\documentclass[11pt,letterpaper]{article}

\usepackage{subcaption}
\usepackage{tikz}
\usetikzlibrary{arrows}
\usetikzlibrary{arrows.meta}

\usepackage{pgfplots}

\usepackage[margin=1in]{geometry}       
\usepackage{amsmath}
\usepackage{hyperref}
\hypersetup{
	colorlinks=true,
	linkcolor=blue,
	citecolor=blue,
	filecolor=magenta,      
	urlcolor=cyan,
	linktoc=page
}

\usepackage{amsthm}
\usepackage{amssymb}
\usepackage{dsfont}

\usepackage{enumitem}
\usepackage{placeins}
\usepackage{graphicx}
\usepackage{epstopdf}

\usepackage{xcolor}

\usepackage{stmaryrd}

\usepackage{mathrsfs}

\theoremstyle{plain}
\newtheorem{theorem}{Theorem}[section]
\newtheorem{corollary}[theorem]{Corollary}
 
\newtheorem{prob}[theorem]{Problem} 
\newtheorem{prop}[theorem]{Proposition}
\newtheorem{lemma}[theorem]{Lemma}

\theoremstyle{definition}
\newtheorem{definition}[theorem]{Definition}
\newtheorem{remark}[theorem]{Remark}

\newcommand{\Z}{\mathbb{Z}}
\newcommand{\Q}{\mathbb{Q}}
\newcommand{\N}{\mathbb{N}}
\newcommand{\R}{\mathbb{R}}
\newcommand{\p}{\mathbb{P}}
\newcommand{\E}{\mathbb{E}}

\newcommand{\sset}{\subset}
\newcommand{\lf}{\left}
\newcommand{\rg}{\right}

\newcommand{\ga}{\gamma}

\newcommand{\ep}{\epsilon}

\newcommand{\sig}{\sigma}

\newcommand{\eps}{\epsilon}

\newcommand{\Rd}{\mathbb{R}^4_\uparrow}

\newcommand{\Geo}{\operatorname{Geo}}
\newcommand{\Gai}{\operatorname{Gamma}^{-1}}

\newcommand{\II}[1]{\llbracket#1 \rrbracket}

\newcommand{\cB}{\mathcal{B}}
\newcommand{\cC}{\mathcal{C}}
\newcommand{\cD}{\mathcal{D}}
\newcommand{\cE}{\mathcal{E}}
\newcommand{\cF}{\mathcal{F}}
\newcommand{\cG}{\mathcal{G}}

\newcommand{\cH}{\mathcal{H}}

\newcommand{\cK}{\mathcal{K}}
\newcommand{\cL}{\mathcal{L}}
\newcommand{\cM}{\mathcal{M}}

\newcommand{\cP}{\mathcal{P}}
\newcommand{\cQ}{\mathcal{Q}}

\newcommand{\cX}{\mathcal{X}}

\newcommand{\DL}{\mathsf{DL}}
\newcommand{\LG}{\mathsf{LG}}
\newcommand{\EL}{\mathsf{EL}}
\newcommand{\GL}{\mathsf{GL}}
\newcommand{\KPZ}{\mathsf{KPZ}}

\newcommand{\supp}{\text{supp}}

\usepackage{MnSymbol}

\DeclareMathOperator*{\argmax}{arg\,max}

\newcommand{\eqd}{\stackrel{d}{=}}

\newcommand{\cvgd}{\stackrel{d}{\to}}

\newcommand{\cvgp}{\stackrel{\mathbb P}{\to}}
\newcommand{\X}{\times}

\DeclareMathOperator{\SRW}{SRW}
\DeclareMathOperator{\NW}{NW}

\renewcommand{\P}{\mathbb{P}}
\newcommand{\fh}{\mathfrak{h}}
\newcommand{\Exp}{\operatorname{Exp}}
\newcommand{\UC}{\operatorname{UC}}

\newcommand{\apar}{\gamma}

\DeclareEmphSequence{\bfseries}

\usepackage[nottoc,numbib]{tocbibind}
\title{The directed landscape in half-space}

\author{
	Duncan Dauvergne
	\thanks{Department of Mathematics, University of Toronto, Canada. e-mail: duncan.dauvergne@utoronto.ca}
	\and
	Lingfu Zhang
	\thanks{The Division of Physics, Mathematics and Astronomy, Caltech, Pasadena, CA, USA. e-mail: lfzhang@caltech.edu}
}

\begin{document}

	\maketitle
	\begin{abstract}
		We prove that two half-space models in the KPZ universality class, exponential last passage percolation and a family of Poisson-avoiding metrics generalizing colored TASEP, converge to a common scaling limit. This scaling limit is the directed landscape in half-space, a random directed metric in the half-plane indexed by a parameter which determines the strength of the boundary interaction.

        As part of our analysis, we characterize the half-space directed landscape in terms of the half-space KPZ fixed point, and prove convergence of geodesics. We also give an explicit construction of joint stationary measures (or horizons) in half-space for the log-gamma polymer, the KPZ equation, exponential and geometric last passage percolation, and the directed landscape itself.
        \end{abstract}
	
	\setcounter{tocdepth}{1}
	\tableofcontents

	\section{Introduction}
	The KPZ (Kardar-Parisi-Zhang) universality class is a broad family of one-dimensional random growth models and two-dimensional random metric and polymer models which are expected to exhibit the same universal behavior under rescaling. Physical predictions for scaling exponents in the KPZ class go back to the 1980s, e.g., see \cite{huse1985huse, kardar1986dynamic}. 
	The past thirty years have seen a period of intense and fruitful research on this class, propelled by the discovery of a handful of exactly solvable models. The seminal work of 
	\cite{baik1999distribution} on the longest increasing subsequence in a random permutation was the first work to harness this exact solvability, confirming the characteristic KPZ scaling exponents and proving a one-point limit theorem for the model. Since then, similar limit theorems have been shown for certain exclusion processes, directed polymers, last passage percolation models, and the KPZ equation itself, e.g., see \cite{johansson2000shape, PSale, tracy2009asymptotics, amir2011probability} for a few highlights. The full scaling limit for random growth models in the KPZ class, the KPZ fixed point, was constructed by \cite{matetski2016kpz}, and the full scaling limit for random metrics and coupled random growth, the  directed landscape, was constructed by \cite{DOV}. For more background, we refer the reader to \cite{quastel2011introduction, corwin2012kardar, romik2015surprising, borodin2016lectures, zygouras2022some, ganguly2021random} and references therein.

	The papers discussed above are all set in \textit{full-space}, i.e., they study random growth models on $\R$, or analogously, metric and polymer models on $\R^2$. It is natural to try to understand how the behavior of these models changes in the presence of a boundary. The simplest way to introduce a boundary is to consider growth models on $\R_{\ge 0}$ or metric models defined in a half-plane. Collectively, these are often referred to as \textit{half-space} models. The behavior of half-space models depends on the strength of the boundary interaction, with predictions for a phase transition in the boundary parameter dating back to \cite{kardar1985}. 
	
	Remarkably, many of the most tractable KPZ models retain at least some exact solvability when appropriately defined in half-space. However, the models are almost always harder to understand in half-space than in full-space and so while many beautiful theorems have been proven in this setting, there has always been a lag in our collective understanding. The half-space setting was first studied rigorously by Baik and Rains, who established one-point limits and phase transitions when both the start and end points are on the boundary for certain exactly solvable last passage models \cite{BR1,BR2,BR3}. Since then, there has been a series of breakthroughs which move beyond these one-point laws in the zero temperature setting. The papers \cite{SI,BBCS1,BBCS2} prove multi-point convergence in the spatial direction when the starting point is on the boundary; \cite{OSZ, BBCW,HeB, bisi2021geometric, IMSsol} establish solvability and one-point limit laws for positive temperature models; and \cite{BC,BFO,CW,CK,BCY} give explicit descriptions for stationary measures, both in half-space and on a strip (with boundary conditions on both sides).
	Finally, an unexpected recent breakthrough of \cite{Zhang} derives formulas for the half-space KPZ fixed point from general initial conditions.
    (See also \cite{dGMRW} for another set of exact formulas for half-space random growth models with general initial conditions.)
	
	The main content of this paper is to construct the directed landscape in half-space, which we expect to be the complete scaling limit for random metric models and coupled random growth in half-space. Our scaling limit depends on a parameter which determines the strength of the boundary interaction.  We construct the directed landscape in half-space by proving convergence of two half-space KPZ models to the same limit: exponential last passage percolation (LPP), and the colored totally asymmetric simple exclusion process (TASEP).
	
	The construction of the full-space directed landscape in \cite{DOV} goes through an analysis of the Airy line ensemble \cite{PSale,CH}, which is connected to random metrics through the Robinson–Schensted–Knuth (RSK) correspondence.
	A similar construction could potentially be carried out in half-space since the RSK correspondence still applies in that setting. Indeed, there has been recent progress on line ensembles and their convergence in half-space \cite{dimitrov2025half,dimitrov2026pinned,das2026pinning}. On the other hand, due to the lack of symmetry in half-space (e.g., no translation invariance), the rich boundary behavior, and more complicated exact formulas, completing a construction of the half-space directed landscape analogously to \cite{DOV} would at a minimum be a formidable technical accomplishment and likely require many new ideas.   
	
	Our strategy in the present paper differs completely from the line ensemble approach of \cite{DOV}. 
	We first find joint stationary measures for exponential LPP and use this to construct the half-space stationary horizon, which couples all stationary measures for the half-space directed landscape. The analogous object in full-space, the stationary horizon, was constructed in \cite{BuS,BSS24}. Our construction of joint stationary measures also works for the log-gamma polymer and the KPZ equation, so we obtain results for these models in passing. We then prove tightness of exponential LPP, and relate our subsequential limits back to the KPZ fixed point. These first two steps give us a collection of subsequential limits for each boundary parameter. Finally, we use these subsequential limits to derive a characterization of the half-space directed landscape in terms of a triangle inequality, independence of time increments, and KPZ fixed point marginals. The proof proceeds by showing that if we have two four-parameter processes satisfying a triangle inequality, independence of time increments, and KPZ fixed point marginals, and the half-space stationary horizon is stationary for \textit{one} of these processes (as is the case for our subsequential limits), then the two processes must be equal in law. 
	
	The proofs in this paper build on several key ideas introduced over the past few years together with several crucial novel aspects.
	In our construction of the half-space stationary horizon, we introduce new and strong symmetries for exactly solvable KPZ models in half-space, extending techniques from \cite{BC}.
	The proof of tightness harnesses a key identity from \cite{BW} and our stationary measure construction, together with probabilistic ideas that build on \cite{basu2014last, BBSloc, dauvergne2021scaling, ganguly2023optimal}. The idea and proof framework for the characterization stems from our recent work \cite{DZ24} in the full-space setting, with the half-space KPZ fixed point from \cite{Zhang} as an input. The key quantitative estimates differ significantly from those in \cite{DZ24}, and are proven using new methods.
	
	In the rest of the introduction, we present our main results, along with our proof strategy and some key ingredients which may be of independent interest.

	\subsection{The directed landscape in half-space}
	
	\subsubsection{Construction via convergence}   \label{sssec:constru}
	
	For each $\alpha>0$, the \emph{half-space exponential last passage percolation (LPP)} with boundary parameter $\alpha$ is defined as follows.
	Let $\Z^2_\ge=\{(i,j): i, j \in \Z, i\ge j\}$, and let $\{X(u)\}_{u \in \Z^2_\ge}$ be an array of independent random variables such that
	\[
	X(i,j)\sim \Exp(1) \;\; \text{ for } \;\; i>j\in\Z, \qquad \qquad
	X(i,i)\sim \Exp(\alpha) \;\; \text{ for } \;\; i\in \Z.
	\]
	Next, for $u, v \in \Z^2$ write $u \le v$ if $u_1 \le v_1, u_2 \le v_2$. For $u \le v \in \Z^2_\ge$ define the passage time 
	\begin{equation}
		\label{E:ELPP-first-def}
	X(u; v) = \max_{\pi}\sum_{w\in \pi} X(w),
	\end{equation}
	where the maximum is over all up-right vertex paths $\pi$ from $u$ to $v$ contained in the half-space $\Z^2_\ge$. We set $X(u; v) = -\infty$ for all other choices of $u, v$.
	To take a scaling limit, we let $\mathbb{H} = \R_{\ge 0} \times \R$ and
	\begin{equation}   \label{eq:defnH2}
	\mathbb{H}^2_\uparrow := \{(x, s; y, t) : (x, s), (y, t) \in \mathbb{H}, s < t\} \subset \R^4.
	\end{equation}
	For each $n\in\N$ and $u = (x, s) \in \mathbb{H}$ define the scaling $(x, s)_n = (\lfloor ns + 2^{5/3} n^{2/3} x \rfloor, \lfloor ns \rfloor)$, which puts limiting coordinates back in $\Z^2_\ge$. We also set $h_n(u) = 4ns + 2^{8/3} n^{2/3} x$, which is the limit shape for $X(0,0; u_n)$ above the fluctuation scale. Finally, define $\cL_n:\mathbb{H}^2_\uparrow \to \R \cup \{-\infty\}$ by
	\begin{equation}  \label{eq:cLn}
		\cL_n(u; v) := 2^{-4/3} n^{-1/3}\left(X(u_n; v_n)  - [h_n(v) -h_n(u)]\right).
	\end{equation}
Note that $\cL_n$ has a dependence on the boundary parameter $\alpha$.
	\begin{theorem}   \label{thm:expconv}
		In \eqref{eq:cLn}, take either
		\begin{enumerate}[nosep, label=(\roman*)]
			\item fixed $\alpha > \frac{1}{2}$, or
			\item $\alpha = \frac{1}{2}-2^{-4/3}\rho n^{-1/3}$ for some $\rho\in\R$.
		\end{enumerate}
		Then as $n\to\infty$, $\cL_n$ converges in distribution in the uniform-on-compact topology to a random continuous function from $\mathbb{H}^2_\uparrow \to \R$. In case (i) the limit is independent of $\alpha$.
	\end{theorem}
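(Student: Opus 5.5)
The argument has three stages, following the strategy outlined in the introduction: tightness, identification of subsequential limits through a characterization, and uniqueness. Convergence in distribution then follows from tightness plus uniqueness of the subsequential limit law.

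\textbf{Tightness.} First I would prove that $\{\cL_n\}$ is tight in the uniform-on-compact topology on $\mathbb{H}^2_\uparrow$, in both regimes (i) and (ii). I would establish two kinds of estimates, each uniform in $n$ and locally uniform in the coordinates:
\begin{itemize}[nosep]
\item one-point upper and lower tail bounds for $\cL_n(u;v)$;
\item modulus-of-continuity bounds of the form $|\cL_n(u;v)-\cL_n(u';v')|\lesssim \|(u,v)-(u',v')\|^{1/2-\epsilon}$ on compacts with stretched-exponential tails.
\end{itemize}
For the spatial increments, I would use stochastic comparison with the joint stationary measures for half-space exponential LPP, the same ones that will be used to build the half-space stationary horizon. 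Increments of passage times from a point along a horizontal line are sandwiched between increments of stationary Busemann-type profiles, and these are random walks with explicit drift. In regime (ii) the boundary drift is tuned at scale $n^{-1/3}$, so it survives in the limit, which is why the limit depends on $\rho$.

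For the one-point bounds near the boundary I would use the identity of \cite{BW} relating half-space point-to-point passage times to full-space or boundary quantities. Temporal continuity would then come from a standard geodesic-geometry argument, as in \cite{basu2014last,dauvergne2021scaling}, combining the spatial bounds with the one-point tails. Kolmogorov–Chentsov-type chaining over dyadic grids then gives tightness and continuity of all subsequential limits.

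\textbf{Properties of subsequential limits.} Let $\cL$ be any subsequential limit. I would verify four things.
\begin{itemize}[nosep]
\item \emph{Metric composition.} $\cL$ satisfies the reverse triangle inequality $\cL(u;w)\ge \cL(u;v)+\cL(v;w)$. This follows directly from the prelimit LPP superadditivity, with discretization errors vanishing.
\item \emph{Independent increments.} $\cL$ has independent time increments, since disjoint time strips use disjoint weights.
\item \emph{KPZ fixed point marginals.} For fixed $s<t$ and continuous initial data, $\sup_x(h(x)+\cL(x,s;\cdot,t))$ is the half-space KPZ fixed point of \cite{Zhang} with the appropriate boundary parameter. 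I would get this by matching with the exponential LPP convergence to the half-space KPZ fixed point from general initial data. Converting Zhang's results, which are for TASEP-type models, to our LPP setting may need the usual LPP–TASEP coupling. In case (i), the fact that the marginals are independent of $\alpha$ (the supercritical boundary washes out) gives independence of the limit from $\alpha$.
\item \emph{Stationarity.} The half-space stationary horizon, obtained as the limit of the joint LPP stationary measures, is jointly stationary under $\cL$.
\end{itemize}

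\textbf{Characterization and main obstacle.} Finally I would invoke the characterization theorem. Any two continuous four-parameter processes with the reverse triangle inequality, independent time increments and half-space KPZ fixed point marginals, one of which has the half-space stationary horizon as a stationary measure, are equal in law. Applying this to two subsequential limits shows that all subsequential limits coincide, which proves the theorem.

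I expect the main obstacle to be proving this characterization in half-space. Following \cite{DZ24}, I would take two such processes and couple them via their common KPZ fixed point marginals. Starting from stationary horizon initial data, one shows that $\cL(x,s;y,t)$ is recovered as a limit of $\sup$-convolutions against steep profiles, i.e. Busemann-type limits along directions. This requires quantitative estimates that geodesics from stationary data coalesce or localize near any target point, with errors vanishing as the slope parameters diverge. Near the boundary, where there is no translation invariance, these estimates must be proved by new means. My first attempt would be to use monotonicity in the boundary parameter, together with comparison to full-space estimates away from the wall.
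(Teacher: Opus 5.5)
Your overall architecture (tightness; subsequential limits are pre-landscapes for which the half-space stationary horizon is stationary; then a characterization) is the paper's. For case (ii) it is the right route, modulo the characterization itself, discussed below.

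\textbf{Case (i) is the main gap.} You apply the same characterization there, but the paper proves it (Theorem \ref{T:half-space-land}) only for $\rho\in\R$, and finiteness of $\rho$ is essential to that proof. In the branching estimate (Proposition \ref{prop:key1}), the final data $\cM_{t,1}\diamond\cD_i$ are swapped, at a controlled Radon--Nikodym cost, for $\cM_{t,1}\diamond\cD_i^*$, where $\cD_i^*$ is a Brownian motion of drift $-2\rho$. This works because the horizon at slope $2|\rho|$ is exactly such a Brownian motion and is stationary for the reversed increment (Lemma \ref{lem:Mstat}). At $\rho=-\infty$ none of this survives: the boundary weights $\Exp(\lambda_i/2-\rho)$ of the horizon degenerate, and stationary measures are locally Brownian--Bessel at the wall. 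A characterization there would need a genuinely different proof, which you do not supply.

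The paper sidesteps this (Corollary \ref{C:-infty-case}) using the monotonicity you mention, but in a different way:
\begin{enumerate}
\item Rescaling the diagonal weights couples the case-(i) environment below the critical-window environment with any fixed $\rho\in\R$, once $n$ is large.
\item Hence every subsequential limit is stochastically dominated by every $\cL^\rho$. By Proposition \ref{P:KPZFP-marginals} it also has one-point laws $\fh^{-\infty}_t\delta_x(y)$.
\item Couple monotonically and set $\hat\cL=\lim_{\rho\to-\infty}\cL^\rho$. The prelimit quadrangle inequality (Corollary \ref{C:exp-quadrangle}) bounds $\E\cL^\rho(u)-\E\cL^{-\infty}(u)$ by the same difference at a diagonal point, which tends to $0$ by \cite{BBCS1}.
\item So the limit equals $\hat\cL$, whose law involves no choices. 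This is also what gives independence of $\alpha$.
\end{enumerate}

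\textbf{Your sketch of the characterization misses its key idea.} Coupling two processes through their common KPZ fixed point marginals cannot suffice, since one-initial-condition marginals do not determine the joint law (different couplings of TASEP share them). The paper instead runs a Lindeberg exchange in time. It interpolates $\cH_i\diamond\cQ_{0,t}\diamond\cM_{t,1}\diamond\cD_i$ in $t$. On each short slab $[s,s+r]$ it couples $\cM_{s,s+r}$ with $\cQ_{s,s+r}$ so that $F\diamond\cM_{s,s+r}=F\diamond\cQ_{s,s+r}$ for a single $F$. This $F$ agrees, up to additive constants, with every $\cH_i\diamond\cQ_{0,s}$ on the components of the near-argmax set of $\cH_i\diamond\cQ_{0,s}+\cM_{t,1}\diamond\cD_i$ (Lemma \ref{lem:exiF-hf}, Proposition \ref{prop:clo}).

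This coupling fails only with probability $O(r^{2/3+\delta})$, by the branching estimate. Near the wall, that estimate is proved through the exponential--Brownian LPP description of the horizon, a parameter-swap reduction to three Brownian motions, and a Karlin--McGregor bound (Lemma \ref{lem:bm3}), not through comparison with full space.

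Point-to-point values are then recovered by Cameron--Martin tilts of the Brownian inputs of $\cH_i$ and $\cD_i$, peaked at $x_i$ and $y_i$ with the slopes held fixed. Absolute continuity then transfers the equality in law. Sending the slopes to infinity, as you propose, would push maximizers off to infinity rather than localize them.

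\textbf{Two smaller points in your tightness step.}
\begin{itemize}
\item The temporal H\"older exponent is $1/3-\epsilon$, not $1/2-\epsilon$.
\item The identity of \cite{BW} is point-to-line, so it only yields the point-to-point upper tail. The diagonal lower tail needs an extra argument; the paper uses the constrained estimates of \cite{ganguly2023optimal} (Proposition \ref{P:constrained-estimate}).
\end{itemize}
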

	
	\begin{definition}     \label{thm:defhdl}
		We let $\cL^\rho:\mathbb{H}^2_\uparrow\to \R$ denote the limit from Theorem \ref{thm:expconv}, where $\rho=-\infty$ in case (i) and $\rho\in \R$ in (ii). The random function $\cL^\rho$ is the  \emph{half-space directed landscape with (boundary) parameter $\rho$}. 
	\end{definition}

The parameter $\rho$ in Definition \ref{thm:defhdl} corresponds to the strength of the boundary. As we increase $\rho$, the boundary strength increases and geodesics tend to spend more time on the boundary, see Section \ref{ssec:scaling} for more discussion and Section \ref{ssec:geodesic-cvgence} for definitions of path length and geodesics. The case when $\rho = -\infty$ is a subcritical setting when the boundary has no weight. In this special case, the function $\cL^{-\infty}$ should have an alternate description through optimizing over paths restricted to $[0, \infty) \times \R$ in a full-space directed landscape. For $\rho > -\infty$, we believe there should be a local time representation for the boundary effect, see Problem \ref{prob:measurability} and surrounding discussion.

	We also prove convergence of a family of \textbf{multi-level Poisson-avoiding metrics} to the same limit. We describe the single-level model here, which is equivalent to the basic coupling for the totally asymmetric simple exclusion process (TASEP) in half-space. 
	
	Half-space TASEP is a Markov process on particle configurations on $\N$, where each location $x\in \N$ is either empty or occupied by a particle. We define the dynamics through a collection of Poisson clocks. Let $\Z_{\ge 0} = \{0, 1, \dots \}$, and consider a Poisson process $\Pi$ on $\Z_{\ge 0} \times \R$, where $\Pi$ has intensity $1$ on $\N \times \R$ and intensity $\alpha>0$ on $\{0\} \times \R$. We think of $\Pi_x := \Pi|_{\{x\} \times \R}$ as a Poisson clock at site $x$. 
	Whenever $\Pi_x$ rings, if site $x$ is occupied by a particle and site $x+1$ is empty, then the particle at site $x$ jumps to $x+1$.
	Whenever $\Pi_0$ rings, if site $1$ is empty, we add an extra particle at site $1$, i.e., a particle jumps from an infinite-particle reservoir to site $1$. Given a particle configuration at time $t$, we define the associated \emph{height function} $h_t:\Z_{\ge 0} \to \R$ by letting
	$$
	h_t(x) - h_t(x-1) = 2 \cdot \mathds{1}(\text{site } x \text{ is empty at time }  t) - 1, \qquad x \in \N.
	$$
	This rule determines $h_t$ up to the choice of $h_t(0)$. While it is convenient to allow $h_0(0)$ to remain arbitrary, to ensure that TASEP induces a nice dynamics on the space of height functions for $t > 0$, we set 
	$$
	h_t(0) = h_{t^-}(0) + 2\cdot \mathds{1}(\text{a particle jumps onto site $1$ at time $t$}).
	$$
	With this choice, TASEP dynamics on heights are equivalent to the rule that a local minimum of $h$ at site $x \in \Z_{\ge 0}$ flips to a local maximum according to the Poisson clock at site $x$. If we couple multiple TASEP evolutions using the same Poisson clocks, this is known as the \emph{basic coupling} \cite{liggett1976coupling}. It is equivalent to \emph{colored TASEP}, see \cite{amir2011tasep, ACH} for background.
	
	The basic coupling of TASEP can be recast as a random metric. Given $\Pi$, we define the \textbf{Poisson-avoiding metric} 
	$$
	H: (\Z_{\ge 0} \times \R)^2 \to \Z_{\ge 0}
	$$ 
	as follows. For $u =(x, s), v = (y, t) \in \Z_{\ge 0} \times \R$ with $s \le t$ let
	$
	\mathbb{M}(u, v)
	$
	be the set of all functions $f:[s,t]\to \Z_{\ge 0}$ with $f(s)=x$ and $f(t)=y$. Define
	\begin{equation}
		\label{E:PA-def}
		H(u; v) = \min \left\{ \int_s^t |f'| + 2 \#(\{(f(r), r) : r \in (s, t]\} \cap \Pi) : f \in \mathbb{M}(u, v)\right\},
	\end{equation}
	where $\int_s^t |f'|$ is the total variation of $f$, and set $H(v; u) = H(u; v)$. It is straightforward to check that $H$ defines a pseudometric, i.e., a metric where distinct points are allowed to have distance $0$.
To explain the name for the Poisson-avoiding metric, note that it is enough to take the minimum above over $f \in \mathbb{M}(u, v)$ which do not hit $\Pi$ on the interval $(s, t)$, since we can always avoid a Poisson point at the cost of increasing total variation by $2$. See Figure \ref{fig:FPP} for an illustration.

Interpreting TASEP as a \textit{discrete Burgers equation} with random force given by $\Pi$, the integral in \eqref{E:PA-def} is then the \textit{action} of $f\in\mathbb{M}(u, v)$ (see e.g., \cite{Lions,EKMS} on general stochastic Burgers or Hamilton-Jacobi equations). For an initial TASEP height function $h_0:\Z_{\ge 0} \to \Z$, we can define
	\begin{equation}   \label{eq:h0Hvar}
	h_t(x; h_0) = \min_{y\in \Z_{\ge 0}} h_0(y) + H(y,0;x,t).
	\end{equation}
	This gives the evolution of TASEP started from $h_0$ using the Poisson clocks $\Pi$. We now state our main convergence theorem for Poisson-avoiding metrics. Here we take a lattice spacing parameter $\ep \to 0$, following \cite{matetski2016kpz}.
	
	\begin{theorem}  \label{thm:colorTASEPconv}
		For $\ep>0$, define the rescaling $(x, s)_\eps = (\lfloor 2\eps^{-1}x \rfloor, 2\eps^{-3/2}s)$ and define $H_\eps:\mathbb H^2_\uparrow \to \R$ by
		\[
		H_\eps(x, s; y, t) = -\eps^{1/2}H((x, s)_\eps; (y, t)_\eps) + \eps^{-1}(t-s).
		\]
		Take either (i) fixed $\alpha > \frac{1}{2}$ and $\rho=-\infty$, or (ii) $\alpha = \frac{1}{2}-\rho\eps^{1/2}/2$ for some $\rho\in\R$.
		Then as $\eps\to 0$, $H_\eps$ converges in distribution to $\cL^\rho$ in the uniform-on-compact topology.
	\end{theorem}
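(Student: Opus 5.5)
We plan to follow the same three-step scheme outlined in the introduction for exponential LPP, and then identify the limit with $\cL^\rho$ through the characterization theorem. Concretely, the characterization says the following. Suppose a random continuous function on $\mathbb{H}^2_\uparrow$ has four properties: it satisfies the (reverse) triangle inequality, it has independent time increments, its KPZ fixed point marginals are the half-space KPZ fixed point of \cite{Zhang} with parameter $\rho$, and the half-space stationary horizon is stationary for it. Then its law is that of $\cL^\rho$.

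The plan is therefore as follows.

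\textbf{Step 1: tightness.} We prove tightness of $H_\eps$ in the uniform-on-compact topology, with all subsequential limits continuous and finite on $\mathbb{H}^2_\uparrow$.

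\textbf{Step 2: the structural properties.} We show every subsequential limit satisfies the four properties above.
\begin{itemize}
\item The triangle inequality is inherited from the metric triangle inequality for $H$; the sign flip in $H_\eps$ turns it into a reverse triangle inequality, and the linear term $\eps^{-1}(t-s)$ is additive.
\item Independence of time increments follows because $H$ restricted to disjoint time strips depends on disjoint restrictions of the Poisson process $\Pi$.
\item The KPZ fixed point marginals come from \eqref{eq:h0Hvar}. The variational formula expresses half-space TASEP from general initial data through $H$. By the half-space TASEP-to-KPZ-fixed-point convergence (the input from \cite{Zhang}, in the scaling of \cite{matetski2016kpz}, with $\alpha=\frac12-\rho\eps^{1/2}/2$), $\sup_y[\fh_0(y)+H_\eps(y,0;x,t)]$ converges to the half-space KPZ fixed point. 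To pass this through the subsequential limit we need the supremum to localize to compact sets, which follows from upper-tail and parabolic-decay bounds.
\item For stationarity of the horizon, the cleanest route is a discrete analogue. We use the multi-level Poisson-avoiding metrics: multi-species half-space TASEP has explicit joint stationary measures obtained from the joint stationary construction for exponential/geometric LPP. Their scaling limit is the half-space stationary horizon, so stationarity survives in the limit.
\end{itemize}
Alternatively, one could couple TASEP with LPP exactly. The single-species half-space TASEP height function started from the wedge is a half-space exponential LPP time via the standard particle-to-LPP mapping, with $\Exp(\alpha)$ diagonal weights corresponding to the reservoir clock. One might then hope to transfer results directly from Theorem \ref{thm:expconv}. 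This coupling, however, only handles one initial condition (one starting point) at a time, not the full four-parameter field. So it gives marginals and tightness inputs, not the identification.

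\textbf{Step 3: identification.} We apply the characterization to each subsequential limit to conclude that $H_\eps\to\cL^\rho$. In case (i), fixed $\alpha>\frac12$, the same argument gives the $\rho=-\infty$ marginals (the boundary-free half-space fixed point), hence the limit $\cL^{-\infty}$.

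\textbf{Main obstacle.} The main obstacle is tightness (modulus of continuity) for the four-parameter field $H_\eps$, together with the localization of the variational supremum near the boundary. First we would try to get one-point tail bounds from the LPP coupling: point-to-point values of $H$ correspond to exponential LPP passage times between rescaled points, using the identification via \eqref{eq:h0Hvar} with narrow-wedge data. These tail bounds are uniform, including near $x=0$ and near criticality. We would then upgrade them to two-point modulus bounds via the stationary-measure comparison (increments are sandwiched by stationary horizon increments, as in the LPP tightness argument), combined with a Kolmogorov-type chaining on $\mathbb{H}^2_\uparrow$. The time-direction continuity is the most delicate part, since Poisson-avoiding metrics lack the lattice structure of LPP. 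Here we would again use the triangle inequality together with the spatial bounds.
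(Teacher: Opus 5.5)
Your plan has a genuine gap in Step 2, fourth bullet, and it comes from stating the characterization with an extra hypothesis. You propose to verify joint stationarity of the half-space stationary horizon for limits of $H_\eps$ via ``explicit joint stationary measures for multi-species half-space TASEP obtained from the LPP horizon.'' No such measures are available, and the identification is wrong at the discrete level. The exponential horizon is jointly stationary for the coupling of growth processes through a \emph{common LPP environment}. That is a different coupling of TASEPs from the basic coupling encoded by $H$. The paper stresses both that there is no direct translation between the two, and that explicit joint stationary measures for Poisson-avoiding metrics are not currently known. (Also, multi-level metrics correspond to the $d$-couplings, not to multi-species TASEP.)

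Nor does joint stationarity follow from the other properties. The KPZ fixed point marginals only fix the law of $f \diamond \cM$ for one $f$ at a time. That gives single-slope stationarity (Lemma \ref{lem:Mstat}) but not the joint law across slopes.

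The fix is that this step is unnecessary. Theorem \ref{thm:chawdl} characterizes $\cL^\rho$, $\rho \in \R$, by the triangle inequality, independent increments and KPZ fixed point marginals alone. In Section \ref{sec:chara}, stationarity is needed for only one of the two pre-landscapes being compared. That one is the exponential LPP limit $\cQ^\rho$, which has it by Proposition \ref{P:properties-elpp}.10. This is exactly what lets the paper handle Poisson-avoiding metrics without knowing their stationary measures. Relatedly, for the marginal check you only need $f \in \NW_+$. Then \eqref{eq:TASEPdcoupling} is a finite minimum over narrow wedges, and no localization of the supremum is required.

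Case (i) has a second gap: ``the same argument'' does not identify the limit as $\cL^{-\infty}$. Theorem \ref{thm:chawdl} is only proved for $\rho \in \R$. Its proof uses, e.g., that $\cH^{\DL}_{2|\rho|}$ is a Brownian motion of slope $-2\rho$, which fails at $\rho=-\infty$, where the boundary behaviour of stationary measures is Bessel-like. What you need instead is Corollary \ref{C:-infty-case}. Couple the boundary Poisson clocks monotonically in $\alpha$, so that for small $\eps$ the fixed-$\alpha>\frac12$ metric has more boundary obstacles than the one with $\alpha = \frac12-\rho\eps^{1/2}/2$. Then any subsequential limit is stochastically dominated by $\cL^\rho$ for every $\rho\in\R$ and has one-point laws given by $\fh^{-\infty}$. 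By Corollary \ref{C:-infty-case}, this pins down its law.

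Finally, your tightness route is heavier than needed: you propose redoing the stationary comparison directly in $H$, with special care for the time direction. For fixed $(y,s)$, $(x,t)\mapsto H(y,s;x,t)$ is narrow-wedge TASEP, which is an inverse of half-space exponential LPP. So the LPP one- and two-point bounds, temporal ones included, transfer directly (Lemmas \ref{L:one-point-TASEP} and \ref{L:two-point-TASEP}). Time reversal handles the first endpoint, and chaining plus Arzel\`a--Ascoli finishes.
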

	Theorem \ref{thm:colorTASEPconv} is a special case of the convergence of multi-level Poisson-avoiding metrics, given in Theorem \ref{thm:mPAM}. A quick corollary of Theorem \ref{thm:colorTASEPconv} gives joint convergence of TASEP from different initial conditions in the basic coupling (and when coupled through multi-level metrics).
	
	TASEP from a fixed initial condition can be constructed as a sequence of growing balls in exponential LPP, see Section \ref{ssec:kpzfp} for details. However, the basic coupling of TASEP is not the same as the coupling through exponential LPP, so there is not a direct translation between exponential LPP and Poisson-avoiding metrics. Instead, the reason we are able to derive Theorem \ref{thm:colorTASEPconv} from the proof of Theorem \ref{thm:expconv} is because our proof gives a characterization of the half-space directed landscape from its KPZ fixed point marginals, which correspond to limits of TASEP from a single initial condition.
	
	\begin{remark}[Symmetric environments]
		\label{R:symmetrized}
	A small extension of Theorem \ref{thm:expconv} and Theorem \ref{thm:colorTASEPconv} should give convergence of the same models in symmetric environments. In the exponential setting, the symmetric environment corresponds to setting $X(i, j) = X(j, i)$ for all $(i, j) \in \Z^2$ and in the Poisson-avoiding metric we can extend to a symmetric environment by letting $\Pi_x = \Pi_{-x}, x \in \N$. Distances in symmetric environments match those of half-space environments unless we are forced to cross the axis of symmetry, in which case distances are equivalent to maximizing over paths in a half-space environment which are forced to touch a boundary. For brevity, we have not pursued this extension here. We note in passing that \emph{colored symmetrized TASEP}, which dates back to Harris \cite{Ha77} (see \cite{He2025} for a modern presentation and more background) can be described from the symmetrized Poisson-avoiding metric with an analogue of the formula \eqref{eq:h0Hvar}.
\end{remark}

\begin{remark}[Other models]
The full-space analogue of Theorem \ref{thm:expconv} is shown in \cite{dauvergne2021scaling}, and the full-space analogue of Theorem \ref{thm:colorTASEPconv} is shown in \cite{ACH} (see also \cite{DZ24} for a different proof). There are many other models where directed landscape convergence is known and for which a half-space version exists, e.g., geometric or Poisson LPP \cite{dauvergne2021scaling}, the KPZ equation \cite{wu2023kpz}, the stochastic six vertex model and colored ASEP \cite{ACH}, the log-gamma polymer \cite{zhang2025convergence}. See also \cite{virag2025actions} for  a unified conceptual framework for treating LPP and polymer models. The most important ingredient needed to prove half-space directed landscape convergence for these models using our present framework is KPZ fixed point convergence. While it is possible that the techniques of \cite{Zhang} could extend to the LPP models listed above, beyond these cases it is likely significant new ideas are needed.
\end{remark}

	\begin{figure}[t]
		\centering
		\resizebox{0.6\textwidth}{!}{
			\centering
			\begin{tikzpicture}
				\foreach \i in {0,...,8}
				{
					\draw [line width=.1pt] (\i,0) -- (\i,4);
					
				}
				
				\begin{scriptsize}
					
					\foreach \i in {0,...,8}
					{
						\draw (\i,0) node[anchor=north]{\i};
					}
				\end{scriptsize}
				
				\draw [line width=1pt, color=blue] (-0.2,2) -- (0.2,2);
				\draw [line width=1pt, color=blue] (-0.2,1) -- (0.2,1);
				\draw [line width=1pt, color=blue] (-0.2,3.5) -- (0.2,3.5);
				\draw [line width=1pt, color=blue] (1-0.2,2.9) -- (1+0.2,2.9);
				\draw [line width=1pt, color=blue] (1-0.2,0.8) -- (1+0.2,0.8);
				\draw [line width=1pt, color=blue] (2-0.2,2) -- (2+0.2,2);
				\draw [line width=1pt, color=blue] (3-0.2,2.4) -- (3+0.2,2.4);
				\draw [line width=1pt, color=blue] (3-0.2,0.4) -- (3+0.2,0.4);
				\draw [line width=1pt, color=blue] (4-0.2,1.5) -- (4+0.2,1.5);
				\draw [line width=1pt, color=blue] (4-0.2,3.2) -- (4+0.2,3.2);
				\draw [line width=1pt, color=blue] (4-0.2,0.2) -- (4+0.2,0.2);     
				\draw [line width=1pt, color=blue] (5-0.2,0.9) -- (5+0.2,0.9);
				\draw [line width=1pt, color=blue] (5-0.2,2.3) -- (5+0.2,2.3);      
				\draw [line width=1pt, color=blue] (6-0.2,2) -- (6+0.2,2);
				\draw [line width=1pt, color=blue] (6-0.2,0.3) -- (6+0.2,0.3);
				\draw [line width=1pt, color=blue] (6-0.2,3.8) -- (6+0.2,3.8);
				\draw [line width=1pt, color=blue] (6-0.2,3.1) -- (6+0.2,3.1);    
				\draw [line width=1pt, color=blue] (7-0.2,1.4) -- (7+0.2,1.4);
				\draw [line width=1pt, color=blue] (7-0.2,2.5) -- (7+0.2,2.5);
				\draw [line width=1pt, color=blue] (8-0.2,2.9) -- (8+0.2,2.9);
				\draw [line width=1pt, color=blue] (8-0.2,0.8) -- (8+0.2,0.8);
				\draw [line width=1pt, color=blue] (8-0.2,1.2) -- (8+0.2,1.2);          
				
				\draw [line width=1pt, color=red] (5,0) -- (5,0.6) -- (4,0.6) -- (4,1) -- (3,1) -- (3,2.2) -- (2,2.2) -- (2,3) -- (3,3) -- (3,3.4) -- (4,3.4) -- (4,4); 
				\draw [color=red, fill=red] (5,0) circle (1.5pt);
				\draw [color=red, fill=red] (4,4) circle (1.5pt);
			\end{tikzpicture}
		}
		
		\caption{An illustration of the Poisson avoiding metric. The blue horizontal bars are the Poisson process $\Pi$. The red path avoids $\Pi$, and has minimal total variation among all paths between the same endpoints.}  
		\label{fig:FPP}
	\end{figure}
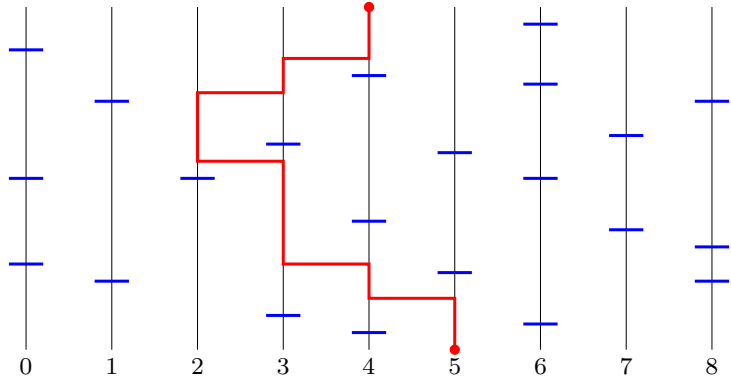

	\subsubsection{Scaling, shape theorem, and continuity}
    \label{ssec:scaling}
	As part of the proof of Theorem \ref{thm:expconv}, we establish various symmetries, metric properties, continuity, and regularity for the directed landscape. Many of these are summarized in Proposition \ref{P:properties-elpp} and Proposition \ref{P:only-easy-consequences}. We highlight a few properties here.
	
	First, we have the following $1$:$2$:$3$ scale invariance, from Proposition \ref{P:properties-elpp}.3. For any $q > 0, \rho \in \R \cup \{-\infty\}$:
	\begin{equation}
		\label{E:KPZ-scaling-intro}
\cL^{q \rho}(x, s; y, t) \eqd q^{-1} \cL^\rho(q^2x, q^3 s; q^2 y, q^3 t),
	\end{equation}
	where the equality in law is as random continuous functions on $\mathbb H^2_\uparrow$. Equation \eqref{E:KPZ-scaling-intro} implies that up to scaling, all the limits $\cL^\rho, \rho  \in (0, \infty)$ are equivalent, as are all the limits $\cL^\rho, \rho \in (-\infty, 0)$. In other words, up to scaling there are only four distinct limit objects: a \emph{subcritical} limit $\cL^{-\infty}$, a \emph{slightly subcritical} limit $\cL^{-1}$, a \emph{critical} limit $\cL^0$, and a \emph{slightly supercritical} limit $\cL^1$. By \eqref{E:KPZ-scaling-intro} when we zoom in on the slightly subcritical or slightly supercritical limits through $1$:$2$:$3$ rescaling, we recover the critical limit, which is scaling invariant. If we zoom out in the slightly subcritical limit, we recover the subcritical limit, which is again scaling invariant. Zooming out in the slightly supercritical limit in the $1$:$2$:$3$ scaling results in the metric blowing up. If we account for this blow-up in the scaling, then there should instead be a limiting transition to a Gaussian limit which can be described from a single Brownian motion at the boundary. We do not pursue this direction here.

	Next, the half-space directed landscape $\cL^\rho$ satisfies continuity and shape theorems, which are half-space analogues of \cite[Propositions 10.5, Corollary 10.7]{DOV}. We state a weaker version of these results here (see Proposition \ref{P:only-easy-consequences}.1 and 2 below).
	\begin{theorem}   \label{thm:dlsc}
		Fix $\rho\in \R\cup\{-\infty\}$, a compact $K\subset \mathbb{H}^2_\uparrow$, and $\eps > 0$. Then on $K$, almost surely the half-space directed landscape $\cL^\rho(x, s; y, t)$ is H\"older-$(1/2-\eps)$ continuous in $x$ and $y$ and H\"older-$(1/3-\eps)$ continuous in $s$ and $t$.
		
		Moreover, for any $b > 0$ there exists $c > 0$ and a random number $M > 0$ satisfying
		$
		\p(M > a) \le 2\exp(-ca^{3/2})
		$
		for all $a > 0$, such that for all $u = (x, t; y, t + s) \in \mathbb H^2_\uparrow$ with $s \le b$ we have
		$$
		\lf|\cL^\rho(u) + (x-y)^2/s \rg|\le M s^{1/3} \log^{2}\lf(\frac{2\|u\|_2}{s}\rg).
		$$
	\end{theorem}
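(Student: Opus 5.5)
This is the half-space analogue of \cite[Proposition 10.5, Corollary 10.7]{DOV}, and the plan is to run the same chaining argument. The input is a uniform one-point tail bound of the form
$$
\p\big(|\cL^\rho(x,s;y,t) + (x-y)^2/(t-s)| > a (t-s)^{1/3}\big) \le C e^{-c a^{3/2}},
$$
together with a two-point modulus bound
$$
\p\big(|\cL^\rho(u)-\cL^\rho(u')| > a\, d(u,u')\big) \le Ce^{-ca^{3/2}},
$$
where
$$
d(u,u') = |x-x'|^{1/2}+|y-y'|^{1/2}+|s-s'|^{1/3}+|t-t'|^{1/3}
$$
on compact sets. Both are obtained at the prelimit level for $\cL_n$ from exponential LPP, uniformly in $n$ (and uniformly in $\alpha$ in regimes (i) and (ii)), and then passed to the limit by Theorem \ref{thm:expconv}.

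\textbf{Prelimit one-point bounds.} For the one-point bound we need two tails.
\begin{enumerate}[nosep, label=(\alph*)]
\item The \emph{lower tail} of $X(u_n;v_n)$ is controlled by comparing with full-space LPP restricted to paths staying off the diagonal. The half-space passage time dominates such a restricted full-space passage time, and for points away from the boundary this gives the full-space lower tail. For points near the boundary we instead use a shifted restriction, or superadditivity through an intermediate point.
\item The \emph{upper tail} uses the stationary (Burke-type) half-space LPP from the joint stationary measures constructed in the paper. The passage time is bounded above by the stationary version, whose increments along the boundary are explicit (sums of independent exponentials with boundary-modified rates). This is combined with the standard Balázs–Cator–Seppäläinen exit-point argument to get $e^{-ca^{3/2}}$ tails. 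In the critical window $\alpha = \frac12 - O(n^{-1/3})$ the boundary weights only shift the mean by $O(n^{1/3})$, which is absorbed into the constants.
\end{enumerate}
The parabola $-(x-y)^2/s$ comes from the limit shape $h_n$ once the curvature correction is expanded. Scaling invariance \eqref{E:KPZ-scaling-intro} reduces everything to unit time, at the cost of replacing $\rho$ by $q\rho$. Because of this rescaling, the bounds must be uniform over all $\rho$ in a compact set, or monotone in $\rho$.

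\textbf{Prelimit modulus bounds.} Spatial increments are handled by sandwiching. For $x<x'$, the difference $\cL(x,s;y,t)-\cL(x',s;y,t)$ is bounded by increments of stationary initial data (half-space stationary horizon comparison, or Busemann-type monotonicity), which are Brownian-like with drift. This yields $|x-x'|^{1/2}$ increments with Gaussian tails. Temporal increments use the triangle inequality: $\cL(x,s;y,t) \ge \cL(x,s;z,s')+\cL(z,s';y,t)$ gives one direction. The other direction follows from the one-point bound over a short time $|s-s'|$ together with maximizing over $z$ in a window of size $|s-s'|^{2/3}$ times a logarithm, using the spatial modulus. A union bound then handles $z$ outside that window via the parabola.

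\textbf{Chaining.} With these estimates in hand, a Kolmogorov–Garsia–Rodemich–Rumsey chaining argument over dyadic grids, in the anisotropic metric $d$, gives the Hölder exponents $1/2-\eps$ and $1/3-\eps$. The same estimates also give the global bound. We cover $\{u: \|u\|_2 \le R,\ s\in[2^{-k-1},2^{-k}]\}$ by rescaled unit boxes and apply the one-point and modulus tails in each box. A union bound over polynomially many boxes costs a factor $\log^{2/3}$ in the tail level, and the $\log^2$ in the statement leaves room to sum over $k$ and $R=2^j$. Here the time-shift invariance of $\cL^\rho$ is used, but not spatial invariance, since there is none.

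\textbf{Main obstacle.} The main obstacle is the uniform upper tail and spatial modulus near the boundary in the critical window. These require the stationary half-space LPP comparison with exit-point control, and I expect that to be the delicate step.
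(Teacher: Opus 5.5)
Your outer layer matches the paper's argument. It combines uniform prelimit one- and two-point bounds (Proposition~\ref{P:properties-elpp}, parts 4--6) passed to the limit with anisotropic chaining (the paper uses \cite[Lemma 3.3]{dauvergne2021bulk}). It then runs the covering argument of \cite[Corollary 10.7]{DOV}, using only time shifts and KPZ scaling, under which $\rho\mapsto s^{1/3}\rho$ stays bounded for $s\le b$. Your temporal-increment step is essentially Proposition~\ref{P:temporal-est}. The gap is in how you propose to get the prelimit one-point bounds at the boundary. For the lower tail you treat this as routine, and for the upper tail you rely on structure that is not available.

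\textbf{Lower tail.} Restricting to paths that avoid the diagonal cannot handle an endpoint on the diagonal.
\begin{itemize}
\item Moving that endpoint abruptly to distance $w$ from the diagonal costs order $w$ in expectation. At $w\asymp n^{2/3}$, the smallest width at which a constrained passage time has an $O(n^{1/3})$ penalty, this cost is $\gg n^{1/3}$.
\item Moving it gradually, or splitting through intermediate points, just produces another passage time with a diagonal endpoint, so the argument never closes.
\item Even in the bulk, a restricted passage time does not inherit the full-space lower tail. You need constrained estimates as in \cite{ganguly2023optimal}, or a transversal-fluctuation bound whose tail is limited by the distance to the boundary.
\end{itemize}
This is not a corner case. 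In the paper, the diagonal lower tail of $X_\infty$ is the input for the off-diagonal bounds of Theorem~\ref{T:half-space-full-tails}, including the upper tail. The paper gets it from the Barraquand--Wang identity (Corollary~\ref{C:elpp-result}), which turns half-space point-to-line LPP into full-space LPP with an $\Exp(\alpha)$ column. It then passes from point-to-line to point-to-point via the $X_{\mathsf{low}}/X_{\mathsf{up}}$ decomposition, FKG, and Proposition~\ref{P:constrained-estimate}.

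\textbf{Upper tail.} The half-space stationary data is not of product form in general. On $j\ge 0$, $Z^+$ is a two-line LPP functional, which is a random walk when $\alpha=1/2+\beta$ (Proposition~\ref{P:burke-exponential}) but not otherwise. So there is no off-the-shelf Burke representation of the stationary passage time as a sum of independent exponentials, and the Bal\'azs--Cator--Sepp\"al\"ainen argument does not transfer.

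Worse, the exit-point control you would need is, in the paper (Lemma~\ref{L:location-bd}), a \emph{consequence} of two one-point bounds: the upper-tail profile bound (Lemma~\ref{L:Xalpha-bd}) and the diagonal lower tail (Corollary~\ref{C:symmetric-version}). Using it to prove the upper tail is circular unless you supply an independent half-space exit-point bound.

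The paper instead bounds point-to-point by point-to-line. By the same identity, the latter is full-space LPP from a random-walk column, controlled by Ledoux--Rider, the parabolic profile bound (Corollary~\ref{C:parabolic-profile}) and random-walk estimates. This also yields the $\eps^{1/2}/(1/2-\alpha)$ crossover.

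Your spatial-modulus step is the paper's (Proposition~\ref{P:two-point-bd}). However, it depends on both one-point bounds through Lemma~\ref{L:location-bd}, so it cannot be carried out before them.
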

	
	\subsubsection{Random geometry and geodesic convergence}
	\label{ssec:geodesic-cvgence}
	As in \cite{DOV, dauvergne2021scaling}, uniform-on-compact convergence of a model to the directed landscape can be used to show convergence of geodesics. As in the full-space directed landscape, geodesics in the half-space directed landscape $\cL^\rho$ exist because of the metric composition law, recorded as Proposition \ref{P:only-easy-consequences}.3 in the sequel.
	\begin{prop}   \label{thm:dlcomp}
		For each $\rho\in\R \cup \{-\infty\}$, almost surely we have
		\[
		\cL^\rho(x,r;y,t) = \max_{z\ge 0} \cL^\rho(x,r;z,s) + \cL^\rho(z,s;y,t),
		\]
		for each $(x,r;y,t) \in \mathbb{H}^2_\uparrow$ and $r<s<t$.
	\end{prop}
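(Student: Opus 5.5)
We will derive the metric composition law from the prelimiting exponential LPP $\cL_n$ and pass to the limit using the convergence of Theorem \ref{thm:expconv}, together with the tail and shape estimates of Theorem \ref{thm:dlsc}. The starting point is the exact discrete composition law. For $u\le v$ in $\Z^2_\ge$ and any intermediate row $m$ with $u_2 \le m < v_2$, every up-right path from $u$ to $v$ crosses from row $m$ to row $m+1$ at a unique column $k \ge m+1$. Hence
\[
X(u;v) = \max_{k} \big[X(u;(k,m)) + X((k,m+1);v)\big],
\]
with the convention that infeasible terms are $-\infty$. After the rescaling \eqref{eq:cLn}, the deterministic centering $h_n$ is additive. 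The discrepancy from splitting at $(k,m)$ versus $(k,m+1)$ is a single weight, which is $O(\log n)$ uniformly on compacts. So we obtain $\cL_n(x,r;y,t) = \max_z [\cL_n(x,r;z,s)+\cL_n(z,s;y,t)] + o(1)$, where $z$ ranges over the grid $2^{-5/3}n^{-2/3}(\Z-\lfloor ns\rfloor)\cap[0,\infty)$, up to errors vanishing uniformly on compacts in probability.

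Next, we use Skorokhod representation to realize $\cL_n\to\cL^\rho$ almost surely, uniformly on compacts. The inequality $\cL^\rho(x,r;y,t)\ge \cL^\rho(x,r;z,s)+\cL^\rho(z,s;y,t)$ for every $z\ge0$ then follows immediately. We approximate $z$ by grid points and pass to the limit pointwise. By continuity, the inequality holds simultaneously for all parameters.

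For the reverse inequality we need compactness of the maximizer location: we must show that the prelimiting argmax $z$ stays in a bounded set with high probability, uniformly in $n$. \textbf{This is the main obstacle.} Uniform-on-compact convergence alone gives no control over $z$ far from the origin. I would first try to obtain a prelimiting analogue of the shape bound in Theorem \ref{thm:dlsc}. Specifically, I would use a one-point upper tail bound for half-space exponential LPP, which stochastically dominates full-space LPP only in the wrong direction, so it must come from the paper's tightness estimates. Combined with a union bound over a dyadic grid in $z$, this should show that $\cL_n(x,r;z,s)+\cL_n(z,s;y,t)\le -c z^2 + O(\log^2 z)$ for large $z$ with high probability. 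The parabolic decay then forces the maximizer into a compact set $[0,Z]$.

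Restricted to this compact set, the maximum over the grid converges to $\max_{z\in[0,Z]}$ of the limit, and the reverse inequality follows. This yields the equality almost surely for fixed rational $(x,r,y,t,s)$. The extension to all parameters simultaneously would come from continuity of $\cL^\rho$, using the Hölder bounds of Theorem \ref{thm:dlsc}, together with a uniform version of the localization bound over compact parameter sets, obtained from the same shape bound applied with the random constant $M$.
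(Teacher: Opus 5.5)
Your overall architecture matches the paper's: an exact discrete composition law, passage to the limit at fixed points with localization of the argmax as the crux (this is Proposition \ref{P:properties-elpp}.2), and then the upgrade to all $(x,r;y,t)$ and $s$ simultaneously via continuity and the shape bound, as in \cite[Lemma 10.8]{DOV} (Proposition \ref{P:only-easy-consequences}.3). The gap is in how you localize the argmax.

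Write $S_n(z) = \cL_n(x,r;z,s)+\cL_n(z,s;y,t)$. One-point upper tails plus a union bound control $S_n$ only at the grid points. The prelimiting maximizer, however, can be any of the $\asymp n^{2/3}$ lattice sites per unit of $z$. If you union bound over all of them, the one-point tail of order $e^{-cz^3}$ at distance $z$ gives a total of order $n^{2/3}e^{-cZ^3}$. This only localizes the argmax to $z \lesssim (\log n)^{1/3}$, not to a compact set uniformly in $n$. If instead you use a coarse grid, nothing controls $S_n$ between grid points. The limiting shape bound in Theorem \ref{thm:dlsc} that you are imitating is itself built from a modulus of continuity, not from one-point bounds alone. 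So you need a maximal inequality in the prelimit.

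The paper supplies this through Lemma \ref{L:Xalpha-bd}, a parabolic profile bound that holds for the whole profile at once. Its proof has two ingredients.
\begin{itemize}
\item The path-crossing inequality of Corollary \ref{C:exp-quadrangle} shows that the excess of $X_\alpha$ over the zero-boundary field $X_\infty$ is maximized at the boundary starting point. So the boundary enters only through a single one-point upper tail (Theorem \ref{T:one-point-bds}). This is exactly how the paper circumvents the ``wrong direction'' domination issue you flagged.
\item $X_\infty$ is dominated by full-space LPP, whose profile is controlled by Corollary \ref{C:parabolic-profile}. That corollary is built from the point-to-segment bound in Lemma \ref{L:point-to-segment}, together with a crude $e^{-c\sqrt{n}}$ bound for the range where moderate-deviation estimates do not apply.
\end{itemize}
With this lemma, tightness of $\max_z [S_n(z)+\delta z^2]$ from above, combined with tightness of $S_n(0)$ from below, immediately gives $\delta z_n^2 \le 2K$ for the argmax $z_n$. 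You do not need the sharper $-cz^2+O(\log^2 z)$ form.

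Alternatively, you could chain the two-point bounds of Propositions \ref{P:two-point-bd} and \ref{P:temporal-est} into a blockwise modulus of continuity, and only then union bound over unit blocks in $z$. With either fix, the rest of your argument goes through as written.
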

	The metric composition law allows us to construct geodesics in $\cL^\rho$. We first define path length.
	\begin{definition}
		Fix $\rho\in\R \cup \{-\infty\}$. For a  continuous path $\pi:[s,t]\to \R_{\ge 0}$, its \emph{length in $\cL^\rho$} is
		\begin{equation}  \label{eq:defncLmet}
	\|\pi\|_{\cL^\rho}=\inf_{k\in \N} \inf_{s=t_0<t_1<\cdots < t_k=t} \sum_{i=1}^k \cL^\rho(\pi(t_{i-1}), t_{i-1}; \pi(t_i), t_i).
		\end{equation}
		A path $\pi$ is a \emph{geodesic} from $(\pi(s), s)$ to $(\pi(t), t)$ if $\|\pi\|_{\cL^\rho} = \cL^\rho(\pi(s),s; \pi(t),t)$.
	\end{definition}
	By the metric composition law, $\pi$ is a directed geodesic if the equality in \eqref{eq:defncLmet} holds before taking any infima, i.e., for any $s=t_0<t_1<\cdots < t_k=t$. As in the full-space setting, geodesics exist between any pair of points, are H\"older-$2/3^-$ continuous and are almost surely unique. 
	
	\begin{theorem}  \label{thm:geounicont}
	Fix $\rho \in \R \cup \{-\infty\}$.
	\begin{enumerate}[nosep, label=\arabic*.]
		\item Almost surely, for all $(p; q) \in \mathbb H^2_\uparrow$, there exists at least one geodesic in $\cL^\rho$ from $p$ to $q$.
		\item Almost surely, all geodesics in $\cL^\rho$ are H\"older-$(2/3-\eps)$ continuous for all $\eps > 0$.
		\item For any fixed $(p; q) \in \mathbb H^2_\uparrow$, almost surely there is a unique geodesic in $\cL^\rho$ from $p$ to $q$.
	\end{enumerate}	
\end{theorem}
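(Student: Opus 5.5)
We prove the three parts in order, following the full-space arguments of \cite{DOV, dauvergne2021scaling}. The inputs are the metric composition law (Proposition \ref{thm:dlcomp}), the continuity and shape estimates of Theorem \ref{thm:dlsc}, and the $1$:$2$:$3$ scaling \eqref{E:KPZ-scaling-intro}.

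For part 1, fix $(p;q) = (x,s;y,t)$ and first restrict to dyadic times $r \in (s,t)$. Applying Proposition \ref{thm:dlcomp} iteratively, we choose maximizers $z_r$ (say the leftmost ones) consistently on the dyadic rationals. The shape bound of Theorem \ref{thm:dlsc} forces $|z_r - z_{r'}| \lesssim |r-r'|^{2/3}$ up to log factors. Indeed, if a step $(z_r, r) \to (z_{r'}, r')$ has horizontal displacement $d \gg |r-r'|^{2/3}$, then its length is at most $-d^2/|r-r'| + M|r-r'|^{1/3}\log^2(\cdot)$. On the other hand, the two-step route passing through the straight-line interpolation point has length that is larger by at least a constant multiple of $d^2/|r-r'|$ minus similar error terms. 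This contradicts optimality.

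This chaining gives a Hölder-continuous function on the dyadics. It extends continuously to $[s,t]$ and satisfies the equality in \eqref{eq:defncLmet} along dyadic partitions. By continuity of $\cL^\rho$ (Theorem \ref{thm:dlsc}), the equality holds for all partitions, so the extension is a geodesic. Doing this simultaneously for all $(p;q)$ uses the a.s. event from Theorem \ref{thm:dlsc}, which holds on all compacts at once. The one new issue is the constraint $z \ge 0$, and it is harmless: interpolation points between nonnegative points are nonnegative, since $\mathbb{H}$ is convex.

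Part 2 follows the same local argument, now applied to an arbitrary geodesic $\pi$. Suppose $|\pi(r) - \pi(r')| = d$ with $d \ge |r-r'|^{2/3 - \eps}$ and $|r-r'|$ small. Consider the three-step chain through $r_1 < r < r' < r_2$ with $r_2 - r_1$ comparable to $r'-r$. Replacing the middle segment by a better path, which is possible because $\cL^\rho$ is nearly $-(\text{displacement})^2/\text{time}$ uniformly on compacts, strictly increases length. This contradicts the geodesic property via the composition law. The random constant $M$ has stretched-exponential tails, so the $\log^2$ losses are absorbed into the $\eps$.

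Part 3 is the main obstacle. In full space, uniqueness for fixed endpoints comes from translation/shear invariance together with a.s. uniqueness of the argmax in Proposition \ref{thm:dlcomp} at a fixed time. Half-space lacks shear invariance, so I would argue directly. It suffices to show that for fixed $(p;q)$ and each fixed rational $r \in (s,t)$, the map $z \mapsto \cL^\rho(p; z, r) + \cL^\rho(z,r;q)$ has an a.s. unique maximizer on $[0,\infty)$. Two distinct geodesics must differ at some rational time, and there both positions would be maximizers.

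By independence of time increments, the two summands are independent continuous processes in $z$. I would show that one of them, say $z \mapsto \cL^\rho(p;z,r)$, has increments absolutely continuous with respect to Brownian motion on compact subintervals of $(0,\infty)$. For the boundary point $z=0$, I would show that $\P(\text{the argmax is at } 0 \text{ and also elsewhere}) = 0$. This uses the fact that the value at $0$ has a continuous conditional law given the increments, for example by exploiting the random additive shift coming from the stationary horizon or KPZ fixed point marginals. Given these facts, the standard lemma applies: the sum of a Brownian-like process and an independent continuous process has an a.s. unique maximum. I expect establishing this local Brownian absolute continuity in half-space, presumably from the half-space KPZ fixed point description used in the characterization, to be the hardest step. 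If it is unavailable, I would fall back on the equivalent approach of proving a.s. uniqueness of geodesics for prelimits and passing to the limit, though that seems weaker.
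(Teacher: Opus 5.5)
Your parts 1 and 2 follow the paper's route: choose midpoint maximizers recursively on dyadic times via metric composition, control them with the shape bound, extend continuously, and note that every geodesic arises from this construction. One correction for part 2: a single-scale three-point comparison does not rule out a path that moves fast over all of $[r_1,r_2]$, because the convexity penalty $\sum_i h_i(v_i-\bar v)^2$ is then small. What the shape bound actually controls is the deviation $|\pi(r)-\tfrac12(\pi(r-\ep_k)+\pi(r+\ep_k))|$ at every dyadic level $k$, and summing over levels gives the H\"older bound.

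Part 3 has a genuine gap.
\begin{itemize}
\item Its key input is Brownian absolute continuity of $z\mapsto\cL^\rho(p;z,r)$ on compact subsets of $(0,\infty)$. This is not available from anything in hand, and the paper says explicitly that it cannot appeal to it. The half-space KPZ fixed point is known only through Zhang's formulas and the variational characterization, and neither gives such regularity.
\item Your boundary step does not work. Adding a common random constant to the profile leaves its set of maximizers unchanged. Moreover, $\cL^\rho(p;0,r)$ is determined by continuity from the profile on $(0,\infty)$, so there is no extra randomness at $0$ to exploit.
\item The fallback also fails. Uniqueness of prelimiting geodesics is not preserved under uniform convergence; the geodesic convergence theorem needs uniqueness in the limit as a hypothesis, not a conclusion.
\end{itemize}

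The missing idea is a local resampling comparison with full space. Fix $r$ and take disjoint rational intervals $I=[a,b]$, $J=[c,d]$ with $0\le a<b<c<d$. It suffices to show $\cL^\rho(p;q\mid I)\ne\cL^\rho(p;q\mid J)$ almost surely, where $\cL^\rho(p;q\mid I)=\max_{z\in I}\cL^\rho(p;z,r)+\cL^\rho(z,r;q)$. Only $J$, which sits strictly inside $(0,\infty)$, needs to carry fresh randomness, so the boundary never enters.

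To produce that randomness, the paper modifies exponential LPP: it replaces the weights in the rescaled box $[c-\delta,d+\delta]\times[r-\delta,r+\delta]$, with $c-\delta>b$, by an independent i.i.d.\ full-space field. It then takes a joint limit $(\cK,\cM,\cL)$, where $\cK$ comes from the original half-space weights, $\cM$ from the modified ones, and $\cL$ is a full-space landscape. Localization of geodesics over the window $[r-\eps,r+\eps]$ shows that, with probability tending to $1$ as $\eps\to0$:
\begin{itemize}
\item $\cM(p;q\mid I)$ is computed from $\cK$ on the strip and $\cM$ off it, so it is measurable with respect to $\cF=\sigma(\cK,\ \cM\text{ off the strip})$;
\item $\cM(p;q\mid J)=\max_{z\in J}f\diamond\cL(\cdot,r-\eps;z,r)+\cL(z,r;\cdot,r+\eps)\diamond g$, with $f,g$ $\cF$-measurable and $\cL$ on the strip independent of $\cF$.
\end{itemize}
Full-space Brownian absolute continuity of KPZ fixed point profiles \cite{sarkar2020brownian} then makes the second quantity conditionally non-atomic given $\cF$. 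Hence the two quantities are almost surely distinct.
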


Theorem \ref{thm:geounicont} is the half-space counterpart of \cite[Theorem 1.7]{DOV}. It is proven in Proposition \ref{P:geos-exist} and Proposition \ref{P:geo-unique}. The arguments for geodesic existence and continuity are essentially as in \cite{DOV}. Uniqueness is slightly harder in the half-space setting since we cannot directly appeal to Brownian absolute continuity of spatial profiles and requires a new argument. Given existence and uniqueness of geodesics, we can argue along the lines of \cite{DOV, dauvergne2021scaling} to prove convergence of geodesics in all models. After some initial setup, we will treat Poisson-avoiding metrics and exponential LPP together for the purposes of the next theorem. Here for a function $f:[s, t] \to \R$ we write $\mathfrak{g} f := \{(f(r), r) : r \in [s, t]\}$.

We start with exponential LPP, recalling \eqref{eq:cLn} and the scaling $(x, s)_n = (\lfloor ns + 2^{5/3} n^{2/3} x \rfloor, \lfloor ns \rfloor)$. We say that $f:[s, t] \to \R$ is a geodesic for $\cL_n$ if
$
(\mathfrak{g} f)_n \subset \pi, 
$
where $\pi$ is a geodesic in the environment $X$ from $(f(s), s)_n$ to $(f(t), t)_n$, where we view up-right lattice paths as subsets of $\Z^2$. 

Moving to Poisson-avoiding metrics, we say that $f:[s, t] \to \R$ is a geodesic for $H_\eps$ if 
$\mathfrak{g} \tilde f = (\mathfrak{g} f)_\eps$ for a geodesic $\tilde f$ in $H$ from $(f(s), s)_\eps$ to $(f(t), t)_\eps$.

	\begin{theorem}  \label{thm:expLPPgeoconv}
Fix $\rho \in \R \cup \{-\infty\}$. Either let $\cM_n$ equal $\cL_n$ in Theorem \ref{thm:expconv}, or else let $\cM_n$ equal $H_{\ep_n}$ in Theorem \ref{thm:colorTASEPconv} for a sequence $\ep_n \to 0$. Consider a coupling where $\cM_n \to \cL^\rho$ almost surely, uniformly on compact sets. Then there exists a set $\Omega$ of probability $1$ such that on $\Omega$, the following assertions hold.
\begin{enumerate}[nosep]
	\item Consider a sequence $(x_n, s; y_n, t) \to (x, s; y, t) \in \mathbb H^2_\uparrow$ and a sequence of geodesics $\pi_n$ in $\cM_n$ from $(x_n, s)$ to $(y_n, t)$. Then $\pi_n$ is precompact in the uniform topology and any subsequential limit of $\pi_n$ is a geodesic in $\cL^\rho$ from $(x, s)$ to $(y, t)$.
	\item If $\cL^\rho$ contains a unique geodesic $\pi$ from $(x, s)$ to $(y, t)$, then $\pi_n \to\pi$.
\end{enumerate}
	\end{theorem}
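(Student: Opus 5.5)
We follow the standard argument of \cite{DOV, dauvergne2021scaling}: a priori transversal fluctuation bounds give tightness, and the uniform convergence $\cM_n \to \cL^\rho$ passes the geodesic property to the limit. Fix the coupling and let $\Omega$ be the event on which three things hold. First, $\cM_n \to \cL^\rho$ uniformly on compacts. Second, $\cL^\rho$ is continuous and satisfies the shape bound of Theorem \ref{thm:dlsc} with a finite random constant $M$. Third, the prelimiting models satisfy a uniform modulus-of-continuity bound for geodesics: for every compact set of endpoints there are a random $C$ and a deterministic $\delta_n \to 0$ such that, for all large $n$, every geodesic $\pi_n$ in $\cM_n$ with endpoints in the compact satisfies $|\pi_n(a)-\pi_n(b)| \le C|a-b|^{2/3}\log^{c}(2/|a-b|) + \delta_n$.

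\textbf{Step 1: transversal fluctuation bounds.} We prove the third property first. For exponential LPP, the estimate that a geodesic over time $\tau$ fluctuates by more than $k\tau^{2/3}$ with probability at most $e^{-ck}$, uniformly in the boundary parameter in the regimes (i)/(ii), follows from the tail bounds used for tightness in the proof of Theorem \ref{thm:expconv}. Combine these with the upper and lower tails of $\cL_n$, uniform in $n$ (the prelimiting analogue of Theorem \ref{thm:dlsc}), and with the ordering of geodesics; a union bound over a dyadic grid of endpoints and times, followed by Borel--Cantelli along dyadic scales, then gives the almost-sure modulus. The Poisson-avoiding metric is handled the same way, using the corresponding prelimiting estimates.

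The boundary causes no difficulty here. A geodesic pinned near $x=0$ only has its transversal motion restricted, and the deviation event can still be written as a passage-time deficit, which is controlled by the upper tail of $\cM_n$ plus the parabolic shape estimate. If uniform-in-$n$ tail bounds are not directly available, we can instead argue deterministically on $\Omega$. Suppose $\pi_n$ deviates by order $\eta$ over a time window of length $\tau$ with $\eta \gg \tau^{2/3}$. Splitting at the window endpoints via the composition law for $\cM_n$, and using uniform convergence to $\cL^\rho$ together with the parabola $-(x-y)^2/s$ in Theorem \ref{thm:dlsc}, this deviation would cost order $\eta^2/\tau$, which exceeds the error $o(1)+M\tau^{1/3}\log^2$. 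This soft version only needs equicontinuity at fixed macroscopic scales, and that is all Arzel\`a--Ascoli requires. We expect this step to be the main obstacle, since it needs uniform control near the boundary and, in the LPP case, control of the lattice discretization.

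\textbf{Step 2: precompactness and identification of limits.} Part 1 of the theorem then follows. By Step 1, the paths $\pi_n$ are uniformly bounded, since their endpoints converge, and equicontinuous up to $\delta_n \to 0$. A variant of Arzel\`a--Ascoli therefore gives precompactness in the uniform topology, and any subsequential limit $\pi$ is continuous. Fix a partition $s=t_0<\dots<t_k=t$. Since $\pi_n$ is a geodesic in $\cM_n$, we have $\cM_n(x_n,s;y_n,t)=\sum_i \cM_n(\pi_n(t_{i-1}),t_{i-1};\pi_n(t_i),t_i)$ up to discretization errors that vanish as $n\to\infty$. Uniform convergence on compacts and continuity of $\cL^\rho$ let us pass to the limit, which gives $\cL^\rho(x,s;y,t)=\sum_i \cL^\rho(\pi(t_{i-1}),t_{i-1};\pi(t_i),t_i)$ for every partition. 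By the remark following \eqref{eq:defncLmet}, $\pi$ is therefore a geodesic.

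\textbf{Step 3: convergence to a unique geodesic.} Part 2 is immediate from Part 1. If $\cL^\rho$ has a unique geodesic $\pi$ from $(x,s)$ to $(y,t)$, then every subsequence of $\pi_n$ has a further subsequence converging to $\pi$, so $\pi_n\to\pi$. Note that $\Omega$ does not depend on the endpoint sequence, as the statement requires, because it is defined through uniform convergence and the uniform modulus over compact sets of endpoints.
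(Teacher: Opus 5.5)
Your Steps 2 and 3 are fine. The gap is in Step 1, and it is the real content of the theorem.

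\textbf{The main route cannot give property (3).} Suppose you grant an $n$-uniform half-space transversal fluctuation estimate; the paper does not prove one, so you would need to. A failure probability that is bounded uniformly in $n$, but does not decay in $n$, is not summable over $n$. The coupling is an arbitrary Skorokhod coupling, so there is no independence across $n$ to fall back on. Borel--Cantelli over dyadic scales therefore controls each fixed $n$. It does not give ``for all large $n$, with one random $C$'', and that is what a single $\Omega$ serving every sequence $\pi_n$ requires. The argument has to be deterministic on the event $\{\cM_n\to\cL^\rho\}$, as in your fallback.

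\textbf{The fallback omits the one nontrivial ingredient.} Uniform convergence on compact subsets of $\mathbb H^2_\uparrow$ says nothing about $\cM_n(p;q)$ once the time gap of $(p;q)$ drops below any fixed threshold. Equicontinuity is exactly the claim that $\pi_n$ cannot move a distance $\eta$ over windows of length $h\to 0$. Here $h=h_n\to0$ is possible, and even $h=0$ for a multi-unit jump of a Poisson-avoiding path. So your claim that such a deviation ``would cost order $\eta^2/\tau$'' is unjustified. Away from $s,t$ you can rescue it by composing out to fixed grid times at distance comparable to $\tau$ on both sides. Near the endpoints $s,t$ there is no room to do that. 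You also need $\pi_n$ to stay in a compact set before uniform convergence can be applied at all.

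The paper supplies both via Lemma \ref{L:limsup-princ}, following \cite[Lemma 13.3]{DOV}. By the triangle inequality,
\[
\cM_n(x,r;y,r+h)\le \cM_n(x,r-\eps;y,r+h)-\cM_n(x,r-\eps;x,r).
\]
Both terms on the right have time gap at least $\eps$. So uniform convergence and the shape bound for $\cL^\rho$ give $\cM_n(x,r;y,r+h)\le C_b-(x-y)^2/(h+\eps)$ for all $h>0$ and all points in a bounded set, once $n\ge n_\eps$.

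With this bound, Lemma \ref{L:abstract-convergence} proceeds in two steps.
\begin{enumerate}
\item It bounds $\sup\pi_n$: if a continuous $\pi_n$ crosses a high level $m$ at some time $r$, the composition law through $(m,r)$ must reproduce the full length. That passes to $\cL^\rho$, even for $r$ near $s,t$, and the shape bound rules it out.
\item It gets precompactness from Hausdorff limits of graphs. A vertical segment in a limiting graph would force $\cM_n$ between two separated points over a vanishing time window, and the displayed bound sends that to $-\infty$.
\end{enumerate}
No explicit modulus is needed, and no probabilistic input beyond the a.s.\ convergence and the shape bound for $\cL^\rho$. In particular the boundary, which you flagged as the main obstacle, plays no role; the obstacle is short time windows. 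Discontinuous Poisson-avoiding geodesics are handled separately, by approximating them with continuous ones.
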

In the above theorem, we could also allow the time coordinates to vary with $n$. Doing so requires replacing uniform convergence of paths with Hausdorff convergence of graphs.

	\subsection{Broader ingredients: stationary horizon, tightness and characterization}
	
	As discussed at the beginning of the introduction, the proof of Theorem \ref{thm:expconv} consists of three key steps: constructing joint stationary measures (i.e., horizons), proving tightness bounds, and a characterization theorem. Each of these steps yields results which are of interest in their own right. We discuss some highlights here, referring the reader further down the paper for complete statements. 
	
	\subsubsection{Half-space stationary horizon}
	
	The first step in the proof of Theorem \ref{thm:expconv} is the construction of joint stationary measures for exponential LPP in half-space (i.e., the half-space exponential horizon). The proof also works for the log-gamma polymer and geometric LPP, and by taking degenerations we can recover joint stationary measures for the KPZ equation and the directed landscape. 
	
	For the purposes of the introduction, we focus on the $1$:$2$:$3$ scaling limit for all of these models, which gives joint stationary measures for the half-space directed landscape. Let $\mathcal D(\alpha)$ be the set of functions $F:[\alpha, \infty) \times \R_{\ge 0} \to \R$ such that
	\begin{itemize}[nosep]
		\item $F(\cdot, 0) = 0$ and $F(\alpha, \cdot)$ is continuous, and
		\item the function
		$$
		\mu_F([\lambda_1 , \lambda_2] \times [x_1, x_2]) = F(\lambda_1, x_1)+F(\lambda_2, x_2)-F(\lambda_2, x_1)-F(\lambda_1, x_2)
		$$
		defines a $\sigma$-finite Borel measure on $[\alpha, \infty) \times \R_{\ge 0}$.
	\end{itemize}
	The space $\mathcal D(\alpha)$ is a Polish space, under the topology of uniform-on-compact convergence on $F(\alpha, \cdot)$ and vague convergence of $\mu_F$.

	\begin{theorem}  \label{thm:stahri}
		Fix $\rho \in \R \cup \{-\infty\}$, and let $\cL^\rho$ be the half-space directed landscape of parameter $\rho$. Then there exists a random function $\cH^\rho \in \mathcal D(2\rho \vee 0)$ with explicit marginals given in Proposition \ref{P:half-space-horizon-marginals} below, such that if we consider 
		\[
		\cH_t^\rho(\lambda, x) := \sup_{y\ge 0} \cH^\rho(\lambda, y) + \cL^\rho(y,0;x,t),
		\]
		then the function$(\lambda, x) \mapsto \cH_t^\rho(\lambda, x) - \cH_t^\rho(\lambda, 0)$ is equal in law to $\cH^\rho$. We call $\cH^\rho$ the \emph{half-space stationary horizon} with (boundary) parameter $\rho$.
	\end{theorem}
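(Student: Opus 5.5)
The plan is to build $\cH^\rho$ as a scaling limit of joint stationary measures for half-space exponential LPP and pass stationarity to the limit using Theorem \ref{thm:expconv}.

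\textbf{Step 1 (discrete joint stationary measures).} For half-space exponential LPP with boundary parameter $\alpha$, I would construct a coupled family of increment processes $\{I^\lambda\}_{\lambda \ge \alpha'}$, indexed by a density parameter, such that each marginal is stationary for the half-space LPP dynamics. By this I mean the map sending an initial profile $h_0$ on the row $\{j = 0\}$ to $x\mapsto \max_y h_0(y) + X((y,0);(x,m))$, recentred at the boundary. The family should be monotone in $\lambda$, and the joint law should be preserved when all profiles are evolved with the same environment. I would extend the queueing/Burke-type argument of \cite{BC}: first verify that a single-parameter stationary measure (a random walk with an appropriate boundary-dependent drift) is preserved one step at a time, then build the multi-parameter coupling by iterating queueing maps (multiclass construction). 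The key identity is a new half-space symmetry, namely an interchange of the boundary weight with the bulk weights along one row, which shows that the multiclass output is again of the same form. I would verify it one lattice row at a time, so joint stationarity for all times follows by induction.

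\textbf{Step 2 (scaling limit of the horizon).} Set $\lambda = \frac12 + 2^{-4/3}\lambda' n^{-1/3}$ and rescale space as in \eqref{eq:cLn}. The rescaled multiclass profiles $F_n(\lambda',x)$ are random walks with small drifts under monotone coupling, so I would prove tightness in $\mathcal D(2\rho\vee 0)$. For $F_n(\alpha,\cdot)$ this uses Donsker with drift. For $\mu_{F_n}$ it uses monotonicity in $\lambda'$, which makes $\mu_{F_n}$ nonnegative, so vague tightness follows from mass bounds on compacts. Identifying limits via finite-dimensional marginals, given by the explicit formulas of Proposition \ref{P:half-space-horizon-marginals}, yields a unique limit $\cH^\rho$. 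The lower endpoint $2\rho\vee 0$ reflects that stationary measures only exist for densities above the boundary threshold.

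\textbf{Step 3 (passing stationarity to the limit).} Couple $\cL_n\to\cL^\rho$ and $F_n\to\cH^\rho$ almost surely. The coupling uses independence of the initial data from the environment, so $\cH^\rho$ is independent of $\cL^\rho(\cdot,0;\cdot,t)$. Discrete stationarity gives $\sup_y F_n(\lambda,y)+\cL_n(y,0;x,t)$, minus its value at $x=0$, $\eqd F_n$ jointly in $\lambda$, and I would take $n\to\infty$. The main obstacle is the noncompact supremum over $y\ge 0$. I need the maximizer to be tight uniformly in $n$ and $\lambda$ on compacts, so the supremum can be restricted to $[0,K]$ with high probability. To get this I would use a uniform parabolic bound for $\cL_n$, namely a prelimit version of the shape estimate in Theorem \ref{thm:dlsc} (obtained in the tightness part of the paper), together with the fact that $F_n(\lambda,y)$ grows at most linearly in $y$ with high probability. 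The combination $\lambda y - y^2/t$ then forces the maximizer to stay bounded. A second subtlety is that vague convergence in $\lambda$ only controls continuity points. I would handle this by working with countably many $\lambda$ outside the atoms of the law and extending by monotonicity and right-continuity. With this localization, uniform-on-compact convergence gives the claimed equality in law.
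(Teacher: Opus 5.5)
Your Step 3 is essentially the paper's argument. Proposition \ref{P:properties-elpp}.10 passes the discrete stationarity to the limit for finitely many slopes, localizes the argmax using random walk growth bounds and Lemma \ref{L:Xalpha-bd}, and gets right-continuity from monotonicity of the argmax. Uniqueness of the limit then transfers the statement to $\cL^\rho$.

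\textbf{Step 1 is wrong as stated, and the error carries into Step 2.} In half-space, the single-slope stationary measures are not random walks, except at the one slope matched to the boundary weight (the slope $\lambda=2|\rho|$ in Lemma \ref{lem:Mstat}). For general slopes, the measure of \cite{BC} is a two-line last passage value. An example is the process $Z^+$ of Section \ref{SS:two-point-bounds}. By Proposition \ref{P:burke-exponential} it becomes a random walk only after the corner weight $Y(2,2)\sim\Exp(\beta+\alpha-1/2)$ is replaced by an $\Exp(2\beta)$ variable, and that swap changes the law of its first increment. This has two consequences:
\begin{itemize}
\item A Burke-type check that ``a random walk with boundary-dependent drift is preserved'' fails for generic slopes.
\item Step 2's claim that the rescaled profiles are random walks with small drifts, handled by Donsker, is false. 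If it were true, the limits would be Brownian. They could then not have the exponential--Brownian last passage marginals of Proposition \ref{P:half-space-horizon-marginals}, which you invoke to identify the limit.
\end{itemize}

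\textbf{The multiclass coupling is named but not constructed.} You would need a half-space analogue of the queueing constructions of \cite{FPAMJB,FSj}, and the sketch does not supply one. An unspecified ``interchange of the boundary weight with the bulk weights along one row'' that makes the output ``of the same form'' is not a checkable identity. Nothing in the sketch actually yields joint stationarity, consistency across slope sets, or monotonicity in the slope.

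The missing idea is the paper's construction:
\begin{itemize}
\item Pass to the extended symmetric environment with the one-crossing rule \eqref{E:diagonal-condition}.
\item Realize the $k$ coupled profiles as $X(2k+1-i,i;[j,2k]_{\textsf{L}})-X(2k+1-i,i;2k,2k)$ across $2k$ special lines with paired parameters $\gamma_{2k+1-i}=\eps-\gamma_i$. The starting weight is then $\Exp(\eps)$ and cancels in the difference as $\eps\to 0$.
\item Prove stationarity (Lemmas \ref{lem:sta} and \ref{lem:staLPP}) and consistency (Lemma \ref{lem:consis}) by repeatedly applying the parameter-permutation invariance of Lemma \ref{lem:invz} to move a bulk line through the special lines. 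That lemma rests on the two-line RSK isometry and the invariance of RSK output under permuting parameters.
\item Obtain monotonicity in the slope from a path-crossing argument on this representation (Theorem \ref{T:lpp-horizons}).
\end{itemize}
The 1:2:3 scaling limit of this representation is exactly what produces the marginals in Proposition \ref{P:half-space-horizon-marginals}.
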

	From the marginals in Proposition \ref{P:half-space-horizon-marginals}, the process $\cH^\rho$ satisfies the following slope asymptotics:
	$$
	\lim_{x \to \infty} \frac{1}{x}\cH^\rho(\lambda, x) = \begin{cases}
		\lambda, \qquad &\lambda > 2\rho \vee 0, \\
		-(2\rho \vee 0), \qquad &\lambda = 2\rho\vee 0.
	\end{cases}
	$$
	We expect that the law of $\cH^\rho$ is uniquely determined by stationarity and the slope condition above, and that it can alternately be described in terms of Busemann functions. Finally, the description for $\cH^\rho$ is quite tractable. If we want to describe the marginal of $\cH^\rho$ restricted to $k$ values of $\lambda_1, \dots , \lambda_k$, then we can do this by combining a last passage problem across $2k$ independent Brownian motions on $\R_{\ge 0}$ of drifts $\pm \lambda_1, \dots, \pm \lambda_k$ with a last passage problem involving an array of $k^2$ independent exponential random variables (see the model of half-space exponential-Brownian LPP in Section \ref{ss:hfsh}). When $k = 1$, this description matches a description from \cite{BC}. The comparable marginal in full-space is similar, but simpler. It can be described using either a last passage process with $k$ independent Brownian motions on $\R$ of drift $\lambda_1, \dots, \lambda_k$ \cite{BuS, BSS24}, or else $k$ independent Brownian motions on $\R_{\ge 0}$ and an array of ${k \choose 2}$ exponential random variables \cite{bates2025permutation}.
	
	The explicit construction of the half-space stationary horizon and constructions of similar horizons for other models are given in Section \ref{sec:horizons}. Our constructions actually give more general versions which apply to both symmetric and half-space models. Finally, we remark that while the statement of Theorem \ref{thm:stahri} uses the half-space directed landscape $\cL^\rho$ as an input, in the sequel we will build the half-space stationary horizon before constructing the half-space directed landscape. We construct $\cH^\rho$ through its marginals (Section \ref{ss:hfsh}) by rescaling the exponential horizon, then after proving tightness for exponential LPP, state a version of Theorem \ref{thm:stahri} for subsequential limits (Proposition \ref{P:properties-elpp}.10).

	\subsubsection{Moderate deviation bounds}
	The second step in the proof of Theorem \ref{thm:expconv} is to establish tightness in the uniform-on-compact topology. The main task here is to establish one-point and two-point moderate deviation tail bounds for exponential LPP. The bounds we prove are optimal, up to constants, in the parameter ranges that are relevant for taking a scaling limit. However, they also extend far beyond these ranges (sometimes suboptimally), which may be of use in studying prelimiting models directly.  
    
    For the purposes of the introduction, we only state the one-point tail bounds on the diagonal, as this is where the situation differs most dramatically from the full-space setting. The reader should compare these with the full-space bounds of Ledoux and Rider, see Theorem \ref{T:W-tail-bounds} below.
    We refer the reader to Theorem \ref{T:half-space-full-tails} for tail bounds off of the diagonal, Proposition \ref{P:two-point-bd} for a two-point spatial tail bound, and Proposition \ref{P:temporal-est} for a two-point temporal tail bound. The limiting versions of all these bounds are contained in Proposition \ref{P:properties-elpp}.

	Recall the model of half-space exponential LPP from \eqref{E:ELPP-first-def}.
	Define
	\begin{equation}  \label{eq:mualphan}
	\mu_{\alpha}(n) = \begin{cases}
		4 n, \qquad &\alpha \ge 1/2. \\
		\frac{n}{\alpha(1 - \alpha)}, \qquad &\alpha < 1/2,
	\end{cases}
	\end{equation}
	which gives the leading order for the passage time $X(1, 1; n, n)$
	(see e.g., \cite{BBCS1}).
\begin{theorem}
\label{T:one-point-bds}
For any $\alpha_0>0$, there exists a constant $c>0$, such that whenever the boundary parameter satisfies $\alpha\ge \alpha_0$ we have the following bounds. 
\begin{enumerate}
    \item (Upper tail) For all $n \in \N, \eps \in (0, n^{-1/3}]$ we have
$$
\P(X(1, 1; n, n) \ge \mu_\alpha(n) + \eps n) \le 2\exp\left(-c \eps^{3/2} n \left(1 \wedge \frac{\eps^{1/2}}{(1/2 - \alpha)\vee 0}\right) \right).
$$
    \item (Shallow lower tail when $\alpha < 1/2$) For all $n \in \N, \eps \in (0, 1]$ such that $\alpha \le 1/2 - n^{-1/3}$ and $\eps n \le \mu_\alpha(n) - 4n$, we have
\begin{align}
\label{E:X-type-above-4}
	\P\Big(X(1, 1; n, n) \le \mu_\alpha(n) - \eps n\Big) \le 2\exp\left(- \frac{c\eps^2 n}{1/2 - \alpha} \right).
\end{align}
\item (Full lower tail for $\alpha \ge 1/2$, deep lower tail for $\alpha < 1/2$) For all $n \in \N, \eps \in (0, 1]$ we have
\begin{align}
\label{E:X-type}
	\P\Big(X(1, 1; n, n) \le 4n - \eps n\Big) \le 2\exp\left(- c \eps^3 n^2\right).
\end{align}	
\end{enumerate}
\end{theorem}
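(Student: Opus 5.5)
We will obtain Theorem \ref{T:one-point-bds} by comparing half-space LPP to two simpler objects: the full-space model, and a boundary random walk. The inputs are the full-space Ledoux--Rider bounds (Theorem \ref{T:W-tail-bounds}) for the bulk, an exact identity from \cite{BW} relating half-space point-to-point passage times to full-space/stationary quantities, and the half-space stationary (exponential horizon) measures we construct, which give explicit increment-stationary versions of the model.

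\textbf{Full lower tail (part 3).} Restricting the maximum to paths that avoid the diagonal except at their endpoints gives
\[
X(1,1;n,n)\ge X(1,1)+X(n,n)+\tilde W,
\]
where $\tilde W$ is a full-space passage time across the strictly-below-diagonal triangle. By a symmetrization/monotonicity argument, the triangle LPP stochastically dominates (up to constant shifts) a full-space $n'\times n'$ square LPP with $n'=n-O(1)$, or a concatenation of a few boxes placed inside the triangle whose shape constants sum to $4n$ to first order. The Ledoux--Rider lower tail $\exp(-c\eps^3 n^2)$ for each box then transfers by a union bound. The boundary weights are nonnegative, so this argument is uniform in $\alpha$.

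\textbf{Upper tail (part 1).} We introduce a stationary half-space model: boundary weights on the bottom row with parameter chosen to match a slope $\lambda$, as in the $k=1$ horizon (the \cite{BC} description). In this model the increments along the boundary are sums of independent exponentials. Then:
\begin{itemize}
\item Monotone coupling bounds $X(1,1;n,n)$ by the stationary last passage value $G_\lambda(n,n)$, minus the contribution of the boundary exit.
\item $G_\lambda(n,n)$ is a sum of i.i.d.\ increments, so its exponential moment is explicit.
\item A Chernoff bound, optimized over $\lambda$ near the critical slope, yields $\exp(-c\eps^{3/2}n)$ when $\alpha\ge 1/2$. This mirrors the Balázs--Cator--Seppäläinen style proof of full-space cube-root tails.
\item When $\alpha<1/2$, the admissible $\lambda$ range is truncated by $\alpha$. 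Optimizing subject to this constraint yields the extra factor $\eps^{1/2}/(1/2-\alpha)$, i.e.\ a Gaussian-type exponent $\eps^2 n/(1/2-\alpha)$ once $\eps\lesssim(1/2-\alpha)^2$.
\end{itemize}
The constraint $\alpha\ge\alpha_0$ keeps all the constants uniform.

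\textbf{Shallow lower tail (part 2).} When $\alpha<1/2$ the geodesic sticks to the boundary. We lower bound $X(1,1;n,n)$ by a path that travels along the diagonal for a time $\theta n$ and then goes through the bulk. Equivalently, via \cite{BW}, we use the identity expressing the half-space value through a random walk with boundary-exit increments. The diagonal sum of $\Exp(\alpha)$ weights combined with nearby bulk weights has mean $\mu_\alpha(n)$ and variance of order $n/(1/2-\alpha)$ per unit on-scale. A Bernstein bound for sums of independent exponentials gives $\exp(-c\eps^2 n/(1/2-\alpha))$, valid exactly in the regime $\eps n\le\mu_\alpha(n)-4n$, where the deviation stays above the bulk value. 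The precise route is to use stationarity in the horizon with $\lambda$ at the boundary value $2\alpha$-type parameter: the passage time then equals a sum of i.i.d.\ increments minus a correction, and we bound that correction by part 3.

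\textbf{Main obstacle.} The hardest step is part 1 in the near-critical window $1/2-\alpha\approx n^{-1/3}$. There the Chernoff optimization must interpolate correctly between the cube-root and Gaussian regimes. Controlling the exit point of the stationary geodesic from the boundary needs care, because the stationary comparison is only an upper bound once the boundary exit is handled. To handle the exit point we first try a separate exit-point tail estimate, proved by the same stationarity argument with a perturbed $\lambda$.
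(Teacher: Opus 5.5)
\textbf{Part 3 does not go through, and this is a genuine gap.} No symmetrization or monotonicity argument shows that the below-diagonal triangle dominates a full-space $n'\times n'$ square with $n'=n-O(1)$.
\begin{itemize}
\item An axis-parallel box contained in $\{i>j\}$ has height smaller than $i-j$ at its lower-left corner. So no square of side more than about $n/2$ fits in the triangle.
\item The comparison you actually have goes the wrong way. In the natural coupling (shared below-diagonal weights, zero diagonal), $X(1,1;n,n)\le Y(1,1;n,n)$.
\end{itemize}
The fallback of concatenating boxes loses in the exponent. A path from $(1,1)$ to $(n,n)$ through a point with $i-j=D$ has total shape at most $4n-cD^2/n$, and a box of side $\ell$ forces $D\ge \ell$. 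Hence every box has $\ell\lesssim\sqrt{\eps}\,n$; in particular a bounded number of boxes cannot have shapes summing to $4n-o(n)$. Since the Ledoux--Rider lower tail is sharp, $n/\ell$ independent square boxes of side $\ell$ are each $\eps\ell$ below their shape with probability at least $\exp(-C\eps^3 n\ell)\ge \exp(-C\eps^{7/2}n^2)$. So bounding $X$ below by such a concatenation can never give $\exp(-c\eps^3n^2)$. Your union bound over boxes gives only $\exp(-c\eps^3\ell^2)$.

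\textbf{Part 2 has the analogous problem.} Consider a point-to-point path that follows the diagonal (alternating $(i,i)\to(i+1,i)\to(i+1,i+1)$, mean $1/\alpha+1$ per step) for time $\theta n$ and then crosses the bulk. Its mean is at most $\max\{4n,\,n/\alpha+n\}$. For fixed $\alpha<1/2$ this is below $\mu_\alpha(n)=n/(\alpha(1-\alpha))$ by order $n$, so it cannot reach $\mu_\alpha(n)-\eps n$. That decomposition attains $\mu_\alpha$ only in the point-to-line problem. Also, the variance behind $\exp(-c\eps^2 n/(1/2-\alpha))$ is of order $n(1/2-\alpha)$, not $n/(1/2-\alpha)$.

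\textbf{The missing idea is how \cite{BW} actually enters.} Corollary \ref{C:elpp-result} concerns the \emph{point-to-line} variable $\max_{0\le i<m}X(1,1;n+i,m-i)$, not the point-to-point one. It identifies this in law with $W(1,1;n,m)=\max_j A(j)+Y(1,j;n-1,m)$, where $A$ is an $\Exp(\alpha)$ random walk up the first column.

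The paper's part 1 is then short. It uses $X(1,1;n,n)\le \max_i X(1,1;n+i,n-i)$, Ledoux--Rider, the parabolic profile bound of Corollary \ref{C:parabolic-profile}, and random-walk bounds on $A$ over $j_0\asymp n(1/2-\alpha)$ steps. The last of these is the source of the factor $\eps^{1/2}/(1/2-\alpha)$.

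Your stationary route for part 1 is not a proof as it stands, for three reasons.
\begin{itemize}
\item The bound $X\le G_\lambda$ alone gives only Gaussian tails $\exp(-c\eps^2 n)$.
\item The whole improvement to $\eps^{3/2}$ rests on the exit-point estimate you leave open.
\item In half-space the stationary profile ($Z^+$ in Section \ref{SS:two-point-bounds}) is a two-line passage time, not a random walk; it becomes one only after changing the corner weight (Proposition \ref{P:burke-exponential}). So the explicit exponential moments you invoke are not available.
\end{itemize}

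For parts 2 and 3, the point-to-line lower tails follow from a single triangle inequality (plus the column random walk for part 2). The real work is the transfer to point-to-point.
\begin{itemize}
\item Write $X(1,1;2n+1,2n+1)\ge X(1,1;n+m,n-m)+X(n+m+1,n-m+1;2n+1,2n+1)$, a sum of two i.i.d.\ terms.
\item Choose $m$ so that $\mu_\alpha(n)-\mu_\alpha(n+m,n-m)\asymp \eps n$; for $\alpha=\infty$ this means $m\asymp\sqrt{\eps}\,n$.
\item Compare $X(1,1;n+m,n-m)$ with the point-to-line variable via a stopping-row decomposition and the FKG inequality.
\item Use the Ganguly--Hegde constrained lower tail (Proposition \ref{P:constrained-estimate}) in a parallelogram at distance $\asymp m$ from the diagonal.
\end{itemize}
Near the diagonal, the boundary is handled exactly by the point-to-line identity. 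Away from it, a parallelogram of width $\asymp\sqrt{\eps}\,n$, unlike boxes of that size, retains the exponent $\eps^3 n^2$.
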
	
\begin{remark}
The situation here is most interesting in the supercritical case when $\alpha < 1/2$. In that setting, we can see that there is a transition from Gaussian behavior in the tail to Tracy-Widom behavior as we move deeper. This reflects a transition from the tail behavior being controlled by deviations of the boundary weights in the shallow tail, and the bulk weights in the deep tails.
(See \cite[Section 3]{ESMixing} for the Gaussian tail in the supercritical case.)
\end{remark}
\begin{remark}
The upper bound on $\eps$ in part $1$ of Theorem \ref{T:one-point-bds} is artificial and should extend all the way to $\eps < 1$, at which point we transition to the large deviation regime. This comes from a minor technical constraint due to suboptimal tail bounds in full-space in certain regimes (see Theorem \ref{T:W-tail-bounds}). It can easily be rectified, but we do not pursue this here.     
\end{remark}

	Given tightness, we can take a subsequential limit $\cQ^\rho:\cH_\uparrow^2\to \R$ of $\cL_n$ from \eqref{eq:cLn}. We would like to show that any such $\cQ^\rho$ has the same law.
	For this, we rely on two key properties of $\cQ^\rho$: the joint stationary measure (i.e., the half-space stationary horizon from Theorem \ref{thm:stahri}), and half-space KPZ fixed point marginals.

	\subsubsection{Characterization using the half-space KPZ fixed point}
	
		Define the state space
	\begin{equation}
		\label{E:UC0+}
		\UC_{0,+} := \Big\{f: \R_{\ge 0} \to \R \cup \{-\infty\} : f \text{ upper semicontinuous}, f \not \equiv - \infty, \quad \limsup_{x \to \infty} \frac{f(x)}{x^2}\le 0  \Big\}
	\end{equation}
	with the topology of local hypograph convergence, see \cite[Section 3.1]{matetski2016kpz} for details. The KPZ fixed point in half-space is a Markov process on $\UC_{0,+}$, whose transition probabilities (on a dense subset of $\UC_{0,+}$) were obtained in \cite{Zhang} by taking a scaling limit of TASEP in half-space. We note that there are some minor technical issues to show that the transition probabilities from \cite{Zhang} indeed define a Feller process with the above state space. In the present paper, we handle this through a variational characterization of the transition probability from time $s$ to $t$. Taking $\cQ^\rho$ as before, for any $f\in \UC_{0,+}$, the law of the process
	\begin{equation}   \label{eq:fvari}
		y \mapsto \max_x f(x) + \cQ^\rho(x,s; y ,t) .
	\end{equation}
	matches the transition probabilities defined in \cite{Zhang}. This variational formulation follows since TASEP can equivalently be described as a process of metric balls of increasing radius in exponential LPP, which yields a discrete variational formula. We pass this discrete formula to the limit in Section \ref{ssec:kpzfp}.
	
	Now, the above variational characterization for the KPZ fixed point implies that for any two distinct limits $\cQ^\rho$, marginals of the form \eqref{eq:fvari} are equal in law.
	However, it is not clear that this implies uniqueness of the subsequential limit, as can be seen by considering discrete examples (e.g. consider different couplings of TASEP).

	We will prove (in Section \ref{sec:chara}) that if we have two processes $\cQ^\rho, \tilde \cQ^\rho$ such that both processes have half-space KPZ fixed point marginals, satisfy a triangle inequality, and have independent time increments and \textit{one} of these processes is stationary for the half-space stationary horizon, then $\cQ^\rho \eqd \tilde \cQ^\rho$. This will complete the proof of Theorem \ref{thm:expconv}. Moreover, since only one of the processes requires stationary horizon marginals, a posteriori we may state a characterization theorem without including this condition.
	\begin{theorem}   \label{thm:chawdl}
		Fix $\rho\in \R$. Consider a random function $\cM^\rho: \Q^4 \cap \mathbb{H}^2_\uparrow \to \R$ satisfying the following three conditions:
		\begin{enumerate}
			\item (Triangle inequality) For any $o = (x, s)$, $p = (y, r)$, $q = (z, t) \in \Q_{\ge 0} \times \Q$ with $s<r<t$, we have
			$\cM^\rho(o; p) + \cM^\rho(p; q) \le \cM^\rho(o; q)$.
			\item (Independent increments) For any $s_1<s_2<\cdots<s_k \in \Q$, the increments $\cM^\rho(\cdot, s_i; \cdot, s_{i+1}):\Q_{\ge 0}^2 \to \R, i \in 1, \ldots, k-1$ are independent.
			\item (KPZ fixed point marginals) For any $f: \R_{\ge 0} \to \R\cup \{-\infty\}$, with $f(\R)\subset\Q\cup\{-\infty\}$ and $f^{-1}(\R)$ being finite, $s<t \in \Q$, and finite $Y\subset \Q_{\ge 0}$, we have
			\[
			(\max_{x\in\Q_{\ge 0}} f(x) + \cM^\rho(x,s;y,t): y\in Y)
			\eqd ( \max_{x\in\R_{\ge 0}} f(x) + \cL^\rho(x,s;y,t): y\in Y).
			\]
		\end{enumerate}
		Then $\cM^\rho$ has a continuous extension to $\mathbb H^2_\uparrow$, which has the same law as $\cL^\rho$.
	\end{theorem}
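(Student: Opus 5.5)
The plan follows the framework of \cite{DZ24}. We compare $\cM^\rho$ with $\cL^\rho$, which we already know satisfies all three conditions, is continuous (Theorem \ref{thm:dlsc}), satisfies the metric composition law (Proposition \ref{thm:dlcomp}), and is stationary for the half-space stationary horizon $\cH^\rho$ (Theorem \ref{thm:stahri}).

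\textbf{Regularity and extension.} Condition 3 with $f$ a single point mass shows that one- and two-point marginals $(\cM^\rho(x,s;y,t),\cM^\rho(x,s;y',t))$ agree with those of $\cL^\rho$. Combined with the triangle inequality, this gives modulus-of-continuity bounds in the spatial variables. For the temporal direction, we bound $\cM^\rho(x,s;y,t)$ below by $\cM^\rho(x,s;x,s')+\cM^\rho(x,s';y,t)$, and bound it above by applying condition 3 with a maximum over a fine spatial grid at the intermediate time. Together with the tail bounds of Proposition \ref{P:only-easy-consequences} (transferred via the equal marginals) and a Kolmogorov-type chaining, this gives a continuous extension to $\mathbb H^2_\uparrow$. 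We then show the extension satisfies the metric composition law. The inequality $\le$ follows from the triangle inequality. For the reverse, we use condition 3 to see that $\max_z \cM^\rho(x,r;z,s)+\cM^\rho(z,s;y,t)$ has, as a function of $y$, the same law as the corresponding $\cL^\rho$ expression. A stochastically ordered pair with equal laws must be a.s.\ equal.

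\textbf{Reduction to stationarity.} By independent increments and the composition law, the law of $\cM^\rho$ is determined by the joint law of finitely many single-time-step increments $\cM^\rho(\cdot,s;\cdot,t)$. Each such increment is determined by the family of maps $f\mapsto \max_x f(x)+\cM^\rho(x,s;\cdot,t)$ acting jointly on many initial conditions. Condition 3 only gives this for one $f$ at a time, and the key is to upgrade it to multiple $f$'s. Following \cite{DZ24}, we feed in initial data $\cH^\rho(\lambda_i,\cdot)$ for $\lambda_1<\dots<\lambda_k$, jointly distributed as the horizon and independent of $\cM^\rho$. For each single $i$, condition 3 (extended by continuity to these random $f$, using the slope asymptotics) shows the output has the law of $\cH^\rho(\lambda_i,\cdot)$ plus a constant. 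We then show the joint output equals the horizon in law. For $\cL^\rho$ this is Theorem \ref{thm:stahri}; for $\cM^\rho$ it must be derived.

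\textbf{Main step and obstacle.} The key mechanism is a coalescence/ordering argument. The outputs from ordered initial data are ordered in the increment sense, since the triangle inequality and metric composition give crossing inequalities. For large $\lambda$ separations the argmaxes localize. One then shows that the joint law of two evolved profiles is pinned down by the marginal laws together with this monotone coupling structure. The route is to compare the maximum $\max_i(\cH(\lambda_i,\cdot)+c_i)$, which is a single initial condition with a known output law, against the individual outputs, and to use inclusion–exclusion over constants $c_i$ to recover the joint law.

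Once the horizon is shown stationary for $\cM^\rho$, we run the argument with the roles reversed. Letting $\lambda$'s range densely and using that $\cM^\rho(x,s;y,t)$ is recovered from the evolution of $\cH^\rho$ in the limit $\lambda\to\infty$ (steep slopes localize the argmax near $x$), we identify the joint law of $\cM^\rho(\cdot,s;\cdot,t)$ with that of $\cL^\rho$. This requires quantitative localization estimates near the boundary $x=0$, where translation invariance fails. I expect these boundary estimates to be the main difficulty. The first approach I would try is to control argmax locations via the Gaussian-to-Tracy–Widom tail bounds of Proposition \ref{P:properties-elpp}, handling $x$ near $0$ by monotonicity in $x$ derived from the horizon ordering.
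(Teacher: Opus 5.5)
Your continuity and metric-composition steps are essentially what Lemma \ref{L:M-lemma} does. The uniqueness argument, however, has a genuine gap at its core.

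You propose to prove directly that the horizon is \emph{jointly} stationary for the unknown $\cM^\rho$. The plan is to combine the laws of $\bigl(\max_i(\cH(\lambda_i,\cdot)+c_i)\bigr)\diamond\cM^\rho_{s,t}$ with ordering/crossing inequalities. But single-initial-condition laws only give $\P\bigl(h_i(y)\le a(y)-c_i \text{ for all } i \text{ and } y\in Y\bigr)$ for the outputs $h_i$. That is the joint distribution function restricted to thresholds of the additive form $a(y)-c_i$. This determines the joint law of the outputs at a single point $y$, but not at several points.

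The monotone structure does not close this gap. The basic coupling of TASEP and the coupling through exponential LPP are both planar. Both satisfy the discrete analogues of conditions 1--3 with identical single-initial-condition laws, yet their joint laws differ; this is exactly the warning after \eqref{eq:fvari}. So ``marginals plus monotone coupling'' cannot pin down the joint law of evolved horizons, and some limit-specific quantitative input is unavoidable. Separately, steep slopes $\lambda\to\infty$ push the argmax of $\cH(\lambda,\cdot)+\cM^\rho(\cdot,s;y,t)$ off to infinity rather than localizing it near a prescribed $x$.

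The paper never proves joint stationarity for $\cM^\rho$ first. It uses joint stationarity only for the reference limit $\cQ^\rho$ (Proposition \ref{P:properties-elpp}.10) and runs a Lindeberg swap on the interpolating processes $\cH_i\diamond\cQ_{0,t}\diamond\cM_{t,1}\diamond\cD_i$.

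\begin{itemize}
\item On a short window $[s,s+r]$ it builds a single initial condition $F$, measurable with respect to $\cH_i$, $\cQ_{0,s}$, $\cM_{s+r,1}$ and $\cD_i$. This $F$ agrees with every $\cH_i\diamond\cQ_{0,s}$ up to additive constants on the near-argmax sets $A_i$.
\item It then couples $\cQ_{s,s+r}$ with $\cM_{s,s+r}$ so that $F\diamond\cQ_{s,s+r}=F\diamond\cM_{s,s+r}$. Condition 3 permits this precisely because $F$ is one initial condition.
\item The crux is the branching estimate (Proposition \ref{prop:key1}, Corollary \ref{cor:key1}): $A_i\cap A_j\cap D_{i,j}=\emptyset$ off an event of probability $O(r^{2/3+\delta})$.
\item Hence each swap costs roughly $r^{2/3+\delta}\cdot r^{1/3}$ in $L^1$, and the $r^{-1}$ swaps together cost $o(1)$.
\item Brownian final data $\cD_i$ are used because half-space profiles lack a known Brownian absolute continuity. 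These, together with Cameron--Martin tilts of the initial and final data, then localize the result to $\{\cM_{0,1}(x_i,y_i)\}$.
\end{itemize}

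That swap-plus-branching mechanism, or a genuine substitute for it, is what your proposal is missing.
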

	This is the half-space analog of the full-space directed landscape characterization, given in \cite{DZ24}. This characterization also allows us to prove convergence of Poisson-avoiding metrics, even though in that setting we do not currently have access to explicit joint stationary measures.	
	
	\begin{remark}[$\rho = -\infty$] The subcritical case when $\rho = -\infty$ is not included in Theorem \ref{thm:chawdl}. While the theorem can still be obtained in this setting, the proof would be quite different since the behavior of the stationary measures at the boundary is locally a Brownian-Bessel process, rather than locally Brownian. To avoid adding in another long proof, we instead have opted for a weaker and faster characterization (Corollary \ref{C:-infty-case}) that suffices for our main convergence theorems. 
		
	We note that the subcritical case should actually be the simplest to characterize, since in this setting the structure of stationary measures suggests that no interior geodesic point lies on the boundary. We have included a suggested approach to this in our open problems section.
	\end{remark}
	
	\subsection*{Organization of the remaining text}
	In Section \ref{sec:horizons} we construct the joint stationary measures, or \emph{horizons}, for various models in half-space.
	In Section \ref{sec:tightness} we prove one-point and two-point moderate deviation bounds for exponential LPP. In Section \ref{sec:fixed} we prove tightness of exponential LPP from these moderate deviation bounds and prove key properties of the subsequential limits: continuity, regularity, and KPZ fixed point marginals.
	Section \ref{sec:chara} finishes the convergence of exponential LPP given tightness (Theorem \ref{thm:expconv}) by establishing the characterization theorem (Theorem \ref{thm:chawdl}).
	In Section \ref{sec:TASEP} we show convergence of multi-level Poisson-avoiding metrics, by proving tightness and using the characterization theorem.
	Existence, uniqueness, and convergence of geodesics are proven in Section \ref{sec:convgeo}.
	Finally, we provide a list of open problems in Section \ref{sec:openproblems}, with a focus on the question of whether the boundary contributes any randomness in the limit.

	\subsection*{Notation and conventions}
	For $a<b\in \R\cup\{-\infty, \infty\}$, we write $\II{a, b}=[a,b]\cap \Z$.
	For $u=(u_1, u_2), v=(v_1, v_2)\in \Z^2$, we write $u \le v$ if $u_1 \le v_1$ and $u_2 \le v_2$. 
	We also denote $\Z^2_\ge=\{(i,j): i, j \in \Z, i\ge j\}$. For $x,y\in \R\cup\{-\infty, \infty\}$, we denote $x\vee y=\max\{x,y\}$ and $x\wedge y = \min\{x, y\}$.
	Unless otherwise noted, all Brownian motions and Brownian bridges have diffusivity $2$.
	For any $x>0$, the notation $O(x)$ denotes a quantity $y$ satisfying $|y|<Cx$, for some constant $C>0$. Unless otherwise noted, such constant $C$ is universal.
	
	\textbf{Distributions.} We let $\Gai(\theta)$ be the inverse gamma distribution with parameter $\theta>0$, with support $[0,\infty)$ and Lebesgue density $\Gamma(\theta)^{-1}x^{-\theta-1}e^{-1/x}$ for $x>0$.
	We let $\Geo(\theta)$ be the geometric distribution with parameter $\theta\in(0,1)$, which is supported on $\Z_{\ge 0}$, with the probability of $k$ being $(1-\theta)\theta^k$.
	We let $\Exp(\theta)$ be the exponential distribution with parameter $\theta>0$, with support $[0,\infty)$ and Lebesgue density $\theta\exp(-\theta x)$ for $x>0$. We also interpret $\Exp(\infty)=0$.

    \textbf{Spaces.} We use $\UC_+$ to denote the space of all upper semicontinuous functions $f:\R_{\ge 0} \to \R\cup\{-\infty\}$, with the topology of local hypograph convergence.
    We let $\UC_{0,+}$ denote the subspace of $\UC_+$, defined from \eqref{E:UC0+}.

	\subsection*{Acknowledgements}
	D.D.\ was supported by an NSERC Discovery Grant and a Sloan fellowship.
	L.Z.\ was supported by NSF grant DMS-2505625 and a Sloan fellowship. We thank Xincheng Zhang for many helpful discussions, and Ofer Busani, Sayan Das, Dominik Schmid, and Evan Sorensen for many valuable comments and suggestions on the first version of this paper.

\section{Horizons for half-space KPZ models}   \label{sec:horizons}

In this section, we identify the joint stationary measures for various half-space versions of models in the KPZ universality class: the log-gamma polymer, geometric and exponential LPP, and the KPZ equation. Following the terminology introduced in \cite{BuS} for the full suite of stationary measures of the full-space directed landscape, we call our constructions \emph{(half-space) horizons}.
In particular, for the log-gamma polymer, geometric and exponential LPP, and the KPZ equation, our half-space horizons couple the stationary measures found in \cite{BC}. 
We also construct their universal scaling limit, the \emph{half-space stationary horizon}, which will serve as the joint stationary measure for the half-space directed landscape to be constructed.

In the full-space setting, joint stationary measures were first constructed for multi-species TASEP in \cite{FPAMJB} using queueing theory, building on \cite{DJLS,FFK,Angel}. See \cite{MJb,ANP} for further developments on joint stationary measures in full-space interacting particle systems. In the setting of exponential last passage percolation, joint stationary measures were first constructed by \cite{FSj} by discovering an adaptation of the queueing-theoretic ideas of \cite{FPAMJB}. We refer the reader to \cite{GRS,SepCor,BuS,BSS24, dauvergne2024directed, bates2025permutation,GRSS,EFS,CGS} for results on other solvable last passage and polymer models, as well as to the directed landscape.

Our construction and proof of stationarity are presented in detail for the log-gamma polymer model. 
More precisely, we consider half-space log-gamma polymer models with general parameters.
A key step is a new joint invariance property under permutations of parameters (see Lemma \ref{lem:invp}; see also Remark \ref{rem:fsana} regarding its full-space analogues in \cite{bates2025permutation,engel2025discrete,D22hidden}). 
We repeatedly use this property to permute the parameters in order to obtain joint stationarity and consistency of our construction (see Lemma \ref{lem:sta} and Lemma \ref{lem:consis}, respectively).
The proofs for the other models follow by degeneration from the log-gamma polymer. 

For the constructions, it is also helpful to work with extensions of half-space models to full-space models with symmetrized weights. 
We begin by introducing the extensions that we will need.

\subsection{Extended half-space models}

  In this subsection, we introduce a particular log-gamma polymer model in a symmetrized field with inhomogeneous weight parameters, along with symmetrized exponential and geometric LPP models.

We consider the following model, which we call \emph{the extended half-space log-gamma polymer with general parameters}.

\begin{definition}
\label{D:gamma}
Let $X:\Z^2_\ge \to \R_{\ge 0}\cup \{\infty\}$ be a field of independent random variables distributed as follows.
Take real parameters $\alpha$ and $\{\apar_i\}_{i\in\Z}$.

For any $i\in \Z$, take $X(i,i)\sim \Gai(\alpha+\apar_i)$ if $\alpha+\apar_i>0$, and $X(i,i)=\infty$ otherwise.

For any $i > j \in \Z$, take $X(i,j)\sim \Gai(\apar_i+\apar_j)$ if $\apar_i+\apar_j>0$, and $X(i,j)=\infty$ otherwise. 
We extend the weights $X$ to all of $\Z^2$ by the symmetry condition $X(i,j)=X(j,i)$ for all $i,j$.

For any $u,v\in \Z^2$ with $u \le v$, define the \emph{partition function}
\[
Z_X(u;v)=\sum_{\pi}\prod_{(i,j)\in \pi}X(i,j),
\]
where the sum is over all up-right paths $\pi$ from $u$ to $v$, subject to the following condition:
\begin{equation}
\label{E:diagonal-condition}
\text{for any $(i,i)\in\pi$, necessarily $(i-1,i)\notin\pi$.}
\end{equation}
In words, paths may cross the diagonal at most once, moving from below the diagonal to above it.
\end{definition}

When both $u,v\in\Z^2_\ge$, we recover the usual half-space log-gamma polymer. 
However, we note that while the field $\{X(i,j)\}_{(i,j)\in\Z^2}$ is symmetric, the above log-gamma model is not; in general,
$Z_X(p,q;r,s)\neq Z_X(q,p;s,r)$.

We now turn to the zero-temperature models. 
Focusing on exactly solvable models, we define \emph{the extended half-space exponential/geometric last passage percolation (LPP) with general parameters}. 
\begin{definition}
\label{D:geo}
Let $X:\Z^2 \to \R_{\ge 0}\cup\{\infty\}$ be a field of random variables that are independent subject to the symmetry condition $X(i,j)=X(j,i)$ for all $i,j$, and distributed as follows.

\textbf{Exponential setting.}
Take real parameters $\alpha$ and $\{\apar_i\}_{i\in\Z}$. For any $i\in\Z$, take $X(i,i)\sim \Exp(\alpha+\apar_i)$ if $\alpha+\apar_i>0$, and $X(i,i)=\infty$ otherwise.  
For any $i\neq j\in\Z$, take $X(i,j)=X(j,i)\sim \Exp(\apar_i+\apar_j)$ if $\apar_i+\apar_j>0$, and $X(i,j)=X(j,i)=\infty$ otherwise.

\textbf{Geometric setting.}
Take positive parameters $\alpha$ and $\{\apar_i\}_{i\in\Z}$. For any $i\in\Z$, take $X(i,i)\sim \Geo(\alpha\apar_i)$ if $\alpha\apar_i\in(0,1)$, and $X(i,i)=\infty$ otherwise.  
For any $i\neq j\in\Z$, take $X(i,j)=X(j,i)\sim \Geo(\apar_i\apar_j)$ if $\apar_i\apar_j\in(0,1)$, and $X(i,j)=X(j,i)=\infty$ otherwise.

For any $u\le v$ in $\Z^2$, define the \emph{passage time}
\[
X(u;v)=\max_{\pi}\sum_{(i,j)\in\pi}X(i,j),
\]
where the maximum is taken over all up-right paths $\pi$ from $u$ to $v$ satisfying \eqref{E:diagonal-condition}.
\end{definition}

Note that, unlike in the log-gamma setting, the constraint \eqref{E:diagonal-condition} can technically be dropped. Indeed, without this constraint we would have the symmetry
\[
X(p,q;r,s)=X(q,p;s, r)
\]
for all $p,q,r,s$, so we may assume $(p,q)\in\Z^2_\ge$. 
In this case, if $(r,s)\in\Z^2_\ge$, the symmetry of the environment guarantees that a maximizing path for $X$ never crosses the diagonal. If $(r,s)\not\in\Z^2_\ge$, then a maximizing path crosses the diagonal exactly once from below to above, and therefore never violates the constraint \eqref{E:diagonal-condition}. 
We nevertheless keep the constraint \eqref{E:diagonal-condition} in the definition, since this formulation is convenient for constructing bi-infinite stationary measures.

\subsection{Invariance under parameter permutation}
	
The key to constructing joint stationary measures for the models above is a parameter-permutation lemma. This can be viewed as an extension of the better-known symmetry of geometric RSK in the half-space log-gamma environment under parameter permutations, which was used to great effect in \cite{BC} to construct stationary measures.

Take the environment $X$ from Definition \ref{D:gamma}. 
Let $\hat\apar_0=\apar_1$ and $\hat\apar_1=\apar_0$, and let $\hat\apar_i=\apar_i$ for all other $i\in\Z$.
Let $\hat{X}$ be defined the same way as $X$ in Definition \ref{D:gamma}, but with parameters $\alpha$ and $\{\hat\apar_i\}_{i\in\Z}$. 
Let $Z_{\hat{X}}(p,q;r,s)$ be the partition function under the environment $\hat{X}$.

\begin{lemma}\label{lem:invp}
We have
\[
\{Z_X(p,q;r,s)\}_{p\le r,\; q\le s,\; p,q\neq 1,\; r,s\neq 0}
\eqd
\{Z_{\hat{X}}(p,q;r,s)\}_{p\le r,\; q\le s,\; p,q\neq 1,\; r,s\neq 0}.
\]
\end{lemma}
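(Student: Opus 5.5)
The plan is to reduce the statement to a local identity in distribution involving only the weights on rows and columns $0$ and $1$, and then check that identity with the standard log-gamma swap (Burke-type) lemma.

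\textbf{Step 1: reduction to a local map.} The parameters $\apar_0,\apar_1$ enter only through the weights in rows and columns $0,1$. Among these, the symmetric field has the $2\times 2$ block $X(0,0),X(1,0)=X(0,1),X(1,1)$, together with the pairs $X(0,j),X(1,j)$ for $j\notin\{0,1\}$. The index restrictions $p,q\neq 1$ and $r,s\neq 0$ mean that no path starts in row/column $1$ or ends in row/column $0$. Hence any path that enters the strip of columns $\{0,1\}$ (or rows $\{0,1\}$) passes through both of them. The plan is to build a measurable map $\Phi$ that acts only on these weights, fixes all other weights, and satisfies two properties:

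(a) $\Phi(X)\eqd \hat X$;

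(b) for every admissible $(p,q;r,s)$, $Z_{\Phi(X)}(p,q;r,s)=Z_X(p,q;r,s)$.

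The lemma follows immediately from (a) and (b).

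\textbf{Step 2: the one-dimensional swap.} For a path crossing columns $0,1$ at height $j$, with $j$ away from the diagonal block, the contribution of the two columns factorizes through the usual $2\times 2$ geometric RSK / Burke map. Following \cite{BC} and the full-space swaps of \cite{bates2025permutation,D22hidden}, the weights in column $0$ and column $1$ are transformed row by row through the recursion
\[
U'_j = X(1,j)\bigl(1 + X(0,j)/U_{j-1}\bigr)\ \text{-type ratios},
\]
that is, the composition of local maps $(U,V,W)\mapsto (U',V',W')$. These local maps preserve the products of inverse-gamma laws with the swapped parameters $\Gai(\apar_0+\apar_j)\leftrightarrow\Gai(\apar_1+\apar_j)$, and they preserve the $2$-row partition functions $Z$ from any entry row to any exit row.

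In half-space, the columns and rows $0,1$ are tied together by the symmetry $X(i,j)=X(j,i)$ and by the diagonal block. So I would carry out the swap simultaneously on the strip of rows $\{0,1\}$ and the strip of columns $\{0,1\}$, starting the recursion at the diagonal block. This means handling the weights $X(0,0)\sim\Gai(\alpha+\apar_0)$, $X(1,1)\sim\Gai(\alpha+\apar_1)$ and $X(1,0)\sim\Gai(\apar_0+\apar_1)$, and using the diagonal crossing constraint \eqref{E:diagonal-condition}.

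\textbf{Step 3: the diagonal block (main obstacle).} Here I need a $3$-variable identity. Define new $\hat X(0,0),\hat X(1,0),\hat X(1,1)$, plus an auxiliary "boundary state" variable that feeds into the row-by-row recursion in both directions. These must satisfy:

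(i) the partition functions through the block, for entry from below or left and exit above or right (including paths that cross the diagonal once), are unchanged;

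(ii) the new variables are independent with laws $\Gai(\alpha+\apar_1)$, $\Gai(\apar_0+\apar_1)$, $\Gai(\alpha+\apar_0)$, jointly with the boundary state, which has the right law to initiate Step 2.

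I would first try the half-space geometric RSK symmetry of \cite{BC}, which is exactly the invariance of $Z$ from the diagonal. I would verify (ii) by an explicit change of variables and a Jacobian computation, checking that the density factorizes: the products of $x^{-\theta-1}e^{-1/x}$ must recombine after the map. Once the block produces the correct boundary state, the Burke property in Step 2 propagates outward to all $j$, in both the row and column strips, consistently with the symmetry. Then (b) follows by decomposing each path according to its entry and exit in the strips and summing. This gives the equality in law of the joint family of partition functions.
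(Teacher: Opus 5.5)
Your skeleton is the same as the paper's: alter only the weights in rows/columns $0,1$, keep all other weights fixed, and reassemble general paths from their pieces inside and outside the strips. But the step you flag as the main obstacle is never actually supplied, and the missing idea is that it is not an obstacle at all.

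By \eqref{E:diagonal-condition}, a path in $\{0,1\}\times\Z$ from $(0,x)$, $x\le 0$, to $(1,y)$, $y\ge 1$, cannot switch columns at row $1$. It switches either at a row $j\le 0$, and then uses $X(1,0)$ and $X(1,1)$, or at a row $j\ge 2$, and then uses $X(0,0)$ and $X(0,1)$ but not $X(1,1)$. So every such path uses exactly one of the equal weights $X(1,0)=X(0,1)$. Its law $\Gai(\apar_0+\apar_1)$ is symmetric under the swap, so it can simply be kept fixed. After removing this factor, the strip is an ordinary bi-infinite two-line environment in which $(X(0,0),X(1,1))$ is just one more row, with parameter $\beta_0=\alpha$. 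Concretely, set $A_i=X(0,i)$, $B_i=X(1,i)$ for $i<0$; $A_0=X(0,0)$, $B_0=X(1,1)$; and $A_i=X(0,i+1)$, $B_i=X(1,i+1)$ for $i>0$. Then
\[
Z_X(0,x;1,y)=X(1,0)\sum_{j=x}^{y-1}\prod_{i=x}^{j}A_i\prod_{i=j}^{y-1}B_i .
\]
A single application of Lemma \ref{lem:coupr} to this whole line preserves all vertical-strip partition functions at once. Assuming $\apar_0<\apar_1$, you apply it on each maximal interval where $\apar_0+\beta_i>0$; intervals meeting the other rows give $\infty$ on both sides. No three-variable identity, no new law for $\hat X(1,0)$, no auxiliary state and no Jacobian computation are needed.

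The mechanism you propose instead does not work as stated. A Burke-type recursion carries its state in one direction. For $Z_X(0,x;1,y)$ with $x<0<y$ to be preserved, the state entering the block must depend on all the rows below it. So it cannot be manufactured from the block weights and then propagated outward in both directions to define your map $\Phi$.

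Even if you run the recursion correctly from $-\infty$, you need a stationary initial state. That requires conditions on $\{\beta_i\}$ which the lemma does not impose: it allows arbitrary real parameters and hence infinite weights, which your proposal also does not address. The paper avoids this because Lemma \ref{lem:coupr} is proved on finite intervals via two-line geometric RSK (the isometry plus invariance of the output under swapping parameters) and then extended.

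Finally, the horizontal strip carries the same weights. There, paths from $(x,0)$ with $x\le -1$ can never reach $(y,1)$ with $y\ge 1$, since they would cross the diagonal from above to below, so it needs no separate treatment at the block. Your item (i) asks for the mixed entry/exit partition functions in the cross $(\{0,1\}\times\Z)\cup(\Z\times\{0,1\})$ without giving a mechanism. In the paper these follow from the strip identities by reflecting at the last diagonal visit and splitting off the paths through $(1,-1),(1,0),(2,0)$.
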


\begin{remark}[Full-space analogues]\label{rem:fsana}
A full-space exponential LPP version of Lemma~\ref{lem:invp} was proven independently as the base case of \cite[Theorem~3.1]{bates2025permutation}, where it is used to give an alternative construction of the joint stationary measures for the full-space model. 
The corresponding versions in the settings of full-space log-gamma polymer and geometric LPP are given in \cite[Theorems 1.6, 1.10]{engel2025discrete}. 
Alternatively, the full-space analogues can be viewed as an extension of part of \cite[Theorems~1.5 and~1.10]{D22hidden}.
\end{remark}
	
To prove this, we require the following lemma.

\begin{lemma}\label{lem:coupr}
Let $I=\llbracket a,b\rrbracket$ for some $-\infty \le a < b \le \infty$. 
Take real parameters $\apar_0<\apar_1$ and $\{\beta_i\}_{i\in I}$, such that $\apar_0+\beta_i>0$ for each $i\in I$.

Let $\{A_i,B_i\}_{i\in I}$ be independent random variables with
$A_i\sim \Gai(\apar_0+\beta_i)$, $B_i\sim \Gai(\apar_1+\beta_i)$.
Similarly, let $\{C_i,D_i\}_{i\in I}$ be independent random variables with
$C_i\sim \Gai(\apar_1+\beta_i)$, $D_i\sim \Gai(\apar_0+\beta_i)$.

Then these two families of random variables can be coupled so that almost surely, for any $x\le y\in I$,
\begin{equation}\label{eq:coupr}
\sum_{j=x}^y \prod_{i=x}^j A_i \prod_{i=j}^y B_i
=
\sum_{j=x}^y \prod_{i=x}^j C_i \prod_{i=j}^y D_i .
\end{equation}
\end{lemma}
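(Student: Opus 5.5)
The plan is to prove the lemma first for finite $I$ by a local, site-by-site transformation, and then pass to infinite intervals by a limiting argument.

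\emph{Matrix reformulation.} For each $i\in I$, set
\[
T_i=\begin{pmatrix} A_i & A_iB_i\\ 0 & B_i\end{pmatrix},
\qquad
S_i=\begin{pmatrix} C_i & C_iD_i\\ 0 & D_i\end{pmatrix}.
\]
The left side of \eqref{eq:coupr} is the $(1,2)$ entry of $T_xT_{x+1}\cdots T_y$. Indeed, a two-row path uses row $A$ on $\II{x,j}$ and row $B$ on $\II{j,y}$. The switching site $j$ carries the weight $A_jB_j$, and the other entries of the product contribute $\prod_{i<j}A_i$ and $\prod_{i>j}B_i$. Similarly, the right side is the $(1,2)$ entry of $S_x\cdots S_y$. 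So it suffices to couple the two families so that all $(1,2)$ entries of interval products agree.

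\emph{Local move.} I would build the coupling by a left-to-right recursion carrying one auxiliary variable, in the spirit of the Burke property for the inverse-gamma queue and the $2\times 2$ local move of geometric RSK. Write $P_A(x,y)=\prod_{i=x}^yA_i$, and similarly $P_B,P_C,P_D$. Let $Z_A(x,y)$ denote the left side of \eqref{eq:coupr}. Then
\[
Z_A(x,y)=B_y\bigl(Z_A(x,y-1)+P_A(x,y)\bigr).
\]
Equality of all $Z_A(x,y)$ with the corresponding $Z_C(x,y)$ is therefore equivalent to a compatibility condition between the pairs $(A_y,B_y)$ and $(C_y,D_y)$. This condition involves only the ratios $Z_A(x,y-1)/P_A(x,y-1)$, and for these ratios a single ``queue-length'' variable $Q_{y-1}$ should suffice to encode, uniformly in $x$, the relation between the two families.

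I will look for a deterministic bijection
\[
(A_y,B_y,Q_{y-1})\mapsto (C_y,D_y,Q_y)
\]
that preserves the required identities. The natural candidate is the standard one from geometric RSK:
\[
C_y=B_y\,\frac{1+Q_{y-1}}{\cdots},\qquad D_y=A_y\,\frac{\cdots}{1+Q_{y-1}},
\]
with the missing factors determined by the identity. Its key property must be that
\[
C_yD_y=A_yB_y,\qquad P_C(x,y)=P_A(x,y)\cdot(\text{boundary terms at }x-1\text{ and }y).
\]
The intended input law is: $Q_{y-1}$ has the appropriate stationary law (inverse gamma with parameter $\apar_1-\apar_0>0$, which is where $\apar_0<\apar_1$ enters) and is independent of $(A_y,B_y)$. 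The required output law is that $(C_y,D_y,Q_y)$ are independent with $C_y\sim\Gai(\apar_1+\beta_y)$, $D_y\sim\Gai(\apar_0+\beta_y)$ and $Q_y$ again stationary. This is a Burke-type statement, and I would verify it by a direct change-of-variables computation of Jacobians and densities, as in the beta-gamma algebra of Seppäläinen's stationary log-gamma polymer.

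\emph{Initialization and iteration.} At the left end $a$ of a finite $I$, I sample an independent $Q_{a-1}$ from the stationary law and iterate the local move. By induction this gives $(C_i,D_i)_{i\in I}$ with the correct joint law. Then I check \eqref{eq:coupr} for all $x\le y$ at once by induction on $y$ via the recursion above. The point is that the identity for a given $x$ reduces to a telescoping relation involving $Q_{x-1}$ and $Q_y$, and this cancels in the $(1,2)$ entry.

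\emph{Infinite intervals.} If $a=-\infty$ or $b=\infty$, I take finite subintervals $I_n\uparrow I$. Each finite-$I_n$ coupling is a probability measure on a product of copies of $(0,\infty)$ whose marginals are fixed, so the sequence is tight. A subsequential limit is a coupling of the two families on all of $I$, and \eqref{eq:coupr} passes to the limit because, for each fixed $x\le y$, both sides are continuous functions of finitely many coordinates. Alternatively, for $a=-\infty$ one can start the queue at $-\infty$ using stationarity.

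The main obstacle is the second step: identifying the exact local map and verifying both that it preserves the interval $(1,2)$ entries uniformly in the starting point $x$ and that it has the Burke-type distributional property. If the one-variable gauge is insufficient, I would instead carry a $2\times2$ upper-triangular gauge $G_i$ with $S_i=G_{i-1}^{-1}T_iG_i$, arranged so that $G$ does not affect $(1,2)$ entries. I would then derive the local map from the $2\times 2$ geometric RSK local move.
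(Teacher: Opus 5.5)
The gap is exactly the step you flag as the main obstacle. The proposal never specifies the local map $(A_y,B_y,Q_{y-1})\mapsto(C_y,D_y,Q_y)$, and that map, with its deterministic and distributional properties, is the whole content of the lemma. Moreover, the property you say the map must have, $P_C(x,y)=P_A(x,y)\cdot(\text{boundary terms at }x-1\text{ and }y)$, is false. $C_i$ has the law of $B_i$, not of $A_i$. So $\log P_C(x,y)-\log P_A(x,y)$ has mean $\sum_{i=x}^{y}(\E\log C_i-\E\log A_i)$, a sum of strictly negative terms. A factor $\phi(Q_{x-1})\chi(Q_y)$ with $Q$ stationary cannot match this, since its $\E\log$ does not depend on $y$. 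For the map that actually works, $C$ is paired with $B$ and $D$ with $A$:
\[
P_C(x,y)=P_B(x,y)\,\frac{Q_{x-1}}{Q_y},\qquad P_D(x,y)=P_A(x,y)\,\frac{Q_y}{Q_{x-1}}.
\]
Your fallback gauge $S_i=G_{i-1}^{-1}T_iG_i$ is also left unspecified, and it is not needed.

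Here is what fills the gap. Take $Q_{a-1}\sim\Gai(\gamma_1-\gamma_0)$ independent of the weights; this is where $\gamma_0<\gamma_1$ enters. Set
\[
Q_y=B_y\Bigl(1+\frac{Q_{y-1}}{A_y}\Bigr),\qquad C_y=\frac{B_yQ_{y-1}}{Q_y}=\frac{A_yQ_{y-1}}{A_y+Q_{y-1}},\qquad D_y=\frac{A_yQ_y}{Q_{y-1}},
\]
which gives the product relations above. The single-site identities are $A_yB_y=Q_yA_y-Q_{y-1}B_y$ and $C_yD_y=Q_{y-1}D_y-Q_yC_y$, and both sides equal $A_yB_y$. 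Writing $P_A(x,j)P_B(j,y)=P_A(x,j-1)\,A_jB_j\,P_B(j+1,y)$ and similarly for $C,D$, both sides of \eqref{eq:coupr} telescope:
\[
\sum_{j=x}^{y}P_A(x,j)P_B(j,y)=Q_yP_A(x,y)-Q_{x-1}P_B(x,y),\qquad \sum_{j=x}^{y}P_C(x,j)P_D(j,y)=Q_{x-1}P_D(x,y)-Q_yP_C(x,y).
\]
The right-hand sides agree by the product relations, for every $x\le y$ including $x=a$.

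For the law: $1/C_y=1/A_y+1/Q_{y-1}\sim\operatorname{Gamma}(\gamma_1+\beta_y)$ is independent of $U=(1/A_y)/(1/A_y+1/Q_{y-1})\sim\operatorname{Beta}(\gamma_0+\beta_y,\gamma_1-\gamma_0)$. Also $(1/D_y,1/Q_y)=B_y^{-1}(U,1-U)$. By beta--gamma algebra, $C_y,D_y,Q_y$ are independent with laws $\Gai(\gamma_1+\beta_y)$, $\Gai(\gamma_0+\beta_y)$, $\Gai(\gamma_1-\gamma_0)$; this is Sepp\"al\"ainen's Burke lemma for the log-gamma polymer. Induction on $y$ then gives the full joint law.

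For infinite $I$: the $Q$ produced at site $a-1$ again has the stationary law and is independent of the weights at sites $\ge a$. So the couplings on nested intervals are exactly consistent, and Kolmogorov's theorem handles infinite $I$ directly; your tightness argument also works.

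Completed this way, your argument is a genuinely different and more elementary route than the paper's. The paper applies two-line geometric RSK and uses the isometry that each side of \eqref{eq:coupr} is the same function of the outputs $(\hat A,\hat B)$, resp.\ $(\hat C,\hat D)$. It then glues along $(\hat A,\hat B)=(\hat C,\hat D)$ via the parameter-permutation invariance of \cite{COSZ}. Your route needs only beta--gamma algebra and produces an explicit coupling.
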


Identities of this type (in the LPP setting) are related to the classical stationary $M/M/1$ queue in queueing theory; see, e.g., \cite[Section~2.5]{timonotes}. 
In the log-gamma setting, the identity can also be extracted as a special case of \cite[Theorem~1.10]{D22hidden}. 
Since the proof in the present setting is considerably simpler than that theorem, we include it here for completeness.

	\begin{proof}[Proof of Lemma \ref{lem:coupr}]
By Kolmogorov's extension theorem (which allows us to extend to infinite intervals) and translation invariance it suffices to prove this when $a = 1, b = n$ for any $n \in \N$. Next, define a new environment $(\hat A_i, \hat B_i, i \in\II{1, n})$ by the following two rules for all $k \in \II{1, n}$:
$$
\prod_{i=1}^k \hat B_i = \sum_{j=1}^k \prod_{i=1}^j A_i \prod_{i=j}^k B_i, \qquad \prod_{i=1}^k \hat A_i\hat B_i = \prod_{i=1}^k A_i B_i.
$$
It is easy to see that these equations uniquely determine $\hat A_i, \hat B_i$. The output $(\hat A_i, \hat B_i, i \in\II{1, n})$ is one of the two `tableaux' coming out of the geometric RSK correspondence. We similarly define $(\hat C_i, \hat D_i, i\in\II{1, n})$ from $C_i, D_i$. 
Now,
	\begin{align}
		\nonumber
\sum_{j=1}^y \prod_{i=1}^j A_i \prod_{i=j}^y B_i &=	\prod_{i=1}^y \hat B_i, \quad &&1 \le y \le n, \\
\label{E:rsk-iso}
\sum_{j=x}^y \prod_{i=x}^j A_i \prod_{i=j}^y B_i &=	\sum_{j=x}^y \prod_{i=x}^j \hat A_i \prod_{i=j}^y \hat B_i, \quad &&2 \le x \le y \le n,
		\end{align}
		and similarly for $C_i, D_i, \hat C_i, \hat D_i$. This is the two-line positive temperature RSK isometry and can easily be checked by hand. In greater generality it is also shown in \cite{NY04}, \cite[Theorem 3.3]{D22hidden}, or \cite[Theorem 1.1]{corwininvariance}. Therefore to complete the proof of \eqref{eq:coupr} it suffices to observe that the joint law of $(\hat A_i, \hat B_i)$ is the same as the joint law of $(\hat C_i, \hat D_i)$. This amounts to an invariance of geometric RSK applied to a log-gamma environment under permutation of parameters, shown in \cite{COSZ}.
	\end{proof}

	\begin{proof}[Proof of Lemma \ref{lem:invp}]

		Without loss of generality, we assume that $\apar_0<\apar_1$. (In the case where $\apar_0=\apar_1$, this equality is trivial).
		
		We construct a coupling between $X$ and $\hat{X}$, as follows.
		For any $(i,j)\in (\Z\setminus\{0,1\})^2$, we let $\hat{X}(i,j)=X(i,j)$.
		We also let $\hat{X}(0,1)=\hat{X}(1,0)=X(0,1)=X(1,0)$.
		
		Now, let $A_i=X(0,i)$, $B_i=X(1,i)$ for $i<0$, and $A_0=X(0,0)$, $B_0=X(1,1)$, and $A_i=X(0,i+1)$, $B_i=X(1,i+1)$ for $i>0$.
	Let $C_i=\hat{X}(0,i)$, $D_i=\hat{X}(1,i)$ for $i<0$, and $C_0=\hat{X}(0,0)$, $D_0=\hat{X}(1,1)$, and $C_i=\hat{X}(0,i+1)$, $D_i=\hat{X}(1,i+1)$ for $i>0$.
	Define $\beta_i=\apar_i$ for $i<0$, $\beta_0=\alpha$, and $\beta_i=\apar_{i+1}$ for $i>0$.
		
	We claim that we can couple the random variables $A_i, B_i, C_i, D_i$ so that \eqref{eq:coupr} holds for all $x \le y \in \Z$. This will complete the construction of the coupling between $X, \hat{X}$. Define $J = \{i\in \Z: \apar_0+\beta_i>0\}$. First, for each $i\notin J$ we simply take $A_i=D_i = \infty$ and $B_i=C_i$. Next, write $J = \bigcup_{i \in F} I_i$, where each $I_i = \II{a_i, b_i}$ is a maximal discrete interval in $J$, i.e., $a_i - 1, b_i + 1 \notin J$ whenever $a_i, b_i$ is finite. By applying Lemma \ref{lem:coupr} separately on each interval $I_i$ and combining the couplings using the independence of the random variables $A_i, B_i$ we can guarantee that \eqref{eq:coupr} also holds whenever $x \le y$ are in a common interval $I_i$. Finally, if $x \le y$ are such that $\II{x, y} \not \subset J$, then \eqref{eq:coupr} is immediate since both sides are equal to $\infty$ almost surely.

Next, define the cross-shaped region $\mathtt{X} = (\{0,1\}\times \Z)\cup(\Z\times \{0,1\})$ and set $\partial^- \mathtt{X} = \{0\}\times (\Z\setminus\{1\})\cup (\Z\setminus\{1\})\times\{0\}$ and $\partial^+ \mathtt{X} = \{1\}\times(\Z\setminus\{0\})\cup (\Z\setminus\{0\})\times\{1\}$. The lower (resp. upper) boundary $\partial^- \mathtt{X}$ (resp.~$\partial^+ \mathtt{X}$) comprises all points that can be the entry point (resp.~exit point)  for an up-right path starting in $\mathtt{X}^c$. Define partition functions
		\[
		Z_{X;\tt{X}}(u; v)= \sum_{\pi}\prod_{(i,j)\in \pi} X(i,j), \qquad 
		Z_{\hat{X};\tt{X}}(u; v)= \sum_{\pi}\prod_{(i,j)\in \pi} \hat{X}(i,j),
		\]
		where the sum is over all up-right paths from $u$ to $v$, restricted to the cross-shaped set $\mathtt{X}$, and subject to the condition \eqref{E:diagonal-condition}.
		We claim that in our coupling,
		\begin{equation}
			\label{E:R-hat-R}
				Z_{X;\tt{X}}(u; v) = Z_{\hat{X};\tt{X}}(u; v), \qquad \text{ for all } \;\; u \in \partial^- \mathtt{X}, \;\;v \in \partial^+ \mathtt{X}, \quad u \le v.
		\end{equation}
		For this, we first show that 
		\begin{equation}
		\label{E:vertical-bar}
		Z_X(0,x;1,y) = Z_{\hat{X}}(0,x;1,y), \quad \text{ for all } \quad x\le y \in \Z, x \neq 1, y\neq 0.
		\end{equation}
		This is immediate from \eqref{eq:coupr} when either $x, y \le -1$ or else $x, y \ge 2$. When $x \le 0$ and $y \ge 1$, we have the computation
		\begin{align*}
		Z_X(0,x;1,y) &= \sum_{j=x}^0 \prod_{i=x}^j X(0, i) \prod_{i=j}^y X(1, i) + \sum_{j=2}^y \prod_{i=x}^j X(0, i) \prod_{i=j}^y X(1, i) \\
		&= X(1, 0) \sum_{j=x}^0 \prod_{i=x}^j A_i \prod_{i=j}^{y-1} B_i + X(0, 1) \sum_{j=2}^y \prod_{i=x}^{j-1} A_i \prod_{i=j-1}^{y-1} B_i \\
		&= X(1, 0) \sum_{j=x}^{y-1} \prod_{i=x}^j A_i \prod_{i=j}^{y-1} B_i.
	\end{align*}
Here the first equality uses condition \eqref{E:diagonal-condition}, the second equality is relabelling and the final equality uses that $X(1, 0) = X(0, 1)$. Executing a similar computation for $Z_{\hat{X}}$, using that $\hat{X}(1, 0) = \hat{X}(0, 1) = X(0, 1)$ and applying \eqref{eq:coupr} gives the result. 
	Similarly,
		\begin{equation}
		\label{E:horizontal-bar}
		Z_X(x,0;y,1) = Z_{\hat{X}}(x,0;y,1), \qquad \text{ for all } \quad x\le y \le -1 \in \Z, \qquad 2\le x\le y \in \Z.
		\end{equation}
		Now, \eqref{E:vertical-bar} and \eqref{E:horizontal-bar} readily imply that $Z_{X;\tt{X}}(0, x;1, y)=Z_{\hat{X};\tt{X}}(0, x;1,y)$ for any $x \ne 1, y \ne 0$ and that $Z_{X;\tt{X}}(x,0;y, 1)=Z_{\hat{X};\tt{X}}(x,0;y, 1)$, for any $y\le -1$ or $2 \le x$. Also note that $Z_{X;\tt{X}}(x,0;y,1)=Z_{\hat{X};\tt{X}}(x, 0;y,1)=0$, for any $x \le -1$ and $y\ge 1$. Next, if $x \le 0$ and $y \ge 1$ then we can write
		$$
		Z_{X;\tt{X}}(0, x; y, 1) = Z_X(0, x; 1, y) + Z_X(0, x; 1, -1) X(1, 0) Z_X(2, 0; y, 1).
		$$
		Here this decomposition follows by splitting the partition function for $Z_{X;\tt{X}}(0, x; y, 1)$ into paths that hit the diagonal $x = y$ and paths that do not. The paths that hit the diagonal can be mapped bijectively onto the set of paths satisfying \eqref{E:diagonal-condition} from $(0, x)$ to $(y, 1)$ by reflecting across the diagonal at the final hitting location, giving the contribution $Z_X(0, x; 1, y)$. The paths that do not hit the diagonal necessarily pass through the points $(1, -1), (1, 0)$ and $(2, 0)$, giving the second piece of the decomposition. A similar decomposition for $Z_{\hat{X};\tt{X}}(0, x; y, 1)$, together with  \eqref{E:vertical-bar}, \eqref{E:horizontal-bar} and the fact that $X(1, 0) = \hat X(1, 0)$ gives that $Z_{X;\tt{X}}(0, x; y, 1) = Z_{\hat{X};\tt{X}}(0, x; y, 1)$.
		
		 Finally, if $x \le 0$ and $y \ge 2$ then the diagonal condition \eqref{E:diagonal-condition} implies
		 $$
		 Z_{X;\tt{X}}(x, 0; 1, y) = Z_X(x, 0; -1, 1) X(0, 1) Z_X(0, 2; 1, y),
		 $$
		 and similarly for $Z_{\hat{X};\tt{X}}$, which again by \eqref{E:vertical-bar}, \eqref{E:horizontal-bar} and the fact that $X(0, 1) = \hat{X}(0, 1)$  similarly yields the identity $Z_{X;\tt{X}}(x, 0; 1, y)  = Z_{\hat{X};\tt{X}}(x, 0; 1, y)$. This completes the proof of \eqref{E:R-hat-R}.
		 
		 Now, consider any $p \le r$ and $q \le s$ with $p, q \ne 1$ and $r, s \ne 0$. By the metric composition law and \eqref{E:R-hat-R}, we have that 
		  \begin{equation}
		 	\label{E:Z-hat-Z}
		 	Z_X(p, q; r, s) = Z_{\hat{X}}(p, q; r, s),
		 \end{equation}
		 since both sides can be decomposed into sums of products of partitions functions that either stay outside of $\mathtt{X}$, or else entirely within $\mathtt{X}$. In more detail, for an up-right path $\pi$ from $(p, q)$ to $(r, s)$, we can uniquely decompose $\pi$ into pieces $\pi_1, \dots, \pi_k$ where each piece $\pi_i$ is either entirely contained in $\mathtt{X}$, or in $\mathtt{X}^c$, and $k$ is minimal. Let $\pi_i^+, \pi_i^-$ be the start and end vertices for $\pi_i$. We call the vertex sequence $\sigma(\pi) = (\pi_1^-, \pi_1^+; \dots, \pi_k^-, \pi_k^+)$ the signature of $\pi$. Then we can write
		 \begin{equation}
		 Z_X(p, q; r, s) = \sum_{\sigma = (\pi_1^-, \pi_1^+; \dots ;\pi_{k(\sigma)}^-, \pi_{k(\sigma)}^+)} \prod_{i=1}^{k(\sigma)} [Z_X(\pi_i^-; \pi_i^+) \mathds{1}(\pi_i^- \notin \mathtt{X}) + Z_{X;\tt{X}}(\pi_i^-; \pi_i^+) \mathds{1}(\pi_i^- \in \mathtt{X})],
		\end{equation}
		where the sum is over all potential path signatures. A similar decomposition holds for $Z_{\hat{X}}$. All terms above are invariant under switching $Z_X \mapsto Z_{\hat{X}}$ and $Z_{X;\tt{X}} \mapsto Z_{\hat{X};\tt{X}}$, yielding \eqref{E:Z-hat-Z}. For the $Z_X$ terms, this uses that $ Z_X(\pi_i^-; \pi_i^+) \mathds{1}(\pi_i^- \notin \mathtt{X})$ can only ever use weights in $\tt{X}^c$, and these weights are the same in $X$ and $\hat{X}$. For the $Z_{X;\tt{X}}$ terms, this uses \eqref{E:R-hat-R}, together with the fact that $\pi_i^- \in \partial^- \mathtt{X}$ and $\pi_i^+ \in \partial^+ \mathtt{X}$ for any signature. This uses that our restrictions on $p, q, r, s$ imply $(p, q) \in \mathtt{X}^c \cup \partial^- \mathtt{X}$ and $(r, s) \in \mathtt{X}^c \cup \partial^+ \mathtt{X}$.
	\end{proof}
	
	We also have the following LPP version.
	
	For exponential or geometric LPP as in Definition \ref{D:geo}, let $\hat{X}$ be defined in the same way as $X$, but with parameters $\alpha$ and $\{\hat\apar_i\}_{i\in\Z}$. 
    Here $\hat\apar_0=\apar_1$ and $\hat\apar_1=\apar_0$, and let $\hat\apar_i=\apar_i$ for all other $i\in\Z$.
    Define the passage times $\hat{X}(u;v)$ analogously to how $X(u;v)$ was defined in Definition \ref{D:geo}, with $\hat{X}$ in place of $X$.
	\begin{lemma}   \label{lem:invz}
		We have 
		$$\{X(p,q;r,s)\}_{p\le r; q\le s; p,q\neq 1; r,s \neq 0}\eqd\{\hat{X}(p,q;r,s)\}_{p\le r; q\le s; p,q\neq 1; r,s \neq 0}.
		$$
	\end{lemma}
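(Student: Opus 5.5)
We obtain Lemma \ref{lem:invz} from Lemma \ref{lem:invp} by a zero-temperature degeneration, rather than redoing the coupling argument in the max-plus setting. The degeneration goes through the standard limits of inverse gamma variables. For the exponential setting, take a small parameter $\delta>0$ and a log-gamma environment $X^\delta$ as in Definition \ref{D:gamma} with parameters $\delta\alpha$ and $\{\delta\apar_i\}$. If $W\sim\Gai(\delta\theta)$ with $\theta>0$, then $\delta\log W \cvgd \Exp(\theta)$ as $\delta\to 0$. If $\theta\le 0$ the weight is $\infty$ for every $\delta$, which matches the convention in Definition \ref{D:geo}. For the geometric setting, we use the analogous known degeneration: an inverse gamma variable with parameter $-\log(\theta)/\delta$-type scaling, suitably discretized, gives $\Geo(\theta)$ in the limit. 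Alternatively, one can use the parallel structure of Lemma \ref{lem:coupr} with geometric $M/M/1$-type couplings. The primary route, however, is the limit.

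The key steps, in order, are as follows.

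\textbf{Step 1.} Apply Lemma \ref{lem:invp} to $X^\delta$ and its hat-version $\hat X^\delta$. The hat-version has parameters $\delta\hat\apar_i$, so swapping $\delta\apar_0,\delta\apar_1$ is the same as swapping $\apar_0,\apar_1$ and then scaling. This gives equality in law of the families $\{\delta\log Z_{X^\delta}(p,q;r,s)\}$ and $\{\delta\log Z_{\hat X^\delta}(p,q;r,s)\}$ over the stated index set.

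\textbf{Step 2.} Couple $X^\delta$ with the exponential field $X$ so that $\delta\log X^\delta(i,j)\to X(i,j)$ almost surely for every $(i,j)$, respecting the symmetry $X(i,j)=X(j,i)$. Use the same coupling for the hatted fields.

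\textbf{Step 3.} Show that $\delta\log Z_{X^\delta}(u;v)\to X(u;v)$ almost surely for each fixed $u\le v$, and likewise for the hatted fields. For fixed $u\le v$ there are finitely many up-right paths, and the diagonal constraint \eqref{E:diagonal-condition} is the same in both definitions. So the elementary bound
\[
\max_\pi \sum_{w\in\pi} \delta\log X^\delta(w)\le \delta\log Z\le \delta\log(\#\text{paths})+\max_\pi\sum_{w\in\pi}\delta\log X^\delta(w)
\]
gives the convergence, with the value $\infty$ handled consistently. The index set is countable, and equality in law of countable families means equality of finite-dimensional distributions. Passing to the limit in Step 1 therefore proves the lemma in the exponential case.

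The main obstacle is the geometric case, because the inverse-gamma-to-geometric degeneration is less standard. The plan there is to realize $\Geo(\theta)$ as a limit of $\lfloor \delta\log W\rfloor$-type quantities; alternatively, one can take a geometric-to-exponential-free route by repeating the proof of Lemma \ref{lem:invp} verbatim in max-plus algebra. That route works because the decomposition arguments there (the cross-shaped region $\mathtt{X}$, the signature decomposition, and the reflection at the diagonal) are purely combinatorial and hold in any semiring. In that approach the only probabilistic input is the zero-temperature analogue of Lemma \ref{lem:coupr}: the two-line last passage values $\max_j(\sum_{i\le j}A_i+\sum_{i\ge j}B_i)$ are invariant in law under swapping the rates $\apar_0,\apar_1$. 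That analogue follows from RSK and the invariance of the shape/recording-tableau law under parameter permutation for exponential and geometric environments, i.e., the classical stationary $M/M/1$ queue (Burke) property. I would present the max-plus rerun for geometric weights and cite that invariance, with the exponential case handled either way.
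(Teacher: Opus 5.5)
Your final plan is correct. For the geometric case it is exactly the paper's argument. The paper proves Lemma \ref{lem:invz} in both settings by rerunning the proof of Lemma \ref{lem:invp} in the $(\max,+)$ semiring. In that rerun, Lemma \ref{lem:couprLPP} replaces Lemma \ref{lem:coupr}; it rests on the zero-temperature two-line isometry \eqref{E:rsk-iso} together with the classical invariance of the RSK output under permuting the two line parameters.

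For the exponential case your degeneration is a genuinely different and valid shortcut:
\begin{itemize}
\item Scaling $\alpha,\apar_i$ by $\delta$ preserves all sign conditions, hence the $\infty$ conventions, and it commutes with the swap $\apar_0\leftrightarrow\apar_1$.
\item $\delta\log\Gai(\delta\theta)\cvgd\Exp(\theta)$.
\item For fixed endpoints there are finitely many admissible paths, subject to the same constraint \eqref{E:diagonal-condition} in both models, so $\delta\log Z\to X(u;v)$.
\end{itemize}
This is the same limit the paper uses for Corollary \ref{C:elpp-result}. It gives you the exponential identity directly from the log-gamma invariance, with no need for a max-plus two-line coupling.

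What it cannot do is reach geometric weights. Your suggested fix, realizing $\Geo(\theta)$ through $\lfloor\delta\log W\rfloor$, does not work. Flooring individual weights does not commute with forming passage times, so an identity in law for the family $\{\delta\log Z\}$ says nothing about LPP in the floored environment. The max-plus rerun is therefore necessary, not optional, for the geometric case. Once it is in place it covers the exponential case too, which is why the paper uses it uniformly.

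When you run it, make sure the two-line invariance you cite holds jointly in all $x\le y$, not just for the final shape. That joint version is what lets you couple the cross weights so that all two-line passage values agree almost surely. It comes from invariance of the law of the full output sequence $(\hat A_i,\hat B_i)_i$, with passage values from $x=1$ read off directly and those from $x\ge 2$ obtained via \eqref{E:rsk-iso}.
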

	The proof is essentially identical to the proof of Lemma \ref{lem:invp}, with the $(+, \times)$-algebra replaced by the $(\max, +)$-algebra everywhere, using the following LPP version of  Lemma \ref{lem:coupr}.
    \begin{lemma}\label{lem:couprLPP}
Let $I=\llbracket a,b\rrbracket$ for some $-\infty \le a < b \le \infty$. Take  $\{A_i,B_i\}_{i\in I}$ and $\{C_i,D_i\}_{i\in I}$ to be two families of independent random variables, such that either
\begin{itemize}
    \item[(1)] $A_i\sim \Exp(\apar_0+\beta_i)$, $B_i\sim \Exp(\apar_1+\beta_i)$, $C_i\sim \Exp(\apar_1+\beta_i)$, $D_i\sim \Exp(\apar_0+\beta_i)$, where $\apar_0<\apar_1$ and $\{\beta_i\}_{i\in I}$ are real parameters, such that $\apar_0+\beta_i>0$ for each $i\in I$; or
    \item[(2)] $A_i\sim \Geo(\apar_0\beta_i)$, $B_i\sim \Geo(\apar_1\beta_i)$, $C_i\sim \Geo(\apar_1\beta_i)$, $D_i\sim \Geo(\apar_0\beta_i)$, where $\apar_0>\apar_1$ and $\{\beta_i\}_{i\in I}$ are positive parameters, such that $\apar_0\beta_i\in (0,1)$ for each $i\in I$.
\end{itemize}
Then these two families of random variables can be coupled so that almost surely, for any $x\le y\in I$,
\[
\max_{j\in\II{x,y}} \sum_{i=x}^j A_i + \sum_{i=j}^y B_i
=
\max_{j\in\II{x,y}} \sum_{i=x}^j C_i + \sum_{i=j}^y D_i .
\]
\end{lemma}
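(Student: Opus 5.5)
The plan is to mirror the proof of Lemma \ref{lem:coupr}, replacing geometric RSK by ordinary (max-plus) RSK on two rows, or equivalently by the stationary $M/M/1$ tandem queue. By Kolmogorov extension and translation it suffices to take $I=\II{1,n}$. Given $(A_i,B_i)_{i\le n}$, define $(\hat A_i,\hat B_i)$ through
$$
\sum_{i=1}^k \hat B_i=\max_{j\in\II{1,k}}\sum_{i=1}^j A_i+\sum_{i=j}^k B_i,\qquad \sum_{i=1}^k(\hat A_i+\hat B_i)=\sum_{i=1}^k (A_i+B_i),
$$
and define $(\hat C_i,\hat D_i)$ from $(C_i,D_i)$ in the same way. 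This is the two-line max-plus RSK map. I would verify by hand (or cite the tropical limit of \cite{NY04} or \cite[Theorem 3.3]{D22hidden}) the max-plus analogue of the isometry \eqref{E:rsk-iso}. That is, for $2\le x\le y\le n$ the two-row last passage value from $x$ to $y$ computed from $(A,B)$ equals the one computed from $(\hat A,\hat B)$, and for $x=1$ it equals $\sum_{i\le y}\hat B_i$. This is a deterministic induction on $y$ using the recursion $G_{k}=\max(G_{k-1},\sum_{i\le k}A_i)+B_k$ for the upper row, and it is routine.

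It then remains to show that $(\hat A_i,\hat B_i)_{i\le n}\eqd(\hat C_i,\hat D_i)_{i\le n}$. Once this holds, we couple the two environments so that the hatted outputs coincide, and the isometry gives the identity for all $x\le y$. For the exponential case (1) this is the zero-temperature version of the \cite{COSZ} invariance. Concretely, it is the output theorem for the $M/M/1$ queue in an inhomogeneous environment: the inter-departure and idle data are determined by a Burke-type property. I would prove it directly by an exact computation. The map $(A,B)\mapsto(\hat A,\hat B)$ is piecewise linear and volume preserving on each linearity region, so the joint density of the output can be written explicitly. The key point is that the densities $\prod_i(\apar_0+\beta_i)(\apar_1+\beta_i)e^{-(\apar_0+\beta_i)A_i-(\apar_1+\beta_i)B_i}$ transform into an expression symmetric in $\apar_0\leftrightarrow\apar_1$. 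An alternative route, which I would try first, is to obtain case (1) as a zero-temperature degeneration of Lemma \ref{lem:coupr}. Take $\Gai(\epsilon\theta)$ weights, use the fact that $\epsilon\log$ of them converges to $\Exp(\theta)$, and note that $\epsilon\log$ of the sums in \eqref{eq:coupr} converges to the maxima. Couplings then pass to the limit by tightness together with the a.s. continuity of the functional, via Skorokhod. This degeneration argument is cleanest, since equality of laws of the vectors indexed by $x\le y$ transfers in the limit.

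For the geometric case (2) I would use the analogous known symmetry of RSK with geometric weights (the Schur/Cauchy structure: the law of the RSK output with row parameters $\apar_0,\apar_1$ and column parameters $\beta_i$ is symmetric in the row parameters). Alternatively, I would again take a limit of log-gamma, now with a discrete rescaling, which is less clean. I therefore prefer a direct argument: the two-row RSK output's law is $\propto s_\lambda$-type weights times products that are symmetric in $\apar_0,\apar_1$, so swapping them preserves the law.

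The main obstacle is this distributional invariance of the RSK output under swapping $\apar_0,\apar_1$. Equality of the top-row values $\sum \hat B_i$ alone is not enough; we need the full pair $(\hat A,\hat B)$. Handling that, via either the degeneration of the \cite{COSZ} invariance or the explicit density and Schur computation, is where the real work lies.
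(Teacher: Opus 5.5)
Your proposal is correct, and its skeleton is the paper's. The paper likewise rests on the max-plus two-line RSK isometry (the tropical version of \eqref{E:rsk-iso}) together with invariance of the law of the recording data $(\hat A,\hat B)$ under $\gamma_0\leftrightarrow\gamma_1$, citing \cite{johansson2003discrete} for the latter. Your Schur-function argument in case (2) is exactly that invariance.

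For case (1), your preferred route is to degenerate Lemma \ref{lem:coupr} with parameters $\epsilon(\gamma_0+\beta_i)$ and $\epsilon(\gamma_1+\beta_i)$. This is a genuine shortcut, because it makes the max-plus isometry and the output invariance unnecessary in that case. Since $|\epsilon\log\sum_j e^{s_j/\epsilon}-\max_j s_j|\le \epsilon\log(y-x+1)$, applying $\epsilon\log$ to both sides of \eqref{eq:coupr} gives arrays that converge, jointly in $x\le y$ by continuous mapping, to the two max-plus arrays. Equality in law of the arrays therefore passes to the limit, and you then glue along the common array. This shortcut does not cover case (2), where the Schur/Johansson symmetry remains necessary.

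One correction if you pursue the explicit-density route instead. The map $(A,B)\mapsto(\hat A,\hat B)$ is not injective; your own definitions force $\hat A_1\equiv 0$. To get a volume-preserving bijection you must adjoin the $P$-datum $\mu=\sum_i A_i$, which lies in $[\lambda_2,\lambda_1]$ with $\lambda_1=\sum_i\hat B_i$ and $\lambda_2=\sum_i\hat A_i$. The resulting joint density, proportional to $e^{-\gamma_0\mu-\gamma_1(\lambda_1+\lambda_2-\mu)-\sum_i\beta_i(\hat A_i+\hat B_i)}$, is \emph{not} symmetric in $\gamma_0\leftrightarrow\gamma_1$. The symmetry appears only after integrating out $\mu$, which yields the two-variable continuous Schur function $(e^{-\gamma_0\lambda_1-\gamma_1\lambda_2}-e^{-\gamma_1\lambda_1-\gamma_0\lambda_2})/(\gamma_1-\gamma_0)$.
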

This uses the zero-temperature version of the two-line RSK isometry for a single path \eqref{E:rsk-iso}, which was first shown in \cite{NY04, biane2005littelmann} before being rediscovered in \cite{DOV}. It also follows from formally changing the algebra in \eqref{E:rsk-iso}. The invariance of the output of RSK under parameter permutation when applied to an array of geometric or exponential random variables is classical and was first observed in \cite{johansson2003discrete}. 
We omit the details of the proofs of both lemmas.
	
	\subsection{Half-space log-gamma horizon}
	\label{S:half-space-gamma-horizon}
	Next, we construct the joint stationary measures for the homogeneous extended half-space log-gamma polymer.
	This couples the stationary measures constructed in \cite{BC}.  We expect that our construction identifies the full set of joint stationary measures for the extended half-space log-gamma polymer, but we do not pursue this direction here. 
    Throughout this section, we fix a bulk parameter $\theta > 0$ and a boundary parameter $\alpha \in (-\theta, \infty)$.

Setting notation, for each $t, j\in \Z$, define
	\begin{equation}   \label{eq:Lshapedef}
	[j,t]_\textsf{L} = 
	\begin{cases}
		(t+j, t),\quad j \ge 0 ; \\
		(t, t-j),\quad j <0,
	\end{cases}    
	\end{equation}
	so that $[\cdot,t]_\textsf{L}$ parametrizes a bi-infinite $L$-shaped region with its corner at $(t, t)$.
    Next, for a function $f:\Z \to \R_+$ and an environment $E:\Z^2 \to \R_{\ge 0}\cup\{\infty\}$, define
\[
	E^{f}(u) = \begin{cases}
	f(j)/f(j - \operatorname{sgn}(j)), \qquad &u = [j,0]_\textsf{L}\; \text{ for some } j \in \Z	 \\
	E(u), \qquad &\text{else}.
\end{cases}
\]
	Here $\operatorname{sgn}(0) = 0$, so that $E^f(0,0) = 1$. Let $Z_E^{f}$ denote partition functions for the environment $E^{f}$. 
    
    Now, we say that a family of random functions  $(F_i:\Z \to \R_+, i \in I)$ for a finite index set $I$ is \emph{jointly stationary} for the extended half-space log-gamma polymer with boundary parameter $\alpha$ and equal bulk parameters $\apar_i = \theta, i \in \Z$, if for $X$ being the field as given in Definition \ref{D:gamma} with such parameters $\alpha$ and $\{\gamma_i\}_{i\in\Z}$, and any $h \in \N$, we have:
    \begin{equation}
    \label{E:stationary-defn}
   \frac{F_i(j)}{F_i(0)} \eqd \frac{Z^{F_i}_X(0,0; [j, h]_\textsf{L})}{Z^{F_i}_X(0,0; [0, h]_\textsf{L})},
    \end{equation}
where the equality in law is joint in all $i \in I, j \in \Z$. 

We next define our jointly stationary families. Fix any parameters $\alpha$ and $\apar_1, \dots, \apar_k$ with 
$-\theta<\apar_1< \apar_2< \cdots < \apar_k\le 0\wedge \alpha$.
Consider the model in Definition \ref{D:gamma}, with given values of $\alpha, \apar_i, i\in\II{1, k}$ and $\apar_i=\theta$ for each $i\ge 2k+1$ or $i\le 0$. We fix a small $\eps > 0$ and set $\apar_{2k+1-i}=\eps-\apar_i$ for each $i\in\II{1, k}$. 
	For each $i\in\II{1, k}$ and $j\in \Z$, we let
	\begin{equation}
		\label{E:Resp}
	R^{\LG,\epsilon}_i(j) = \frac{Z_X(2k+1-i, i; [j,2k]_\textsf{L} )}{Z_X(2k+1-i, i; 2k, 2k)}.
	\end{equation}
	\begin{prop}
		\label{P:stationary-existence}
	The collection of functions $R^{\LG,\epsilon} = \{R_i^{\LG,\epsilon}\}_{i=1}^k$ has a distributional limit $R^{\LG} = \{R_i^{\LG}\}_{i=1}^k$ as $\epsilon \to 0$. 
	\end{prop}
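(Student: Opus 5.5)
The plan is to observe that for each fixed $j$, $R^{\LG,\epsilon}_i(j)$ is a ratio of two \emph{finite} sums of products of finitely many weights. The start point $(2k+1-i,i)$ and every target $[j,2k]_\textsf{L}$ lie in a bounded box, so only finitely many up-right paths contribute. The claim then reduces to convergence in law of finitely many weights as $\epsilon\to 0$, plus the continuous mapping theorem. Since the functions live on $\Z$, convergence of finite-dimensional distributions jointly in $i\in\II{1,k}$ and $j$ in finite sets identifies a limit in the product topology. I will realize the weights in a coupling, for instance $X(u)=1/G_u$ with $G_u$ a Gamma variable whose parameter depends continuously on $\epsilon$, so that the limit is almost sure.

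\textbf{Step 1: bookkeeping of parameters.} A path from $(2k+1-i,i)$ only visits $(p,q)$ with $p\ge 2k+1-i$ and $q\ge i$. For $p\in\II{2k+1-i,2k}$ write $m=2k+1-p\le i$, so $\apar_p=\epsilon-\apar_m$.
\begin{itemize}[nosep]
\item For $q\in\II{i,k}$, the parameter is $\epsilon-\apar_m+\apar_q$. This is bounded away from $0$ unless $m=q=i$, which is exactly the starting point. For $q\in\II{k+1,2k}$ it is $2\epsilon-\apar_m-\apar_{2k+1-q}\ge 2\epsilon$ plus nonnegative terms, positive and convergent. Here I use $\apar_m\le 0$, and I must check that this does not degenerate: it degenerates only if $\apar_m=\apar_{2k+1-q}=0$, which forces $m=k=2k+1-q$.
\item Rows and columns $\ge 2k+1$ give parameters $\theta+\epsilon-\apar_m>0$ or $2\theta$.
\item Diagonal weights give $\alpha+\epsilon-\apar_m\ge\epsilon$, using $\apar_m\le\alpha$.
\item Points below the diagonal are handled by the symmetry $X(i,j)=X(j,i)$ and condition \eqref{E:diagonal-condition}.
\end{itemize}
Thus every weight other than the start converges in law to an inverse gamma variable with a strictly positive limiting parameter, except in the degenerate boundary cases $\apar_k=\alpha$ or $\apar_k=0$ with $i=k$. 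In those cases finitely many specific weights near $(k+1,k)$, namely $(k+1,k+1)$ and/or its reflection, have parameter of order $\epsilon$.

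\textbf{Step 2: cancel the singular start weight.} Every path counted in numerator and denominator of \eqref{E:Resp} contains the start $(2k+1-i,i)$, whose weight $\Gai(\epsilon)$ blows up. This weight factors out of both sums and cancels. In the generic case, the ratio is then a continuous function of the remaining finitely many weights. Since these weights are finite and positive almost surely in the limit, the ratio converges almost surely in the coupling, which gives the limit $R^{\LG}$.

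\textbf{Step 3: the degenerate case (main obstacle).} When some additional weight $W_\epsilon\sim\Gai(c\epsilon)$ blows up, I would use that $\epsilon\log W_\epsilon$ converges to a nondegenerate law, since $\Gamma(\epsilon)^{\epsilon}$ behaves like a uniform variable. Products of two independent such weights therefore dominate single ones by a diverging factor. I would then show that all paths to every target $[j,2k]_\textsf{L}$ and to $(2k,2k)$ can pass through the maximal set of singular sites, which are adjacent to the start. After dividing numerator and denominator by the product of these singular weights, the paths avoiding some singular site contribute $o(1)$ in probability. The surviving sums are again continuous functions of non-degenerate weights. The delicate point is verifying that for \emph{every} target, including those with $j<0$ that lie above the diagonal and require crossing it under \eqref{E:diagonal-condition}, the same set of singular sites is reachable, so the normalization is common. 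I expect this case check, rather than any analytic estimate, to be the main work.
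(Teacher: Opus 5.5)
Your proposal is correct and follows the paper's proof. Only finitely many weights enter each ratio, and the start weight $X(2k+1-i,i)\sim\Gai(\epsilon)$ factors out and cancels. The only other blow-up is the diagonal weight $X(k+1,k+1)$ when $i=k$ and $\apar_k=\alpha$; since every target is reachable through it, it dominates, and the limit equals the ratio of partition functions started from $(k+1,k+1)$. Your extra degenerate case $\apar_k=0$ is spurious: the site $(k+1,k+1)$ is diagonal, with parameter $\alpha+\epsilon-\apar_k$, and off-diagonal sites in $\II{k+1,2k}^2$ always involve some $\apar_\ell<0$ with $\ell<k$, so they stay nondegenerate. Consequently Step 3 needs only that the single extra weight tends to $\infty$ in probability, and the $\epsilon\log W_\epsilon$ comparison is unnecessary.
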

	
	\begin{proof}
	First let $i < k$. As $\epsilon \to 0$, all random variables $X(v)$ in the environment that contribute to the partition functions defining $R^{\LG,\epsilon}_i$ stay finite (and change continuously in their parameters) except for the variable $X(2k + 1 -i, i)$, which cancels out in the ratio \eqref{E:Resp}. When $i = k$, the argument is the same except in the case $\apar_k = \alpha$. In this case, since both the terms $X(k + 1, k)$ and $X(k+1, k+1)$ blow up, the fraction of the weight in the partition function in \eqref{E:Resp} using $X(k+1, k+1)$ goes to $1$ as $\eps \to 0$, and so
	$$
	\lim_{\eps \to 0} \frac{Z_X(k+1, k; [j,2k]_\textsf{L})}{Z_X(k+1, k; 2k, 2k)} = 	\lim_{\eps \to 0} \frac{Z_X(k+1, k + 1; [j,2k]_\textsf{L})}{Z_X(k+1, k + 1; 2k, 2k)}.
	$$
On the right-hand side, the blow-up term $X(k+1, k+1)$ now cancels out as before.
\end{proof}

	The process $R^{\LG}$ is jointly stationary in the sense of \eqref{E:stationary-defn}. We first prove this when $h = 1$, where we can reframe this stationarity as follows. For each $i\in\II{1, k}$ and $j\in \Z$, let
	\begin{equation}
		\label{E:Resp+ell}
	R^{\LG, \eps, +}_i(j) = \frac{Z_X(2k+1-i, i; [j,2k+1]_\textsf{L})}{Z_X(2k+1-i, i; 2k+1, 2k+1)}.
	\end{equation}
	Exactly as in Proposition \ref{P:stationary-existence}, the functions $R^{\LG,\eps,+} = \{R^{\LG,\eps,+}_i\}_{i=1}^k$ have a joint distributional limit as $\epsilon \to 0$, which we call $R^{\LG,+} = \{R^{\LG,+}_i\}_{i=1}^k$. 

    \begin{lemma}  \label{lem:sta} We have $R^{\LG} \eqd R^{\LG,+}$.
	\end{lemma}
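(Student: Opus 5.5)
The plan is to realize the one-step evolution as a relabelling of parameters. Lemma \ref{lem:invp} moves the bulk row/column at index $2k+1$ down to index $1$, and a diagonal translation then turns $R^{\LG,\eps,+}$ into $R^{\LG,\eps}$ up to an error that vanishes as $\eps\to0$.

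\textbf{Setup.} In the environment defining \eqref{E:Resp+ell}, the indices $1,\dots,2k$ carry $\apar_1,\dots,\apar_k,\eps-\apar_k,\dots,\eps-\apar_1$, and index $2k+1$ carries $\theta$. The endpoints $[j,2k+1]_\textsf{L}$ have both coordinates $\ge 2k+1$ (the corner is $(2k+1,2k+1)$). So for every $m\le 2k-1$, Lemma \ref{lem:invp}, translated to the pair of indices $(m,m+1)$, puts no restriction on the endpoints: they avoid the forbidden value $m$.

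\textbf{Swaps.} I would apply the translated Lemma \ref{lem:invp} successively at the pairs $(2k,2k+1),(2k-1,2k),\dots,(1,2)$. This bubbles the parameter $\theta$ from index $2k+1$ down to index $1$ and shifts the block of $\apar$'s up to indices $2,\dots,2k+1$. Translating the whole field by $(-1,-1)$, which preserves the law of the homogeneous half-space field outside the block, sends the block back to indices $1,\dots,2k$ and the endpoint L-shape to $[j,2k]_\textsf{L}$. It also sends the starting points to $(2k+1-i,i)$, as long as we tracked the moving starting points $(2k+2-i,i+1)$ in the permuted environment. So the bookkeeping of starting points is the crux.

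\textbf{Main obstacle: starting points.} The starting points $(2k+1-i,i)$ have coordinates in $\II{1,2k}$, so a swap at $(m,m+1)$ is illegal whenever some start has a coordinate equal to $m+1$. My first attempt exploits the fact that after $\eps\to0$ the start weight $X(2k+1-i,i)$, of parameter $\eps$, dominates and cancels in ratios. Concretely, I would rewrite each ratio $R^{\LG,\eps,+}_i$ as a ratio of partition functions started from the two neighbours $(2k+2-i,i)$ and $(2k+1-i,i+1)$, weighted by that blowing-up weight. Alternatively, and preferably, I would start from points of the outer boundary of the block, say $(2k+1,i)$ and $(2k+1-i,0)$, with weights carrying the correct boundary data. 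That way all starting coordinates avoid the swapped indices. Where a coordinate still collides, I would pass to the limit $\eps\to0$ as in Proposition \ref{P:stationary-existence} before performing that swap. The term through the diverging weight then dominates, so the statistic depends only on partition functions whose start coordinates avoid $m+1$, and Lemma \ref{lem:invp} applies to those.

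\textbf{Conclusion.} Apply the swaps (possibly in the order $i=1,\dots,k$, handling the pair $\apar_i,\eps-\apar_i$ together), translate, and take $\eps\to0$ using Proposition \ref{P:stationary-existence} on both sides. This identifies the joint law of $R^{\LG,+}$ with that of $R^{\LG}$. The case $\apar_k=\alpha$ needs the extra observation from the proof of Proposition \ref{P:stationary-existence} that the diagonal weight dominates.
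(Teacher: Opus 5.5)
Your overall plan (bubble $\theta$ from index $2k+1$ down to index $1$ by successive applications of Lemma \ref{lem:invp}, then translate by $(-1,-1)$) is the same as the paper's. However, the step you yourself call the crux is not resolved, and the mechanisms you propose for it do not work as stated.

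If the starts are tracked, with the start for $i$ being the cell whose two indices carry $\apar_i$ and $\eps-\apar_i$, there is never a collision \emph{before} a swap. When you swap indices $\ell,\ell+1$, $\theta$ sits at index $\ell+1$, no tracked start uses that index, and Lemma \ref{lem:invp} applies as is. The difficulty arises \emph{after} the swap, when one start is stale. For $i=\ell\le k$ it sits at $(2k+2-\ell,\ell)$, on the new $\theta$-index $\ell$, while the $\Gai(\eps)$ cell has moved to $(2k+2-\ell,\ell+1)$; the case $i=2k+1-\ell$ with $\ell\ge k+1$ is symmetric.

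Your three proposed fixes do not address this:
\begin{itemize}
\item The preferred fix fails. The start $(2k+1,i)$ has coordinate $2k+1$, which is already forbidden at the first swap $(2k,2k+1)$. The start $(2k+1-i,0)$ has a coordinate in $\II{k+1,2k}$, which is forbidden at a later swap. In fact every value in $\II{2,2k+1}$ is forbidden at some step, since $\theta$ passes through all of them. The ``weights carrying the correct boundary data'' are also never specified.
\item Rewriting via the two neighbours of the start is exact: the start weight is a common factor of all paths and cancels without any limit. But it leaves the start coordinates in the same range, so nothing is gained.
\item Passing to $\eps\to0$ \emph{before} a swap has the order backwards. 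Lemma \ref{lem:invp} is an identity at fixed $\eps>0$ and says nothing directly about the limiting ratios.
\end{itemize}

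The missing idea is an intermediate statistic that separates the exact swap from the re-anchoring of the start. Let $X^{(\ell)}$ be the environment in which $\theta$ has been moved to index $\ell$, so that indices $\ell+1,\dots,2k+1$ carry $\apar_\ell,\dots,\apar_{2k}$. Let $R^{\eps,\ell,\tilde\ell}$ denote the ratios computed in $X^{(\ell)}$ with starts tracked as if $\theta$ were at index $\tilde\ell$.

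First, at fixed $\eps$, Lemma \ref{lem:invp} couples $X^{(\ell)}$ and $X^{(\ell+1)}$ so that $R^{\eps,\ell,\ell+1}=R^{\eps,\ell+1,\ell+1}$ exactly. This is legal because starts tracked as if $\theta$ were at index $\ell+1$ never use coordinate $\ell+1$, and all endpoints have coordinates $\ge 2k+1$.

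Second, inside the single environment $X^{(\ell)}$, the statistics $R^{\eps,\ell,\ell+1}$ and $R^{\eps,\ell,\ell}$ differ only for $i=\min(\ell,2k+1-\ell)$. For that $i$, the start moves one step (up, or to the right) onto the $\Gai(\eps)$ cell. Since that weight diverges, $X^{(\ell)}(2k+2-\ell,\ell)\,Z_{X^{(\ell)}}(2k+2-\ell,\ell+1;\cdot)/Z_{X^{(\ell)}}(2k+2-\ell,\ell;\cdot)\to1$ in probability; this is \eqref{E:limeps}. Hence the two ratios have the same $\eps\to0$ limit. In other words, domination is used when the diverging cell is the \emph{second} cell of the path, one step after a stale start, not when it is the start itself.

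Chaining $\ell=2k,\dots,1$, together with $R^{\eps,2k+1,2k+1}=R^{\LG,\eps,+}$ and $R^{\eps,1,1}\eqd R^{\LG,\eps}$ (your translation), completes the proof.
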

	\begin{proof}
    Our overall strategy is to use Lemma \ref{lem:invp} to permute the parameters repeatedly.
    
		For $\ell\in\II{1, 2k+1}$ let $\sigma_\ell \in S_{2k + 1}$ be the permutation $(1, 2, \dots, \ell - 1, 2k + 1, \ell, \dots, 2k)$. Note that $\sigma_\ell^{-1} = (1, 2, \dots, \ell - 1, \ell + 1, \dots, 2k + 1, \ell)$, and extend $\sigma_\ell$ to a permutation of $\Z$ by setting it equal to the identity off of $\II{1, 2k+1}$.
		 Letting $\{\apar_i\}_{i\in\Z}$ be the parameters used in the definition \eqref{E:Resp}, let $X^{(\ell)}, Z_{X^{(\ell)}}$ be environments and partition functions defined with the parameter sequence $\{\apar_{\sigma(i)}\}_{i\in\Z}$. Note the implicit dependence on $\eps$. Finally, for $\ell, \tilde \ell \in \II{1, 2k + 1}$ set
		\[
			R^{\eps, \ell, \tilde \ell}_i(j) = \frac{Z_{X^{(\ell)}}(\sigma_{\tilde \ell}^{-1}(2k+1-i), \sigma_{\tilde \ell}^{-1}(i); [j,2k+1]_\textsf{L})}{Z_{X^{(\ell)}}(\sigma_{\tilde \ell}^{-1}(2k+1-i), \sigma_{\tilde \ell}^{-1}(i); 2k+1, 2k+1)}.
		\]
		Observe that $R^{\eps, 2k + 1, 2k+1} = R^{\LG,\eps, +}$ and $R^{\eps, 1, 1} \eqd R^{\LG,\eps}$. Therefore to prove the lemma it suffices to show that for any $\ell\in\II{1,2k}$, we have
		\begin{equation}
\label{E:eps-ell-sigma}
\lim_{\eps \to 0} R^{\eps, \ell, \ell} \eqd \lim_{\eps \to 0}	R^{\eps, \ell + 1, \ell + 1}. 
		\end{equation}
First fix $\eps > 0$. By Lemma \ref{lem:invp}, we can couple the environments $X^{(\ell)}, X^{(\ell + 1)}$ so that
$$
Z_{X^{(\ell)}}(p, q; r, s) = Z_{X^{(\ell+1)}}(p, q; r, s)
$$ 
whenever $p, q \ne \ell + 1$ and $r, s \ne \ell$. In this coupling we have $R^{\eps, \ell, \ell+1}_i = R^{\eps, \ell + 1, \ell+1}_i$ for all $i \in \II{1, k}$ and $j \in \Z$, since $\sigma_{\ell+1}^{-1}(2k + 1 - i), \sigma_{\ell +1}(i) \ne \ell + 1$ for any such $i$. To complete the proof of \eqref{E:eps-ell-sigma} it suffices to show that 
$\lim_{\ep \to 0} R^{\eps, \ell, \ell}_i \eqd \lim_{\ep \to 0} R^{\eps, \ell, \ell +1}_i$ jointly in all $i$. By examining the definition of $\sigma_\ell^{-1}$, this equality holds almost surely before sending $\eps \to 0$ other than for the unique index $i = \min(\ell, 2k + 1- \ell)$. At this point, to ease notation we will just check the case when $i = \ell$ as the case when $i = 2k + 1- \ell$ is similar.  
It is enough to verify that for any fixed $j$, in probability we have
\begin{equation}
	\label{E:limeps}
	\lim_{\ep \to 0} \frac{X^{(\ell)}(2k + 2 - \ell, \ell) Z_{X^{(\ell)}}(\sigma_{\ell}^{-1}(2k+1-\ell), \sigma_{\ell}^{-1}(\ell); [j,2k+1]_\textsf{L})}{Z_{X^{(\ell)}}(\sigma_{\ell +1}^{-1}(2k+1-\ell), \sigma_{\ell+1}^{-1}(\ell); [j,2k+1]_\textsf{L})} = 1.
\end{equation}
We have $(\sigma_{\ell}^{-1}(2k+1-\ell), \sigma_{\ell}^{-1}(\ell)) = (2k+2-\ell, \ell+1)$ whereas $(\sigma_{\ell+1}^{-1}(2k+1-\ell), \sigma_{\ell+1}^{-1}(\ell)) = (2k+2-\ell, \ell)$.
Moreover, as $\eps \to 0$, the number $X^{(\ell)}(2k+2-\ell, \ell+1)$ blows up to $\infty$, whereas none of the other cells contributing to the partition functions in \eqref{E:limeps} blow up. This yields \eqref{E:limeps}.  
	\end{proof}

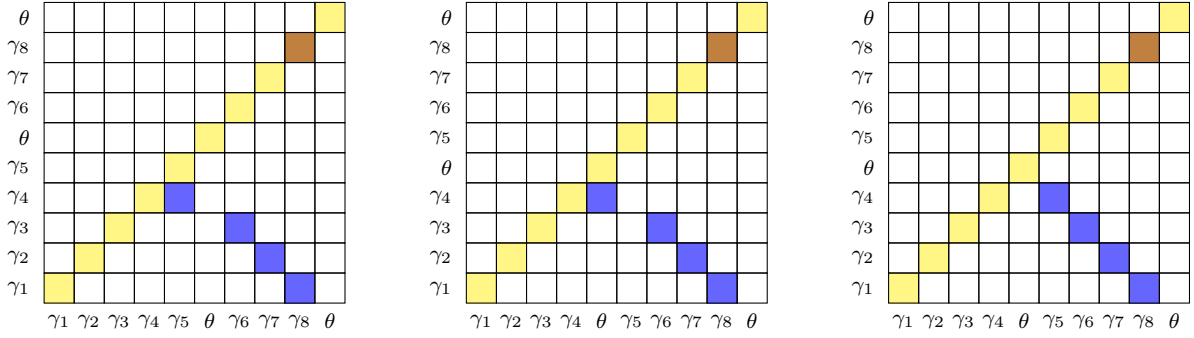
\begin{figure}[t]
    \centering 

\begin{subfigure}{0.32\textwidth}
\centering
\begin{tikzpicture}[scale=0.4]
\def\s{1}

\fill[blue!60] (9*\s - 1*\s, -9*\s) rectangle ++(\s,\s);
\fill[blue!60] (9*\s - 2*\s, -8*\s) rectangle ++(\s,\s);
\fill[blue!60] (9*\s - 3*\s, -7*\s) rectangle ++(\s,\s);
\fill[blue!60] (9*\s - 5*\s, -6*\s) rectangle ++(\s,\s);

\foreach \col in {0,...,9} {
    \fill[yellow!60] (9*\s - \col*\s, -\col*\s) rectangle ++(\s,\s);
}

\fill[brown] (8*\s, -1*\s) rectangle ++(\s,\s);

\foreach \row in {0,...,9} {
    \foreach \col in {0,...,9} {
        \draw (9*\s - \col*\s, -\row*\s) rectangle ++(\s,\s);
    }
}

\begin{scriptsize}
\node at (9*\s - 0*\s + 0.5*\s, -9*\s - 0.6*\s) {$\theta$};
\node at (9*\s - 1*\s + 0.5*\s, -9*\s - 0.6*\s) {$\gamma_{8}$};
\node at (9*\s - 2*\s + 0.5*\s, -9*\s - 0.6*\s) {$\gamma_{7}$};
\node at (9*\s - 3*\s + 0.5*\s, -9*\s - 0.6*\s) {$\gamma_{6}$};
\node at (9*\s - 4*\s + 0.5*\s, -9*\s - 0.6*\s) {$\theta$};
\node at (9*\s - 5*\s + 0.5*\s, -9*\s - 0.6*\s) {$\gamma_{5}$};
\node at (9*\s - 6*\s + 0.5*\s, -9*\s - 0.6*\s) {$\gamma_{4}$};
\node at (9*\s - 7*\s + 0.5*\s, -9*\s - 0.6*\s) {$\gamma_{3}$};
\node at (9*\s - 8*\s + 0.5*\s, -9*\s - 0.6*\s) {$\gamma_{2}$};
\node at (9*\s - 9*\s + 0.5*\s, -9*\s - 0.6*\s) {$\gamma_{1}$};

\node[left] at (-0.2*\s, -9*\s + 0.5*\s) {$\gamma_{1}$};
\node[left] at (-0.2*\s, -8*\s + 0.5*\s) {$\gamma_{2}$};
\node[left] at (-0.2*\s, -7*\s + 0.5*\s) {$\gamma_{3}$};
\node[left] at (-0.2*\s, -6*\s + 0.5*\s) {$\gamma_{4}$};
\node[left] at (-0.2*\s, -5*\s + 0.5*\s) {$\gamma_{5}$};
\node[left] at (-0.2*\s, -4*\s + 0.5*\s) {$\theta$};
\node[left] at (-0.2*\s, -3*\s + 0.5*\s) {$\gamma_{6}$};
\node[left] at (-0.2*\s, -2*\s + 0.5*\s) {$\gamma_{7}$};
\node[left] at (-0.2*\s, -1*\s + 0.5*\s) {$\gamma_{8}$};
\node[left] at (-0.2*\s, 0*\s + 0.5*\s) {$\theta$};
\end{scriptsize}

\end{tikzpicture}
\end{subfigure}
\hfill
\begin{subfigure}{0.32\textwidth}
\centering
\begin{tikzpicture}[scale=0.4]
\def\s{1}

\fill[blue!60] (9*\s - 1*\s, -9*\s) rectangle ++(\s,\s);
\fill[blue!60] (9*\s - 2*\s, -8*\s) rectangle ++(\s,\s);
\fill[blue!60] (9*\s - 3*\s, -7*\s) rectangle ++(\s,\s);
\fill[blue!60] (9*\s - 5*\s, -6*\s) rectangle ++(\s,\s);

\foreach \col in {0,...,9} {
    \fill[yellow!60] (9*\s - \col*\s, -\col*\s) rectangle ++(\s,\s);
}

\fill[brown] (8*\s, -1*\s) rectangle ++(\s,\s);

\foreach \row in {0,...,9} {
    \foreach \col in {0,...,9} {
        \draw (9*\s - \col*\s, -\row*\s) rectangle ++(\s,\s);
    }
}

\begin{scriptsize}
\node at (9*\s - 0*\s + 0.5*\s, -9*\s - 0.6*\s) {$\theta$};
\node at (9*\s - 1*\s + 0.5*\s, -9*\s - 0.6*\s) {$\gamma_{8}$};
\node at (9*\s - 2*\s + 0.5*\s, -9*\s - 0.6*\s) {$\gamma_{7}$};
\node at (9*\s - 3*\s + 0.5*\s, -9*\s - 0.6*\s) {$\gamma_{6}$};
\node at (9*\s - 4*\s + 0.5*\s, -9*\s - 0.6*\s) {$\gamma_{5}$};
\node at (9*\s - 5*\s + 0.5*\s, -9*\s - 0.6*\s) {$\theta$};
\node at (9*\s - 6*\s + 0.5*\s, -9*\s - 0.6*\s) {$\gamma_{4}$};
\node at (9*\s - 7*\s + 0.5*\s, -9*\s - 0.6*\s) {$\gamma_{3}$};
\node at (9*\s - 8*\s + 0.5*\s, -9*\s - 0.6*\s) {$\gamma_{2}$};
\node at (9*\s - 9*\s + 0.5*\s, -9*\s - 0.6*\s) {$\gamma_{1}$};

\node[left] at (-0.2*\s, -9*\s + 0.5*\s) {$\gamma_{1}$};
\node[left] at (-0.2*\s, -8*\s + 0.5*\s) {$\gamma_{2}$};
\node[left] at (-0.2*\s, -7*\s + 0.5*\s) {$\gamma_{3}$};
\node[left] at (-0.2*\s, -6*\s + 0.5*\s) {$\gamma_{4}$};
\node[left] at (-0.2*\s, -5*\s + 0.5*\s) {$\theta$};
\node[left] at (-0.2*\s, -4*\s + 0.5*\s) {$\gamma_{5}$};
\node[left] at (-0.2*\s, -3*\s + 0.5*\s) {$\gamma_{6}$};
\node[left] at (-0.2*\s, -2*\s + 0.5*\s) {$\gamma_{7}$};
\node[left] at (-0.2*\s, -1*\s + 0.5*\s) {$\gamma_{8}$};
\node[left] at (-0.2*\s, 0*\s + 0.5*\s) {$\theta$};
\end{scriptsize}

\end{tikzpicture}
\end{subfigure}
\hfill
\begin{subfigure}{0.32\textwidth}
\centering

\begin{tikzpicture}[scale=0.4]
\def\s{1}

\fill[blue!60] (9*\s - 1*\s, -9*\s) rectangle ++(\s,\s);
\fill[blue!60] (9*\s - 2*\s, -8*\s) rectangle ++(\s,\s);
\fill[blue!60] (9*\s - 3*\s, -7*\s) rectangle ++(\s,\s);
\fill[blue!60] (9*\s - 4*\s, -6*\s) rectangle ++(\s,\s);

\foreach \col in {0,...,9} {
    \fill[yellow!60] (9*\s - \col*\s, -\col*\s) rectangle ++(\s,\s);
}

\fill[brown] (8*\s, -1*\s) rectangle ++(\s,\s);

\foreach \row in {0,...,9} {
    \foreach \col in {0,...,9} {
        \draw (9*\s - \col*\s, -\row*\s) rectangle ++(\s,\s);
    }
}

\begin{scriptsize}
\node at (9*\s - 0*\s + 0.5*\s, -9*\s - 0.6*\s) {$\theta$};
\node at (9*\s - 1*\s + 0.5*\s, -9*\s - 0.6*\s) {$\gamma_{8}$};
\node at (9*\s - 2*\s + 0.5*\s, -9*\s - 0.6*\s) {$\gamma_{7}$};
\node at (9*\s - 3*\s + 0.5*\s, -9*\s - 0.6*\s) {$\gamma_{6}$};
\node at (9*\s - 4*\s + 0.5*\s, -9*\s - 0.6*\s) {$\gamma_{5}$};
\node at (9*\s - 5*\s + 0.5*\s, -9*\s - 0.6*\s) {$\theta$};
\node at (9*\s - 6*\s + 0.5*\s, -9*\s - 0.6*\s) {$\gamma_{4}$};
\node at (9*\s - 7*\s + 0.5*\s, -9*\s - 0.6*\s) {$\gamma_{3}$};
\node at (9*\s - 8*\s + 0.5*\s, -9*\s - 0.6*\s) {$\gamma_{2}$};
\node at (9*\s - 9*\s + 0.5*\s, -9*\s - 0.6*\s) {$\gamma_{1}$};

\node[left] at (-0.2*\s, -9*\s + 0.5*\s) {$\gamma_{1}$};
\node[left] at (-0.2*\s, -8*\s + 0.5*\s) {$\gamma_{2}$};
\node[left] at (-0.2*\s, -7*\s + 0.5*\s) {$\gamma_{3}$};
\node[left] at (-0.2*\s, -6*\s + 0.5*\s) {$\gamma_{4}$};
\node[left] at (-0.2*\s, -5*\s + 0.5*\s) {$\theta$};
\node[left] at (-0.2*\s, -4*\s + 0.5*\s) {$\gamma_{5}$};
\node[left] at (-0.2*\s, -3*\s + 0.5*\s) {$\gamma_{6}$};
\node[left] at (-0.2*\s, -2*\s + 0.5*\s) {$\gamma_{7}$};
\node[left] at (-0.2*\s, -1*\s + 0.5*\s) {$\gamma_{8}$};
\node[left] at (-0.2*\s, 0*\s + 0.5*\s) {$\theta$};
\end{scriptsize}

\end{tikzpicture}
\end{subfigure}
    
    \caption{Illustrations of $R^{\eps,6,6}$, $R^{\eps,5,6}$, $R^{\eps,5,5}$ (from left to right), with $k=4$. The brown box denotes $(2k+1, 2k+1)=(9,9)$, the yellow boxes denote the rest of the diagonal, while the blue boxes denote all the $(\sigma_{\tilde \ell}^{-1}(2k+1-i), \sigma_{\tilde \ell}^{-1}(i))$. Note that here $\gamma_1+\gamma_8=\gamma_2+\gamma_7=\gamma_3+\gamma_6=\gamma_4+\gamma_5=\eps$.}
    \label{fig:proofsta}
\end{figure}

	\begin{corollary}  \label{cor:sta} The process $R^{\LG}$ is jointly stationary, in the sense of \eqref{E:stationary-defn}.
	\end{corollary}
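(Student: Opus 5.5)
The plan is to reduce the general-$h$ statement \eqref{E:stationary-defn} to the one-step statement of Lemma \ref{lem:sta}, and then iterate using the Markov structure of the polymer in time (the index $h$ of the L-shapes $[\cdot,h]_\textsf{L}$).

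\textbf{Step 1: the case $h=1$.} Before sending $\eps\to 0$, rewrite $R^{\LG,\eps,+}$ as a polymer started from the boundary data $R^{\LG,\eps}$ on the L-shape $[\cdot,2k]_\textsf{L}$. Every up-right path from $(2k+1-i,i)$ to $[j,2k+1]_\textsf{L}$ must cross $[\cdot,2k]_\textsf{L}$, so it has a last point $[m,2k]_\textsf{L}$ there. Condition on that exit point and divide by $Z_X(2k+1-i,i;2k,2k)$. This gives
\[
R^{\LG,\eps,+}_i(j)\propto \sum_m R^{\LG,\eps}_i(m)\,\cdot(\text{weights of the fresh L-shape } [\cdot,2k+1]_\textsf{L}),
\]
which is exactly $Z^{F}_{X'}(0,0;[j,1]_\textsf{L})/Z^{F}_{X'}(0,0;[0,1]_\textsf{L})$ with $F=R^{\LG,\eps}_i$. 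Here $X'$ is the environment shifted by $(2k,2k)$.

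The point to check is the diagonal constraint \eqref{E:diagonal-condition}. A path that crosses the L-shape at the corner and continues must respect the once-only crossing rule. This matches the convention $E^f(0,0)=1$ together with the ratio weights $f(j)/f(j-\operatorname{sgn} j)$, whose telescoping products reproduce $R(m)$ along the L-shape.

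The weights on $[\cdot,2k+1]_\textsf{L}$ have parameters $\theta$ (bulk) and $\alpha+\theta$ (diagonal), because $\apar_i=\theta$ for $i\ge 2k+1$. They are also independent of $R^{\LG,\eps}$, since they lie outside the region used to define it. So $(R^{\LG,\eps}, R^{\LG,\eps,+})$ is (initial data, one step of the homogeneous dynamics), jointly in $i$ because all $i$ share the same fresh weights.

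Now let $\eps\to0$. This uses the continuity argument of Proposition \ref{P:stationary-existence}: the fresh weights do not depend on $\eps$, and the map from the data to the one-step output is continuous. The limit $R^{\LG,+}$ is therefore the one-step evolution of $R^{\LG}$ with independent fresh weights. Lemma \ref{lem:sta} says this has the same joint law as $R^{\LG}$, which is \eqref{E:stationary-defn} for $h=1$.

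\textbf{Step 2: induction on $h$.} Use the same decomposition at the L-shape $[\cdot,h-1]_\textsf{L}$. The $h$-step evolution from $F$ equals the one-step evolution, driven by fresh independent weights on $[\cdot,h]_\textsf{L}$, applied to the normalized $(h-1)$-step output.

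The one-step map is applied simultaneously to all $i$ with shared fresh weights. It is also invariant under multiplying $F$ by a constant, which is why ratios appear. Hence if the $(h-1)$-step family has the law of $R^{\LG}$ jointly in $i$, so does the $h$-step family, by Step 1 and the independence of the fresh weights.

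\textbf{Main obstacle.} The delicate part is the bookkeeping in Step 1. One must verify that the path decomposition at the L-shape, combined with the diagonal-crossing constraint and the symmetric extension of the weights, gives exactly the boundary-weighted partition function $Z^F$ with the stated ratio convention. In particular the corner site must be counted once with weight $1$, and no crossing paths may be double counted. The limit $\eps\to0$ also needs care when $\apar_k=\alpha$, but this is handled as in Proposition \ref{P:stationary-existence}.
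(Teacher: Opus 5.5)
Your proposal is correct and follows the paper's proof. The paper also takes Lemma~\ref{lem:sta} as the base case, reading $R^{\LG,+}$ as the one-step evolution of $R^{\LG}$ through a last-exit decomposition along $[\cdot,2k]_\textsf{L}$, and then inducts on $h$ by composing along the next L-shaped line. You supply the bookkeeping the paper leaves implicit. For fixed $j$ the sum over exit points is finite, and the diagonal rule excludes the exit at $m=-1$ (as well as $m=0$) in exactly the same way in both pictures. The fresh L-shape weights are $\Gai(2\theta)$ off the corner and $\Gai(\alpha+\theta)$ at the corner, and they are independent of $R^{\LG,\eps}$. Together these facts make the $\eps\to0$ step a plain continuous-mapping argument.
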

	
	\begin{proof}
	This is a straightforward induction on $h$, with the base case being Lemma \ref{lem:sta}. The inductive step then follows by applying a metric composition law on the line $j \mapsto [j, 2k + h]_\textsf{L}$.
	\end{proof}

    By Corollary \ref{cor:sta}, we call $R^{\LG}$ the \emph{(extended) half-space log-gamma joint stationary process with boundary parameter $\alpha$, bulk parameter $\theta$, and slope parameters $(\apar_1, \ldots, \apar_k)$}.
	Note that when $k=1$, the process $R^{\LG}_1$ on $\N$ precisely recovers the stationary measure in \cite[Section 1.2.2]{BC} (the parameters $\apar, u, v$ there correspond to $\theta, \alpha, \apar_1$, respectively).
	
	Next, we show that our construction above is consistent. 
    
    Let $R^{\LG}$ be as above, and fix $m \in \II{1,k}$. Let $\tilde{R}^{\LG} = \{\tilde{R}^{\LG}_i\}_{i=1}^{k-1}$ be the half-space log-gamma joint stationary process with the same boundary and bulk parameters $\alpha$ and $\theta$, and slope parameters $(\tilde\apar_1,\ldots, \tilde\apar_{k-1})$, where $\tilde\apar_i=\apar_i$ for $1\le i<m$, and $\tilde\apar_i=\apar_{i+1}$ for $m\le i <k$. 
    
	\begin{lemma}  \label{lem:consis}
	We have $\{R^{\LG}_i\}_{i\in\II{1,k}, i \ne m} \eqd \tilde R^{\LG}$.
	\end{lemma}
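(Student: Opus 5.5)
The plan is to mimic the proof of Lemma \ref{lem:sta}: apply Lemma \ref{lem:invp} repeatedly to move the two parameters $\apar_m$ and $\eps-\apar_m$ out of the way, then recognize the result as the $(k-1)$-family. Work at fixed $\eps>0$ with the environment used in \eqref{E:Resp}: parameter $\apar_i$ at index $i$, $\eps-\apar_i$ at index $2k+1-i$, and $\theta$ elsewhere. The functions $R^{\LG,\eps}_i$ with $i\ne m$ start at $(2k+1-i,i)$, so they never use row $m$ or column $2k+1-m$ as a starting cell.

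\textbf{Step 1: push $\apar_m$ to the bottom.} Use Lemma \ref{lem:invp} with the adjacent transpositions $(m-1,m),(m-2,m-1),\dots,(1,2)$, so that $\apar_m$ ends at index $1$ and $\apar_1,\dots,\apar_{m-1}$ shift up by one. At each swap, the starting points of the surviving indices must be relabelled to track the moved parameters. Exactly as in \eqref{E:limeps}, a starting point sometimes has a forbidden coordinate (equal to $\ell+1$ for the swap $(\ell,\ell+1)$). In that case I move it by one cell onto a neighbouring cell whose weight blows up as $\eps\to0$. Since that weight cancels in the ratio, the two versions agree in the $\eps\to0$ limit, and since only $\eps\to0$ limits are compared, this suffices. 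The endpoints $[j,2k]_\textsf{L}$ have coordinates $\ge 2k>\ell$, so they are always allowed.

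After this step, the surviving starting points lie in rows $\ge2$. Paths from them never touch row $1$ or, by symmetry of the field, column $1$. So index $1$ can be deleted.

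\textbf{Step 2: push $\eps-\apar_m$ past the endpoints.} Similarly, move $\eps-\apar_m$ upward via the swaps $(2k+1-m,2k+2-m),\dots$. The goal is that the remaining indices $2,\dots,2k-1$, after shifting down by one, carry exactly the parameter string of the $(k-1)$-construction: $\tilde\apar_1,\dots,\tilde\apar_{k-1},\eps-\tilde\apar_{k-1},\dots,\eps-\tilde\apar_1$.

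\textbf{Step 3: identify with $\tilde R^{\LG}$ via stationarity.} The resulting functions should be the $(k-1)$-family observed along an L-shape one level further out, at $2k$ rather than $2(k-1)$ after the shift. By Corollary \ref{cor:sta}, applied with $\theta$ at the extra index, this is equal in law to $\tilde R^{\LG}$.

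\textbf{Main obstacle.} The difficulty is in Steps 2–3: the swaps with index $2k$ conflict with endpoints on $[\cdot,2k]_\textsf{L}$, which Lemma \ref{lem:invp} forbids. I would first replace $R^{\LG,\eps}$ by its one-step-advanced version $R^{\LG,\eps,+}$ along $[\cdot,2k+1]_\textsf{L}$ using Lemma \ref{lem:sta}. If the conflict persists, I would advance once more using Corollary \ref{cor:sta}, with the parameter at index $2k+1$ equal to $\theta$. The aim is to arrange that $\eps-\apar_m$ is never swapped across an endpoint coordinate, instead being swapped only among indices strictly below the L-corner. It would then be sent to a $\theta$-slot, while a $\theta$ takes its place among the indices that the endpoints see. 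Verifying that this bookkeeping closes up, with the blow-up cells handled as in \eqref{E:limeps} and the case $\apar_m=\alpha$ handled as in Proposition \ref{P:stationary-existence}, is where I expect the real work to lie.
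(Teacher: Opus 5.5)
There is a genuine gap, and it is exactly where you flagged it: Steps 2–3 cannot work, because $\eps-\apar_m$ can never be made invisible by pushing it upward. A path from a surviving start $(p,q)$ to $[j,h]_\textsf{L}$ crosses every row index in $[q,h]$. Along the two arms of the L it also sees every index beyond $h$, once $|j|$ is large. So a parameter sitting anywhere at or above the lowest surviving starting coordinate affects the law of the $R_i$.

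On top of that, the swap that would carry $\eps-\apar_m$ across the corner, $(h,h+1)$, is forbidden by Lemma \ref{lem:invp} for every $h$: endpoints on $[\cdot,h]_\textsf{L}$ always have a coordinate equal to $h$. Advancing the corner with Lemma \ref{lem:sta} or Corollary \ref{cor:sta} only recreates the same obstruction one level higher. Parking $\eps-\apar_m$ on a $\theta$-slot strictly between $2k$ and an advanced corner leaves a non-$\theta$ row that every path crosses, which changes the law. So the bookkeeping you describe does not close.

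The missing idea is that the only invisible indices are those strictly below all surviving starting coordinates, so $\eps-\apar_m$ must also be sent \emph{down}. This is what the paper does. It first moves $\eps-\apar_m$ from index $2k+1-m$ down to index $1$, then moves $\apar_m$ (now at $m+1$) down to index $2$. Equivalently, you can keep your Step 1 and then push $\eps-\apar_m$ down to index $2$.

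With this ordering, every swap is at a pair $(a,a+1)$ with $a\le 2k-m<2k$, so the endpoints on $[\cdot,2k]_\textsf{L}$ never conflict. Each starting-point relabelling is handled by the blow-up comparison of \eqref{E:limeps}, exactly as in your Step 1: the relabelled start cell always pairs $\apar_i$ with $\eps-\apar_i$, so it has parameter $\eps$.

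At the end, the surviving starts have both coordinates $\ge 3$, so indices $1,2$ are invisible. The diagonal shift by $2$ then turns the configuration into the $(k-1)$-construction verbatim: corner $2k-2$ maps to $2k$, and $\theta$ sits at all indices $\ge 2k+1$. No stationarity input is needed.
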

	
	\begin{proof}
	The proof is similar to the proof of Lemma \ref{lem:sta}. We first define permutations $\sigma_\ell \in S_{2k + 1 - m}$ for $\ell\in\II{1, 2k}$. First, for $\ell \in\II{m, 2k}$ set $\ell^* = \ell + 1- m$ and define
	$$
	\sigma_\ell = (1, \dots, \ell^* - 1, 2k + 1 - m, \ell^*, \dots, 2k - m), \qquad \sigma_\ell^{-1} = (1, \dots, \ell^* - 1, \ell^* + 1, \dots, 2k + 1 - m, \ell^*).
	$$
	Then, for $\ell \in\II{1, m-1}$ define
	\begin{align*}
	\sigma_\ell &= (2k + 1 - m, 1, \dots, \ell-1, m, \ell, \dots, m -1, m + 1, \dots, 2k - m), \qquad \\
	\sigma_\ell^{-1} &= (2, \dots, \ell, \ell + 2, \dots, m + 1, \ell + 1, m + 2, \dots, 2k + 1 - m, 1).
	\end{align*}
We extend $\sigma_\ell$ to a permutation of $\Z$ by setting it equal to the identity off of $\II{1, 2k+1-m}$.
 As in the proof of Lemma \ref{lem:sta}, we define $Z_{X^{(\ell)}}$ to be partition functions defined with the parameter sequence $\{\apar_{\sigma(i)}\}_{i\in\Z}$. For $\ell, \tilde \ell \in \II{1, 2k + 1 -m}$ set
		\[
			R^{\eps, \ell, \tilde \ell}_i(j) = \frac{Z_{X^{(\ell)}}(\sigma_{\tilde \ell}^{-1}(2k+1-i), \sigma_{\tilde \ell}^{-1}(i); [j,2k]_\textsf{L})}{Z_{X^{(\ell)}}(\sigma_{\tilde \ell}^{-1}(2k+1-i), \sigma_{\tilde \ell}^{-1}(i); 2k, 2k)}.
		\]
 Similarly to before, we have that $\{R^{\eps, 2k, 2k}_i\}_{i\in\II{1, k}, i \ne m} = \{R^{\LG,\eps}_i\}_{i\in\II{1, k}, i \ne m}$ and $\{R^{\eps, 1, 1}_i\}_{i\in\II{1, k}, i \ne m} \eqd \tilde R^{\LG,\eps}$. Therefore to prove the lemma, again we need to prove that for each $\ell\in\II{1,2k-1}$,
\[
\lim_{\eps \to 0} \{R^{\eps, \ell, \ell}_i\}_{i\in\II{1, k}, i \ne m} \eqd \lim_{\eps \to 0}\{R^{\eps, \ell+1, \ell+1}_i\}_{i\in\II{1, k}, i \ne m}.
\]
The rest of this proof proceeds essentially as in the proof of Lemma \ref{lem:sta}. We omit the details.
	\end{proof}

\begin{remark}
		\label{R:special-case-gamma*}
	In the special case when $\alpha = \apar_k \le 0$, we can alternately define $R^{\LG,\epsilon}$, $R^{\LG}$ using only $2k - 1$ lines. Indeed, as before fix parameters $\alpha$ and $\theta, \apar_1, \dots, \apar_k$ with 
	$-\theta<\apar_1< \apar_2< \cdots < \apar_k = \alpha\le 0$.
    We take a small $\eps > 0$ and set $\bar\apar_i=\apar_i$ for each $i\in\II{1, k-1}$,  $\bar\apar_{2k-i}=\eps-\apar_i$ for each $i\in\II{1, k}$, and $\bar\apar_i=\theta$ for any other $i$.
    Let $Z_{\bar{X}}$ be the partition function defined with parameters $\alpha$ and $\{\bar\apar_i\}_{i\in\Z}$.
    For each $i\in\II{1, k}$ and $j\in \Z$, we let
	\[
		Q^{\LG,\epsilon}_i(j) = \frac{Z_{\bar{X}}(2k-i, i; [j,2k-1]_\textsf{L} )}{Z_{\bar{X}}(2k-i, i; 2k-1, 2k-1)},
	\]
	and let $Q^{\LG}$ be the distributional limit of $Q^{\LG,\epsilon}$ as $\eps\to 0$. This limit exists by following the proof of Proposition \ref{P:stationary-existence}, except we do not need to take extra care for the case $i = k$. Moreover, we can use the strategy of Lemma \ref{lem:consis} to check that $Q^{\LG} \eqd R^{\LG}$, when the two models have the same parameters. While the $Q^{\LG}$-description is simpler than the $R^{\LG}$-description for our stationary law (and still exists for the degeneration in the upcoming sections), in the sequel we focus on the $R^{\LG}$-description as it is more general.
	\end{remark}

    Now, given the consistency in Lemma \ref{lem:consis}, we can apply Kolmogorov's extension theorem to construct a family of stationary measures indexed by any countable collection of slope parameters. Further using right-continuity, we can get a family of all slope parameters $\apar \in (-\theta, \alpha \wedge 0]$. To do so, we require some mild discussion of topology. The simplest solution is to view our functions as measures. 
    More precisely, we set up the next definition, which can also be applied to the other horizons. 

\begin{definition}
\label{D:horizon-topology}
Let $L$ be an interval in $\R$ and let $J \in \{\R, \Z\}$, and fix a distinguished point $\gamma_0 \in L$. Define $\mathcal{D}(L, J, \gamma_0)$ to be the set of functions $F:L \times J \to \R$, such that (denoting $F_\gamma(j)=F(\gamma,j)$)
\begin{itemize}[nosep]
    \item $F_{\gamma_0}$ is continuous if $J = \R$;
    \item $F_\gamma(0) = 0$ for all $\gamma$;
    \item there is a $\sigma$-finite measure $\mu_F$ on $L \times J$ such that for any $\gamma_1<\gamma_2\in L$ and $j_1<j_2\in J$, we have
    \begin{equation}
        \label{E:F-measure}
        \mu_F((\gamma_1, \gamma_2] \times ((j_1, j_2]\cap J)) = F(\gamma_1, j_1) + F(\gamma_2, j_2) - F(\gamma_1, j_2) - F(\gamma_2, j_1). 
    \end{equation}
\end{itemize}
We say that $F^n \to F$ in $\mathcal{D}(L, J, \gamma_0)$ if $F^n_{\gamma_0} \to F_{\gamma_0}$ in the uniform-on-compact topology, and $\mu_{F^n} \to \mu_F$ vaguely. This turns $\mathcal{D}(L, J, \gamma_0)$ into a Polish space.
\end{definition}

\begin{theorem}
\label{T:half-space-log-gamma-horizon}
Let $\theta > 0$ and $\alpha \in (-\theta, \infty)$. Then there exists a random function $\cH^{\LG}$, such that $-\log \cH^{\LG}\in \mathcal{D}((-\theta, \alpha \wedge 0], \Z, \alpha \wedge 0)$, and for any finite set $I = (\apar_1 < \cdots < \apar_k) \subset (-\theta, \alpha \wedge 0]$, the restriction $\cH^{\LG}|_I$ is equal in law to the half-space log-gamma joint stationary measure given in Proposition \ref{P:stationary-existence}. In particular, $\cH^{\LG}$ gives a coupling of all the log-gamma joint stationary measures identified above. We call $\cH^{\LG}$ the \textbf{(extended) half-space log-gamma horizon}.
\end{theorem}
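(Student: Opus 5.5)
The plan is to build $\cH^{\LG}$ on a countable dense set of slopes with Kolmogorov's extension theorem, and then extend to all of $(-\theta,\alpha\wedge 0]$ using monotonicity and right-continuity. Let $D\subset(-\theta,\alpha\wedge 0]$ be a countable dense set containing $\alpha\wedge 0$. For every finite $I\subset D$, Proposition \ref{P:stationary-existence} gives the law of the joint stationary process $R^{\LG}$ with slope parameters $I$. By Lemma \ref{lem:consis}, removing one slope from $I$ gives the process for the smaller set. So this is a projective family on $(\R_+)^{I\times\Z}$, and Kolmogorov's theorem yields a random $\cH^{\LG}:D\times\Z\to\R_+$ whose restriction to every finite $I\subset D$ has the correct law. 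Note that $R_i^{\LG}(0)=1$ by \eqref{E:Resp}, since $[0,2k]_\textsf{L}=(2k,2k)$. Hence $F:=-\log\cH^{\LG}$ satisfies $F_\gamma(0)=0$. Continuity of $F_{\gamma_0}$ is vacuous because $J=\Z$.

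The key structural step is to show that $F$ has nonnegative mixed differences. That is, for $\gamma_1<\gamma_2$ in $D$ and $j_1<j_2$,
$$
F(\gamma_1,j_1)+F(\gamma_2,j_2)-F(\gamma_1,j_2)-F(\gamma_2,j_1)\ge 0
$$
almost surely, possibly up to a global sign convention that I will fix once the orientation is checked. It suffices to prove this for the prelimit $R^{\LG,\eps}$ at two indices $i<i'$ and then pass to the $\eps\to0$ limit. After rewriting everything as ratios of partition functions, the inequality is a log-supermodularity statement of the form
$$
Z_X(a;b)\,Z_X(a';b')\ \ge\ Z_X(a;b')\,Z_X(a';b).
$$
Here $a=(2k+1-i,i)$ and $a'=(2k+1-i',i')$ are starting points with $a'$ strictly up-left of $a$, and $b,b'$ are two points on the $L$-shaped line $[\cdot,2k]_\textsf{L}$. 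For ordinary planar up-right polymers this is the standard path-crossing (Gessel--Viennot/Lindström type) argument: any pair of paths in the configurations counted on the right-hand side must intersect, and switching tails at the first intersection maps them injectively into the configurations counted on the left-hand side.

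I expect this step to be the main obstacle, because the diagonal condition \eqref{E:diagonal-condition} interferes with tail switching. After the swap, a path might cross the diagonal twice, and the environment is symmetric but the path set is not. I would handle this by splitting at the first intersection point and checking that each swapped tail still crosses the diagonal at most once from below. This should follow because both original paths do so and the intersection point is common to both. Alternatively, I would reflect the portions above the diagonal using $X(i,j)=X(j,i)$, reducing to paths in a half-plane whose endpoints lie on the correct sides, where planarity applies directly.

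With the nonnegative rectangle function in hand, $\gamma\mapsto F(\gamma,j)$ is monotone on $D$ for each $j$, increasing or decreasing according to the sign of $j$. I then extend by setting
$$
F(\gamma,j):=\lim_{\gamma'\downarrow\gamma,\ \gamma'\in D}F(\gamma',j)
$$
for $\gamma\notin D$. Points of $D$ other than $\alpha\wedge0$ are handled the same way after a null-set modification. To see that this does not change finite-dimensional laws, I show that the law of the process at a fixed slope $\gamma$ is continuous in $\gamma$. For fixed $j$, each ratio in \eqref{E:Resp} involves only finitely many cells, and their inverse-gamma parameters depend continuously on the $\apar_i$; this continuity survives the $\eps\to0$ limit, using the same blow-up cancellation argument as in Proposition \ref{P:stationary-existence}. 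Combined with monotonicity, this shows that the right limit agrees almost surely with an independent copy of the value at $\gamma$. Consequently, the restriction to any finite $I\subset(-\theta,\alpha\wedge0]$ has the law from Proposition \ref{P:stationary-existence}.

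Finally, the extended rectangle function is nonnegative, finitely additive and right-continuous in $\gamma$. Carathéodory's extension theorem therefore produces a measure $\mu_F$ satisfying \eqref{E:F-measure}, and this measure is finite on compact sets, hence $\sigma$-finite. Thus $-\log\cH^{\LG}\in\mathcal D((-\theta,\alpha\wedge0],\Z,\alpha\wedge0)$.
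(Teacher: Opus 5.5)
Your proposal is correct and follows essentially the same route as the paper. Both arguments proceed in the same steps:
\begin{enumerate}
\item Kolmogorov extension over a countable dense set of slopes, with consistency from Lemma \ref{lem:consis}.
\item Nonnegativity of the mixed differences of $-\log\cH^{\LG}$, from the forced crossing of paths started at $(2k+1-i,i)$ and $(2k+1-i',i')$. The paper cites Lindstr\"om--Gessel--Viennot, which is exactly your tail-swapping injection, and the diagonal condition \eqref{E:diagonal-condition} survives the swap because it only constrains consecutive steps.
\item Monotonicity in $\gamma$ plus continuity in law of the finite-dimensional marginals in the slope parameters, giving almost-sure right-continuity and then the measure $\mu_F$.
\end{enumerate}
One small fix: in step 3, the right limit should agree \emph{in law} with the value at $\gamma$ (jointly over finitely many slopes), not ``almost surely with an independent copy''. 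At points of $D$, equality in law together with the monotone ordering then forces almost-sure equality.
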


\begin{proof}
For the proof, let $\hat \Q = \Q \cap (-\theta, \alpha \wedge 0]$.
First, by Kolmogorov's extension theorem, we can define a function $\cH^{\LG}:\hat \Q \times \Z \to \R$ such that for all finite $I \subset \hat \Q$, $\cH^{\LG}|_I$ is equal in law to the process $R^{\LG}$ in Proposition \ref{P:stationary-existence}. Here the consistency of the marginals is given by Lemma \ref{lem:consis}. 

Now, for any $\apar_1 < \apar_2 \in \hat \Q$ and $j_1 < j_2 \in \Z$ we claim that
\begin{equation} \label{E:LGV}
    \cH^{\LG}(\apar_1,j_2) \cH^{\LG}(\apar_2,j_1) - \cH^{\LG}(\apar_1,j_1) \cH^{\LG}(\apar_2,j_2) \ge 0.
\end{equation}
It suffices to consider this for the process $R^{\LG,\eps}$ in Proposition \ref{P:stationary-existence}, with $k = 2$. In this setting, by \eqref{E:Resp}, \eqref{E:LGV} is equivalent to the statement that
\begin{equation*}
   Z_X(4, 1; [j_2,4]_\textsf{L} ) Z_X(3, 2; [j_1,4]_\textsf{L} ) -  Z_X(4, 1; [j_1,4]_\textsf{L} ) Z_X(3, 2; [j_2,4]_\textsf{L}) \ge 0.
\end{equation*}
Now, any pair of up-right paths from $(4, 1)$ to $[j_1,4]_\textsf{L}$ and from $(3, 2)$ to $[j_2,4]_\textsf{L}$ necessarily cross, and so by the Lindstr\"om-Gessel-Viennot lemma, the left-hand side above is equal to the partition function for pairs of disjoint paths from $(4, 1)$ to $[j_2,4]_\textsf{L}$ and from $(3, 2)$ to $[j_1,4]_\textsf{L}$. In particular, it is non-negative, as desired. 

Next, observe that $\cH^{\LG}$ is almost surely continuous at every point $(\gamma, j) \in \hat \Q \times \Z$. Indeed, since this is a countable set it suffices to prove almost sure continuity for every fixed $(\gamma, j)$. Moreover, if we let $j_1 = 0$ in \eqref{E:LGV}, then since $\cH^{\LG}(\gamma,0) = 1$ for each $\gamma\in\hat\Q$, we see that $\cH^{\LG}(\gamma,j)$ is monotone in $\gamma$, so it suffices to show continuity in law of $\cH^{\LG}(\cdot, j)$ for each $j\in \Z$. This is immediate from the description in Proposition \ref{P:stationary-existence}, which is continuous in the slope parameters.

We claim that if we let $F=-\log \cH^{\LG}$, then such $F$ (hence $\cH^{\LG}$) can be extended to a function in $\mathcal D((-\theta, \alpha \wedge 0], \Z, \alpha \wedge 0)$. First, by \eqref{E:LGV}, the right-hand side of \eqref{E:F-measure} is always non-negative for this $F$. Next, the continuity of $\cH^{\LG}$ (hence $F$) on $\Q \cap (-\theta, \alpha \wedge 0] \times \Z$ guarantees that the right-hand side of \eqref{E:F-measure} can be extended to a measure on $(-\theta, \alpha \wedge 0] \times \Z$, and hence $F$ can be defined accordingly. This $F$ will satisfy $F_\gamma(0) = 0$ for all $\gamma$, since this holds for all $\gamma \in \hat \Q$.

To complete the proof, it simply suffices to show that for any finite set $I \subset (-\theta, \alpha \wedge 0]$, that $\cH^{\LG}|_I$ is equal in law to the function from Proposition \ref{P:stationary-existence}. This holds for $I \subset \hat \Q$ by our construction, and for general $I$ since the construction in Proposition \ref{P:stationary-existence} is continuous in law as a function of the slope parameters, and moreover, for any sequence $I_n \to I$ from below, we have the continuity
$$
\cH^{\LG}|_I = \lim_{n\to\infty} \cH^{\LG}|_{I_n} 
$$
from continuity of measure for $\mu_F$.
\end{proof}

\begin{remark}
	\label{R:consistent-family}
Note that we do not expect that the $\cH^{\LG}$ in Theorem \ref{T:half-space-log-gamma-horizon} is continuous, but only right-continuous in the slope parameter. Indeed, the analogous full-space process is not (see e.g., \cite{bates2025permutation} in the setting of LPP). A left-continuous version can also be defined, and the two processes together should have a geometric interpretation as left- or right-continuous Busemann processes for the extended half-space log-gamma polymer.

We leave the construction of these Busemann processes as an open problem. The construction of Busemann processes is also closely connected to the question of uniqueness for joint stationary measures, which we have not addressed here.

 Note that we expect a qualitative difference between the Busemann functions in half-space and full-space models: when $\alpha < 0$,  we expect that the set of allowable angles for semi-infinite polymer measures (i.e., positive temperature versions of semi-infinite geodesics) is of the form $[0, \pi/4 - \hat\alpha) \cup \{\pi/4\}$ for some $0<\hat\alpha<\pi/4$. This comes from the fact that diagonal is highly attractive in this regime, and this phenomenon also manifests through the presence of a facet in the limit shape. The same phenomenon is expected in all the other models in the next few subsections.
	\end{remark}

	\subsection{Half-space geometric and exponential LPP horizons}
    \label{SS:half-space-zero-temp-horizons}
	With completely analogous proofs as in Section \ref{S:half-space-gamma-horizon}, we can obtain joint stationary measures for exponential and geometric LPP. The joint stationary measures for exponential LPP can alternately be obtained through taking a zero temperature limit of the log-gamma polymer.

For a function $f:\Z \to \R$, and an environment $E:\Z^2 \to \R\cup\{\infty\}$, define
\begin{equation}     \label{eq:Efdef}
	E^{f}(u) = \begin{cases}
	f(j) - f(j - \operatorname{sgn}(j)), \qquad &u = [j,0]_\textsf{L}\; \text{ for some } j \in \Z,	 \\
	E(u), \qquad &\text{else}.
\end{cases}    
\end{equation}
	Here $\operatorname{sgn}(0) = 0$, so that $E^{f}(0, 0) = 0$. Let $E^{f}(u;v)$ denote the passage time from $u$ to $v$ for the environment $E^{f}$. 
    
    Now, we say that a family of functions  $(F_i:\Z \to \R, i \in I)$ for a finite index set $I$ is \emph{jointly stationary} for the extended half-space exponential or geometric last passage percolation with boundary parameter $\alpha$ and equal bulk parameters $\gamma_i=\theta$, $i\in\Z$, if for $X$ being the field as given in Definition \ref{D:geo} with such parameters $\alpha$ and $\{\gamma_i\}_{i\in\Z}$, and any $h \in \N$, we have:
    \begin{equation}
    \label{E:stationary-lpp}
   F_i(j) - F_i(0) \eqd X^{F_i}(0,0; [j,h]_\textsf{L}) - X^{F_i}(0,0; [0,h]_\textsf{L}).
    \end{equation}
where the equality in law is joint in all $i \in I, j \in \Z$. 
	
	We now define our joint stationary measures.
    Consider the models in Definition \ref{D:geo}.
	In the model of extended half-space exponential LPP, as in the log-gamma polymer we take $\apar_i=\theta$ for each $i\ge 2k+1$ or $i\le 0$, where $\theta>0\vee -\alpha$. 
	We also assume that $-\theta<\apar_1< \apar_2< \cdots < \apar_k\le 0\wedge \alpha$, and let $\apar_{2k+1-i}=\eps-\apar_i$, for some $\eps>0$, for each $i\in\II{1, k}$, exactly as in the log-gamma model. In extended half-space geometric LPP, all parameters must now be positive, and the analogous constraints are
	\begin{equation*}
	\theta < (1/\alpha) \wedge 1, \qquad 1/\theta>\apar_1>  \cdots > \apar_k \ge (\alpha \vee 1), \qquad \apar_{2k + 1 - i} = (1 - \eps)/\apar_i, \;\; i\in\II{1, k}.
	\end{equation*}
	Now, for each $i\in\II{1, k}$ and $j\in \Z$, we let $R^{\EL,\epsilon}_i(j)$ or $R^{\GL,\epsilon}_i(j)$ (in the exponential or geometric settings, respectively) be equal to
	\[
	X(2k+1-i, i; [j,2k]_\textsf{L}) - X(2k+1-i, i; 2k, 2k),
	\]
	and define $R^{\EL} = \{R_i^{\EL}\}_{i=1}^k \eqd \lim_{\eps\to 0}\{R^{\EL,\eps}_i\}_{i=1}^k$ or $R^{\GL} = \{R_i^{\GL}\}_{i=1}^k \eqd \lim_{\eps\to 0}\{R^{\GL,\eps}_i\}_{i=1}^k$, where the existence of the limit can be justified as in Proposition \ref{P:stationary-existence}.

	Similarly to Corollary \ref{cor:sta}, we have the following stationarity result. 
	
	\begin{lemma}   \label{lem:staLPP}
The process $R^{\EL}$ (resp.~$R^{\GL}$) is jointly stationary for the extended half-space exponential (resp.~geometric) LPP with boundary parameter $\alpha$ and bulk parameter $\theta$, in the sense of \eqref{E:stationary-lpp}.
	\end{lemma}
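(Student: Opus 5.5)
The plan is to transcribe the log-gamma argument (Lemma \ref{lem:sta} and Corollary \ref{cor:sta}) into the $(\max,+)$ algebra, using Lemma \ref{lem:invz} in place of Lemma \ref{lem:invp}. I will write the exponential case; the geometric case is identical, with the ordering of the $\apar_i$ reversed and the blow-up condition $\apar_i\apar_j\to 1$ in place of $\apar_i+\apar_j\to 0$. First define $R^{\EL,\eps,+}_i(j)=X(2k+1-i,i;[j,2k+1]_\textsf{L})-X(2k+1-i,i;2k+1,2k+1)$ and its $\eps\to0$ limit $R^{\EL,+}$. This limit exists by the argument of Proposition \ref{P:stationary-existence}. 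The weight $X(2k+1-i,i)\sim\Exp(\eps)$ blows up but is common to every path and cancels in the difference. In the case $\apar_k=\alpha$, both $X(k+1,k)$ and $X(k+1,k+1)$ blow up. Then with probability tending to one the maximizing path uses the larger of these two huge weights, and one checks, as in Proposition \ref{P:stationary-existence}, that the difference converges to the corresponding difference started from $(k+1,k+1)$.

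The main step is the analogue of Lemma \ref{lem:sta}: $R^{\EL}\eqd R^{\EL,+}$. Use the same permutations $\sigma_\ell$ and the quantities $R^{\eps,\ell,\tilde\ell}_i$ defined with passage-time differences. Lemma \ref{lem:invz} couples $X^{(\ell)}$ and $X^{(\ell+1)}$ so that all passage times with starting coordinates $\neq\ell+1$ and ending coordinates $\neq\ell$ agree. This gives $R^{\eps,\ell,\ell+1}=R^{\eps,\ell+1,\ell+1}$ exactly. It then remains to compare $R^{\eps,\ell,\ell}_i$ and $R^{\eps,\ell,\ell+1}_i$ for the single index $i=\min(\ell,2k+1-\ell)$. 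For this I need the zero-temperature analogue of \eqref{E:limeps}: in probability,
\[
X^{(\ell)}(2k+2-\ell,\ell;[j,2k+1]_\textsf{L})-X^{(\ell)}(2k+2-\ell,\ell+1;[j,2k+1]_\textsf{L})-X^{(\ell)}(2k+2-\ell,\ell)\to0
\]
for every fixed $j$. The reason is that the weight at $(2k+2-\ell,\ell+1)$ is $\Exp(\eps)$, hence of order $\eps^{-1}$, while all other relevant weights stay tight. So with probability tending to one, every maximizing path from $(2k+2-\ell,\ell)$ passes through $(2k+2-\ell,\ell+1)$, and then the identity is exact. Chaining over $\ell$ gives $R^{\EL,\eps}\eqd R^{\eps,1,1}$ and $R^{\eps,2k+1,2k+1}=R^{\EL,\eps,+}$, with the limits linked as in \eqref{E:eps-ell-sigma}.

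Finally, for general $h$ I would induct as in Corollary \ref{cor:sta}. By the metric composition law on the line $j\mapsto[j,2k+h]_\textsf{L}$, the passage time to level $h+1$ is a last-passage functional of the profile at level $h$ together with fresh independent weights. The case $h=1$ shows that the profile at level $2k+1$, shifted to the origin, has the same joint law as the initial profile. The induction hypothesis then gives the statement at level $h+1$, jointly in $i$ and $j$, and this is exactly \eqref{E:stationary-lpp}.

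I expect the main obstacle to be the blow-up limits. One issue is justifying convergence in probability of the differences when two huge weights compete (the $\apar_k=\alpha$ case). The other is checking that the infinite-valued weights, which arise when $\apar_i+\apar_j\le0$, do not break the $(\max,+)$ version of Lemma \ref{lem:invz}'s coupling; this works because such weights force both sides to equal $\infty$.
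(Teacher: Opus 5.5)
Your proposal is correct and is the paper's proof. The paper's proof of this lemma just says to repeat Lemma~\ref{lem:sta} and Corollary~\ref{cor:sta} in the $(\max,+)$ algebra with Lemma~\ref{lem:invz} in place of Lemma~\ref{lem:invp}, which is what you do, including the zero-temperature version of \eqref{E:limeps} and the induction in $h$ by metric composition along $j\mapsto[j,2k+h]_\textsf{L}$. One wording fix: when $\apar_k=\alpha$, the path from $(k+1,k)$ always collects $X(k+1,k)$, and what holds with high probability is that it also passes through $(k+1,k+1)$; that is why the difference matches the one started from $(k+1,k+1)$, not a choice of the larger of two weights.
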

	
	\begin{proof}
	The proof is exactly as in Lemma \ref{lem:sta} and  Corollary \ref{cor:sta} with the following changes:
	\begin{itemize}[nosep]
		\item We use Lemma \ref{lem:invz} in place of Lemma \ref{lem:invp}. 
		\item We switch algebras: $(+, \times) \mapsto (\max, +)$, and partition functions by passage times.\qedhere
	\end{itemize}
	\end{proof}

	We call $R^{\EL}$ (resp.~$R^{\GL}$) the \emph{(extended) half-space exponential (resp.~geometric) LPP joint stationary process, with boundary parameter $\alpha$, bulk parameter $\theta$, and slope parameters $(\apar_1,\ldots, \apar_k)$}.
    Note that when $k=1$, $R^{\EL}_1$ or $R^{\GL}_1$ restricted to $\N$ is the same as the stationary measures for half-space LPP given in \cite[Section 3]{BC}.
    
	Similar to Lemma \ref{lem:consis}, this construction is also consistent, and as in Theorem \ref{T:half-space-log-gamma-horizon} can be used to construct a full horizon for the model.

\begin{theorem}
\label{T:lpp-horizons}
\begin{enumerate}
    \item Let $\theta > 0, \alpha \in (-\theta, \infty)$. Then there exists a random function $\cH^{\EL}$ such that $-\cH^{\EL}\in \mathcal{D}((-\theta, \alpha \wedge 0], \Z, \alpha \wedge 0)$, and for any finite set $I = (\apar_1 < \cdots < \apar_k) \subset (-\theta, \alpha \wedge 0]$, the restriction $\cH^{\EL}|_I$ is equal in law to the exponential LPP joint stationary process defined above. In particular, $\cH^{\EL}$ gives a coupling of all the (extended) half-space exponential LPP joint stationary processes identified above. We call $\cH^{\EL}$ the \textbf{(extended) half-space exponential horizon}.
    \item Let $\theta \in (0, 1), \alpha \in (0, 1/\theta)$. Then there exists a random function $\cH^{\GL}\in \mathcal{D}([\alpha \vee 1, 1/\theta), \Z, \alpha \vee 1)$ such that for any finite set $I = (\apar_1 < \cdots < \apar_k) \subset [\alpha \vee 1, 1/\theta)$, the restriction $\cH^{\GL}|_I$ is equal in law to the geometric LPP joint stationary process defined above. In particular, $\cH^{\GL}$ gives a coupling of all the (extended) half-space geometric LPP joint stationary processes identified above. We call $\cH^{\GL}$ the \textbf{(extended) half-space geometric horizon}. 
\end{enumerate}
\end{theorem}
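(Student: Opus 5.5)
We will mirror the proof of Theorem \ref{T:half-space-log-gamma-horizon}, carrying the argument out for the exponential case and noting the minor changes for the geometric case. In the exponential case, set $\hat\Q=\Q\cap(-\theta,\alpha\wedge 0]$. In the geometric case, set $\hat\Q=\Q\cap[\alpha\vee 1,1/\theta)$.

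\textbf{Step 1: consistency and extension on rationals.} We first need the LPP analogue of Lemma \ref{lem:consis}. Removing one slope parameter $\apar_m$ from $R^{\EL}$ (resp.~$R^{\GL}$) must give the joint stationary process with the remaining $k-1$ parameters. The proof is the same sequence of permutations $\sigma_\ell$ as in Lemma \ref{lem:consis}. We use Lemma \ref{lem:invz} in place of Lemma \ref{lem:invp}, and replace $(+,\times)$ by $(\max,+)$.

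The key limit computation is the analogue of \eqref{E:limeps}. There, the blow-up of the weight $X^{(\ell)}(2k+2-\ell,\ell+1)$, which is $\Exp(\eps)$ or $\Geo(1-\eps)$, forces the maximizing path from the shifted starting point to pass through that cell. The weight then cancels in the differences defining $R^{\EL,\eps}$. Kolmogorov's extension theorem then yields a process $\cH^{\EL}$ on $\hat\Q\times\Z$ with the prescribed finite-dimensional marginals.

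\textbf{Step 2: monotonicity (the main point).} We need the analogue of \eqref{E:LGV}. For $\apar_1<\apar_2$ in $\hat\Q$ and $j_1<j_2$, we want
$$\cH^{\EL}(\apar_1,j_2)+\cH^{\EL}(\apar_2,j_1)-\cH^{\EL}(\apar_1,j_1)-\cH^{\EL}(\apar_2,j_2)\ge 0,$$
with the sign chosen so that $-\cH^{\EL}$ induces a nonnegative $\mu$ in \eqref{E:F-measure}. In the geometric case the parameter order is reversed, and the sign convention is adjusted accordingly.

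It suffices to verify this for $R^{\EL,\eps}$ with $k=2$. There it reads
$$X(4,1;[j_2,4]_\textsf{L})+X(3,2;[j_1,4]_\textsf{L})\ge X(4,1;[j_1,4]_\textsf{L})+X(3,2;[j_2,4]_\textsf{L}).$$
Lindstr\"om--Gessel--Viennot is replaced by the standard crossing argument. Geodesics $\pi$ from $(4,1)$ to $[j_1,4]_\textsf{L}$ and $\pi'$ from $(3,2)$ to $[j_2,4]_\textsf{L}$ must intersect, because of the relative position of their endpoints along the L-shaped line. Swapping their tails at a common vertex produces paths from $(4,1)$ to $[j_2,4]_\textsf{L}$ and from $(3,2)$ to $[j_1,4]_\textsf{L}$ with the same total weight. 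This gives the inequality.

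Some care is needed with the diagonal constraint \eqref{E:diagonal-condition}. We must check that the swapped paths still cross the diagonal at most once, from below. Since both paths cross it from below at most once, and the crossing is the same event for each concatenated piece, this should go through. Alternatively, one can use the remark after Definition \ref{D:geo} that the constraint may be dropped for LPP. I expect this sign and geometry bookkeeping to be the only real obstacle.

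\textbf{Step 3: continuity and extension.} Setting $j_1=0$, Step 2 gives monotonicity of $\cH^{\EL}(\cdot,j)$ in the parameter. The joint stationary process depends continuously in law on its slope parameters. These two facts give almost sure continuity at each point of $\hat\Q\times\Z$.

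Hence the rectangle increments define a nonnegative additive set function on rational rectangles. This extends to a $\sigma$-finite measure on the parameter interval times $\Z$, and $F$ is defined from it with $F_\apar(0)=0$.

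Finally, we identify marginals at non-rational finite sets $I$. Approximating $I$ from below (right-continuity from continuity of the measure) and using continuity in law of the construction shows that $\cH|_I$ has the law of the joint stationary process. This completes both parts.
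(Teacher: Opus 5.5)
Your proposal is correct and follows the paper's proof essentially step for step. The shared steps are: consistency via the permutation argument of Lemma \ref{lem:consis} carried over to the $(\max,+)$ algebra with Lemma \ref{lem:invz}; Kolmogorov extension on $\hat\Q\times\Z$; the $k=2$ quadrangle inequality via swapping tails of crossing geodesics in place of Lindstr\"om--Gessel--Viennot; and then the same monotonicity, continuity and measure-extension steps as for the log-gamma horizon (for the diagonal constraint, the cleanest justification is that \eqref{E:diagonal-condition} only forbids the single step $(i-1,i)\to(i,i)$, and every step of the swapped paths is a step of one of the two original geodesics).
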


\begin{proof}
We only prove the exponential LPP setting, as the other one is essentially identical (with sign changes as appropriate). The proof of consistency goes through exactly as in Lemma \ref{lem:consis}, making the same two changes as in the proof of Lemma \ref{lem:staLPP}, and we omit the details. 

Given this, we proceed exactly as in the proof of Theorem \ref{T:half-space-log-gamma-horizon}, by first defining $\cH^{\EL}:\hat \Q \times \Z \to \R$ and then extending $-\cH^{\EL}$ to an element of $\mathcal{D}((-\theta, \alpha \wedge 0], \Z, \alpha \wedge 0)$. All steps go through verbatim, except instead of \eqref{E:LGV}, we need to check that for any $\apar_1 < \apar_2 \in \hat \Q$ and $j_1 < j_2 \in \Z$, we have
\[
    \cH^{\EL}(\gamma_1,j_2) + \cH^{\EL}(\gamma_2,j_1) - \cH^{\EL}(\gamma_1,j_1) - \cH^{\EL}(\gamma_2,j_2) \ge 0.
\]
Using the $\cH^{\EL,\eps}$-representation with $k = 2$, this is equivalent to the statement that
\begin{equation*}
   X(4, 1; [j_2,4]_\textsf{L} ) + X(3, 2; [j_1,4]_\textsf{L} ) \ge  X(4, 1; [j_1,4]_\textsf{L}) +X(3, 2; [j_2,4]_\textsf{L}).
\end{equation*}
Let $\pi = (\pi_1, \dots, \pi_k)$ be a geodesic from $(4, 1)$ to $[j_1,4]_\textsf{L}$, and let $\tau = (\tau_1, \dots, \tau_\ell)$ be a geodesic from $(3, 2)$ to $[j_2,4]_\textsf{L}$. These necessarily meet at a point $v = \pi_j = \tau_{j'}$. Then the paths
$$
(\pi_1, \dots, \pi_j, \tau_{j' + 1}, \dots, \tau_\ell), \qquad  (\tau_1, \dots, \tau_{j'}, \pi_{j + 1}, \dots, \pi_k)
$$
go from $(4, 1)$ to $[j_2,4]_\textsf{L}$ and $(3, 2)$ to $[j_1,4]_\textsf{L}$. Moreover, by switching ends we have not broken the condition \eqref{E:diagonal-condition}. This gives the desired inequality.
\end{proof}

\subsection{Half-space stationary horizon}   \label{ss:hfsh}

In this section, we take the 1:2:3-scaling limit of the half-space exponential horizon to construct the (extended) half-space stationary horizon. 
	
	Take $\rho\in\R \cup \{-\infty\}$, and slopes $\lambda_1>\lambda_2>\cdots>\lambda_k \ge 2\rho \vee 0$. 
	For any $n\in \N$, let $\theta=1/2$, $\alpha=-2^{-4/3}\rho n^{-1/3}$ (if $\rho\in\R$) or $\alpha>0$ fixed (if $\rho=-\infty$), and $\gamma_i=-2^{-7/3}n^{-1/3}\lambda_i$.
	Let $R^{\EL}$ be the half-space exponential LPP joint stationary process with these parameters. Then we define
	\[
	R_i^{(n)}(x) = 2^{-4/3}n^{-1/3}[R_i^{\EL}( 2^{5/3}n^{2/3}x) - 2^{8/3}n^{2/3}|x|],
	\]
	for each $i\in\II{1, k}$ and $x\in \R$. Here we extend each function $R_i^{\EL}$ to all of $\R$ by linearly interpolating between integers. The processes $R^{(n)}=\{R_i^{(n)}\}_{i=1}^k$ have a limit as $n\to \infty$, which can be described by a \emph{half-space exponential-Brownian LPP} problem which mixes a fully discrete and a semi-discrete environment. Note that in this setting, because of a coordinate shift the half-space in question is everything below the shifted diagonal $y = x + 2k + 1$. A polymer version of this kind of mixed problem has appeared before, see \cite[Definition 6.33]{barraquand2020half} and surrounding discussion, as well as the description of the half-space KPZ horizon in Section \ref{ssec:hsKPZhor}. 
    
    To define this problem, we use the language of cadlag LPP, introduced in \cite{dauvergne2021scaling}.
	
Let $f = \{f_i:\R \to \R, i \in I\}$ be a sequence of cadlag functions, where $I$ is an interval in $\Z$. The environment $f$ defines a finitely additive signed measure $df$ on $\R \times \Z$ through
	$$
	df([x, y] \times \{i\}) = f_i(y) - f_i(x^-),
	$$
	where $f_i(x^-) = \lim_{y \uparrow x} f_i(y)$. For $u = (x, n), v = (y, m) \in \R\times\Z$ with $x\le y$ and $n\le m$, a path $\pi$ from $u$ to $v$ is a union of closed intervals
	$$
	[x_i, x_{i+1}] \times \{i\}, \qquad i\in\II{n, m-1}, \qquad x = x_n \le x_{n+1} \le \cdots \le x_m = y.
	$$
For $u \le v$, we now define the \emph{passage times} as
	$$
	f(u; v) = \sup_{\pi:u \to v} df(\pi), \qquad f^{\Delta_k}(u; v) = \sup_{\pi:u \to v, \pi \cap \Delta_k \ne \emptyset} df(\pi).
	$$
	In the first definition, the supremum is over all paths from $u$ to $v$. In the second definition, the supremum is over paths from $u$ to $v$ which hit the shifted diagonal $\Delta_k = \{(x, x + k): x \in \Z\}$.

	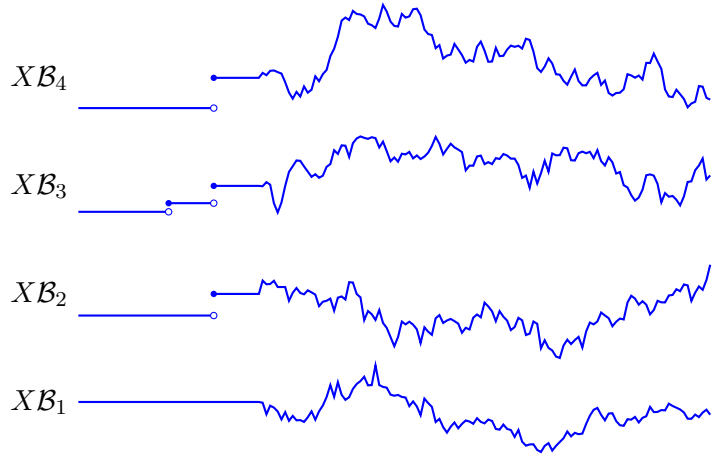
\begin{figure}[t]
		\centering

\begin{tikzpicture}[x=6cm,y=1.15cm]
    \draw[blue, thick]
    plot coordinates {
(-0.4, 0)
(0.0000, 0.0000)
(0.0083, -0.0084)
(0.0167, -0.1420)
(0.0250, -0.0432)
(0.0333, -0.0651)
(0.0417, -0.1099)
(0.0500, -0.2014)
(0.0583, -0.1175)
(0.0667, -0.2183)
(0.0750, -0.1611)
(0.0833, -0.2123)
(0.0917, -0.2097)
(0.1000, -0.2308)
(0.1083, -0.1771)
(0.1167, -0.1084)
(0.1250, -0.2051)
(0.1333, -0.1087)
(0.1417, -0.0404)
(0.1500, 0.0568)
(0.1583, 0.1955)
(0.1667, 0.0597)
(0.1750, 0.2295)
(0.1833, 0.0835)
(0.1917, 0.0245)
(0.2000, 0.0553)
(0.2083, 0.1509)
(0.2167, 0.2083)
(0.2250, 0.2415)
(0.2333, 0.2922)
(0.2417, 0.1928)
(0.2500, 0.1950)
(0.2583, 0.4231)
(0.2667, 0.1958)
(0.2750, 0.1744)
(0.2833, 0.1655)
(0.2917, 0.0845)
(0.3000, 0.0721)
(0.3083, 0.0814)
(0.3167, 0.0585)
(0.3250, 0.0513)
(0.3333, -0.0478)
(0.3417, 0.0065)
(0.3500, -0.0518)
(0.3583, -0.1529)
(0.3667, 0.0393)
(0.3750, -0.0125)
(0.3833, -0.0563)
(0.3917, -0.2318)
(0.4000, -0.1954)
(0.4083, -0.2911)
(0.4167, -0.3544)
(0.4250, -0.2864)
(0.4333, -0.2373)
(0.4417, -0.3042)
(0.4500, -0.2535)
(0.4583, -0.2140)
(0.4667, -0.2264)
(0.4750, -0.2525)
(0.4833, -0.1873)
(0.4917, -0.2302)
(0.5000, -0.2469)
(0.5083, -0.2036)
(0.5167, -0.2198)
(0.5250, -0.3120)
(0.5333, -0.3204)
(0.5417, -0.2612)
(0.5500, -0.3120)
(0.5583, -0.3245)
(0.5667, -0.4045)
(0.5750, -0.3238)
(0.5833, -0.4594)
(0.5917, -0.5055)
(0.6000, -0.4912)
(0.6083, -0.4893)
(0.6167, -0.5556)
(0.6250, -0.5785)
(0.6333, -0.5074)
(0.6417, -0.4765)
(0.6500, -0.5624)
(0.6583, -0.5081)
(0.6667, -0.4628)
(0.6750, -0.4197)
(0.6833, -0.3410)
(0.6917, -0.3977)
(0.7000, -0.3568)
(0.7083, -0.3727)
(0.7167, -0.3663)
(0.7250, -0.2552)
(0.7333, -0.1587)
(0.7417, -0.0905)
(0.7500, -0.0687)
(0.7583, -0.1488)
(0.7667, -0.2124)
(0.7750, -0.1339)
(0.7833, -0.1301)
(0.7917, -0.2215)
(0.8000, -0.2690)
(0.8083, -0.2759)
(0.8167, -0.1993)
(0.8250, -0.2046)
(0.8333, -0.1727)
(0.8417, -0.2373)
(0.8500, -0.1835)
(0.8583, -0.1650)
(0.8667, -0.0570)
(0.8750, -0.1170)
(0.8833, -0.1152)
(0.8917, -0.0935)
(0.9000, -0.0973)
(0.9083, -0.0197)
(0.9167, -0.0465)
(0.9250, -0.0659)
(0.9333, -0.0867)
(0.9417, -0.1212)
(0.9500, -0.1934)
(0.9583, -0.1944)
(0.9667, -0.1461)
(0.9750, -0.1747)
(0.9833, -0.1223)
(0.9917, -0.0894)
(1.0000, -0.1474)
    };

\draw[blue, thick]
    plot coordinates {
(-0.4, 1)
(-0.1, 1)
};
    
    \draw[blue, thick]
    plot coordinates {
(-0.1, 1.25)
(0.0000,1.2500)
(0.0083,1.4043)
(0.0167,1.3618)
(0.0250,1.3648)
(0.0333,1.4020)
(0.0417,1.3297)
(0.0500,1.3300)
(0.0583,1.3299)
(0.0667,1.1704)
(0.0750,1.2636)
(0.0833,1.3184)
(0.0917,1.2611)
(0.1000,1.2454)
(0.1083,1.2918)
(0.1167,1.2679)
(0.1250,1.2422)
(0.1333,1.1090)
(0.1417,1.1593)
(0.1500,1.1709)
(0.1583,1.1959)
(0.1667,1.0561)
(0.1750,1.2068)
(0.1833,1.2210)
(0.1917,1.1855)
(0.2000,1.3860)
(0.2083,1.3818)
(0.2167,1.2485)
(0.2250,1.2111)
(0.2333,0.9469)
(0.2417,1.0519)
(0.2500,1.0157)
(0.2583,0.9490)
(0.2667,1.0638)
(0.2750,0.9141)
(0.2833,0.8567)
(0.2917,0.7506)
(0.3000,0.6452)
(0.3083,0.7518)
(0.3167,0.9306)
(0.3250,0.8977)
(0.3333,0.9284)
(0.3417,0.8945)
(0.3500,0.8403)
(0.3583,0.6813)
(0.3667,0.8632)
(0.3750,0.8135)
(0.3833,0.7771)
(0.3917,0.9692)
(0.4000,1.0050)
(0.4083,0.9136)
(0.4167,0.9427)
(0.4250,0.9607)
(0.4333,0.9353)
(0.4417,0.9064)
(0.4500,0.8006)
(0.4583,0.8584)
(0.4667,0.8389)
(0.4750,0.9755)
(0.4833,1.0123)
(0.4917,0.9797)
(0.5000,1.1494)
(0.5083,1.1053)
(0.5167,1.0480)
(0.5250,0.9343)
(0.5333,1.0509)
(0.5417,0.9863)
(0.5500,0.9417)
(0.5583,0.8318)
(0.5667,0.8673)
(0.5750,0.9734)
(0.5833,0.9446)
(0.5917,0.7748)
(0.6000,0.7899)
(0.6083,0.9308)
(0.6167,0.9046)
(0.6250,0.6219)
(0.6333,0.7137)
(0.6417,0.6255)
(0.6500,0.5961)
(0.6583,0.5257)
(0.6667,0.5092)
(0.6750,0.6533)
(0.6833,0.7208)
(0.6917,0.6704)
(0.7000,0.7161)
(0.7083,0.7532)
(0.7167,0.6297)
(0.7250,0.8144)
(0.7333,0.9188)
(0.7417,0.8980)
(0.7500,0.9312)
(0.7583,1.0350)
(0.7667,1.0004)
(0.7750,0.9696)
(0.7833,0.8767)
(0.7917,0.9290)
(0.8000,0.9651)
(0.8083,0.9544)
(0.8167,1.1507)
(0.8250,1.2290)
(0.8333,1.1965)
(0.8417,1.1257)
(0.8500,1.1656)
(0.8583,1.1259)
(0.8667,1.0138)
(0.8750,1.0636)
(0.8833,1.1702)
(0.8917,1.1528)
(0.9000,1.1519)
(0.9083,1.2322)
(0.9167,1.2385)
(0.9250,1.3046)
(0.9333,1.2817)
(0.9417,1.3596)
(0.9500,1.3491)
(0.9583,1.1696)
(0.9667,1.2654)
(0.9750,1.3365)
(0.9833,1.2976)
(0.9917,1.4120)
(1.0000,1.5878)
    };

\draw[blue, thick]
plot coordinates {
(-0.4, 2.2)
(-0.2, 2.2)
};

\draw[blue, thick]
plot coordinates {
(-0.2, 2.3)
(-0.1, 2.3)
};

\draw[blue, thick]
plot coordinates {
(-0.1, 2.5)
(0.0000,2.5000)
(0.0083,2.4961)
(0.0167,2.5434)
(0.0250,2.5143)
(0.0333,2.2990)
(0.0417,2.1924)
(0.0500,2.3072)
(0.0583,2.5004)
(0.0667,2.6620)
(0.0750,2.7274)
(0.0833,2.7943)
(0.0917,2.7899)
(0.1000,2.7252)
(0.1083,2.6660)
(0.1167,2.6043)
(0.1250,2.6349)
(0.1333,2.5940)
(0.1417,2.6177)
(0.1500,2.7205)
(0.1583,2.7519)
(0.1667,2.9125)
(0.1750,2.8701)
(0.1833,2.9689)
(0.1917,2.9651)
(0.2000,2.8350)
(0.2083,2.9934)
(0.2167,3.0492)
(0.2250,3.0719)
(0.2333,3.0560)
(0.2417,3.0441)
(0.2500,3.0630)
(0.2583,3.0528)
(0.2667,2.9448)
(0.2750,3.0118)
(0.2833,2.9011)
(0.2917,2.7845)
(0.3000,2.8286)
(0.3083,2.7796)
(0.3167,2.8720)
(0.3250,2.8643)
(0.3333,2.8931)
(0.3417,2.8181)
(0.3500,2.8523)
(0.3583,2.8104)
(0.3667,2.8394)
(0.3750,2.8833)
(0.3833,2.9620)
(0.3917,3.0665)
(0.4000,2.9866)
(0.4083,2.8639)
(0.4167,2.9284)
(0.4250,2.9878)
(0.4333,2.9458)
(0.4417,2.9199)
(0.4500,2.8909)
(0.4583,2.6800)
(0.4667,2.7349)
(0.4750,2.7331)
(0.4833,2.8402)
(0.4917,2.8501)
(0.5000,2.7515)
(0.5083,2.7749)
(0.5167,2.7521)
(0.5250,2.8215)
(0.5333,2.8109)
(0.5417,2.8686)
(0.5500,2.8514)
(0.5583,2.8151)
(0.5667,2.8154)
(0.5750,2.7421)
(0.5833,2.5926)
(0.5917,2.6189)
(0.6000,2.4773)
(0.6083,2.6648)
(0.6167,2.5280)
(0.6250,2.6471)
(0.6333,2.7912)
(0.6417,2.6641)
(0.6500,2.8749)
(0.6583,2.8537)
(0.6667,2.8560)
(0.6750,2.9420)
(0.6833,2.9351)
(0.6917,2.7971)
(0.7000,2.7932)
(0.7083,2.8785)
(0.7167,2.8968)
(0.7250,2.8778)
(0.7333,2.8946)
(0.7417,2.8209)
(0.7500,2.9134)
(0.7583,2.7864)
(0.7667,2.7360)
(0.7750,2.8315)
(0.7833,2.7724)
(0.7917,2.6428)
(0.8000,2.6604)
(0.8083,2.5076)
(0.8167,2.4822)
(0.8250,2.3931)
(0.8333,2.3297)
(0.8417,2.3654)
(0.8500,2.4557)
(0.8583,2.6231)
(0.8667,2.6329)
(0.8750,2.6188)
(0.8833,2.5225)
(0.8917,2.3105)
(0.9000,2.4170)
(0.9083,2.3750)
(0.9167,2.3834)
(0.9250,2.3394)
(0.9333,2.2745)
(0.9417,2.3770)
(0.9500,2.5453)
(0.9583,2.5522)
(0.9667,2.6815)
(0.9750,2.7358)
(0.9833,2.8171)
(0.9917,2.5740)
(1.0000,2.6211)
};

\draw[blue, thick]
plot coordinates {
(-0.4, 3.4)
(-0.1, 3.4)
};
\draw[blue, thick]
plot coordinates {
(-0.1, 3.75)
(0.0000,3.7500)
(0.0083,3.8072)
(0.0167,3.7807)
(0.0250,3.8312)
(0.0333,3.8354)
(0.0417,3.8159)
(0.0500,3.7994)
(0.0583,3.6991)
(0.0667,3.5802)
(0.0750,3.5091)
(0.0833,3.5872)
(0.0917,3.5340)
(0.1000,3.6506)
(0.1083,3.5787)
(0.1167,3.6128)
(0.1250,3.7678)
(0.1333,3.7108)
(0.1417,3.7932)
(0.1500,3.8412)
(0.1583,4.0005)
(0.1667,4.0863)
(0.1750,4.2245)
(0.1833,4.4118)
(0.1917,4.4472)
(0.2000,4.4185)
(0.2083,4.5243)
(0.2167,4.4894)
(0.2250,4.4831)
(0.2333,4.4612)
(0.2417,4.3766)
(0.2500,4.3595)
(0.2583,4.3266)
(0.2667,4.4374)
(0.2750,4.5932)
(0.2833,4.5162)
(0.2917,4.4002)
(0.3000,4.3510)
(0.3083,4.3438)
(0.3167,4.4896)
(0.3250,4.4468)
(0.3333,4.4609)
(0.3417,4.5439)
(0.3500,4.5499)
(0.3583,4.4743)
(0.3667,4.3101)
(0.3750,4.2568)
(0.3833,4.2456)
(0.3917,4.0927)
(0.4000,4.1284)
(0.4083,3.9730)
(0.4167,4.0071)
(0.4250,4.0189)
(0.4333,4.1424)
(0.4417,4.0786)
(0.4500,4.0131)
(0.4583,3.9250)
(0.4667,3.9573)
(0.4750,3.9386)
(0.4833,4.1089)
(0.4917,4.1691)
(0.5000,4.0649)
(0.5083,3.9350)
(0.5167,3.9391)
(0.5250,4.0972)
(0.5333,4.0009)
(0.5417,4.0050)
(0.5500,4.0773)
(0.5583,4.1232)
(0.5667,4.1085)
(0.5750,4.1213)
(0.5833,4.0886)
(0.5917,4.2025)
(0.6000,4.1885)
(0.6083,4.0545)
(0.6167,3.8625)
(0.6250,3.8185)
(0.6333,3.8810)
(0.6417,3.9756)
(0.6500,3.9163)
(0.6583,3.7920)
(0.6667,3.7862)
(0.6750,3.7147)
(0.6833,3.6708)
(0.6917,3.7627)
(0.7000,3.8459)
(0.7083,3.7129)
(0.7167,3.6628)
(0.7250,3.6629)
(0.7333,3.7369)
(0.7417,3.7785)
(0.7500,3.8081)
(0.7583,3.6682)
(0.7667,3.7187)
(0.7750,3.5743)
(0.7833,3.5452)
(0.7917,3.5748)
(0.8000,3.5777)
(0.8083,3.6180)
(0.8167,3.7923)
(0.8250,3.7778)
(0.8333,3.7516)
(0.8417,3.7765)
(0.8500,3.7301)
(0.8583,3.9235)
(0.8667,3.9212)
(0.8750,4.0303)
(0.8833,3.9401)
(0.8917,3.8535)
(0.9000,3.6142)
(0.9083,3.5346)
(0.9167,3.5278)
(0.9250,3.6127)
(0.9333,3.5613)
(0.9417,3.5062)
(0.9500,3.4089)
(0.9583,3.4533)
(0.9667,3.4733)
(0.9750,3.6551)
(0.9833,3.6475)
(0.9917,3.5184)
(1.0000,3.4981)
};

\fill[blue] (-0.1,1.25) circle (1.2pt);
\draw[blue,fill=white] (-0.1,1) circle (1.2pt);

\fill[blue] (-0.1,2.5) circle (1.2pt);
\draw[blue,fill=white] (-0.1,2.3) circle (1.2pt);

\fill[blue] (-0.1,3.75) circle (1.2pt);
\draw[blue,fill=white] (-0.1,3.4) circle (1.2pt);

\fill[blue] (-0.2,2.3) circle (1.2pt);
\draw[blue,fill=white] (-0.2,2.2) circle (1.2pt);

\node[left] at (-0.4,3.75) {$X\cB_4$};
\node[left] at (-0.4,2.5) {$X\cB_3$};
\node[left] at (-0.4,1.25) {$X\cB_2$};
\node[left] at (-0.4,0) {$X\cB_1$};
\end{tikzpicture}		
		\caption{An illustration of the half-space exponential-Brownian LPP environment (for marginals of the half-space stationary horizon), with $k=2$.} 
		\label{fig:hfsh}
	\end{figure}

	\begin{prop}
		\label{P:half-space-horizon-marginals}
	As $n\to\infty$, the process $R^{(n)}$ has a distributional limit $R^{\DL}=\{R^{\DL}_i\}_{i=1}^k$, under the uniform-on-compact topology. 
    We can describe the process $R^{\DL}$ explicitly as follows.
	
Let $\cB = (\cB_1, \cB_2, \ldots, \cB_{2k})$ be $2k$ independent Brownian motions on $\R_{\ge 0}$ of diffusivity $2$, drift $\lambda_i$ for $\cB_i, i\in\II{1, k}$ and $-\lambda_i$ for $\cB_{2k+1-i}$. Next, let $\{X(-i, j) : i\in\II{1, k}, j \in \II{i +1, 2k +1 - i}\}$ be an array of exponential random variables, independent of each other and $\mathcal B$, satisfying 
\[
X(-i, j) \sim \begin{cases}
\operatorname{Exp}(\lambda_i/2 - \lambda_j/2), \qquad &j \in \II{i + 1, k}, \\
\operatorname{Exp}(\lambda_i/2 + \lambda_{2k + 1 -j}/2), \qquad &j\in\II{k + 1, 2k - i}, \\
\operatorname{Exp}(\lambda_i/2 - \rho), \qquad &j=2k + 1- i.
\end{cases}
\]
We turn this array into a 
sequence of cadlag functions $X = (X_1, \dots, X_{2k})$ on $(-\infty, 0]$ by the rule that $X_j(0) = 0$ for all $j$, and the measure $dX$ has atoms of size $X(-i, j)$ at each coordinate $(-i, j)$ where $X(-i, j)$ is defined (and is supported on these points). Equivalently, the path $X_j$ has a jump of size $X(-i, j)$ at location $-i$.

Let $X \cB$ be the concatenation of $X$ and $\cB$. (See Figure \ref{fig:hfsh} for an illustration.) Then:
$$
R^{\DL}_i(x) = \begin{cases}
X\cB(-i, i; x, 2k) - X\cB(-i, i; 0, 2k), \qquad &x \ge 0, \\
X\cB^{\Delta_{2k+1}}(-i, i; |x|, 2k) - X\cB(-i, i; 0, 2k), \qquad &x \le 0.
\end{cases}
$$
\end{prop}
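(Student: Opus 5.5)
We start from the $\eps$-representation of $R^{\EL}$ with $\theta=1/2$, $\alpha=-2^{-4/3}\rho n^{-1/3}$, $\gamma_i=-2^{-7/3}n^{-1/3}\lambda_i$, and $\gamma_{2k+1-i}=\eps-\gamma_i$. In this environment, the rows and columns indexed by $1,\dots,2k$ carry special parameters and all others carry $\theta=1/2$. First send $\eps\to 0$ at fixed $n$, exactly as in Proposition \ref{P:stationary-existence}.

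\textbf{Step 1: reduce to a finite last passage problem.} For fixed $n$, write $R_i^{\EL}(j)$ for $j\ge0$ as a last passage time from $(2k+1-i,i)$ to $[j,2k]_\textsf{L}=(2k+j,2k)$, minus its value at $j=0$.
\begin{itemize}[nosep]
\item Only the $2k$ special rows matter, together with the finitely many special cells with indices in $\II{1,2k}^2$.
\item Along the special horizontal lines $i\in\II{1,2k}$, the weights to the right of column $2k$ are $\Exp(\gamma_i+1/2)$. Their means are $2$ up to an $O(n^{-1/3})$ drift correction.
\item Since the path is up-right and ends on row $2k$, its horizontal travel beyond column $2k$ is a last passage problem across the $2k$ lines $\{\cdot\}\times\{1,\dots,2k\}$. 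This holds up to the finite-region correction.
\end{itemize}
For $j<0$, the endpoint is $(2k,2k-j)$, above the diagonal. By the symmetry of $X$ and the diagonal condition \eqref{E:diagonal-condition}, such a path is equivalent to a half-space path that is forced to touch the diagonal, reflected there. This is what produces $X\cB^{\Delta_{2k+1}}$ with endpoint $|x|$.

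\textbf{Step 2: take the scaling limit of each piece.}
\begin{itemize}[nosep]
\item \emph{Semi-discrete part.} Under the scaling $x\mapsto 2^{5/3}n^{2/3}x$, centered by $2^{8/3}n^{2/3}|x|$ and multiplied by $2^{-4/3}n^{-1/3}$, the random walk on line $i$ converges by Donsker to a Brownian motion of diffusivity $2$. Its drift is computed from $(\gamma_i+1/2)^{-1}=2-4\gamma_i+O(\gamma_i^2)$, which gives drift $\lambda_i$ for $i\le k$. For the line $2k+1-i$, the parameter is $-\gamma_i+1/2$, which gives drift $-\lambda_i$.
\item \emph{Discrete corner.} The finite array of special cells has parameters $\gamma_a+\gamma_b$ or $\alpha+\gamma_a$, which are of order $n^{-1/3}$. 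After multiplying by $2^{-4/3}n^{-1/3}$, these cells converge to exponential variables.
\item \emph{Rates.} For example, $2^{-4/3}n^{-1/3}\Exp(\gamma_{2k+1-j}+\gamma_i)$ has rate $2^{4/3}n^{1/3}(\gamma_i-\gamma_j)=(\lambda_j-\lambda_i)/2\cdot(-1)$ in the appropriate sign, and similarly for the other cases.
\item \emph{Relabelling.} Relabel the corner cells as the atoms $X(-i,j)$. Matching the index bookkeeping so that the rates read $\lambda_i/2-\lambda_j/2$, $\lambda_i/2+\lambda_{2k+1-j}/2$ and $\lambda_i/2-\rho$ is the routine but fiddly part.
\item \emph{Degenerate cells.} Cells with negative parameter are $\infty$. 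For example, $X(2k+1-i,i)$ blows up in the $\eps$-limit and cancels. These cells are handled as in Proposition \ref{P:stationary-existence}. The effect is to fix the starting point at $(-i,i)$.
\item \emph{Case $\rho=-\infty$.} Here the diagonal cells become $\Exp(\alpha+\gamma)$ with $\alpha$ fixed, so they vanish after rescaling. This matches rate $\infty$, i.e.\ $\Exp(\infty)=0$.
\end{itemize}

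\textbf{Step 3: continuity and tightness.} Cadlag LPP across finitely many lines is a continuous functional of the environment under uniform-on-compact convergence (as in \cite{dauvergne2021scaling}), plus a finite vector of jumps. Joint convergence of the environment therefore gives convergence of finite-dimensional laws. It also gives uniform-on-compact convergence in $x$, since all the processes are built continuously from the same converging environment.

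The main obstacle is the negative side $x\le0$. There one must check that the reflected, diagonal-touching paths converge to $X\cB^{\Delta_{2k+1}}$ with the correct shifted diagonal, and that the diagonal weights scale to give the $\lambda_i/2-\rho$ atoms. I would handle this by an explicit bijection of lattice paths before taking limits, reflecting at the last diagonal visit.
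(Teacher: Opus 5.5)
Your proposal follows essentially the same route as the paper. After the $\eps\to0$ limit, you write $R^{\EL}$ as a cadlag LPP across $2k$ lines with a finite exponential corner, then rescale using that LPP commutes with scaling and with common shifts of all lines, apply Donsker on $x\ge 0$, and use the symmetric-environment/diagonal-touching reflection for $x\le 0$.

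Two small corrections. First, your sample rate should read $2^{4/3}n^{1/3}(\gamma_i-\gamma_j)=(\lambda_j-\lambda_i)/2$ with no extra factor $-1$; this is the atom $X(-j,i)$ with $j<i\le k$. Second, the $x\le 0$ side is not a real obstacle: the diagonal constraint only involves the discrete corner, and for finite $\rho$ the rescaled corner has exactly the law of $X$ for every $n$.
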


In the above proposition, if $\lambda_k = 2 \rho$, then the exponential random variable $X(-k, k + 1)$ is $\operatorname{Exp}(0)$, which we interpret as $\infty$. This is similar to the finite case. In this case, $R^{\DL}_k$ should be viewed as a limit, where we  approximate $X(-k, k + 1)$ by a sequence of numbers $\to\infty$. This is equivalent to setting
$$
R^{\DL}_k(x) = X\cB(-k+1, k + 1; x, 2k) - X\cB(-k+1, k + 1; 0, 2k).
$$

\begin{proof}[Proof of Proposition \ref{P:half-space-horizon-marginals}]
We first give an explicit description of the processes $R^{(n)}$ in terms of a cadlag LPP. Starting with the unscaled process $R^{\EL}$, following Section \ref{SS:half-space-zero-temp-horizons} we can write it in terms of a collection of $2k$ lines of independent exponential random variables $\{W(i, j), i \in \N, j\in\II{1, 2k}\}$. 

Namely, we let $W(i, i) \sim \Exp(\alpha + \gamma_i)$ for all $i\in \II{k+1, 2k}$, and $W(i, j) \sim \Exp(\gamma_i + \gamma_j)$ for all $i > j$ with $i+j\ge 2k+2$. We take $W(i, j)=0$ for the remaining $(i, j)$.
Here $\alpha$ and $\gamma_i$ for $i\in \II{1, k}$ are defined as in the beginning of this subsection, and $\gamma_i=-\gamma_{2k+1-i}$ for $i \in \II{k+1, 2k}$, and $\gamma_i=\theta=1/2$ for $i\ge 2k+1$.
If $\lambda_k=2\rho$, thereby $\alpha+\gamma_{k+1}=0$, we interpret $W(k+1,k+1)$ as $\infty$.

We can turn this array into a cadlag environment $W = (W_1, \dots, W_{2k})$: setting $W_j(2k) = 0$ for all $j$, and let $dW$ have atoms of size $W(i, j)$ at every point $(i, j)$. Note that there is an implicit dependence on $n$ in the environment parameters here.

We now shift and rescale the environment $W$ to construct $R^{(n)}$. Define 
$$
\hat W_i(x) = \begin{cases}
	2^{-4/3} n^{-1/3}W_i(x + 2k+1), \qquad &x < 0, \\
	2^{-4/3} n^{-1/3}[W_i(2^{5/3} n^{2/3} x + 2k + 1) - 2^{8/3} n^{2/3} x], \qquad &x \ge 0.
\end{cases}
$$
Then since cadlag LPP commutes with scalings of the environment and shifts of all lines by a common function, we can write
$$
R^{(n)}_i(x) = \begin{cases}
	\hat W(-i, i; x, 2k) - \hat W(-i, i; 0, 2k), \qquad &x \ge 0, \\
	\hat W^{\Delta_{2k+1}}(-i, i; |x|, 2k) - \hat W(-i, i; 0, 2k), \qquad &x \le 0.
\end{cases}
$$
Here to obtain the expression for $R^{(n)}_i(x), x \le 0$ we have used that LPP in a symmetric environment can alternately be described as LPP in a half-space environment, where the paths are forced to touch the diagonal. At this point, it simply remains to observe that $\hat W \cvgd X \cB$, in the uniform-on-compact topology as $n \to \infty$. This uses Donsker's invariance principle for $x \ge 0$, and is immediate even before $n \to \infty$ for $x \le 0$.
\end{proof}
	
	We will check in Proposition \ref{P:properties-elpp} that the process $R^{\DL}$ is jointly stationary for any subsequential limit of exponential LPP. For now, we finish this section by extending it to a horizon.

    \begin{theorem}
    \label{T:half-stationary-horizon}
    Let $\rho \in \R\cup\{-\infty\}$. Then there exists a process $\cH^{\DL}\in \mathcal D([2\rho \vee 0, \infty), \R, 2\rho \vee 0)$, which we call the \emph{(extended) half-space stationary horizon}, such that for any finite set $I=(\lambda_1>\cdots>\lambda_k) \subset [2\rho \vee 0, \infty)$, the process $\cH^{\DL}|_I$ has the same distribution as $R^{\DL}$ from Proposition \ref{P:half-space-horizon-marginals}.
    \end{theorem}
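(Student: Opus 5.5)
The plan mirrors the proof of Theorem \ref{T:half-space-log-gamma-horizon}, with the finite-$n$ exponential horizon of Theorem \ref{T:lpp-horizons} supplying the structural inputs, which are then passed to the limit through Proposition \ref{P:half-space-horizon-marginals}. Set $\hat \Q = \Q \cap [2\rho\vee 0, \infty)$.

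\textbf{Step 1: consistency.} For finite $I\subset \hat\Q$, the laws of $R^{\DL}$ indexed by $I$ should form a consistent family. Under the scaling of Section \ref{ss:hfsh}, $\lambda_i \mapsto \gamma_i = -2^{-7/3}n^{-1/3}\lambda_i$ is decreasing. Hence removing one slope from $I$ corresponds, for each $n$, to removing one slope parameter from the exponential joint stationary process. Consistency of these finite-$n$ processes was established in the proof of Theorem \ref{T:lpp-horizons}, as the LPP analogue of Lemma \ref{lem:consis}. Applying the rescaling and taking $n\to\infty$ via Proposition \ref{P:half-space-horizon-marginals} shows that dropping the $m$-th coordinate of $R^{\DL}$ for $I$ yields $R^{\DL}$ for $I\setminus\{\lambda_m\}$. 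Kolmogorov's extension theorem then gives a process $\cH^{\DL}$ on $\hat\Q\times\R$, continuous in $x$ for each fixed $\lambda$.

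\textbf{Step 2: the rectangle inequality.} For $\lambda_1>\lambda_2$ in $\hat\Q$ and $x_1<x_2$, I would show that
\[
\cH^{\DL}(\lambda_2,x_1)+\cH^{\DL}(\lambda_1,x_2)-\cH^{\DL}(\lambda_2,x_2)-\cH^{\DL}(\lambda_1,x_1)\ge 0,
\]
or the reversed inequality, whichever is the correct orientation. The orientation is fixed so that $\mu_F$ defined by \eqref{E:F-measure} is nonnegative, where larger $\lambda$ means steeper slope. The finite-$n$ version is exactly the crossing-path inequality in the proof of Theorem \ref{T:lpp-horizons}, applied to $-\cH^{\EL}$ with $\gamma_1<\gamma_2$. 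The inequality is preserved by the affine rescaling, since the subtracted term $2^{8/3}n^{2/3}|x|$ cancels in the rectangle combination. It then passes to the distributional limit because it is a closed condition. Taking $x_1=0$ shows that $\cH^{\DL}(\cdot,x)$ is monotone in $\lambda$ for $x\ge 0$, and in the opposite direction for $x\le 0$.

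\textbf{Step 3: extension and identification of marginals.} Fix a point $(\lambda,x)\in\hat\Q\times\R$. Almost surely $\cH^{\DL}$ is continuous in $\lambda$ there. This follows from monotonicity together with continuity in law of $\lambda\mapsto R^{\DL}$ in the explicit description of Proposition \ref{P:half-space-horizon-marginals}: the Brownian drifts and the exponential rates depend continuously on the $\lambda_i$. Continuity in law, combined with monotonicity, forces almost-sure continuity at fixed points. Consequently the rectangle increments define a $\sigma$-finite measure on $[2\rho\vee 0,\infty)\times\R$. Its finiteness on compacts follows from the monotonicity bounds and continuity in $x$. This lets us define $\cH^{\DL}$ on all of $[2\rho\vee0,\infty)\times\R$, with right-continuity in $\lambda$ as the chosen convention, and obtain an element of $\mathcal D([2\rho\vee0,\infty),\R,2\rho\vee 0)$. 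Marginals at arbitrary finite $I$ then follow by approximating $I$ by rational sets from the appropriate side, using continuity of measure for $\mu_F$ and continuity in law of $R^{\DL}$ in its parameters.

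\textbf{Main obstacle.} I expect the main difficulty to lie in Steps 1 and 3, in justifying the interchange of the finite-$n$ identities with the $n\to\infty$ limit when $\lambda_k=2\rho$. In that case $X(-k,k+1)$ is infinite, and continuity in $\lambda$ at the endpoint $2\rho$ must be checked through the limiting description given after Proposition \ref{P:half-space-horizon-marginals}. For this I would verify directly that, as $\lambda_k\downarrow 2\rho$, the atom $X(-k,k+1)\to\infty$ forces the geodesic to take that jump. The passage-time differences then converge to the stated degenerate formula, which yields continuity in law at the boundary slope.
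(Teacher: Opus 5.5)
Your proposal is correct and follows essentially the same route as the paper. It takes consistency from the finite-$n$ exponential horizon (Theorem \ref{T:lpp-horizons}) in the limit, applies Kolmogorov extension on $\hat\Q\times\R$, passes the rectangle inequality from the prelimit crossing-path argument, and extends to all slopes using continuity in law of the explicit description in Proposition \ref{P:half-space-horizon-marginals}. Your separate check of continuity in law as $\lambda_k \downarrow 2\rho$, where $X(-k,k+1)$ blows up and forces the geodesic onto that atom, spells out a detail the paper covers with the single assertion that $\cH^{\DL}|_I$ is continuous in law as a function of $I$.
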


    \begin{proof}
    Let $\hat \Q := \Q \cap [2\rho \vee 0, \infty)$.
     The consistency of the construction in Proposition \ref{P:half-space-horizon-marginals} follows immediately from Theorem \ref{T:lpp-horizons}.1 upon taking a limit. This allows us to define $\cH^{\DL}$ restricted to the set $\hat \Q \times \R$. Moreover, for $\lambda_1 > \lambda_2 \in \hat \Q$ and $x_1 < x_2 \in \R$, we have that
     $$
     \cH^\DL(\lambda_1, x_1) + \cH^\DL(\lambda_2, x_2) - \cH^\DL(\lambda_1, x_2) - \cH^\DL(\lambda_2, x_1) \le 0,
     $$
     since this inequality holds in the prelimit by the proof of Theorem \ref{T:lpp-horizons}. Finally, from the construction in Proposition \ref{P:half-space-horizon-marginals}, the process $\cH^{\DL}|_I$ is continuous in law as a function of the finite set $I$. Then this allows us to extend $\cH^{\DL}$ to all of $[2\rho \vee 0, \infty) \times \R$ and check that the resulting construction has marginals given by Proposition \ref{P:half-space-horizon-marginals} on arbitrary finite sets.
    \end{proof}

    	\subsubsection{Invariance under parameter permutation: exponential-Brownian setting}
	We next state a degeneration of Lemma \ref{lem:coupr}, i.e., invariance under parameter permutation for the log-gamma polymer model, in the exponential-Brownian LPP setting.
	This will be used several times later in this paper.
	
	Define a two-line exponential-Brownian environment as follows. Take any $\lambda_0>\lambda_1$, and $\{\beta_i\}_{i\in \Z_-}$, such that $\lambda_0<2\beta_i$ for each $i\in \Z_-$.
	Take $X(i,j)\sim \Exp(-\lambda_j/2+\beta_i)$ for each $i\in \Z_-$ and $j\in\{0,1\}$, and define $X_0, X_1:(-\infty, 0] \to \R$ by setting $X_i(0) = 0$ and letting $dX$ be the atomic measure with atoms $X(i,j)$ at $(i, j)$. Let $\cB_0, \cB_1$ be Brownian motions on $\R_{\ge 0}$ with diffusivity $2$ and slopes $\lambda_0, \lambda_1$, respectively. All these random variables are taken to be independent of each other, and we let $X\cB$ be the concatenation of $(X_0, X_1)$ and $(\cB_0, \cB_1)$. Define $X'$, $\cB'$, and their concatenation $X'\cB'$ in the same way, except with the parameters $\lambda_0, \lambda_1$ switched.
	
	\begin{lemma}   \label{lem:exp-bro-swap}
		We have $X\cB(x, 0; y, 1)\eqd X'\cB'(x, 0; y, 1)$, jointly as functions of $x \le y \in \R$. 
	\end{lemma}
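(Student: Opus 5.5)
Lemma \ref{lem:exp-bro-swap} is the exponential--Brownian degeneration of Lemma \ref{lem:couprLPP} (case (1)), and I would prove it by a discrete approximation. Scale parameters as in Proposition \ref{P:half-space-horizon-marginals}: set $\apar_j^{(n)} = -2^{-7/3} n^{-1/3}\lambda_j$ for $j\in\{0,1\}$. Assuming the $\beta_i$ are in the same units as $\lambda_j/2$, as the stated rates $\Exp(-\lambda_j/2+\beta_i)$ suggest, set $\beta_i^{(n)} = 2^{-4/3}n^{-1/3}\beta_i$ for the sites $i\in\Z_-$; this normalisation must be checked against the scaling of Proposition \ref{P:half-space-horizon-marginals}. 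For the sites corresponding to $i\ge 0$, set $\beta^{(n)} = 1/2$, the bulk parameter $\theta$. Take two-line exponential environments $A_i \sim \Exp(\apar_0^{(n)}+\beta^{(n)}_i)$ and $B_i\sim \Exp(\apar_1^{(n)}+\beta^{(n)}_i)$, and the swapped family $C_i,D_i$. The hypothesis $\lambda_0<2\beta_i$ is exactly what makes all rates positive. Since $\lambda_0>\lambda_1$ gives $\apar_0^{(n)}<\apar_1^{(n)}$, Lemma \ref{lem:couprLPP}(1) gives a coupling in which the two-line passage times agree almost surely for all $x\le y$. In particular they are equal in law jointly in $(x,y)$.

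Next I apply the same rescaling $\hat W$ as in the proof of Proposition \ref{P:half-space-horizon-marginals}. On $i<0$ the weights are left unscaled in location but multiplied by $2^{-4/3}n^{-1/3}$. With the rates above, $2^{-4/3}n^{-1/3}\Exp(\apar_j^{(n)}+\beta^{(n)}_i)$ is exactly $\Exp(\beta_i-\lambda_j/2)$, which matches the law of $X(i,j)$. On $i\ge0$ the lines become random walks with mean $1/(1/2+\apar_j^{(n)})$. After centering by $2^{8/3}n^{2/3}x$ and scaling, Donsker's theorem gives Brownian motions of diffusivity $2$ and drift $\lambda_j$; this is the same computation already done in Proposition \ref{P:half-space-horizon-marginals}. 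Hence $\hat W\cvgd X\cB$ and likewise $\hat W'\cvgd X'\cB'$, uniformly on compacts, with the discrete sites mapped to the atoms at $i\in\Z_-$.

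Finally I pass the identity to the limit. Two-line cadlag LPP, $f(x,0;y,1)=\sup_{z\in[x,y]} f_0(z)-f_0(x^-)+f_1(y)-f_1(z^-)$, is a continuous functional of the environment in the Skorokhod sense used here. The jumps sit at fixed integer locations, and the Brownian part is continuous, so uniform-on-compact convergence of the concatenated environment suffices. The interpolation between integers on $x\ge0$ changes the passage times only by $o(1)$ after scaling. The continuous mapping theorem then yields $X\cB(x,0;y,1)\eqd X'\cB'(x,0;y,1)$ jointly in $x\le y$ ranging over compacts, and hence jointly over all $x\le y$.

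The main obstacle is bookkeeping rather than depth. The first point is fixing the normalisation of $\beta^{(n)}_i$ on the discrete part so that the rescaled exponentials have exactly the rates in the statement. The second is handling the endpoints $x,y$ that fall at jump locations, where one must respect the left-limit convention $f_i(x^-)$. If that convention causes trouble, I would first prove the identity for $x,y\notin\Z_-$, which is a dense set of continuity points, and then extend to all $x\le y$ by right-continuity of both sides in $y$ and the corresponding one-sided continuity in $x$.
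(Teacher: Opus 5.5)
Your proposal is correct and follows the paper's route. The paper likewise applies the two-line coupling of Lemma \ref{lem:couprLPP}(1) with $I=\Z$; its citation of Lemma \ref{lem:invz} ``with $I=\Z$'' amounts to this. It then passes to the exponential--Brownian limit exactly as in the proof of Proposition \ref{P:half-space-horizon-marginals}. The normalisation you flagged does check out, since $2^{-4/3}n^{-1/3}\cdot\Exp\big(2^{-4/3}n^{-1/3}(\beta_i-\lambda_j/2)\big)\eqd\Exp(\beta_i-\lambda_j/2)$. Because the discrete part has exactly the limiting law for every $n$, the left-limit issue at the atoms never arises, so your fallback argument is unnecessary.
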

	\begin{proof}
	This directly follows from Lemma \ref{lem:invz} (with $I=\Z$) after taking a limit transition from exponential LPP to Brownian LPP in the range $\R_{\ge 0}$.
We omit the details, as they are similar to the proof of Proposition \ref{P:half-space-horizon-marginals}.
	\end{proof}

	We also state a useful corollary. Here we condense notation and write $X\cB(x, y) = X\cB(x, 0; y, 1)$ and $X'\cB'(x, y) = X'\cB'(x, 0; y, 1)$ for the passage times.
    For $0\le x \le y$, we also write $\cB(x, y)=X\cB(x, y)$ and $\cB'(x, y)=X'\cB'(x, y)$, since it is actually independent of $X$ or $X'$.
	\begin{corollary}  \label{cor:exp-bro}
		Take $X_*\sim \Exp(\lambda_0/2-\lambda_1/2)$.
		We can couple $\cB_0, \cB_1, X_*$ and $\cB_0', \cB_1'$ together, such that $\cB_0, \cB_1, X_*$ are independent of each other, and $\cB_0', \cB_1'$ are independent of each other, and
		\begin{align}
\label{eq:exp-bro-c1}
	\cB(x,y) &= \cB'(x,y), \qquad \text{ for each  } 0\le x \le y, \\
\label{eq:exp-bro-c2}
	\cB_1'(x) &= \cB_1(x) \vee \max_{0\le y \le x} \cB_1(x) - \cB_1(y) + \cB_0(y) - X_* , \\
 \label{eq:exp-bro-c3}
	X_* &= \max_{x\ge 0} \cB_0'(x)-\cB_1'(x).
		\end{align}		
	\end{corollary}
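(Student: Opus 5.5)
Start from Lemma \ref{lem:exp-bro-swap}, which gives equality in law of the whole two-line passage-time process $X\cB(x,y)$, $x\le y\in\R$, with $X'\cB'(x,y)$. Since $\R$ is Polish, we can realize a coupling in which these two processes agree almost surely for all $x\le y$. Restricting to $0\le x\le y$ gives \eqref{eq:exp-bro-c1} immediately. Here $\cB(x,y)$ is a functional of $(\cB_0,\cB_1)$ only, and $\cB'(x,y)$ of $(\cB_0',\cB_1')$ only. So we keep independence within each family, and the coupling lives on the product of the two environments.

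The next step identifies $X_*$. Take only one atom on the negative side, namely $\beta_{-1}=\beta$ with $X(-1,0)\sim\Exp(\beta-\lambda_0/2)$ and $X(-1,1)\sim\Exp(\beta-\lambda_1/2)$; all the $X(i,j)$ with $i<-1$ play no role if we take $x=-1$. Then
$$X\cB(-1,y)=X(-1,1)+\Big(\cB_1(y)\vee \max_{0\le z\le y}\big[X(-1,0)-X(-1,1)+\cB_0(z)+\cB_1(y)-\cB_1(z)\big]\Big).$$
A cleaner route is to send $\beta\downarrow \lambda_0/2$. In that limit $X(-1,0)$ blows up, so we should normalize by subtracting it. For the primed environment, $X'(-1,1)\sim\Exp(\beta-\lambda_0/2)$ blows up instead.

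I would instead compare differences, $X\cB(-1,y)-X\cB(-1,0)$, and take the limit $\beta\to\lambda_0/2$. On the unprimed side the difference tends to $\cB_1(y)\vee\max_{z\le y}[\cB_1(y)-\cB_1(z)+\cB_0(z)-X_*]$ after reindexing. Here $X_*$ is the limit of $X(-1,0)-X(-1,1)$ conditioned appropriately; by memorylessness, $X(-1,1)-X(-1,0)$ given that it is positive is $\Exp(\beta-\lambda_1/2)\to\Exp(\lambda_0/2-\lambda_1/2)$. This should be arranged by a symmetric choice of which line dominates. On the primed side, the huge weight sits on line $1$, so the path jumps to line $1$ at time $0$ and the difference is just $\cB_1'(y)$. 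This yields \eqref{eq:exp-bro-c2}.

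For \eqref{eq:exp-bro-c3}, use the same limit but now evaluate the equality at large $y$ with the primed side expressed through $X'(-1,0)$ and $X'(-1,1)$. Alternatively, use the queueing identity: $X_*$ is recovered as the final value of $\cB_1'-\cB_1$ type increments, namely $X_*=\lim_{y\to\infty}$ of $\cB_1'(y)-\cB_1(y)$ recentered. The cleanest route is to derive $\max_{x\ge0}\cB_0'(x)-\cB_1'(x)=X_*$ as the Burke-type inverse map. The map $(\cB_0,\cB_1,X_*)\mapsto(\cB_0',\cB_1')$ is determined by \eqref{eq:exp-bro-c1} and \eqref{eq:exp-bro-c2}, since $\cB'(0,y)$ together with $\cB_1'$ determines $\cB_0'$. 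This map is the output of an M/M/1-style queue started with initial load $X_*$. The reverse queue's departure process then recovers the initial load as the supremum of $\cB_0'-\cB_1'$, and the drift $\lambda_1-\lambda_0<0$ ensures the maximum is finite and attained.

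The main obstacle is making the $\beta\to\lambda_0/2$ limit rigorous jointly with the coupling. Couplings for each $\beta$ must converge, which needs tightness of the coupled quadruple plus continuity of the maps involved. Verifying \eqref{eq:exp-bro-c3} exactly, rather than just in law, also requires care. I would first try to do everything at fixed $\beta$ with explicit Burke-type identities and only then pass to the limit.
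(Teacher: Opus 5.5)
Your overall plan (use the coupling from Lemma \ref{lem:exp-bro-swap}, send $\beta_{-1}\downarrow\lambda_0/2$, compare differences of passage times) matches the paper's. The identification of $X_*$ fails, however, because your displayed formula for $X\cB(-1,y)$ is wrong. Every path from $(-1,0)$ collects the atom $X(-1,0)$, including the path that jumps to line $1$ at $-1$. Your formula drops $X(-1,0)$ from that branch. The correct formula is
\[
X\cB(-1,y)=X(-1,0)+\Big(\big(X(-1,1)+\cB_1(y)\big)\vee \cB(0,y)\Big),\qquad \cB(0,y)=\max_{0\le z\le y}\cB_0(z)+\cB_1(y)-\cB_1(z).
\]
Since $X\cB(-1,0)=X\cB(-1,-1)=X(-1,0)+X(-1,1)$, the identity
\[
X\cB(-1,y)-X\cB(-1,-1)=\cB_1(y)\vee\big(\cB(0,y)-X(-1,1)\big)
\]
holds exactly for every $\beta$, and $X(-1,0)$ cancels. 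So $X_*$ is simply $X(-1,1)$, with no conditioning. It is independent of $(\cB_0,\cB_1)$ by construction, and its law $\Exp(\beta-\lambda_1/2)$ converges to $\Exp(\lambda_0/2-\lambda_1/2)$.

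The conditioning you propose is not just unnecessary but breaks the argument. The event $\{X(-1,1)>X(-1,0)\}$ has probability tending to $0$. The primed environment is coupled to $X$; for instance, $\cB_1'$ depends on $X(-1,1)$ through \eqref{eq:exp-bro-c2}. Conditioning on that event therefore changes the law of $(\cB_0',\cB_1')$, and you could no longer claim these are independent Brownian motions with drifts $\lambda_1,\lambda_0$. A minor point: on the primed side the path jumps to line $1$ at $-1$, where the exploding atom $X'(-1,1)$ sits, not at $0$.

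For \eqref{eq:exp-bro-c3} you list heuristics but give no proof. Your one concrete claim, that $\cB'(0,\cdot)$ and $\cB_1'$ determine $\cB_0'$, is false. They only determine the running maximum $\cB'(0,y)-\cB_1'(y)=\max_{0\le z\le y}(\cB_0'-\cB_1')(z)$.

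That running maximum is, however, exactly what is needed. Once \eqref{eq:exp-bro-c1}--\eqref{eq:exp-bro-c2} hold in the limiting coupling, \eqref{eq:exp-bro-c3} follows deterministically:
\[
\max_{0\le z\le y}(\cB_0'-\cB_1')(z)=\cB(0,y)-\Big(\cB_1(y)\vee\big(\cB(0,y)-X_*\big)\Big)=\min\Big(\max_{0\le z\le y}(\cB_0-\cB_1)(z),\,X_*\Big).
\]
Now let $y\to\infty$. The left side converges to the finite overall maximum, since $\cB_0'-\cB_1'$ has drift $\lambda_1-\lambda_0<0$. On the right, $\max_{[0,y]}(\cB_0-\cB_1)\to\infty$, so the right side converges to $X_*$.

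The paper argues instead at fixed $\beta$. Because $\cB_0$ has the larger drift, $X(-1,1)=X\cB(-1,-1)+\cB(0,x)-X\cB(-1,x)$ for all large $x$. The paper transfers this to the primed side through the coupling and then passes to the limit. The computation above is a shortcut that avoids interchanging the limits in $\beta$ and $x$. No queueing-reversibility argument is needed.
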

	\begin{proof}
		The coupling can be obtained as follows.
		From Lemma \ref{lem:exp-bro-swap}, we get a coupling between $X\cB$, $X'\cB'$,
		such that $X\cB(x, y)=X'\cB'(x, y)$ almost surely for all $x \le y$.
		Under this coupling, we send $\beta_{-1}\to \lambda_0/2$ from above, so that $X(-1, 0) \to \infty$ and $X'(-1, 1) \to\infty$, and let $X_*=X(-1,1)$. Therefore we get a coupling between $\cB_0, \cB_1, X_*$ and $\cB_0', \cB_1'$.
		We next check that this coupling satisfies all the conditions. 
		
		The independence and \eqref{eq:exp-bro-c1} are obvious.
		As for \eqref{eq:exp-bro-c2}, since $X'(-1, 1) \to \infty$ as $\beta_{-1}\downarrow \lambda_0/2$, 
		\begin{equation}
			\label{E:B1B'}
\cB_1'(x) = \lim_{\beta_{-1}\downarrow \lambda_0/2}X'\cB'(-1,x)-X'\cB'(-1,-1) = \lim_{\beta_{-1}\downarrow \lambda_0/2}X\cB(-1,x)-X\cB(-1,-1).
		\end{equation}
		On the other hand, before sending $\beta_{-1}\downarrow \lambda_0/2$, we have
		\[
		X\cB(-1,x)=
		\left(\cB_1(x)+X(-1,0)+X(-1,1)\right) \vee \left( \max_{0\le y \le x} \cB_1(x) - \cB_1(y) + \cB_0(y) + X(-1,0)\right).
		\]
		By subtracting $X\cB(-1,-1)=X(-1,0)+X(-1,1)$ from both sides and taking the limit, \eqref{eq:exp-bro-c2} follows.
		
		As for \eqref{eq:exp-bro-c3}, before sending $\beta_{-1}\downarrow \lambda_0/2$, for all $x$ large enough we have
		\[
		X_*=X(-1,1) = X\cB(-1,-1)+\cB(0,x) - X\cB(-1,x),
		\]
		since the slope of $\cB_0$ is larger than that of $\cB_1$.
		Thus since $X\cB(\cdot, \cdot)=X'\cB'(\cdot, \cdot)$ almost surely,
		\[
		X_* = X'\cB'(-1,-1)+\cB'(0,x) - X'\cB'(-1,x),
		\]
		for all $x$ large enough. Now sending $\beta_{-1}\downarrow \lambda_0/2$, and using \eqref{E:B1B'}, we have that 
		\[
		X_*=\cB'(0,x)-\cB_1'(x),
		\]
		for all $x$ large enough. Since the slope of $\cB_1'$ is larger than that of $\cB_0'$, almost surely the process $\cB_0' - \cB_1'$ has a unique maximum, and for all $x$ large enough we have
		\[
		\cB'(0,x) = \cB_1'(x) + \max_{y\ge 0} - \cB_1'(y) + \cB_0'(y).
		\]
		Thus we get \eqref{eq:exp-bro-c3}.
	\end{proof}
	\begin{remark}
	We note that Corollary \ref{cor:exp-bro} may alternately be proven from Brownian Burke's theorem (see e.g., \cite[Theorem 2]{OYbb}).
	\end{remark}
    
\subsection{Half-space KPZ horizon}   \label{ssec:hsKPZhor}

We end this section by constructing the (extended) half-space KPZ horizon, via taking a scaling limit of the half-space log-gamma polymer to the half-space KPZ equation.

Take parameters $\alpha$ and $\apar_1, \dots, \apar_k$ satisfying 
$\apar_1< \apar_2< \cdots < \apar_k\le 0\wedge \alpha$,
and let $\theta=\frac{1}{2}+\sqrt{n}$ for $n\in\N$ that is large enough.
For $R^{\LG}=\{R^{\LG}_i\}_{i=1}^k$ from Proposition \ref{P:stationary-existence}, we define the rescaling:
\[
R^{(n)}_i(x) = \log\left( R^{\LG}_i(n^{1/2}x) \right) + \frac{n^{1/2}\log(n)|x|}{2},
\]
for each $i\in\II{1, k}$ and $x\in\R$.
Here we extend each $R^{\LG}_i$ to be defined on $\R$, by linearly interpolating between integers.
	  
	\begin{prop}
		\label{P:half-space-KPZ-horizon-marginals}
	As $n\to\infty$, the process $R^{(n)}$ has a distributional limit $R^{\KPZ}=\{R^{\KPZ}_i\}_{i=1}^k$, under the uniform-on-compact topology. 
    We can describe the process $R^{\KPZ}$ explicitly as follows.
	
Let $\cB = (\cB_1, \cB_2, \ldots, \cB_{2k})$ be $2k$ independent Brownian motions on $\R_{\ge 0}$ of diffusivity $1$, with drift $-\gamma_i$ for $\cB_i$ and $\gamma_i$ for $\cB_{2k+1-i}$, for each $i\in\II{1, k}$.
We consider the O'Connell-Yor polymer partition function (see \cite{OYbb}) across $\{\cB_i\}_{i=1}^{2k}$, defined as
\[
Z_{\cB}(x,i;y,j) = \idotsint_{x=x_i\le \cdots \le x_j \le x_{j+1}=y} \exp\left( \sum_{i'=i}^j \cB_{i'}(x_{i'+1})-\cB_{i'}(x_{i'}) \right) dx_{i+1}\cdots dx_j,
\]
for any $1\le i \le j \le 2k$, and $0\le x \le y$.

Next, let $\{X(-i, j) : i\in\II{1, k}, j\in\II{i, 2k +1 - i}\}$ be an array of inverse gamma random variables, independent of each other and $\cB$, satisfying 
\[X(-i, j) \sim \begin{cases}
1 & j =i,\\
\Gai(\gamma_j-\gamma_i), \qquad &j\in\II{ i + 1, k}, \\
\Gai(-\gamma_i- \gamma_{2k + 1 -j}), \qquad &j\in\II{k + 1, 2k - i}, \\
\Gai(-\gamma_i+\alpha), \qquad &j =  2k + 1- i.
\end{cases}
\]
Then:
\[
R^{\KPZ}_i(x)=
\begin{cases}
\log\left(\frac{\sum_{i\le \ell \le 2k} \big(\sum_{\pi\in P[i,\ell]}\prod_{v\in \pi}X(v)\big)Z_{\cB}(0,\ell; x, 2k)}{
\sum_{\pi\in P[i,2k]}\prod_{v\in \pi}X(v) }\right), \qquad & x\ge 0,\\
\log\left(\frac{\sum_{i\le \ell \le 2k} \big(\sum_{\pi\in \hat{P}[i,\ell]}\prod_{v\in \pi}X(v)\big)Z_{\cB}(0,\ell; |x|, 2k)}{
\sum_{\pi\in \hat{P}[i,2k]}\prod_{v\in \pi}X(v) }\right), \qquad & x< 0,
\end{cases}
\]
where $P[i,\ell]$ is the collection of all up-right paths in $\Z^2$ from $(-i,i)$ to $(-1,\ell)$, and  
$\hat{P}[i,\ell]$ is the subset of such paths that intersect $\{(x,x+2k+1): x\in \Z\}$.
\end{prop}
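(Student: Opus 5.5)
Our plan mirrors the proof of Proposition \ref{P:half-space-horizon-marginals}, with the $(\max,+)$ cadlag LPP replaced by the $(+,\times)$ semi-discrete polymer, and the Donsker step replaced by the log-gamma to O'Connell--Yor convergence. The starting point is the $2k$-line representation of $R^{\LG}$ from Proposition \ref{P:stationary-existence}. After the $\eps\to 0$ limit, $R^{\LG}_i(j)$ is a ratio of partition functions from $(2k+1-i,i)$ in an environment with three kinds of weights:
\begin{itemize}[nosep]
\item a finite triangular block of weights in columns $2k+1-i$, $i\le k$, below row $2k+1$, with parameters $\Gai(\gamma_i+\gamma_j)$, $\Gai(\alpha+\gamma_{2k+1-i})$ etc.;
\item $2k$ semi-infinite rows (for $j\ge 0$) of $\Gai(\theta+\gamma_\ell)$ weights, $\ell\in\II{1,2k}$, with $\gamma_{2k+1-\ell}=-\gamma_\ell$ in the limit;
\item for $j<0$, the symmetric reflection, which, as in that proof, equals a half-space partition function restricted to paths touching the shifted diagonal.
\end{itemize}

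First, we relabel the finite block. The weights with bulk-bulk parameters $\gamma_i+\gamma_{2k+1-j}$ become $\Gai(\gamma_j-\gamma_i)$ after using $\gamma_{2k+1-j}=-\gamma_j$ in the $\eps\to0$ limit. The boundary cells become $\Gai(\alpha-\gamma_i)$, and the starting cell, which cancels in the ratio, becomes $1$. This produces exactly the array $X(-i,j)$ of the statement. These weights do not depend on $n$, so no limit is needed for them; only the semi-infinite rows change with $\theta=\tfrac12+\sqrt n$. Next, we decompose every path at the exit point from this finite block onto row $\ell$ at horizontal position $0$. This gives
\[
Z(\text{start};[j,2k]_\textsf{L})=\sum_{\ell}\Big(\sum_{\pi\in P[i,\ell]}\prod_{v\in\pi}X(v)\Big)\, Z_{\text{rows}}(0,\ell;j,2k),
\]
with $\hat P$ in place of $P$ for $j<0$ (the diagonal-touching constraint lives entirely in the finite block). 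The same formula with $j=0$ gives the denominator. Note that $Z_{\text{rows}}(0,\ell;0,2k)$ is not literally $\delta$-like, so it must be checked that after rescaling it contributes asymptotically a common factor; see the next step.

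The main analytic step is the convergence of the row partition functions. We use the standard semi-discrete limit: for $\Gai(\theta+\gamma_\ell)$ weights with $\theta=\tfrac12+\sqrt n$, $\log$ of a single weight has mean $-\psi(\theta+\gamma_\ell)\approx -\tfrac12\log n-\gamma_\ell n^{-1/2}$ and variance $\approx n^{-1/2}$. Over $n^{1/2}x$ sites, the partial sums of $\log$-weights plus $\tfrac12 n^{1/2}\log(n) x$ therefore converge by Donsker to independent Brownian motions of diffusivity $1$ and drift $-\gamma_\ell$. The discrete polymer across the $2k$ rows then converges to $Z_{\cB}$, up to a deterministic normalization coming from the combinatorial count of up-steps. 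This normalization is common to numerator and denominator once we pair terms with the same $\ell$, or it is a factor $n^{-(2k-\ell)/2}$ which also appears in the $x\to 0$ limit. I would handle it by proving convergence of the rescaled, normalized partition functions jointly in $(\ell,x)$, as in the O'Connell--Yor limit of \cite{OYbb} or the log-gamma to semi-discrete limits in \cite{barraquand2020half}. I expect this step, namely the uniform-on-compact convergence together with verifying that the normalizations cancel in the ratio so that the denominator becomes exactly $\sum_{\pi\in P[i,2k]}\prod X(v)$, to be the main obstacle. In particular, at $x=0$ only the $\ell=2k$ term survives, which needs $Z_{\cB}(0,\ell;0,2k)=0$ for $\ell<2k$.

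Finally, joint convergence in $i$ follows because all $R^{(n)}_i$ are continuous functionals of the same environment. The continuous mapping theorem, combined with tightness in the uniform-on-compact topology (which follows from the Brownian convergence and monotonicity of the partition functions in $x$), gives $R^{(n)}\cvgd R^{\KPZ}$. For $\gamma_k=\alpha$, the infinite weight is handled exactly as in Proposition \ref{P:stationary-existence}.
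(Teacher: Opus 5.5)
Your proposal follows the same route as the paper. The paper's proof is only a sketch: track the $2k$-line construction of $R^{\LG}$ from Proposition~\ref{P:stationary-existence}, and apply Donsker to the partial sums of $\log$ of the $\Gai(\theta\pm\gamma_i)$ row weights. The following parts of your proposal are correct:
\begin{itemize}
\item the relabelling of the $n$-independent block into the array $X(-i,j)$;
\item the decomposition at the step from column $2k$ to column $2k+1$;
\item the $\hat P$ description for $x<0$;
\item the drift and diffusivity computation, where $\theta=\tfrac12+\sqrt n$ is exactly what gives $-\psi(\theta+\gamma)=-\tfrac12\log n-\gamma n^{-1/2}+O(n^{-1})$.
\end{itemize}

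The step you single out as the main obstacle is misdiagnosed, and neither of your two proposed resolutions is what actually happens.

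\emph{The denominator.} $Z_X(2k+1-i,i;2k,2k)$ ends at $(2k,2k)$, which lies in column $2k$, the last block column (relabelled $-1$). It contains no row weights. So for every $n$ it equals $\sum_{\pi\in P[i,2k]}\prod_{v\in\pi}X(v)$ exactly. Also $\hat P[i,2k]=P[i,2k]$, because $(-1,2k)$ lies on the shifted diagonal. There is no row partition function at $j=0$ to control, and nothing in the denominator to pair $\ell$-terms with.

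\emph{The row normalization.} It is identically $1$, independently of $\ell$. A path from $(2k+1,\ell)$ to $(2k+m,2k)$, with $m=n^{1/2}x$, collects $m+2k-\ell$ weights, each with $\log w=-\tfrac12\log n+O(n^{-1/4})$.
\begin{itemize}
\item The added term $\tfrac12 m\log n$ absorbs $m$ of the factors $n^{-1/2}$.
\item The remaining factor $n^{-(2k-\ell)/2}$, from the extra weights at the $2k-\ell$ up-steps, is exactly compensated by the Jacobian $n^{(2k-\ell)/2}$ of the Riemann sum over the up-step locations.
\item The fluctuations along rows sum to the Brownian increments.
\end{itemize}
Hence $n^{m/2}Z_{\mathrm{rows}}((2k+1,\ell);(2k+m,2k))\to Z_{\cB}(0,\ell;x,2k)$ with no constant.

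An uncancelled $\ell$-dependent factor $n^{-(2k-\ell)/2}$, as in your second alternative, could not be absorbed by the $\ell$-free denominator. It would kill every term with $\ell<2k$ and collapse the limit to $\cB_{2k}$, contradicting the statement.

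\emph{Tightness.} Uniform-on-compact convergence does not come from monotonicity in $x$; the rescaled partition functions are not monotone. It follows from two facts:
\begin{itemize}
\item Donsker in the sup norm for the $2k$ rescaled walks;
\item with high probability, $\max|\log w+\tfrac12\log n|=O(n^{-1/4}\sqrt{\log n})$ over the $O(n^{1/2})$ corner weights in a compact window.
\end{itemize}
Together these make the Riemann sums converge uniformly.
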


Similar to Proposition \ref{P:half-space-horizon-marginals} and the finite case (in the proof of Proposition \ref{P:stationary-existence}), if $\gamma_k=\alpha$, $X(-k,k+1)\sim\Gai(0)$, which we interpret as $\infty$. Then $R^{\KPZ}_k$ should be viewed as a limit, where we  approximate $X(-k, k + 1)$ by a sequence of numbers $\to\infty$.

The proof of Proposition \ref{P:half-space-KPZ-horizon-marginals} follows from tracking the construction of $R^{\LG}$ in Proposition \ref{P:stationary-existence}, and the rescaled convergence of the sums of the logarithms of i.i.d.~$(\Gai\big(\frac{1}{2}+\sqrt{n}+\gamma_i\big)$ (resp.~$\Gai\big(\frac{1}{2}+\sqrt{n}-\gamma_i\big)$) random variables to $\cB_i$ (resp.~$\cB_{2k+1-i}$) for each $i\in\II{1,k}$ (via Donsker's invariance principle).
We omit the details.

Note that when $k=1$, $R^{\KPZ}_1$ is precisely the stationary measure of the half-space KPZ equation, as given in \cite[Definition 1.2]{BC}.

We have that $R^{\KPZ}$ is jointly stationary for the half-space KPZ equation, which we recall now.
Following the setup in \cite{BC}, the half-space KPZ equation, subject to Neumann boundary condition
with parameter $\alpha$ is the following:
\[
\partial_t \mathbf{F}(x,t) = \frac{1}{2}\partial_x^2 \mathbf{F}(x,t) + \frac{1}{2}(\partial_x \mathbf{F}(x,t))^2 + \xi(x,t),
\]
\[
\partial_x \mathbf{F}(x,t)|_{x=0} = \alpha,
\]
where $\xi$ is space-time white noise. This is defined for $x, t \ge 0$, via the Hopf-Cole transform $\mathbf{F}(x,t)=\log \mathbf{Z}(x,t)$, where $\mathbf{Z}$ solves the half-line stochastic heat equation (SHE) with Robin boundary condition:
\[
\partial_t \mathbf{Z}(x,t) = \frac{1}{2}\partial_x^2 \mathbf{Z}(x,t) + \mathbf{Z}(x,t)\xi(x,t),
\]
\[
\partial_x \mathbf{Z}(x,t)|_{x=0} = \left( \alpha - \frac{1}{2}\right) \mathbf{Z}(0,t),
\]
with initial data being $\exp(H(x,0))$ for $x\ge 0$.
The precise notion of a solution to these equations is given in \cite[Definition 5.1]{BC}.
\begin{lemma}
For the above $R^{\KPZ}=\{R^{\KPZ}_i\}_{i=1}^k$, 
let $\mathbf{F}_i(x,t)$ be the solution to the half-space KPZ equation with boundary parameter $\alpha$ and initial data $R_i^{KPZ}|_{\R\ge 0}$, for each $i\in\II{1, k}$, and let them be coupled using the same space-time white noise $\xi$.
Then for any $t>0$, $\{\mathbf{F}_i(\cdot, t)-\mathbf{F}_i(0,t)\}_{i=1}^k$ has the same distribution as $\{R^{\KPZ}_i|_{\R\ge 0}\}_{i=1}^{k}$.
\end{lemma}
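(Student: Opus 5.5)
The natural route is to transfer the joint stationarity of the log-gamma horizon (Corollary \ref{cor:sta}) to the KPZ equation through the intermediate-disorder scaling limit of the half-space log-gamma polymer. We work with the rescaling of Proposition \ref{P:half-space-KPZ-horizon-marginals}: $\theta=\tfrac12+\sqrt n$, boundary parameter $\alpha$ and slope parameters $\apar_1<\dots<\apar_k\le 0\wedge\alpha$ held fixed. By Corollary \ref{cor:sta}, for every $h\in\N$ the family $\{R_i^{\LG}\}_{i=1}^k$ is jointly stationary for the extended half-space log-gamma polymer in the sense of \eqref{E:stationary-defn}. We restrict to $j\ge 0$, i.e.\ to the horizontal arm of the $L$-shape, and take $h=\lfloor tn\rfloor$ (after the appropriate diagonal/time identification). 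The jointly equal-in-law ratios $Z^{R_i^{\LG}}_X(0,0;[j,h]_{\textsf L})/Z^{R_i^{\LG}}_X(0,0;[0,h]_{\textsf L})$ are polymer partition functions started from the boundary data $R_i^{\LG}$, all driven by the same environment $X$. This gives, at the discrete level, exactly the statement of the lemma: a common bulk noise, with different initial data $R_i^{\LG}$.

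Next we pass to the limit on both sides. On the stationary side, Proposition \ref{P:half-space-KPZ-horizon-marginals} gives $R^{(n)}\to R^{\KPZ}$ jointly, uniformly on compacts. On the dynamic side we need the half-space analogue of the Alberts--Khanin--Quastel intermediate-disorder convergence. After centering by the deterministic term $\frac{n^{1/2}\log n|x|}{2}$ together with the appropriate time-dependent constant (which cancels in the ratio), the point-to-line log-gamma partition functions with initial data $\exp(R_i^{(n)})$ should converge to $\mathbf Z_i(\cdot,t)$, the solutions of the SHE with Robin parameter $\alpha-\tfrac12$, jointly in $i$ and driven by one white noise $\xi$. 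We would take this from the half-space log-gamma to half-space KPZ convergence established in the literature (e.g.\ in the setting of \cite{BC} and \cite{barraquand2020half}), using the parameter matching with boundary weights $\Gai(\alpha+\theta)$ already recorded there. Joint convergence for several initial data under one noise follows because the discrete-to-chaos expansion is linear in the initial data. Consequently, if $R^{(n)}\to R^{\KPZ}$ jointly with the environment (the limit $R^{\KPZ}$ being independent of $\xi$, since $R^{\LG}$ is independent of the weights above row $0$), then the ratios converge to $\{\exp(\mathbf F_i(x,t)-\mathbf F_i(0,t))\}_i$.

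The main obstacle is this joint convergence with random, non-compactly supported initial data that grow linearly. Our plan is to first prove it for deterministic initial data with uniform exponential-growth bounds, via the chaos expansion and the half-space heat kernel estimates. We would then condition on $R^{(n)}$, using Skorokhod coupling together with the independence between $R^{\LG}$ and the bulk weights, and control the tails with the drift bounds on $\cB_i$: the drifts $\pm\apar_i$ are bounded, so the growth is at most linear and is dominated by the Gaussian heat kernel. Uniqueness of solutions in the class of \cite[Definition 5.1]{BC} then identifies the limit. Finally, equality in law is preserved under these limits, so $\{\mathbf F_i(\cdot,t)-\mathbf F_i(0,t)\}_i\eqd\{R_i^{\KPZ}|_{\R_{\ge0}}\}_i$.
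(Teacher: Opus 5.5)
Your proposal is correct and follows the same route as the paper. The paper obtains this lemma by pushing the log-gamma joint stationarity (Lemma \ref{lem:sta}, iterated in time as in Corollary \ref{cor:sta}) through the intermediate-disorder scaling of Proposition \ref{P:half-space-KPZ-horizon-marginals}, verbatim as Barraquand--Corwin derive their half-space KPZ stationary measure from the log-gamma one \cite{BC}. Your additional points are exactly what is needed to run that transfer for several slopes at once: restricting to $j\ge 0$ so that only the half-line data enter, independence of the initial data from the bulk weights, and linearity of the partition function in the initial data to get joint convergence under a single noise.
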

The proof of this is by taking the scaling limit from Lemma \ref{lem:sta}, and is verbatim from the derivation of \cite[Theorem 1.2]{BC} from  \cite[Theorem 1.8]{BC}.  We omit the details.

Finally, as in the many other settings, we can extend $R^{\KPZ}$ to a horizon.
    \begin{theorem}
    \label{T:half-KPZ-horizon}
    Let $\alpha \in \R$. Then there exists a random function $\cH^{\KPZ}$, which we call the \emph{(extended) half-space KPZ horizon}, such that $-\cH^{\KPZ}\in \mathcal D( (-\infty, 0\wedge \alpha], \R, \alpha \wedge 0)$, and for any finite set $I=(\gamma_1<\cdots<\gamma_k) \subset (-\infty, 0\wedge \alpha]$, the process $\cH^{\KPZ}|_I$ is has the same distribution as $R^{\KPZ}$ from Proposition \ref{P:half-space-KPZ-horizon-marginals}.
    \end{theorem}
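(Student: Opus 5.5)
We follow the template of the proofs of Theorem \ref{T:half-space-log-gamma-horizon} and Theorem \ref{T:half-stationary-horizon}. Let $\hat \Q := \Q \cap (-\infty, 0\wedge\alpha]$. The first step is consistency of the finite-dimensional laws $R^{\KPZ}$. For the log-gamma marginals $R^{\LG}$, consistency is Lemma \ref{lem:consis}, and it holds for every $n$ with $\theta = \frac12+\sqrt n$. The rescaling map $R^{\LG}_i \mapsto R^{(n)}_i$ acts coordinatewise in $i$ and does not depend on the other slope parameters. Hence dropping index $m$ before or after rescaling gives the same result. Passing to the limit via Proposition \ref{P:half-space-KPZ-horizon-marginals} (which gives joint convergence in the uniform-on-compact topology) yields $\{R^{\KPZ}_i\}_{i\neq m} \eqd \tilde R^{\KPZ}$, the process with slope parameter $\gamma_m$ removed. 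Kolmogorov's extension theorem then defines $\cH^{\KPZ}$ on $\hat\Q\times\R$, with $\cH^{\KPZ}(\gamma,0)=0$ (note $R^{\KPZ}_i(0)=0$ from the formula).

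The second step is the monotonicity (measure) inequality. Here $\gamma$ plays the role of $-\lambda$, and the target is that $F = -\cH^{\KPZ}$ has nonnegative rectangle increments. We take logs in \eqref{E:LGV}. Since $\cH^{\LG}$ in Theorem \ref{T:half-space-log-gamma-horizon} is the ratio $R^{\LG}$, we have for $\gamma_1<\gamma_2$ and integers $j_1<j_2$
$$
\log R^{\LG}_1(j_2) + \log R^{\LG}_2(j_1) - \log R^{\LG}_1(j_1) - \log R^{\LG}_2(j_2) \ge 0 .
$$
The additive correction $\frac{n^{1/2}\log n\,|x|}{2}$ is the same for both indices and cancels in this combination. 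Linear interpolation preserves the inequality between integer points for each fixed pair of slopes, since the combination is bilinear-compatible: one can interpolate in each of $x_1,x_2$ separately, because each term involves only one spatial argument. Being a closed condition, the inequality passes to the distributional limit. This gives the rectangle inequality for $-\cH^{\KPZ}$ on $\hat\Q\times\R$.

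The third step is the extension, which I expect to be the main technical point. I need continuity in law of $R^{\KPZ}$ in the slope parameters, so that rational-slope values determine the process and the extension matches the prescribed marginals at irrational $\gamma$ and at the endpoint $\gamma=0\wedge\alpha$.

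The explicit description is built from Brownian motions with drifts $\mp\gamma_i$ (continuous in law in $\gamma_i$, e.g. by coupling one standard Brownian motion plus linear drift) and inverse gamma variables whose parameters are continuous in $\gamma$. The formula is a continuous functional of these on compacts, so continuity in law holds away from degenerations.

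The delicate case is $\gamma_k\uparrow\alpha$, where $X(-k,k+1)\to\infty$. There I would argue as in Proposition \ref{P:stationary-existence}: the ratio is dominated by paths through the blowing-up weight, which cancels, and the limit is the stated interpretation. Given the monotonicity in $\gamma$ for each fixed $x$ (take $x_1=0$ in the rectangle inequality), a.s. continuity at each fixed rational $\gamma$ follows from continuity in law. The rectangle increments then extend to a $\sigma$-finite measure $\mu_F$ on $(-\infty,0\wedge\alpha]\times\R$, and $F$ is defined at general $\gamma$ as the right-continuous version.

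Continuity of $F_{\alpha\wedge 0}$ in $x$ follows from continuity of the Brownian/O'Connell–Yor formula. Finally, for finite $I$ approximated from the appropriate side by rational sets $I_n$, continuity of measure together with continuity in law gives $\cH^{\KPZ}|_I \eqd R^{\KPZ}$, completing the proof.
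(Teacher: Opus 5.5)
Your proposal is correct and follows the paper's proof. The paper's steps are: consistency from Lemma \ref{lem:consis} passed through the limit in Proposition \ref{P:half-space-KPZ-horizon-marginals}; Kolmogorov extension on $\hat\Q\times\R$; the rectangle inequality inherited from \eqref{E:LGV}; and extension to all slopes via continuity in law in the parameters. You usefully spell out two points the paper leaves implicit: the $\gamma_k\uparrow\alpha$ blow-up and the continuity of $F_{\alpha\wedge 0}$.

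One small inaccuracy: the paper interpolates $R^{\LG}_i$ before taking logarithms, so the interpolated terms are not affine in $x$ and your ``bilinear'' reason does not literally apply. The inequality still holds between lattice points, since a ratio of two affine functions is monotone. In any case the interpolation step can be skipped entirely, because the inequality on the lattice $n^{-1/2}\Z$ passes directly to the continuous limit.
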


    \begin{proof}
    Let $\hat \Q := \Q \cap (-\infty, 0\wedge \alpha]$.
     The consistency of the construction in Proposition \ref{P:half-space-KPZ-horizon-marginals} follows immediately from Lemma \ref{lem:consis} upon taking a limit. Thus we can define $\cH^{\KPZ}$ restricted to the set $\hat \Q \times \R$. Moreover, for $\gamma_1 < \gamma_2 \in \hat \Q$ and $x_1 < x_2 \in \R$, we have that
     $$
     \cH^{\KPZ}(\gamma_1, x_1) + \cH^{\KPZ}(\gamma_2, x_2) - \cH^{\KPZ}(\gamma_1, x_2) - \cH^{\KPZ}(\gamma_2, x_1) \le 0,
     $$
     since this inequality holds in the prelimit in the proof of Theorem \ref{T:half-space-log-gamma-horizon}. Finally, from the construction in Proposition \ref{P:half-space-KPZ-horizon-marginals}, the process $\cH^{\KPZ}|_I$ is continuous in law as a function of the finite set $I$. Then we can extend $\cH^{\KPZ}$ to all of $(-\infty, 0\wedge \alpha] \times \R$ and check that the resulting construction has marginals given by Proposition \ref{P:half-space-KPZ-horizon-marginals} on arbitrary finite sets.
    \end{proof}

\section{Moderate deviation estimates for exponential LPP}  \label{sec:tightness}

In this section, we prove moderate deviation estimates for exponential LPP in half-space. Our main goal with these estimates will be to prove tightness in the 1:2:3 scaling when the diagonal weights are critical or subcritical. However, in many settings the estimates will give information beyond these regimes and may be of independent interest. 

In principle, many tail bounds in this section should be available from exact distribution formulas (in e.g., \cite{BBCS1,BFO,Zhang,dGMRW}). However, as in the full-space case, such formulas can be involved, and analysis of them can be highly non-trivial tasks. Therefore, our approach combines exact-solvability, together with geometric and probabilistic arguments.

In Section \ref{SS:one-point-tails-diagonal}, we prove one-point tail bounds for passage times along the diagonal, culminating in Theorem \ref{T:one-point-bds}. Here we prove our estimates in the greatest generality, and our final estimates are optimal (up-to-constants) in most regimes. Our method here uses an identity of Barraquand and Wang \cite{BW} which relates point-to-line passage times in half-space to point-to-point passage times in a full-space problem with a boundary condition. Given this identity, we derive moderate deviation estimates on point-to-line passage times in half-space by appealing to the well-known estimates in full space. We then use a novel geometric method to transfer point-to-line estimates in half-space to the point-to-point estimates we need.
(See also Remark \ref{rem:alter-route} on another potential route to get upper bound, using a different identity from \cite{BBCS1}.)

In Section \ref{SS:off-diagonal-tails}, we extend these tail bounds to give one-point tail bounds on off-diagonal entries. The method is soft, based on \textit{path-crossing inequalities} for comparing models with different boundary conditions or on different domains. The path-crossing inequalities (Proposition \ref{P:weighted-quadrangle} and its corollaries) are straightforward and may be of general interest. For example, they imply that two half-space models with the same bulk weights and monotonically coupled boundary weights differ most in their behavior exactly at the boundary.

Finally, in Section \ref{SS:two-point-bounds}, we prove two-point moderate deviation bounds which will eventually lead to tightness. There are two bounds here: 
\begin{itemize}[nosep]
    \item a spatial two-point bound (Proposition \ref{P:two-point-bd}) which translates to a H\"older-$1/2^-$ estimate and a Gaussian tail bound in the limit, and
    \item a temporal two-point bound (Proposition \ref{P:temporal-est}) which translates to a H\"older-$1/3^-$ estimate and a $\exp(-c x^{3/2})$-tail bound in the limit.
\end{itemize}
The temporal bound follows from the spatial bound and the one-point estimates using a standard method, e.g.,\ as in \cite[Lemma 10.4]{DOV}. 

For the spatial bound, estimates weaker than Gaussian (yet still sufficient for tightness) can be obtained quickly from one-point estimates, for instance via the geometric arguments in \cite[Theorem 3]{BGtim} or \cite[Lemma 2.3]{SSZinf}.
In the full-space setting, Gaussian spatial tail bounds have also been derived using Gibbs resampling arguments (see, e.g., \cite[Theorem 1.11]{HamBr}, \cite[Corollary 1.3]{calvert2019brownian}, \cite[Lemma 6.1]{dauvergne2021bulk}, \cite[Example 1.7]{dauvergne2024wiener}).
Our proof of the Gaussian spatial tail bound follows a different approach, based on comparison with stationary initial conditions, whose tails are easier to control.
Ideas of comparison with stationarity have appeared in the literature (see, e.g., \cite[Section 2]{BBSloc} for local comparison in total variation distance, and \cite[Section 2]{BFu}, \cite[Corollary 5.9]{MSZ} for finite geodesic fluctuations and coalescence). To our knowledge, this is the first time such a method has been used to derive sharp two-point bounds, particularly in the half-space setting, which requires the extended half-space stationary measures from Section \ref{sec:horizons} as input.
This method is quite flexible, and we expect that it can be applied to other models where a full range of stationary measures is available, but which may not have as much symmetry or exact solvability as full-space models.

In this section, for any $E:\Z^2\to \R_{\ge 0}$, we write the LPP passage time from $u$ to $v$ under $E$ as
\[
E(u;v)  = \max_{\pi:u\to v} \sum_{w\in \pi} E(w),
\]
where the maximum is over all up-right paths (from $u$ to $v$). 
For $E$ defined only on a subset of $\Z^2$, the passage time is taken by extending $E$ to $\Z^2$, with $E(v)=0$ for each $v$ outside the subset.
 
\subsection{One-point tail bounds on the diagonal}
\label{SS:one-point-tails-diagonal}

Recall the field $X:\Z_{\ge}^2\to \R_{\ge 0}$ with boundary parameter $\alpha>0$ from Section \ref{sssec:constru}.
We prove Theorem \ref{T:one-point-bds} in this subsection.
Note that the methods in this subsection can also be used to prove optimal moderate deviation bounds on the random variables of the form $X(1, 1; u)$ for arbitrary $u$. We have chosen to focus our efforts on the case when $u = (n, n)$ in order to reduce some technicality.

As discussed previously, our starting point for proving one-point tail bound is an identity from \cite{BW} relating point-to-line partition functions in the half-space log-gamma polymer to point-to-point partition functions in an inhomogeneous full-space log-gamma polymer. Taking a zero-temperature limit of this identity recovers an analogous identity for exponential LPP.

We state the identity in a less general form than is given in \cite{BW}. 
Similar to the previous section, for an array of non-negative weights $U:\Z^2\to \R_{\ge 0}$ and $u \le v\in\Z^2$, define the point-to-point partition function
$$
Z_U(u; v) = \sum_{\pi: u \to v} \prod_{w\in \pi} U(w),
$$
where the sum is over all up-right paths.
Now, for $u\in\Z^2$ and $(n, m) \in \Z^2_\ge$ with $u \le (n, m)$. We define the \textbf{trapezoidal point-to-line} partition function
$$
Z^\mathsf{T}_U(u; n, m) = \sum_{i=0}^{m-1} Z_U(u; n + i, m - i).  
$$
Note that unlike in the previous section, here we do not impose the constraint \eqref{E:diagonal-condition}.
\begin{theorem}[\protect{Special case of \cite[Theorem 1.4]{BW}}]
    \label{T:barraquand-wang}
Take any parameters $\alpha, \beta > 0$. Let $U, V:\Z^2\to \R_{\ge 0}$ be arrays of independent random variables, 
such that for each $j \in \Z$ and $i \in \Z \setminus \{1\}$,
$$
U(1, j) \sim \Gai(\alpha), \qquad U(i, j) \sim \Gai(\beta);
$$
and for each $(i,j)\in \Z^2_{\ge}$, 
$$
V(i, i) \sim \Gai(\alpha), \qquad V(i, j) \sim \Gai(\beta),
$$
and $V(i,j)=0$ for each $(i,j)\in\Z^2\setminus \Z^2_{\ge}$.
Then for each $(1,1)\le (n,m)\in\Z^2_{\ge}$, we have
$$
Z^\mathsf{T}_V(1,1; n, m) \eqd Z_U(1,1; n, m).
$$
\end{theorem}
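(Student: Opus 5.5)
Since the statement is quoted from \cite{BW}, the most economical route is to recall their argument, which runs through geometric RSK and Whittaker functions. Here is how I would reconstruct it self-containedly. Throughout, write $n$ for the horizontal and $m$ for the vertical extent.

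\emph{Step 1 (full-space side).} Apply geometric RSK to the $n\times m$ array $(U(i,j))_{i\le n,\, j\le m}$. By the Noumi--Yamada/Corwin--O'Connell--Seppäläinen--Zygouras theory \cite{COSZ}, the corner entry $Z_U(1,1;n,m)$ is a fixed coordinate of the output pattern. For independent inverse-gamma inputs with column parameters $\hat\gamma_i$ and row parameters $\check\gamma_j$, i.e. $U(i,j)\sim\Gai(\hat\gamma_i+\check\gamma_j)$, the law of the output shape is a Whittaker measure. Its density is proportional to
\[
\Psi_{\hat\gamma}(\cdot)\,\Psi_{\check\gamma}(\cdot)\,e^{-1/(\cdot)} .
\]
The choice $\hat\gamma_1=\alpha-\beta/2$, $\hat\gamma_i=\check\gamma_j=\beta/2$ otherwise reproduces the law of $U$. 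Consequently the Laplace/Mellin transform of $Z_U(1,1;n,m)$ becomes an explicit integral of two $\mathfrak{gl}$-Whittaker functions. Only one of them is inhomogeneous, with a single distinguished parameter.

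\emph{Step 2 (half-space side).} Symmetrize $V$ to a symmetric array on $\II{1,n}^2$, with diagonal weights $\Gai(\alpha)$, and apply geometric RSK in its symmetric form. Then $P=Q$, and the output is a single triangular pattern. By O'Connell--Seppäläinen--Zygouras \cite{OSZ}, the trapezoidal point-to-line partition function $Z^{\mathsf T}_V(1,1;n,m)$ is the corresponding point-to-line functional of this pattern; for $n=m$ it is the sum of entries along the last anti-diagonal. The law of the pattern's shape is a half-space Whittaker measure, whose density is proportional to
\[
\Psi_{\beta/2}(\cdot)\, e^{-\alpha(\cdot)}\, e^{-1/(\cdot)} ,
\]
where the middle factor is a boundary term from the diagonal weights. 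The case $n\ne m$ is handled by the trapezoid truncation, restricting rows $m<j\le n$.

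\emph{Step 3 (matching the integrals).} Compare the two integrals. On the half-space side, integrating out the boundary factor uses the Bump--Stade / Ishii--Stade identity: the integral of a Whittaker function against the boundary exponential yields a product of Gamma functions times a Whittaker function. This should exactly produce the extra inhomogeneous parameter $\alpha-\beta/2$ in one index. That is the content of the Barraquand--Wang identity, and it makes the two transforms agree.

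I expect Step 3, with the correct bookkeeping of the trapezoidal (rather than full anti-diagonal) point-to-line functional when $m<n$, to be the main obstacle.

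A fallback avoiding Whittaker analysis is a direct coupling proof by induction on $m$. The plan would be to add one row at a time and use a two-line swap in the spirit of Lemma \ref{lem:coupr}. The idea is to move the $\alpha$-parameter line from the diagonal, where it sits in $V$, onto the first column, where it sits in $U$. Each swap would preserve the relevant point-to-line quantities, as in the proof of Lemma \ref{lem:invp}. The difficulty there is designing swaps that respect the half-space geometry.
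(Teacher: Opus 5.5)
\paragraph{Assessment.} The paper gives no proof of this statement. It is imported from \cite{BW} and used only through its zero-temperature limit, Corollary~\ref{C:elpp-result}, so your outline has to stand as a proof in its own right, and it does not. Step~1 is fine: it is the COSZ Whittaker measure with $\hat\gamma_1=\alpha-\beta/2$. But Step~3, the only step carrying the content, is asserted rather than derived. The sentence ``this should exactly produce the extra inhomogeneous parameter \ldots\ That is the content of the Barraquand--Wang identity'' presupposes that the two Laplace transforms agree, which is the theorem itself. The identity you invoke does not supply this either. The Bump--Stade formula evaluates an integral of a Whittaker function against the boundary weight to a pure product of Gamma functions, with no Whittaker function left over. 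So it cannot be what inserts the parameter $\alpha-\beta/2$ into one index, and you never state an identity that would.

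Step~2 fails at the level of definitions. $Z^{\mathsf T}_V(1,1;n,m)$ sums over the endpoints $(n+i,m-i)$, $0\le i\le m-1$, whose first coordinates reach $n+m-1$. For $m\ge 2$ it therefore depends on weights outside $\II{1,n}^2$. It cannot be a functional of the geometric RSK output of the symmetrized array on $\II{1,n}^2$, since that output is a bijective image of that array and nothing more. In particular, for $n=m$ the point-to-line sum is not ``the sum of entries along the last anti-diagonal'' of that pattern.

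Enlarging the square to contain the whole endpoint segment does not help. The points $(n+i,m-i)$ with $0\le i<m-1$ are then interior. The partition functions that geometric RSK of a square returns as output coordinates are those to points of its last row and last column. Partition functions to interior points, and sums of them, are not output coordinates, let alone functions of the shape. So the half-space Whittaker measure, which only gives the law of the shape, does not determine the law of $Z^{\mathsf T}_V$. A Whittaker-function proof would need two things you do not supply: an RSK-type construction adapted to the triangular/trapezoidal half-space region below the endpoint line, and an explicit identity matching its transform with the COSZ contour integral.

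The fallback is only a hope. Lemmas~\ref{lem:coupr} and~\ref{lem:invp} permute the bulk line parameters $\gamma_i$ while $\alpha$ stays attached to the diagonal. They do not by themselves move the boundary parameter onto a column, and you do not say what would.
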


Taking a limit transition, we recover the following identity for exponential LPP.

\begin{corollary}
\label{C:elpp-result}
Take $\alpha>0$ and the field $X$ from Section \ref{sssec:constru}.
Let $W:\Z^2\to\R_{\ge 0}$ be an array of independent random variables, such that for each $j\in\Z$ and $i \in \Z \setminus \{1\}$,
$$
W(1, j) \sim \Exp(\alpha), \qquad W(i, j) \sim \Exp(1).
$$
Then for each $(1,1)\le (n,m)\in\Z^2_{\ge}$, we have
$$
\max_{0 \le i \le m - 1}X(1, 1; n + i, m - i) \eqd W(1, 1; n, m).
$$
\end{corollary}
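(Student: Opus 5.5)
The plan is to derive Corollary \ref{C:elpp-result} from Theorem \ref{T:barraquand-wang} through the standard zero-temperature limit. This limit turns inverse gamma polymers into exponential LPP. The basic fact is that if $G \sim \Gai(\theta)$, then $\eps \log G$ with $\theta = \eps c$ converges in law to $\Exp(c)$ as $\eps \downarrow 0$. Indeed, $\log G = -\log \Gamma$ with $\Gamma\sim\mathrm{Gamma}(\theta)$, and $\P(-\eps\log\Gamma > x) = \P(\Gamma < e^{-x/\eps})\sim e^{-\theta x/\eps}/\Gamma(\theta+1) \to e^{-cx}$. Fix $\alpha>0$ and apply Theorem \ref{T:barraquand-wang} with parameters $\eps\alpha$ and $\eps\cdot 1$ in place of $\alpha,\beta$. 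This gives arrays $U^\eps, V^\eps$ with $Z^{\mathsf T}_{V^\eps}(1,1;n,m) \eqd Z_{U^\eps}(1,1;n,m)$ for each fixed $(n,m)$. Set $\tilde U^\eps = \eps\log U^\eps$ and $\tilde V^\eps = \eps \log V^\eps$ on the cells where the weights are nonzero.

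The first step is weak convergence of the environments. Only finitely many cells matter, namely those in the rectangle $\II{1,n+m}\times\II{1,m}$. The cells are independent, so $\tilde U^\eps \to W$ and $\tilde V^\eps \to X$ jointly in law on these finitely many cells. In $W$, column $1$ is $\Exp(\alpha)$ and all other cells are $\Exp(1)$. In $X$, diagonal cells are $\Exp(\alpha)$ and off-diagonal cells of $\Z^2_\ge$ are $\Exp(1)$. The cells of $V$ outside $\Z^2_\ge$ are identically $0$, so every path carrying nonzero weight stays in $\Z^2_\ge$.

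The second step is the deterministic Laplace principle. Suppose a finite sum $S$ has $N$ terms, each of the form $\prod_{w\in\pi}e^{a_w/\eps}$. Then $\max_\pi\sum_{w\in\pi}a_w \le \eps\log S \le \max_\pi \sum_{w\in\pi} a_w + \eps\log N$. Here $N$ is bounded by a constant depending only on $(n,m)$: it counts the paths, together with the $m$ endpoints in the trapezoidal sum. Hence
\[
\eps\log Z^{\mathsf T}_{V^\eps}(1,1;n,m) = \max_{0\le i\le m-1}\tilde V^\eps(1,1;n+i,m-i) + O(\eps),
\]
where the maximum runs over up-right paths inside $\Z^2_\ge$, since paths leaving it have weight $0$. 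Similarly, $\eps\log Z_{U^\eps}(1,1;n,m) = \tilde U^\eps(1,1;n,m)+O(\eps)$.

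The map from a finite array to the max-plus passage time is continuous. By the continuous mapping theorem, the two sides converge in law to $\max_{0\le i\le m-1}X(1,1;n+i,m-i)$ and $W(1,1;n,m)$ respectively. Equality in law at each $\eps$ then passes to the limit.

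The main obstacle is mild: the positive-temperature paths must not be the wrong class. In Theorem \ref{T:barraquand-wang} there is no constraint \eqref{E:diagonal-condition}, and paths are confined to $\Z^2_\ge$ only because $V$ vanishes off that set. This matches the half-space LPP definition \eqref{E:ELPP-first-def}. The rest of the argument is just the elementary log-gamma-to-exponential tail computation above.
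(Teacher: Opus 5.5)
Your proposal is correct and is the same argument as the paper: apply Theorem \ref{T:barraquand-wang} with parameters $\eps\alpha$ and $\eps$, take $\eps\log$ of both partition functions, and let $\eps\to 0$. You supply the details the paper leaves implicit, namely the inverse-gamma-to-exponential limit, the bound $\max\le\eps\log S\le\max+\eps\log N$, and continuous mapping.
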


\begin{proof}
In the setting of Theorem \ref{T:barraquand-wang}, take $\alpha = \eps \tilde \alpha, \beta = \eps$ for some $\eps > 0$, and call the resulting environments used in that theorem $U_\eps$ and $V_\eps$. Then
$$
\epsilon \log Z^\mathsf{T}
_{V_\eps}(1, 1; n, m) \eqd 
\epsilon \log Z
_{U_\eps}(1, 1; n, m),
$$
which recovers the corollary with $\tilde \alpha$ upon taking $\eps\to 0$.
\end{proof}

\begin{remark}   \label{rem:alter-route}
An identity different from but similar to Corollary \ref{C:elpp-result} is \cite[Lemma 6.1]{BBCS1}, which relates two point-to-point passage times in half-space exponential LPP models.
This identity can also lead to upper tail bounds in half-space (see \cite[Section 4.3]{DominikMaxCurrent}, in particular Lemma 4.14 there), by using stationary exponential LPP exit-point or transversal fluctuation estimates (from e.g., \cite[Section 4.2]{BFu}, \cite[Section 2.6]{emrah2020right}, \cite[Theorem 3.1]{EJS21}, \cite[Theorem 2.4]{Bha}, or \cite[Lemma 5.6]{MSZ}).
\end{remark}

To derive Theorem \ref{T:one-point-bds} from Corollary \ref{C:elpp-result}, we need bounds on $W(1, 1; n, m)$, which is simply a full-space exponential passage time starting from a one-sided exponential random walk. 

Below we let $Y:\Z^2\to \R_{\ge 0}$ is an array of i.i.d.\ $\operatorname{Exp}(1)$ random variables.
Then we have
\begin{equation}
\label{E:metric-composition}
\begin{split}
W(1, 1; n, m) &= \max_{j \in \II{1, m}} [W(1, 1; 1, j) + W(2, j;n, m) 
]\\
&\eqd \max_{j \in \II{1, m}} [A(j) + Y(1, j; n-1, m)],
\end{split}
\end{equation}
where $A:\Z_{\ge 0}\to\R_{\ge 0}$ is a random walk with $\operatorname{Exp}(\alpha)$-increments.

Finding moderate deviation estimates on random variables of the above form is well-studied, e.g., see  \cite{Bha, emrah2020right}, but the estimates there do not quite apply in our setting where we need to vary $\alpha$ with $n$. Instead, we will re-derive a moderation deviation estimate by hand.

The random variable $Y(1,1;n, m)$ (resp.~$\max_{i\in\II{-n+1,n-1}}Y(1,1;n+i,n-i)$) equal in law to the largest eigenvalue in Laguerre Unitary (resp.~Orthogonal) Ensemble random matrices. An almost complete description for the tails of this random variable was developed by Ledoux and Rider \cite{ledoux2010small}, and this is the starting point for our analysis. 
To state their result, we let 
\begin{equation}   \label{eq:defmunm}
\mu(n, m) = (\sqrt{m} + \sqrt{n})^2
\end{equation}
be the limit shape function for exponential LPP. We will also write $\mu(n)=\mu(n,n)=4n$.

\begin{theorem}    [\protect{Part of \cite[Theorem 2]{ledoux2010small}}]
    \label{T:W-tail-bounds}
With $Y$ as above, there are absolute constants $C, c > 0$ such that for all $\eps \in (0, 1]$ and $n \ge m \in \N$ we have:
\begin{align*}
    \P\Big(Y(1, 1; n, m) \le \mu(n, m) (1- \eps)\Big) &\le C \exp\left( - c \eps^3 n m  \left( \tfrac{1}{\eps} \wedge \sqrt{\tfrac{n}{m}}\right)\right), \\
    \P\Big(Y(1, 1; n, m) \ge \mu(n, m) (1 + \eps)\Big) &\le C \exp\left( - c  \eps^{3/2}\sqrt{n m} \left( \tfrac{1}{\sqrt{\eps}} \wedge \left(\tfrac{n}{m}\right)^{1/4} \right)\right), \\
    \P\Big(\max_{i\in\II{-n+1,n-1}}Y(1,1;n+i,n-i) \ge 4n (1 + \eps)\Big) &\le C \exp\left( - c  \eps^{3/2}n\right).
\end{align*}
\end{theorem}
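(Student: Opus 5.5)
The plan is to deduce the statement directly from the classical random-matrix identifications plus \cite[Theorem 2]{ledoux2010small}, so no new probabilistic argument is needed. The only work is to match normalizations and to check that Ledoux and Rider's constants are uniform over the aspect ratio $n/m \ge 1$.

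\textbf{Step 1: identification with LUE.} By Johansson's identification \cite{johansson2000shape}, $Y(1,1;n,m)$ for i.i.d.\ $\Exp(1)$ weights is equal in law to the largest eigenvalue of the $m\times m$ Laguerre Unitary Ensemble $W^*W$, where $W$ is an $n\times m$ complex Gaussian matrix with entries normalized so that $\E|W_{ij}|^2=1$. This is the $\beta=2$ Laguerre ensemble with parameters $(n,m)$. With this normalization the soft edge sits at $(\sqrt n+\sqrt m)^2=\mu(n,m)$, which is exactly the centering used in the statement.

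\textbf{Step 2: identification with LOE.} By the symmetrization identities of Baik and Rains \cite{BR1,BR2}, the point-to-line maximum $\max_{i}Y(1,1;n+i,n-i)$ is equal in law to the largest eigenvalue of an $n\times n$ Laguerre Orthogonal Ensemble, i.e.\ the $\beta=1$ Laguerre ensemble. This can also be obtained as a zero-temperature limit of the symmetric log-gamma/RSK correspondence. Its soft edge is at $4n$.

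\textbf{Step 3: reading off the bounds.} Theorem 2 of \cite{ledoux2010small} gives small-deviation bounds for the largest eigenvalue of the $\beta$-Laguerre ensemble for general $\beta$, uniformly in $n\ge m$.
\begin{itemize}
\item For the right tail the bound is $\exp\bigl(-c\beta\,\eps^{3/2}\sqrt{nm}\,((n/m)^{1/4}\wedge \eps^{-1/2})\bigr)$. This is the familiar $n^{1/3}$-scale behaviour written in the $\eps$-parametrization, with the crossover to linear-in-$\eps$ large deviations once $\eps\gtrsim (m/n)^{1/2}$.
\item For the left tail the bound is $\exp\bigl(-c\beta\,\eps^{3}nm\,(\eps^{-1}\wedge\sqrt{n/m})\bigr)$.
\end{itemize}
Specializing to $\beta=2$ gives the first two displayed inequalities. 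Specializing the right-tail bound to $\beta=1$ and $n=m$ gives the third, since there $\sqrt{nm}=n$ and the minimum is attained by the constant term for $\eps\le 1$.

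\textbf{Main obstacle.} The only step requiring care is the bookkeeping of the translation. Ledoux and Rider parametrize deviations on the scale $\mu(n,m)^{2/3}$-type fluctuations, or equivalently relative to $(\sqrt n+\sqrt m)^2$ with explicit $\gamma=m/n$ dependence. I would first rewrite their bounds with $\eps$ defined relative to $\mu(n,m)$, and then check two things: that the resulting exponents reduce, up to absolute constants, to the min-expressions above across the regimes $\eps\lessgtr (m/n)^{1/2}$; and that no constant degenerates as $m/n\to 0$. If their statement is only for $\eps$ in a restricted range, the remaining range $\eps\in[\eps_0,1]$ is handled by monotonicity in $\eps$ together with standard large-deviation bounds, after adjusting $C,c$.
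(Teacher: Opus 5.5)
Your proposal matches the paper, which gives no independent proof: it identifies $Y(1,1;n,m)$ with the largest eigenvalue of the Laguerre Unitary Ensemble and the point-to-line maximum with the largest eigenvalue of the Laguerre Orthogonal Ensemble, then quotes Ledoux and Rider's Theorem 2 for the $\beta$-Laguerre ensemble at $\beta=2$ and $\beta=1$. Your normalization checks are exactly the bookkeeping this citation needs: the soft edge sits at $\mu(n,m)$, possible $O(1)$ shifts in the edge location are absorbed into the constants, and setting $n=m$ gives the third bound.
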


Note that the tails above are suboptimal when $\epsilon > \sqrt{m/n}$, as can be easily seen by setting $m = 1$. At this point, the true behavior moves into the large deviation regime. This difference is ultimately what leads to the $\ep \le n^{-1/3}$ constraint in Theorem \ref{T:one-point-bds}.1. The constraint  could be removed with an analysis of the large deviations of $Y(1, 1; n, m)$ which is uniform in $m/n$. This constraint disappears in the scaling limit, where the relevant regime is $\ep = O(n^{-2/3})$.

The upper tail bound on $Y(1, j;n, m)$ can be upgraded to an upper tail bound on the supremum of the process $j \mapsto Y(1, j;n, m)$ on intervals at the limiting scale.
Such estimates were first obtained in \cite[Proposition 10.5]{basu2014last} in the setting of Poissonian LPP, and the estimate below can be found as a special case of \cite[Theorem 4.2(ii)]{BGZ} (see also e.g., \cite[Proposition B.1]{HSMod} and \cite[Proposition A.2]{ZhangOE}). 
\begin{lemma}
\label{L:point-to-segment}
For every $\kappa > 1$, there exists a constant $c > 0$ such that for all $n, m \in \N$ with $\kappa^{-1} \le m/n \le \kappa$ and $\eps \in (0, 1)$, we have
$$
\P\Big(\max_{j \in \Z : |j| \le n^{2/3}} Y(1, j; n, m)  - \mu(n, m+1-j) \ge \eps m)\Big) \le 2 \exp\left( - c \eps^{3/2}m\right).
$$
\end{lemma}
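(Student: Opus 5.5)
The plan is a one-point-to-segment reduction: glue a short, independent segment onto the start of every path, then apply the one-point upper tail of Theorem \ref{T:W-tail-bounds} to a single point-to-point passage time.

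\textbf{Reductions.} If $\eps^{3/2} m \le C_0$ for a large constant $C_0$, the bound is trivial once $c \le (\log 2)/C_0$, since then the right-hand side is at least $1$. So assume $\eps \ge C_0^{2/3} m^{-2/3}$. Equivalently $\eps m \ge C_0^{2/3} m^{1/3}$, which is a large multiple of $n^{1/3}$ because $m/n \in [\kappa^{-1},\kappa]$.

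\textbf{Backward point and choice of $\eta$.} Fix a small constant $\eta=\eta(\kappa)>0$. Let $p = (1-\lfloor \eta n\rfloor, 1-\lfloor \eta m\rfloor)$, so that $p$ lies on the ray from $(n,m)$ through $(1,1)$, extended backwards. For each $|j|\le n^{2/3}$, a path from $p$ to $(0,j)$ followed by a path from $(1,j)$ to $(n,m)$ is an up-right path from $p$ to $(n,m)$. Hence
\[
Y(p;n,m) \ge Y(p;0,j) + Y(1,j;n,m).
\]
Because $\mu$ is homogeneous and concave, expanding to second order gives
\[
\mu(p;0,j) + \mu(n,m+1-j) \ge \mu(p;n,m) - C\Big(\frac{j^2}{\eta n} + 1\Big) \ge \mu(p;n,m) - \frac{C n^{1/3}}{\eta}.
\]
Here $\mu(p;\cdot)$ denotes the shape function for the displaced start, and the first-order terms in $j$ cancel since $p$ is on the characteristic line. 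Once $C_0$ is large in terms of $\eta$ and $\kappa$, this error is at most $\eps m/4$.

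\textbf{Independence and conditioning.} Let $A$ be the event in the lemma, and on $A$ let $j^*$ be the smallest maximizing index. Both $A$ and $j^*$ are measurable with respect to the weights in columns $\ge 1$. The passage times $Y(p;0,j)$ use only columns $\le 0$, so they are independent of $(A,j^*)$. Conditionally on $A$, Theorem \ref{T:W-tail-bounds} (lower tail, applied to an $\eta n \times (\eta m + j)$ rectangle with aspect ratio bounded in terms of $\kappa$) gives
\[
Y(p;0,j^*) \ge \mu(p;0,j^*) - \frac{\eps m}{4}
\]
with probability at least $1/2$. This uses that the lower-tail fluctuation scale $(\eta n)^{1/3}$ is much smaller than $\eps m$, which again follows from $C_0$ large. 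On the intersection of these events,
\[
Y(p;n,m) \ge \mu(p;n,m) + \frac{\eps m}{2}.
\]
Therefore
\[
\P(A) \le 2\,\P\Big(Y(p;n,m) \ge \mu(p;n,m) + \frac{\eps m}{2}\Big).
\]
Theorem \ref{T:W-tail-bounds} (upper tail, relative deviation $\asymp \eps$ on a rectangle of size comparable to $n\times m$ with bounded aspect ratio) bounds the right-hand side by $C\exp(-c\eps^{3/2} m)$. The constant $C$ is absorbed into $2$ by shrinking $c$, using again that $\eps^{3/2}m \ge C_0$.

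\textbf{Main obstacle.} The delicate step is the deterministic shape comparison: the first-order cancellation has to hold uniformly in $|j|\le n^{2/3}$, and the remaining curvature error must be $O(n^{1/3}/\eta)$. I would first verify this by a direct Taylor expansion of $(\sqrt{a}+\sqrt{b})^2$ along the ray. If $p$ is not exactly on the lattice line, I would absorb the $O(1)$ discrepancy into the constant.
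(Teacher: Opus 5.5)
Your proof is correct, and it takes a different route from the paper, which does not prove this lemma. The paper imports it as a special case of \cite[Theorem 4.2(ii)]{BGZ}, noting that such estimates go back to \cite[Proposition 10.5]{basu2014last} for Poissonian LPP.

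Your backward-extension argument instead derives the lemma directly from the one-point bounds of Theorem~\ref{T:W-tail-bounds}, and each step checks out:
\begin{itemize}
\item The event and $j^*$ depend only on columns $\ge 1$, while $Y(p;0,j)$ depends only on columns $\le 0$. At the cost of a factor $2$, this converts the segment supremum into a single point-to-point upper tail from $p$.
\item The lower tail for $Y(p;0,j^*)$ is harmless once $\eps m \gg (\eta n)^{1/3}$, which your trivial-regime reduction guarantees.
\item The only loss is the curvature deficit $O((j^2+1)/(\eta n)) = O(n^{1/3}/\eta)$, uniformly over $|j| \le n^{2/3}$. The first-order terms cancel by degree-one homogeneity of $\mu$. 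The floors and the extra row (the two rectangles' heights sum to one more than the big rectangle's) cost only $O(1)$, which monotonicity of $\mu$ absorbs.
\item The Ledoux--Rider upper tail is applied with relative deviation $\asymp \eps \le 1/2$ on a rectangle of bounded aspect ratio. This gives the sharp exponent $\eps^{3/2} m$ over the whole range $\eps \in (0,1)$, with constants depending only on $\kappa$.
\end{itemize}

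The citation buys generality: the paper's statement is only a special case of the cited result, which it uses as a black box. Your approach buys a short, self-contained proof that uses only ingredients already in the paper. It exploits the fact that the full-space i.i.d.\ field $Y$ provides an independent region behind the segment, together with a one-point upper tail of the sharp order.
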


By taking a union bound, we can turn Lemma \ref{L:point-to-segment} into a parabolic profile bound.

\begin{corollary}
\label{C:parabolic-profile}
Take $\kappa \in (0, 1/2)$ and $n, m \in \N$ with $n/m \in [\kappa, \kappa^{-1}]$. Consider the process 
$$
\bar Y(j) = Y(1, j; n, m) - \mu(n, m+1- j), \qquad j\in\II{1,m},
$$
and fix a point $j_0 \in [1, (1-\kappa) m]$. Take any $\delta_0 > 0$. There exists some $c > 0$ depending only on $\kappa, \delta_0$, such that for all $\eps \in (0, n^{-1/3})$ we have
$$
\P\left(\max_{j \in \II{1, m}} \bar Y(j) - \frac{\delta_0|j-j_0|^2}{m} \ge \eps m\right) \le 2\exp(- c \eps^{3/2}m). 
$$
\end{corollary}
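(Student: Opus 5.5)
We prove Corollary \ref{C:parabolic-profile} by covering $\II{1,m}$ with blocks of length about $n^{2/3}$ and applying Lemma \ref{L:point-to-segment} to each block, with an error budget that grows with the distance of the block from $j_0$. A union bound then gives the result, since the penalty $\delta_0|j-j_0|^2/m$ makes the far blocks cheap.

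Concretely, write $\II{1,m}$ as a union of intervals $I_\ell=\II{j_0+\ell n^{2/3}-n^{2/3}/2,\; j_0+\ell n^{2/3}+n^{2/3}/2}$, for $\ell\in\Z$, intersected with $\II{1,m}$. There are $O(m/n^{2/3})=O(n^{1/3})$ such blocks. On $I_\ell$ we have $|j-j_0|\ge (|\ell|-1)n^{2/3}$. It therefore suffices to bound, for each $\ell$,
\[
\P\Big(\max_{j\in I_\ell}\bar Y(j)\ge \eps m+\delta_0(|\ell|-1)_+^2 n^{4/3}/m\Big).
\]

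To apply Lemma \ref{L:point-to-segment} we shift coordinates so that the block is centered. Let $j_\ell$ be the center of $I_\ell$. By translation invariance of $Y$,
\[
Y(1,j;n,m)\eqd Y(1,j-j_\ell+1;\,n,\,m-j_\ell+1).
\]
We then apply the lemma with $m'=m+1-j_\ell$ in place of $m$. Since $j_\ell\le j_0+O(n^{2/3})$ and $j_0\le(1-\kappa)m$, we get $m'\gtrsim \kappa m$, so $n/m'$ lies in a bounded range depending only on $\kappa$. The blocks near the top end $j\approx m$ are the exception; there, either extend using monotonicity, or handle them with the crude bound $Y(1,j;n,m)\le Y(1,1;n,m)$ together with the fact that $\mu(n,m+1-j)$ is much smaller than $\mu(n,m)$. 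The centering in the lemma, $\mu(n,m'+1-(j-j_\ell+1))=\mu(n,m+1-j)$, matches $\bar Y$ exactly.

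The lemma then gives that each block probability is at most
\[
2\exp\!\big(-c(\eps m+\delta_0 (|\ell|-1)_+^2 n^{4/3}/m)^{3/2}/\sqrt{m'}\big),
\]
provided the threshold divided by $m'$ is less than $1$. For thresholds beyond $m'$, use monotonicity of the tail in the threshold and the lemma at level $1$, which only helps. Since $m'\asymp m\asymp n$, the exponent is at least
\[
c\eps^{3/2}m + c'\delta_0^{3/2}(|\ell|-1)_+^3 .
\]

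The main obstacle is summing over $\ell$. The sum $\sum_\ell e^{-c'(|\ell|-1)_+^3}$ is a constant, but the few blocks with $|\ell|\le1$ give no extra decay. The resulting bound is $C\exp(-c\eps^{3/2}m)$, and the constant $C$ must be reduced to $2$. When $\eps^{3/2}m$ is large, we absorb $C$ by shrinking $c$. When $\eps^{3/2}m\le C_0$, the bound $2\exp(-c\eps^{3/2}m)$ exceeds $1$ once $c$ is chosen small, so the statement is trivial there.

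The hypothesis $\eps<n^{-1/3}$ keeps the relevant thresholds in the moderate deviation range of the lemma. It is also why the block width $n^{2/3}$ is the correct scale: the parabolic penalty must beat the $n^{1/3}$ blocks, and it does so through the cubic growth in $\ell$.
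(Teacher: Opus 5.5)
Your treatment of the blocks where $m+1-j$ is comparable to $n$ is sound: the shift, Lemma \ref{L:point-to-segment}, the cubic gain in $\ell$, and the absorption of the constant are essentially the paper's argument on $|j-j_0|\le\kappa m/2$. The gap is at the top end, where $m+1-j\ll n$. There Lemma \ref{L:point-to-segment} is unavailable, because its constant depends on the aspect ratio, and neither of your fixes works.

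The bound $Y(1,j;n,m)\le Y(1,1;n,m)$ only gives $\bar Y(j)\le Y(1,1;n,m)-\mu(n,m+1-j)$. The typical size of the right-hand side is $\mu(n,m)-\mu(n,m+1-j)$, which is about $m+2\sqrt{nm}$ for $j$ near $m$. The allowance $\eps m+\delta_0|j-j_0|^2/m$ is at most $(\eps+\delta_0)m$, so for small $\delta_0$ the event you would be bounding has probability close to $1$. The smallness of $\mu(n,m+1-j)$ relative to $\mu(n,m)$ is what kills this bound, not what saves it. Monotonicity in $j$ fails the same way. Comparing with any $j^*$ where the lemma applies costs the deterministic gap $\mu(n,m+1-j^*)-\mu(n,m+1-j)$, which is of order $m$ and is not covered by a penalty of at most $\delta_0 m$.

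What is missing is a pointwise bound that holds at every aspect ratio. For $|j-j_0|>\kappa m/2$ the penalty alone exceeds $\delta_0\kappa^2 m/4$, so it suffices to bound $\P(\bar Y(j)>\delta_0\kappa^2 m/4)$ for each such $j$. This is a relative deviation of order one. The one-point upper tail in Theorem \ref{T:W-tail-bounds}, applied to the $n\times(m+1-j)$ rectangle with coordinates swapped if necessary, bounds it by $Ce^{-c\sqrt m}$, since the exponent is at least of order $\sqrt{n(m+1-j)}\ge\sqrt n$. A union bound over at most $m$ points then costs $Cme^{-c\sqrt m}$.

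This is exactly where $\eps<n^{-1/3}$ is used. It gives $\eps^{3/2}m\le n^{-1/2}m\le C\sqrt m$, so the term $Cme^{-c\sqrt m}$ is absorbed into $2e^{-c'\eps^{3/2}m}$. Your stated reason for the hypothesis is not correct: Lemma \ref{L:point-to-segment} holds for every $\eps\in(0,1)$. The restriction instead compensates for the suboptimal one-point tail in the degenerate-aspect-ratio regime, the same phenomenon the paper flags after Theorem \ref{T:W-tail-bounds}.
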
 

\begin{proof}
	In this proof, we use $c>0$ to denote a small constant that may depend on $\kappa, \delta_0$, and its value may change from line to line.
    We also assume that $m\eps^{3/2}$ is large enough (depending on $\kappa, \delta_0$), since otherwise the bound is trivial.
	
    First, for any $j \in \II{1, m}$, by the upper bound in Theorem \ref{T:W-tail-bounds} we have $\P(\bar Y(j) > \delta_0\kappa^2 m/4) \le 2e^{-c\sqrt{m}}$. Therefore by taking a union bound, we have that
	\begin{equation}
		\label{E:first-simplif}
	\P\left(\max_{j \in \II{1, m}} \bar Y(j) - \frac{\delta_0|j-j_0|^2}{m} \ge \eps m\right) \le 2m\exp(-c \sqrt{m}) + \P\left(\max_{j : |j - j_0| \le \kappa m/2} \bar Y(j) - \frac{\delta_0|j-j_0|^2}{m} \ge \eps m\right).
	\end{equation}
Next, for $k \in \Z$, define
$$
Y^+(k) = \max_{|j| \le m^{2/3}} \bar Y(j_0 + k\lfloor m^{2/3} \rfloor + j).
$$
Then letting $I = \{k \in \Z: (|k|-1)\lfloor m^{2/3} \rfloor \le (\kappa/2 \wedge \delta_0) m\}$, the probability on the right-hand side of \eqref{E:first-simplif} is bounded above by 
$$
\sum_{k \in I} \P(Y^+(k) > \eps m + \delta_0 (|k|-1)^2 m^{1/3}) \le 2 \exp(-c \eps^{3/2}m),
$$
where the final bound uses Lemma \ref{L:point-to-segment}. Combining this bound with \eqref{E:first-simplif} and using the upper bound on $\eps$ gives the result.
\end{proof}

We now combine Theorem \ref{T:W-tail-bounds} and Corollary  \ref{C:parabolic-profile} to give upper and lower tail bounds on the random variable $W(1,1; n, n)$ in the regimes we care about. Recall $\mu_\alpha(n)$ from \eqref{eq:mualphan}.
\begin{prop}
\label{P:W-upper-bd}
Let $\alpha_0 > 0$. Then for all $\eps \in (0, n^{-1/3})$, when $\alpha  \ge \alpha_0$ we have
$$
\P(W(1, 1; n, n) \ge \mu_\alpha(n) + \eps n) \le 2\exp\left(-c \eps^{3/2} n\left(\frac{\eps^{1/2}}{(1/2 - \alpha)\vee 0}\wedge 1\right) \right),
$$
where $c$ is a constant depending only on $\alpha_0$.
\end{prop}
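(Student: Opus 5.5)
We start from the decomposition \eqref{E:metric-composition}, which gives $W(1,1;n,n) \eqd \max_{j\in\II{1,n}} [A(j) + Y(1,j;n-1,n)]$. Here $A$ is a random walk with $\Exp(\alpha)$ increments, independent of $Y$. A union bound over $j$ is too lossy, so we split the event according to the value of $j$. We center each term by
$$
A(j) + Y(1,j;n-1,n) - \mu_\alpha(n) = \big[A(j) - j/\alpha\big] + \big[Y(1,j;n-1,n) - \mu(n-1,n+1-j)\big] + \big[j/\alpha + \mu(n-1,n+1-j) - \mu_\alpha(n)\big].
$$
The last bracket is a deterministic function $g(j)$. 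It is the limit shape of the concave optimization $\sup_{s\in[0,1]} s/\alpha + (\sqrt{1}+\sqrt{1-s})^2$ (times $n$), whose maximum equals $\mu_\alpha(n)/n$.

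\textbf{Case $\alpha \ge 1/2$.} The maximizer is $s=0$, and $g(j) \le -c\,j^2/n$ near $0$, at least when $\alpha$ is bounded away from $1/2$; for $\alpha$ near $1/2$ the decay is only quadratic. We bound the $Y$-bracket with Corollary \ref{C:parabolic-profile} taking $j_0=1$ and a small $\delta_0$, which absorbs a $\delta_0 j^2/n$ loss, and we handle $j$ of order $n$ with the crude union bound as in \eqref{E:first-simplif}. The random-walk bracket is controlled by a maximal inequality for the centered exponential walk. Its upper tail is $\P(\max_{j\le J}(A(j)-j/\alpha) \ge x) \le \exp(-c\,x^2/J)$ for $x\lesssim J$, and $\exp(-cx)$ beyond. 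We combine the two on dyadic blocks $j\in[2^\ell n^{2/3}\eps^{1/2}\cdot,\,2^{\ell+1}\cdot]$. On each block the parabola $g(j)\le -c j^2/n$ beats the walk fluctuation $\sqrt{j}$ plus $\eps n$, and the block probabilities sum to $2\exp(-c\eps^{3/2}n)$. Uniformity for $\alpha\ge\alpha_0$ comes from the exponential moments of $\Exp(\alpha)$ being uniformly controlled.

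\textbf{Case $\alpha<1/2$.} Now the maximizer $s^*=1-(\alpha/(1-\alpha))^2$ is interior, with $s^*\asymp (1/2-\alpha)$, and $g$ is quadratic around $j^*=s^*n$ with curvature of order $1/n$. We apply Corollary \ref{C:parabolic-profile} with $j_0=j^*$. This is allowed once $s^*\le 1-\kappa$; if $\alpha$ is close to $0$ we use $\alpha\ge\alpha_0$ to fix $\kappa$. Near $j^*$ the walk bracket $A(j)-j/\alpha$ has Gaussian fluctuations of size $\sqrt{j^*}\asymp\sqrt{(1/2-\alpha)n}$. This yields the bound
$$\exp\big(-c\,\eps^2 n/(1/2-\alpha)\big) = \exp\big(-c\,\eps^{3/2} n\cdot \eps^{1/2}/(1/2-\alpha)\big),$$
while the $Y$-bracket contributes $\exp(-c\eps^{3/2}n)$. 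Taking the worse of the two gives the stated minimum. More precisely, we split according to whether the $Y$-bracket or the walk bracket exceeds $\eps n/2$. The walk term is handled by a Gaussian maximal (Doob/Bernstein) inequality for the centered walk over $j\le Cn(1/2-\alpha)$, again with dyadic blocks away from $j^*$ where the parabola $-c(j-j^*)^2/n$ absorbs the deviations.

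\textbf{Main obstacle.} The hardest step is uniformity across the transition $\alpha\approx 1/2$, where $s^*\to 0$ and the parabola near the boundary degenerates. There the walk fluctuation $\sqrt{j}$ and the curvature $j^2/n$ must be balanced carefully. Our plan is to handle both cases with a single dyadic decomposition in $|j-j_0|$, where $j_0=\max(1,j^*)$. We use $g(j)\le -c(j-j_0)^2/n$, which holds uniformly in $\alpha\ge\alpha_0$ (in the supercritical case it is checked from explicit concavity of $s\mapsto s/\alpha+(1+\sqrt{1-s})^2$). On each block we bound the walk by Bernstein's inequality, and we verify that the resulting block sum is dominated by the first block, of size $j\lesssim j_0 + \eps^{1/2}n$, which produces exactly the stated rate.
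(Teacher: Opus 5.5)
Your proposal is correct and follows essentially the same route as the paper. Both arguments decompose via \eqref{E:metric-composition}, use the parabolic bound $j/\alpha+\mu(n,n-j)\le\mu_\alpha(n)-\delta_0(j-j_0)^2/n$ around $j_0=\lfloor x_0\rfloor\asymp(1/2-\alpha)n$ (with $j_0=0$ when $\alpha\ge1/2$), control the $Y$-term by Corollary \ref{C:parabolic-profile}, and control the walk by Gaussian/Bernstein bounds, arriving at $2\exp(-c\eps^2n^2/j_0)+2\exp(-c\eps^{3/2}n)$. The paper splits $\bar A(j)=\bar A(j_0)+(\bar A(j)-\bar A(j_0))$ where you use dyadic blocks, and the degeneration you flag near $\alpha\approx1/2$ does not occur, since $\frac{d^2}{ds^2}\big[s/\alpha+(1+\sqrt{1-s})^2\big]=-\tfrac12(1-s)^{-3/2}\le-\tfrac12$ uniformly in $\alpha$.
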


\begin{proof}
	In this proof, we use $c>0$ to denote a small constant that may depend on $\alpha_0$, and its value may change from line to line. We also assume that $n\eps^{3/2}$ is large enough (depending on $\alpha_0$), since otherwise the bound is trivial.

We will use the metric composition law \eqref{E:metric-composition}, and work with the processes $A(j)$ and $B(j) := Y(1, j; n - 1, n)$. 
Let $\mu_A(j) = \alpha^{-1} j$ and $\mu_B(j) = \mu(n, n-j)$ be their deterministic approximations, and let $\bar A = A - \mu_A, \bar B = B - \mu_B$. Define
$$
x_0 = 
\left(\frac{n(1 - 2 \alpha)}{(1 - \alpha)^2} \right) \mathds{1}(\alpha < 1/2), \qquad j_0 = \lfloor x_0 \rfloor.
$$
We can compute by Taylor expansion that for $j \in \II{0,n}$,
\begin{align*}
\mu_A(j) + \mu_B(j) \le \mu_\alpha(n) - \delta_0\frac{(j-j_0)^2}{n},
\end{align*}
where $\delta_0 > 0$ is a constant depending only on $\alpha_0$. Next, we have the following standard random walk bounds:
\begin{align*}
\P\Big(\bar A(j) - \bar A(j_0) \le \eps n/4 + \delta_0\frac{(j-j_0)^2}{2n} \text{ for all } j \in \II{0, n} \Big) &\ge 1 - 2\exp \left(-\frac{c\eps^2 n^2}{n^{2/3} + \eps n} \right), \\
\P(\bar A(j_0) \ge \eps n/4) &\ge 1 - 2 \exp\Big(-\frac{c \eps^2 n^2}{j_0}\Big).
\end{align*}
From Corollary \ref{C:parabolic-profile} we have
\[
\P\left(\max_{j \in \II{0, n}} \bar B(j) - \frac{\delta_0|j-j_0|^2}{2n} \ge \eps n/2\right) \le 2\exp(- c \eps^{3/2}n). 
\]
Combining these bounds and using that $n^{-2/3}<\eps<n^{-1/3}$, we get
$$
\P\left( \max_{j \in \II{0, n}}  A(j) +  B(j) \ge \mu_\alpha(n) + \eps n \right) \le 2 \exp\left(-\frac{c \eps^2 n^2}{j_0}\right) + 2 \exp(-c\eps^{3/2} n ).
$$
Simplifying using the formula for $j_0$ gives the result.
\end{proof}

\begin{proof} [Proof of part 1 of Theorem \ref{T:one-point-bds}]
Using Corollary \ref{C:elpp-result} and following the notations, we have
$$
X(1, 1; n, n) \le \max \{X(1, 1; n + i, n - i) : 0 \le i \le n- 1\} \eqd W(1, 1; n, n).
$$
Then Proposition \ref{P:W-upper-bd} immediately implies the upper bound in Theorem \ref{T:one-point-bds}.1.
\end{proof}

We move on to the lower bound. We will start by using the triangle inequality to give a simple lower tail estimate on $W(1, 1; n, m)$. For $n\ge m\in \N$ and $\alpha>0$ define
\begin{equation}   \label{eq:defmunma}
\mu_\alpha(n, m) = \begin{cases}
    \frac{m}{\alpha} + \frac{n}{1 - \alpha}, \qquad &\alpha < \frac{\sqrt{m/n}}{1 + \sqrt{m/n}} \\
    \mu(n, m), \qquad &\alpha \ge \frac{\sqrt{m/n}}{1 + \sqrt{m/n}},
\end{cases} 
\end{equation}
which is the shape function for half-space LPP. Equivalently, $\mu_\alpha$ is the smallest concave function which dominates $\mu$ everywhere, and satisfies $\mu_\alpha(n, n) = \mu_\alpha(n)$ on the diagonal. 
\begin{lemma}
\label{L:alpha0-tail}
Take $n/2 \le m \le n$. For any $\eps \in (0, 1)$ we have the bound
$$
\P(W(1, 1; n, m) < \mu(n, m) - \eps n) \le 2\exp\left(- c \eps^3 n^2 \right),
$$
where $c>0$ is a universal constant.
Moreover, for any $\alpha_0 \in (0, 1/2)$, $\alpha \in [\alpha_0, 1/2 - n^{-1/3}]$, and $\eps \in (0, 1)$ with $\mu_\alpha(n, m) - \eps n \ge \mu(n, m)$, we have 
$$
\P(W(1, 1; n, m) < \mu_\alpha(n, m) - \eps n) \le 2\exp\left(-  \frac{c\eps^2 n}{1/2 - \alpha} \right).
$$
where $c > 0$ is a constant depending on only on $\alpha_0$. 
\end{lemma}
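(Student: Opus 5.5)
The plan is to lower-bound $W(1,1;n,m)$ by the passage time of a single well-chosen path, via the metric composition law \eqref{E:metric-composition}: for every $j \in \II{1,m}$,
$$
W(1,1;n,m) \ge A(j) + Y(1,j;n-1,m)
$$
in law, where $A$ is a random walk with $\Exp(\alpha)$ increments and $Y$ is an independent i.i.d.\ $\Exp(1)$ field. We then pick $j$ deterministically according to the regime and apply a union bound.

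\textbf{First bound.} Take $j=1$, so that $W(1,1;n,m) \ge A(1) + Y(1,1;n-1,m) \ge Y(1,1;n-1,m)$. Since $\mu(n-1,m) \ge \mu(n,m) - 2$, we may assume $\eps n$ is larger than a constant (otherwise the bound is trivial after adjusting $c$). Then
$$
\P\big(Y(1,1;n-1,m) < \mu(n-1,m) - \eps n/2\big) \le C\exp(-c\eps^3 n^2)
$$
by the lower tail in Theorem \ref{T:W-tail-bounds}. We apply that theorem with $\eps' \asymp \eps$, using $\mu(n-1,m) \asymp n$ and $n/m \in [1,2]$, so that $\tfrac{1}{\eps'} \wedge \sqrt{n/m} \ge 1$ and the exponent is at least $c\eps^3 nm \ge c\eps^3 n^2/2$. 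When $m > n-1$ we use the symmetric version of Theorem \ref{T:W-tail-bounds}. The leading constant $C$ is absorbed into the form $2\exp(\cdot)$ by shrinking $c$, noting the bound is trivial when $\eps^3 n^2$ is small.

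\textbf{Second bound.} In this regime $\alpha < \sqrt{m/n}/(1+\sqrt{m/n})$, since otherwise $\mu_\alpha = \mu$ and the hypothesis $\mu_\alpha - \eps n \ge \mu$ fails. Optimize $j \mapsto j/\alpha + \mu(n-1,m-j)$: the maximizer $j_*$ solves $\sqrt{n/(m-j)} = 1/\alpha - 1$, i.e.\ $m - j_* = n\alpha^2/(1-\alpha)^2$, and at $j_*$ we get $j_*/\alpha + \mu(n,m-j_*) = \mu_\alpha(n,m)$ up to $O(1)$. Take $j=\lfloor j_* \rfloor$.

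Note that $j_* = m - n\alpha^2/(1-\alpha)^2 = O(n(1/2-\alpha))$. This comes from Taylor expanding: at $\alpha=1/2$ with $m=n$ we have $j_*=0$, and the hypotheses force $m$ close to $n$ whenever $\alpha$ is close to $1/2$. More generally, $j_* \lesssim n(1/2-\alpha) + (n-m)$, and the condition $\mu_\alpha(n,m) - \mu(n,m) \ge \eps n$ controls this. Indeed, $\mu_\alpha - \mu$ is quadratic in the distance to the transition, giving roughly $\eps n \lesssim j_*^2/n$.

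Now split the deficit:
$$
\P\big(A(j) < j/\alpha - \eps n/2\big) \le \exp\big(-c\eps^2 n^2 / j\big) \le \exp\big(-c\eps^2 n / (1/2-\alpha)\big),
$$
by a Gaussian-regime Chernoff bound for sums of exponentials. For the LPP term, $m - j \asymp n$ because $\alpha \ge \alpha_0$, so Theorem \ref{T:W-tail-bounds} gives
$$
\P\big(Y(1,j;n-1,m) < \mu(n-1,m-j+1) - \eps n/2\big) \le 2\exp(-c\eps^3 n^2).
$$

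\textbf{Main obstacle.} The key step is showing $\eps^3 n^2 \gtrsim \eps^2 n/(1/2-\alpha)$, i.e.\ $\eps n(1/2-\alpha) \gtrsim 1$. We expect this to follow from the constraints. One has $\eps \lesssim (\mu_\alpha - \mu)/n \lesssim (1/2-\alpha)^2$, but an upper bound on $\eps$ is the wrong direction, so instead we use that the bound is trivial unless $\eps^2 n/(1/2-\alpha) \ge 1$. Combined with $1/2-\alpha \ge n^{-1/3}$, this gives
$$
\eps^3 n^2 \ge \frac{\eps^2 n}{1/2-\alpha} \cdot \eps n (1/2-\alpha),
$$
and we need $\eps n (1/2-\alpha) \ge 1$. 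From $\eps \ge (1/2-\alpha)^{1/2} n^{-1/2}$ we get $\eps n (1/2-\alpha) \ge n^{1/2}(1/2-\alpha)^{3/2} \ge 1$, using $1/2-\alpha \ge n^{-1/3}$. This settles that comparison.

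The remaining care is verifying the $O(1)$ and Taylor-expansion bookkeeping for $j_*$ uniformly in $m \in [n/2,n]$.
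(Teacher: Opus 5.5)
Your proposal is correct and follows the paper's proof essentially step for step. For the first bound you use $W(1,1;n,m)\ge W(2,1;n,m)\eqd Y(1,1;n-1,m)$ plus Theorem \ref{T:W-tail-bounds}. For the second you split once at $j\approx m-n\alpha^2/(1-\alpha)^2$, apply a random-walk lower-tail bound to $A(j)$ and Theorem \ref{T:W-tail-bounds} to the bulk term, and finish with the triviality reduction to $\eps n(1/2-\alpha)\gtrsim 1$, which you derive correctly from $1/2-\alpha\ge n^{-1/3}$. Two small points. First, the key bound $j_*\le n(1-2\alpha)/(1-\alpha)^2\le 8n(1/2-\alpha)$ follows immediately from $m\le n$; your remark that $\mu_\alpha-\mu$ is quadratic (giving $\eps n\lesssim j_*^2/n$) bounds $j_*$ from below, the wrong direction, and is not needed. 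Second, take $j=\lfloor j_*\rfloor\vee 1$ (the paper uses $\lfloor 1+x_0\rfloor$) so that $j\in\II{1,m}$.
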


\begin{proof}
The first bound follows from the inequality 
$$
W(1, 1; n, m) \ge W(2, 1; n, m) \eqd Y(1, 1; n-1, m),
$$
together with the lower bound in Theorem \ref{T:W-tail-bounds} and the bounds on $m$. 

We move on the second bound. 
Below we use $C, c>0$ to denote large and small constants that may depend on $\alpha_0$, and their values may change from line to line.
First, it suffices to prove the bound assuming that $\eps  n (1/2 - \alpha)$ is large enough depending on $\alpha_0$, since otherwise the bound is trivial.  Set
$$
x_0 = \left(m - \frac{n \alpha^2}{(1-\alpha)^2} \right)\vee 0
<C n(1/2- \alpha).
$$
By \eqref{E:metric-composition}, letting $j_0 = \lfloor 1 + x_0 \rfloor$, we have the triangle inequality
$$
W(1, 1; n, m) \ge A_0 + B_0, \qquad A_0 := W(1, 1; 1, j_0), \quad B_0 := W(2, j_0; n, m).
$$
Observe that $j_0/\alpha + \mu(n, m - j_0) - \mu_\alpha(n, m) >- \eps n/3$ by Taylor expansion and our lower bound on $\eps n$. Therefore
\begin{align*}
\P(W(1, 1; n, m) < \mu_\alpha(n, m) - \eps n) &\le \P( A_0 < j_0/\alpha - \eps n/3) +  \P(B_0 < \mu(n, m - j_0) - \eps n/3) \\
&\le 2\exp(- c \eps^2 n^2/j_0) + 2\exp(- c \eps^3n^2) \\
&\le 2 \exp(-c \eps^2 n/(1/2 - \alpha)).
\end{align*}
Here in the second line we have used a standard random walk tail bound on $A_0$, and we have applied Theorem \ref{T:W-tail-bounds} to the second term. We have used that $\frac{n}{m - j_0}<C$ to simplify the bound from Theorem \ref{T:W-tail-bounds}. In the final line, we have used the upper bound on $x_0$, together with the lower bound of $\eps n (1/2 - \alpha).$ 
\end{proof}

By Corollary \ref{C:elpp-result}, the lower bounds in Lemma \ref{L:alpha0-tail} is equivalent to half-space point-to-line passage time lower bounds.
We must convert them to point-to-point bounds, to finish the proof of Theorem \ref{T:one-point-bds}.
For this, we require a moderation deviation estimate on constrained passage times. 

For a simply connected set $\mathbb{U} \subset \Z^2$, and any $F:\Z^2:\to \R_{\ge 0}$, and $u, v \in \mathbb{U}$, let
$$
F^\mathbb{U}(u; v) = \max_{\pi:u \to v, \pi \subset \mathbb{U}} \sum_{w\in \pi} F(w)
$$
be the passage time from $u$ to $v$ along up-right paths constrained to stay in the set $\mathbb{U}$. Bounds on constrained passage times are important tools in the study of certain events for unconstrained LPP, e.g., see \cite{basu2014last, BGtim, BGZ}. Ganguly and Hegde \cite{ganguly2023optimal} found upper and lower moderate deviation estimates with the correct exponents for constrained passage times in parallelograms under a set of general KPZ scaling assumptions on the weights. We state their result only for the lower tail under homogeneous exponential LPP (for which their assumptions are valid).

For $\ell>0$ and $n\in \N$, let
 $$
\mathbb{U}_{n, \ell} = \{v = (x-t, x + t) \in \Z^2 : |t| \le \ell n^{2/3}/2, \; 1 \le x \le n\} 
 $$
 denote a parallelogram of height $n$ and width $\ell n^{2/3}$.

 \begin{prop}[\protect{Special case of \cite[Theorem 5]{ganguly2023optimal}}]
     \label{P:constrained-estimate}
 There exist universal constants $C, c, \theta_0, n_0 > 0$ such that for all $\theta > \theta_0$, $n > n_0$ and $C\theta^{-1} < \ell < 2 n^{1/3}$, we have
 \begin{align*}
    \P(Y^{\mathbb{U}_{n,\ell}}(1, 1; n, n) - 4 n \le - \theta n^{1/3}) \le \exp \left(- c (\ell \theta^{5/2} \wedge \theta^3) \right).
 \end{align*}
 \end{prop}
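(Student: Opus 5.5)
This is a direct specialization of \cite[Theorem 5]{ganguly2023optimal}, so the plan is to check that homogeneous exponential LPP fits their framework and to translate conventions, not to reprove it. Their theorem is stated for an LPP model under general KPZ scaling hypotheses. These are: one-point upper and lower tail bounds of the form $\exp(-c\theta^{3/2})$ and $\exp(-c\theta^{3})$ at the scale $n^{1/3}$, uniformly over directions in a compact cone around the diagonal; and a limit shape that is concave and $C^2$ with curvature bounded away from zero. For i.i.d.\ $\Exp(1)$ weights the shape is $\mu(n,m)=(\sqrt n+\sqrt m)^2$ from \eqref{eq:defmunm}, which is smooth and strictly concave transversally near the diagonal. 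The required tails follow from Theorem \ref{T:W-tail-bounds} applied with $\eps=\theta n^{-2/3}$, restricted to the regime $\theta\le n^{2/3}$ where the bounds are sharp. So the first step is to verify each hypothesis from these facts.

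The second step is bookkeeping. I would rewrite $\mathbb{U}_{n,\ell}$, given in the rotated coordinates $(x-t,x+t)$, as a parallelogram in their anti-diagonal convention. This amounts to checking that its height is $n$ along the diagonal direction and its width is $\ell n^{2/3}$, up to the factor-$2$ rescalings that come from the rotation. I would also check that the endpoints $(1,1)$ and $(n,n)$ lie on the central axis of the parallelogram. Their constraints on $\theta$ and $\ell$, namely $\theta$ large, $\ell$ at least a constant multiple of $\theta^{-1}$, and $\ell n^{2/3}\lesssim n$, become the stated ranges $\theta>\theta_0$ and $C\theta^{-1}<\ell<2n^{1/3}$ after absorbing constants.

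For intuition about the exponent $\ell\theta^{5/2}\wedge\theta^3$, and as a backup if some hypothesis proves awkward, I would run the standard block argument. Cut the parallelogram into $k$ consecutive pieces of height $n/k$ along the diagonal, and choose $k$ so that the natural transversal scale $(n/k)^{2/3}$ of each piece is comparable to the width $\ell n^{2/3}$. Concatenating constrained geodesics through the block corners gives a superadditive lower bound. The independent block deficits each have $\exp(-cx^3)$ lower tails, and optimizing the block count against $\theta$ yields the $\ell\theta^{5/2}$ term, while $\theta^3$ is the unconstrained one-point cap. I expect the real difficulty to lie in this optimization and the sum-of-independent-variables concentration. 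Since we invoke \cite{ganguly2023optimal} directly, however, the only genuine obstacle is the hypothesis verification in the first step, in particular the uniformity of the one-point tails across directions $m/n\in[\kappa,\kappa^{-1}]$.
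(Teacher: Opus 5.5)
Your main route is the paper's: the proposition is quoted from \cite[Theorem 5]{ganguly2023optimal}, with the remark that homogeneous exponential LPP satisfies its hypotheses, and no further proof is given. Those hypotheses are in fact weaker than the optimal tails you list (stretched-exponential one-point tails plus curvature of the limit shape), so Theorem \ref{T:W-tail-bounds} more than suffices.

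Your backup block heuristic is wrong as stated. A block of length $L=n/k$ confined to width $L^{2/3}$ does not have $e^{-cx^3}$ lower tails at deviation $x=\theta k^{-2/3}\gg 1$. A lower deviation of size $xL^{1/3}$ must depress all paths within transversal distance $x^{1/2}L^{2/3}$, and the confinement cuts this region off, so the block's tail is only $e^{-cx^{5/2}}$ (the $\ell=1$ case of the statement). This is why your choice $k=\ell^{-3/2}$ produces the exponent $\ell^{3/2}\theta^3$, which exceeds the sharp $\ell\theta^{5/2}$ once $\ell\theta>1$. The consistent heuristic takes $k\approx\theta^{1/2}/\ell$, so that $\ell n^{2/3}\approx x^{1/2}(n/k)^{2/3}$ and the blocks genuinely behave as unconstrained. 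This gives total cost $kx^3=\theta^3/k=\ell\theta^{5/2}$.
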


We can now combine Lemma \ref{L:alpha0-tail} and Proposition \ref{P:constrained-estimate} to prove the two lower bounds in Theorem \ref{T:one-point-bds}.

\begin{proof}[Proof of parts 2 and 3 of Theorem \ref{T:one-point-bds}]
We assume that $n$ is large enough (depending on $\alpha_0$), since otherwise the bounds are trivial. 
Below we use $c>0$ to denote a small constant that may depend on $\alpha_0$, and its value may change from line to line.

We will prove both bounds with $2n + 1$ in place of $n$; the extension to even $n$ easily follows from monotonicity of in $n$. We will also prove the two bounds in parallel. Note that for the bound \eqref{E:X-type}, it suffices to consider the case $\alpha = \infty$, when all diagonal weights are $0$, since the passage time $X(1, 1; n, n)$ is monotone decreasing in $\alpha$. Therefore throughout the proof, we assume that either:
\begin{itemize}[nosep]
    \item $\alpha = \infty$, and we are aiming to prove \eqref{E:X-type}. Here we assume that $\eps n^{2/3}$ is large enough, since otherwise the bound is trivial. 
    \item $\alpha \in [\alpha_0, 1/2- (2n+1)^{-1/3}]$, and we are aiming to prove \eqref{E:X-type-above-4}. Here we assume that $\frac{\eps^2n}{1/2-\alpha}$ is large enough (depending on $\alpha_0$), since otherwise the bound is trivial. We also assume that $\eps \le \frac{(1 - 2\alpha)^2}{\alpha(1-\alpha)}$ to ensure that $\eps n \le \mu_\alpha(2n+1) -  4(2n+1)$.\\
    Note that the above two assumptions imply that $(1-2\alpha)^3n$ is large enough (depending on $\alpha_0$), thus we can let $\alpha \in [\alpha_0, 1/2- n^{-1/3}]$. They also imply that $\eps n (1-2\alpha)$ is large enough (depending on $\alpha_0$).
\end{itemize}
To start, note that for each $m\in\II{0,n-1}$,
$$
X(1, 1; 2n + 1, 2n + 1) \ge X(1, 1; n + m, n - m) + X(n + m + 1, n - m + 1; 2n + 1, 2n + 1).
$$
The two random variables on the right-hand side above are i.i.d., and so it suffices to show that for some $m$ chosen judiciously from $\epsilon$, we can bound
\begin{equation}
\label{E:Big-R-bd}
   \P\Big(X(1, 1; n + m, n - m) \le \mu_\alpha(n) - \eps n\Big) 
\end{equation}
by the right-hand side of \eqref{E:X-type} or \eqref{E:X-type-above-4}. 

We next choose $m$ for which we prove \eqref{E:Big-R-bd}.
We will aim to play off the limit shape cost of increasing $m$ against the freedom that paths get from being far away from the diagonal, allowing us to appeal to Proposition \ref{P:constrained-estimate}. 
Thus we choose $m$ such that
\[
\eps n/24 < \mu_\alpha(n) - \mu_\alpha(n + m, n-m) < \eps n/12.
\]
Such $m$ exists when $n$ is large: indeed, $\mu_\alpha(n) - \mu_\alpha(n + m, n-m)$ equals $\mu(n)-\mu(n+m,n-m)\in [m^2/n, 2m^2/n]$ when $\alpha=\infty$, and equals $\frac{m(1 - 2\alpha)}{\alpha(1-\alpha)}$ when $\alpha \in [\alpha_0, 1/2 - n^{-1/3}]$ and $\mu_\alpha(n + m, n-m) \ge 4n$.
So we can choose $m$, satisfying 
\begin{align*}
0.1\sqrt{\eps}n<m<n/2, \qquad &\alpha = \infty, \\
\frac{0.1\alpha_0\ep n}{1/2 - \alpha}<m<n/2, \qquad &\alpha \in [\alpha_0, 1/2 - n^{-1/3}], \;\; \eps \le \frac{(1 - 2\alpha)^2}{\alpha(1-\alpha)}.
\end{align*}

We bound \eqref{E:Big-R-bd} using the following strategy.
Let 
\begin{align*}
X_{\mathsf{tot}} &= \max_{j \in \II{0, n - m-1}} X(1, 1; n + m + j, n - m - j),\\
X_{\mathsf{low}} &= \max_{t \in \II{1,n-m}} X(1, 1; 2m + t-1, t) + 4(n-m-t),\\
X_{\mathsf{up}} &= \max_{t \in \II{1, n-m}, j \in \II{0, n-m-t}} X^-(2m + t, t; n + m + j, n - m - j) - 4(n-m-t).
\end{align*}
Here $X^-:\Z^2\to \R_{\ge 0}$ is taken to be $X^-(i,j)=X(i,j)\mathds{1}(i>j)$, for each $(i,j)\in\Z^2$.
Then obviously we have $X_{\mathsf{low}} + X_{\mathsf{up}}\ge X_{\mathsf{tot}} \eqd W(1,1;n+m,n-m)$.
We also let
\begin{align*}
T &= \min \{t \in \II{1, n-m} :X(1, 1; 2m + t-1, t) + 4(n - m -t) > \mu_\alpha(n) -\eps n/2\},
\end{align*}
and let $T= \infty$ if the above set is empty.

We will prove the following statements:
\begin{itemize}
    \item[(i)] There exists a universal constant $C_0>0$, such that $\P(X_{\mathsf{up}} \le C_0n^{1/3})\ge 1/2$.
    \item[(ii)] For each $t\in \II{1,n-m}$, we have
\[
\P(\tilde X(2m + T, T; n + m, n - m) - 4(n -m - T) \le - \eps n/2 \mid T=t ) \le 2\exp\left(- c ( \eps^{5/2} mn\wedge \eps^3 n^2) \right). 
\]
\end{itemize}
Assuming (i), by taking $\eps>6C_0n^{-2/3}$ and using the FKG inequality, we have
\[
\P(X_{\mathsf{tot}} \le \mu_\alpha(n) - \eps n/3) \ge \P(X_{\mathsf{low}} \le \mu_\alpha(n) - \eps n/2, X_{\mathsf{up}} \le C_0n^{1/3}) \ge \P(X_{\mathsf{low}} \le \mu_\alpha(n) - \eps n/2)/2.
\]
On the other hand, by the choice of $m$, we have
\begin{align*}
\P(X_{\mathsf{tot}} \le \mu_\alpha(n) - \eps n/3) \le & \P(X_{\mathsf{tot}} \le  \mu_\alpha(n+m,n-m) - \eps n/4) \\ 
=& \P(W(1,1;n+m,n-m) \le \mu_\alpha(n+m,n-m) - \eps n/4)    \\ 
\le & \begin{cases}
 2\exp(-c\eps^3n^2), \quad &\alpha = \infty, \\
2\exp(-c\eps^2n/(1/2-\alpha)), \quad &\alpha \in [\alpha_0, 1/2 - n^{-1/3}], \;\; \eps \le \frac{(1 - 2\alpha)^2}{\alpha(1-\alpha)},
\end{cases}
\end{align*}
where the last inequality is by Lemma \ref{L:alpha0-tail} .
Thus we get an upper bound of $\P(X_{\mathsf{low}} \le \mu_\alpha(n) - \eps n/2)=\P(T=\infty)$. 
Now by further assuming (ii), we can upper bound \eqref{E:Big-R-bd} by
\begin{align*}
    &\P(T=\infty) + \sum_{t\in\II{1,n-m}} \P(\tilde X(2m + t, t; n + m, n - m) - 4(n -m - t) \le - \eps n/2 , t ) \\
\le & \P(T=\infty) +  2\exp\left(- c ( \eps^{5/2} mn \wedge \eps^3 n^2) \right) \\
\le & \begin{cases}
 2\exp(-c\eps^3n^2), \quad &\alpha = \infty, \\
2\exp(-c\eps^2n/(1/2-\alpha)), \quad &\alpha \in [\alpha_0, 1/2 - n^{-1/3}], \;\; \eps \le \frac{(1 - 2\alpha)^2}{\alpha(1-\alpha)}.
\end{cases}
\end{align*}
Here for the last inequality, in the first case we used $m>c\sqrt{\eps}n$ to deduce that $\eps^{5/2} mn>c\eps^3 n^2$, and in the second case we used that
\[
\eps^{5/2} mn \wedge \eps^3 n^2 > \frac{c\eps^{7/2}n^2}{1/2-\alpha} > \frac{c\eps^2n}{1/2-\alpha},
\]
where the first inequality is due to $m>c\eps n/(1/2-\alpha)$ and $\eps \le \frac{(1 - 2\alpha)^2}{\alpha(1-\alpha)}$, and the second inequality is due to that $\frac{\eps^2n}{1/2-\alpha}$ is large enough and $\eps \le \frac{(1 - 2\alpha)^2}{\alpha(1-\alpha)}$.

\textbf{Proof of (i).}
From the fact that $\{X^-(u;v)\}_{u\le v\in\Z^2}$ is stochastically dominated by $\{Y(u;v)\}_{u\le v\in\Z^2}$, and translation invariance of the field $Y$, it suffices to show that, for a universal constant $C_0>0$,
\begin{equation} \label{eq:iinY}
\P\left( \max_{t\in\II{1,n-m}, j\in\II{0,n-m-t}} Y(t,t;n-m+j,n-m-j) - 4(n-m-t) \le C_0n^{1/3} \right) \ge 1/2.
\end{equation}
Similar maximal inequalities for LPP has be proved in the literature, see e.g., \cite[Proposition 4.3]{BGZ}, whose proof can be adapted to get \eqref{eq:iinY}.
Here we instead present a proof of \eqref{eq:iinY} using ideas from \cite[Section 10.2]{dauvergne2021scaling}.
Define 
\[
S=\max \Big\{ t \in \II{1,n-m} : \max_{j \in \II{0, n-m-t}}  Y(t,t;n-m+j,n-m-j) - 4(n-m-t) > C_0n^{1/3} \Big\},
\]
and set $S=\infty$ if the set is empty. Then we want to show that $\P(S=\infty)\ge 1/2$ by taking $C_0$ large.

We argue by contradiction, and assume that $\P(S<\infty)\ge 1/2$.
For each $t\in\II{1,n-m}$, the event $S=t$ is measurable with respect to the field $Y$ in $\II{t, 2n-2m-t}\times \II{t, n-m}$, and is independent of $Y(0,0;t-1,t)$.
Then by the lower bound in Theorem \ref{T:W-tail-bounds}, we have that
$\P( Y(0,0;t-1,t) \ge 4t-C_0n^{1/3}/2 \mid S=t  ) \ge 1/2$,
when $C_0$ is large enough. We thus conclude that
\[
\P\Big(\max_{j \in \II{0, n-m}}  Y(0,0;n-m+j,n-m-j) \ge 4(n-m) + C_0n^{1/3}/2 \Big) > 1/4.
\]
However, by taking $C_0$ large, this contracts the last bound in Theorem \ref{T:W-tail-bounds}.
Thus the assumption that $\P(S<\infty)\ge 1/2$ cannot hold, and (i) is true.

\textbf{Proof of (ii).}
Note that the event $T=t$ is measurable with respect to the field $X$ in $\II{1, 2m+t-1} \times \II{1,t}$, thus is independent of $\tilde X(2m + t, t; n + m, n - m) - 4(n -m - t)$.
Therefore it suffices to lower bound $\tilde X(2m + t, t; n + m, n - m) - 4(n -m - t)$ for each $t\in\II{1, n-m}$.
When $n-m-t<n^{1/3}$ such a lower bound is obvious, since then $\eps n/2>n-m-t$ by the assumption that $\eps n^{2/3}$ is large enough from the beginning of this proof.

It now remains to consider the case where $n-m-t\ge n^{1/3}$. Such a bound follows from Proposition \ref{P:constrained-estimate}. 
Indeed, we consider $\mathbb{U}_{n-m-t+1,\ell}$ with $\ell = m (n - m - t+1)^{-2/3} \wedge (n-m-t+1)^{1/3}$.
Then $Y^{\mathbb{U}_{n-m-t+1, \ell}}(1,1;n-m-t+1, n-m-t+1)$ is stochastically dominated by $\tilde X(2m + t, t; n + m, n - m)$. 
By taking $\theta = \eps n (n - m - t+1)^{-1/3}/2$, and noting that both $\eps n^{2/3}$ and $\eps m$ are large enough in either case, we have that $\theta$ and $\theta\ell$ are large enough.
Thus we can apply Proposition \ref{P:constrained-estimate} to get (ii).
\end{proof}

\subsection{Path-crossing inequalities and tails off the diagonal}
\label{SS:off-diagonal-tails}

Our next goal is to extend the tail bounds in Theorem \ref{T:one-point-bds} off the diagonal in a way that will allow us to prove one-point tightness. The bounds are optimal up-to-constants for $\alpha \ge 1/2 - n^{-1/3}$, but our $\alpha$-dependence can be improved with some straightforward geometric arguments when $\alpha \le 1/2 - n^{-1/3}$. We do not purse that direction here.

Our core idea for the off diagonal extension is that, as we move further away from the diagonal, the passage times are closer to the full-space LPP passage times. We can make this idea rigorous through path-crossing inequalities, which we will state and prove in a general setting before specializing to half-space exponential LPP.

Consider a weighted finite directed acyclic graph $G = (V, E, M)$, where $V$ and $E$ are vertices and edges, and $M:V \to \R$ are real-valued weights. For two vertices $u, v \in G$ we write $u \prec v$ if there is at least one directed (vertex) path $\pi$ from $u$ to $v$, i.e.,a sequence of vertices $(u = \pi_0, \pi_1, \dots, \pi_k = v)$ with $(\pi_{i-1}, \pi_i) \in E$ for all $i\in\II{1, k}$. We write $|\pi|_M = M(\pi_0) + \cdots + M(\pi_k)$ for the length of $\pi$ with the weight assignment $M$. For $u \prec v$, define the passage time
$$
G(u ; v) = \max_{\pi:u \to v} |\pi|_M
$$
where the maximum is over all directed paths $\pi$ from $u$ to $v$. 
Our general path-crossing inequality is as follows. 

\begin{prop}
\label{P:weighted-quadrangle}
  Consider a weighted directed acyclic graph $G = (V, E, M)$, and let $H = (V, E, M')$ be another one with the same vertices and edges, but different weights satisfying $M' \le M$, with equality on a set $W \subset V$. Consider vertices $x_1 \prec y_1$ and $x_2 \prec y_2$ and suppose that we have the following {\bf path-crossing} property:
\begin{itemize}
    \item Let $\tau$ be any path from $x_1$ to $y_1$ and $\pi = (\pi_0, \dots, \pi_k)$ be any path from $x_2$ to $y_2$. Then if $\pi_i \notin W$ for some $i \in \II{1, k-1}$, there are indices $i^- < i < i^+$ such that the vertices $\pi(i^-), \pi(i^+)$ are on the path $\tau$.
\end{itemize}
Then
$$
H(x_1; y_1) + G(x_2 ; y_2)  \le G(x_1; y_1) + H(x_2 ; y_2).
$$  
\end{prop}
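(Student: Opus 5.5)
Take $\tau$ to be a $G$-geodesic from $x_2$ to $y_2$ and $\sigma$ an $H$-geodesic from $x_1$ to $y_1$. The goal is to build, by exchanging pieces of these two paths, a path $\tau'$ from $x_1$ to $y_1$ and a path $\sigma'$ from $x_2$ to $y_2$ such that
$$
|\sigma|_{M'} + |\tau|_M \le |\tau'|_M + |\sigma'|_{M'}.
$$
Since $|\tau'|_M \le G(x_1;y_1)$ and $|\sigma'|_{M'} \le H(x_2;y_2)$, this gives the claimed inequality.

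\emph{Easy case.} Suppose every interior vertex of $\tau$ lies in $W$. If additionally $M(x_2)=M'(x_2)$ and $M(y_2)=M'(y_2)$, then $|\tau|_{M'}=|\tau|_M$. Hence
$$
G(x_2;y_2)=|\tau|_{M'}\le H(x_2;y_2),
$$
and since $H(x_1;y_1)\le G(x_1;y_1)$ (because $M'\le M$), we are done.

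The endpoints need more care, because the crossing hypothesis only involves indices $i\in\II{1,k-1}$. If an endpoint lies outside $W$, its weight discrepancy $M(\cdot)-M'(\cdot)$ is paid once on each side of the inequality: once by $G(x_2;y_2)$ versus $H(x_2;y_2)$, and once by the fact that every $x_1$-path contains $x_1$. This suggests rewriting everything in terms of interior weights. Before committing to that, I will check whether the paper's intended usage forces the endpoints into $W$. If it does, I will simply note the assumption; otherwise I will carry the endpoint correction terms explicitly.

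\emph{Main case.} Suppose some interior vertex $\tau_i\notin W$. I apply the path-crossing hypothesis with the roles arranged as in the statement: the path from $x_1$ to $y_1$ is $\sigma$, and the path from $x_2$ to $y_2$ is $\tau$. This produces indices $i^-<i<i^+$ with $\tau_{i^-},\tau_{i^+}\in\sigma$. Take $i^-$ minimal and $i^+$ maximal among indices of $\tau$ lying on $\sigma$, so that the pair brackets every interior off-$W$ vertex of $\tau$. Call $a=\tau_{i^-}$ and $b=\tau_{i^+}$. Acyclicity guarantees that $a$ precedes $b$ along $\sigma$ as well.

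Now swap the middle segments between $a$ and $b$:
\begin{itemize}
\item $\tau'$ follows $\sigma$ from $x_1$ to $a$, then $\tau$ from $a$ to $b$, then $\sigma$ from $b$ to $y_1$;
\item $\sigma'$ follows $\tau$ from $x_2$ to $a$, then $\sigma$ from $a$ to $b$, then $\tau$ from $b$ to $y_2$.
\end{itemize}
The multiset of vertices is preserved under the swap, with $a$ and $b$ each counted once in each path.

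\emph{Comparing weights.} Segments of $\tau$ outside $(a,b)$ lie in $W$ by the choice of $i^\pm$, so their $M$ and $M'$ weights agree; these segments go into $\sigma'$ and are evaluated with $M'$. The middle segment of $\tau$ keeps weight $M$ inside $\tau'$. The outer parts of $\sigma$ keep weight $M'$ inside $\tau'$, and $M'\le M$ is the favorable direction there. The middle part of $\sigma$ keeps $M'$ inside $\sigma'$. Summing these comparisons gives
$$
|\tau'|_M + |\sigma'|_{M'} \ge |\sigma|_{M'}+|\tau|_M,
$$
modulo the bookkeeping at $a$ and $b$, which appear in both sums and can be assigned consistently.

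\emph{Main obstacle.} The delicate step is this bookkeeping: handling the endpoints $x_2,y_2$ possibly lying outside $W$, and accounting correctly for the shared vertices $a,b$. Choosing $i^-$ as the first and $i^+$ as the last crossing index, rather than arbitrary crossing indices, is what lets a single swap cover all interior off-$W$ vertices of $\tau$.
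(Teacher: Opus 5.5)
Your main case is the paper's argument exactly. You take an $H$-geodesic for $(x_1;y_1)$ and a $G$-geodesic for $(x_2;y_2)$, let $i^-,i^+$ be the first and last indices of the latter lying on the former, swap the middle segments, and compare weights: the outer pieces of the $G$-geodesic lie in $W$, and $M'\le M$ on the outer pieces of the $H$-geodesic. Your acyclicity remark ordering $a$ before $b$ on $\sigma$ is a detail the paper leaves implicit.

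The endpoint case, which you leave open, cannot be settled by carrying correction terms. As literally stated, the proposition is false when $x_2$ or $y_2$ lies outside $W$. Take $V=\{x_1,y_1,x_2,y_2\}$ with edges $x_1\to y_1$ and $x_2\to y_2$ only, $M'\equiv 0$, $M\equiv 0$ except $M(x_2)=1$, and $W=V\setminus\{x_2\}$. The unique $x_2\to y_2$ path has no interior vertex, so the crossing hypothesis holds vacuously. Yet the left side equals $1$ and the right side equals $0$.

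Your heuristic that the discrepancy at an off-$W$ endpoint is ``paid once on each side'' holds only when that endpoint also lies on the $x_1\to y_1$ geodesic (e.g.\ $x_1=x_2$). Otherwise nothing on the $(x_1;y_1)$ side absorbs $M(x_2)-M'(x_2)$. The same issue sits inside your main case: the claim that the pieces of $\tau$ outside $[a,b]$ lie in $W$ fails for $x_2$ if $x_2\notin W$ and $i^->0$.

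The correct repair is to read the crossing hypothesis for all $i\in\II{0,k}$ with $i^-\le i\le i^+$. In other words, an off-$W$ endpoint of the $x_2\to y_2$ path must lie on every $x_1\to y_1$ path. This holds in the paper's application: in Corollary \ref{C:exp-quadrangle}, if $u_2$ or $v_2$ is on the diagonal, then that corollary's ordering constraints together with $u_1,v_1\in\Z^2_\ge$ force $u_1=u_2$ or $v_1=v_2$. It is also exactly what the paper's proof uses tacitly when it asserts that $i^\pm$ exist whenever $\pi\not\subset W$.

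Under this reading, your single swap covers every case in which the $G$-geodesic leaves $W$, with $i^-$ the first and $i^+$ the last common index, now allowing $i^-=0$, $i^+=k$, or $i^-=i^+$. No separate endpoint bookkeeping is needed.
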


\begin{proof}
 Let $\tau = (\tau_0 = x_1,\dots, \tau_\ell = y_1)$ and $\pi = (\pi_0 = x_2, \dots, \pi_k = y_2)$ be paths such that
   $
   |\pi|_M = G(x_2 ; y_2)$ and $|\tau|_{M'} = H(x_1 ; y_1).
   $
   If $\pi \subset W$, then $G(x_2; y_2) = H(x_2 ; y_2)$, and so the inequality follows since $H(x_1 ; y_1) \le G(x_1 ; y_1)$. If $\pi\not\subset W$, then let $i^-, i^+ \in \II{0,k}$ be the smallest and largest indices, respectively, such that $\pi_{i^-}. \pi_{i^+} \in \tau$, and let $j^-, j^+ \in \II{0,\ell}$ be such that $\pi_{i^\pm} = \tau_{j^\pm}$. The indices $i^-, i^+$ exist by the path-crossing property above. Consider the paths
   $$
   \pi' = (\pi_0, \dots, \pi_{i^- - 1}, \tau_{j^-}, \dots, \tau_{j^+}, \pi_{i^+ + 1}, \dots, \pi_k), \qquad \tau' = (\tau_0, \dots, \tau_{j^- - 1}, \pi_{i^-}, \dots, \pi_{i^+}, \tau_{i^+ + 1}, \dots, \tau_\ell).
   $$
   Then $\tau'$ is a path from $x_1$ to $y_1$ and $\pi'$ is a path from $x_2$ to $y_2$. We have that
   \begin{align*}
      |\tau|_{M'} + |\pi|_{M} &= \sum_{i=0}^\ell M'(\tau_i) + \sum_{i=0}^k M(\pi_i) \\
      &= \sum_{i=0}^\ell M'(\tau_i) + \sum_{i=0}^{i^- - 1} M'(\pi_i)  + \sum_{i=i^-}^{i^+} M(\pi_i) + \sum_{i=i^+ + 1}^{\ell} M'(\pi_i) \\
      &\le |\tau'|_{M} + |\pi'|_{M'}.
   \end{align*}
   Here the second equality uses that $\pi_i \in W$ whenever $i \notin \II{i^-,i^+}$. This follows from the construction of $i^\pm$ and the path-crossing property. The final inequality uses that $M' \le M$.  
Finally, since $|\tau'|_{M} \le G(x_1, y_1)$ and $|\pi'|_{M'} \le H(x_2, y_2)$, the conclusion follows. 
\end{proof}

\begin{corollary}
\label{C:containment-quadrangle}
Consider a weighted directed acyclic graph $G = (V, E, M)$, and we let $H = (W, E|_{W\times W}, M|_{W})$ be the subgraph of $G$ induced by a vertex set $W \subset V$. Consider vertices $x_1 \prec y_1$ and $x_2 \prec y_2 \in H$ and suppose that we have the following {\bf path-crossing} property:
\begin{itemize}
    \item Let $\tau$ be any path in $H$ from $x_1$ to $y_1$ and $\pi = (\pi_0, \dots, \pi_k)$ be any path in $G$ from $x_2$ to $y_2$. Then if $\pi_i \notin W$ for some $i \in \II{1,k-1}$, there are indices $i^- < i < i^+$ such that the vertices $\pi(i^-), \pi(i^+)$ are on the path $\tau$.
\end{itemize}
Then
$$
H(x_1; y_1) + G(x_2; y_2) \le G(x_1; y_1) + H(x_2 ; y_2).
$$
\end{corollary}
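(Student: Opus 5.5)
The plan is to derive Corollary \ref{C:containment-quadrangle} from Proposition \ref{P:weighted-quadrangle} by encoding the induced subgraph $H$ as a reweighting of $G$. Take $M'$ on $V$ to agree with $M$ on $W$ and to equal $-K$ on $V\setminus W$, where $K$ is a very large constant. Write $H'=(V,E,M')$. Then $M'\le M$ as soon as $K\ge \max_V |M|$, and equality holds exactly on $W$.

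First, I would show that $H'(x_i;y_i)=H(x_i;y_i)$ for $i=1,2$ once $K$ is large. Since $G$ is finite, there are only finitely many paths. Any path from $x_i$ to $y_i$ that leaves $W$ has $M'$-weight at most
\[
|V|\max_V|M| - K .
\]
The endpoints $x_i,y_i$ lie in $H$, and $H(x_i;y_i)$ is attained by a path inside $W$, which has weight at least $-|V|\max_V|M|$. So for $K>2|V|\max_V|M|$ every optimal $M'$-path stays in $W$. Its $M'$-weight then equals its $M$-weight, and its edges are edges of the induced subgraph $H$. Hence $H'(x_i;y_i)=H(x_i;y_i)$. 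This uses that $x_i\prec y_i$ holds in $H$, so that $H(x_i;y_i)$ is finite; I will read the hypotheses that way.

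Second, I would check the path-crossing property required by Proposition \ref{P:weighted-quadrangle} for the pair $(G,H')$. That property quantifies over all paths $\tau$ in $G$ from $x_1$ to $y_1$, whereas the corollary only assumes it for paths $\tau$ in $H$. This mismatch is the main obstacle. To get around it, I would not apply the proposition as a black box but rerun its proof directly. In that proof, $\tau$ is chosen as an optimizer of $H'(x_1;y_1)$, and by the first step this optimizer can be taken inside $W$, i.e. as a path in $H$. The path-crossing hypothesis is only ever invoked for this particular $\tau$, so the corollary's weaker hypothesis suffices.

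Finally, the swapping argument from the proof of Proposition \ref{P:weighted-quadrangle} then gives
\[
H'(x_1;y_1)+G(x_2;y_2)\le G(x_1;y_1)+H'(x_2;y_2).
\]
Substituting $H'=H$ at both endpoint pairs gives the claim. One point to verify is that the swapped path $\pi'$, which uses the $M'$-weights, really is a competitor for $H'(x_2;y_2)$. It is a genuine path in $G$ (and lies in $W$ off $\II{i^-,i^+}$), so this holds. Its weight bound is then controlled by $H'(x_2;y_2)=H(x_2;y_2)$, which closes the argument.
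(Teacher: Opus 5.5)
Your proposal is correct and follows the paper's route. Like the paper, you push the weights off $W$ to a very negative value (the paper lets $r\to-\infty$ instead of fixing a large $K$) so that the reweighted passage times agree with those of $H$, and then invoke Proposition \ref{P:weighted-quadrangle}. Your extra remark is a worthwhile refinement: the proof of that proposition only uses the path-crossing property for the optimal $\tau$, which lies in $W$ once the off-$W$ weights are negative enough, whereas the paper's one-line argument applies the proposition as a black box even though the corollary only assumes path-crossing for paths $\tau$ in $H$.
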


\begin{proof}
Consider Proposition \ref{P:weighted-quadrangle}, and let $H_r = (V, E, M_r)$, where $M_r = M$ on $W$ and equals $r$ off of $W$. The inequality above follows from the inequality for $H_r, G$ after taking $r \to -\infty$, since in this limit $H_r(x; y) \to H(x; y)$ for all $x, y$.
\end{proof}

We next apply Proposition \ref{P:weighted-quadrangle} and Corollary \ref{C:containment-quadrangle} to  half-space exponential LPP. Let $Y:\Z^2\to \R_{\ge 0}$ be an array of i.i.d.\ $\Exp(1)$ random variables.
For each $\alpha \in \R_+\cup\{\infty\}$, define $X_\alpha:\Z_\ge^2\to \R_{\ge 0}$ such that
$X_\alpha(i, j) = Y(i, j)$ for $i > j \in \Z$ and $X_\alpha(i, i) = Y(i, i)/\alpha$ for $i \in \Z$ (interpreting $X_\infty(i,i)=0$).

The next corollary is an immediate application of the above path-crossing inequalities.
\begin{corollary}
\label{C:exp-quadrangle}
For $i = 1, 2$, consider points $u_i = (x_i, n) \le v_i = (y_i, m) \in \Z^2_\ge, i = 1, 2$ with $x_1 \le x_2$ and $y_1 \le y_2$. Then:
\begin{align*}
    X_\infty(u_1; v_1) + Y(u_2; v_2) &\le Y(u_1; v_1) + X_\infty(u_2; v_2) \\
    X_\beta(u_1; v_1) + X_\alpha(u_2; v_2) &\le X_\alpha(u_1; v_1) + X_\beta(u_2; v_2), \qquad \text{ for any } \alpha \le \beta \in \R_+\cup\{\infty\}.
\end{align*}
\end{corollary}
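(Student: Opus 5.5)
Both inequalities will follow by applying the path-crossing results to one directed acyclic graph: the lattice $\Z^2$ (restricted to a large finite box containing all relevant points) with up and right edges. For the first inequality I will use Corollary \ref{C:containment-quadrangle}. Take $G$ to be the full lattice with weights $M = Y$. Take $W$ to be the set of points of $\Z^2_\ge$ strictly below the diagonal, together with the diagonal. On $W$, assign to diagonal vertices the weight $0$ and to all other vertices the weight $Y$. This weighted graph is not literally an induced subgraph of $G$, since the diagonal weights differ. To handle this I combine the two results. Define an intermediate graph $G'$ on all of $\Z^2$ with weight $0$ on the diagonal and $Y$ elsewhere. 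Then $Y(u;v)$ for the full-space model dominates $G'(u;v)$, and $X_\infty$ is the $G'$-passage time constrained to $\Z^2_\ge$.

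Rather than chaining, it is cleaner to apply Proposition \ref{P:weighted-quadrangle} directly with $M = Y$ on all of $\Z^2$ and $M' = M_r$, where $M_r$ equals $Y$ strictly below the diagonal, $0$ on the diagonal, and $r$ strictly above it. The two weights agree on $W := \{(i,j): i>j\}$, and $M_r \le M$ once $r \le 0$. Letting $r \to -\infty$ makes $H(u;v) \to X_\infty(u;v)$ for $u,v \in \Z^2_\ge$. For the second inequality I take $M = X_\alpha$ and $M' = X_\beta$ on $\Z^2_\ge$, extended to $\Z^2$ by weight $-\infty$ (equivalently, restricting to the induced graph on $\Z^2_\ge$). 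Then $M' \le M$ since $Y(i,i)/\beta \le Y(i,i)/\alpha$, with equality off the diagonal, so again $W = \{i>j\}$. In both cases the roles are as follows: $(x_1,y_1)$ in the proposition correspond to $(u_1,v_1)$, whose geodesic $\tau$ is the one closer to the diagonal, and $(x_2,y_2)$ correspond to $(u_2,v_2)$.

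The key step, and the only real content, is checking the path-crossing property. Take any up-right path $\tau$ from $u_1$ to $v_1$ in $\Z^2_\ge$, and a path $\pi$ from $u_2$ to $v_2$ that visits a vertex $\pi_i$ with $\pi_i \notin W$, that is, on or above the diagonal, at an interior index. I need to show $\pi$ meets $\tau$ both before and after index $i$. The two start points lie on the common row $n$ with $x_1 \le x_2$, so $\pi$ starts weakly to the right of $\tau$ on that row. Similarly, $v_2$ is weakly right of $v_1$ on row $m$. Meanwhile $\pi_i$ lies weakly left of the diagonal, and $\tau$ stays weakly right of it, so at the row of $\pi_i$ the path $\pi$ is weakly left of $\tau$. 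Since both are up-right lattice paths, the standard planar argument gives the crossing: compare, row by row, the horizontal extents occupied by the two paths. The switch from ``$\pi$ right of $\tau$'' to ``$\pi$ left of $\tau$'' forces a shared vertex at some index $i^- \le i$, and the switch back forces one at some $i^+ \ge i$.

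The main obstacle is making these inequalities strict, $i^- < i < i^+$, as the proposition requires. I handle it as follows. If $\pi_i$ is itself on $\tau$, then $\pi_i$ is on the diagonal, and I can simply take the nearest shared vertices, or note that the weight comparison in the proof of Proposition \ref{P:weighted-quadrangle} only needs the shared vertices to bracket the non-$W$ segment, allowing $i^\pm = i$. For the endpoint cases, where $\pi_0$ or $\pi_k$ lies on $\tau$ only because $u_1 = u_2$ or $v_1 = v_2$, the bracketing still holds with $i^- = 0$ or $i^+ = k$. I expect the rest to be routine bookkeeping of the rows.
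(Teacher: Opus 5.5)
Your proposal is correct and is the argument the paper intends: it states the corollary as an immediate application of Proposition~\ref{P:weighted-quadrangle} and Corollary~\ref{C:containment-quadrangle}. Your remark that the proof of Proposition~\ref{P:weighted-quadrangle} only needs the weak bracketing $i^-\le i\le i^+$ is right and actually necessary, for instance when $v_1$ lies on the diagonal and $\pi$ passes through $v_1$ before moving right.

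By the same token, with $M_r$ defined on all of $\Z^2$ the crossing property as stated fails for paths $\tau$ that leave $\Z^2_\ge$. You should say explicitly that only the $H_r$-geodesic $\tau$ is used, and that it lies in $\Z^2_\ge$ once $r<-\sum Y$ over the box. Alternatively, chain Corollary~\ref{C:containment-quadrangle} for $Y$ versus $X_1$ with Proposition~\ref{P:weighted-quadrangle} for $X_1$ versus $X_\infty$, which is the intermediate route you first considered.
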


We now extend the tail bounds on the diagonal in Theorem \ref{T:one-point-bds} to give moderation deviation tail bounds for half-space exponential LPP. 
Recall the functions $\mu$ and $\mu_\alpha$ from the \eqref{eq:defmunm} and \eqref{eq:defmunma}.
\begin{theorem}
	\label{T:half-space-full-tails} 
For $n, m\in \N$ and any $\alpha>0$, let
$$
\Delta_\alpha(n) = \mu_\alpha(n) - \mu(n)
$$
be the gap in the limit shape on the diagonal, and
$$
\nu_\alpha(n, m) = \mu(n, m) +  \Delta_\alpha(m).
$$
Fix $\alpha_0>0$, and take any $\alpha \ge \alpha_0$.
Then there exists a constant $c> 0$ depending only on $\alpha_0$, such that for all $v + (1, 1), v + (n, m) \in \Z^2_\ge$ with $n/m \in [\alpha_0, \alpha_0^{-1}]$ and all $\ep \in (0, m^{-1/3}]$ we have
\begin{align}
\label{E:half-space-upper-bd}
	\P\Big(X_\alpha(v + (1, 1); v + (n, m)) \ge \nu_\alpha(n, m) + \eps n\Big) &\le 2\exp\left(-c \eps^{3/2}n \left(1 \wedge \frac{\eps^{1/2}}{(1/2 - \alpha)\vee 0}\right) \right).
    \end{align}
Moreover, for all $v + (1, 1), v + (n, m) \in \Z^2_\ge$ with $n/m \in [\alpha_0, \alpha_0^{-1}]$ and all $\ep \in (0, 1)$ we have
    \begin{align}
	\label{E:half-space-lower-bd}
	\P\Big(X_\alpha(v + (1, 1); v + (n, m)) \le \mu(n, m) - \eps n\Big) &\le 2 \exp\left( - c\eps^3 n^2 \right).
\end{align}	
\end{theorem}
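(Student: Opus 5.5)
The upper bound \eqref{E:half-space-upper-bd} will come from the second path-crossing inequality of Corollary \ref{C:exp-quadrangle}, which compares against the diagonal. The lower bound \eqref{E:half-space-lower-bd} will come from monotonicity in $\alpha$ together with a constrained-passage-time argument modelled on the proof of parts 2 and 3 of Theorem \ref{T:one-point-bds}. Throughout I write $u = v+(1,1) = (a, b)$ and $w = v+(n,m) = (a', b')$, so that $u, w \in \Z^2_\ge$.

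\textbf{Upper bound.} Set $r = b'-b+1 \le m$ and take the diagonal points $u_1 = (b,b)$ and $w_1 = (b',b')$, which lie in the same rows as $u$ and $w$. Since $u, w \in \Z^2_\ge$, we have $b \le a$ and $b' \le a'$, so the second inequality of Corollary \ref{C:exp-quadrangle} applies with the pair $(u_1, w_1)$ on the left, $(u, w)$ on the right, and $\beta = \infty$. It gives
$$
X_\alpha(u;w) \le X_\infty(u;w) + \big[X_\alpha(u_1;w_1) - X_\infty(u_1;w_1)\big].
$$
This is the precise sense in which the boundary gain is largest at the boundary. I then bound the three terms separately.
\begin{enumerate}
\item Since $X_\infty \le Y$ pathwise and $Y$ is translation invariant, $X_\infty(u;w)$ is dominated by $Y(1,1;n,m)$. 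The upper tail of the latter at level $\mu(n,m)+\eps n/3$ is handled by Theorem \ref{T:W-tail-bounds}, since $n/m$ is bounded and $\eps \le m^{-1/3}$.
\item By translation invariance along the diagonal, $X_\alpha(u_1;w_1) \eqd X(1,1;r,r)$. Part 1 of Theorem \ref{T:one-point-bds} controls its exceedance of $\mu_\alpha(r) + \eps n/3$.
\item The term $X_\infty(u_1;w_1)$ is controlled by part 3 of Theorem \ref{T:one-point-bds} at level $4r - \eps n/3$, with the much stronger bound $\exp(-c\eps^3 n^2)$.
\end{enumerate}
The deterministic centerings sum to $\mu(n,m) + \Delta_\alpha(r) \le \nu_\alpha(n,m)$, since $\Delta_\alpha$ is nondecreasing and $r \le m$. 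When $r$ is small relative to $\eps n$, the diagonal error terms are trivially absorbed. The exponents combine into the stated form because $r \le m \asymp n$, and the main case to check is that the $\eps^{1/2}/((1/2-\alpha)\vee 0)$ factor survives when $\eps n$ is compared with $\eps r$.

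\textbf{Lower bound.} Since $X_\alpha$ is decreasing in $\alpha$, it suffices to treat $X_\infty$. If the distance of $u$ and $w$ from the diagonal is at least $\ell n^{2/3}$, with $\ell \asymp \eps^{1/2} n^{1/3}$, then the parallelogram $\mathbb U$ of width $\ell n^{2/3}$ around the segment from $u$ to $w$ lies strictly below the diagonal. In that case $X_\infty(u;w) \ge Y^{\mathbb U}(u;w)$, and Proposition \ref{P:constrained-estimate} (in its version for a general slope $n/m \in [\alpha_0, \alpha_0^{-1}]$, which I expect to be available from \cite{ganguly2023optimal}) gives $\exp(-c(\ell\theta^{5/2}\wedge\theta^3))$ with $\theta = \eps n^{2/3}$, which equals $\exp(-c\eps^3 n^2)$.

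Otherwise, I route the path through intermediate points pushed a distance $k \asymp \sqrt{\eps}\, n$ away from the diagonal. Concretely, I go from $u$ to $u + (k,0)$ along a short segment of length $O(k)$, then through a parallelogram ending at $w - (0,k)$, and finally to $w$. Each step can be chosen inside the strict half-space, so each passage time dominates a $Y$-passage time in the region. By the choice of $k$, the limit-shape loss is at most $\eps n/4$, exactly as in the choice of $m$ in the proof of parts 2 and 3. The short pieces have lower tails controlled by Theorem \ref{T:W-tail-bounds}, and the middle piece is controlled by Proposition \ref{P:constrained-estimate}.

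The main obstacle is this second regime. Choosing $k$ so that the constrained-parallelogram exponent $\ell\theta^{5/2}$ still reaches order $\eps^3 n^2$ needs care, and for the short pieces near the corner I would fall back on crude bounds, as in the case $n-m-t<n^{1/3}$ of part (ii) of that proof.
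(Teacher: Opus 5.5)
Your upper bound is the paper's argument: the same diagonal pair in the same rows, the second inequality of Corollary~\ref{C:exp-quadrangle} with $\beta=\infty$, then Theorems~\ref{T:W-tail-bounds} and~\ref{T:one-point-bds}. Note that $r=b'-b+1$ is exactly $m$, so the centering is $\nu_\alpha(n,m)$ on the nose.

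The lower bound, however, fails in your second (near-diagonal) regime.
\begin{itemize}
\item The route $u\to u+(k,0)\to w-(0,k)\to w$ forces a straight horizontal run and a straight vertical run. Each collects only about $k$, while the bulk gains $\partial_n\mu(n,m)=1+\sqrt{m/n}$ per horizontal step.
\item The centering of your route is $2k+\mu(n-k,m-k)=n+m+2\sqrt{(n-k)(m-k)}$. Since $(n-k)(m-k)\le(\sqrt{nm}-k)^2$, the loss relative to $\mu(n,m)$ is at least $2k\asymp\sqrt{\eps}\,n\gg\eps n$.
\item The analogy with the choice of $m$ in parts 2 and 3 of Theorem~\ref{T:one-point-bds} is misleading. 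There the endpoint moves from $(n,n)$ to $(n+m,n-m)$ along a level curve of $\mu$, which costs only $m^2/n$. The escape from the diagonal near the start is not a straight segment; it goes through the Barraquand--Wang identity, the point-to-line bound of Lemma~\ref{L:alpha0-tail}, and the stopping time $T$.
\item Tuning $k$ does not help. Making the loss $O(\eps n)$ needs $k\lesssim\eps n$. Then the parallelogram has width $\ell\lesssim\eps n^{1/3}$, and Proposition~\ref{P:constrained-estimate} only gives $\exp(-c\eps^{7/2}n^2)$.
\end{itemize}
When $u,w$ lie on the diagonal, your second regime would in effect have to reprove part 3 of Theorem~\ref{T:one-point-bds} from scratch.

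The missing idea is to reuse the comparison with the diagonal from your upper bound, now with the first inequality of Corollary~\ref{C:exp-quadrangle}:
$$X_\infty(u;w)\ge X_\infty(u_1;w_1)+Y(u;w)-Y(u_1;w_1).$$
The only bad term is $-Y(u_1;w_1)$. It requires the upper tail of $Y(u_1;w_1)$, which is only $\exp(-c\eps^{3/2}n)$ and too weak for a union bound. The paper handles this with FKG.
\begin{itemize}
\item Let $A=\{X_\infty(u;w)\le\mu(n,m)-\eps n\}$ and $B=\{Y(u_1;w_1)\le 4m+\eps n/2\}$. Both are decreasing in the underlying i.i.d.\ field.
\item $\P(B)\ge 1/2$ once $\eps n^{2/3}$ is large, so $\P(A)\le 2\P(A\cap B)$.
\item On $A\cap B$, either $X_\infty(u_1;w_1)\le 4m-\eps n/4$ or $Y(u;w)\le\mu(n,m)-\eps n/4$. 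These are controlled by part 3 of Theorem~\ref{T:one-point-bds} and by Theorem~\ref{T:W-tail-bounds} respectively.
\end{itemize}
This reduces everything to the diagonal case. It also removes your need for a general-slope version of Proposition~\ref{P:constrained-estimate}.
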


\begin{proof}
By translation invariance, we can assume that $v = (t, 0)$ for some $t \in \Z_{\ge 0}$. For each $i, j \in \Z$, we let 
$$
\bar X_\alpha(i, 1; j, m) = X_\alpha(i, 1; j, m) - \nu_\alpha(j-i + 1, m), \qquad \bar X_\infty(i, 1; j, m) = X_\infty(i, 1; j, m) - \mu(j-i + 1, m).
$$
We start with \eqref{E:half-space-upper-bd}.  Let $u_1 = (1, 1), v_1 = (m, m)$, $u_2 = (t + 1, 1), v_2 = (t + n, m)$. By Corollary \ref{C:exp-quadrangle}, 
\begin{align*}
\bar X_\alpha(u_2; v_2) \le \bar X_\alpha(u_1; v_1) + \bar X_{\infty}(u_2; v_2) - \bar X_{\infty}(u_1; v_1) \le \bar X_\alpha(u_1; v_1) + Y(u_2; v_2)-\mu(n,m) - \bar X_{\infty}(u_1; v_1).
\end{align*}
Therefore by a union bound
\begin{align*}
&\P(\bar X_\alpha(u_2; v_2) \ge \eps n) \\
\le\; &\P(\bar X_\alpha(u_1; v_1) \ge \eps n/3) + \P(Y(u_2, v_2) \ge \mu(n,m) + \eps n/3) + \P(\bar X_{\infty}(u_1; v_1) \le -\eps n/3)).
\end{align*}
We can bound the first and third terms using Theorem \ref{T:one-point-bds}, and the second term using Theorem \ref{T:W-tail-bounds}.
These give \eqref{E:half-space-upper-bd}.

We move on to the lower tail bound \eqref{E:half-space-lower-bd}, using the same $u_i, v_i$ notation. By monotonicity, it suffices to prove this bound for $X_\alpha$ replaced by $X_\infty$. By Corollary \ref{C:exp-quadrangle}, we have the inequality
\[
   \bar X_{\infty}(u_2; v_2) + Y(u_1; v_1) - \mu(m) \ge Y(u_2; v_2) - \mu(n,m) + \bar X_{\infty}(u_1; v_1),
\]
and so
\begin{multline*}
			\P(\bar X_{\infty}(u_2; v_2) \le -\eps n, Y(u_1; v_1) \le \mu(m)+\eps n/2)  \\ \le \P(Y(u_2; v_2) \le \mu(n,m) -\eps n/4) + \P(\bar X_{\infty}(u_1; v_1) \le -\eps n/4).
\end{multline*}
The right-hand side above is bounded above by the right-hand side of \eqref{E:half-space-lower-bd} by combining the lower bounds in Theorem \ref{T:one-point-bds} with Theorem \ref{T:W-tail-bounds}. On the other hand, we can apply the FKG inequality to the left-hand side above to get that, whenever $\eps n^{2/3}$ is large enough,
\begin{align*}
\P(\bar X_{\infty}(u_2; v_2) \le -\eps n) &\le 2\P(\bar X_{\infty}(u_2; v_2) \le -\eps n)\P( Y(u_1; v_1) \le \mu(m)+\eps n/2)\\
&\le 2\P(\bar X_{\infty}(u_2; v_2) \le -\eps n, Y(u_1; v_1) \le \mu(m)+\eps n/2),
\end{align*}
where the first inequality holds by Theorem \ref{T:W-tail-bounds}. When $\eps n^{2/3}$ is not large enough, we note that \eqref{E:half-space-lower-bd} holds obviously.
Thus we finish the proof.
\end{proof}

We finish this subsection with a corollary that applies to exponential LPP in a symmetric environment. For this, we extend $X_\alpha$ to $\Z^2$ by setting $X_\alpha(i, j) = X_\alpha(j, i)$ for all $i, j \in\Z$.

\begin{corollary}
\label{C:symmetric-version}
For all $v + (1, 1), v + (n, m) \in \Z^2$ with $n/m \in [\alpha_0, \alpha_0^{-1}]$ and $\ep \in (0, 1)$, \eqref{E:half-space-lower-bd} holds.
\end{corollary}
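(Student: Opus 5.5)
The plan is to reduce to two cases according to whether the geodesic from $v+(1,1)$ to $v+(n,m)$ is forced to cross the axis of symmetry.

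\emph{Case 1: both endpoints on the same side.} By the symmetry $X_\alpha(i,j)=X_\alpha(j,i)$ of the environment we may reflect and assume both endpoints lie in $\Z^2_\ge$ (reflection swaps $n,m$, and the ratio constraint $n/m\in[\alpha_0,\alpha_0^{-1}]$ is symmetric). The passage time in the symmetric environment is at least the passage time over paths confined to $\Z^2_\ge$, since allowing more paths only increases the maximum. That confined passage time is exactly the half-space quantity in Theorem \ref{T:half-space-full-tails}, so \eqref{E:half-space-lower-bd} applies directly.

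\emph{Case 2: the endpoints are on opposite sides.} Say $v+(1,1)\in\Z^2_\ge$ and $v+(n,m)\notin\Z^2_\ge$ (the reverse orientation is symmetric). I would not try to control the crossing point. Instead, monotonicity lets me drop to $\alpha=\infty$, and all weights are nonnegative. Then
\[
X_\alpha(v+(1,1);v+(n,m))\ \ge\ Y'(v+(1,1);v+(n,m)),
\]
where $Y'$ is the array that agrees with $X_\alpha$ off the diagonal and vanishes on it. The remaining task is to compare this with $\mu(n,m)$ despite the correlations created by the symmetry. I would pick an intermediate point $w$ on the diagonal at the natural crossing location, so that the straight line from $v+(1,1)$ to $v+(n,m)$ passes near $w$. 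This splits the rectangle into two sub-rectangles, one contained in $\Z^2_\ge$ and the other in its reflection. The superadditivity
\[
X(v+(1,1);v+(n,m))\ \ge\ X(v+(1,1);w)+X(w+(0,1);v+(n,m))
\]
then reduces the problem to two Case~1 passage times. The second term is handled by reflection, and the two pieces are independent except through the reflected weights, which a union bound makes irrelevant. Concavity of $\mu$ gives
\[
\mu(n_1,m_1)+\mu(n_2,m_2)\ \ge\ \mu(n,m)-O(1)
\]
when $w$ lies on the straight line, and then $\P(\text{sum}\le \mu-\eps n)$ is bounded by two lower-tail events of size $\eps n/2$.

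The main obstacle is degenerate geometry. One sub-rectangle may be very thin, with aspect ratio outside $[\alpha_0,\alpha_0^{-1}]$, which happens when the crossing point is near an endpoint. For that piece I would fall back on cruder estimates:
\begin{itemize}
\item use the trivial bound $X\ge0$ together with $\mu(n_i,m_i)\le \eps n/4$ when the piece is short;
\item otherwise apply Theorem \ref{T:half-space-full-tails} with a smaller aspect constant, at the cost of changing $c$, using that the lower tail bound in Theorem \ref{T:W-tail-bounds} still gives $\exp(-c\eps^3 n^2)$ after adjusting the thresholds.
\end{itemize}
Alternatively, I could shift $w$ slightly off the straight line. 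This loses only $O(\eps n)$ in $\mu$ while keeping both aspect ratios in a fixed range.
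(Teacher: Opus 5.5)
Your proposal is correct and follows essentially the paper's argument: reduce by reflection to a start below and an end above the diagonal, split at the lattice diagonal point nearest to where the straight segment between the endpoints crosses the diagonal, reflect the second leg into $\Z^2_\ge$, and apply the lower tail of Theorem \ref{T:half-space-full-tails} to each leg with a union bound. Your handling of the same-side case and of short or rounded legs fills in details the paper leaves implicit. One small correction: the bound $\mu(n_1,m_1)+\mu(n_2,m_2)\ge\mu(n,m)-O(1)$ comes from $1$-homogeneity of $\mu$ for nearly parallel legs, since concavity alone gives the reverse inequality.
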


\begin{proof}
By symmetry, it remains to consider the case where $v + (1, 1) = (t, 1)$ and $v + (n, m) = (t + n, m)$ for some $t\in \Z$, $t\ge 2$, and $t+ n < m$. Let $(i, i)$ be the point on the diagonal that is closest to the line segment connecting $(t, 1)$ to $(m, t + n)$ (choose an arbitrary one if there is a tie). Then we must have $i\ge 2$, and
\begin{align*}
    X_\alpha(t, 1; t + n, m) \ge X_\alpha(t, 1; i, i-1) + X_\alpha(i, i; t + n, m) = X_\alpha(t, 1; i, i-1) + X_\alpha(i, i; m, t + n).
\end{align*}
By lower bounding each of the two terms using Theorem \ref{T:half-space-full-tails}, the conclusion follows.
\end{proof}

\subsection{Two-point tail bounds}
\label{SS:two-point-bounds}

Our next goal is to prove two-point tail bounds, both in space and in time. Our two-point tail bounds have a narrower parameter range than in the one-point setting, but will still be sharp when we pass to the limit. Along the way, we will prove spatial profile bounds that will be helpful when we apply the metric composition law.

To start, we recall from Lemma \ref{lem:staLPP} the following representation for the (extended) half-space exponential LPP stationary measure. 
As we do not need to consider the joint stationary process here or the stationary measure with the maximal slope parameter (i.e., under the notations of Lemma \ref{lem:staLPP},  we only consider the case where $k=1$ and $\gamma_1<\alpha$), we can make the presentation more explicit. 

Take the field $X:\Z_{\ge}^2\to \R_{\ge 0}$ with boundary parameter $\alpha>0$ from Section \ref{sssec:constru}, and extend it to $\Z^2$ by setting $X(j,i)=X(i,j)$ for each $i, j \in \Z$.
Take another parameter $\beta \in ((1/2 - \alpha)\vee 0, 1/2)$, and consider independent exponential random variables
$$
Y(2, 2) \sim \operatorname{Exp}(\beta + \alpha- 1/2), \qquad Y(j, 2) \sim  \operatorname{Exp}(1/2 + \beta), \qquad Y(j, 1) \sim \operatorname{Exp}(1/2 - \beta), \qquad j\ge 3,
$$ 
and let $Y(2, 1) = 0$. Define the process $Z^+:\Z \to \R_{\ge 0}$ by
$$
Z^+(j) = Y(2, 1; 2 + j, 2) - Y(2, 2) = \max_{2 \le k \le j + 2} \left(\sum_{i=2}^k Y(i, 1) + \sum_{i=k}^{j+2} Y(i, 2)\right) - Y(2, 2) ,
\qquad j \ge 0,
$$
$$
Z^+(j) = \sum_{i=3}^{|j|+2} Y(i, 2), \qquad j \le -1.
$$
Finally, we set $Z^-(j) = Z^+(-j)$. Then both of the initial conditions $Z^\pm$ are stationary for the symmetric environment $X$, in the sense of \eqref{E:stationary-lpp}. 
Indeed, under the notation of Lemma \ref{lem:staLPP}, $Z^+$ is $R^{\EL}_1$ with boundary parameter $\alpha-1/2$, bulk parameter $1/2$, and slope parameter $(-\beta)$ and $k=1$.
Because $Z^+$ is stationary and the environment is symmetric, its reflection $Z^-$ is also stationary.

The next lemma records two-point bounds and modulus of continuity estimates for the stationary measures $Z^\pm$.
\begin{lemma}
\label{L:basic-stationary-est}
Fix $\alpha_0 \in (0, 1/2)$. There is $c>0$ depending only on $\alpha_0$ such that the following is true. Take parameters $\alpha, \beta$ such that $\alpha > \alpha_0$, $\beta \in (2(1/2 - \alpha)\vee 0, 1/2 - \alpha_0)$. Set
$$
\bar Z^+(i) =\begin{cases}
	Z^+(i) - \frac{i}{1/2 - \beta}, \qquad i \ge 0 \\
	Z^+(i) - \frac{|i|}{1/2 + \beta}, \qquad i < 0.
\end{cases}
$$
 Then for any $i < j \in \Z$ and $m > 0$ we have that
\begin{equation}
	\label{E:two-point-stat}
\P\Big(\max_{r < s \in \II{i,j}} |\bar Z^+(s) - \bar Z^+(r)| \ge m\Big) \le 2\exp \left(-\frac{cm^2}{m + (j - i)} \right).
\end{equation}
Moreover, for any $j_0 \in \Z, \delta > 0,$ and $m > 0$ we have that
\begin{equation}
	\label{E:shape-bd-stat}
	\P(|\bar Z^+(j_0 + i) - \bar Z^+(j_0)| \le m + \delta i^2 \text{ for all } i \in \Z) \ge 1 - 2\exp \left(-\frac{cm^2}{m + \sqrt{m/\delta}} \right).
\end{equation}
\end{lemma}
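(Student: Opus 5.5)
The plan is to reduce both bounds to estimates for random walks with exponential increments, using the explicit description of $Z^+$. For $j<0$, $\bar Z^+$ is a centred random walk with i.i.d.\ $\Exp(1/2+\beta)$ increments. The increments of the centred walk are mean-zero, with exponential moments of order $1/2+\beta\ge 1/2$ uniformly bounded. For $j\ge 0$, write $Z^+(j)=S_1(j)+\max_{0\le k\le j}[S_2(j)-S_2(k)+S_1(k)-S_1(j)]$, up to the harmless $Y(2,2)$ and indexing shifts. Here $S_1$ is the walk with $\Exp(1/2-\beta)$ steps (mean $1/(1/2-\beta)$) and $S_2$ has $\Exp(1/2+\beta)$ steps (smaller mean). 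Because the rate $1/2-\beta\ge\alpha_0$ is bounded below, all increments have exponential moments uniformly controlled by $\alpha_0$. So $\bar Z^+(j)=\bar S_1(j)+Q(j)$, where $Q(j)\ge 0$ is the queue-length (reflected walk) process of $S_2-S_1$. That reflected walk has negative drift $-(\frac{1}{1/2-\beta}-\frac{1}{1/2+\beta})=-\frac{2\beta}{1/4-\beta^2}$, which is at least of order $\beta$ in absolute value. Alternatively, by stationarity (Burke-type property, cf.\ Corollary \ref{cor:exp-bro} in the discrete setting), $Z^+$ on $\Z_{\ge0}$ could itself be a walk with i.i.d.\ $\Exp(1/2-\beta)$ increments started from a random but tight initial value. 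I will first try to check this via Lemma \ref{lem:couprLPP}: swapping the two lines' parameters turns the two-line LPP into a single random walk on the top line plus a jump at the start. If that works, $\bar Z^+$ is, on each half-line, a centred exponential random walk, and everything reduces to walks.

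For \eqref{E:two-point-stat}, I will split $\II{i,j}$ at $0$ if needed, bounding the oscillation on each half separately and combining with a union bound. On a single half-line, for a centred walk with increments having uniformly bounded exponential moments, Doob's maximal inequality applied to the exponential martingale gives
$\P(\max_{r<s\in\II{i,j}}|\bar S(s)-\bar S(r)|\ge m)\le 2\exp(-cm^2/(m+(j-i)))$,
the standard Bernstein-type bound. Here one uses $\max_{r,s}|\bar S(s)-\bar S(r)|\le 2\max_s|\bar S(s)-\bar S(i)|$. Across $0$ the two pieces share the point $0$, so the triangle inequality suffices.

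For \eqref{E:shape-bd-stat}, I will do a dyadic chaining over blocks. Consider the blocks $|i|\in[2^k L,2^{k+1}L)$ with $L=\sqrt{m/\delta}$, together with the central block $|i|\le L$. On the central block I apply \eqref{E:two-point-stat} with window length $2L$ and deviation $m$, giving the bound $2\exp(-cm^2/(m+\sqrt{m/\delta}))$. On block $k$ the allowed deviation is $m+\delta 4^kL^2=m(1+4^k)$ over a window of length about $2^{k}L$. This gives probability at most $\exp(-c m^2 4^{2k}/(m4^k+2^kL))$, which decays superexponentially in $k$. Summing over $k$ and over both signs is therefore dominated by the first term, after adjusting $c$.

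The main obstacle is the $j\ge0$ side if the Burke-type simplification fails. Then I need uniform control of the reflected process $Q$ when the drift $\beta$ is small. This is exactly why the hypothesis $\beta\ge 2(1/2-\alpha)\vee0$ appears: $Q$ is controlled by a Lindley recursion, and increments of $Q$ are bounded by increments of $S_2-S_1$ plus a nonnegative correction. For the oscillation one can use $|Q(s)-Q(r)|\le\max_{r\le u\le s}|(S_2-S_1)(u)-(S_2-S_1)(r)|$, the $1$-Lipschitz property of the Skorokhod reflection map. This again reduces to the centred-walk bound, since the drift term is linear with bounded slope and gets absorbed after centring. I expect this Lipschitz reduction to close the gap without needing stationarity at all.
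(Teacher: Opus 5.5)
The gap is on the half-line $j\ge 0$, and it starts with dropping $Y(2,2)$ as harmless. In fact
\[
Z^+(j)=\max\Big(\sum_{i=3}^{j+2}Y(i,2),\;\max_{3\le k\le j+2}\Big[\sum_{i=3}^{k}Y(i,1)+\sum_{i=k}^{j+2}Y(i,2)\Big]-Y(2,2)\Big).
\]
So a large corner weight pins $Z^+$ to the slow line. Then $\bar Z^+$ drifts at slope $-d$, with $d=\frac{1}{1/2-\beta}-\frac{1}{1/2+\beta}$, for a random stretch of length of order $Y(2,2)/d$. Your decomposition $\bar Z^+=\bar S_1+Q$ misses this. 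The correct version has an extra term $-\min(M(j)^+,Y(2,2))$, where $M$ is the running maximum of $S_1-S_2$ (suitably indexed).

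This is not a technicality; the estimate genuinely fails outside the hypothesis. Let $\eta:=\beta+\alpha-1/2$ be the rate of $Y(2,2)$. Fix $\beta$ and let $\eta\downarrow 0$. Then $\P(\bar Z^+(n)-\bar Z^+(0)\le -dn/2)\ge \tfrac12 e^{-3\eta d n}$ for large $n$. For small $\eta$ this is incompatible with \eqref{E:two-point-stat} at $m=dn/2$, whose right side is of the form $e^{-c'n}$.

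So any proof must use the law of $Y(2,2)$. The hypothesis $\beta>2(1/2-\alpha)\vee 0$ enters precisely as $\eta>\beta/2$. It does not enter through $Q$, which does not depend on $\alpha$ at all. Your Burke shortcut does not help either. Proposition \ref{P:burke-exponential} makes $Z^+$ a walk only when the corner weight is $\Exp(2\beta)$, i.e.\ when $\alpha=1/2+\beta$. Otherwise $Z^+$ is not a walk with a shifted start; it has the transient drift just described.

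The Lipschitz fallback works only for the upper tail, where the drift has the favourable sign. There it is essentially the paper's argument, which bounds $Z^+(s)-Z^+(r)$ by the two-line passage time from column $r+3$ to column $s+2$. For the lower tail, the reflection-map bound controls $|Q(s)-Q(r)|$ by $\max_{u\in\II{r,s}}|X(u)-X(r)|$ with $X=S_2-S_1$. This gives a centred maximum plus the deterministic term $d(s-r)$. That term swamps $m$ whenever $m\ll d(j-i)$, which includes the Gaussian regime $m\asymp\sqrt{j-i}$ once $j-i\gg d^{-2}$. A linear drift is not absorbed by centring.

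The missing ingredient is a stationary comparison. Replace $Y(2,2)$ by $Y^*(2,2)\sim\Exp(2\beta)$; then $\bar Z^*$ is exactly a centred walk by Proposition \ref{P:burke-exponential}. There are two cases.
\begin{itemize}
\item If $\eta\ge 2\beta$, couple so that $Y^*(2,2)\ge Y(2,2)$. The path-crossing inequality (Proposition \ref{P:weighted-quadrangle}) gives $Z^*(s)-Z^*(r)\le Z^+(s)-Z^+(r)$ for $0\le r<s$, which transfers the lower tail.
\item If $\eta<2\beta$, condition on the corner weight. The density ratio on $[0,\kappa]$ is at most $e^{(2\beta-\eta)\kappa}\le e^{3\eta\kappa}$; this is where $\eta\ge\beta/2$ is used. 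Meanwhile $\P(Y(2,2)>\kappa)=e^{-\eta\kappa}$. Taking $\eta\kappa$ to be a small multiple of $m^2/(m+j-i)$ closes the estimate.
\end{itemize}

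Staying closer to your route would require uniform exponential tails of rate $\asymp\beta$ for both $Q$ and $Y(2,2)$. These would be combined with the Lipschitz bound only when $d(j-i)\lesssim m$. That is a different argument, and you have not carried it out. Your handling of $j<0$, the split at $0$, and the dyadic argument for \eqref{E:shape-bd-stat} are fine.
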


To prove Lemma \ref{L:basic-stationary-est}, we require the following statement, which can be viewed as Burke's theorem for exponential random walks.
\begin{prop}
    \label{P:burke-exponential}
In the setting above, if we replace $Y(2, 2)$ with an independent random variable $\sim \operatorname{Exp}(2 \beta)$, then $(Z^+(j), j \ge 0)$ has the law of a random walk whose increments are $\operatorname{Exp}(1/2 - \beta)$.
\end{prop}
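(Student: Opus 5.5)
The statement is the exponential version of Burke's theorem for the $M/M/1$ queue. The plan is to rewrite $Z^+$ as the output of a Lindley recursion and then check, one step at a time, that the exponential laws are preserved. Throughout, write $a = 1/2-\beta$ and $b = 1/2+\beta$, so that $b - a = 2\beta > 0$.

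\textbf{Step 1: recursive description.} For $k \ge 2$ set
$$G(k) = \max_{2 \le \ell \le k}\Big(\sum_{i=2}^\ell Y(i,1) + \sum_{i=\ell}^{k} Y(i,2)\Big), \qquad L(k) = \sum_{i=3}^k Y(i,1),$$
so that $Z^+(j) = G(j+2) - G(2)$ with $G(2) = Y(2,2)$, using $Y(2,1)=0$. Splitting according to whether the maximizing $\ell$ equals $k$ or not gives the two-line recursion
$$G(k) = \max\big(G(k-1), L(k)\big) + Y(k,2).$$
Now introduce the queue length $Q(k) = G(k) - L(k) \ge 0$ and the increments $I(k) = G(k) - G(k-1)$. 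Using $L(k) - L(k-1) = Y(k,1)$, the recursion becomes
$$Q(k) = \big(Q(k-1) - Y(k,1)\big)^+ + Y(k,2), \qquad I(k) = Y(k,2) + \big(Y(k,1) - Q(k-1)\big)^+.$$
The initial value is $Q(2) = Y(2,2)$, which under the modified hypothesis is $\operatorname{Exp}(2\beta) = \operatorname{Exp}(b-a)$.

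\textbf{Step 2: one-step Burke lemma.} Let $Q \sim \operatorname{Exp}(b-a)$, $U \sim \operatorname{Exp}(a)$ and $V \sim \operatorname{Exp}(b)$ be independent, and set
$$Q' = (Q-U)^+ + V, \qquad I = V + (U-Q)^+.$$
The claim is that $Q'$ and $I$ are independent, with $Q' \sim \operatorname{Exp}(b-a)$ and $I \sim \operatorname{Exp}(a)$.

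To prove this, first compute the law of $W = Q - U$. For $w>0$ its density is proportional to $e^{-(b-a)w}$, and for $w<0$ it is proportional to $e^{aw}$. In particular $(Q-U)^+$ and $(U-Q)^+$ are never both positive. Condition on $\{Q > U\}$: here $W^+$ is $\operatorname{Exp}(b-a)$, $I = V$, and $Q' = W + V$. Condition on $\{Q \le U\}$: here $(U-Q)^+$ is $\operatorname{Exp}(a)$, $I = V + (U-Q)$, and $Q' = V$.

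Then compute the joint Laplace transform $\mathbb{E}[e^{-sQ'-tI}]$ by summing the two pieces. Each piece is a product of rational functions, and the sum should factor as
$$\frac{b-a}{b-a+s}\cdot\frac{a}{a+t}.$$
This algebraic factorization is the only real computation in the proof and the step where a slip is most likely. It is the classical output theorem, so I will either verify it by hand or cite the standard reference \cite[Section~2.5]{timonotes}.

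\textbf{Step 3: induction.} I will show by induction on $k \ge 2$ that $Q(k) \sim \operatorname{Exp}(b-a)$ and that $Q(k), I(3), \dots, I(k)$ are mutually independent, with each $I(\cdot) \sim \operatorname{Exp}(a)$.

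The base case $k=2$ holds since $Q(2) = Y(2,2) \sim \operatorname{Exp}(b-a)$ and there are no increments yet.

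For the inductive step, note that $(Y(k,1), Y(k,2))$ is independent of everything indexed by $k-1$ or earlier. Step 2 applied to $Q(k-1)$, $U = Y(k,1)$ and $V = Y(k,2)$ shows that $(Q(k), I(k))$ are independent with the right marginals. Moreover, this pair is a function of $Q(k-1)$ and the fresh variables, while $Q(k-1)$ is independent of $(I(3),\dots,I(k-1))$ by the inductive hypothesis. Hence the pair $(Q(k), I(k))$ is independent of $(I(3),\dots,I(k-1))$, and the hypothesis propagates.

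\textbf{Conclusion.} Since $Z^+(j) = \sum_{k=3}^{j+2} I(k)$, the process $(Z^+(j))_{j \ge 0}$ is a random walk with i.i.d.\ $\operatorname{Exp}(1/2-\beta)$ increments, which is the statement.
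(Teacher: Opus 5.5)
Your proof is correct, but it follows a different route from the paper.

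The paper gets the statement in one line as a degenerate case of the exponential parameter-swap coupling, Lemma \ref{lem:couprLPP}. That lemma rests on the zero-temperature two-line RSK isometry together with the invariance of RSK under permutation of parameters. The paper applies it on the half-line starting at index $2$, with the smaller parameter sent to $-\beta$ from above. In the limit, one of the two coupled two-line environments reproduces $Z^+$ with $Y(2,2)\sim\operatorname{Exp}(2\beta)$. In the other, the blown-up weight forces every maximizing path up at the first step. Its recentered passage times are therefore just partial sums of i.i.d.\ $\operatorname{Exp}(1/2-\beta)$ weights.

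You instead prove the classical $M/M/1$ output theorem by hand. Your Lindley recursion for $Q(k)=G(k)-L(k)$ and $I(k)$ is right. The computation you flagged as risky does factor: the events $\{Q>U\}$ and $\{Q\le U\}$ contribute
\[
\frac{a(b-a)}{(b-a+s)(b+s+t)} \qquad\text{and}\qquad \frac{a(b-a)}{(a+t)(b+s+t)},
\]
whose sum is $\frac{b-a}{b-a+s}\cdot\frac{a}{a+t}$. The induction then correctly propagates joint independence of $Q(k),I(3),\dots,I(k)$.

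Your route is elementary and self-contained: no RSK, no limit transition, and no extension to an infinite index set. It also makes explicit the queueing picture the paper only alludes to via \cite[Section~2.5]{timonotes}. The paper's route costs essentially nothing in context, since Lemma \ref{lem:couprLPP} is already needed to build the horizons. The same swap-and-degenerate mechanism is reused later (e.g.\ in Corollary \ref{cor:exp-bro}), so all these Burke-type statements come from a single identity.
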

\begin{proof}
This follows from the exponential case of Lemma \ref{lem:couprLPP} (with $a=2, b=\infty$, $\gamma_1=\beta$, each $\beta_i=1/2$, and sending $\gamma_0$ to $-\beta$ from above).    
\end{proof}

\begin{proof}[Proof of Lemma \ref{L:basic-stationary-est}]
In this we use $c>0$ to denote a small constant that depends only on $\alpha_0$, and its value may change from line to line.
We first prove \eqref{E:two-point-stat}. Note that it suffices to prove it when $ij\ge 0$, as the general case then follows by the triangle inequality. For $j \in \N$ define random walks with mean-$0$ increments:
$$
Y_1(j) = \sum_{i=2}^{j+2} Y(i, 1) - \frac{j}{1/2 - \beta}, \qquad Y_2(j) = \sum_{i=2}^{j+2} Y(i, 2) - \frac{j}{1/2 + \beta}.
$$
By standard random walk bounds (i.e.,Doob's maximal inequality applied to $e^{\lambda Y_k}$ and a Chernoff bound), for any $m > 0$ and $0 \le i \le j$, and $\iota = 1, 2$ we have the estimate
\begin{equation}
	\label{E:doob}
\P\Big(\max_{r \in \II{i,j}} |Y_\iota(r) - Y_\iota(i)| \vee  |Y_\iota(r) - Y_\iota(j)|\ge m\Big) \le 2\exp \left(-\frac{cm^2}{m + (j - i)} \right),
\end{equation}
This immediately implies the bound \eqref{E:two-point-stat} for $i<j \le 0$.

We move on to bound when $0 \le i < j$. For any $0 \le r < s$, we have
\begin{align*}
\bar Z^+(s) - \bar Z^+(r)  \le\; &\max_{r+3 \le k \le s + 2} \left( \sum_{\ell=r+3}^k Y(\ell, 1) + \sum_{\ell=k}^{s+2} Y(\ell, 2) \right) - \frac{(s-r)}{1/2 - \beta} \\
\le\; &\max_{r+1 \le k \le s} \left( Y_1(k) - Y_1(r) + Y_2(s) - Y_2(k-1) \right) + \frac{1}{1/2 -\beta}.
\end{align*}
Together with \eqref{E:doob}, this implies the upper tail bound in \eqref{E:two-point-stat} (i.e., \eqref{E:two-point-stat} with the absolute value removed). Here we have used that $1/(1/2 -\beta) \le \alpha_0^{-1}$ to remove the constant term.

The lower tail bound in \eqref{E:two-point-stat} when $0 \le i < j$ is more delicate. For this, we use Proposition \ref{P:burke-exponential}.
Consider the field $Y^*:\II{2,\infty}\times\II{1,2}\to \R_{\ge 0}$, which equals $Y$ except for that $Y^*(2,2)\sim \Exp(2\beta)$, and is independent from $Y$.
Define
$$
Z^*(j) = Y^*(2, 1; 2 + j, 2) - Y^*(2,2).
$$
The process $\bar Z^*(j) = Z^*(j) - j/(1/2 - \beta)$ is now simply a centered exponential random walk, per Proposition \ref{P:burke-exponential}, and so satisfies the tail bound \eqref{E:two-point-stat}.

If $2 \beta \le \beta + \alpha - 1/2$, then we can couple the environments so that $Y^*(2,2) > Y(2, 2)$ almost surely. In this coupling, by Proposition \ref{P:weighted-quadrangle}, for all $0 \le r < s$ we have that
$$
Z^*(s) - Z^*(r) \le Z^+(s) - Z^+(r),
$$
so the lower tail in \eqref{E:two-point-stat} follows from the lower tail for $\bar Z^*$. Now suppose $2 \beta \ge \beta + \alpha - 1/2$, and define 
\begin{align*}
    p(x) &= \P\Big(\max_{r < s \in \II{i,j}} \bar Z^+(s) - \bar Z^+(r) \le - m \Big\mid  Y(2,2) = x\Big) \\
    &= \P\Big( \max_{r < s \in \II{i,j}}  \bar Z^*(s) - \bar Z^*(r) \le - m \Big\mid  Y^*(2,2) = x\Big).
\end{align*}

For any $\kappa > 0$, we can then write
\begin{align}
\nonumber
\P\Big(\max_{r < s \in \II{i,j}} &\bar Z^+(s) - \bar Z^+(r) \le - m\Big) \\
\nonumber
&= (\beta + \alpha - 1/2) \int_0^\infty p(x) e^{-(\beta + \alpha - 1/2) x} dx \\
\nonumber
&\le e^{(\beta - \alpha + 1/2)\kappa} 2 \beta \int_0^\kappa p(x) e^{-2 \beta x} dx  + (\beta + \alpha - 1/2)\int_\kappa^\infty e^{-(\beta + \alpha - 1/2) x} dx \\
\label{E:RHS-integral}
&\le e^{(\beta - \alpha + 1/2)\kappa} 2\exp \left(-\frac{cm^2}{m + (j - i)} \right)  + e^{-(\beta + \alpha - 1/2) \kappa}.
\end{align}
In the second inequality, we have evaluated the second integral, and used the tail bound \eqref{E:two-point-stat} for $\bar Z^*$ on the first integral. Now, by the lower bound $\beta \ge 2(1/2 - \alpha)\vee 0$ we have that $e^{(\beta - \alpha + 1/2)\kappa} \le e^{3(\beta + \alpha - 1/2) \kappa}$. 
Therefore by taking appropriate $\kappa$, we can upper bound  \eqref{E:RHS-integral} by the right-hand side of \eqref{E:two-point-stat}.

Finally, for the bound \eqref{E:shape-bd-stat}, by a union bound we can write
\begin{align*}
&\P(|\bar Z^+(j_0 + i) - \bar Z^+(j_0)| > m + \delta i^2 \text{ for some } i \in \Z)  \\
\le &\sum_{k \in \N} \P\Big(\max_{j \in \Z : |j| \le 2^k \sqrt{m/\delta}}|\bar Z^+(j_0 + j) - \bar Z^+(j_0)| > 2^{2k-2} m\Big) \\
\le &\sum_{k \in \N} 2 \exp \left( -\frac{ c2^{4k} m^2}{2^k \sqrt{m/\delta} + 2^{2k-2} m}\right).
\end{align*}
Here the final bound is by \eqref{E:two-point-stat}. Simplifying this expression gives the result.
\end{proof}

Next, we will use Lemma \ref{L:basic-stationary-est} to bound the two-point spatial tail. The basis for comparison is the following inequality.
Recall the $L$-shaped region notation $[\cdot, \cdot]_{\textsf{L}}$ from \eqref{eq:Lshapedef}. 
For $E:\Z^2\to\R$ and $u,v\in\Z^2$, with $[u]_{\textsf{L}}\le [v]_{\textsf{L}}$, we denote
\[
E[u;v]_{\textsf{L}} = E([u]_{\textsf{L}};[v]_{\textsf{L}}).
\]
Moreover, for $f:\Z\to\R$, recall the notation $E^f$ from \eqref{eq:Efdef}.
\begin{lemma}
\label{L:basis-for-comparison}
Take $x \in \Z$ and $y_1 \le y_2 \in \Z, t \in \N$ such that $[x,1]_{\textsf{L}}\le [y_1,t]_{\textsf{L}}$ and $[x,1]_{\textsf{L}}\le [y_2,t]_{\textsf{L}}$, and take $f:\Z \to \R_{\ge 0}$. Then for any $E:\Z^2\to\R$,
$$
E[x, 1; y_2, t]_{\textsf{L}} - E[x, 1; y_1, t]_{\textsf{L}} \le E^f[0,0; y_2, t]_{\textsf{L}} - E^f[0,0; y_1, t]_{\textsf{L}},
$$
on the event where a geodesic in $E^f$ from $[0,0]_{\textsf{L}}$ to $[y_1, t]_{\textsf{L}}$ intersects $\{[r, 1]_{\textsf{L}} : r \ge x\}$. Similarly, 
$$
E[x, 1; y_2, t]_{\textsf{L}} - E[x, 1; y_1, t]_{\textsf{L}} \ge E^f[0,0; y_2, t]_{\textsf{L}} - E^f[0,0; y_1, t]_{\textsf{L}},
$$
on the event where a geodesic in $E^f$ from $[0,0]_{\textsf{L}}$ to $[y_2, t]_{\textsf{L}}$ intersects $\{[r, 1]_{\textsf{L}} : r \le x\}$.
\end{lemma}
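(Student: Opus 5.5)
This is a standard geodesic-crossing (quadrangle) argument; the only extra ingredient is that $E^f$ agrees with $E$ off the $L$-shape at level $0$.

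\textbf{Reduction to passage times from level-one points.} Any path in $E^f$ from $[0,0]_{\textsf{L}}$ to a point $[y,t]_{\textsf{L}}$ with $t\ge 1$ travels along the L-shape $\{[j,0]_{\textsf{L}}\}$. It then steps onto the L-shape $\{[r,1]_{\textsf{L}}\}$ and continues using only weights of $E$. The weights along $[0,0]_{\textsf{L}}\to[j,0]_{\textsf{L}}$ telescope to $f(j)-f(0)$. Hence every $E^f$ passage time of this form decomposes as
\[
E^f[0,0;y,t]_{\textsf{L}}=\max_{r}\big(a(r)+E[r,1;y,t]_{\textsf{L}}\big),
\]
where $a(r)$ collects the contribution of the path up to and including its first visit to level $1$. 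The point $[r,1]_{\textsf{L}}$ is where the geodesic enters level one. For each target there is then a geodesic $\gamma_i$ from $[0,0]_{\textsf{L}}$ to $[y_i,t]_{\textsf{L}}$ passing through some $[r_i,1]_{\textsf{L}}$. Its tail from $[r_i,1]_{\textsf{L}}$ to $[y_i,t]_{\textsf{L}}$ is an $E$-geodesic.

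\textbf{First inequality.} Assume $\gamma_1$ meets level one at $[r_1,1]_{\textsf{L}}$ with $r_1\ge x$. Take an $E$-geodesic $\pi$ from $[x,1]_{\textsf{L}}$ to $[y_2,t]_{\textsf{L}}$. The start of $\pi$ lies weakly to the left of the start of the tail $\tau$ of $\gamma_1$ along the L-shape, since $x\le r_1$. The end of $\pi$ lies weakly to the right of the end of $\tau$, since $y_1\le y_2$. Both paths move up-right through the nested L-shaped levels in the planar region, so $\pi$ and $\tau$ must share a vertex $v$; this is the planarity and crossing step. Swap the tails at $v$. This gives a path $[x,1]_{\textsf{L}}\to v\to[y_1,t]_{\textsf{L}}$ and a path $[0,0]_{\textsf{L}}\to[r_1,1]_{\textsf{L}}\to v\to[y_2,t]_{\textsf{L}}$. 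Their total weight equals $E[x,1;y_2,t]_{\textsf{L}}+E^f[0,0;y_1,t]_{\textsf{L}}$; if the paths share more than the single vertex $v$, the count is adjusted in the usual way by counting $v$ once in each. Bounding each swapped path by the corresponding passage time gives
\[
E[x,1;y_2,t]_{\textsf{L}}+E^f[0,0;y_1,t]_{\textsf{L}}\le E[x,1;y_1,t]_{\textsf{L}}+E^f[0,0;y_2,t]_{\textsf{L}},
\]
which rearranges to the claim. This is exactly the argument of Proposition~\ref{P:weighted-quadrangle}, with $E$ and $E^f$ playing the roles of two passage-time functions on a common graph. One convenient way to set it up is to attach $[0,0]_{\textsf{L}}$ to level one through the $f$-weighted boundary.

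\textbf{Second inequality.} This is symmetric. Use the geodesic $\gamma_2$ to $[y_2,t]_{\textsf{L}}$ entering level one at $[r_2,1]_{\textsf{L}}$ with $r_2\le x$, and an $E$-geodesic from $[x,1]_{\textsf{L}}$ to $[y_1,t]_{\textsf{L}}$. Now the $E$-geodesic starts to the right of $\gamma_2$'s tail but ends to the left, so the two must cross. Swapping at the crossing point gives $E^f[0,0;y_2,t]_{\textsf{L}}+E[x,1;y_1,t]_{\textsf{L}}\le E^f[0,0;y_1,t]_{\textsf{L}}+E[x,1;y_2,t]_{\textsf{L}}$, which rearranges to the second claim.

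\textbf{Main obstacle.} The main difficulty is justifying the crossing in the L-shaped geometry. One must check that the left/right ordering of the endpoints along the two L-lines forces intersection. This holds because the region between levels $1$ and $t$ is a union of nested L's, and an up-right path is a monotone curve separating the region. A second point is that the swapped paths remain admissible. No diagonal constraint is imposed in this section, so any up-right path is allowed, and the swap is fine.
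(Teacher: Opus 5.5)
Your proposal is correct and is essentially the paper's proof: take an $E^f$-geodesic to $[y_1,t]_{\textsf{L}}$ through a level-one point $[r,1]_{\textsf{L}}$ with $r\ge x$ and an $E$-geodesic from $[x,1]_{\textsf{L}}$ to $[y_2,t]_{\textsf{L}}$. The endpoint ordering forces a common vertex, you swap tails, and you use $E^f=E$ on $\N^2$; the second inequality is symmetric. One small imprecision: the hypothesis only says the geodesic visits some $[r,1]_{\textsf{L}}$ with $r\ge x$, not that its first entry into level one has $r\ge x$, so start the tail $\tau$ at such a visited point rather than at the entry point; nothing else in the argument changes.
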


\begin{proof}
We only prove the first inequality, as the second has a symmetric proof. First, it suffices to prove the inequality with $E^f$ on both sides, since the environment $E^f$ equals $E$ on the set $\N^2$. Next, let $\gamma$ be a geodesic in $E^f$ from $[0,0]_{\textsf{L}}$ to $[y_1, t]_{\textsf{L}}$ using a point $[r,1]_{\textsf{L}}$ for some $r \ge x$, and let $\pi$ be a geodesic from $[x, 1]_{\textsf{L}}$ to $[y_2, t]_{\textsf{L}}$. Since $r \ge x$ and $y_1 \le y_2$, these two geodesics necessarily meet, at a point $p$. Let $\ga'$ be the path following $\gamma$ up to the point $p$, and $\pi$ thereafter, and let $\pi'$ follow $\pi$ up to the point $p$ and $\gamma$ thereafter. Then
$$
E^f[0,0; y_1, t]_{\textsf{L}} + E^f[x, 1; y_2, t]_{\textsf{L}} = |\gamma'|_{E^f} + |\pi'|_{E^f} \le E^f[0,0; y_2, t]_{\textsf{L}} + E^f[x, 1; y_1, t]_{\textsf{L}},
$$
where the last inequality uses that $\ga'$ is a path from $[0,0]_\textsf{L}$ to $[y_2, t]_\textsf{L}$ and $\pi'$ is a path from $[x, 1]_\textsf{L}$ to $[y_1, t]_\textsf{L}$.
\end{proof}

In order to apply Lemma \ref{L:basis-for-comparison}, we will analyze the location of the geodesics from stationary initial conditions. The proof uses the following profile bound for symmetric LPP, a simple consequence of Corollary \ref{C:parabolic-profile}. For this lemma, recall $\mu$ from \eqref{eq:defmunm}, and for $u, v \in \Z^2$ with $[u]_\textsf{L}\le [v]_\textsf{L}$ we set
$$
\mu[u; v]_\textsf{L} = \mu([v]_\textsf{L} - [u]_\textsf{L} + (1,1)),
$$
and take $\Delta_\alpha(n) = \mu_\alpha(n) - \mu(n)$ as in Theorem \ref{T:half-space-full-tails}.
Also recall the symmetric field $X:\Z^2\to\R_{\ge 0}$ from the beginning of this subsection, and denote
\[
\bar X[u; v]_\textsf{L} = X[u; v]_\textsf{L} - \mu[u; v]_\textsf{L}.
\]
\begin{lemma}
\label{L:Xalpha-bd}
Take any $\alpha_0 \in (0, 1)$, $\delta_0 > 0$, $n \in \N$ and $j_0, x \in \Z_{\ge 0}$ with $x, j_0 \le n/10$. Take parameter $\alpha \ge \alpha_0$, and set (for $j\in\II{1-n, n+x-1}$)
$\bar X(j) = \bar X[j, 1; x, n]_\textsf{L}$.
Then there exists $c > 0$ depending only on $\alpha_0, \delta_0$ such that for all $\eps \in (0, n^{-1/3})$ we have
$$
\P\left(\max_{j\in\II{0,n+x-1}} \bar X(j) - \frac{\delta_0(j-j_0)^2}{n} \ge \Delta_\alpha(n) + \eps n\right) \le 2\exp\left(-c \eps^{3/2}n \left(1 \wedge \frac{\eps^{1/2}}{(1/2 - \alpha)\vee 0}\right) \right).
$$
\end{lemma}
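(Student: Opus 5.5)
The plan is to reduce the statement to the parabolic profile bound of Corollary \ref{C:parabolic-profile} for full-space LPP, plus the one-point upper bound from Theorem \ref{T:half-space-full-tails}. We use a path-crossing comparison of Corollary \ref{C:exp-quadrangle} type to move from the half-space environment to the full-space one.

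First, unpack the geometry. For $j\ge 0$, the start point is $[j,1]_\textsf{L}=(1+j,1)$ and the endpoint is $[x,n]_\textsf{L}=(n+x,n)$. Since the field $X$ is symmetric and both endpoints are on or below the diagonal, a maximizing path may be taken in $\Z^2_\ge$. So $\bar X(j)$ is a half-space passage time from $(1+j,1)$ to $(n+x,n)$, centered by $\mu(n+x-j,n)$. We only need to control $j\in\II{0,n+x-1}$.

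For $j\le n/5$ (say), compare with the diagonal start point, using the first-row version of the crossing inequality. Set $u_1=(1,1)$, $u_2=(1+j,1)$ and $v_1=v_2=(n+x,n)$. Corollary \ref{C:exp-quadrangle} (second form, with $\beta=\infty$) gives
\[
X_\alpha(u_2;v)-X_\infty(u_2;v)\le X_\alpha(u_1;v)-X_\infty(u_1;v).
\]
Combined with the first form, $X_\infty(u_2;v)-X_\infty(u_1;v)\le Y(u_2;v)-Y(u_1;v)$, this yields
\[
\bar X(j)\le \bar X(0)+\big[Y(u_2;v)-\mu(n+x-j,n)\big]-\big[Y(u_1;v)-\mu(n+x,n)\big].
\]
The first term is bounded by Theorem \ref{T:half-space-full-tails}, giving $\Delta_\alpha(n)+\eps n/3$ with the claimed tail; here $x\le n/10$ means $\Delta_\alpha$ evaluated at $m=n$ is correct. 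The last term is a single full-space lower tail, controlled by Theorem \ref{T:W-tail-bounds}. The middle process, in $j$, is exactly $\bar Y$ from Corollary \ref{C:parabolic-profile} after reflecting coordinates. That corollary, with $j_0\le n/10$ and a smaller $\delta_0$, bounds it by $\eps n/3+\delta_0(j-j_0)^2/n$ with probability $1-2e^{-c\eps^{3/2}n}$. The Tracy--Widom-type tails are dominated by the claimed tail since the factor $1\wedge(\cdot)\le1$.

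For large $j$ (beyond $n/5$), no parabolic refinement is needed: the penalty $\delta_0(j-j_0)^2/n\gtrsim n$ dominates. A crude union bound over $O(n)$ values of $j$ with Theorem \ref{T:half-space-full-tails} (or simple large deviation bounds when $n+x-j$ is small relative to $n$) gives $e^{-cn}$, and this is absorbed since $\eps<n^{-1/3}$.

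The main obstacle is the bookkeeping in the crossing step. We must check that the hypotheses of Corollary \ref{C:exp-quadrangle} hold for start points on the same row ($x_1\le x_2$ and equal endpoints). We must also check that the $j_0$ and aspect-ratio constraints of Corollary \ref{C:parabolic-profile} are met after reindexing, since that corollary varies the start along a column rather than a row. If the reindexing is awkward, apply the corollary in the transposed environment, which has the same law because $Y$ is i.i.d.
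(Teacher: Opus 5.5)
Your crossing step has the inequality backwards. In Corollary \ref{C:exp-quadrangle} the pair with the smaller first coordinate, here $u_1=(1,1)$, is the one nearer the boundary. The first form reads $X_\infty(u_1;v)+Y(u_2;v)\le Y(u_1;v)+X_\infty(u_2;v)$, that is,
\[
Y(u_2;v)-X_\infty(u_2;v)\le Y(u_1;v)-X_\infty(u_1;v).
\]
So the full-space/half-space gap is \emph{largest} at the boundary. This gives $X_\infty(u_2;v)-X_\infty(u_1;v)\ge Y(u_2;v)-Y(u_1;v)$, a lower bound on $X_\infty(u_2;v)$, which cannot help bound $X_\alpha(u_2;v)$ from above.

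Your displayed decomposition is in fact false. It is equivalent to $X_\alpha(u_2;v)-Y(u_2;v)\le X_\alpha(u_1;v)-Y(u_1;v)$. Take $j\ge1$, and work in the coupling of that corollary, where the above-diagonal weight $Y(1,2)$ is not used by $X_\alpha$. On the positive-probability event that $Y(1,2)$ is huge, $Y(u_1;v)\ge Y(1,2)$ is huge. Meanwhile $X_\alpha(u_1;v)$, $X_\alpha(u_2;v)$ and $Y(u_2;v)$ never see the site $(1,2)$, so the right-hand side can be made arbitrarily negative with the left-hand side unchanged.

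The repair is short, and it gives exactly the paper's proof. Keep your (correct) second-form step,
\[
X_\alpha(u_2;v)-X_\infty(u_2;v)\le X_\alpha(u_1;v_1)-X_\infty(u_1;v_1),
\]
with $v_1$ either $v$ itself or the diagonal point $(n,n)$. Then replace the first-form step by the deterministic domination $X_\infty(u_2;v)\le Y(u_2;v)$: half-space paths are full-space paths, and $X_\infty$ has zero diagonal weights. This yields, simultaneously for all $j\in\II{0,n+x-1}$,
\[
\bar X(j)\le \big[X_\alpha(u_1;v_1)-X_\infty(u_1;v_1)\big]+\big[Y(u_2;v)-\mu(n+x-j,n)\big].
\]
The second bracket is the full-space profile controlled by Corollary \ref{C:parabolic-profile}, applied in the transposed environment over the whole range of $j$. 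So your separate large-$j$ argument is not needed.

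The first bracket needs an upper tail for $X_\alpha$ and a lower tail for the half-space $X_\infty$, not for $Y$. With $v_1=(n,n)$, as in the paper, parts 1 and 3 of Theorem \ref{T:one-point-bds} bound it by $\Delta_\alpha(n)+2\eps n$ with the required probability. With $v_1=v$, use Theorem \ref{T:half-space-full-tails}, whose lower-tail bound also covers $X_\infty$.
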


\begin{proof}
Take $X_\infty=X$ on $\Z^2\setminus\{(i,i):i\in\Z\}$, and $X_\infty(i,i)=0$ for each $i\in\Z$.
Let $\bar X_\infty(j)=X_\infty[j, 1; x, n]_\textsf{L} - \mu[j, 1;x, n]_\textsf{L}$.
Then by Corollary \ref{C:exp-quadrangle}, we have
$$
\max_{j\in\II{0,n+x-1}} \bar X(j) - \bar X_\infty(j) \le   X[0, 1; 0, n]_\textsf{L} - X_\infty[0, 1; 0, n]_\textsf{L}.
$$
By upper bounding $\max_{j\in\II{0,n+x-1}}\bar X_\infty(j)$ using Corollary \ref{C:parabolic-profile} and the fact that $X_\infty$ is dominated by full-space passage times, and upper bounding the right-hand side using the one-point tail bounds in Theorem \ref{T:one-point-bds}, the conclusion follows.
\end{proof}

By combining the profile bound in Lemma \ref{L:Xalpha-bd}, standard fluctuation bounds on the stationary measures $Z^\pm$ (Lemma \ref{L:basic-stationary-est}) and the lower tail bounds in Corollary \ref{C:symmetric-version}, we can get precise estimates on the argmax location when applying metric composition with a stationary measure for symmetric exponential LPP. For this next lemma, we have restricted our parameter ranges to simplify both the statements and the proof.
\begin{lemma}
	\label{L:location-bd}
    Take $n \in \N$ and $x \in \Z_{\ge 0}$ with $x \le n/100$. Take parameters $\alpha, \beta$ satisfying that
$\alpha \ge 1/2 - n^{-1/6}$ and $\beta \in (2(1/2 - \alpha)\vee 0, 1/100)$. Define
$$
y^\pm(\beta) = \argmax_{y\in\II{-n, n + x}} f^\pm(y, \beta), \quad \text{where }\quad f^\pm(y, \beta) = \frac{|y|}{1/2 \mp \operatorname{sgn}(y)\beta} + \mu[y-\operatorname{sgn}(y), 1; x, n]_\textsf{L}.
$$
Then we have $y^+(\beta)>0$, and
$|y^\pm(\beta) - (x \pm 8 \beta n)|\le 100 \beta^2 n + 1$.
Moreover, there are universal constants $C, c>0$ such that the following statements hold. Let
\[
y^\pm_* = \argmax_{y\in\II{-n,n+x}} (Z^\pm(y) + X[y-\operatorname{sgn}(y),1; x,n]_\textsf{L}) .
\]
For any $\eps > C (1/2 - \alpha)\vee 0$ we have
\[
\P(|y^+_* - y^+(\beta)|\ge \eps n),\quad \P(|y^-_* - y^-(\beta)|\ge \eps n\;\text{and}\;y^-_*>0)\le 2\exp(- c \eps^3 n) + 2 \exp(-c\sqrt{n}).
\]
\end{lemma}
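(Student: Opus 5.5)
The plan has two parts: a deterministic Taylor expansion that locates the maximizers $y^\pm(\beta)$, followed by a probabilistic comparison of the random function $y\mapsto Z^\pm(y)+X[y-\operatorname{sgn}(y),1;x,n]_\textsf{L}$ with its deterministic approximation $f^\pm(\cdot,\beta)$, using the curvature of $f^\pm$ away from its maximum.

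\textbf{Deterministic part.} For $y\ge 0$ we have $[y-1,1]_\textsf{L}=(y,1)$ and $[x,n]_\textsf{L}=(n+x,n)$, so $\mu[y-1,1;x,n]_\textsf{L}=(\sqrt{n}+\sqrt{n+x-y})^2$. Its derivative in $y$ is $-(1+\sqrt{n/(n+x-y)})$. Setting this equal to $-1/(1/2-\beta)=-(2+4\beta+O(\beta^2))$ gives $\sqrt{n/(n+x-y)}=1+4\beta+O(\beta^2)$, that is, $y=x+8\beta n+O(\beta^2 n)$. The constant $100$ absorbs the error because $\beta<1/100$, and the extra $+1$ accounts for integer rounding. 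Since $8\beta n>x$ is not needed, positivity of $y^+(\beta)$ comes from the fact that the slope at $y=0$ is $-(1+\sqrt{n/(n+x)})>-2-4\beta$; hence $f^+$ is increasing there.

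For $y<0$ we have $[y+1,1]_\textsf{L}=(1,1+|y|)$. The problem reduces to the symmetric one with the slope $1/(1/2\mp\beta)$ on the negative side. For $f^-$ the relevant slopes are $1/(1/2+\beta)$ for $y>0$ and $1/(1/2-\beta)$ for $y<0$. The maximizer $y^-(\beta)\approx x-8\beta n$ is obtained in the same way, on the side where it lies. Both $f^\pm$ are concave in $y$ on each side with second derivative of order $-1/n$, which gives
\[
f^\pm(y^\pm(\beta),\beta)-f^\pm(y,\beta)\ge c\,(y-y^\pm(\beta))^2/n
\]
on the relevant range. I would check this uniformly, including across the kink at $y=0$ for $f^-$, where concavity holds because the one-sided slopes decrease.

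\textbf{Probabilistic part.} On the event $|y^+_*-y^+(\beta)|\ge\eps n$, either $Z^++X$ at some such $y$ exceeds its value at $y^+(\beta)$, or the fluctuations compensate the deterministic gap of size $c\eps^2 n$. I would take a point $j_0=y^+(\beta)$ and bound three quantities.
\begin{enumerate}[nosep,label=(\alph*)]
\item The oscillation $\bar Z^+(y)-\bar Z^+(j_0)\le m+\delta(y-j_0)^2$, with $m=\eps^2 n/10$ and $\delta=c/(10n)$. By \eqref{E:shape-bd-stat}, its failure probability is at most $2\exp(-c m^2/(m+\sqrt{m/\delta}))=2\exp(-c\eps^3 n)$.
\item The upper profile of $\bar X[y-1,1;x,n]_\textsf{L}-\delta_0 (y-j_0)^2/n$, bounded via Lemma~\ref{L:Xalpha-bd} at level $\Delta_\alpha(n)+\eps^2 n/10$. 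Here I need $\eps^2/10<n^{-1/3}$; otherwise I cap the level, and the leftover is handled by the crude $e^{-c\sqrt n}$ term. I would also split negative $y$ by symmetry of the environment.
\item The lower tail of $\bar X[j_0-1,1;x,n]_\textsf{L}\ge-\eps^2n/10$, from Corollary~\ref{C:symmetric-version}, with probability $2\exp(-c\eps^6 n^2)$. This is at most $2\exp(-c\eps^3 n)$ only when $\eps^3 n\gtrsim1$; otherwise the bound is trivial.
\end{enumerate}
The restriction $\eps>C(1/2-\alpha)\vee0$ is used because $\Delta_\alpha(n)\lesssim(1/2-\alpha)^2n$ when $\alpha<1/2$, so it is dominated by the gap $\eps^2 n$. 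The factor $1\wedge\eps^{1/2}/(1/2-\alpha)$ in Lemma~\ref{L:Xalpha-bd} is then of order $1$. Combining (a)–(c) on the event $|y-j_0|\ge\eps n$ gives
\[
Z^++X(y)-[Z^++X](j_0)\le -c\eps^2 n+3\eps^2 n/10+\text{(curvature slack)}<0,
\]
after choosing $\delta,\delta_0$ small relative to the deterministic curvature constant.

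\textbf{Main obstacle.} The delicate step is the minus case. $Z^-$ is the reflection of $Z^+$, so on $y>0$ it has the slope $1/(1/2+\beta)$ and is controlled by the random-walk side of Lemma~\ref{L:basic-stationary-est}. However, the true maximizer $y^-(\beta)\approx x-8\beta n$ may be negative, while $Z^-$ on the negative side involves the reflected boundary-affected part. This is why the statement only controls the event $y^-_*>0$. I would handle it by restricting the comparison to $y>0$ only. If $y^-(\beta)>0$, the argument above applies verbatim. If $y^-(\beta)\le0$, I would compare against $j_0=y^-(\beta)$, using (a) applied to $Z^-=Z^+(-\cdot)$ on the whole line, which \eqref{E:shape-bd-stat} allows since it holds for all $i\in\Z$. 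Care is needed that Lemma~\ref{L:Xalpha-bd} is stated for $j\ge0$ only, so for negative $j$ I would invoke the symmetry $X(i,j)=X(j,i)$ to reflect the L-shaped passage time and reapply it.
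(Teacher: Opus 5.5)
Your proposal is essentially the paper's proof, step for step. It uses:

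\begin{itemize}
\item the Taylor expansion with the quadratic decay \eqref{eq:ffypm};
\item the shape bound \eqref{E:shape-bd-stat} for $\bar Z^\pm$, with $m$ of order $\eps^2 n$ and $\delta$ of order $1/n$;
\item the profile bound of Lemma~\ref{L:Xalpha-bd} at level $\Delta_\alpha(n)+c\eps^2 n$, capped at $\eps$ of order $n^{-1/6}$, which produces the $e^{-c\sqrt n}$ term;
\item the lower tail of Corollary~\ref{C:symmetric-version} at the center;
\item the condition $\eps>C(1/2-\alpha)\vee 0$ to absorb $\Delta_\alpha(n)$.
\end{itemize}

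Three local repairs are needed, none of which changes the argument:

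\begin{itemize}
\item \emph{The kink of $f^-$ at $0$.} It is actually convex exactly when $y^-(\beta)<0$. The left and right slopes there are roughly $v-w$ and $1/w-1/v$, with $v=\sqrt{1+x/n}$ and $w=\frac{1+2\beta}{1-2\beta}$, so the left slope is the smaller one. The quadratic decay across $0$ therefore comes from these two slopes having the same sign and comparable size, not from concavity.
\item \emph{Constants.} Your levels $\eps^2 n/10$ must be small multiples of the curvature constant in \eqref{eq:ffypm}, as in the paper's $\frac{\delta_0}{8}\eps^2 n$. That constant is at most about $1/4$, so the slack $3\eps^2 n/10$ is not absorbed.
\item \emph{Negative indices.} No reflection is needed. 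Reflecting would in any case turn the endpoint $[x,n]_\textsf{L}$ into $[-x,n]_\textsf{L}$, which is outside Lemma~\ref{L:Xalpha-bd}. Instead:
  \begin{itemize}
  \item For the $+$ case, $Z^+(j)\ge Z^+(-j)$ and $X[j-1,1;x,n]_\textsf{L}\ge X[-j+1,1;x,n]_\textsf{L}$ already give $y^+_*>0$, as the paper notes.
  \item For the $-$ case with $y^-(\beta)\le 0$, center the parabola of Lemma~\ref{L:Xalpha-bd} at $j_0=(y^-(\beta)-1)\vee 0=0$. This works because $(y-1)^2\le (y-y^-(\beta))^2$ for all $y\ge 1$.
  \end{itemize}
\end{itemize}
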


\begin{proof}
	The estimates on $y^\pm(\beta)$ follow by Taylor expanding $f(y, \beta)$ near $y = x$, $\beta = 0$. We can also compute that for all $j \in \II{-n, n + x}$, we have that 
\begin{equation}   \label{eq:ffypm}
f^\pm(j, \beta) - f^\pm(y^\pm(\beta)) \le - \frac{\delta_0(j-y^\pm(\beta))^2}{n},    
\end{equation}
where $\delta_0$ is a universal constant. 

Below we use $c>0$ to denote a universal constant, and its value may change from line to line.
Recall $\bar Z^+$ from Lemma \ref{L:basic-stationary-est} and $\bar X$ from Lemma \ref{L:Xalpha-bd}. We also let $\bar Z^-(i)=\bar Z^+(-i)$ for each $i\in\Z$.
By the profile bounds on $\bar X$ from Lemma \ref{L:Xalpha-bd} with $j_0 = (y^\pm(\beta)-1)\vee 0$, we get
$$
\P\left(\max_{j \in\II{1,n+x}: |j - y^\pm(\beta)| \ge \eps n} \bar X(j-1) - \frac{\delta_0 (j-1-(y^\pm(\beta)-1)\vee 0))^2}{4n} \ge \Delta_\alpha(n) + \frac{\delta_0}{8} \eps^2 n\right) \le 2\exp\left(-c\eps^3 n \right),
$$
where $(8/\delta_0)^{1/2}(1/2 - \alpha)\vee 0 < \eps < (8/\delta_0)^{1/2}n^{-1/6}$. 
Note that the left-hand side above is non-increasing in $\delta$, so when $\eps \ge  (8/\delta_0)^{1/2}n^{-1/6}$ we can still upper bound it by $2\exp(-c\sqrt{n})$.
Besides, by \eqref{E:shape-bd-stat} in Lemma \ref{L:basic-stationary-est}  with $j_0 = y^\pm(\beta), \delta = \frac{\delta_0}{4n}$ and $m=\frac{\delta_0}{8} \eps^2 n$ for any $\eps>0$, we get that
$$
\P\left(\max_{j \in\II{1,n+x}: |j - y^\pm(\beta)| \ge \eps n}  \bar Z^\pm(j) - \bar Z^\pm(y^\pm(\beta)) - \frac{\delta_0(j-y^\pm(\beta))^2}{4n}  \ge \frac{\delta_0}{8} \eps^2 n \right) \le 2\exp\left(-c\eps^3 n \right).
$$
By taking $C$ to be a large enough universal constant and $\eps>C(1/2 - \alpha)\vee 0$, we have that
\[
\Delta_\alpha(n) = O((1/2 - \alpha)\vee 0)^2 n \le \frac{\delta_0}{8} \eps^2 n.
\]
Thus from the three bounds above we conclude that,
\begin{align*}
&\P\left(\max_{j \in\II{1,n+x}: |j - y^+(\beta)| \ge \eps n}  \bar Z^+(j) - \bar Z^+(y^+(\beta)) + \bar X(j-1)  - \frac{\delta_0(j-y^+(\beta))^2}{2n}  \ge \frac{3\delta_0}{8} \eps^2 n \right), \\
&\P\left(\max_{j \in\II{1,n+x}: j - y^-(\beta) \ge \eps n}  \bar Z^-(j) - \bar Z^-(y^-(\beta)) + \bar X(j-1)  - \frac{\delta_0(j-y^-(\beta))^2}{2n}  \ge \frac{3\delta_0}{8} \eps^2 n \right) \\ &\le 2\exp\left(-c\eps^3 n \right) + 2 \exp(-c\sqrt{n}).
\end{align*}
With \eqref{eq:ffypm} and the lower bound on 
$\bar X(y^\pm(\beta))$ from Corollary \ref{C:symmetric-version}, 
we finish the proof.
(Here we note that $y^+_*>0$ almost surely, since $Z^+(j)\ge Z^+(-j)$ and $X[j-1,1; x,n]_\textsf{L}\ge X[-j+1,1; x,n]_\textsf{L}$ for each $j\in\N$.)
\end{proof}

We are now ready to state and prove the two-point spatial bound. 
\begin{prop}
	\label{P:two-point-bd}
Take $n \in \N$, parameter $\alpha \ge 1/2 - n^{-1/6}$, and $x, y, z_0 \in \II{0, n/100}$, $a > 0$ satisfying
\begin{equation}
	\label{E:constraints}
C_0 \max \left(\frac{z_0^2}{n}, \frac{z_0 |y-x|}{n}, z_0(1/2 - \alpha)\vee 0 \right) < a < C_* z_0,
\end{equation}
where $C_0>0$ is a universal constant, and $C_*>0$ is any large enough number.
Then for a constant $c>0$ depending only on $C_*$, we have
$$
\P\left(\max_{z \in\II{0,z_0}} \left|\bar X[x, 1; y + z, n]_\textsf{L} - \bar X[x, 1; y, n]_\textsf{L} \right| \ge a \right) \le 2\exp(- c a^2/z_0) + 2 \exp(-c \sqrt{n}).
$$
\end{prop}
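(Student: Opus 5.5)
We compare the increments of $z \mapsto X[x,1;y+z,n]_\textsf{L}$ with those of a stationary initial condition, via Lemma \ref{L:basis-for-comparison}. We treat the upper tail of $\max_z(\bar X[x,1;y+z,n]_\textsf{L}-\bar X[x,1;y,n]_\textsf{L})$ and the lower tail separately. For the upper tail we use the stationary initial condition $f=Z^-$, with a slope parameter $\beta$ chosen so that geodesics from $[0,0]_\textsf{L}$ to $[y,n]_\textsf{L}$ pass to the right of $[x,1]_\textsf{L}$. For the lower tail we use $Z^+$, with a slope $\beta$ chosen so that geodesics to $[y+z_0,n]_\textsf{L}$ pass to the left of $[x,1]_\textsf{L}$.

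\textbf{Choice of slope.} Take $\beta = K a/z_0$ for a suitable large universal constant $K$. By \eqref{E:constraints}, $\beta \ge K C_0(1/2-\alpha)\vee 0$, so the constraint $\beta>2(1/2-\alpha)\vee0$ of Lemma \ref{L:location-bd} holds. We also need $\beta<1/100$. If instead $a\gtrsim z_0$, the bound follows trivially from one-point bounds, or by capping $\beta$, which is where $C_*$ enters.

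\textbf{Geodesic location.} By Lemma \ref{L:location-bd} applied with endpoint $y$ (and endpoint $y+z_0$ for the other side), the argmax satisfies $y^\mp_*\approx y\mp 8\beta n$ up to an error $100\beta^2n+\eps n$. In the geodesic picture this means the geodesic from the stationary line exits near horizontal coordinate $y\mp8\beta n$. Since $|y-x|\le a n/(C_0z_0)$, we have $8\beta n = 8Kan/z_0 \gg |y-x| + 100\beta^2 n$, once $K$ is large relative to $1/C_0$ and $a\le C_*z_0$ makes $\beta$ small. Taking $\eps = \beta$ in Lemma \ref{L:location-bd}, the geodesic crosses row $1$ on the required side of $x$, except on an event of probability $2\exp(-c\beta^3n)+2\exp(-c\sqrt n)$. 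Since $a\ge C_0z_0^2/n$, we get $\beta^3 n \gtrsim a^3n/z_0^3 \ge C_0 a^2/z_0$. Hence this failure probability is at most $2\exp(-ca^2/z_0)+2e^{-c\sqrt n}$.

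\textbf{Comparison with the stationary increments.} On the good event, Lemma \ref{L:basis-for-comparison} gives, for all $z\in\II{0,z_0}$ simultaneously (the geodesic condition is needed only at the relevant endpoint, and by planarity/monotonicity it propagates to all $z$),
\[
X[x,1;y+z,n]_\textsf{L}-X[x,1;y,n]_\textsf{L}\le Z'(y+z)-Z'(y),
\]
where $Z'$ is the stationary profile at time $n$. By stationarity, \eqref{E:stationary-lpp}, $Z'$ has the law of $Z^-$ (respectively $Z^+$) up to a shift. The lower-tail direction works the same way with the reversed inequality.

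Now center both sides. The centering of $X$ is $\mu[x,1;y+z,n]_\textsf{L}-\mu[x,1;y,n]_\textsf{L}\approx 2z+O(z|y-x|/n+z^2/n)$. The centering of the stationary profile is $z/(1/2\pm\beta)= 2z\mp4\beta z+O(\beta^2z)$. The mismatch is therefore at most $4\beta z_0 + O(z_0|y-x|/n + z_0^2/n)$. This is $\le a/2$ provided $4K$ is chosen small compared to the gap ensured by $C_0$.

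This is the main tension. We need $\beta$ large for the geodesic to clear $x$, and $\beta z_0\ll a$ so the drift mismatch stays small. Both hold because $\beta n \gg |y-x|$ and $\beta z_0 \ll a$ are jointly implied by $a\gg z_0|y-x|/n$, i.e.\ by the constant $C_0$ in \eqref{E:constraints}. So fix $K$ first, then $C_0$ large.

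\textbf{Conclusion.} The remaining fluctuation $\max_z|\bar Z^\pm(y+z)-\bar Z^\pm(y)|\ge a/2$ is controlled by \eqref{E:two-point-stat}, giving the bound $2\exp(-ca^2/(a+z_0))\le 2\exp(-c a^2/z_0)$ since $a\le C_*z_0$. Combining the two tails and the location failure events gives the claim.
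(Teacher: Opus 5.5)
\paragraph{Main issue: the two stationary measures are attached to the wrong tails.} Your strategy is the paper's: compare with a stationary initial condition via Lemma~\ref{L:basis-for-comparison}, locate the exit point with Lemma~\ref{L:location-bd} for $\beta\asymp a/z_0$, pay a drift mismatch of order $\beta z_0$, and finish with \eqref{E:two-point-stat}. However, because of the swap, the geometric step fails as written.

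\begin{itemize}
\item On the positive side, $Z^-$ has increments of mean $1/(1/2+\beta)<2$, and Lemma~\ref{L:location-bd} puts its argmax near $y-8\beta n$.
\item $Z^+$ has mean $1/(1/2-\beta)>2$ and its argmax is near $y+8\beta n$.
\item You deliberately arrange $8\beta n\gg|y-x|$. So the $Z^-$-geodesic to $[y,n]_{\textsf{L}}$ crosses row $1$ strictly to the left of $x$. The first inequality of Lemma~\ref{L:basis-for-comparison}, which needs an exit point $r\ge x$, is therefore unavailable for your upper tail.
\item Symmetrically, the $Z^+$-geodesic to $[y+z_0,n]_{\textsf{L}}$ exits to the right of $x$, so the second inequality is unavailable for your lower tail.
\end{itemize}

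Your claim that the exit point $y^\mp_*\approx y\mp 8\beta n$ lies ``on the required side of $x$'' is false for both signs. The repair is to swap: use $Z^+$ with endpoint $y$ for the upper tail (exactly the paper's choice), and $Z^-$ with endpoint $y+z_0$ for the lower tail.

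This swap is also what makes your ``main tension'' genuine. For the upper tail the comparison profile has drift $1/(1/2-\beta)\approx 2+4\beta$. This exceeds the characteristic drift $2+(x-y)/(2n)$ by $4(\beta-\beta_0)$, where $\beta_0=(x-y)/(8n)$, and that excess times $z_0$ must stay below $a/2$. Under your literal assignment the mismatch would have the favorable sign. That should have signalled that the orientation was reversed.

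\paragraph{Two smaller points.}
\begin{itemize}
\item The constant $K$ cannot be ``large''. You need $4Ka\le a/2$, so $K$ is a fixed small constant (the paper's $c_0$), and it is $KC_0$ that must be large. Your closing remark ``fix $K$ first, then $C_0$ large'' suggests you meant this. Read that way, dropping the paper's explicit shift of $\beta$ by $\beta_0\vee 2(1/2-\alpha)\vee 0$ is harmless, since \eqref{E:constraints} makes both quantities $O(a/(C_0z_0))$.
\item For $a$ of order $z_0$, the bound does \emph{not} follow from one-point estimates. Take $z_0=n^{1/3}$, $a=z_0/2$, $x=y$. Applying the one-point tails to $\bar X[x,1;y+z,n]_{\textsf{L}}$ and $\bar X[x,1;y,n]_{\textsf{L}}$ separately gives only an $O(1)$ bound, because $a$ sits at the one-point fluctuation scale $n^{1/3}$. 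What is required is $e^{-cn^{1/3}}$.
\end{itemize}
Your other option, capping $\beta$ at a small absolute constant, is the right one; the paper does this with $(c_0a/z_0)\wedge 1/200$. With the cap the exit point still clears $x$, because $|x-y|\le n/100<8n/200$.
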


Before proceeding with the proof, it is worth noting that for the purpose of tightness, the constraints \eqref{E:constraints} will not give any restrictions in the limit. Indeed, asymptotically we will take $z_0 = w n^{2/3}, |y-x| = O(n^{2/3}), (1/2 - \alpha)\vee 0 = O(n^{-1/3}), a = b n^{1/3}$ and so in these ranges our constraints become
$$
C_0 (w^2\vee w) < b < C_* w n^{1/3},
$$
and the tail bound becomes sub-Gaussian: $2\exp(-c b^2/w)$. For $w$ large and $b \le C_0 (w^2\vee w)$ the above bound no longer applies, but in this case the Tracy-Widom one-point bounds can actually provide a sharper estimate (up to the constant in the exponent).

\begin{proof}[Proof of Proposition \ref{P:two-point-bd}]
	We prove the upper tail, i.e., the bound without the absolute value, as the lower tail follows from symmetric arguments. 
    
    Set $\beta_0 = (x-y)/(8n)$, and take parameter $\beta=\beta_0\vee 2(1/2 - \alpha)\vee 0+(c_0a/z_0) \wedge 1/200$, where $c_0$ is a small enough universal constant.
    We then have $|\beta_0|\le 1/800$, and $\beta \in (2(1/2 - \alpha)\vee 0, 1/100)$.
    Below we use $c>0$ to denote a constant small enough depending on $C_*$ and all other universal constants (including $C_0, c_0$), and its value may change from line to line.

    Recall $y^+(\beta)$ from Lemma \ref{L:location-bd}. 
    In particular, if we denote $\eps=(y^+(\beta)-(x+1))/n$, we have
    \[
    \eps \ge y/n+8\beta - 100\beta^2 -x/n -2/n.
    \]
    From our choice of $\beta$, we have that $\beta-\beta_0<1/100$, so
    \[
    \eps+2/n\ge y/n+8\beta - 100\beta^2 -x/n\ge 8(\beta-\beta_0) - 200 (\beta-\beta_0)^2 - 200\beta_0^2> 6(\beta-\beta_0)- 200\beta_0^2.
    \]
    By taking $C_0$ large depending on $c_0$, we have $200\beta_0^2<200a/(64C_0z_0) < c_0a/z_0$. Also note that $200\beta_0^2<1/200$.    
    Thus $200\beta_0^2<\beta-\beta_0$. By plugging this into the right-hand side above, we get $\eps+2/n > 5(\beta-\beta_0)\ge 5(c_0a/z_0\wedge 1/200)$. Since  $C_0((1/2-\alpha)\vee 1/n) <a/z_0 < C_*$ due to the constraints of $a$, we further have $\eps+2/n > ca/z_0$ and $\eps+2/n > 5c_0C_0((1/2-\alpha)\vee 1/n)$.
    As $C_0$ is large enough depending on $c_0$,  we can apply to Lemma \ref{L:location-bd} get that
\[
    \P\bigg(\argmax_{j \in \II{-n, n+y}} (Z^+(j) + X[j-\operatorname{sgn}(j), 1; y, n]_\textsf{L}) \ge x+1 \bigg)  \ge  1- 2\exp(-c(a/z_0)^3 n ) - 2\exp(-c\sqrt{n}).\]

On the event in the above probability, Lemma \ref{L:basis-for-comparison} implies that for all $z \in \II{0, z_0}$, we have
$$
X[x, 1; y + z, n]_{\textsf{L}} - X[x, 1; y, n]_{\textsf{L}} \le X^{Z^+}[0,0; y + z, n]_{\textsf{L}} - X^{Z^+}[0,0; y, n]_{\textsf{L}} \eqd Z^+(y+z) - Z^+(y).
$$
Here the final equality is as a process in $z$, from Lemma \ref{lem:staLPP}.
Then 
\begin{multline}   \label{E:big-blomp}
\P\left(\max_{z \in\II{0,z_0}} \bar X[x, 1; y + z, n]_\textsf{L} - \bar X[x, 1; y, n]_\textsf{L} \ge a \right)\le 
2\exp(- c ( a/z_0)^3n) + 2 \exp(- c \sqrt{n}) \\ 
+ \P\left( \max_{z \in\II{0, z_0}}\bar Z^+(y+z) - \bar Z^+(y) + \frac{z}{1/2-\beta}-\mu[x, 1; y + z, n]_\textsf{L} + \mu[x, 1; y, n]_\textsf{L}\ge a \right).
\end{multline}
For $z \in\II{0, z_0}$, we have
\begin{align*}
    \mu[x, 1; y + z, n]_\textsf{L} - \mu[x, 1; y, n]_\textsf{L} &= 2z + \frac{(x-y)z}{2n} - \frac{z^2}{4 n} + O(z^3 n^{-2} + |x-y|^2 z n^{-2}), \\
\frac{z}{1/2 - \beta} &= 2z + \frac{(x-y)z}{2n} + 4 (\beta-\beta_0)z  + O((\beta-\beta_0)^2 z + |x-y|^2 z n^{-2}).
\end{align*}
Note that $\beta-\beta_0 \le 2(1/2-\alpha)\vee 0 + |x-y|/(8n) + c_0a/z_0$.
Thus by taking $C_0$ large enough and $c_0$ small enough, we have that
\[
a/2 >  \frac{z}{1/2-\beta}-\mu[x, 1; y + z, n]_\textsf{L} + \mu[x, 1; y, n]_\textsf{L},
\]
Therefore we can apply \eqref{E:two-point-stat} in Lemma \ref{L:basic-stationary-est} (and use that $a<C_*z_0$) to bound the last line of \eqref{E:big-blomp} by $2\exp(-ca^2/z_0)$.
Finally, note that $\exp(-c(a/z_0)^3 n ) < \exp(-ca^2/z_0)$ since $a > C_0z_0^2/n$, we get the final bound.
\end{proof}

Next, we can combine the two-point spatial tail bound in Proposition \ref{P:two-point-bd}, the profile bound in Lemma \ref{L:Xalpha-bd}, and the one-point tail in Theorem \ref{T:one-point-bds} to give a sharp temporal two-point estimate.

\begin{prop}
\label{P:temporal-est}
Take $n < n+r \le 2n \in \N$, and parameter $\alpha \ge 1/2 - n^{-1/6}$. For any $x, y_2\in \II{0, n/1000}$, denote $y_1 = \lfloor x + n(y_2-x)/(n + r) \rfloor$, and 
$u_1 = [x, 0; y_1, n]_\textsf{L}$, $u_2 = [x, 0; y_2, n + r]_\textsf{L}$.
Take any $\epsilon > 0$ satisfying the constraints:
\begin{equation}
	\label{E:constraints-phrased}
	C_0 \max \left(\frac{|x-y_2|^2}{n^2}, ((1/2 - \alpha)\vee 0)^2\right) < \epsilon < C_* r^{-1/3},
\end{equation}
where $C_0>0$ is a universal constant, and $C_*>0$ is any large enough number. 
Then for a constant $c>0$ depending only on $C_*$, we have
$$
\P\left( \left|\bar X(u_1) - \bar X(u_2)\right| \ge \eps r \right) \le 2\exp\left(-c \eps^{3/2} r \right).
$$
\end{prop}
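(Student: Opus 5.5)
The plan is to follow the standard approach for temporal two-point bounds (as in \cite[Lemma 10.4]{DOV}): split $\bar X(u_2)-\bar X(u_1)$ into a lower-tail part and an upper-tail part, and handle both with the metric composition law across the line $j\mapsto[j,n]_\textsf{L}$. Throughout, write $p_1=[y_1,n]_\textsf{L}$ and $p_2=[y_2,n+r]_\textsf{L}$. The centering will be handled by a Taylor expansion of $\mu$. First, $\mu[u_2]-\mu[u_1]$ agrees with $\mu(r)=4r$ (the limit shape for an increment of duration $r$ from $p_1$ to $p_2$) up to an error $O(r|x-y_2|^2/n^2)$. This is $\le \eps r/10$ by the lower constraint in \eqref{E:constraints-phrased}, and the choice of $y_1$ along the straight line is exactly what kills the first-order term. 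Similarly, $\Delta_\alpha(r)=O(r((1/2-\alpha)\vee 0)^2)\le \eps r/10$ by the same constraint. We may also assume $\eps r^{2/3}$ is large, since otherwise the bound is trivial.

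\textbf{Lower tail} ($\bar X(u_2)\le\bar X(u_1)-\eps r$). By the triangle inequality,
\[
X(u_2)\ge X(u_1)+X(p_1+(0,1);p_2)
\]
(shifting to an adjacent point as needed, whose cost is a single $\Exp$ weight and negligible). The second term is a passage time of duration $r$ with slope close to $1$. Its lower tail below $\mu(r)-\eps r/2$ is bounded by $2\exp(-c\eps^3 r^2)$ using Theorem \ref{T:half-space-full-tails}, or Corollary \ref{C:symmetric-version} if the segment crosses the diagonal. Since $\eps< C_* r^{-1/3}$, we have $\eps^3r^2\ge c\,\eps^{3/2}r$, so this is dominated by the target bound. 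This direction is routine.

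\textbf{Upper tail} ($\bar X(u_2)\ge\bar X(u_1)+\eps r$). Write
\[
X(u_2)=\max_{z}\Bigl(X[x,0;z,n]_\textsf{L}+X[z',n+1;y_2,n+r]_\textsf{L}\Bigr),
\]
where $z'$ is the appropriate neighbour of $z$ on the next L-line; this is an independent product decomposition. Set $z_0$ to be of order $\eps^{1/2}r$, i.e.\ $\theta r^{2/3}$ with $\theta\asymp \eps^{1/2}r^{1/3}$ (note $\theta\lesssim 1$ up to $C_*$). We then control the two terms over $|z-y_1|\le \theta' r^{2/3}$ windows dyadically.

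For the second term we use the profile bound Lemma \ref{L:Xalpha-bd} (after reflection/translation so that the start line plays the role of $\II{0,n+x-1}$). It gives, with probability $1-2\exp(-c\eps^{3/2}r)$,
\[
\bar X[z',n+1;y_2,n+r]_\textsf{L}\le \Delta_\alpha(r)+\eps r/4+\delta_0|z-y_1|^2/r
\]
for all $z$. The $\eps$-dependence term $\eps^{1/2}/((1/2-\alpha)\vee0)\ge c$ holds by the constraint. Moreover $\mu$ decays by at least $2\delta_0|z-y_1|^2/r$, so there is a net parabolic penalty $-\delta_0|z-y_1|^2/r$.

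For the first term we need
\[
\bar X[x,0;z,n]_\textsf{L}-\bar X[x,0;y_1,n]_\textsf{L}\le \eps r/4+\tfrac{\delta_0}{2}|z-y_1|^2/r \quad\text{for all } z.
\]
We obtain this by applying Proposition \ref{P:two-point-bd} on dyadic windows $|z-y_1|\le 2^k z_0$ with threshold $a_k=\eps r/4+\delta_0 4^{k-1}z_0^2/r$. The resulting bound is $\exp(-ca_k^2/(2^kz_0))$, which sums to $\exp(-c\eps^{3/2}r)$ once $z_0\asymp\eps^{1/2}r$.

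The \textbf{main obstacle} is checking that the admissibility constraints \eqref{E:constraints} hold for each dyadic window: $a_k> C_0\max(z^2/n,\,z|y_1-x|/n,\,z(1/2-\alpha)\vee 0)$ with $z=2^kz_0$, and $a_k<C_*2^kz_0$. The first two are comparable to $r\cdot(\text{slope})^2$-type quantities and follow from $r\le n$ and the lower constraint on $\eps$. The third uses $\eps\ge C_0((1/2-\alpha)\vee0)^2$, and the upper bound uses $\eps<C_*r^{-1/3}$. For very large $k$, where $2^kz_0\gtrsim n/100$ and Proposition \ref{P:two-point-bd} no longer applies, I would instead use the one-point bounds (Theorem \ref{T:half-space-full-tails} upper tail together with the lower tail at $y_1$), which already give $\exp(-c\eps^{3/2}r)$ there. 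There is also the additional $2\exp(-c\sqrt n)$ error from Proposition \ref{P:two-point-bd}. To handle it, I would note that $\eps^{3/2}r\le C_*^{3/2}r^{1/2}\le C\sqrt n$, so this error is absorbed by adjusting $c$. Combining the three high-probability events with the centering estimates then gives $\bar X(u_2)-\bar X(u_1)\le \eps r$ off an event of probability $2\exp(-c\eps^{3/2}r)$.
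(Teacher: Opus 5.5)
Your outline is the paper's proof. The lower tail uses $X(u_2)\ge X(u_1)+X(v)$, with $v$ starting on the next L-line, together with the one-point lower tail. The upper tail uses metric composition across the level-$n$ L-line, with Lemma~\ref{L:Xalpha-bd} for the second factor and Proposition~\ref{P:two-point-bd} on dyadic windows of size $2^k\cdot\Theta(\sqrt{\eps}\,r)$ for the first. The gap is in your check of the hypotheses \eqref{E:constraints} of Proposition~\ref{P:two-point-bd}. Two of your claims there are false, and each sits exactly where the paper does something you omitted.

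\textbf{First claim.} The constraint $a_k>C_0(2^kz_0)^2/n$ (with $C_0$ the large constant of that proposition) does not follow from $r\le n$. For large $k$ your threshold is essentially the parabolic term $\delta_0 4^{k-1}z_0^2/r$. So you need $\delta_0/(4r)>C_0/n$, i.e.\ $r<\delta_0 n/(4C_0)$. Since $\delta_0$ is a small curvature constant and the available penalty cannot be enlarged, this genuinely requires $r$ to be a small multiple of $n$. The paper therefore begins by reducing to $r\le\eps_0 n$ with $\eps_0$ small. For general $r\le n$, it applies the small-$r$ case $O(\eps_0^{-1})$ times between consecutive points on the same straight line and uses the triangle inequality. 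For $r\ge\eps_0 n$ you could alternatively just use the one-point bounds at $u_1$ and $u_2$.

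\textbf{Second claim.} The constraint $a_k<C_*2^kz_0$ is not a consequence of $\eps<C_*r^{-1/3}$. With $z_0\asymp\sqrt{\eps}\,r$ one has $a_k/(2^kz_0)\asymp\sqrt{\eps}\,(2^{-k}+\delta_0 2^{k})$, which is unbounded in $k$. The constraint therefore breaks once $2^kz_0\gtrsim C_*r/\delta_0$. When $r\ll n$ this is far below the scale $n/100$ at which you switch to one-point bounds. The paper caps the threshold, taking $a=(C_1^{-1}2^{k-2}\sqrt{\eps}\wedge C_1)z_0$. Lowering $a$ only enlarges the event, and the resulting bounds $\exp(-c(2^{2k}\eps\wedge 1)2^k\sqrt{\eps}\,r)$ still sum to $\exp(-c\eps^{3/2}r)$.

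\textbf{Smaller slips.} In the lower tail, $\eps^3r^2\ge\eps^{3/2}r$ comes from $\eps^{3/2}r\ge 1$ (otherwise the bound is trivial), not from the upper bound on $\eps$. The centering is exact up to $O(1)$: $\mu(u_2)-\mu(u_1)=\mu(p_2-p_1)+O(1)$ by homogeneity of $\mu$ along the line. Comparing with $4r$ instead is off by about $2(y_2-y_1)$, which can exceed $\eps r$.

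\textbf{A repair needed in both arguments.} The quadratic deficit $2\delta_0|z-y_1|^2/r$ of the combined shape holds only for $|z-y_1|=O(r)$. When $y_1\gg r$, the allowed points with $y_1-z\gg r$ have only a linear deficit $\approx y_1-z$, because the second segment becomes very flat. So the dyadic scales above $r$ need thresholds of order $\delta_0 2^kz_0$, and the flat second-segment passage times need the one-point upper tail rather than Lemma~\ref{L:Xalpha-bd}. The paper's Taylor-expansion step asserts the same quadratic bound over the whole range, so this is a repair common to both arguments rather than a difference of approach.
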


\begin{proof}
First, it suffices to prove the bound when $r \le \eps_0 n$, where $\eps_0$ is a small universal constant to be determined. The tail bound for general $r \le n$ then follows by applying the $r\le \eps_0 n$ case many times and a triangle inequality. We may also assume $r$ is greater than a large constant, since otherwise the bound holds trivially.
Below we use $c>0$ to denote a constant small enough depending on $C_*$ and all other universal constants (including $C_0, \eps_0$), and its value may change from line to line.
	
Denote $v = [y_1, n+1; y_2, n+r]_\textsf{L}$. Then we have the triangle inequality
\begin{align*}
\bar X_\alpha(u_1) - \bar X_\alpha(u_2) \le  -\bar X_\alpha(v) - \mu(u_1) - \mu(v) + \mu(u_2) = -\bar X_\alpha(v) + O(1).
\end{align*}
Here the equality is a straightforward Taylor expansion computation. We can bound the lower tail of $\bar X_\alpha(v)$ using Theorem \ref{T:half-space-full-tails}, to get that for any $\eps \in (0, 1)$,
\[
	\P(\bar X_\alpha(u_1) - \bar X_\alpha(u_2) \ge \eps r) \le 2 \exp(-c\eps^{3}r^2),
\]
For the lower bound of $\bar X_\alpha(u_1) - \bar X_\alpha(u_2)$, we use the metric composition law. 
For each $z\in\II{1-y_1, r+y_2-y_1}$, define 
\begin{align*}
f_1(z) &= \mu[x, 0; z + y_1, n]_\textsf{L} - \mu(u_1), \\
f_2(z) &= \mu[z + y_1-1, n+1; y_2, n + r]_\textsf{L} - \mu(v) \\
A(z) &= \bar X[x, 0; z + y_1, n]_\textsf{L} - \bar X(u_1), \\
B(z) &= \bar X[z + y_1-1, n+1; y_2, n + r]_\textsf{L}.
\end{align*}
We can write
\begin{align*}
\bar X(u_2) &= - \mu(u_2) + \max_{z \in\II{1-y_1, r+y_2-y_1}} X[x, 0; z + y_1, n]_\textsf{L} + X[z + y_1 - 1, n+1; y_2, n + r]_\textsf{L} \\
&=\bar X(u_1) + O(1) + \max_{z \in\II{1-y_1, r+y_2-y_1}} [A(z) + B(z) + f_1(z) + f_2(z)] \\
&\le \bar X(u_1) + O(1) + \max_{z \in\II{1-y_1, r+y_2-y_1}}[A(z) - \delta_0 z^2/r] + \max_{z \in\II{1-y_1, r+y_2-y_1}} [B(z) - \delta_0 z^2/r]. 
\end{align*}
where we have extracted the $O(1)$-term and established the final inequality by straightforward Taylor expansion. Here $\delta_0$ is a universal constant. Now,  
by Lemma \ref{L:Xalpha-bd}, we have that for all $\eps \in (8((1/2 - \alpha)\vee 0)^2, C_*r^{-1/3})$: 
\begin{align*}
\P\left(\max_{z \in\II{1-y_1, r+y_2-y_1}} [B(z) - \delta_0 z^2/r] \ge \eps r\right) \le 2\exp(-c \eps^{3/2} r ).
\end{align*}
Here the lower bound on $\eps$ allows us to absorb the $\Delta_\alpha(r)$ term.
The above bound matches the upper bound in the lemma, so to complete the proof it suffices to bound the random variable $\max_{z \in\II{1-y_1, r+y_2-y_1}} [A(z) -\delta_0 z^2/r]$. First, we can write
\begin{multline*}
\P\left(\max_{z \in\II{1-y_1, r+y_2-y_1}}  [A(z) - \delta_0 z^2/r] \ge \eps r\right) \\ \le 
\P\left(\max_{z \in\II{1-y_1, r+y_2-y_1}, |z|\le n/200}  [A(z) - \delta_0 z^2/r] \ge \eps r\right)
+
\P\left(\max_{z \in\II{1-y_1, r+y_2-y_1}, |z|> n/200}  [A(z) - \delta_0 z^2/r] \ge \eps r\right).
\end{multline*}
The second term on the right-hand side above is at most $2\exp(-c \sqrt{n})$ by Lemma \ref{L:Xalpha-bd}, for any choice of $\eps$. For the first term on the right-hand side, assuming that $\eps$ satisfies the constraints of the lemma, and taking $w_0 = C_1\sqrt{\eps}r$ for a universal constant $C_1$ (to be determined and large enough depending on $\delta_0$), we can bound it by
\begin{align*}
& \sum_{k=1}^{\lceil\log_2(n/(200 w_0)) \rceil} \P\left (\max_{z \in\II{1-y_1, r+y_2-y_1}, |z| \le 2^k w_0 } A(z) \ge \eps r + \delta_0 (2^{k-1} -1)^2 w_0^2/r \right) \\
\le &\sum_{k=1}^{\lceil\log_2(n/(200 w_0)) \rceil} \P\left (\max_{z \in\II{1-y_1, r+y_2-y_1}, |z| \le 2^k w_0 } A(z) \ge C_1^{-1} 2^{2k-2}\sqrt{\eps} w_0 \right) \\
\le &\sum_{k=1}^{\lceil\log_2(n/(200 w_0)) \rceil} [2 \exp(-c (2^{2k} \eps \wedge 1) 2^k w_0) + 2 \exp(-c \sqrt{n})] \\
\le & 2 \exp(-c \eps^{3/2}r) + 2 \exp(-c \sqrt{n}).
\end{align*}
Here in the third line we have applied Proposition \ref{P:two-point-bd} with $z_0 =2^k w_0$ and $a = ( C_1^{-1} 2^{k-2} \sqrt{\eps}\wedge C_1) z_0$. We check that our parameters satisfy the conditions required for Proposition \ref{P:two-point-bd}. First, since $a \le C_1z_0$ the upper bound in \eqref{E:constraints} always holds. Next we check the lower bounds in \eqref{E:constraints}, which can be equivalently written as:
$$
C_2 \max \left(\frac{z_0}{n}, \frac{|y_1-x|}{n}, (1/2 - \alpha)\vee 0 \right) < C_1^{-1} 2^{k-2} \sqrt{\eps}\wedge C_1,
$$
for a universal constant $C_2 > 0$. 
Let $C_1>C_2$, and $C_0$ be large enough and $\eps_0$ be small enough, both depending on $C_1, C_2$.
Then we have $C_2z_0/n < C_2 < C_1$ and $C_2z_0/n = C_1C_22^{k}\sqrt{\eps}r/n<C_1C_22^{k}\eps_0\sqrt{\eps}< C_1^{-1} 2^{k-2} \sqrt{\eps}$.
Moreover, we have $C_2 \max \left(\frac{|y_1-x|}{n}, (1/2 - \alpha)\vee 0 \right)<C_2<C_1$,
and by \eqref{E:constraints-phrased} we have $C_2 \max \left(\frac{|y_1-x|}{n}, (1/2 - \alpha)\vee 0 \right)<C_2C_0^{-1/2}\sqrt{\eps}< C_1^{-1} 2^{k-2} \sqrt{\eps}$.
Thus the lower bounds in \eqref{E:constraints} hold.
\end{proof}

\section{Subsequential limits of exponential LPP}   
\label{sec:fixed}

In this section, we use the tail bounds in Section \ref{sec:tightness} to show that exponential LPP in half-space is tight when the boundary parameter has critical or subcritical scaling. We also prove properties of the subsequential limits that we need moving forward. The proofs in this section are either straightforward bookkeeping, or are simple generalizations of proofs relating to the directed landscape in full-space, mostly following \cite{DOV}. 

In Section \ref{SS:subsequential}, we prove tightness of exponential LPP and give a list of properties that fall out immediately from the prelimit. In Section \ref{ssec:kpzfp}, we prove that our subsequential limits have marginals which match the half-space KPZ fixed point marginals constructed in \cite{Zhang}. 
We also prove bounds on TASEP that imply tightness of the Poisson-avoiding metrics. Finally, in Section \ref{SS:prelandscapes}, we introduce the more general notion of (half-space) pre-landscapes.
We provide a list of properties of pre-landscapes, including modulus of continuity and shape theorems, which are required for our upcoming characterization of the directed landscape in half-space. 
As a byproduct of the results in this section, we will show that the half-space KPZ fixed point is a Feller process (Theorem \ref{T:Markov-process}).

\subsection{Tightness and basic properties}

\label{SS:subsequential}

We use the notations from Section \ref{SS:off-diagonal-tails}: for each $\alpha\in \R_+\cup\{\infty\}$, take the environment $X_\alpha:\Z^2_\ge \to \R_{\ge 0}$ consisting of independent exponential random variables 
$X_\alpha(i, j) \sim \operatorname{Exp}(1)$ for $i > j \in \Z$, and $X_\alpha(i, i) \sim \operatorname{Exp}(\alpha)$ for $i \in \Z$.
Also define the passage time $X_\alpha(u;v)$ for all $u\le v \in \Z^2_{\ge}$ as before.

For $x \ge 0$ and $s \in \R$, denote $(x, s)_n = (\lfloor ns + 2^{5/3} n^{2/3} x \rfloor, \lfloor ns \rfloor)$. 
Then for $\rho \in \R \cup \{-\infty\}$ and $n\in \N$ with $1/2 - 2^{-4/3} \rho n^{-1/3}>0$, we set
\begin{equation}
	\label{E:rescaled-elpp}
	\cQ_n^\rho(x, s; y, t) = \frac{X_{1/2 - 2^{-4/3} \rho n^{-1/3}}((x, s)_n; (y, t)_n) - 4n(t-s) - 2^{8/3}n^{2/3}(y-x)}{2^{4/3} n^{1/3}}.
\end{equation}
We can define $\cQ_n^\rho$ on all of $\mathbb H^2_\uparrow$ (recall from \eqref{eq:defnH2}) by simply setting it to be $-\infty$ outside of the set where the passage time in \eqref{E:rescaled-elpp} is defined. 
\begin{theorem}
	\label{T:elpp}
    Consider a sequence $\rho_n \in \R \cup \{-\infty\}$ such that $\lim_{n \to \infty} \rho_n = \rho \in\R \cup \{-\infty\}$. Then the sequence $\cQ_n^{\rho_n}$ is tight under the uniform-on-compact topology on functions from $\mathbb H^2_\uparrow \to \R$. 
\end{theorem}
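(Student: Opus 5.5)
We will follow the standard Kolmogorov--Chentsov route used for full-space LPP in \cite{DOV, dauvergne2021scaling}. We restrict to a compact set $K\subset\mathbb H^2_\uparrow$ of the form $\{(x,s;y,t): x,y\in[0,b],\ s,t\in[-b,b],\ t-s\ge b^{-1}\}$, and show two things: one-point tightness of $\cQ_n^{\rho_n}(u)$ for fixed $u\in K$, and a uniform-in-$n$ modulus of continuity bound on $K$. Since $\rho_n\to\rho$, the boundary parameter $\alpha_n=1/2-2^{-4/3}\rho_n n^{-1/3}$ satisfies $\alpha_n\ge 1/2-Cn^{-1/3}$ for large $n$ (when $\rho=-\infty$ we have $\alpha_n\ge 1/2$ eventually). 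Hence $(1/2-\alpha_n)\vee 0=O(n^{-1/3})$, which is precisely the regime where the constraints \eqref{E:constraints} and \eqref{E:constraints-phrased} become vacuous at the $n^{2/3}$ spatial and $n^{1/3}$ fluctuation scales. We also use translation invariance along the diagonal, so every passage time can be written as $X_\alpha(v+(1,1);v+(n',m'))$ with $n'/m'$ bounded away from $0$ and $\infty$ on $K$.

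\textbf{Step 1 (one-point bounds).} For $u\in K$, apply Theorem \ref{T:half-space-full-tails} with $\eps=a n^{-2/3}$.
\begin{itemize}
\item For the upper tail, $\nu_\alpha(n',m')-\mu(n',m')=\Delta_\alpha(m')=O(((1/2-\alpha)\vee 0)^2 n)=O(n^{1/3})$. Combined with the Taylor expansion of $\mu(n',m')$ against the centering $4n(t-s)+2^{8/3}n^{2/3}(y-x)$, which produces $-(x-y)^2/(t-s)$ plus $o(1)$ after rescaling, this gives $\P(\cQ_n^{\rho_n}(u)\ge a)\le 2e^{-ca^{3/2}}$ uniformly in $n$.
\item The lower tail \eqref{E:half-space-lower-bd} gives $\P(\cQ_n^{\rho_n}(u)\le -a)\le 2e^{-ca^3}$.
\end{itemize}
Note that the requirement $\eps\le m^{-1/3}$ only restricts $a\le n^{1/3}$, which is harmless.

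\textbf{Step 2 (two-point bounds).} For points differing in one endpoint's spatial coordinate by $w$ (in rescaled units), Proposition \ref{P:two-point-bd} with $z_0\asymp wn^{2/3}$ and $a=bn^{1/3}$ gives $\P(|\Delta\cQ|\ge b)\le 2e^{-cb^2/w}+2e^{-c\sqrt n}$ for $C_0(w\vee w^2)<b<C_*wn^{1/3}$. We handle the remaining ranges as follows:
\begin{itemize}
\item for $b\le C_0(w\vee w^2)$ with $w$ small this range is empty of relevance, since $w\vee w^2=w$ and $b\ge C_0 w$ is exactly the range needed for a Hölder bound;
\item for $b\ge C_*wn^{1/3}$, the one-point bounds of Step 1 apply.
\end{itemize}
Proposition \ref{P:two-point-bd} is stated for the $\textsf{L}$-shaped parametrization with starting point on row $1$. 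Varying the starting point instead of the endpoint follows by the symmetry of reversing the environment (a rotation by $\pi$ preserves the half-space exponential field). For the temporal increment, Proposition \ref{P:temporal-est} with $r\asymp hn$ and $\eps r=bn^{1/3}$ gives a bound $2e^{-cb^{3/2}/h^{1/2}}$. Its constraint \eqref{E:constraints-phrased} reads $\eps\gtrsim n^{-2/3}$, i.e. $b\gtrsim h$, which again is the relevant Hölder range. Varying the start time is symmetric. Combining these, we obtain for $u,u'\in K$ with $|x-x'|+|y-y'|\le w$ and $|s-s'|+|t-t'|\le h$ the bound
\[
\P\big(|\cQ_n^{\rho_n}(u)-\cQ_n^{\rho_n}(u')|\ge b(w^{1/2}+h^{1/3})\big)\le Ce^{-cb^{3/2}}+Ce^{-c\sqrt n}.
\]

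\textbf{Step 3 (chaining).} Feed the two-point bound into a Kolmogorov-type chaining lemma, such as \cite[Lemma 3.3]{DOV}, on a dyadic grid of mesh $\gtrsim n^{-2/3}$ in space and $n^{-1}$ in time. The error term $e^{-c\sqrt n}$ is summable over polynomially many grid points. Below the lattice mesh, $\cQ_n$ is piecewise constant up to single-weight jumps of size $O(n^{-1/3}\log n)$ with high probability, by an exponential maximum bound. This yields equicontinuity in probability, and together with Step 1 it gives tightness via Arzelà--Ascoli. For points near the boundary $x=0$, the estimates above already cover $x,y\ge 0$.

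The main obstacle is bookkeeping in Step 2. We must check that every pair of nearby points in rescaled coordinates maps to a configuration covered by Proposition \ref{P:two-point-bd} or Proposition \ref{P:temporal-est} after translation and reflection. In particular, the hypotheses $x,y,z_0\le n/100$ and $x,y_2\le n/1000$ must be arranged by shifting along the diagonal, and the floor roundings and $O(1)$ centering errors from the Taylor expansion of $\mu$ must be absorbed uniformly in $n$.
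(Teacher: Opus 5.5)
Your architecture (rescaled Propositions \ref{P:two-point-bd} and \ref{P:temporal-est}, one-point control, chaining, Arzel\`a--Ascoli) is the paper's. However, Step 2 has a genuine gap: large normalized deviations at small separations.

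Both propositions carry \emph{upper} constraints: $a<C_*z_0$ in \eqref{E:constraints} and $\eps<C_*r^{-1/3}$ in \eqref{E:constraints-phrased}. Write $\beta$ for the normalized deviation, i.e.\ bound $\P(|\Delta\cQ|\ge \beta w^{1/2})$ or $\P(|\Delta\cQ|\ge \beta h^{1/3})$. The constraints then become $\beta\le C w^{1/2}n^{1/3}$ and $\beta\le C h^{1/3}n^{1/3}$. So near the lattice scale ($w\approx n^{-2/3}$, $h\approx n^{-1}$) only $\beta=O(1)$ is covered.

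\begin{itemize}
\item For the complementary spatial range you invoke the one-point bounds. These give at best $\P(|\Delta\cQ|\ge\beta w^{1/2})\le 4e^{-c\beta^{3/2}w^{3/4}}$, which degenerates as $w\to 0$.
\item The temporal upper constraint is not addressed at all.
\end{itemize}

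In fact the uniform estimate you assert, with right-hand side $Ce^{-c\beta^{3/2}}+Ce^{-c\sqrt n}$ for every $\beta$, is false. Moving the endpoint $q$ one lattice step to the right gives $X(p;q+(1,0))-X(p;q)\ge X(q+(1,0))$, and $X(q+(1,0))$ is an $\Exp(1)$ variable independent of $X(p;q)$. So normalized one-step increments have only exponential tails. At $\beta=n^{1/4}$ the probability is at least $e^{-Cn^{1/4}}$, far above $e^{-cn^{3/8}}+e^{-c\sqrt n}$.

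Chaining down to the lattice requires normalized deviations that grow polynomially in $n$ at the finest dyadic levels, so this regime cannot be skipped. The same issue affects your sub-lattice step. The jumps of $\cQ_n^{\rho_n}$ between adjacent lattice points are one-step passage-time increments, not single weights. Bounding them uniformly by $O(n^{-1/3}\log n)$ needs exactly the missing range.

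The missing ingredient is the paper's intermediate-point argument.
\begin{itemize}
\item \textbf{Spatial.} Take a pair $u_1,u_2$ at separation $z\ge 2^{-5/3}n^{-2/3}$ with $\sqrt z\,n^{1/3}<\beta\le n^{1/4}$. Insert $u_3$ with the same start point and the endpoint shifted by $w'=4\beta\sqrt z/n^{1/3}>4z$. Apply the rescaled bound $\P(|\Delta\cQ|>\beta'\sqrt{w})\le 2e^{-c\beta'^2}+2e^{-c\sqrt n}$, valid for $\beta'\le\sqrt w\,n^{1/3}$ (see \eqref{E:y1y2}), to the pairs $(u_1,u_3)$ and $(u_2,u_3)$. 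Use $\beta'=\beta\sqrt{z/w'}/2$ and $\beta'=\beta\sqrt{z/(w'-z)}/2$ respectively; these now lie in the admissible range. This gives $2e^{-c\beta\sqrt z\,n^{1/3}}+2e^{-c\sqrt n}\le 2e^{-c\beta}+2e^{-c\sqrt n}$.
\item \textbf{Temporal.} Use a collinear auxiliary point at time separation $r$ determined by $r^{2/3}n^{1/3}=4\beta s^{1/3}$. Together with $s\ge 1/n$, this gives $2e^{-c\beta^{3/4}}$.
\end{itemize}

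These weaker tails, valid for all $\beta<n^{1/4}$, suffice. Chaining at H\"older exponent $1/3-\eps$ with a fixed H\"older constant only involves normalized deviations of size at most $n^{1/9+O(\eps)}$ spatially and $n^{O(\eps)}$ temporally. For small $\eps$ and large $n$ these are below $n^{1/4}$. The $e^{-c\sqrt n}$ errors are absorbed by the polynomial number of lattice pairs. With this added, the rest of your argument goes through.
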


\begin{proof}
We define $\mathcal J_n: \mathbb H^2_\uparrow \to \R$ by taking $\mathcal J_n(x, s; y, t)=\cQ_n^{\rho_n}(x, s; y, t)$ whenever $(x, s)_n, (y, t)_n \in \Z^2$, and extending to general points $(x, s; y, t)$ by a linear interpolation. 
For $b > 0$, define
\begin{equation}   \label{eq:Kbdef}
K_b = \{ u = (x, s; y, t) \in \mathbb{H}^2_\uparrow : \|u\|_\infty \le b, t -s \ge b^{-1}\}.
\end{equation}
It suffices to show that for all $b > 0$, $\mathcal J_n|_{K_b}$ is tight under the uniform topology. We start by translating the two-point bounds in Proposition \ref{P:two-point-bd} and \ref{P:temporal-est} to the limiting setting. In what follows, we use $c>0$ to denote a constant which is allowed to depend on $b$, $\rho$, and $\max_n \rho_n$, and may change from line to line. 

First, let $u_1 = (x, s; y, t), u_2 = (x, s; y + z, t) \in K_b$ for some $z \ge 1/(2^{5/3}n^{2/3})$. By rescaling Proposition \ref{P:two-point-bd}, for any $a \le \sqrt{z} n^{1/3}$ we have: 
\begin{equation}
	\label{E:y1y2}
	\P(|\mathcal J_n(u_1) - \mathcal J_n(u_2)| > a \sqrt{z}) \le 2 \exp(-c a^2) + 2 \exp(-c \sqrt{n}).
\end{equation} 
Here we removed the lower bound on $a$ in Proposition \ref{P:two-point-bd} since the bound can be made trivial in the small-$a$ regime by choosing $c$ sufficiently small (according to $b$, $\rho$, and $\max_n \rho_n$). By symmetry, the same bound \eqref{E:y1y2} holds if we let $u_2 = (x + z, s; y, t)$.

Now suppose $u_1 = (p; y, t), u_2 = (p; y', t + s) \in K_b$ for some $s \ge 1/n$, and that $p, (y, t), (y', t + s)$ are collinear. Then by rescaling Proposition \ref{P:temporal-est} we have that for any $a \le s^{1/3} n^{1/3}$ we have
\begin{equation}
	\label{E:y1y3}
	\P(|\mathcal J_n(u_1) - \mathcal J_n(u_2)| > a s^{1/3}) \le 2 \exp(-c a^{3/2}).
\end{equation}
Again, we eliminated the lower bound on $a$ by taking $c$ small enough (depending on $K$, $\alpha$ and $\max_n \rho_n$). Again by symmetry, the same bound holds for $u_1 = (y, t; q), u_2 = (y', t + s; q)$ with $(y, t), (y', t + s), q$ collinear.

Note that in \eqref{E:y1y2} and \eqref{E:y1y3}, the upper bounds on $a$ would be problematic when $z$ or $s$ is small, so we will give weak extensions of these estimates.

For \eqref{E:y1y2}, consider $\sqrt{z} n^{1/3}< a \le n^{1/4}$. Let $w = 4a \sqrt{z}/n^{1/3}$ and let $u_3 = (x, s; y + w, t)$. Note that $w>4z$, and $a\sqrt{z/(w-z)}/2<\sqrt{w-z}n^{1/3}$.
Then combining the bound \eqref{E:y1y2} for $u_1, u_3, a\sqrt{z/w}/2$ and $u_2, u_3, a\sqrt{z/(w-z)}/2$ and using the triangle inequality $|\mathcal J_n(u_1) - \mathcal J_n(u_2)| \le |\mathcal J_n(u_1) - \mathcal J_n(u_3)| + |\mathcal J_n(u_3) - \mathcal J_n(u_2)|$, we have:
\begin{equation}
	\label{E:y1y2strong}
	\P(|\mathcal J_n(u_1) - \mathcal J_n(u_2)| > a \sqrt{z}) \le 2 \exp(-c a n^{1/3} \sqrt{z}) + 2 \exp(-c \sqrt{n}) \le 2 \exp(-ca) + 2 \exp(-c \sqrt{n}),
\end{equation} 
where the final bound uses that $z \ge 1/(2^{5/3}n^{2/3})$. Similarly, for \eqref{E:y1y3}, consider $s^{1/3} n^{1/3}< a < n^{1/4}$. Take $r>0$ such that $4as^{1/3}=r^{2/3}n^{1/3}$.
Let $y''$ be chosen so that $p, (y, t), (y'', t + r)$ are collinear.
Let $u_3=(p; y'', t+r)$.
Note that $r>8s$, and $a(s/(r-s))^{1/3}/2< (r-s)^{1/3}n^{1/3}$.
Then combining the bound \eqref{E:y1y3} for $u_1, u_3, a(s/r)^{1/3}/2$ and $u_2, u_3, a(s/(r-s))^{1/3}/2$ and using the triangle inequality, we have:
\begin{equation}
	\label{E:y1y3strong}
	\P(|\mathcal J_n(u_1) - \mathcal J_n(u_2)| > a s^{1/3}) \le 2 \exp(-ca^{3/4}n^{1/4}s^{1/4}) \le 2 \exp(-c a^{3/4}),
\end{equation}
where in the final bound we have used that $s \ge 1/n$. Now, observe that by \eqref{E:y1y2}, \eqref{E:y1y3}, the bounds \eqref{E:y1y2strong}, \eqref{E:y1y3strong} actually hold for all $a < n^{1/4}$. Now, combining these bounds with the triangle inequality and taking a union bound to cover the $O(n^{20/3})$-many pairs $u_1, u_2 \in K_b$ coming from distinct lattice points, we have that for any $0<\eps<1/3$,
\begin{equation}
	\label{E:summarized-2-pt}
	\max_{u_1, u_2 \in K_b} \frac{|\mathcal J_n(u_1) - \mathcal J_n(u_2)|}{\|u_1 - u_2\|_2^{1/3 - \eps}},
\end{equation}
is a sequence of tight random variables. Since $\mathcal J_n(0,0; 0, 1)$ is tight by Theorem \ref{T:one-point-bds}, by the Arzel\'a-Ascoli theorem, this implies that the whole sequence $\mathcal J_n|_{K_b}$ is tight in the uniform topology, completing the proof.
\end{proof}

Many properties of half-space exponential LPP pass immediately to the limit. The next proposition records properties that follow immediately from corresponding properties in the prelimit. Note that the dependence on $\rho, t$ as $\rho, t \to \infty$ in the one-point tails (part $4$ below) is suboptimal off the diagonal. This can be sharpened, but we have not included the strengthening here.

\begin{prop}
	\label{P:properties-elpp}
	Fix $\rho \in \R \cup \{-\infty\}$, and let $\cQ^\rho$ be a subsequential limit in law of $\cQ_n^{\rho_n}$ along a sequence $\rho_n \to \rho$. Then $\cQ^\rho$ satisfies the following properties. In what follows, $c> 0$ are universal constants, and we define
		$\cG^\rho(x, s; y, t) = \cQ^\rho(x, s; y, t) + \frac{(x-y)^2}{(t-s)}$ for any $(x,s;y,t)\in \mathbb{H}^2_\uparrow$.
	\begin{enumerate}
		\item \textup{(Independent increments)} For any disjoint collection of time intervals $(s_i, t_i), i\in\II{1, k}$, the increments
		$\cQ^\rho(\cdot, s_i; \cdot, t_i)$
		are independent.
		\item \textup{(Metric composition)} For any fixed $r < s < t$ and $x, y \ge 0$, we have
		$$
		\cQ^\rho(x, s; y, t) = \max_{z \ge 0} \cQ^\rho(x, s; z, r) + \cQ^\rho(z, r; y, t).
		$$
		\item \textup{(KPZ scaling)} Fix $q > 0$, and consider the rescaled field
		$$
		\cQ^{\rho, q}(x, s; y, t) := q^{-1} \cQ^\rho(q^2x, q^3 s; q^2 y, q^3 t).
		$$
		Then $\cQ^{\rho, q}$ is a subsequential limit of the sequence $\cQ^{q \rho_n}_n$.
		\item \textup{(One-point tail bounds)} For $u = (x, s; y, s + t) \in \mathbb{H}^2_\uparrow$, and all $a > 0$,
		\begin{align*}
			\P( \cG^\rho(u)< - a t^{1/3}) &\le 2\exp(-c a^3), \\
			\P( \cG^\rho(u) > a t^{1/3} + t (\rho\vee 0)^2) &\le 2\exp\left(-c a^{3/2} \left(1 \wedge \frac{\sqrt{a}}{(\rho \vee 0) t^{1/3}}\right)\right).
		\end{align*}
		Furthermore, for $u = (0, s; 0, s + t)$ on the diagonal, we have the following Gaussian lower bound: for all $0 < a < t (\rho\vee 0)^2$,
		\begin{align*}
			\P( \cG^\rho(u) < -a + t (\rho\vee 0)^2) &\le 2\exp\left(- \frac{ca^2}{(\rho\vee 0)t}\right).
		\end{align*}
		\item \textup{(Two-point spatial tail bound)} 
        There is a universal constant $C_0>0$, such that for any $u = (x, s; y, s+t), u' = (x, s; y + z, s+t)  \in \mathbb H^2_\uparrow$, and 
        $a>C_0\max(z^{3/2}/t, \sqrt{z}|x-y|/t, \sqrt{z} (\rho\vee 0))$,
        we have:
		\begin{equation}
			\label{E:mm-2}
\P(|\cG^\rho(u) - \cG^\rho(u')| \ge a\sqrt{z}) \le 2 \exp(- c a^2).
		\end{equation}
		\item \textup{(Two-point temporal tail bound)} There is a universal constant $C_0>0$, such that for any $u = (x, s; y, s + t), u' = (x, s; y', s+t + r)  \in \mathbb H^2_\uparrow$, where $0<r<t$ and $(x, s), (y, s+t), (y', s + t + r)$ are all collinear, and $a > C_0\max(r^{2/3}(x-y)^2/t^2, r^{2/3}(\rho\vee 0)^2)$, we have:
		\begin{equation*}
			\P(|\cG^\rho(u) - \cG^\rho(u')| \ge a r^{1/3}) \le 2 \exp(- c a^{3/2}).
		\end{equation*}
		\item \textup{(Basic parabolic shape estimate)} For any $(x, s; y, s + t) \in \mathbb{H}^2_\uparrow$ and $a, \delta > 0$ we have:
		\begin{equation*}
            \begin{split}
                \P(\cG^\rho(x, s; y + z, s+ t) &\le t(\rho\vee 0)^2  + a t^{1/3} + \delta z^2 \text{ for all } z \in [-y, \infty)) \\
                &\ge 1 - 2\exp\left(-c_\delta a^{3/2} \left(1\wedge \frac{\sqrt{a}}{(\rho\vee 0) t^{1/3}}\right)\right),
            \end{split}
		\end{equation*}
        where $c_\delta$ is a constant depending on $\delta$.
		\item \textup{(Time stationarity and reversal)} For any $r \in \R$, we have
        $$
\cQ^\rho(x, s; y, t) \eqd \cQ^\rho(x, s + r; y, t + r) \eqd \cQ^\rho(y, -t; x, -s),
        $$
        where the equality in law is as continuous functions on $\mathbb H^2_\uparrow$. 

        \item \textup{(Quadrangle inequality)} For any $s < t$, and  $x_1 \le x_2$ and $y_1 \le y_2$, we have
        $$
       \cQ^\rho(x_1, 0;  y_1, t) + \cQ^\rho(x_2, 0;  y_2, t) - \cQ^\rho(x_1, 0;  y_2, t) - \cQ^\rho(x_2, 0;  y_1, t) \ge 0.
        $$
		\item \textup{(Half-space stationary horizon)} Recall from Theorem \ref{T:half-stationary-horizon} that the half-space stationary horizon with parameter $\rho \in \R \cup \{-\infty\}$ is a random function $\cH^\DL \in \mathcal D([2\rho \vee 0, \infty), \R, 2\rho \vee 0)$. (And denote that $\cH^\DL_\lambda(x) = \cH^\DL(\lambda,x)$.) For each $t > 0$, consider the process
        \begin{equation}
        \label{E:Hlambda}
\cH_\lambda^t(y) := \max_{x \ge 0} \cH^\DL_\lambda(x) + \cQ^\rho(x, 0;  y, t), \qquad y\ge 0.
        \end{equation}
Then $(\lambda, x) \mapsto \cH^t_\lambda(x) - \cH^t_\lambda(0)$ is in $\cD([2\rho \vee 0, \infty), \R_{\ge 0}, 2\rho \vee 0)$, and $\eqd\cH^\DL|_{[2\rho \vee 0, \infty) \times \R_{\ge 0}}$.	
	\end{enumerate}
\end{prop}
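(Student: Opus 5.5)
The plan is to pass a coupled version of each property through the limit. By Skorokhod's theorem we realize $\cQ_n^{\rho_n} \to \cQ^\rho$ almost surely, uniformly on compacts along the subsequence. We then check each item on the prelimit (or on the linear interpolation $\mathcal J_n$ from the proof of Theorem \ref{T:elpp}, which differs from $\cQ_n^{\rho_n}$ by $o(1)$ uniformly on compacts). Throughout, the centering in \eqref{E:rescaled-elpp} is matched against $\mu$ and $\nu_\alpha$ via Taylor expansion. With $\alpha = 1/2 - 2^{-4/3}\rho_n n^{-1/3}$ one has $\Delta_\alpha(m) \approx 2^{4/3}n^{1/3}t(\rho\vee 0)^2$ when $m \approx nt$, and $\mu(n,m)$ produces the parabola $-(x-y)^2/t$. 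This bookkeeping is what turns $\cQ$ into $\cG$ and produces the $t(\rho\vee0)^2$ shift.

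The items fall into three groups.

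\emph{Independence and symmetries (items 1, 3, 8).} Item 1 holds because the prelimit increments use disjoint strips of weights up to boundary rows, which contribute $o(1)$ after rescaling. Item 3 is a reparametrization: $n \mapsto q^3 n$ rescales $\rho_n$ to $q\rho_n$. For item 8, time stationarity is lattice translation along the diagonal. Time reversal is the map $(i,j)\mapsto(-j,-i)$, which preserves $\Z^2_\ge$, the diagonal, and the law of the weights, and reverses paths.

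\emph{Tail bounds (items 4--7).} For item 4, the upper tail follows from \eqref{E:half-space-upper-bd} and Theorem \ref{T:one-point-bds}.1, and the lower tail from \eqref{E:half-space-lower-bd}. The Gaussian diagonal bound is Theorem \ref{T:one-point-bds}.2, and for $\rho \le 0$ part 3 applies directly. Item 5 is a rescaling of Proposition \ref{P:two-point-bd}, and item 6 a rescaling of Proposition \ref{P:temporal-est}; in both, the constraints \eqref{E:constraints} and \eqref{E:constraints-phrased} become the stated lower bounds on $a$, and the upper constraints disappear once $C_*$ is taken large. Item 7 follows from Lemma \ref{L:Xalpha-bd}, taking closed-event limits by portmanteau.

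\emph{Order and composition (items 2, 9, 10).} The quadrangle inequality (item 9) holds in the prelimit by Corollary \ref{C:exp-quadrangle}, or by the path-crossing argument, and is closed under limits. For item 2, the prelimit gives a discrete metric composition over the row at time $r$. The direction $\ge$ passes to the limit trivially. For $\le$, we must show the argmax $z$ stays in a compact set with high probability uniformly in $n$. This uses the parabolic shape estimate (item 7 at the prelimit level) for both factors: for $|z|$ large, the sum is below $-\delta z^2$, while the value at a fixed $z$ is tight.

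Item 10 is the main obstacle. In the prelimit, Lemma \ref{lem:staLPP} says that the exponential horizon $R^{\EL}$, with slope parameters $\gamma_i = -2^{-7/3}n^{-1/3}\lambda_i$, is jointly stationary under the discrete variational formula on L-shaped lines. Restricted to $x\ge 0$, this is half-space LPP started from the initial data. After rescaling, Proposition \ref{P:half-space-horizon-marginals} gives convergence of the initial data to $R^{\DL}$ jointly, and we may couple independently of the environment. The discrete variational formula then passes to the limit provided the argmax $x$ is tight. To show this we combine item 7 with the slope of the initial data: $\cH^\DL_\lambda(x) \le \lambda x + O(\sqrt{x})$ by the Brownian description, so $\lambda x - \delta x^2$ dominates. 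For $\lambda_k = 2\rho$ the extra care from Proposition \ref{P:stationary-existence} applies.

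Two further details remain. The L-shaped lines cover negative indices, but the maximizer lies in $x\ge0$ because of the symmetric extension. The prelimit uses the diagonal shift $(2k+1)$, which vanishes in the limit. Finally, membership in $\cD$ follows because the measure-positivity (the quadrangle sign) and continuity at $\lambda = 2\rho\vee0$ are preserved, and joint equality in law on finite $\lambda$ sets determines the law on $\cD$.
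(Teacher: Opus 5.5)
Items 1--9 and the finite-dimensional half of item 10 follow the paper's route: each property is read off the prelimit, and Lemma \ref{L:Xalpha-bd} localizes the argmaxes in items 2 and 10. The gap is your last sentence, which does not establish that $(\lambda,x)\mapsto\cH^t_\lambda(x)-\cH^t_\lambda(0)$ lies in $\cD([2\rho\vee0,\infty),\R_{\ge0},2\rho\vee0)$.

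Nonnegative rectangle increments do hold pathwise, by the quadrangle inequality and the corresponding property of $\cH^\DL$. But these, together with continuity of $F_{2\rho\vee0}$, do not give the measure $\mu_F$. The rectangle set function must be countably additive, and that forces right-continuity in $\lambda$ at every $\lambda$ simultaneously. This is a pathwise statement over uncountably many $\lambda$, and equality of finite-dimensional laws cannot deliver it. A process monotone in $\lambda$ can share the finite-dimensional laws of $\cH^\DL$ and still take its left limit at random jump points. Your remark that finite-dimensional laws determine the law on $\cD$ applies only once membership in $\cD$ is known.

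The paper closes this by showing that the argmax $M(\lambda,y)$ in \eqref{E:Hlambda} is monotone in $\lambda$, monotone in $y$ (by the quadrangle inequality), and right-continuous, so $\cH^t$ inherits right-continuity from $\cH^\DL$ and $\cQ^\rho$. A direct fix also works in your setup, in two steps. First, for $\lambda\le\lambda'\le\Lambda$ and $x>0$, the difference $\cH^\DL_{\lambda'}(x)-\cH^\DL_\lambda(x)=\mu_{\cH^\DL}((\lambda,\lambda']\times(0,x])\ge0$ is nondecreasing in $x$ and tends to $0$ as $\lambda'\downarrow\lambda$, so this convergence is uniform on compacts. Second, $\cH^\DL_\lambda\le\cH^\DL_\Lambda$ on $\R_{\ge0}$ with $\cH^\DL_\lambda(0)=0$, so the argmaxes in \eqref{E:Hlambda} lie in a common random compact for all $\lambda\le\Lambda$ and all $y$ in a compact set.

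Some smaller slips:
\begin{itemize}
\item Corollary \ref{C:exp-quadrangle} is vacuous at $\alpha=\beta$, so item 9 rests on the same-environment path swap alone.
\item In item 8, the reflection $(i,j)\mapsto(-j,-i)$ sends $(x,s)_n$ to $(x,-s-2^{5/3}n^{-1/3}x)_n$ up to rounding. This $O(n^{-1/3})$ time shift is removed only by tightness.
\item Restricting the L-shaped initial data to $x\ge0$ comes from the diagonal condition \eqref{E:diagonal-condition}, not from symmetry.
\item Tightness of the prelimit argmax in item 10 needs uniform-in-$n$ growth bounds on $R^{(n)}$ (random-walk estimates), not just the limiting Brownian description.
\end{itemize}
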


\begin{proof}
\textbf{Part $1$.} This is immediate from the same property in the prelimit when the closed intervals $[s_i, t_i]$ are disjoint. If only the open intervals $(s_i, t_i)$ are disjoint, then the same property holds by continuity of $\cQ^\rho$. 

\textbf{Part $2$.} In the prelimit we have the metric composition law
$$
\cQ^{\rho_n}_n(x, s; y, t) = \max_{z \ge 0} \cQ^{\rho_n}_n(x, s; z, r) + \cQ^{\rho_n}_n(z + 2^{-5/3} n^{-2/3}, r; y, t) + 2^{-4/3} n^{-1/3}.
$$
This passes to the limit if we can show that the location of the $\argmax$ is tight. This follows straightforwardly from the fact that the random variable
$
 \cQ^{\rho_n}_n(x, s; 0, r) +\cQ^{\rho_n}_n(2^{-5/3} n^{-2/3}, r; y, t)
 $ 
 is tight (Theorem \ref{T:half-space-full-tails}), and the fact that for some $\delta_0$ small enough (depending on $r, s, t$ and $x, y$), the shifted profile
 $$
 \max_{z \ge 0} \cQ^{\rho_n}_n(x, s; z, r) + \cQ^{\rho_n}_n(z + 2^{-5/3} n^{-2/3}, r; y, t) + \delta_0 z^2.
 $$
 is tight (by Lemma \ref{L:Xalpha-bd}).
 
 \textbf{Part $3$.} This is immediate from construction.
 
 \textbf{Part $4$.} This follow from passing the bounds in Theorems \ref{T:one-point-bds}, \ref{T:W-tail-bounds}, and \ref{T:half-space-full-tails} to the limit.

\textbf{Parts $5$ and $6$.}
These estimates follow from passing Proposition \ref{P:two-point-bd} and Proposition \ref{P:temporal-est} to the limit, respectively. Note that the upper bounds on $a$ or $\eps$ there become trivial in the limit.

\textbf{Part $7$.}  This follows from passing Lemma \ref{L:Xalpha-bd} to the limit.

 \textbf{Part $8$.} This is immediate from corresponding symmetries of the underlying lattice of exponential random variables. Note that the symmetry $\cQ^{\rho_n}_n(x, s; y, t) \eqd \cQ^{\rho_n}_n(y, -t; x, - s)$ is not exactly satisfied in the prelimit, because our spatial axes are not orthogonal to the diagonal. However, this differences wash away in the limit, since we have uniform-on-compact convergence.

 \textbf{Part $9$.} For $\cQ^{\rho_n}_n$ such a quadrangle inequality follows from a path-crossing argument. Then it also holds for $\cQ^\rho$ by passing $n\to\infty$.

 \textbf{Part $10$.} We check this in two steps. First, we check that for any finite set $L \subset [2\rho \vee 0, \infty)$, we have $\{\cH^t_\lambda - \cH^t_\lambda(0)\}_{\lambda\in L} \eqd \{\cH^\DL_\lambda\}_{\lambda\in L}$. For this, we translate Lemma \ref{lem:staLPP} into a stationarity statement for $\cQ^{\rho_n}_n$. Indeed, let $\{R^{(n)}_i\}_{i=1}^k$ be as defined at the beginning of Section \ref{ss:hfsh} with $\rho = \rho_n$ and a finite set of slopes 
$\lambda_1>\lambda_2>\cdots>\lambda_k \ge 2\rho \vee 0$. For $t \in n^{-1} \N$, define the process
 \begin{equation}
 \label{E:Hn-prelim}
    R^{(n), t}_i(y) = \max_{x \in 2^{-5/3} n^{-2/3} \Z, x> 0} R^{(n)}_i(x) + \cQ^{\rho_n}_n(x-2^{-5/3} n^{-2/3}, 0; y, t), 
 \end{equation}
 when $y \in 2^{-5/3} n^{-2/3} \Z$, $y\ge 0$, and by linear interpolation off of this set. Then
 $$
 (R^{(n), t}_i(y) - R^{(n),t}_i(0) : i\in \II{1, k}, y\ge 0) \eqd  (R^{(n)}_i(y) : i\in \II{1, k}, y\ge 0).
 $$
 This identity in law passes to the limit. This follows straightforward from tightness of the processes $\{R^{(n)}_i\}_{i=1}^k$ and $\cQ^{\rho_n}_n$, together with uniform-in-$n$ estimates on the growth of the processes $\{R^{(n)}_i\}_{i=1}^k$ and $\cQ^{\rho_n}_n$ (which guarantee that  argmax locations in \eqref{E:Hn-prelim} are tight). The estimate for $\{R^{(n)}_i\}_{i=1}^k$ follows from  standard random walk estimates, and the estimate for $\cQ^{\rho_n}_n$ is by Lemma \ref{L:Xalpha-bd}.

Given the finite dimensional distributions, to complete the proof, it remains to show that the process $(\lambda, x) \mapsto \cH^t_\lambda(x) - \cH^t_\lambda(0)$ 
is right-continuous.

Suppose $y_m \downarrow y$ and $\lambda_m \downarrow \lambda$. Let $M=M(\lambda, y)$ be the location of the $\argmax$ in \eqref{E:Hlambda}. Since $\cH^t_\lambda$ is increasing in $\lambda$, we have that $M$ is also increasing in $\lambda$. Moreover, the quadrangle inequality (part $8$) guarantees that $M$ is also increasing in $y$. Finally, right continuity of $\cH^\DL$ in $\lambda, y$ and these monotonicities imply that $M$ is also right-continuous.
Therefore
\begin{align*}
    \lim_{n \to \infty} \cH^t_{\lambda_m}(y_m) = \lim_{n \to \infty} \cH^\DL_{\lambda_m}(M(\lambda_m, y_m)) + \cQ^\rho(M(\lambda_m, y_m), 0;  y_m, t) = \cH^t_{\lambda}(y),
\end{align*}
by the right-continuity of $H^\DL$, the continuity of $\cQ^\rho$, and the right-continuity of $M$.
\end{proof}

\subsection{The half-space KPZ fixed point}
\label{ssec:kpzfp}

In \cite{Zhang}, X. Zhang proved that TASEP in half-space converges under $1:2:3$ scaling to a limit, the half-space KPZ fixed point. As in the full space setting, TASEP in half-space can be defined as an inverse function of half-space exponential LPP. This description, together with the uniform-on-compact tightness of exponential LPP implies that the any subsequential limit of half-space exponential LPP has KPZ fixed point marginals. 

A general framework for proving convergence of inverse functions under KPZ scaling was set up in \cite[Section 15]{dauvergne2021scaling}, and can be applied in the present setting. However, it is also straightforward to prove the convergence in our particular setting, and so we have included a proof here. We will also prove moderate deviation estimates for TASEP in half-space that will be used to show uniform-on-compact convergence of Poisson-avoiding metrics in later sections.

Recall from the introduction that half-space TASEP with boundary parameter $\alpha$. It can be defined as Markov process $(h_t, t \ge 0)$ on the state space
\begin{equation}   \label{eq:defSRW+}
\SRW_+ = \{h:\Z_{\ge 0} \to \Z \mid h(0) \in 2 \Z , \; h(i) - h(i+1) = \pm 1 \text{ for all i} \}.
\end{equation}
For $i \in \N$, if $h_t$ has a local minimum at $i$, then this flips to a local maximum $h_t(i) \mapsto h_t(i) + 2$ at rate $1$. For $i =0$, $h_t(0)$ flips from a local minimum to a local maximum at rate $\alpha$. All flips are independent. We can alternately define $h_t$ from exponential LPP $X=X_\alpha$ (recall from Section \ref{sssec:constru} or the previous subsection). First, let $R(x, y) = (x + y, y-x)/2$ be the rotation which translates from TASEP coordinates to exponential LPP coordinates. For $h\in \SRW_+$, define
$$
h^\alpha(t, x; h) := \max\left\{ g \in [2\Z + \mathds{1}(x \text{ is odd})] : \max_{y \in \Z} X(R(y, h(y)+2) ; R(x, g)) \le t\right\}.
$$
Then $(h^\alpha(t,\cdot; h), t\ge 0)$ equals $(h_t, t \ge 0)$ in law, with initial state $h_0=h$.

We now state a convergence result from \cite{Zhang}.
Recall that $\UC_+$ is the set of all upper semicontinuous functions from $\R_{\ge 0} \to \R$ with the topology of local hypograph convergence. 
For each $\eps>0$, define the rescaling map $\mathbf{A}_\eps: \SRW_+\to \UC_+$ via
\begin{equation}   \label{eq:bfAe}
\mathbf{A}_\eps h(x) = -\eps^{1/2}h(2 \eps^{-1}x),
\end{equation}
for each $x\in 2^{-1}\eps^{2/3}\Z_{\ge 0}$, and linearly interpolating between points in $2^{-1} \eps^{2/3}\Z_{\ge 0}$.
Then for $\rho<\eps^{-1/2}$, $t\ge 0$, and $f$ in the range of $\mathbf{A}_\eps$, we define 
\[
\mathfrak{h}^\rho_\eps(t, \cdot; f) = \mathbf{A}_\eps\left( h^{1/2 - \rho \eps^{1/2}/2}(2\eps^{-3/2}t, \cdot; \mathbf{A}_\eps^{-1}(f)) \right) + \eps^{-1}t.
\]
\begin{theorem}[\protect{Part of \cite[Theorem 1.2]{Zhang}}]
\label{T:half-space-fixed-point}
Let $\rho \in\R$ and $f \in \UC_+$ with $\limsup_{x\to\infty} f(x)/x < \infty$. Take any $h_\eps \in \operatorname{SRW}_+$, such that
$\mathbf{A}_\eps h_\eps \to f$
in $\UC_+$ as $\eps\to 0$, and for some constant $C > 0$,
\[
\mathbf{A}_\eps h_\eps(x) \le C(1 + x), \qquad \text{ for all } x \ge 0 \text{ and } \eps>0. 
\]
Then for any $t \ge 0$ and any finite set $Y \subset \R_{\ge 0}$, the limit
\[
( \mathfrak{h}^\rho(t, y ; f) : y \in Y) := \lim_{\eps \to 0} \Big( \mathfrak{h}^\rho_\eps(t, y; \mathbf{A}_\eps h_\eps) : y \in Y \Big)
\]
exists in law as a random variable in $\R^{|Y|}$. 
Moreover, for any $t \ge 0$ and $c>0$, and any finite set $Y \subset \R_{\ge 0}$, the limit
\[
( \mathfrak{h}^{-\infty}(t, y ; f) : y \in Y) := \lim_{n \to \infty} \Big( \mathfrak{h}^{-c\eps^{-1/2}}_\eps(t, y; \mathbf{A}_\eps h_\eps) : y \in Y \Big)
\]
exists in law and is independent of $c>0$.
\end{theorem}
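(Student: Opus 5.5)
The plan is not to reprove this: the statement is a restatement of \cite[Theorem 1.2]{Zhang}. The work is to match the conventions of \cite{Zhang} with those used here, and then read off both assertions. I would proceed in three steps.

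\textbf{Step 1: dictionary between the models.} I would first fix a dictionary between the half-space TASEP of \cite{Zhang} and the process $h_t$ on $\SRW_+$ described above.
\begin{itemize}
\item \emph{Particles and heights.} Match particle configurations with height functions via $h(x)-h(x-1)=2\cdot\mathds{1}(\text{empty})-1$, and match the rate-$\alpha$ reservoir injection at site $1$ with the flip rule at $0$.
\item \emph{Scaling map.} Check that $\mathbf{A}_\eps$ from \eqref{eq:bfAe}, the time change $t\mapsto 2\eps^{-3/2}t$, and the centering $+\eps^{-1}t$ coincide with the $1$:$2$:$3$ rescaling in \cite{Zhang}, possibly after the harmless reflection $h\mapsto -h$ and constant rescalings of space, time and height.
\item \emph{Boundary parameter.} Check that the boundary rate $\alpha=\tfrac12-\rho\eps^{1/2}/2$ corresponds to the critical-window parametrization of \cite{Zhang}, with our $\rho$ equal to their boundary parameter, possibly up to a sign or a factor of $2$. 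Any such factor would be absorbed into the definition of $\mathfrak{h}^\rho$, since here we only assert existence of the limit.
\end{itemize}

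\textbf{Step 2: hypotheses on the initial data.} Next I would verify that the hypotheses here imply those of \cite{Zhang}. These are hypograph convergence $\mathbf{A}_\eps h_\eps\to f$ in $\UC_+$ together with the uniform linear growth bound $\mathbf{A}_\eps h_\eps(x)\le C(1+x)$. Zhang states convergence of finite-dimensional distributions for initial data that converge in $\UC$ and obey a one-sided linear bound. For $\rho\in\R$ the first assertion is then exactly his finite-dimensional convergence at a fixed time $t$ and finite $Y$. If his statement is phrased only for a dense class of initial data (for example finitely supported or piecewise-linear $f$), I would extend it by monotonicity of TASEP in the initial condition under the basic coupling. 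Concretely, I would sandwich $h_\eps$ between approximating data whose limits differ by at most $\delta$ on compacts, and use the growth bound to ensure that far-away parts of the initial condition do not influence heights on $Y$ with high probability.

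\textbf{Step 3: the case $\rho=-\infty$.} Here $\alpha=\tfrac12+c/2$ is a fixed rate strictly above $1/2$. I expect \cite{Zhang} to treat this subcritical regime separately, giving a limit expressed through a kernel that does not involve $\alpha$; that immediately yields independence of $c$. If instead only the critical window is covered there, I would argue in two parts.
\begin{itemize}
\item \emph{Monotonicity.} Use the monotone coupling in $\alpha$: the basic coupling with nested Poisson clocks at site $0$ makes heights monotone in the boundary rate.
\item \emph{Sandwich.} Compare a fixed $\alpha>1/2$ with the pair $\alpha'=\infty$ and $\alpha=\tfrac12-\rho\eps^{1/2}/2$ as $\rho\to-\infty$. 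It would then suffice to show $\mathfrak{h}^\rho\to\mathfrak{h}^{-\infty}$ as $\rho\to-\infty$ in law, uniformly enough to interchange the limits.
\end{itemize}

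\textbf{Main obstacle.} I expect the bookkeeping of normalizations in Step 1 to be the main obstacle, especially the boundary parametrization and whether the lattice spacing is $\eps^{2/3}$ or $\eps$. The fallback argument for independence of $c$ in Step 3 is the only other potential difficulty.
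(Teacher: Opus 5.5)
Your plan is correct and matches the paper: the statement is taken directly from \cite[Theorem 1.2]{Zhang} with no further proof, and the only convention change the paper records is the sign reversal $\rho\mapsto-\rho$ of the boundary parameter. Zhang's theorem already covers both the critical window and fixed $\alpha>1/2$ for initial data with linear growth, so you do not need the fallback arguments in Steps 2 and 3. This is just as well, because the Step 3 sandwich would require extra inputs: a separate scaling limit for $\alpha'=\infty$, and its identification with $\lim_{\rho\to-\infty}\mathfrak{h}^\rho$.
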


We have omitted one of the major elements of Theorem 1.2 from \cite{Zhang} above, namely the explicit Fredholm Pffafian formula for $( \mathfrak{h}^\rho(t, y ; f) : y \in Y)$, as we only need the abstract existence of the limit above as an input to the present paper. 
We mention for the reader comparing this result to \cite[Theorem 1.2]{Zhang} that we have reversed the sign of the  boundary parameter, $\rho \mapsto - \rho$.

For $\rho \in \R \cup \{-\infty\}$, the process $\mathfrak{h}^\rho$ is the half-space KPZ fixed point with boundary parameter $\rho$. 
We note that \cite{Zhang} does not prove that the marginals of $\mathfrak{h}^\rho(\cdot, \cdot; f)$ identified above allow for an extension to a continuous function  in $(x, t)$, or that these marginals define a Feller Markov process. These details can be readily verified through standard probabilistic arguments (and will be implemented in Theorem \ref{T:Markov-process} below), using the following variational characterization.

\begin{prop}
    \label{P:KPZFP-marginals}
Take $\rho\in\R\cup\{-\infty\}$, and $\cQ^\rho$ a subsequential limit in law of $\cQ^\rho_n$ (when $\rho\in\R$) or $\cQ^{-c(2n)^{1/3}}_n$ for some $c>0$ (when $\rho=-\infty$).
For any $f \in \UC_+$ with $\limsup_{x\to\infty} f(x)/x < \infty$, a finite set $Y\subset \R_{\ge 0}$, and $s\in\R$, $t>0$, we have
		\begin{equation}
        \label{E:desired-fp}
		    \left(\max_{x \ge 0} f(x) + \cQ^\rho(x, s; y, s + t) : y \in Y\right) \eqd ( \mathfrak{h}^\rho(y, t ; f) : y \in Y).
		\end{equation}
\end{prop}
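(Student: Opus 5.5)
We plan to compare the two sides through a common prelimit: half-space TASEP written as an inverse of exponential LPP. First we reduce to the case where $f$ is simple. Take $f$ with $f^{-1}(\R)$ a finite set and values in $\R$ on that set. For general $f \in \UC_+$ with at most linear growth we approximate from above by simple functions $f_k \downarrow f$ in the hypograph sense. Both sides of \eqref{E:desired-fp} are monotone in $f$, and both are continuous under such limits. On the left this follows from continuity of $\cQ^\rho$ together with the parabolic shape estimate (Proposition \ref{P:properties-elpp}.7), which confines the argmax to a compact set. On the right we use Theorem \ref{T:half-space-fixed-point}, which applies to any approximating sequence $h_\eps$ with $\mathbf{A}_\eps h_\eps \to f$. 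Alternatively, we can run the argument below directly for general $f$, choosing $h_\eps$ with $\mathbf{A}_\eps h_\eps \to f$ and at most linear growth.

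Next we express TASEP heights through LPP. By definition, $h^\alpha(t,x;h)$ is the largest admissible $g$ such that the point-to-line passage time $\max_y X(R(y,h(y)+2); R(x,g))$ is at most $t$. So $\{h^\alpha(t,x;h) \ge g\}$ is equivalent to $\{\max_y X(R(y,h(y)+2);R(x,g)) \le t\}$, an event about passage times from the rotated graph of $h$ to the point $R(x,g)$. After the scaling $\mathbf{A}_\eps$, and matching $\eps$ with $n$ (roughly $n \asymp \eps^{-3/2}$, with $\alpha = 1/2-\rho\eps^{1/2}/2$ corresponding to $1/2 - 2^{-4/3}\rho n^{-1/3}$), the rescaled point-to-line passage time becomes
\[
\max_x \big[f_\eps(x) + \cQ^{\rho_n}_n(x,\cdot;y,\cdot)\big],
\]
evaluated along a line of endpoints $(y,t')$ that is tilted in time. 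The fluctuation of the inverse map then turns into a statement about the first time this maximum crosses a level. Concretely, for fixed $y \in Y$ and $a \in \R$, we aim to show that
\[
\P\big(\mathfrak h^\rho_\eps(t,y;\mathbf{A}_\eps h_\eps) \ge a \ \forall y\big)
\]
is, up to $o(1)$ errors, equal to the probability that $\max_x f_\eps(x) + \cQ_n(x,0;y+\delta_y,t+\tau)$ exceeds $a - o(1)$. Here the shifts $\delta_y,\tau$ vanish as $\eps \to 0$; they arise because the level set is approached along the anti-diagonal direction.

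The main obstacle is controlling these shifts uniformly. The inversion moves the endpoint by $O(\eps^{1/2})$ in rescaled height, which corresponds to an $o(1)$ change in both space and time. We plan to handle this with the tightness of Theorem \ref{T:elpp}, which gives uniform-on-compact equicontinuity, combined with tightness of the argmax from Lemma \ref{L:Xalpha-bd} and the bound $f_\eps(x) \le C(1+x)$. Together these make the map from the environment to $\max_x f_\eps + \cQ_n$ continuous along the subsequence. Passing to the subsequential limit $\cQ^\rho$, we obtain, for continuity points $a_y$,
\[
\P\Big(\max_x f(x)+\cQ^\rho(x,0;y,t) \ge a_y\ \forall y\in Y\Big) = \P\big(\mathfrak h^\rho(t,y;f)\ge a_y\ \forall y\big),
\]
and these joint distribution functions determine the laws. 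Time stationarity (Proposition \ref{P:properties-elpp}.8) gives the result for general $s$. For $\rho=-\infty$, the same argument works along $\alpha$ fixed in $(1/2,\infty)$, and we invoke the second part of Theorem \ref{T:half-space-fixed-point}.
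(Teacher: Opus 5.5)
Your proposal is correct and follows essentially the paper's route: reduce to finitely supported $f$, write TASEP as the inverse of half-space exponential LPP so that the level crossing happens within $O(n^{1/3})$ of $2nt$ and only produces $o(1)$ shifts in the time coordinates, and absorb these shifts using uniform-on-compact convergence of $\cQ^\rho_n$ to a continuous limit. The paper identifies the two limits almost surely under a joint Skorokhod coupling, while you equate prelimit probabilities of the same event and pass to the limit on each side at continuity points, which is equivalent. One slip to fix: finitely supported functions cannot approximate a general $f$ from above, so the reduction should instead use density of $\NW_+$ in the local hypograph topology together with continuity of both sides on $\{f \le C(1+x)\}$, where continuity of the right-hand side comes from a diagonal application of Theorem \ref{T:half-space-fixed-point}.
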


\begin{proof}
First, by translation invariance we may assume $s = 0$ to simplify notation. Next, it suffices to assume that $f \in \NW_+$, where $\NW_+ \subset \UC_+$ is the set of $f$ with $\{x\ge 0: f(x)>-\infty\}$ being finite. This follows since $\NW_+$ is dense in $\UC_+$ and both sides of \eqref{E:desired-fp} are continuous on the following set (for some $C>0$):
$$
\UC_C := \{f \in \UC_+: f(x) \le C(1 + x) \text{ for all } x\},
$$
For the left-hand side, this continuity follows from the continuity of $\cQ^\rho$ and the shape bound in Proposition \ref{P:properties-elpp}.7. For the right-hand side, this follows from Theorem \ref{T:half-space-fixed-point}  by the following diagonalization argument. Suppose $f_i \to f$ in $\UC_C$. For every $i$ we can find a sequence $h_{i,\eps} \in \SRW_+$ such that $\mathbf{A}_\eps h_{i,\eps}$ converge as $\eps \to 0$ to $f_i$, uniformly in $i$, and satisfy
$$
\mathbf{A}_\eps h_{i,\eps}(x) \le (C + 1)(1 + x), \qquad \text{ for all } x \ge 0.
$$
Let $d$ be any metric metrizing the weak topology on probability measures on $\R^{|Y|}$. Then by Theorem \ref{T:half-space-fixed-point} we can find a sequence $\eps_i \to 0$ such that
$$
d\left(\operatorname{Law} ( \mathfrak{h}^\rho(t, y ; f_i) : y \in Y) , \;\operatorname{Law}( \mathfrak{h}^\rho_{\eps_i}(t, y; \mathbf{A}_{\eps_i} h_{i,\eps_i}) : y \in Y)\right) < 1/i,
$$
where for $\rho=-\infty$, we replace $\mathfrak{h}^\rho_{\eps_i}$ by $\mathfrak{h}^{-c\eps^{-1/2}}_{\eps_i}$.
On the other hand, another application of Theorem \ref{T:half-space-fixed-point} implies that the random vectors $( \mathfrak{h}^\rho_{\eps_i}(t, y; \mathbf{A}_{\eps_i} h_{i,\eps_i}) : y \in Y)$ converge as $i \to \infty$ to $(\mathfrak{h}^\rho(t, y ; f) : y \in Y)$, which when combined with the above display gives the desired continuity for the right-hand side of \eqref{E:desired-fp}.

Now, let $f \in \operatorname{NW}_+$, and let $\{x_1, \dots, x_k\}$ be the set of points where $f \ne - \infty$. 
We shall take the scaling limits in Proposition \ref{P:properties-elpp} and Theorem \ref{T:half-space-fixed-point} jointly. 
For this, we take $\eps = (2n)^{-2/3}$ for $n\in \N$.
We set up a finite approximation of $f$. 
Let $a_n = 2^{5/3} n^{2/3}=2\eps^{-1}$ and $b_n = 2^{-1/3} n^{-1/3}=\eps^{1/2}$.
For $i\in\II{1, k}$ and $n \in \N$, consider narrow wedge initial conditions $f_{n, i} \in \operatorname{SRW}_+$ given by
$$
f_{n, i}(y) = -b_n^{-1} m_{n, i} + |y - a_n x_{n, i}|,
$$
where $x_{n, i} \to x_i$ and $m_{n, i} \to f(x_i)$ as $n \to \infty$. Let $f_n = \min(f_{n, 1}, \dots, f_{n, k})$. 
Specifying that the $f_n$ is of the exact above form is not strictly necessary, but avoids some complication later.

By combining the tightness in Theorem \ref{T:elpp} with the convergence in Theorem \ref{T:half-space-fixed-point}, we can find a subsequence $M \subset \N$ such that the processes $\cQ^\rho_n$ (or $\cQ^{-c\eps^{-1/2}}_n$ when $\rho=-\infty$) and $( \mathfrak{h}^\rho_{\eps}(t, y; \mathbf{A}_{\eps}f_n) : y \in Y)$ (or $( \mathfrak{h}^{-c\eps^{-1/2}}_{\eps}(t, y; \mathbf{A}_{\eps}f_n) : y \in Y)$ when $\rho=-\infty$) jointly converge in law to a coupling $(\cQ^\rho, ( \mathfrak{h}^\rho(t, y ; f) : y \in Y))$ along $M$. By Skorokhod's representation theorem, we may couple the processes to realize this convergence in law as almost sure convergence. To complete the proof, it is enough to show that in this coupling, for any fixed $y \in Y$ we have that almost surely,
\begin{equation}
\label{E:fixed-y}
\max_{x \ge 0} f(x) + \cQ^\rho(x, 0; y, t) = \mathfrak{h}^\rho(t, y ; f). 
\end{equation}
Below we ignore some rounding issues for clearer presentation.
We note that $ \mathfrak{h}^\rho_{\eps}(t, y; \mathbf{A}_{\eps}f_n)$ (or $\mathfrak{h}^{-c\eps^{-1/2}}_{\eps}(t, y; \mathbf{A}_{\eps}f_n)$ when $\rho=-\infty$) can be rewritten as
\begin{align}   \label{eq:bngX}
    -b_n\left[\max\left\{g : \max_{i\in\II{1, k}} X(R(a_n x_{n, i}, -b_n^{-1} m_{n, i}+2) ; R(a_n y, g)) \le 4nt\right\}  - 2 n t\right],
\end{align}
where $X=X_{1/2-\rho\eps^{1/2}/2}$ (if $\rho\in \R$) or $X=X_{1/2+c/2}$ (if $\rho=-\infty$).
Further note that $R(a_n x, m) = (x, [m-a_n x]/(2n))_n$ for any $m\in\R$ and $x>0$. Thus we have
\begin{multline*}
 X(R(a_n x_{n, i}, -b_n^{-1} m_{n, i}+2) ; R(a_n y, g)) \\
= 2 b_n^{-1} \cQ^\rho_n(x_{n, i}, [-b_n^{-1} m_{n, i}-a_n x_{n, i}+2]/(2n); y, [g-a_n y]/(2n)) + 2(g +b_n^{-1} m_{n, i}),
\end{multline*}
where $\cQ^\rho_n$ is again replaced by $\cQ^{-c\eps^{-1/2}}_n$ when $\rho=-\infty$.

Note that $ [-b_n^{-1} m_{n, i}-a_n x_{n, i}+2]/(2n)$ and $a_ny/(2n)$ converge to $0$ as $n\to\infty$.
Also note that the above expression is increasing in $g$, so the uniform-on-compact convergence to $\cQ^\rho$, we can see that for large enough $n$, the above expression will cross the threshold $4nt$ when $|g - 2nt| \le n^{1/3} \log n$. For $g$ in this range, again by the uniform-on-compact convergence, the $\cQ^\rho_n$ or $\cQ^{-c\eps^{-1/2}}_n$ term converges to $\cQ^\rho(x, 0; y, t)$. In summary, we have that
$$
 X(R(a_n x_{n, i}, -b_n^{-1} m_{n, i}); R(a_n y, g)) = 2g + 2 b_n^{-1}(\cQ^\rho(x_{n, i}, 0; y, t) + m_{n, i}) + o(n^{1/3}).
$$
Plugging this into \eqref{eq:bngX} gives
\begin{align*}
& -b_n\left[\max\left\{g : 2g + \max_{i\in\II{1, k}} 2 b_n^{-1}(\cQ^\rho(x_{n, i}, 0; y, t) + m_{n, i}) + o(n^{1/3}) \le 4nt\right\}  - 2 n t\right] \\
&= \max_{i\in\II{1, k}} \cQ^\rho(x_{n, i}, 0; y, t) + m_{n, i} + o(1),
\end{align*}
which converges to the right-hand side of \eqref{E:fixed-y} as $n \to \infty$.
\end{proof}

\subsubsection{One-point and two-point estimates for TASEP}
We prove moderate deviation estimates for TASEP from a narrow wedge initial condition, from the moderate deviation estimates for exponential LPP. Our main goal is to obtain tightness for Poisson-avoiding metrics (to be completed in Section \ref{sec:TASEP}), so we will work in less generality than for exponential LPP in order to simplify the bounds.

We start with one-point estimates. Below we write $\delta_y(x) = |x- y| + \mathds{1}(y \text{ is odd})$ for the narrow wedge condition started from $y\in\Z_{\ge 0}$. 

\begin{lemma}
\label{L:one-point-TASEP}
Fix $\alpha_0 \in \R$. 
Then there exists a constant $c > 0$ depending only on $\alpha_0$ such that for all $t > 0$, $\alpha \ge 1/2 - \alpha_0 t^{-1/3}$, $x, y \in \II{0, \alpha_0 t^{2/3}}$ we have
\begin{align*}
	\P(h^\alpha(t, x; \delta_y) \ge t/2 + \epsilon t) &\le 2\exp\left(-c\eps^{3}t^2\right), \qquad \eps \in (0, 1),\qquad \text{ and} \\
    \P(h^\alpha(t, x; \delta_y) \le t/2 - \epsilon t) &\le 2\exp\left(-c\eps^{3/2}t\right), \qquad \eps \in (0, t^{-1/3}).
    \end{align*}
\end{lemma}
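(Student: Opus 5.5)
The plan is to translate the events on $h^\alpha(t,x;\delta_y)$ into events on a single half-space exponential LPP passage time, and then quote the one-point bounds of Theorem \ref{T:half-space-full-tails} (or Theorem \ref{T:one-point-bds} on the diagonal).

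\textbf{Step 1: reduction to passage times.} Since $\delta_y$ has a single minimum at $y$, the inner maximum in the definition of $h^\alpha$ is attained at the wedge tip. Up to $O(1)$ shifts, this gives
\[
h^\alpha(t,x;\delta_y)\ge g \iff X(R(y,\delta_y(y)+2);R(x,g))\le t .
\]
Monotonicity in $g$ means this is an exact inverse relation, with parity rounding only. So for $g = t/2 \pm \eps t$:
\begin{itemize}
\item $\{h^\alpha\ge t/2+\eps t\}$ is contained in $\{X(u;v_+)\le t\}$, where $v_+=R(x,t/2+\eps t)$;
\item $\{h^\alpha\le t/2-\eps t\}$ is contained in $\{X(u;v_-)>t\}$, where $v_-=R(x,t/2-\eps t)$.
\end{itemize}

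\textbf{Step 2: the LPP rectangles.} The displacement $v_\pm-u$ has both coordinates of order $t/4$, because $R$ rotates and $g\approx t/2$ while $|x-y|\le \alpha_0 t^{2/3}$. Concretely, with $N=(g+x-y)/2$ and $M=(g-x+y)/2$ (up to $O(1)$), we have $N,M=t/4\pm \eps t/2+O(t^{2/3})$, so the aspect ratio lies in $[1/2,2]$ for large $t$. Then
\[
\mu(N,M)=(\sqrt N+\sqrt M)^2=2g-O((x-y)^2/g),
\]
so $\mu(N,M)= t\pm 2\eps t - O(t^{1/3})$. The starting point is at distance $O(t^{2/3})$ from the diagonal, so we are in the setting of Theorem \ref{T:half-space-full-tails} after a translation $v$. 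Small $t$ or small $\eps t$ is trivial by shrinking $c$, so we may absorb the $O(t^{1/3})$ errors when $\eps t^{2/3}$ is large. In the complementary range the bounds are trivial for small $c$: the upper-tail exponent is then $O(1)$, and the lower tail has $\eps^{3/2}t=O(1)$.

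\textbf{Step 3: upper tail of $h$.} On $\{X(u;v_+)\le t\}$ we have $X(u;v_+)\le \mu(N,M)-\eps t$. The lower-tail bound \eqref{E:half-space-lower-bd} holds for any $\alpha$, since it is uniform in $\alpha$ by monotonicity. It gives probability at most $2\exp(-c\eps^3 t^2)$, as stated. For $\eps$ near $1$ the rectangle degenerates, but then $\eps$ can be capped at a small constant, since the event is monotone in $\eps$ and the claimed bound only weakens by a constant factor in $c$.

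\textbf{Step 4: lower tail of $h$.} On $\{X(u;v_-)>t\}$ we have $X(u;v_-)\ge \mu(N,M)+\eps t$. Apply \eqref{E:half-space-upper-bd} with $\alpha\ge 1/2-\alpha_0 t^{-1/3}$. Here $\nu_\alpha=\mu+\Delta_\alpha(M)$, and $\Delta_\alpha(M)=O(((1/2-\alpha)\vee0)^2M)=O(t^{1/3})$, which is absorbed when $\eps t^{2/3}$ is large. The factor
\[
1\wedge \frac{\eps^{1/2}}{(1/2-\alpha)\vee 0}\ \ge\ 1\wedge \frac{\eps^{1/2}t^{1/3}}{\alpha_0}
\]
is bounded below by a constant once $\eps\ge t^{-2/3}$. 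The constraint $\eps\le m^{-1/3}$ in Theorem \ref{T:half-space-full-tails} matches the restriction $\eps<t^{-1/3}$ up to constants, giving $2\exp(-c\eps^{3/2}t)$. The lemma allows $\alpha$ bounded below only by a quantity that could be small. For a lower bound $\alpha\ge\alpha_*>0$ needed by Theorem \ref{T:half-space-full-tails}, use $1/2-\alpha_0t^{-1/3}\ge 1/4$ for $t$ large, and treat small $t$ trivially.

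\textbf{Main obstacle.} The main obstacle is bookkeeping rather than substance. It consists of getting the inverse relation and the rotation $R$ exactly right, including the $+2$ offset and parity, and checking that all $O(t^{1/3})$ shape corrections and the $\Delta_\alpha$ shift are dominated by $\eps t$ in the nontrivial regime.
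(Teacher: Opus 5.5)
Your proposal is correct and follows the paper's proof. The paper likewise rewrites $\{h^\alpha(t,x;\delta_y)\ge t/2+\eps t\}$ as a lower-tail event for $X_\alpha(p;p+(n,m))$ with $n,m=t/4+\eps t/2\pm(x-y)/2$ and $\mu(n,m)=t+2\eps t+O(t^{1/3})$, applies \eqref{E:half-space-lower-bd}, and treats the other tail with \eqref{E:half-space-upper-bd} after reducing to $\eps\ge C_0t^{-2/3}$ and using the lower bound on $\alpha$. Your remark in Step 3 that the rectangle degenerates for $\eps$ near $1$ is inaccurate: with $g=t/2+\eps t$ it grows to side length about $3t/4$. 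Capping $\eps$ at a small constant is still harmless there, since it keeps the normalized deviation below $1$.
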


\begin{proof}
We will ignore parity and rounding issues in $x, y$, as they are minor and do not affect the statements or proofs. We may also assume $\eps \ge C_0 t^{-2/3}$ for a constant $C_0>0$ depending only on $\alpha_0$, since otherwise the bounds are trivial.
 We can write
\begin{align}
\label{E:original-X}
 \P(h^\alpha(t, x; \delta_y) \ge t/2 + \epsilon t) = \P(X_\alpha(y/2, -y/2;  t/4 + \epsilon t/2 + x/2, t/4 + \epsilon t/2 - x/2) \le t).
\end{align}
Set
$$
n = t/4 + \eps t/2 + (x-y)/2, \quad m = t/4 + \eps t/2 - (x-y)/2, \quad \mu(m, n) = t + 2 \eps t + O(t^{1/3}), \quad p=(y/2, -y/2),
$$
where the constant in the $O(t^{1/3})$ term depends on $\alpha_0$. This allows us to rewrite \eqref{E:original-X} as 
$$
\P(X_\alpha(p; p + (n, m)) \le \mu(m, n) - 2\eps t + O(t^{1/3})), 
$$
which yields the desired bound from \eqref{E:half-space-lower-bd} in Theorem \ref{T:half-space-full-tails}.
For the second bound, the probability can be similarly written as
$$
\P(X_\alpha(p; p + (n, m)) \ge \mu(m, n) + 2\eps t + O(t^{1/3})), 
$$
which yields the desired bound from \eqref{E:half-space-upper-bd} in Theorem \ref{T:half-space-full-tails}, plus that $\eps \ge C_0 t^{-2/3}$ and the lower bound on $\alpha$.
\end{proof}

Two-point estimates for TASEP are slightly more delicate. We prove a slightly suboptimal bound that still yields tightness for Poisson-avoiding metrics. Denote
$$
\bar h^\alpha(x, t; \delta_y) = h^\alpha(x, t; \delta_y) - 2nt.
$$

\begin{lemma}
    \label{L:two-point-TASEP}
Fix $\alpha_0 \in \R$. 
There are constants $C_0, c > 0$ depending only on $\alpha_0$ such that the following holds. Take $0 < t_1 \le t_2 < \alpha_0 t_1$, and $x_1, x_2, y \in \II{0, \alpha_0 t_1^{2/3}}$, and $\alpha \ge 1/2 - \alpha_0 t_1^{1/3}$.  Then 
\begin{align*}
	\P\Big(|\bar h^\alpha(x_1, t_1; \delta_y) - \bar h^\alpha(x_2, t_2; \delta_y)| \ge a (|x_1 - x_2|^{1/2} + |t_2 - t_1|^{1/3})\Big) &\le 2\exp(-c a^{1/2}),
\end{align*}    
for any $a > 0$ satisfying
$$
C_0\log^2 \left( \frac{t_1^{1/3}}{|x_1 - x_2|^{1/2} + |t_2 - t_1|^{1/3}}\right) < a < t_1^{1/4}.
$$
\end{lemma}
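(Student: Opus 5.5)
The plan is to transfer the two-point bounds for exponential LPP (Proposition \ref{P:two-point-bd}, Proposition \ref{P:temporal-est}) to TASEP heights by inverting the LPP description of $h^\alpha$. The logarithm and the weak exponent $a^{1/2}$ come from a union bound plus the nonlinear inversion. Write $d = |x_1 - x_2|^{1/2} + |t_2 - t_1|^{1/3}$. By the triangle inequality, it suffices to treat separately the purely spatial increment $(x_1,t_1)\to(x_2,t_1)$ and the purely temporal increment $(x_2,t_1)\to(x_2,t_2)$, each at threshold $ad/2$.

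First I reformulate height increments as LPP increments. By definition, $h^\alpha(t,x;\delta_y)\ge g$ if and only if $X_\alpha(p;R(x,g))\le t$ with $p=(y/2,-y/2)$. Along the anti-diagonal line $g\mapsto R(x,g)$, the passage time grows at rate roughly $2$ per unit of $g$, as in the proof of Lemma \ref{L:one-point-TASEP}. So $\bar h^\alpha(x,t)$ is, up to a factor $1/2$ and lower-order terms, minus the centered passage time $\bar X$ to the point $R(x,2nt\text{-level})$, provided the passage-time profile along the line is monotone with slope bounded below.

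Concretely, fix the deterministic target $g_0=t/2$ (the appropriate centering) and a window $|g-g_0|\le K t^{1/3}$. On that window I want the linearization $X_\alpha(p;R(x,g)) = X_\alpha(p;R(x,g_0)) + 2(g-g_0) + \text{error}$. The error is controlled by Proposition \ref{P:two-point-bd} applied in the rotated coordinates, because moving along $g$ is a spatial move of size $|g-g_0|$ combined with an $O(|g-g_0|)$ temporal move. Lemma \ref{L:one-point-TASEP} ensures that $h$ lies in this window except on an event of probability $\exp(-cK^{3/2})$.

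Next I carry out the comparison, working on a grid. For the spatial increment, a change $x_1\to x_2$ at fixed $g$ moves the LPP endpoint by $|x_1-x_2|$ along a line of constant time-direction. Proposition \ref{P:two-point-bd} with $z_0=|x_1-x_2|$ gives fluctuations of order $a|x_1-x_2|^{1/2}$ with probability at least $1-2e^{-ca^2}$. The constraint \eqref{E:constraints} is where $a$ must exceed $C_0 z_0^{3/2}/t$-type thresholds; in the stated range this holds after adjusting constants. For the temporal increment, changing $t$ by $r=|t_2-t_1|$ at fixed $x$ shifts the crossing level $g$ by about $r/2$. This reduces to a temporal LPP increment, handled by Proposition \ref{P:temporal-est}, together with the $g$-linearization above; that yields tails $\exp(-ca^{3/2})$.

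The hard step is that $h$ is defined through the level $t$ crossing of a random monotone function of $g$. To bound $|\bar h(x_1,t_1)-\bar h(x_2,t_2)|\ge ad$, I must control the increments of $g\mapsto X_\alpha(p;R(x,g))$ uniformly over a window of $g$ values, not at a single point. My first attempt is to discretize $g$ in steps of size $d$ over the window of length $Kt^{1/3}$. That gives at most $(t^{1/3}/d)^{O(1)}$ grid points. Then I apply the two-point bounds at each grid point with parameter $a$ and take a union bound. This produces a factor $(t^{1/3}/d)^{C}e^{-ca^{3/2}}$, which is at most $e^{-ca^{1/2}}$ exactly when $a\ge C_0\log^2(t_1^{1/3}/d)$. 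This explains the lower constraint.

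Between grid points I use monotonicity of $X$ in $g$ to sandwich the crossing point. The resulting discretization error is at most one grid step $d$, and it is absorbed into $ad$. The upper constraint $a<t_1^{1/4}$ keeps $\eps$ in the admissible ranges $\eps<C_*r^{-1/3}$ and $\eps<t^{-1/3}$ of the LPP estimates.

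Throughout, the condition $\alpha\ge 1/2-\alpha_0 t_1^{-1/3}$ keeps $(1/2-\alpha)\vee 0$ at the critical scale, so the $\alpha$-dependent constraints in Propositions \ref{P:two-point-bd} and \ref{P:temporal-est} become constant factors depending only on $\alpha_0$. I deliberately settle for the crude exponent $a^{1/2}$, since it also absorbs the $\exp(-c\sqrt n)$ error terms and the window-exit probabilities.
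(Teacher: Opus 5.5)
Your architecture is the same as the paper's: discretize the height level, take a union bound over $O(t_1^{1/3}/d)$ levels plus one-point tails for leaving the window, and reduce each term to a two-point estimate for $X_\alpha$. The per-term estimate, however, is not justified, and it is the heart of the lemma.

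You assert a tail $e^{-ca^{3/2}}$ directly from Propositions \ref{P:two-point-bd} and \ref{P:temporal-est}, and claim that $a<t_1^{1/4}$ keeps you inside their admissible ranges. It does not. The upper constraints ($a<C_*z_0$ in \eqref{E:constraints}, $\eps<C_*r^{-1/3}$ in \eqref{E:constraints-phrased}) are relative to the size of the LPP increment, not to $t_1$. In TASEP units they require the threshold $ad$ to be at most of order $|x_1-x_2|$, respectively $r^{2/3}$ with $r$ the temporal separation of the two LPP endpoints. They therefore fail as soon as $a$ is large compared with $|x_1-x_2|^{1/2}$ or $|t_2-t_1|^{1/3}$ (e.g.\ $d\asymp 1$ and $a$ near $t_1^{1/4}$), which is the main regime of the lemma. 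There you have to pass through an auxiliary, farther point, as in the proof of Theorem \ref{T:elpp} (bounds \eqref{E:y1y2strong}, \eqref{E:y1y3strong}). That only gives tails of the form $e^{-c\tilde a^{3/4}}$.

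You have also misidentified the LPP event in each term. On $\{\bar h(x_1,t_1)\ge mk/2,\ \bar h(x_2,t_2)\le m(k-1)/2\}$ with $m\asymp ad$, the endpoints are $q_i=R(x_i,\,t_i/2+m(k+1-i)/2)$. Their diagonal (temporal) separation is of order $|m+(t_1-t_2)-(x_1-x_2)|$, so the height threshold itself enters the LPP geometry. The event becomes $\{X_\alpha(p;q_1)\le t_1,\ X_\alpha(p;q_2)\ge t_2\}\subset\{|\bar X_\alpha(p;q_1)-\bar X_\alpha(p;q_2)|\ge m/2\}$. Its rescaled parameter $\tilde a\asymp m/(d+m^{1/3})$ is only bounded below by $ca^{2/3}$. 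Combined with $e^{-c\tilde a^{3/4}}$, this is exactly where the exponent $a^{1/2}$ comes from. The $\log^2$ lower constraint is then what is needed to beat the factor $k_0\asymp t_1^{1/3}/d$ with that per-term bound; a genuine $e^{-ca^{3/2}}$ would need only $\log^{2/3}$, so your accounting of the constraint does not reflect the actual mechanism.

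Your linearization does not substitute for this. Moving $g$ at fixed $x$ moves $R(x,g)$ along the diagonal, i.e.\ temporally, not spatially. Over a window of length $Kt^{1/3}$ the linearization error is of order $(Kt^{1/3})^{1/3}$, which is not small compared with $ad$ when $d$ is bounded. The paper's direct formulation of each term as a two-point event between two different levels avoids linearization altogether.

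Finally, the case $d<1$ ($x_1=x_2$, $t_2-t_1<1$) is outside the reach of the LPP bounds. The paper treats it separately by counting rings of the Poisson clock at $x_1$.
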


\begin{proof}
First, we assume $|x_1 - x_2|^{1/2} + |t_2 - t_1|^{1/3} \ge 1$. Indeed, the only way for this to fail is if $x_1 = x_2$ and $t_2 - t_1 \le 1$. In this case, the bound follows from bounding the probability that the Poisson clock governing the height flips for TASEP at location $x_1 = x_2$ rings at least $a/2$ times in the interval $[t_1, t_2]$.
We also assume that $C_1(|x_1-x_2|^{1/2} + |t_2-t_1|^{1/2})< t_1^{1/3}$ for some constant $C_1>0$ large enough depending on $\alpha_0$. This is because, otherwise, the conclusion follows from applying Lemma \ref{L:one-point-TASEP} to $\bar h^\alpha(x_1, t_1; \delta_y)$ and $\bar h^\alpha(x_2, t_2; \delta_y)$ separately.

Below we use $C, c>0$ to denote large and small constants depending only on $\alpha_0$. Their values can change from line to line.

Set $Y_i =\bar h^\alpha(x_i, t_i; \delta_y), i = 1, 2$. As in the previous proof, we ignore parity and rounding issues. Let $m = m' a (|x_1 - x_2|^{1/2} + |t_2 - t_1|^{1/3})$ for some $m' \in [1/2, 1]$; we will choose the exact value of $m'$ later. 

Take $k_0 = \left\lceil\frac{t_1^{1/3}}{|x_1 - x_2|^{1/2} + |t_2 - t_1|^{1/3}}\right\rceil$.
Then $mk_0 \le t_1^{2/3}$, and by a union bound we have that
\begin{align}
\nonumber
	\P(|Y_1 - Y_2| \ge a (|x_1 - x_2|^{1/2} + &|t_2 - t_1|^{1/3})) \\ 
    \nonumber
\le \sum_{k = -k_0+1}^{k_0} \P(Y_1 \ge mk/2,& \;\; Y_2 \le m(k-1)/2) + \P(Y_1 \le m(k-1)/2, \;\; Y_2 \ge mk/2) \\
\nonumber
&+ \P(|Y_1| \ge mk_0/2) + \P(|Y_2| \ge mk_0/2) \\
\nonumber
    \le \sum_{k = -k_0+1}^{k_0} \P(Y_1 \ge mk/2,& \;\; Y_2 \le m(k-1)/2) + \P(Y_1 \le m(k-1)/2, \;\; Y_2 \ge mk/2) \\
\label{E:first-comp}
&+ 2 \exp(- cm^{3/2} t_1^{-1/2} k_0^{3/2}).
\end{align}
Here the second equality uses Lemma \ref{L:one-point-TASEP}. The final term in \eqref{E:first-comp} is $<2\exp(-c a^{3/4})$.

Since $k_0<\exp((a/C_0)^{1/2})+1$ (by the lower bound of $a$), it remains to bound each term in the sum by $2\exp(-c a^{1/2})$ for any $1<a<t_1^{1/4}$. We focus on $\P(Y_1 \ge mk/2, Y_2 \le m(k-1)/2)$, as the bound on the other term is identical. Let
$$
p = (y/2, -y/2), \quad q_i = (t_i/4 + m(k+ 1 - i)/4 + x_i/2, t_i/4 + m(k+ 1 - i)/4 - x_i/2), \quad i = 1, 2.
$$
Then similarly to the previous proof, we can write
\begin{align}
\label{E:YY-mK}
\P(Y_1 \ge mk/2, &\;\; Y_2 \le m(k-1)/2) = \P(X_\alpha(p; q_1) \le t_1,  X_\alpha(p; q_2) \ge t_2).
\end{align}
We have that $|\mu(q_1-p)-\mu(q_2-p)-t_1+t_2-m|<C(|t_1-t_2|t_1^{-2/3} + |x_1-x_2|t_1^{-1/3})$.
By taking $C_1$ large enough we have $|\mu(q_1-p)-\mu(q_2-p)|<m/2$.
Thus by denoting $\bar X_\alpha(u; v) = X_\alpha(u; v) - \mu(v- u)$ for any $u\le v\in \Z^2_{\ge}$, we have
\[
     \P(X_\alpha(p; q_1) \le t_1,  X_\alpha(p; q_2) \ge t_2) \le \P(|\bar X_\alpha(p; q_1) - \bar X_\alpha(p; q_2)| \ge m/2).
\]
Now take
\[
\tilde a = \frac{m}{t_1^{1/3} (|\hat x_1 - \hat x_2|^{1/2} + |\hat t_1 - \hat t_2 |^{1/3})  } = \frac{ m' a (|x_1 - x_2|^{1/2} + |t_1 - t_2|^{1/3})}{t_1^{1/3} (|\hat x_1 - \hat x_2|^{1/2} + |\hat t_1 - \hat t_2 |^{1/3})  },
\]
with $(\hat x_i, \hat t_i)$ taken to satisfy $(t_1 \hat t_i + 2^{5/3}t_1^{2/3}\hat x_i, t_1\hat t_i) = q_i$, for $i=1, 2$.
Observe that $2^{5/6}t_1^{1/3} |\hat x_1 - \hat x_2|^{1/2} = |x_1 - x_2|^{1/2}$, and $|\hat t_1 - \hat t_2| = |m + (t_1 - t_2) - (x_1 - x_2)|/t_1$.
By taking either $m'=1/2$ or $m'=1$ (where the choice depends only on $a, x_1, x_2, t_1, t_2$), we have
\[
\tilde a< \frac{Ca (|x_1 - x_2|^{1/2} + |t_1 - t_2|^{1/3})}{|x_1 - x_2|^{1/2} + |(t_1 - t_2)-(x_1-x_2)|^{1/3}} < Ca < Ct_1^{1/4}.
\]
Then we can use Propositions \ref{P:two-point-bd} and \ref{P:temporal-est}, and argue as in the proof of Theorem \ref{T:elpp} up to the bounds \eqref{E:y1y2strong} and \eqref{E:y1y3strong}, to conclude that $\P(|\bar X_\alpha(p; q_1) - \bar X_\alpha(p; q_2)| \ge m/2)< 2\exp(- c \tilde a^{3/4})$.
On the other hand, we have
\[
\tilde a > \frac{ca (|x_1 - x_2|^{1/2} + |t_1 - t_2|^{1/3})}{|x_1 - x_2|^{1/2} + |t_1-t_2|^{1/3} + m^{1/3}} > c(a\wedge m^{2/3}) > ca^{2/3}.
\]
Thus we get $\P(|\bar X_\alpha(p; q_1) - \bar X_\alpha(p; q_2)| \ge m/2)< 2\exp(- c a^{1/2})$.
\end{proof}

\subsection{Pre-landscapes and regularity properties}
\label{SS:prelandscapes}

In this subsection, we define half-space pre-landscapes, and establish their basic properties. As an upshot of the regularity results, we will prove that the KPZ fixed point in half-space is a continuous Markov process, when started from any initial condition of sub-parabolic growth.

For this subsection and throughout the next section, we will use the notation $f \diamond g$ for metric composition. More formally, consider $D \subset \R$, sets $D_1, D_2$ (usually a single point or subsets of $\R^k$ for some $k\in \N$), and functions $f:D_1 \times D \to \R, g:D \times D_2 \to \R$. Then $f \diamond g:D_1 \times D_2 \to \R$ is given by
$$
f \diamond g(x, y) = \sup_{z \in D} f(x, z) + g(z, y).
$$
\begin{definition}
\label{D:prelandscapes}
   A random function $\cM^\rho: \Q^4 \cap \mathbb{H}^2_\uparrow \to \R$ is a \textbf{(half-space) pre-landscape} with parameter $\rho \in \R \cup \{-\infty\}$ if the following three conditions hold:
\begin{enumerate}
    \item (Triangle inequality) For any $o = (x, s)$, $p = (y, r)$, $q = (z, t) \in \Q_{\ge 0} \times \Q$ with $s<r<t$, we have
				$\cM^\rho(o; p) + \cM^\rho(p; q) \le \cM^\rho(o; q)$.
				\item (Independent increments) For any $s_1<s_2<\cdots<s_k \in \Q$, the increments $\cM^\rho(\cdot, s_i; \cdot, s_{i+1}):\Q_{\ge 0}^2 \to \R, i\in\II{1, k-1}$ are independent.
                \item (KPZ fixed point marginals) Recall that $\NW_{+}$ is the set of all $f \in \UC_+$ such that $\supp(f) = \{x : f(x) > -\infty\}$ is finite. Consider any function $f \in \NW_{+}$ with $\supp(f) \subset \Q_{\ge 0}$ and $f(\R)\subset\Q\cup\{-\infty\}$.  Then for any $t, s \in \Q$, $s > 0$ and any finite set $Y \subset \Q_{\ge 0}$ we have that
				\[
				 (f \diamond \cM^\rho(\cdot, t; y, t + s) : y \in Y)
				\eqd (\fh^\rho (t, y; f) : y \in Y).
				\]
                Here the right-hand side above is given by Theorem \ref{T:half-space-fixed-point}.
\end{enumerate}
Moving forward, we will use the notation $\cM^\rho_{s, t}(\cdot, \cdot) = \cM^\rho(\cdot, s; \cdot, t)$ for the increment in a pre-landscape from time $s$ to time $t$, and the notation $\fh^\rho_t f = \fh^\rho (t, \cdot; f)$, which will help emphasize the semigroup structure under metric composition.
\end{definition}
Note that any subsequential limit $\cQ^\rho$ from Proposition \ref{P:KPZFP-marginals} is a pre-landscape of parameter $\rho$, by parts $1$ and $2$ of Proposition \ref{P:properties-elpp} and Proposition \ref{P:KPZFP-marginals}. 

Many of the properties in Proposition \ref{P:properties-elpp} have straightforward analogues for pre-landscapes. The important exception here is the stationary horizon property, Proposition \ref{P:properties-elpp}.10, which will only be realized as a consequence of our characterization theorem. The next two lemmas derive versions of parts $1$-$8$. We start with a pre-landscape analogue of the time-reversability in part $8$.

\begin{lemma}
			\label{L:Equation-QQ}
            For $\cM^\rho$ from Definition \ref{D:prelandscapes}, the time reversal $(x, s; y, t) \mapsto \cM^\rho(y, -s; x, -t)$ is also a half-space pre-landscape with parameter $\rho$.
		\end{lemma}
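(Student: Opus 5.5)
Set $\cR(x, s; y, t) := \cM^\rho(y, -t; x, -s)$, which is the natural reading of the time reversal (swap the two space-time points and negate time), so that $\cR$ is again indexed by $\Q^4 \cap \mathbb H^2_\uparrow$. The plan is to verify the three conditions of Definition \ref{D:prelandscapes} for $\cR$ in turn. The first two are pure relabelling. For the third, we will reduce to a subsequential limit $\cQ^\rho$ of exponential LPP, which is time-reversal symmetric by Proposition \ref{P:properties-elpp}.8.

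The triangle inequality for $\cR$ is the triangle inequality for $\cM^\rho$ applied to the reversed triple $(z, -t), (y, -r), (x, -s)$, whose times $-t < -r < -s$ are in increasing order. For independent increments, the increments of $\cR$ over $[s_i, s_{i+1}]$ are deterministic functions of the increments of $\cM^\rho$ over $[-s_{i+1}, -s_i]$. These reversed intervals are again consecutive and disjoint, so independence follows from condition 2 for $\cM^\rho$.

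The main step is the KPZ fixed point marginals. Fix $f \in \NW_+$ with rational data and support $\{x_1, \dots, x_k\}$, times $t$ and $s > 0$, and a finite set $Y \subset \Q_{\ge 0}$. The law of the vector $(f \diamond \cR_{t, t+s}(\cdot, y))_{y \in Y}$ is determined by its joint distribution function evaluated at rational thresholds $g : Y \to \Q$. The key identity is the skew time-reversal trick:
\begin{align*}
\{ f \diamond \cR_{t,t+s}(\cdot, y) \le g(y) \ \forall y \in Y\}
&= \{ \cM^\rho(y, -t-s; x_i, -t) \le g(y) - f(x_i) \ \forall i, y\} \\
&= \{ (-g) \diamond \cM^\rho_{-t-s, -t}(\cdot, x_i) \le -f(x_i) \ \forall i\},
\end{align*}
where $-g$ is extended by $-\infty$ off $Y$, so that $-g \in \NW_+$ with rational data.

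By condition 3 for $\cM^\rho$, the probability of the last event equals the corresponding probability for the half-space KPZ fixed point, $\P(\fh^\rho_s(-g)(x_i) \le -f(x_i)\ \forall i)$. By Proposition \ref{P:KPZFP-marginals}, this equals the same expression computed with a subsequential limit $\cQ^\rho$ of $\cQ^\rho_n$, which exists by Theorem \ref{T:elpp} (for $\rho = -\infty$ we use the sequence $\cQ^{-c(2n)^{1/3}}_n$ as there). Running the same chain of equalities backwards, but now for $\cQ^\rho$, turns this probability into $\P(\max_i f(x_i) + \cQ^\rho(y, -t-s; x_i, -t) \le g(y)\ \forall y)$. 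By the time reversal and stationarity of Proposition \ref{P:properties-elpp}.8, this equals $\P(\max_i f(x_i) + \cQ^\rho(x_i, t; y, t+s) \le g(y)\ \forall y)$. Applying Proposition \ref{P:KPZFP-marginals} once more identifies this with $\P(\fh^\rho_s f(y) \le g(y)\ \forall y)$.

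Since rational thresholds determine the law, this gives condition 3 for $\cR$. The only delicate point is this duality step: one must check that the event is exactly of the rank-one form $\{M_{ij} \le b_j - a_i\}$, which is symmetric under swapping the roles of the initial condition and the test function. Everything else is bookkeeping.
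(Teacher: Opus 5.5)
Your proposal is correct and is essentially the paper's proof. Conditions 1 and 2 follow by relabelling. Condition 3 follows from the duality $\P[f \diamond \mathcal{R}_{t,t+s} \le g] = \P[(-g) \diamond \cM^\rho_{-t-s,-t} \le -f]$, which reduces it to the symmetry $\P[\fh^\rho_s(-g) \le -f] = \P[\fh^\rho_s f \le g]$; this symmetry is checked by running the same identity for a subsequential limit $\cQ^\rho$ and using its time-reversal invariance (Proposition \ref{P:properties-elpp}.8) together with Proposition \ref{P:KPZFP-marginals}. You are also right to read the reversal as $\cM^\rho(y,-t;x,-s)$: the statement as printed has the times transposed, and this is the intended map.
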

		
		\begin{proof}
        Properties $1, 2$ are immediate, so we just check property $3$.
			Take any $f, g \in \NW_+$ with $\supp(f), \supp(g) \subset \Q_{\ge 0}$, and $\supp(f), \supp(g)$ being finite, and $f(\R_{\ge 0}), g(\R_{\ge 0})\subset\Q\cup\{-\infty\}$. Then
			\begin{align*}
				\P\left [ \cM^\rho_{t-s, t} \diamond f \le - g \right] = \P\left [ g \diamond \cM^\rho_{t-s, t} \diamond f \le 0 \right] =\P\left [ g \diamond \cM^\rho_{t-s, t} \le - f \right] = \P[\fh^\alpha_s g \le -f].
			\end{align*}
Property $3$ then follows if we can show that $\P[\fh^\alpha_s g \le -f] =  \P[\fh^\alpha_s f \le -g]$. For this, consider the same chain of equalities, but with $\cM^\rho$ equal to a subsequential limit $\cQ^\rho$ from Proposition \ref{P:KPZFP-marginals}. In this case, the left-hand side above equals $\P\left [f \diamond \cQ_{-t, -t + s}^\rho  \le - g \right]$ by time-reversal invariance (Proposition \ref{P:properties-elpp}.8), which equals $\P[\fh^\rho_s f \le -g]$ by Proposition \ref{P:KPZFP-marginals}. 
		\end{proof}

        \begin{lemma}
            \label{L:M-lemma}
        For $\cM^\rho$ from Definition \ref{D:prelandscapes}, it has a continuous extension to $\mathbb H^2_\uparrow$, and this extension satisfies parts $1, 2, 4, 5, 6$, and $7$ of Proposition \ref{P:properties-elpp} (in the place of $\cQ^\rho$ there).    
        Moreover, for any $q > 0, c \in \R$ if we consider the rescaling
        \begin{equation}
        \label{E:KPZ-scale-quasi-invariance}
            \cM^{\rho, q, c}(x, s; y, t) := q^{-1} \cM^\rho(q^2 x, q^3 s + c; q^2 y, q^3 t + c)
        \end{equation}
        then $\cM^{\rho, q, c}|_{\Q^4 \cap \mathbb H^2_\uparrow}$ is a half-space pre-landscape of parameter $q \alpha$.
        \end{lemma}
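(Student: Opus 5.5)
The guiding idea is that every finite-dimensional law needed for parts $4$–$7$ of Proposition \ref{P:properties-elpp} is already a law of $\cM^\rho$ on $\Q^4 \cap \mathbb H^2_\uparrow$, and that these laws are fixed by the pre-landscape axioms; hence they match those of any subsequential limit $\cQ^\rho$ from Proposition \ref{P:KPZFP-marginals}.

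\textbf{Step 1: one- and two-point laws agree with $\cQ^\rho$.} For $x,y \in \Q_{\ge 0}$ take $f = 0\cdot\mathds{1}_{\{x\}}$ (that is, $0$ at $x$ and $-\infty$ elsewhere). Property $3$ then shows that $(\cM^\rho(x,s;y,t): y \in Y)$ has the law of $(\fh^\rho(t-s,y;f): y\in Y)$. By Proposition \ref{P:KPZFP-marginals} this is also the law of $(\cQ^\rho(x,s;y,t): y \in Y)$. Two consequences follow.
\begin{itemize}
\item Parts $4$, $5$ and $7$ hold for $\cM^\rho$ restricted to rationals. Part $7$ needs only a single starting point and many endpoints; we apply it on finite sets and then take a monotone limit, so only countably many points are involved.
\item Spatial two-point laws with a common starting point coincide with those of $\cQ^\rho$. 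Using Lemma \ref{L:Equation-QQ} (time reversal), the laws with a common endpoint also coincide.
\end{itemize}

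\textbf{Step 2: temporal increments (part $6$).} The collinear temporal increment $\cM^\rho(x,s;y,s+t)$ versus $\cM^\rho(x,s;y',s+t+r)$ involves two different end times, so its joint law is not directly one fixed-point marginal. Here the triangle inequality and independent increments replace the prelimit metric composition, following the proof of Proposition \ref{P:temporal-est}.
\begin{itemize}
\item \emph{Lower bound on the increment:} use $\cM^\rho(u_1) + \cM^\rho(y,s+t;y',s+t+r) \le \cM^\rho(u_2)$, together with the one-point lower tail of the independent short piece.
\item \emph{Upper bound on the increment:} apply property $3$ with initial condition $f = \cM^\rho(x,s;\cdot,s+t)$. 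Because the increments are independent, the law of $\max_z \cM^\rho(x,s;z,s+t)+\cM^\rho(z,s+t;y',s+t+r)$ over a finite set of rational $z$ equals the law of the fixed-point evolution from that random initial condition. This is the same as the corresponding quantity for $\cQ^\rho$, which is bounded by metric composition there.
\end{itemize}
Since the triangle inequality gives $\cM^\rho(x,s;y',s+t+r) \ge$ this maximum, we need the reverse comparison in law. I would get it by noting that $\cM^\rho(x,s;y',s+t+r)$ and $\max_{z\in\Q_{\ge0}}(\dots)$ have the same law: both equal $\fh^\rho_{t+r}$ of $\mathds 1_x$, using the Markov property of the $\fh$-marginals. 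Combined with the pointwise inequality, this forces almost sure equality, so metric composition holds on rationals. \textbf{This is the main obstacle.} It requires checking that $\fh^\rho$ composes as a semigroup on the relevant $\NW_+$/UC initial conditions, which I would deduce from $\cQ^\rho$ satisfying metric composition (Proposition \ref{P:properties-elpp}.2) together with continuity in the initial data, as in the proof of Proposition \ref{P:KPZFP-marginals}. The same argument gives part $2$ (metric composition) for $\cM^\rho$.

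\textbf{Step 3: continuity, parts $1$ and $2$, and scaling.} With the two-point tails of parts $5$ and $6$ on rationals, the chaining/Kolmogorov argument from the proof of Theorem \ref{T:elpp} yields local Hölder continuity on $\Q^4 \cap K_b$. This gives a unique continuous extension, and every property passes to it by continuity:
\begin{itemize}
\item independent increments (part $1$) extends by taking limits;
\item metric composition (part $2$) extends by using the parabolic shape bound (part $7$) to localize the argmax.
\end{itemize}
Finally, the rescaling \eqref{E:KPZ-scale-quasi-invariance} preserves the triangle inequality and independent increments trivially. For property $3$, the KPZ-fixed-point marginals transform by the same scaling. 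This can be read off from $\cQ^\rho$ via Proposition \ref{P:properties-elpp}.3 and time stationarity (part $8$), because the $\fh$-marginals are exactly the marginals of $\cQ^\rho$.
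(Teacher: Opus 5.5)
Your proposal is correct and follows essentially the same route as the paper. The crux is metric composition on rational points, which you obtain exactly as the paper does (the argument of \cite[Lemma 3.3]{DZ24}): the pointwise triangle inequality combined with equality in law with $\cQ^\rho$. Independent increments then give that the joint law of the two temporally separated values is that of $\cQ^\rho$. The remaining pieces also match: parts $4$, $5$, $7$ from the fixed-point marginals and time reversal, continuity from Kolmogorov--Centsov, and the scaling statement from parts $3$ and $8$ of Proposition \ref{P:properties-elpp}. The direct tail-bound detour at the start of your Step 2 is unnecessary, since once metric composition on rationals is in hand, part $6$ is simply inherited from $\cQ^\rho$.
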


        \begin{proof}
        First, $\cM^\rho$ satisfies the metric composition law (part $2$) on $\Q^4 \cap \mathbb H^2_\uparrow$ by the same proof as \cite[Lemma 3.3]{DZ24}. Here we use $\cQ^\rho$ from Proposition \ref{P:KPZFP-marginals} in the place of the full-space landscape, and the metric composition law and continuity of $\cQ^\rho$ from Proposition \ref{P:properties-elpp}.

        Next, $\cM^\rho$ satisfies parts $4, 5, 7$ on rational points since marginals of $\cM^\rho$ are the same as for any subsequential limit $\cQ^\rho$ from Proposition \ref{P:KPZFP-marginals}, by Definition \ref{D:prelandscapes}.3 and the time-reversibility in Lemma \ref{L:Equation-QQ}. For part $6$, using the metric composition law on rationals, for any points $(x, s; y, t), (x, s; y', t + r) \in \Q^4 \cap \mathbb H^2_\uparrow$ for $r > 0$ we can write
        $$
        (\cM^\rho_{s, t}(x, y), \cM^\rho_{s, t + r}(x, y')) = (\cM^\rho_{s, t}(x, y),\cM^\rho_{s, t} \diamond \cM^\rho_{t, t + r}(x, y')) \eqd (\cQ^\rho_{s, t}(x, y),\cQ^\rho_{s, t} \diamond \cQ^\rho_{t, t + r}(x, y')).
        $$
       Here the final equality in law uses the independent increment property of $\cM^\rho, \cQ^\rho$, and the fact that any marginal of the form $\cM^\rho_{s, t}(\cdot, x), \cM^\rho_{s, t}(x, \cdot)$ is the same as the corresponding one in $\cQ^\rho$ and $\cM^\rho$ by Definition \ref{D:prelandscapes}.3 and the time-reversibility in Lemma \ref{L:Equation-QQ}.

       Given parts $4, 5, 6$ for $\cM^\rho$ on rationals, the existence of a continuous extension follows from the Kolmogorov-Centsov criterion. The resulting continuous extension inherits parts $1, 2, 4, 5, 6, 7$ from $\cM^\rho$ restricted to rational points. Note that the metric composition law (which is a supremum on rationals) becomes a maximum almost surely over $\R$ by the parabolic decay bound in part $7$. Finally, the `Moreover' follows from the same rescaling property of the KPZ fixed point marginals, which itself requires parts $3$ and $8$ of Proposition \ref{P:properties-elpp}, together with Proposition \ref{P:KPZFP-marginals}.
           \end{proof}

We are now ready to give regularity estimates for pre-landscapes. Each of them is a complete analogue of a corresponding statement in \cite{DOV}, with a proof that goes through verbatim.

\begin{prop}
\label{P:only-easy-consequences}
Take any half-space pre-landscape $\cM^\rho$ continuously extended to $\mathbb{H}^2_\uparrow$. Let $\cK^\rho(x, s; y, t) = \cM^\rho(x, s; y, t) + \frac{(x-y)^2}{(t-s)}$ for any $(x,s;y,t)\in \mathbb{H}^2_\uparrow$.
\begin{enumerate}
\item \textup{(Modulus of continuity, analogue of \cite[Proposition 10.5]{DOV}).} For any $b \ge 2, \ep \le 1$, define
$$
K^\ep_b = \{(x, t; y, t + s) \in \mathbb H^2_\uparrow : s \in [\ep, b], \text{and } |t|, x, y \le b \}.
$$
For two points $u_i = (x_i, t_i; y_i, s_i)\in K^\ep_b$, $i = 1, 2$, let 
\begin{align*}
    \tau &= \tau(u_1, u_2) = \|(t_1, s_1) - (t_2, s_2)\|_2, \\
    \xi &= \xi(u_1, u_2) = \|(x_1, y_1) - (x_2, y_2)\|_2.
\end{align*}
Then if $\tau^{1/3} + \xi^{1/2} \le \min(\eps/b, 1/(\rho \vee 0))$, we have
$$
|\cK^\rho(u_1) - \cK^\rho(u_2)| \le M \lf(\tau^{1/3}\log^{2/3}(\tau^{-1}) + \xi^{1/2}\log^{1/2}(4b\xi^{-1}) \rg),
$$
with a random variable $M$  satisfying
$\p(M > a) \le b^{4}[(b/\ep)^{10} + (\rho \vee 0)^{10}] \exp(-ca^{3/2})$
for all $a > 0$. Here $c>0$ is a universal constant.
\item \textup{(Shape bound, analogue of \cite[Corollary 10.7]{DOV}).} Fix $\rho_0 > 1$ and assume $\rho < \rho_0$. Then there exists $c > 0$ depending on $\rho_0$ such that the following holds. For all $u = (x, t; y, t + s) \in \mathbb H^2_\uparrow$ with $s \le \rho_0$, we have
$$
\lf|\cK^\rho(u) \rg|\le M s^{1/3} \log^2\lf(\frac{2\|u\|_2}{s}\rg),
$$
where $M > 0$ is a random variable satisfying
$\p(M > a) \le 2\exp(-ca^{3/2})$
for all $a > 0$.
\item \textup{(Metric composition holds everywhere, analogue of \cite[Lemma 10.8]{DOV})}
For each $u = (x, r;y, t) \in \mathbb H^2_\uparrow$ and $s \in (r, t)$, define
	$f_{u, s}(z) = \cM^\rho(x,r;z,s)+\cM^\rho(z,s;y,t)$. Then on an event of probability $1$, $\cM^\rho$ satisfies the metric composition law
	\begin{equation}
	\label{E:L-met-strong}
	\cM^\rho(u)=\max_{z \in \mathbb R} f_{u, s}(z)
	\end{equation}
	for every $u = (x, r;y, t) \in \mathbb H^2_\uparrow$ and $s \in (r, t)$. Moreover, for any compact set $K \sset \mathbb H^2_\uparrow$, there exists a random variable $B_K$ such that for all $u = (x, r;y, t) \in K$ and $s \in (r, t)$, the set where $f_{u, s}$ attains its maximum is contained in the interval $[-B_K, B_K]$.
\end{enumerate}
\end{prop}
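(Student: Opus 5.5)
The plan is to follow the proofs of \cite[Proposition 10.5, Corollary 10.7, Lemma 10.8]{DOV} essentially line by line. The only inputs are the one-point and two-point tail bounds for $\cM^\rho$, i.e., parts $4$, $5$, $6$ and $7$ of Proposition \ref{P:properties-elpp}, which hold for $\cM^\rho$ by Lemma \ref{L:M-lemma}. The new feature compared with \cite{DOV} is that these bounds carry the boundary corrections $t(\rho\vee 0)^2$ and the lower constraints $a > C_0\max(\dots,\sqrt{z}(\rho\vee 0))$ and $a>C_0 r^{2/3}(\rho\vee 0)^2$. The restriction $\tau^{1/3}+\xi^{1/2}\le 1/(\rho\vee 0)$ and the factor $(\rho\vee 0)^{10}$ in the tail of $M$ are there exactly to absorb these corrections.

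\textbf{Part 1.} Work on $K_b^\eps$ and cover it by a dyadic grid of mesh $2^{-k}$ in time and $2^{-2k/3}$-type spacing in space (the standard Kolmogorov-type chaining in \cite[Lemma 3.3]{DOV}). For neighboring grid points differing only in $y$ (or, via time reversal from Lemma \ref{L:Equation-QQ}, only in $x$), apply part $5$. On $K_b^\eps$ we have $|x-y|\le 2b$ and $t\ge\eps$, so the constraint $a\ge C_0(z^{3/2}/t+\sqrt z|x-y|/t+\sqrt z(\rho\vee0))$ is $O(1)$ once $\sqrt z\le \eps/b$ and $\sqrt z\le 1/(\rho\vee 0)$. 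Hence the bound $\P(|\Delta|\ge a\sqrt z)\le 2e^{-ca^2}$ holds for all $a$ after adjusting constants. For time increments, first move along the line through the fixed endpoint so that collinearity holds and apply part $6$, then correct the spatial displacement with part $5$. Summing over the $O((b/\eps)^{C}+(\rho\vee0)^{C})$ grid points per scale, with deviations $a\sim \log^{1/2}$ in space and $\log^{2/3}$ in time, gives the modulus with a random $M$ whose tail is as stated. The polynomial prefactor comes from counting grid points at the coarsest admissible scale $\min(\eps/b,1/(\rho\vee0))$. Passing from grid to all points uses the continuity of the extension.

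\textbf{Part 2.} Follow \cite[Corollary 10.7]{DOV}. By the rescaling \eqref{E:KPZ-scale-quasi-invariance} (which maps $\rho\mapsto q\rho$ and keeps $\rho<\rho_0$ for $q\le1$) and time stationarity, reduce to unit-size boxes $\{s\in[1,2]\}$ translated in space-time and at dyadic scales $s\in[2^{-k-1},2^{-k}]$. On each box apply part $1$ with $\eps,b$ of order one, together with the one-point bounds from part $4$ at the box's corner. Part $4$ has the extra term $t(\rho\vee0)^2\le \rho_0^2 s$, which is harmless since $s\le\rho_0$. A union bound over boxes at distance $\sim\|u\|$ and scale $s$ costs a polynomial factor in $\|u\|/s$, which is absorbed by the $\log^2(2\|u\|_2/s)$ factor in the $a^{3/2}$ tail.

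\textbf{Part 3.} Take rational $s$ and rational $u$. There metric composition holds by Lemma \ref{L:M-lemma}, and the maximizer is confined to a compact set by the shape bound of part 2, since $-(x-z)^2/(s-r)-(z-y)^2/(t-s)$ dominates the logarithmic error. Continuity from part $1$ then extends the identity to all $u\in K$ and $s\in(r,t)$, with $B_K$ given by the uniform parabolic confinement. The lower bound $\ge$ follows from the triangle inequality by continuity.

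The main obstacle is the bookkeeping in part $1$: verifying that the lower constraints on $a$ in parts $5$ and $6$ are uniformly satisfied on the chaining increments, given the regime $\tau^{1/3}+\xi^{1/2}\le\min(\eps/b,1/(\rho\vee0))$.
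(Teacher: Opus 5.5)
Your proposal is correct and follows the paper's proof. In part 1, you shift the time coordinates along a line using part 6, then correct the resulting spatial displacement (of size at most $\tau b/\eps$) using part 5 of Proposition \ref{P:properties-elpp}; the regime $\tau^{1/3}+\xi^{1/2}\le\min(\eps/b,1/(\rho\vee 0))$ removes the lower constraints on $a$, and the result feeds into a chaining lemma (the paper uses \cite[Lemma 3.3]{dauvergne2021bulk}). Parts 2 and 3 then follow the proofs of \cite[Corollary 10.7, Lemma 10.8]{DOV}, taking as inputs part 1, the one-point bounds of part 4, and metric composition on rationals.

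One correction in part 2: the half-space model has no spatial translation or shear invariance, so you cannot reduce far-away boxes (or boxes with large $|x-y|/s$) to part 1 with $b$ of order one. Instead, apply part 1 directly on $K^\eps_b$ with dyadic $b,\eps$ comparable to $\|u\|_2$ and $s$. Its polynomial prefactor $b^4[(b/\eps)^{10}+(\rho\vee 0)^{10}]$ is then absorbed by the $\log^2(2\|u\|_2/s)$ factor, exactly as you anticipate.
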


\begin{proof}
For part $1$,we first claim that for all $u_1 =(x_1, s_1; y_1, t_1), u_2=(x_2, s_2; y_2, t_2) \in K^\ep_b$ with $\tau^{1/3} + \xi^{1/2} \le \min(\eps/b, 1/(\rho \vee 0))$ we have 
\begin{equation}
\label{E:Ku}
\p\lf(|\cK^\rho(u_1) - \cK^\rho(u_2)|  \ge m\tau^{1/3} + \ell \xi^{1/2}\rg) \le Ce^{-cm^{3/2}} + Ce^{-c\ell^2}
\end{equation}
for universal constants $c$ and $C$. Indeed, we first apply Proposition \ref{P:properties-elpp}.6 to shift to the time coordinates in $u_1$ to $s_1, t_1$. The lower bound on $a$ in that proposition goes away because of our upper bound on $\tau$.  Note that when we do this, the collinearity condition in Proposition \ref{P:properties-elpp}.6 means that the spatial coordinates may shift by at most $\tau b/\eps$. Therefore to complete the bound we must apply Proposition \ref{P:properties-elpp}.5 with $z = \xi + \tau b/\eps$. At this point, the lower bound on $a$ in that proposition goes away because of our upper bound on both $\tau$ and $\xi$.

Now, we can think of $K^\ep_b$ as a subset of the box $S = [-b, b]^3 \X [\eps, 2b]$ with coordinates $x, y, t, s$. Therefore as in the proof of \cite[Proposition 10.5]{DOV}, we may complete the proof by appealing to a general lemma for converting tail bounds to modulus of continuity estimates. More precisely, we apply \cite[Lemma 3.3]{dauvergne2021bulk} with $\alpha_x = \alpha_y = 1/2, \alpha_s = \alpha_t = 1/3, \beta_x = \beta_y = 2, \beta_s = \beta_t = 3/2, r_x = r_y = \min(\eps/b, 1/(\rho \vee 0))^2, r_s = r_t = \min(\eps/b, 1/(\rho \vee 0))^3$. This gives the desired result after simplification.
The conditions that $b \ge 2$ and $\ep \le 1$ are used only in the simplification. 

Part $2$ follows verbatim as in the proof of \cite[Corollary 10.7]{DOV}. The two inputs required for that proof are \cite[Proposition 10.5]{DOV} and a one-point tail bound of the form
$$
\P(|\cK^\rho(x, t; y, t + s)| \ge as^{1/3}) \le 2 \exp(-ca^{3/2})
$$
for all $a > 0$. Part $1$ above gives the analogue here of \cite[Proposition 10.5]{DOV}, and Proposition \ref{P:properties-elpp}.4 (for $\cM^\rho$ by Lemma \ref{L:M-lemma}) gives the above bound for $\rho, s < \rho_0$, after allowing the constant $c$ to depend on $\rho_0$.

Part $3$ follows verbatim as in the proof of \cite[Lemma 10.8]{DOV}. The necessary inputs are metric composition for all rational $u, s$, which is provided by Proposition \ref{P:properties-elpp}.2 (for $\cM^\rho$ by Lemma \ref{L:M-lemma}) in our setting, and \cite[Corollary 10.7]{DOV}, which here is replaced by part $2$ above.
\end{proof}

We have now gathered all the necessary preliminary estimates to move forward with the proof of our characterization theorem, which we execute in the next section. We end this section by using the regularities in the previous proposition to extend the KPZ fixed point in half-space $\mathfrak{h}^\rho_t$ to a time-homogeneous Markov process.
Recall the space $\UC_{0,+}$ from \eqref{E:UC0+}.

\begin{theorem}
    \label{T:Markov-process}
 For $\rho \in \R \cup \{-\infty\}$, take any half-space pre-landscape $\cM^\rho$, continuously extended to $\mathbb{H}^2_\uparrow$.
 Consider a function $f \in \UC_{0,+}$.
For each $t > 0$, define the process
$\mathfrak{h}^\rho_t f = f \diamond \cM^\rho_{0, t}$, and set $\mathfrak{h}^\rho_0 f = f$.
Then $(\mathfrak{h}^\rho_t, t \ge 0)$ is a time-homogeneous Markov process on $\UC_{0,+}$, whose law does not depend on the choice of pre-landscape. Moreover, for almost surely $\cM^\rho$, $(t,f)\mapsto \mathfrak{h}^\rho_tf$ is continuous on $\UC_{0,+}$, and $\fh^\rho_t f$ is a continuous function for each $t > 0, f \in \UC_{0,+}$. Lastly, for any finite set $Y \subset \R_{\ge 0}, t > 0$ and any function $f \in \UC_{0,+}$  satisfying
\begin{equation}
    \label{E:linear-grwoth}
    \limsup_{x \to \infty}\frac{f(x)}{x} < \infty,
\end{equation}
the marginal
$
(\mathfrak{h}^\rho_t f(y) : y \in Y)
$
is given by Theorem \ref{T:half-space-fixed-point}. 
The Markov process $(\mathfrak{h}^\rho_t, t \ge 0)$ is the \textbf{KPZ fixed point in half-space} with parameter $\rho$.
\end{theorem}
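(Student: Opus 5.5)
We propose to derive everything from the continuous pre-landscape $\cM^\rho$ and the regularity results in Proposition \ref{P:only-easy-consequences}, following the full-space argument of \cite{matetski2016kpz, DOV}.

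\textbf{Step 1: the process is well defined on $\UC_{0,+}$ with continuous outputs.} Fix $f\in\UC_{0,+}$ and $t>0$. The shape bound (Proposition \ref{P:only-easy-consequences}.2) gives $\cM^\rho(x,0;y,t)\le -(x-y)^2/t + M t^{1/3}\log^2(2\|u\|_2/t)$. Since $\limsup f(x)/x^2\le 0$, the supremum in $f\diamond\cM^\rho_{0,t}(y)$ is attained on a random compact set of $x$, locally uniformly in $y$ and in $t$ on compact subsets of $(0,\infty)$. Continuity of $\cM^\rho$ then shows that $\fh^\rho_t f$ is finite and continuous, with at most parabolic growth of the form $C_t(1+y^2)$ and coefficient $\to 0$ as $t\to 0$ (for $y$ large, compare with the $-(x-y)^2/t$ penalty). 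In particular $\fh^\rho_tf\in\UC_{0,+}$.

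\textbf{Step 2: continuity of $(t,f)\mapsto\fh^\rho_t f$.} For $t>0$, take $f_n\to f$ in local hypograph topology and $t_n\to t$. The uniform argmax localization from Step 1, together with upper semicontinuity and hypograph convergence, gives uniform-on-compact convergence of $\fh_{t_n}f_n$. This uses that hypograph convergence gives $\limsup$ and $\liminf$ control of sup-convolutions against a continuous kernel. At $t=0$, the lower bound $\cM^\rho(x,0;y,t)\ge -(x-y)^2/t - Mt^{1/3}\log^2$ combined with the upper bound forces $\fh_t f\to f$ in hypograph topology. This matches \cite[Section 3]{matetski2016kpz}, and I expect it to be the most delicate bookkeeping step, especially the boundary at $x=0$ and uniformity in $f_n$.

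\textbf{Step 3: Markov property, semigroup, and independence of the choice of pre-landscape.} The metric composition law (Proposition \ref{P:only-easy-consequences}.3), holding everywhere almost surely, gives $\fh_{t+s}f=(\fh_t f)\diamond\cM^\rho_{t,t+s}$. Independent increments (Definition \ref{D:prelandscapes}.2) and time-stationarity (via Lemma \ref{L:M-lemma} with $c$ shifted) give the time-homogeneous Markov property. The law of $\fh_t f$ does not depend on the choice of pre-landscape. For $f\in\NW_+$ with rational data this is Definition \ref{D:prelandscapes}.3. It extends to finite-dimensional marginals at real $y$ by continuity in $y$, and then to all of $\UC_{0,+}$ by approximating $f$ by such $f_k\to f$, using the continuity of Step 2. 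Finite-dimensional laws of $\fh_t f$ in $y$ determine its law as a continuous function.

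\textbf{Step 4: identification with Theorem \ref{T:half-space-fixed-point} under \eqref{E:linear-grwoth}.} Take $\cM^\rho=\cQ^\rho$, the subsequential limit. Proposition \ref{P:KPZFP-marginals} then gives exactly the marginals of Theorem \ref{T:half-space-fixed-point} for all $f\in\UC_+$ with linear growth. Since the law does not depend on the pre-landscape (Step 3), this marginal identification transfers to every $\cM^\rho$.
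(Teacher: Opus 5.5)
Your proposal is correct and follows essentially the same route as the paper. You get regularity and continuity from the shape bound and continuity of $\cM^\rho$ (Proposition \ref{P:only-easy-consequences}). The Markov property comes from metric composition, independent increments and the KPZ fixed point marginals, approximating by $\NW_+$ data. Independence of the choice of pre-landscape comes from Definition \ref{D:prelandscapes}.3 plus continuity, and the marginals are identified through Proposition \ref{P:KPZFP-marginals}.

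One point to tighten: the argmax localization in your Step 2 is uniform in $n$ only if $f_n \to f$ comes with a uniform growth bound. An example is $f_n(x)\le C(1+x)$, as in the sets $\UC_C$ used in the proof of Proposition \ref{P:KPZFP-marginals}; the MQR-type topology supplies such a bound, and plain local hypograph convergence does not. The paper's one-line continuity claim relies on the same convention.
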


\begin{proof}
The continuity and shape bound of $\cM^\rho$ in Proposition \ref{P:only-easy-consequences} imply that $\mathfrak{h}^\rho_t f(x)$ is continuous as a function of $t > 0$ and $x\ge 0$, and in the initial condition $f$. Continuity as $t \to 0$ of $\fh^\rho_t f$ in $\UC_{0,+}$ also follows from the continuity and shape bound of $\cM^\rho$. The fact that the process $(\fh^\rho_t f, t \ge 0)$ is a time-homogeneous Markov process follows since for $0 < s < t$, we have
$$
\fh^\rho_t f = f \diamond \cM^\rho_{0, t} = f \diamond \cM^\rho_{0, s} \diamond \cM^\rho_{s, t} \eqd (f \diamond \cM^\rho_{0, s}) \diamond \hat\cM^\rho_{0, t-s} \eqd \fh^\rho_{t-s} (f \diamond \cM_{0, s}).
$$
where $\hat\cM^\rho$ is an independent copy of $\cM^\rho$. The second equality above uses the metric composition law of $\cM^\rho$, the third equality uses the independent increment property and KPZ fixed point marginal properties (Definition \ref{D:prelandscapes}.2, 3) for $\cM$ when $f \in \NW_+$ has rational support and range. The extension to arbitrary $f$ follows since both sides of the third equality are continuous in law over $f \in \UC_{0,+}$ (due to the continuity and shape bound of $\cM^\rho$ in Proposition \ref{P:only-easy-consequences}). The first and fourth equalities above are by definition. 

Since $\mathfrak{h}^\rho_t f$ is a time-homogeneous Markov process, its law is determined by the laws of $\mathfrak{h}^\rho_t f$ for each individual choice of $f$. 
Such laws are determined by Definition \ref{D:prelandscapes}.3 for $f\in\NW_+$ with rational support and range, and for general $f\in \UC_{0,+}$ via the continuity in law of $f$, as in the previous step. 
In particular, the law  does not depend on the choice of $\cM^\rho$.
For $f$ satisfying \eqref{E:linear-grwoth}, the marginal is given by Theorem \ref{T:half-space-fixed-point}, due to Proposition \ref{P:KPZFP-marginals}.
\end{proof}

We provide another quick consequence of Definition \ref{D:prelandscapes}.3 and the regularity estimates.
It is a one-slope marginal version of Proposition \ref{P:properties-elpp}.10 for pre-landscapes.

\begin{lemma}  \label{lem:Mstat}
 For $\rho \in \R \cup \{-\infty\}$, take any half-space pre-landscape $\cM^\rho$, continuously extended to $\mathbb{H}^2_\uparrow$, and  the half-space stationary horizon $\cH^\DL \in \mathcal D([2\rho \vee 0, \infty), \R, 2\rho \vee 0)$ from Theorem \ref{T:half-stationary-horizon} (denoting that $\cH^\DL_\lambda(x) = \cH^\DL(\lambda,x)$).
Then for any $\lambda\ge 2\rho \vee 0$, we have
$$
\cH^\DL_{\lambda}\diamond \cM^\rho_{0, t} - (\cH^\DL_{\lambda}\diamond \cM^\rho_{0, t})(0)\eqd \cH^\DL_{\lambda} \eqd \cM^\rho_{0, t}\diamond\cH^\DL_{\lambda} - (\cM^\rho_{0, t}\diamond\cH^\DL_{\lambda})(0) .
$$
Moreover, when $\lambda=2|\rho|$, $\cH^\DL_{\lambda}$ is a Brownian motion with slope $-2\rho$.
\end{lemma}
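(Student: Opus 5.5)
The plan is to reduce everything to the single-initial-condition marginal property of pre-landscapes (Definition \ref{D:prelandscapes}.3, extended by Theorem \ref{T:Markov-process}). The first identity is then transferred from a subsequential limit $\cQ^\rho$, for which it is already known. The second identity comes from time reversal.

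\textbf{First identity.} Fix $\lambda \ge 2\rho\vee 0$ and $t>0$. The function $\cH^\DL_\lambda$ is independent of $\cM^\rho_{0,t}$, and almost surely it lies in $\UC_{0,+}$. This holds because by Proposition \ref{P:half-space-horizon-marginals} it is a last passage value built from Brownian motions with drifts, so it grows at most linearly. By Theorem \ref{T:Markov-process}, conditional on $\cH^\DL_\lambda = f$, the law of $f\diamond \cM^\rho_{0,t}$ equals the law of $\fh^\rho_t f$, and this law does not depend on the choice of pre-landscape. Hence the law of $\cH^\DL_\lambda\diamond \cM^\rho_{0,t}$ is the same as that of $\cH^\DL_\lambda\diamond \cQ^\rho_{0,t}$, where $\cQ^\rho$ is any subsequential limit as in Proposition \ref{P:KPZFP-marginals}. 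Two routine checks are needed here:
\begin{itemize}
\item measurability of $f\mapsto \text{Law}(\fh^\rho_t f)$, which follows from the continuity in law given by Theorem \ref{T:Markov-process};
\item equality in law as processes, which follows since each is a continuous function and finite-dimensional marginals determine the law.
\end{itemize}
For $\cQ^\rho$, Proposition \ref{P:properties-elpp}.10 restricted to the single slope $\lambda$ gives $\cH^\DL_\lambda\diamond\cQ^\rho_{0,t} - (\cdot)(0) \eqd \cH^\DL_\lambda$ on $\R_{\ge0}$. This gives the first equality.

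\textbf{Second identity.} Lemma \ref{L:Equation-QQ} says the time reversal $\tilde\cM^\rho(x,s;y,t)=\cM^\rho(y,-t;x,-s)$ is again a pre-landscape. Then
\[
\cM^\rho_{0,t}\diamond\cH^\DL_\lambda(x)=\sup_y \cM^\rho(x,0;y,t)+\cH^\DL_\lambda(y) = \cH^\DL_\lambda\diamond\tilde\cM^\rho_{-t,0}(x).
\]
By the time-shift part of Lemma \ref{L:M-lemma}, $\tilde\cM^\rho_{-t,0}$ is the time-$t$ increment of a pre-landscape. Applying the first identity to this pre-landscape gives the second equality.

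\textbf{Brownian claim.} For $\lambda = 2|\rho|$ the task is to read off $R^{\DL}_1$ from Proposition \ref{P:half-space-horizon-marginals} with $k=1$ and $\lambda_1=\lambda$. On $x\ge0$ the process is
\[
R^{\DL}_1(x)=X\cB(-1,1;x,2)-X\cB(-1,1;0,2).
\]
This involves two Brownian motions, $\cB_1$ with drift $\lambda$ and $\cB_2$ with drift $-\lambda$, together with a single boundary exponential $X(-1,2)\sim\Exp(\lambda/2-\rho)$.

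\begin{itemize}
\item \emph{Case $\rho>0$.} Then $\lambda=2\rho$ is the maximal slope, the exponential is $\Exp(0)=\infty$, and by the remark after Proposition \ref{P:half-space-horizon-marginals} the process reduces to $\cB_2$ started at $0$. This is a Brownian motion with drift $-2\rho$.
\item \emph{Case $\rho\le 0$.} Then $\lambda=-2\rho\ge0$, and
\[
R^{\DL}_1(x)=\max\Big(X_*+\cB_2(x),\ \max_{0\le y\le x}\cB_1(y)+\cB_2(x)-\cB_2(y)\Big)-X_*,
\qquad X_*\sim\Exp(\lambda/2-\rho)=\Exp(\lambda).
\]
This is exactly the right-hand side of \eqref{eq:exp-bro-c2} in Corollary \ref{cor:exp-bro}, with $\cB_0=\cB_1$ of drift $\lambda$, $\cB_1$ there equal to our $\cB_2$ of drift $-\lambda$, and $X_*\sim\Exp(\lambda_0/2-\lambda_1/2)=\Exp(\lambda)$. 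That corollary identifies it as $\cB_1'$, a Brownian motion with drift $\lambda=-2\rho$. If $\lambda=0$ the exponential is infinite and the process is directly $\cB_2$, which has drift $0$.
\end{itemize}

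The main obstacle is bookkeeping: matching the parameters and drift signs of Proposition \ref{P:half-space-horizon-marginals} to those of Corollary \ref{cor:exp-bro}, and confirming the stated slope $-2\rho$ in the case $\rho\le0$. The first two identities are soft, given the pre-landscape framework.
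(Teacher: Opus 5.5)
Your proposal is correct and follows the paper's proof. The first identity comes from the independence of the choice of pre-landscape in Theorem \ref{T:Markov-process} combined with Proposition \ref{P:properties-elpp}.10 for a subsequential limit $\cQ^\rho$; the second comes from time reversal (Lemma \ref{L:Equation-QQ}); and the Brownian claim comes from reading off the $k=1$ marginal in Proposition \ref{P:half-space-horizon-marginals} in the degenerate case $\lambda=2\rho$, and from Corollary \ref{cor:exp-bro} otherwise.

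Your drift bookkeeping is the consistent one: $\cB_1$ has drift $\lambda$, $\cB_2$ has drift $-\lambda$, $X_*\sim\Exp(\lambda)$, and the resulting slope is $-2\rho$ in both cases. The paper's written proof has sign slips at exactly these points, although its conclusion matches yours.
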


\begin{proof}
The first equality in law is a direct consequence of the independence of the choice of pre-landscape in Theorem \ref{T:Markov-process}, and the result for $\cQ^\rho$ from Proposition \ref{P:KPZFP-marginals}, which is in Proposition \ref{P:properties-elpp}.10. The second equality in law uses the time-reversal in Lemma \ref{L:Equation-QQ}.

    The `Moreover' part follows from the construction of half-space stationary horizon in Section \ref{ss:hfsh}. If $\rho \ge 0, \lambda = 2 \rho$, then by construction (see the discussion after the statement of Proposition \ref{P:half-space-horizon-marginals}) we have that $\cH^\DL_\lambda$ is a Brownian motion of diffusivity $2$ and slope $\lambda$. If $\rho < 0$ so that $\lambda = - 2 \rho$, we can write
\[
\cH^\DL_{\lambda}(x)=\max\left\{ \cB_2(x), \cB_2(x) - X(-1,2) + \max_{0\le y\le x} \cB_1(y)-\cB_2(y)  \right\},
\]
where $\cB_1$, $\cB_2:\R_{\ge 0} \to \R$ are independent Brownian motions with diffusivity $2$ and slopes $-\lambda$ and $\lambda$, respectively, and $X(-1,2)\sim \Exp(  -2\rho  )$. Then by Corollary \ref{cor:exp-bro}, $\cH^\DL_\lambda$ is a Brownian motion with slope $-\lambda=-2\rho$.
\end{proof}

\section{Characterization}   \label{sec:chara}

In this section we prove our characterization of the half-space directed landscape, i.e., Theorem \ref{thm:chawdl}, which we restate as follows.

\begin{theorem}
\label{T:half-space-land}
For every $\rho \in \R$, there exists a unique (in law) pre-landscape from $\Q^4 \cap \mathbb{H}^2_\uparrow \to \R$ of parameter $\rho$. 
It has a continuous extension $\cL^\rho: \mathbb{H}^2_\uparrow \to \R$, which we call the \textbf{half-space directed landscape} with parameter $\rho$.
\end{theorem}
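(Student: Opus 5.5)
Existence is already available: by Theorem \ref{T:elpp} the rescaled fields $\cQ^\rho_n$ are tight, and by Proposition \ref{P:properties-elpp} (parts 1, 2) and Proposition \ref{P:KPZFP-marginals} any subsequential limit $\cQ^\rho$, restricted to rational points, is a pre-landscape of parameter $\rho$. Lemma \ref{L:M-lemma} supplies the continuous extension. The real content is therefore uniqueness. The plan is to show that if $\cM^\rho$ is an arbitrary pre-landscape and $\cQ^\rho$ is a subsequential limit, then $\cM^\rho \eqd \cQ^\rho$. We follow the framework of \cite{DZ24}, and we may use the extra structure that $\cQ^\rho$ carries the full half-space stationary horizon (Proposition \ref{P:properties-elpp}.10), whereas $\cM^\rho$ only has the one-slope stationarity of Lemma \ref{lem:Mstat}.

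By independent increments and metric composition (Lemma \ref{L:M-lemma}, Proposition \ref{P:only-easy-consequences}.3), the law of a pre-landscape is determined by the law of a single time increment $\cM^\rho_{0,t}(\cdot,\cdot)$ on a finite grid of spatial points. We further reduce to showing that the joint law of $(f_i \diamond \cM^\rho_{0,t}(y_j))_{i,j}$ agrees with that for $\cQ^\rho$, for finitely many $f_i \in \NW_+$; a single $f$ is handled by Definition \ref{D:prelandscapes}.3.

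The key step is a coupling via the stationary horizon. Choose slopes $\lambda_1>\dots>\lambda_k\ge 2\rho\vee 0$ with $\lambda_k$ large, run a short time $\delta$, and split $[0,t]$ into many pieces of length $\delta$. For each piece, we compare $\cM^\rho_{s,s+\delta}$ and $\cQ^\rho_{s,s+\delta}$ acting on a finite family of profiles. The comparison rests on two facts. First, for short times, $f\diamond \cM_{s,s+\delta}$ is determined locally by the point-to-line structure near each argmax, and its law is given by KPZ fixed point marginals. Second, the joint law of $\max$-plus combinations of a few single-initial-condition evolutions can be written through the one-point-in-$f$ marginals, using the quadrangle/triangle inequality to pin geodesic ordering. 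Concretely, as in \cite{DZ24}, we show that for profiles that are locally Brownian with distinct drifts, the evolution under $\cM$ of $\max(f_1,f_2)$ equals $\max(\fh f_1,\fh f_2)$ off an event of small probability. This requires that the argmax locations be separated, which follows from the drift differences over time $\delta$. This lets us propagate joint laws of the horizon through $\cM$ in the same way as through $\cQ$: the joint horizon law is preserved for $\cQ$ by Proposition \ref{P:properties-elpp}.10, and the error for $\cM$ per step is $o(\delta)$, so it is negligible after summing over $t/\delta$ steps. Finally, we recover point-to-point values by sending $\lambda$'s to the relevant range and using the shape and continuity bounds of Proposition \ref{P:only-easy-consequences}.

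The main obstacle is the quantitative error bound near the boundary $x=0$. Unlike the full-space setting, the stationary profiles are not Brownian near $0$: the horizon involves the exponential/Brownian mixture of Proposition \ref{P:half-space-horizon-marginals}. Geodesics may also stick to the boundary, so the argument that two evolutions have well-separated argmax locations with high probability needs a new estimate. We would first try to control it using the two-point spatial bound (Proposition \ref{P:properties-elpp}.5) together with the Brownian structure at slope $2|\rho|$ from Lemma \ref{lem:Mstat}, which gives local Brownian comparison at the boundary. This is where the restriction $\rho\in\R$ enters.
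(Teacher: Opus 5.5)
Your existence argument and the Lindeberg-type exchange framework match the paper. However, the per-step swap you describe would not work, and the estimate that actually drives the proof is missing.

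The identity $(f_1\vee f_2)\diamond\cM=(f_1\diamond\cM)\vee(f_2\diamond\cM)$ holds deterministically, so it carries no information about joint laws. The quadrangle inequality is not known a priori for a general pre-landscape (Lemma \ref{L:M-lemma} does not provide Proposition \ref{P:properties-elpp}.9), so it cannot pin geodesic ordering on the $\cM$ side. Likewise, propagating the joint horizon through $\cM$ presupposes a joint stationarity for $\cM$ that is not available.

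The paper instead interpolates $\cH_i\diamond\cQ_{0,t}\diamond\cM_{t,1}\diamond\cD_i$ with independent Brownian final conditions $\cD_i$, so that only $\cQ$ ever acts on the joint horizon. For a swap on $[s,t]$, $r=t-s$, it builds a single initial condition $F$ that agrees up to additive constants with each $\cH_i\diamond\cQ_{0,s}$ on the $r^{2/3}$-neighbourhood $A_i$ of its near-argmax set. Then $F\diamond\cQ_{s,t}=F\diamond\cM_{s,t}$ can be arranged using only single-$f$ marginals (Lemma \ref{lem:exiF-hf}). Such an $F$ exists when $A_i\cap A_j$ avoids the set $D_{i,j}$ where $\cH_i\diamond\cQ_{0,s}-\cH_j\diamond\cQ_{0,s}$ is non-constant.

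Argmax separation is the wrong requirement. Nothing over a short time separates the maximizers, and overlaps of $A_i,A_j$ are not rare. Even independent maximizers with bounded density would overlap with probability of order $r^{2/3}$ up to logarithms. That is exactly critical, since the swap error has size $r^{1/3}$ (Lemma \ref{lem:diff-tail}) and there are about $1/r$ steps. Nor are the horizon profiles jointly like Brownian motions with distinct drifts: their differences are monotone by \eqref{eq:hquad} and locally constant off a thin set. This coalescence is what makes overlaps harmless. What is needed is the branching estimate $\P[A_i\cap A_j\cap D_{i,j}\neq\emptyset]\le Cr^{2/3+\delta}$ (Proposition \ref{prop:key1}, Corollary \ref{cor:key1}).

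Proving this branching estimate at the boundary is the genuinely new content. The Gaussian two-point bound together with the slope-$2|\rho|$ Brownian marginal is far too coarse for it. The paper proceeds in three steps:
\begin{enumerate}
\item It replaces $\cH_i\diamond\cQ_{0,s}$ by $\cH_i$ via the joint stationarity of $\cQ$.
\item It replaces $\cD_i$ by slope-$(-2\rho)$ Brownian motions $\cD_i^*$, so that each $\cM_{t,1}\diamond\cD_i^*$ is Brownian by Lemma \ref{lem:Mstat}, paying a Radon--Nikodym cost controlled by Lemma \ref{lem:bmcomp}.
\item It uses the explicit exponential--Brownian LPP representation of the two-slope horizon. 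The bad event is split into $\cE^*_{1,+}$, $\cE^*_{2,+}$ and $\cE^*_{2,-}\cap\cE^*_{1,2}$. The last is reduced via parameter permutation (Lemma \ref{lem:exp-bro-swap}, Corollary \ref{cor:exp-bro}) to a non-crossing event for three Brownian motions, bounded by a Karlin--McGregor computation (Lemma \ref{lem:bm3}).
\end{enumerate}
Finally, point-to-point values are not obtained by adjusting the slopes $\lambda_i$, since no horizon profile localizes at a prescribed $x_i$. The paper adds Cameron--Martin tilts to the Brownian ingredients of $\cH_i$ and to $\cD_i$ that force the maximizers to $x_i$ and $y_i$. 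It then transfers Proposition \ref{prop:ulteq} by absolute continuity.
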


As discussed in the introduction, we require a separate argument to handle the case of $\rho = -\infty$. 

\begin{corollary}
    \label{C:-infty-case}
There is a unique (in law) random continuous function $\cL^{-\infty}:\mathbb{H}^2_\uparrow \to \R$ such that:
\begin{enumerate}
    \item For each $\rho \in\R$, $\cL^{-\infty}$ is stochastically dominated by $\cL^\rho$,  the half-space directed landscape with parameter $\rho$.
    \item For each $u = (x, s; y, s + t) \in \Q^4\cap \mathbb{H}^2_\uparrow$, we have
    $\cL^{-\infty}(u) \eqd \mathfrak{h}^{-\infty}_t\delta_x(y)$,
    where $\mathfrak{h}^{-\infty}_t$ is the half-space KPZ fixed point of parameter $-\infty$, and $\delta_x\in \UC_+$ such that $\delta_x(x)=0$ and $\delta_x(z)=-\infty$ for each $z\neq x$.
\end{enumerate}
We call $\cL^{-\infty}$ the \textbf{half-space directed landscape} with parameter $-\infty$.   
\end{corollary}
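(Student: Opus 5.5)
The plan is to build $\cL^{-\infty}$ from subsequential limits of exponential LPP with boundary parameter scaled as $\alpha = 1/2 + c/2$, and then identify it through a monotone sandwich rather than a full characterization.

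\textbf{Existence.} Take $\cQ^{-\infty}$ to be any subsequential limit of $\cQ^{-c(2n)^{1/3}}_n$. By Theorem \ref{T:elpp} it exists, and by Lemma \ref{L:M-lemma} it is a continuous pre-landscape.
\begin{itemize}
\item Property 2 holds by Proposition \ref{P:KPZFP-marginals} with $f=\delta_x$.
\item For property 1, fix $\rho\in\R$. For $n$ large, $1/2+c/2 > 1/2 - 2^{-4/3}\rho n^{-1/3}$. Passage times are monotone decreasing in $\alpha$: couple diagonal weights as $Y(i,i)/\alpha$. Hence $\cQ^{-c(2n)^{1/3}}_n \le \cQ^{\rho}_n$ pointwise in this coupling. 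Pass to a joint subsequential limit via tightness and Skorokhod, obtaining $\cQ^{-\infty}\le \cQ^\rho$ almost surely. By Theorem \ref{T:half-space-land}, $\cQ^\rho \eqd \cL^\rho$, so this gives the stochastic domination.
\end{itemize}

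\textbf{Uniqueness.} Let $\cL,\cL'$ both satisfy properties 1 and 2. Fix finitely many rational points $u_1,\dots,u_k$.
\begin{itemize}
\item The key step is to show that $\cL^\rho$ decreases in law to a common limit as $\rho\to-\infty$. I would take the coupling of the prelimits above across several values of $\rho$ simultaneously. This gives a monotone coupling of $(\cL^\rho)_{\rho\in\Q}$ through joint subsequential limits; by uniqueness of each marginal (Theorem \ref{T:half-space-land}) the joint law is consistent.
\item Define $\cL^*=\lim_{\rho\to-\infty}\cL^\rho$ as a decreasing limit in this coupling. By property 1, any such $\cL$ satisfies $\cL \preceq \cL^\rho$ in the sense of stochastic domination for every $\rho$. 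It follows that $\cL\preceq \cL^*$.
\item One-point marginals then match. By property 2, $\cL(u)\eqd \fh^{-\infty}_t\delta_x(y)$. I would show $\cL^*(u)\eqd \fh^{-\infty}_t\delta_x(y)$ by continuity of the one-point laws of $\fh^\rho_t\delta_x(y)$ in $\rho$ as $\rho\to-\infty$. This comes from the TASEP limit in Theorem \ref{T:half-space-fixed-point}, via a diagonal argument with $\rho=-c\eps^{-1/2}$ and monotonicity in $\alpha$ for the prelimits.
\end{itemize}

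\textbf{Upgrade to equality in law.} A stochastic domination $\cL\preceq\cL^*$ of random vectors, combined with equal one-dimensional marginals at each coordinate, forces equality in law. Couple them monotonically, so that $\cL\le\cL^*$ coordinatewise. Equal marginals then give that the nonnegative differences have zero expectation; I would handle integrability using the tail bounds of Proposition \ref{P:properties-elpp}.4, or use a bounded increasing transform such as $\arctan$ instead. Hence $\cL=\cL^*$ on rational points, and continuity extends this to $\mathbb H^2_\uparrow$. The same applies to $\cL'$, so $\cL\eqd\cL'$.

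\textbf{Main obstacle.} I expect the hardest step to be justifying that the monotone limit $\cL^*$ has one-point laws equal to $\fh^{-\infty}$. This requires interchanging $\rho\to-\infty$ with the scaling limit $n\to\infty$. I would try to resolve it as follows:
\begin{itemize}
\item For fixed $c$, monotonicity in $\alpha$ sandwiches the marginals: $\fh^{-\infty}$ lies below $\fh^\rho$ for every real $\rho$.
\item For the matching upper bound, use tightness estimates uniform in $\rho\le0$ (the one-point bounds in Theorem \ref{T:one-point-bds} are uniform for $\alpha\ge1/2$), together with the fact that $\fh^{-\infty}$ does not depend on $c$.
\end{itemize}
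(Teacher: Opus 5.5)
The existence part, the monotone coupling of $(\cL^\rho)_\rho$, the domination $\cL\preceq\cL^*$, and the final step (domination plus equal one-point means forces equality) all match the paper. The gap is the step you flag yourself: showing that the decreasing limit $\cL^*=\lim_{\rho\to-\infty}\cL^\rho$ has one-point law $\fh^{-\infty}_t\delta_x(y)$. None of the tools you list gives the required upper bound.

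\begin{itemize}
\item Theorem \ref{T:half-space-fixed-point} only produces limits for fixed $\rho$ and along $\rho=-c\eps^{-1/2}$. It says nothing about continuity in $\rho$ as $\rho\to-\infty$.
\item Uniform tightness in $\rho\le 0$ gives uniform integrability, but it does not identify any limit.
\item Independence of $c$ concerns $\alpha-1/2$ of order one, whereas fixed $\rho$ means $\alpha-1/2=O(n^{-1/3})$. Monotonicity in $\alpha$ compares these two regimes only in the direction $\cQ^{-c(2n)^{1/3}}_n\le\cQ^\rho_n$.
\item A diagonal argument produces sequences $\rho_n\to-\infty$ with $\rho_n n^{-1/3}\to0$ whose limit is $\fh^{-\infty}$. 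This again yields only lower bounds, since $\cQ^{\rho_n}_n\le\cQ^\rho_n$ for each fixed $\rho$ and large $n$.
\end{itemize}

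So you are left with the circular sandwich $\fh^{-\infty}\preceq(\cdot)\preceq\cL^*$. Interchanging $n\to\infty$ with $\rho\to-\infty$ needs quantitative input that monotonicity and tightness cannot supply.

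The paper supplies this input in two steps. First, it applies the second inequality of Corollary \ref{C:exp-quadrangle} in the prelimit, with the two boundary parameters, to the point pairs $(0,s)_m,(0,s+t)_m$ and $(x,s)_m,(y,s+t)_m$. These lie in the same rows, so the ordering hypothesis holds. Passing to the limit using the uniform tail bounds gives
\[
\E\cL^{\rho}(u)-\E\fh^{-\infty}_t\delta_x(y)\le \E\cL^{\rho}(0,s;0,s+t)-\E\fh^{-\infty}_t\delta_0(0),
\]
which reduces everything to the boundary-to-boundary point. Second, at that point it takes the convergence of the one-point law of $\cL^\rho(0,s;0,s+t)$ to the $\rho=-\infty$ law as $\rho\to-\infty$ from exact Baik--Rains-type crossover results (the discussion after Definition 2.9 in \cite{BBCS1}). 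This shows $\E\cL^*(u)=\E\fh^{-\infty}_t\delta_x(y)$, and your domination-plus-zero-mean argument then completes the proof.
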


\begin{remark}
\label{R:cts-input}
  Unlike with Theorem \ref{T:half-space-land}, Corollary \ref{C:-infty-case} is stated with a continuous function as our input, which is not always as easy to obtain from the prelimit. If one wants to remove this constraint and start with a function defined only on $\Q^4 \cap \mathbb{H}^2_\uparrow$, one workaround is to replace the second condition with the assumption that $\cL^{-\infty}$ is a pre-landscape of parameter $-\infty$, which extends to a continuous function on all of $\mathbb{H}^2_\uparrow$ by Lemma \ref{L:M-lemma}.
\end{remark}

The bulk of this section is devoted to the proof of Theorem \ref{T:half-space-land}, and we will outline the proof shortly. Before doing so, we prove Corollary \ref{C:-infty-case} and Theorem \ref{thm:expconv} from Theorem \ref{T:half-space-land}.

\begin{proof}[Proof of Corollary \ref{C:-infty-case}]
The existence is by any subsequential limit $\cQ^{-\infty}$ from Proposition \ref{P:KPZFP-marginals}, plus the continuous extension in Lemma \ref{L:M-lemma}. We next prove uniqueness.

By condition $1$, we can construct a coupling of $\cL^{-\infty}$ and $\cL^{-n}, n \in \N$ so that 
$$
\cL^{-1} \ge \cL^{-2} \ge \cdots \ge \cL^{-\infty}.
$$
Set $\hat\cL = \lim_{n \to -\infty} \cL^{-n}$. We claim that almost surely, for any finite set $F\subset \Q^4 \cap \mathbb{H}^2_\uparrow$ we have that $\cL^{-\infty}|_F = \hat\cL|_F$. This will imply the corollary, since the law of $\hat\cL|_F$ is the limit of the law of $\cL^{-n}|_F$, which does not depend on the choice of $\cL^{-\infty}$. We have that $\cL^{-\infty}|_F \le \hat\cL|_F$, so it suffices to show that for any point $u = (x, s; y, t)\in \Q^4 \cap \mathbb{H}^2_\uparrow$ we have that $\E\cL(u) = \E\hat\cL(u)= \lim_{n \to \infty} \E \cL^{-n}(u)$. Observe
\begin{equation}
\label{E:L-sides}
    \E \cL^{-n}(u) - \E\cL^{-\infty}(u) \le \E \cL^{-n}(0, s; 0, t) - \E\cL(0, s; 0, t).
\end{equation}
Indeed, since this equality only concerns one-point laws, by condition $2$ it suffices to prove this when $\cL^{-\infty}, \cL^{-n}$ are limits of exponential LPP. In this case, the bound follows from the second inequality in Corollary \ref{C:exp-quadrangle}. Now, the right-hand side of \eqref{E:L-sides} goes to $0$ as $n\to\infty$, see the discussion after Definition 2.9 in \cite{BBCS1}.
\end{proof}

\begin{proof}[Proof of Theorem \ref{thm:expconv}]
The processes $\cL_n$ are tight by Theorem \ref{T:elpp}, and any subsequential limit is a pre-landscape of parameter $\rho$ by parts $1$ and $2$ of Proposition \ref{P:properties-elpp} and Proposition \ref{P:KPZFP-marginals}. In the case when $\rho \in \R$, the uniqueness of the subsequential limit then follows from Theorem \ref{T:half-space-land}. If $\rho = -\infty$, then $\cL_n$ is stochastically dominated by itself with any finite $\rho$ (for $n$ large enough depending on $\rho$), via the coupling where we simply rescale the boundary random variables in the exponential environment. Therefore any subsequential limit of $\cL_n$ satisfies both conditions of Corollary \ref{C:-infty-case}, which implies uniqueness of the limit. 
\end{proof}

For Theorem \ref{T:half-space-land}, the existence is fulfilled by any subsequential limit $\cQ^\rho$ from Proposition \ref{P:KPZFP-marginals}, and the continuous extension is from Lemma \ref{L:M-lemma}. To prove uniqueness, we broadly follow the framework for characterizing the full-space directed landscape from \cite{DZ24}, which is described in Section \ref{SS:exchange}. Many steps are exactly as in that paper, and indeed, Section \ref{SS:exchange} and Section \ref{SS:coupling-and-branch} are mostly recap from \cite{DZ24}. Here most proofs are omitted as they go through verbatim.

The main new ingredient is the proof of the key quantitative branching estimate Proposition \ref{prop:key1}, to be given in Section \ref{ss:key1}. The proof is much more involved than its counterpart in the full-space setting (in \cite[Section 4]{DZ24}), as geodesic behavior and coalescence probabilities change near the boundary, and the half-space stationary horizon has a more complicated description than the full-space stationary horizon. 
Beyond the proof of the branching estimate, the proofs in \cite[Section 4]{DZ24} use a quantitative absolute continuity of the Airy$_2$ process with respect to Brownian motion from \cite{dauvergne2024wiener} (see also \cite{calvert2019brownian,HamBr}).
This result is not available for the corresponding marginals of the half-space KPZ fixed point, even when starting from a single narrow wedge at the boundary. To circumvent this, we analyze stationary initial and final conditions (instead of only the initial condition in \cite{DZ24}), which requires different Brownian removal arguments.

\subsection{The exchange strategy}
\label{SS:exchange}
From now until the end of this section, we fix $\rho\in \R$, and let $\cM=\cM^\rho$ denote the continuous extension of a half-space pre-landscape with parameter $\rho$.
We also let $\cQ=\cQ^\rho$ be a subsequential limit from Proposition \ref{P:KPZFP-marginals}.
We use the notations $\cM_{s, t} = \cM(\cdot, s; \cdot, t)$ and $\cQ_{s, t} = \cQ(\cdot, s; \cdot, t)$ for any $s<t$.

By the metric composition law and independence of increments of $\cM$, to prove Theorem \ref{T:half-space-land} it suffices to prove the following.
    \begin{prop}   \label{prop:MEeq}
        For any $s<t$, we have $\cM_{s, t} \eqd \cQ_{s, t}$.
    \end{prop}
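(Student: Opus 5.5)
We reduce to comparing joint laws of finitely many spatial marginals, and interpolate between $\cM$ and $\cQ$ one short time increment at a time (the exchange strategy of \cite{DZ24}). By Lemma \ref{L:M-lemma} (time shifts and KPZ rescaling preserve being a pre-landscape) and Proposition \ref{P:properties-elpp}.3 and 8, it suffices to treat $s=0$, $t=1$. Since both $\cM_{0,1}$ and $\cQ_{0,1}$ are continuous, their laws are determined by the finite-dimensional distributions
$$
\big(\cM_{0,1}(x_i,y_j)\big)_{i\le m,\, j\le \ell}, \qquad x_i,y_j\in\Q_{\ge 0}.
$$
Equivalently, they are determined by the joint law of $(f_i\diamond \cM_{0,1})(y)$ over finitely many initial conditions $f_i=\delta_{x_i}$. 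So the goal is to show that, for finitely many $f_1,\dots,f_m\in\NW_+$, the vector $(f_i\diamond\cM_{0,1})_{i}$ has the same law as $(f_i\diamond\cQ_{0,1})_i$.

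\textbf{Step 1 (interpolation).} Fix $k$ and take $\cM$ and $\cQ$ independent. Define hybrids
$$
T^{(j)}=\cM_{0,j/k}\diamond\cQ_{j/k,1},\qquad j\in\II{0,k}.
$$
By independent increments (Proposition \ref{P:properties-elpp}.1 and Definition \ref{D:prelandscapes}.2), $T^{(j)}$ and $T^{(j+1)}$ differ only in whether the increment on $[j/k,(j+1)/k]$ is taken from $\cM$ or from $\cQ$. Conditioning on everything else, it then suffices to show the following one-step estimate. Take any random profiles $g_1,\dots,g_m$ (these are $f_i\diamond\cM_{0,j/k}$) and any final data, independent of the middle increment. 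Then the joint law of $\big(g_i\diamond \cM_{j/k,(j+1)/k}\big)_i$, tested against the final data, differs from the same quantity with $\cQ$ in place of $\cM$ by $o(1/k)$ in total variation. Summing the $k$ steps gives an error $o(1)$, so the two laws agree.

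\textbf{Step 2 (reduction to one initial condition).} Over a time interval of length $h=1/k$ the $m$ profiles should, with high probability, interact "as one". With probability $1-o(h)$ there is a point $z$ such that the argmaxes of $g_i\diamond\cM$ lie to the left of $z$ for $y\le z$ and to the right of $z$ for $y\ge z$ (or they coincide). On this event, the vector $(g_i\diamond\cM)_i$ is a deterministic function of the single combined initial condition $\max_i (g_i+c_i)$, for suitable constants $c_i$ read off from the outputs. Definition \ref{D:prelandscapes}.3 says single-initial-condition marginals of $\cM$ and $\cQ$ agree. Hence the discrepancy between $\cM$ and $\cQ$ is bounded by the probability of a \emph{double branching} event: two distinct switch points in a time-$h$ step. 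The needed estimate is that this probability is $o(h)$, which is the branching estimate Proposition \ref{prop:key1}. Regularity inputs (Proposition \ref{P:only-easy-consequences}.1--3) make the reduction rigorous. They localize argmaxes to compacts, allow approximating general $g_i$ by $\NW_+$ functions with rational data, and handle continuity in law.

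\textbf{Step 3 (main obstacle: the branching estimate).} Proving that double branching has probability $o(h)$, uniformly near the boundary, is the hard step. I would try first to replace the random profiles $g_i$ by stationary ones, namely slices $\cH^\DL_\lambda$ of the half-space stationary horizon, at both the initial and the final side. Lemma \ref{lem:Mstat} makes the law of a single slice invariant under $\cM$. Proposition \ref{P:properties-elpp}.10 makes joint slices invariant under $\cQ$. A branching event for a pair of slopes $\lambda<\lambda'$ then reduces to the event that the increment $\cH^\DL_{\lambda'}-\cH^\DL_\lambda$, whose explicit exponential--Brownian description is given in Proposition \ref{P:half-space-horizon-marginals} and Corollary \ref{cor:exp-bro}, has two points of increase in a small window. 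That event can be bounded by Brownian/exponential random-walk computations. One must transfer from arbitrary profiles to stationary ones by a Brownian-removal/absolute-continuity argument. Near $x=0$ this uses that the slice $\lambda=2|\rho|$ is an exact Brownian motion (Lemma \ref{lem:Mstat}), since the Airy-type absolute continuity used in \cite{DZ24} is unavailable in half-space. Two features need separate treatment: the boundary layer, where geodesics may stick to $x=0$, and the atom structure of the horizon at $\lambda=2\rho\vee0$. I expect this to occupy most of the work.
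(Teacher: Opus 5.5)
Your Steps 1--2 do not close the argument. You need each swap to cost $o(1/k)$ in total variation, and you attribute the required $o(h)$ bound on double branching to Proposition \ref{prop:key1}. That proposition does not give this. It bounds each window of length $r^{2/3}$ by $C(\max I)^{-1}r^{4/3+\delta}$, and after the union bound this is only $\P[\cE[W]\setminus\cE_*]\le Cr^{2/3+\delta}$ (Corollary \ref{cor:key1}), which is far larger than $r$. There is also no reason to expect $o(r)$. Heuristically, whenever the two optimal paths first meet at an interior time, the near-branching event occurs for every swap window within $O(r)$ (up to logs) of that time, so summed total-variation errors cannot vanish.

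The missing idea is to compare real-valued outputs (the reason the final data $\cD_i$ are attached) in $L^1$-Wasserstein distance. Under a suitable coupling the two hybrids agree off the bad event. On the bad event they differ by only $r^{1/3}$ up to logarithms (Lemma \ref{lem:diff-tail}). Each swap then costs roughly $r^{1/3}\cdot r^{2/3+\delta}$, which sums to $r^{\delta}\to0$.

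Your combined initial condition $\max_i(g_i+c_i)$ has a related problem. Its constants, its switch point, and the good event are all read off from the outputs, so they depend on the very increment being swapped, and Definition \ref{D:prelandscapes}.3 cannot be applied to it. The paper's $F$ (Lemma \ref{lem:exiF-hf}) is instead measurable with respect to $\sigma(\{\cH_i\},\cQ_{0,s},\cM_{t,1},\{\cD_i\})$. It is built from near-argmax sets of $\cH_i\diamond\cQ_{0,s}+\cM_{t,1}\diamond\cD_i$, which ignore the middle increment, and $\cQ_{s,t},\cM_{s,t}$ are then coupled so that $F\diamond\cQ_{s,t}=F\diamond\cM_{s,t}$.

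Your route to stationary data also fails in half-space. You start from narrow wedges and plan to move the intermediate profiles $f_i\diamond\cM_{0,j/k}$ to horizon slices by Brownian absolute continuity. That is exactly the input unavailable here, as the paper notes. In addition, your ordering $\cM_{0,j/k}\diamond\cQ_{j/k,1}$ evolves the initial side by $\cM$, for which only single-slice stationarity is known (Lemma \ref{lem:Mstat}). The branching estimate, however, needs the joint law of two slices, through the difference profile defining $D_{i,j}$.

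The paper reverses both choices. Its hybrid is $\cH_i\diamond\cQ_{0,t}\diamond\cM_{t,1}\diamond\cD_i$, with horizon slices at time $0$ and independent Brownian $\cD_i$ on $\cE[W]$. Then $\cH_i\diamond\cQ_{0,s}$ is jointly stationary by Proposition \ref{P:properties-elpp}.10, and $\cM_{t,1}\diamond\cD_i$ needs only single-slope invariance together with Lemma \ref{lem:bmcomp}. The paper proves the identity in this Brownian-boundary form (Proposition \ref{prop:ulteq}). Only afterwards does it recover $\cM_{0,1}(x_i,y_i)$, by Cameron--Martin tilting the Brownian motions underlying the horizon and the $\cD_i$ so that the maximizers concentrate at $x_i$ and $y_i$. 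This last step is absent from your plan.
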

        
 By the KPZ rescaling and time shift in \eqref{E:KPZ-scale-quasi-invariance} and  parts $3$ and $8$ of Proposition \ref{P:properties-elpp}, it suffices to prove Proposition \ref{prop:MEeq} with $s = 0, t = 1$.	By continuity of $\cM, \cQ$, for this it suffices to prove the following finite dimensional distribution equality.
		\begin{prop}  \label{prop:finid}
			Take any $k\in\N$, and $x_1,\ldots, x_k >0$, and $y_1,\ldots, y_k >0$.
			Then 
			\[
			\{\cM_{0,1}(x_i,y_i)\}_{i=1}^k\stackrel{d}{=}\{\cQ_{0,1}(x_i,y_i)\}_{i=1}^k.
			\]
		\end{prop}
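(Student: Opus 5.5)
We will follow the exchange strategy of \cite{DZ24}. Split $[0,1]$ into $N$ equal time intervals. By independent increments and metric composition (Proposition \ref{P:only-easy-consequences}.3), $\cM_{0,1} = \cM_{0,1/N}\diamond\cdots\diamond\cM_{(N-1)/N,1}$, and similarly for $\cQ$. Let $\Lambda_j$ be the hybrid in which the first $j$ increments come from $\cQ$ and the remaining $N-j$ from an independent copy of $\cM$, so $\Lambda_0\eqd\cM_{0,1}$ and $\Lambda_N\eqd\cQ_{0,1}$. It then suffices to show that replacing a single increment $\cM_{s,s+1/N}$ by $\cQ_{s,s+1/N}$ changes the law of $(\Lambda(x_i,y_i))_{i\le k}$ by $o(1/N)$ in a suitable distance. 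Summing over $j$ gives an error $o(1)$, and letting $N\to\infty$ proves the proposition.

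For the one-step comparison, write $\Lambda(x_i,y_i)=\max_{z,w} A_i(z)+\cM_{s,s+1/N}(z,w)+B_i(w)$, where $A_i=\delta_{x_i}\diamond(\text{earlier increments})$ and $B_i=(\text{later increments})\diamond\delta_{y_i}$ are independent of the swapped increment. By the KPZ fixed point marginals (Definition \ref{D:prelandscapes}.3, extended via Theorem \ref{T:Markov-process}), $f\diamond\cM_{s,s+1/N}$ and $f\diamond\cQ_{s,s+1/N}$ agree in law for any single profile $f$. The difficulty is that we have $k$ profiles simultaneously. If all $k$ geodesics through the slab have coalesced into at most one branch before or after time $s$, that is, if the argmaxes $(z_i,w_i)$ coincide up to additive constants in $A_i$, $B_i$, then the joint law depends only on a single-profile marginal and the swap costs nothing. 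So the error is at most the probability of a \emph{branching event}, meaning that two of the geodesics are disjoint across the short slab. We also need a bound on the expected size of the discrepancy on that event.

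The key input is therefore the quantitative branching estimate Proposition \ref{prop:key1}. It should bound by $o(1/N)$ (in fact $N^{-1}\cdot o(1)$ after summing over locations) the probability that, within a time window of length $1/N$, there are two disjoint geodesic pieces between the relevant profiles. To prove it, we replace $A_i$ and $B_i$ locally by stationary initial and final conditions. These are the half-space stationary horizon slopes from Lemma \ref{lem:Mstat}, and since stationarity holds for both $\cM$ and $\cQ$, the comparison is symmetric. We then control coalescence using the joint law of two slopes, $\cH^\DL_{\lambda_1},\cH^\DL_{\lambda_2}$, from Proposition \ref{P:half-space-horizon-marginals}: the difference process is locally constant except on a sparse set, and Corollary \ref{cor:exp-bro} makes this explicit.

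The main obstacle is this last step near the boundary $x=0$. There, geodesics may stick to the boundary and the horizon marginal involves the exponential-Brownian array, so we cannot use Brownian absolute continuity of Airy-type profiles as in \cite{DZ24}. Our first attempt would be a Brownian-removal argument that applies stationarity on both the initial and final sides (using Lemma \ref{lem:Mstat} twice, via time reversal in Lemma \ref{L:Equation-QQ}). We would combine this with the shape and modulus bounds of Proposition \ref{P:only-easy-consequences} to localize argmaxes, and then separately estimate branching in the interior and in an $N^{-2/3}$-neighbourhood of the boundary.
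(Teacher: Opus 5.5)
\paragraph{Gap 1: the bad event is misidentified.}
The exchange framework you set up (short slabs, coupling one slab via single-profile marginals, a branching estimate as the key input) is the right one. But you declare the swap free only when all $k$ geodesics have coalesced into one branch, and charge everything else to the event that two geodesics are disjoint across the slab. For distinct endpoints the geodesics are typically separated. That event therefore has probability bounded below independently of $N$ in every slab, and nothing survives the sum over $N$ slabs.

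The missing idea is the gluing of Lemma \ref{lem:exiF-hf}, following \cite{DZ24}. In the paper's notation (not yours), let $A_i$ be the $|\log r|^{10}r^{2/3}$-neighbourhood of the near-argmax set of the $i$-th problem at time $s$. One builds a single profile $F$ with these properties:
\begin{itemize}
\item it is measurable with respect to everything outside the slab;
\item it equals $-\infty$ off $\bigcup_i A_i$;
\item it differs from the $i$-th time-$s$ profile by a constant on each component of $A_i$.
\end{itemize}
One then couples $\cQ_{s,t}$ and $\cM_{s,t}$ so that $F\diamond\cQ_{s,t}=F\diamond\cM_{s,t}$, which uses only single-profile marginals. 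Locality over the short slab then gives $\cM^s_i=\cM^t_i$ in \eqref{eq:cMdef} for all $i$ simultaneously. Separated geodesics are harmless in this construction. The only obstruction is $A_i\cap A_j\cap D_{i,j}\neq\emptyset$: two geodesics are close at time $s$ while the difference of the two time-$s$ profiles is still non-constant there. Since the discrepancy is only of order $r^{1/3}$ up to logarithms (Lemma \ref{lem:diff-tail}), what is needed, and what the paper proves, is that this has probability $O(r^{2/3+\delta})$ (Corollary \ref{cor:key1}), not the $o(r)$ you aim for.

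\paragraph{Gap 2: how the time-$s$ profiles become stationary.}
Starting from $\delta_{x_i}$, you would need to compare the joint law of $(\delta_{x_i}\diamond\cQ_{0,s})_i$ with the horizon. That is precisely the absolute continuity you note is unavailable in half-space, and replacing your profiles $A_i,B_i$ ``locally'' by stationary ones does not supply it.

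The paper instead runs the whole exchange for Proposition \ref{prop:ulteq}, with initial data $\cH_i=\cH^\DL(\lambda_i,\cdot)$ and independent Brownian final data $\cD_i$ of slope $-W$, on $\cE[W]$. Then $(\cH_i\diamond\cQ_{0,s})_i$ is \emph{exactly} horizon-distributed by Proposition \ref{P:properties-elpp}.10. This is also why the comparison is not symmetric, contrary to your remark. Joint multi-slope stationarity is known only for $\cQ$, so $\cQ$ must sit before the swapped slab. After the slab, only single-slope stationarity for a general pre-landscape is available: Lemma \ref{lem:Mstat}, applied with Brownian $\cD_i^*$ of slope $-2\rho$, together with the drift-change bound of Lemma \ref{lem:bmcomp}.

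Point-to-point values are recovered only at the end. Cameron--Martin drifts $\phi_{\tau,i}$ are added to the Brownian motions underlying the horizon, and $\psi_{\tau,i}$ to $\cD_i$, so that $\cH_{\tau,i}$ peaks at $x_i$ and $\cD_{\tau,i}$ at $y_i$ (see \eqref{eq:htau1}, \eqref{eq:htau2}). Absolute continuity transfers the identity of Proposition \ref{prop:ulteq}, and one lets $\tau\to0$ and then $W\to\infty$. Your plan has no substitute for this step.
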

    For technical reasons, it is more convenient to work with the following Brownian boundary version.

Take $\lambda_1>\lambda_2>\cdots>\lambda_k > 2\rho\vee 0$. For the half-space stationary horizon $\cH^{\DL}\in \mathcal D([2\rho \vee 0, \infty), \R, 2\rho \vee 0)$ from Theorem \ref{T:half-stationary-horizon}, 
denote $\cH_i=\cH^\DL(\lambda_i, \cdot)|_{\R_{\ge 0}}$ for each $i\in\II{1, k}$. Next, let $W > 0$ and $\cD_1, \ldots, \cD_k: \R_{\ge 0}\to \R$ be independent Brownian motions, each with slope $-W$. 

Let $\cE[W] = \cE_1[W, \cH] \cap \cE_2[W, \cD]$, where we define the events
\begin{align}
\label{E:HW}
\cE_1[W, \cH]:\quad \cH_i(x)&<\cH_i(W) + W + W(x-W)/3, \qquad \text{ for all } i \in \II{1, k}, x \ge W. \\
\label{E:DW}
  \cE_2[W, \cD]:\quad  \cD_i(x)&<\cD_i(W) + W - 2W(x-W)/3, \qquad \text{ for all } i \in \II{1, k}, x \ge W.
\end{align}
We will prove the following Brownian boundary version of Proposition \ref{prop:finid}.
\begin{prop}   \label{prop:ulteq}
For  $\{\cH_i\}_{i=1}^k, \cM, \cQ, \{\cD_i\}_{i=1}^k$ independent of each other, we have
\[
(\{\cH_i\}_{i=1}^k, \{ \cH_i\diamond \cM_{0,1} \diamond \cD_i \}_{i=1}^k, \{\cD_i\}_{i=1}^k ) \mathds{1}[\cE[W]]\eqd (\{\cH_i\}_{i=1}^k, \{ \cH_i\diamond \cQ_{0,1} \diamond \cD_i \}_{i=1}^k, \{\cD_i\}_{i=1}^k ) \mathds{1}[\cE[W]].
\]
\end{prop}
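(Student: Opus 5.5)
We follow the exchange strategy of \cite{DZ24}. The plan is to interpolate between $\cM$ and $\cQ$ by slicing the unit time interval into $N$ pieces $[(j-1)/N, j/N]$. For $j\in\II{0,N}$ we set $\cP^{(j)} = \cQ_{0,j/N}\diamond \cM_{j/N,1}$, using independent copies and the metric composition law. Then $\cP^{(0)}\eqd\cM_{0,1}$ and $\cP^{(N)}\eqd\cQ_{0,1}$, so it suffices to bound each swap of a single time slice $\cM_{(j-1)/N,j/N}\mapsto\cQ_{(j-1)/N,j/N}$ in the law of the vector in Proposition \ref{prop:ulteq} by $o(1/N)$ in total variation, uniformly in $j$.

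For a fixed swap we condition on everything except the slice $I=[(j-1)/N,j/N]$. By Proposition \ref{P:properties-elpp}.10 together with Lemma \ref{lem:Mstat}, $\cH_i\diamond\cQ_{0,(j-1)/N}$ is again distributed as a horizon marginal (after recentering), jointly in $i$ for $\cQ$. Similarly $\cM_{j/N,1}\diamond\cD_i$ is a time-reversed final condition, handled through Lemma \ref{L:Equation-QQ}. Each output $\cH_i\diamond\cP\diamond\cD_i$ therefore equals $F_i\diamond\cM_I\diamond G_i$, or the same with $\cQ_I$, for random initial data $F_i$ and final data $G_i$.

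If all $k$ optimizers passed through a single point at time $(j-1)/N$, or at time $j/N$, then the $k$-tuple would be a function of one-point marginals $f\diamond\cM_I\diamond g$. These agree for $\cM$ and $\cQ$ by the KPZ fixed point marginal property (Definition \ref{D:prelandscapes}.3, extended via Theorem \ref{T:Markov-process}). The difference between the swaps is thus controlled by the probability of a ``branching'' event, on which the $k$ geodesics through the thin slice do not coalesce on either side. The key quantitative branching estimate (Proposition \ref{prop:key1}) should give a probability $o(1/N)$ for this event. Summing over $N$ slices gives a total error $o(1)$, and letting $N\to\infty$ proves the equality.

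The indicator $\mathds{1}[\cE[W]]$ keeps the maximizers in compact windows and keeps $F_i, G_i$ controlled. Together with the shape bounds of Proposition \ref{P:only-easy-consequences}, this lets us localize the argmax locations and apply the branching estimate on a compact set.

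The main obstacle is the branching estimate near the boundary. Unlike \cite{DZ24}, we lack Brownian absolute continuity for half-space fixed point profiles, so we would work with stationary data on both sides. The plan is to compare $F_i$ and $G_i$ directly to horizon marginals, and to use Corollary \ref{cor:exp-bro} together with the exponential–Brownian description of Proposition \ref{P:half-space-horizon-marginals} to remove the Brownian pieces. Two-point bounds (Proposition \ref{P:properties-elpp}.5, 6) then bound the probability that distinct slopes still separate across a time window of length $1/N$ by $N^{-1-c}$. The separate treatment of geodesics touching $x=0$, where the boundary drift $\rho$ enters, is where I expect most of the work.
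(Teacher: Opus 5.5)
Your architecture (a Lindeberg exchange over time slices, with a branching estimate controlling each swap) is the paper's, but the quantitative bookkeeping has a genuine gap.

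You run the exchange in total variation, so you need each swap to fail with probability $o(1/N)$. You assert $N^{-1-c}$ from the two-point bounds, but two-point tail bounds control sizes of increments, not whether geodesics for different slopes have coalesced, and nothing in your sketch yields this rate. The branching estimate the paper actually proves is Corollary~\ref{cor:key1}. It goes through Proposition~\ref{prop:key1}, using the exponential--Brownian structure of the horizon, the replacement of $\cD_i$ by slope $-2\rho$ Brownian motions via Lemmas~\ref{lem:Mstat} and \ref{lem:bmcomp}, and the Karlin--McGregor bound of Lemma~\ref{lem:bm3}. It gives only $\P[\cE[W]\setminus\cE_*]\le Cr^{2/3+\delta}$ for a slice of length $r$. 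Summed over $N=1/r$ slices this is $r^{-1/3+\delta}\to\infty$.

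The missing ingredient is Lemma~\ref{lem:diff-tail}: under \emph{any} coupling, $|\cM^s_i-\cM^t_i|$ is of order $(t-s)^{1/3}$ up to $\log^2$ factors, with exponential tails. Hence the $L^1$-Wasserstein cost of one swap is about $r^{2/3+\delta}\cdot r^{1/3}$ up to logarithms, i.e.\ $o(r)$, and this is what sums to $o(1)$.

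The Wasserstein framework is also what handles the end slices. Proposition~\ref{prop:clo} is only available for $s<t\in[\theta,1-\theta]$, with $\theta$-dependent constants. So $[0,\theta]$ and $[1-\theta,1]$ are swapped without any good coupling, at Wasserstein cost of order $\theta^{1/3}$ (up to logarithms) from Lemma~\ref{lem:diff-tail} alone, and then $\theta\to 0$. Such a swap is not small in total variation.

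Your good/bad dichotomy is also misidentified. For distinct slopes the $k$ optimizers are typically at macroscopically different locations at a given time. So the event that they do not all pass through one point at the slice has probability of order one, not $o(1/N)$.

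The paper's good event $\cE_*$ allows separated optimizers and excludes only near-misses without coalescence: wherever the near-optimal sets $A_i,A_j$ overlap, $\cH_i\diamond\cQ_{0,s}-\cH_j\diamond\cQ_{0,s}$ must be constant. Crucially, $A_i$ is defined from data \emph{outside} the slice, with slack $|\log r|^{10}r^{1/3}$ and width $|\log r|^{10}r^{2/3}$ to absorb the slice's influence. On $\cE[W]\cap\cE_*$ one builds a single initial condition $F$ (Lemma~\ref{lem:exiF-hf}), measurable with respect to the outside data. It agrees with each $\cH_i\diamond\cQ_{0,s}$ up to constants on every component of $A_i$. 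One then couples the slice so that $F\diamond\cQ_{s,t}=F\diamond\cM_{s,t}$ as functions, using the fixed point marginal for the whole initial condition $F$.

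In your version, the coalescence point, and hence the data $f$, are determined by the slice itself. That does not produce a coupling, because the marginal identity only applies to data independent of the slice.
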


The proof of Proposition \ref{prop:ulteq} uses a Lindeberg exchange strategy, which is also used in the proof of the full-space characterization in \cite{DZ24}.

For each $t\in [0,1]$ and $i\in\II{1, k}$, we denote\footnote{We use the convention that $\cM(x,t;x,t)=\cQ(x,t;x,t)=0$ for any $x\ge 0$ and $t$, and $\cL(x,t;y,t)=\cM(x,t;y,t)=-\infty$ for any $t$ and $x, y\ge 0$, with $x\neq y$.}
\begin{equation}   \label{eq:cMdef}
\cM_i^t = \cH_i \diamond \cQ_{0,t} \diamond \cM_{t,1} \diamond \cD_i,
\end{equation}
        		where $\{\cH_i\}_{i=1}^k, \cQ, \cM, \{\cD_i\}_{i=1}^k$ are all independent. 
We note that $\{\cM_i^0\}_{i=1}^k = \{ \cH_i\diamond \cM \diamond \cD_i \}_{i=1}^k$ and $\{\cM_i^1\}_{i=1}^k = \{ \cH_i\diamond \cQ \diamond \cD_i \}_{i=1}^k$.

		Our basic strategy will be to show that for $0\le s < t\le 1$, we can couple the random variables $\{\cM^s_i\}_{i=1}^k$ and $\{\cM^t_i\}_{i=1}^k$ such that $\E |\cM^s_i - \cM^t_i | \mathds{1}[\cE[W]]$ is much smaller than $t-s$ when $t-s$ is small, for each $i\in\II{1, k}$. Note that this will typically not be true if $\cM$ and $\cQ$ are taken to be independent.

        More precisely, we will prove the following swap result. 
For this result and throughout the remainder of the section, we use $C,c>0$ to denote large and small constants that are allowed to depend on $\rho, \lambda_1, \lambda_2, \ldots, \lambda_k$ and $W$, and their values can change from line to line.
		\begin{prop}  \label{prop:clo}
			For each  $\theta \in (0, 1/4)$, there exists $\delta>0$ such that the following holds. For $s <  t \in [\theta, 1 - \theta]$, we can find a coupling of $\{\cH_i\}_{i=1}^k, \cM, \cQ, \{\cD_i\}_{i=1}^k$ such that each of the tuples $(\{\cH_i\}_{i=1}^k, \{\cM_i^s\}_{i=1}^k, \{\cD_i\}_{i=1}^k)$ and  $(\{\cH_i\}_{i=1}^k, \{\cM_i^t\}_{i=1}^k, \{\cD_i\}_{i=1}^k)$ has the same law as in \eqref{eq:cMdef}, and 
			$$
			\P\left[\big\{\{\cM_i^s\}_{i=1}^k \neq \{\cM_i^t\}_{i=1}^k \big\}\cap \cE[W] \right] \le C (t-s)^{2/3+\delta},
			$$
			where the constant $C$ can also depend on $\theta$.
		\end{prop}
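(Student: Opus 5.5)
I will follow the swap argument of \cite[Section 4]{DZ24}, adapted to half-space. For $s<t$ write
$$\cM^s_i = (\cH_i\diamond \cQ_{0,s})\diamond \cM_{s,t}\diamond(\cM_{t,1}\diamond \cD_i), \qquad \cM^t_i = (\cH_i\diamond \cQ_{0,s})\diamond \cQ_{s,t}\diamond(\cM_{t,1}\diamond \cD_i).$$
Set $F_i = \cH_i\diamond\cQ_{0,s}$ and $G_i=\cM_{t,1}\diamond\cD_i$. Then the two tuples differ only in the middle increment, and $(F_i,G_i)$ are independent of it. The coupling will use common $F_i, G_i$ (together with common $\cH_i,\cD_i$), and will couple $\cM_{s,t}$ with $\cQ_{s,t}$ on a suitable event.

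The key observation is that when the $k$ geodesics from the initial data $F_i$ to the final data $G_i$ across the time slab $[s,t]$ all pass through a single point at some level, the quantity $F_i\diamond\cM_{s,t}\diamond G_i$ depends on $\cM_{s,t}$ only through finitely many one-sided marginals. Concretely, I will use the following \emph{coalescence/branching event} $\mathcal B$: there exist points $p_1,p_2$ with
$$F_i\diamond \cM_{s,t}\diamond G_i = (F_i\diamond\cM_{s,t})(p)+ \ldots,$$
so that only the single-variable profiles $F_i\diamond \cM_{s,r}$ and $\cM_{r,t}\diamond G_i$ matter. By Definition \ref{D:prelandscapes}.3 and Lemma \ref{L:Equation-QQ}, these profiles have the same joint law for $\cM$ as for $\cQ$. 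Fixing $r=(s+t)/2$ and using independent increments, I would couple $\cM_{s,r}$ with $\cQ_{s,r}$ so that the $k$ functions $F_i\diamond\cM_{s,r}$ agree. This is possible because each is a KPZ fixed point started from $F_i$. I would likewise couple $\cM_{r,t}$ with $\cQ_{r,t}$ so that the backward profiles agree, via time reversal. On the event $\mathcal B'$ that, for both $\cM$ and $\cQ$, all $k$ maximizers of $z\mapsto (F_i\diamond \cM_{s,r})(z)+(\cM_{r,t}\diamond G_i)(z)$ are realized through a shared structure, the values $\cM^s_i$ and $\cM^t_i$ agree. Strictly, agreement needs joint equality for all $i$ simultaneously, and the multi-$f$ joint law is exactly what is unknown. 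So I would instead use the $k$-point version: I would couple only the collection $(F_i\diamond\cM_{s,r})_i$ on the event where the $k$ geodesics from the $F_i$ through $[s,r]$ have merged into one point at time $r$. There the $k$ profiles are determined by one function plus constants, which reduces the question to single-initial-data marginals.

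Hence $\P[\{\cM^s\ne\cM^t\}\cap\cE[W]]$ is bounded by the probability that, in a slab of duration $\tau=t-s$, the $k$ geodesics (forward from $F_i$, backward from $G_i$) fail to coalesce, intersected with $\cE[W]$. By the $1{:}2{:}3$ scaling, two geodesics at distance $d$ fail to meet within time $\tau$ with probability roughly $\min(1, d\,\tau^{-2/3})$, so naively the bound is only $O(\tau^{2/3})$. The gain $\tau^{\delta}$ must come from the randomness of the spatial positions: the geodesics from the distinct slopes $\lambda_i$ sit at random locations that are separated by order one only with high probability. The event of non-coalescence requires a near-tie, namely a branching point, inside a window of size $\tau^{2/3}$, and this is the key branching estimate Proposition \ref{prop:key1}. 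I would prove it by using the stationarity of $\cH_i$ (Proposition \ref{P:properties-elpp}.10, Lemma \ref{lem:Mstat}) to say that $F_i$ is again a half-space stationary horizon marginal. Symmetrically, on $\cE_2[W,\cD]$, the $G_i$ are Brownian-type by the reversed version of the same lemma. Coalescence failure then translates into an event that the increments $F_i - F_j$ and $G_i-G_j$, which are monotone and piecewise constant in the horizon, change within a $\tau^{2/3}$-window near the argmax. For the increments $F_i-F_j$, I would bound this event using the explicit exponential–Brownian description in Proposition \ref{P:half-space-horizon-marginals}, together with Brownian removal via Corollary \ref{cor:exp-bro}.

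The main obstacle is this branching estimate near the boundary $x=0$. There the horizon has exponential atoms, and geodesics may stick to the boundary, so the full-space Airy absolute continuity arguments are unavailable. I would split into two cases. In the first, the argmax lies at distance more than $\tau^{2/3-\delta'}$ from $0$; there the local Brownian behaviour of $F_i,G_i$ gives the full-space bound. In the second, it lies within $\tau^{2/3-\delta'}$ of $0$; there I would use the one-point and parabolic tail bounds of Proposition \ref{P:properties-elpp}.4, 7 and the fact that the boundary drift is Brownian with slope $-2\rho$ to show that this case has probability $O(\tau^{2/3+\delta})$. The restriction to $\cE[W]$ controls contributions from large $x$, and $\theta$ keeps $s$ and $1-t$ bounded below so that $F_i$ and $G_i$ are smoothed.
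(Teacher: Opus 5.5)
\textbf{The coupling mechanism is the gap.} Write $r=t-s$ (your $\tau$). For distinct $i$, the relevant geodesics across $[s,t]$ sit near the near-maximizers of $F_i+G_i$, and these are typically order-one apart. In a slab of duration $r\ll 1$ they essentially never coalesce; your own heuristic $\min(1,d\,r^{-2/3})$ gives non-coalescence probability close to $1$. So bounding $\P[\{\cM^s\neq\cM^t\}\cap\cE[W]]$ by a non-coalescence probability yields nothing.

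Coalescence is also not what makes the profiles comparable. Geodesics meeting at one point do not make $F_i\diamond\cM_{s,r}-F_j\diamond\cM_{s,r}$ constant. Your backward coupling of $(\cM_{r,t}\diamond G_i)_i$ needs the same unknown multi-profile joint law. And a coupling ``on an event'' defined through $\cM_{s,r}$ itself is not a construction.

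The missing idea is Lemma \ref{lem:exiF-hf}: glue the initial data into one function $F$ with the following properties.
\begin{itemize}
\item $F$ is measurable with respect to $\sigma(\{\cH_i\},\cQ_{0,s},\cM_{t,1},\{\cD_i\})$.
\item $F=-\infty$ off $\bigcup_i A_i$, where $A_i$ is the $|\log r|^{10}r^{2/3}$-neighbourhood of the $|\log r|^{10}r^{1/3}$-near-maximizers of $F_i+G_i$.
\item $F$ equals $F_i$ plus a constant on each component of $A_i$.
\item $F$ has a H\"older-$1/2$ modulus on scale $|\log r|^{10}r^{2/3}$.
\end{itemize}
Conditionally on that data, couple $\cM_{s,t}$ and $\cQ_{s,t}$ so that $F\diamond\cM_{s,t}=F\diamond\cQ_{s,t}$. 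This uses only the single-initial-condition law. Since $F_i$ and $G_i$ are common to both sides, all $k$ values then agree, with no splitting of the slab and no coupling on the $G$ side.

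The gluing is trivial when the $A_i$ are disjoint, which is exactly the typical case your scheme counts as failure. It is obstructed only by $A_i\cap A_j\cap D_{i,j}\neq\emptyset$, where $D_{i,j}$ is the neighbourhood of the set on which $F_i-F_j$ is non-constant. Only the horizon increments enter here. $G_i-G_j$ has no monotone or locally constant structure, since the $\cD_i$ are independent Brownian motions.

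\textbf{Your near-boundary case also fails as stated.} The argmax of $F_i+G_i$ lands within $r^{2/3-\delta'}$ of $0$ with probability of order $r^{1/3-\delta'/2}$: a process with Brownian-scale fluctuations near $0$ has its argmax in $[0,\epsilon]$ with probability of order $\sqrt{\epsilon}$. This is far larger than $r^{2/3+\delta}$. Moreover, one-point and parabolic tail bounds carry no information at spatial scale $r^{2/3}$.

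What is needed is the localized estimate of Proposition \ref{prop:key1}: $\P[A_1\cap A_2\cap D_{1,2}\cap I\neq\emptyset]\le C(\max I)^{-1}r^{4/3+\delta}$ for intervals $I$ of length $r^{2/3}$. The factor $(\max I)^{-1}$ absorbs the larger probabilities near $x=0$, and the union bound over $I$ sums to $O(r^{2/3+\delta}|\log r|)$. The paper proves it in three steps.
\begin{itemize}
\item Replace $F_i$ by $\cH_i$, using stationarity.
\item Replace $\cD_i$ by Brownian motions $\cD_i^*$ of slope $-2\rho$, so that $\cM_{t,1}\diamond\cD_i^*$ is again Brownian (Lemma \ref{lem:Mstat}). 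The Radon--Nikodym cost is controlled by Lemma \ref{lem:bmcomp}.
\item In the exponential--Brownian representation of $(\cH_1,\cH_2)$, split the event into pieces costing $r^{1/3}$, a small power $r^{\delta_0}$ (from \cite[Lemma 4.5]{DZ24}), and $z_+^{-1}r^{1-o(1)}$. The last bound (Lemma \ref{lem:key112-}) comes from reducing to three Brownian motions via parameter permutation and Corollary \ref{cor:exp-bro}, then applying a Karlin--McGregor estimate (Lemma \ref{lem:bm3}).
\end{itemize}
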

Before proving this proposition, we note that it readily implies Proposition \ref{prop:ulteq}. More precisely, we can derive the following tail estimate for the difference $\cM^s_i - \cM^t_i$.
		\begin{lemma}  \label{lem:diff-tail}
			For any $0\le s<t \le 1$, under any coupling between $\{\cH_i\}_{i=1}^k, \cM, \cQ, \{\cD_i\}_{i=1}^k$, we have the following:
			for any $i\in \llbracket 1, k\rrbracket$ and $a>0$,
			\[
			\P\left[|\cM^s_i - \cM^t_i | >  a(1 - \log(t-s))^2(t-s)^{1/3}\right] < 2\exp(-ca).
			\]
		\end{lemma}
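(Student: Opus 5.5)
We reduce Lemma \ref{lem:diff-tail} to a statement about a single random field of the form $f\diamond \cN_{s,t}\diamond g$, where $\cN_{s,t}$ is an increment of a pre-landscape over a short time window. For the tuple at time $s$ we write $\cM^s_i = F_i \diamond \cM_{s,t}\diamond G_i$, with $F_i = \cH_i\diamond \cQ_{0,s}$ and $G_i = \cM_{t,1}\diamond \cD_i$. For the tuple at time $t$ we write $\cM^t_i = F_i\diamond \cQ_{s,t}\diamond G_i$. The laws of $F_i$ and $G_i$ are the same in both expressions, but the coupling is arbitrary, so we cannot compare the two directly. Instead we bound each of $\cM^s_i$ and $\cM^t_i$ against a common reference quantity $R_i = \max_{x\ge 0} F_i(x)+G_i(x)$ (the "zero time" composition), and apply the triangle inequality. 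Each comparison involves only one tuple, so its law is fixed by \eqref{eq:cMdef} and does not depend on the coupling. Hence it suffices to show, for $\cN\in\{\cM,\cQ\}$ independent of $F_i,G_i$ and with $h=t-s$,
\[
\P\Big[|F_i\diamond \cN_{s,t}\diamond G_i - R_i| > a(1-\log h)^2 h^{1/3}\Big] < \exp(-ca).
\]
Here $R_i$ is understood as computed from the same $F_i,G_i$ in each tuple. We bound the two tuples' differences separately and add them.

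For the lower bound we take $x^*$ to be the argmax of $F_i+G_i$ and use $F_i\diamond\cN_{s,t}\diamond G_i \ge R_i + \cN(x^*,s;x^*,t)$. By independence and the one-point tail in Proposition \ref{P:properties-elpp}.4 (valid for $\cN$ by Lemma \ref{L:M-lemma}), $\cN(x^*,s;x^*,t)\ge -ah^{1/3}$ except with probability $2e^{-ca^3}$.

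For the upper bound we write $F_i\diamond\cN_{s,t}\diamond G_i = \max_{x,y}F_i(x)+\cN(x,s;y,t)+G_i(y)$. The shape bound Proposition \ref{P:only-easy-consequences}.2 gives $\cN(x,s;y,t)\le -(x-y)^2/h + Mh^{1/3}\log^2(2\|u\|/h)$ with $\P(M>a)\le 2e^{-ca^{3/2}}$. The term $F_i(x)+G_i(y)$ is at most $R_i + G_i(y)-G_i(x)$. We therefore need a modulus of continuity for $G_i$ (or $F_i$) with Gaussian-type tails. We get it from Proposition \ref{P:only-easy-consequences}.1 when $t<1$, and from the Brownian motion $\cD_i$ when $t=1$; the symmetric case $s=0$ uses $\cH_i$. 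Time reversal (Lemma \ref{L:Equation-QQ}) handles $G_i$ versus $F_i$ symmetrically. Concretely, we bound $|G_i(y)-G_i(x)|\le M'|x-y|^{1/2}\log^{1/2}(\cdot)$ on a window of size $O(1)$ and use parabolic growth outside. Optimizing $M'|x-y|^{1/2} - (x-y)^2/h$ over $|x-y|$ gives $O(M'^{4/3}h^{1/3})$, which produces the $\exp(-ca)$ tail; the logarithms produce the $(1-\log h)^2$ factor.

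The main obstacle is locating the relevant $x,y$ in a compact window with exponential tails, so that the local modulus bounds apply uniformly. For this we use the parabolic shape bounds: Proposition \ref{P:properties-elpp}.7 for $\cQ_{0,s}$ and $\cM_{t,1}$, and the linear drift of $\cH_i$ and $\cD_i$. A further complication is that when $s$ or $1-t$ is tiny, $F_i$ or $G_i$ degenerates toward $\cH_i$ or $\cD_i$. We handle this by noting that Brownian and stationary-horizon increments (Lemma \ref{lem:Mstat}, together with the explicit marginals) supply the needed $|x-y|^{1/2}$ modulus uniformly in $s,t$.
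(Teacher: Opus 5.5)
Your proposal is correct and matches the paper's route. The paper defers to the proof of \cite[Lemma 3.10]{DZ24}, with the shape bound of Proposition \ref{P:only-easy-consequences}.2 as the key input: both $F_i\diamond\cM_{s,t}\diamond G_i$ and $F_i\diamond\cQ_{s,t}\diamond G_i$ are sandwiched around $F_i\diamond G_i$ using the shape bound for the short increment, a Brownian-type modulus and localization of the maximizers, and the logarithms give the factor $(1-\log(t-s))^2$.

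Two points would streamline the argument. First, get uniformity in $s,t$ by always using the modulus of $F_i$, whose increments have the law of those of $\cH_i$ for every $s$ by the stationarity in Proposition \ref{P:properties-elpp}.10. Proposition \ref{P:only-easy-consequences}.1 applied to $G_i$ degenerates as $t\to 1$, and $G_i$ is not stationary, so Lemma \ref{lem:Mstat} does not repair that branch. Second, applying the uniform shape bound at $x^*$ for the lower bound removes your need for independence between the short increment and $(F_i,G_i)$.
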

The proof of Lemma \ref{lem:diff-tail} is essentially verbatim from that of \cite[Lemma 3.10]{DZ24}, with the shape bound in Proposition \ref{P:only-easy-consequences}.2 as an input.
We can then derive Proposition \ref{prop:ulteq} using Proposition \ref{prop:clo} and Lemma \ref{lem:diff-tail}, following the same arguments as the proof of  \cite[Proposition 3.8]{DZ24}, by bounding the $L^1$-Wasserstein distance between the laws.
We omit the details of these proofs.

In now remains to (1) prove Proposition \ref{prop:clo}, and (2) derive Proposition \ref{prop:finid} from Proposition \ref{prop:ulteq}. We achieve these in the remaining subsections.

\subsection{Coupling and branch estimates}
\label{SS:coupling-and-branch}

In this and the next subsection we prove Proposition \ref{prop:clo}. We fix $\theta \in (0, 1/4)$, and all the constants $C,c>0$ are allowed to depend on $\theta$ (in addition to $\rho, \lambda_1, \lambda_2, \ldots, \lambda_k$ and $W$).

The main argument in the proof is a key branching estimate, which we state in Proposition \ref{prop:key1}.

Take $s < t = s + r \in [\theta, 1 - \theta]$, and throughout we assume that $r = t-s$ is sufficiently small. We take $\{\cH_i\}_{i=1}^k$, $\cQ_{0,s}$, $\cM_{t,1}$, and $\{\cD_i\}_{i=1}^k$ that are independent of each other.

We consider the following sets.
For each $i\in\II{1, k}$, denote
		\[
		\tilde{A}_i=\{x \in \R :\cH_i\diamond \cQ_{0,s}(x) + \cM_{t,1}\diamond \cD_i(x) > \cH_i\diamond \cQ_{0,s}\diamond \cM_{t,1}\diamond \cD_i - |\log(r)|^{10}r^{1/3} \},
		\]
and let $A_i$ be the $|\log(r)|^{10}r^{2/3}$-neighborhood of $\tilde{A}_i$. 
We note the following bound on each $A_i$ under $\cE[W]$.
\begin{lemma}  \label{lem:boundAi}
We have $\P\big[A_i \subset [0, a] \;\big\mid\; \cE[W] \big] > 1-2\exp(-ca^{3/2})$, for each $i\in \llbracket 1, k\rrbracket$ and $a>0$.
\end{lemma}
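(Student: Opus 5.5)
The plan is to control the set $\tilde A_i$ directly, since $A_i$ is only its $|\log r|^{10}r^{2/3}$-neighborhood. For $r$ small this enlarges $\tilde A_i$ by at most $1$, so it suffices to show $\P[\tilde A_i\not\subset[0,a-1]\mid\cE[W]]\le 2e^{-ca^{3/2}}$. Since $\P[\cE[W]]\ge c>0$ with $c$ depending only on $W,\lambda_i,\rho$ (positive-probability Brownian/stationary-horizon events), we can drop the conditioning at the cost of a constant and bound the unconditional probability. However, it is more convenient to keep the defining inequalities of $\cE[W]$, since on $\cE[W]$ the boundary functions have controlled linear growth beyond $W$.

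Write $F(x)=\cH_i\diamond\cQ_{0,s}(x)$ and $G(x)=\cM_{t,1}\diamond\cD_i(x)$, and let $S=\max_x F+G$. Each is obtained by metric composition of an independent boundary function with a pre-landscape over a time interval of length $\ge\theta$. The first step is an upper envelope. By the shape bound (Proposition \ref{P:only-easy-consequences}.2, applied to $\cQ$ and to $\cM$), with probability $1-2e^{-ca^{3/2}}$ we have $\cQ_{0,s}(z,x)\le -(x-z)^2/s + M\,\log^2(2+x+z)$ with $M\le a^{\,}$-type control. Combining this with $\cE_1[W,\cH]$, which gives $\cH_i(z)\le \cH_i(W)+W+W(z-W)/3$ for $z\ge W$, and with the random-walk-type bound $\max_{z\le W}\cH_i$ (a tight variable with Gaussian tails from Proposition \ref{P:half-space-horizon-marginals}), we get $F(x)\le \cH_i(W)+C+Cx - x^2/4+\text{(error)}$ for large $x$. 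The parabola comes from optimizing the linear slope against $-(x-z)^2/s$. The analogous bound holds for $G$ using $\cE_2[W,\cD]$ and time-reversal (Lemma \ref{L:Equation-QQ}). Hence $F(x)+G(x)\le K - c x^2$ for $x\ge a/2$, where $K$ is a random level with tails $e^{-ca^{3/2}}$ or better.

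The second step is a lower bound on $S$: $S\ge F(0)+G(0)\ge \cH_i(0)+\cQ_{0,s}(0,0)+\cM_{t,1}(0,0)+\cD_i(0)$, which has lower tail $e^{-ca^{3}}$ by Proposition \ref{P:properties-elpp}.4. Together with the common-offset terms, this shows that any $x\in\tilde A_i$ must satisfy $c x^2\le K-S+1$. That quantity exceeds $ca^2$ only with probability $2e^{-ca^{3/2}}$, which gives $\tilde A_i\subset[0,a/2]$.

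The main obstacle is making the upper envelope uniform in $x$ with the right tail exponent $3/2$. The shape-bound random variable $M$ and the horizon growth must be absorbed into a single tail. The rough plan is a union bound over dyadic blocks $x\in[2^j a,2^{j+1}a]$, using the parabolic decay $-(2^ja)^2$ to beat the $\log^2$ losses and the linear growth allowed by $\cE[W]$. The factor $W/3$ versus $-2W/3$ in \eqref{E:HW}–\eqref{E:DW} ensures that the net linear drift of $F+G$ is negative, as the slopes $\lambda_i$ of $\cH_i$ are transported through the landscape.
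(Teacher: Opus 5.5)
Your overall route is the one the paper's one-line proof points to: the shape bound of Proposition~\ref{P:only-easy-consequences}.2, combined with the linear envelopes that $\cE[W]$ imposes on $\cH_i$ and $\cD_i$, together with $S\ge F(0)+G(0)$. However, the central envelope you derive is false, because neither $F$ nor $F+G$ decays parabolically. On $\cE[W]$ you only have $\cH_i(z)\le K_H+W+Wz/3$ for all $z\ge 0$, where $K_H=\max_{[0,W]}\cH_i$. Optimizing over the composition variable gives $\max_{z}\big[\tfrac{W}{3}z-\tfrac{(x-z)^2}{s}\big]=\tfrac{W}{3}x+\tfrac{W^2 s}{36}$, attained at $z=x+Ws/6$. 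This is linear in $x$, since $z$ is free to follow $x$. It cannot be otherwise: $F-F(0)\eqd\cH_i$ by Proposition~\ref{P:properties-elpp}.10, so $F(x)/x\to\lambda_i>0$ almost surely, and no bound of the form $K+Cx-x^2/4$ can hold with positive probability. In the same way, $G(x)\le K_D-\tfrac{2W}{3}x+C+(\text{error})$. So the only decay of $F+G$ is the net linear drift $-W/3$, which you mention in your last sentence but never actually use. Both ``$cx^2\le K-S+1$'' and ``the parabolic decay $-(2^ja)^2$ beats the $\log^2$ losses'' rest on a term that does not exist. (Note also that $\P[\cE[W]]>0$ requires $W>3\lambda_1$.)

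Once this is corrected, a point $x\in\tilde A_i$ only satisfies $\tfrac{W}{3}x\le K-S+1+CM\log^2(2+x)$, where $M$ is the shape-bound constant. This error is not a fixed random level: it grows with $x$, and against linear decay it can be absorbed only when $M\le c\,a/\log^2 a$. So the shape bound by itself yields $\P[\tilde A_i\not\subset[0,a]\mid\cE[W]]\le C\exp(-ca^{3/2}/\log^3 a)$, not the stated exponent. This weaker tail does suffice for every use of the lemma, namely $a=|\log r|^{2/3+0.0001}$ in Corollary~\ref{cor:key1} and $a=|\log r|^2/2$ in the proof of Proposition~\ref{prop:clo}, since it still beats any power of $r$. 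To get exactly $\exp(-ca^{3/2})$, run your dyadic plan with the linear gap instead. For $x\in[2^ja,2^{j+1}a]$, landing in $\tilde A_i$ requires an upper deviation of order $W2^ja$, either of $\cQ_{0,s}$ near $z=x+Ws/6$ or of $\cM_{t,1}$ near $y=x-W(1-t)/3$; the leftover parabola in $z-x$ and $y-x$ handles the rest. By the one-point and profile tails (Proposition~\ref{P:properties-elpp}.4 and 7, with time reversal) plus local continuity, this costs $\exp(-c(2^ja)^{3/2})$ per unit grid point. Summing over the grid points and over $j$ gives the claim.
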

\begin{proof}
This follows from the shape bound in Proposition \ref{P:only-easy-consequences}.2.
\end{proof}

Next, for each $i, j \in \llbracket 1, k\rrbracket$, $i\neq j$, we let $\tilde{D}_{i,j}$ be the set where
		$\cH_i\diamond \cQ_{0,s}-\cH_j\diamond \cQ_{0,s}$
		is non-constant, i.e.,
		\begin{multline*}
		\tilde{D}_{i,j} = \big\{x \in \R : \text{ for each } \ep > 0, \text{ there exists } y \in (x-\ep, x + \ep) \text{ with }\\ \cH_i\diamond \cQ_{0,s}(x)-\cH_j\diamond \cQ_{0,s}(x) \ne \cH_i\diamond \cQ_{0,s}(y)-\cH_j\diamond \cQ_{0,s}(y) \big\}.		    
		\end{multline*}
		We then let $D_{i,j}$ be the $|\log(r)|^{10}r^{2/3}$ neighborhood of $\tilde{D}_{i,j}$. 
		\begin{prop}  \label{prop:key1}
        There is a universal constant $\delta>0$ such that the following is true. For each $i, j \in \llbracket 1,k\rrbracket$, $i\neq j$,
and any closed interval $I\subset [0, |\log(r)|^{2/3+0.0001}]$ of length $r^{2/3}$,
		we have $\P[\{A_i \cap A_j \cap D_{i,j} \cap I \neq \emptyset\}] < C (\max I)^{-1}r^{4/3+\delta}$.
		\end{prop}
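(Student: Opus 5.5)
We follow the full-space branching estimate of \cite[Section 4]{DZ24}, adapting it to the boundary. The event $\{A_i\cap A_j\cap D_{i,j}\cap I\neq\emptyset\}$ requires three things at a common location $x\in I$ of scale $r^{2/3}$:
\begin{itemize}[nosep]
\item[(a)] the profiles $\cH_i\diamond\cQ_{0,s}$ and $\cH_j\diamond\cQ_{0,s}$ have not coalesced near $x$, i.e.\ $x$ is within $|\log r|^{10}r^{2/3}$ of a point where the two geodesic trees from the two stationary initial conditions branch;
\item[(b)] the combined function $\cH_i\diamond\cQ_{0,s}+\cM_{t,1}\diamond\cD_i$ is within $|\log r|^{10}r^{1/3}$ of its maximum near $x$;
\item[(c)] the same holds with $j$ in place of $i$.
\end{itemize}
The proof bounds the probabilities of these three near-maximizer and branching constraints and shows they multiply.

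\textbf{Step 1 (initial side: branching).} By Proposition \ref{P:properties-elpp}.10, equivalently Lemma \ref{lem:Mstat} and the independence of the pre-landscape choice in Theorem \ref{T:Markov-process}, the pair $(\cH_i\diamond\cQ_{0,s},\cH_j\diamond\cQ_{0,s})$ minus its values at $0$ has the law of the two-slope marginal of $\cH^\DL$. So the branching set $\tilde D_{i,j}$ can be read off from Proposition \ref{P:half-space-horizon-marginals} with $k=2$. There $\cH_i-\cH_j$ is constant exactly where the last-passage geodesics in the exponential--Brownian environment $X\cB$ coincide. I would show, via the Burke-type coupling of Corollary \ref{cor:exp-bro} and Lemma \ref{lem:exp-bro-swap}, that on $[x,x+r^{2/3}]$ the difference is non-constant with probability $O(r^{1/3})$, uniformly up to logs. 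This is the half-space analogue of the standard estimate that the stationary horizon has jump density of order its slope gap. The same computation should give decay in $x$, since the set $\tilde D_{i,j}$ is sparse far from the boundary; this is what produces the factor $(\max I)^{-1}$.

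\textbf{Step 2 (near-maximum events).} Conditionally on the initial side, which is independent of $\cM_{t,1}\diamond\cD_i$ and $\cM_{t,1}\diamond\cD_j$, I need
\[
\P\big(x\in A_i\cap A_j\text{ for some }x\in I\big)\lesssim r^{1/3}\cdot r^{2/3+\delta}.
\]
This rests on two inputs. First, a single near-maximum at a fixed window costs about $r^{1/3}$ up to logs, by Brownian-like local behaviour of both profiles. Second, given that $i$ attains its near-max in $I$ and the profiles branch there, the $j$-profile must also be near-maximal at the same window, and that costs an extra $r^{1/3}$. The branching means the two sums differ by a non-constant function near $x$, which breaks the rigidity that would otherwise make these events coincide. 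To get the local Brownian behaviour without the quantitative Airy absolute continuity of \cite{dauvergne2024wiener}, I would use stationarity on \emph{both} sides. The final-side functions $\cM_{t,1}\diamond\cD_i$ are locally compared, via Lemma \ref{L:basis-for-comparison}-type monotone sandwiching, with stationary final conditions, which are Brownian by Lemma \ref{lem:Mstat} (slope $-2\rho$ Brownian motion at $\lambda=2|\rho|$). The initial side is exactly stationary. Sums of independent Brownian-like pieces then allow explicit ``Brownian removal'' computations of near-max probabilities, using the parabolic shape bounds of Proposition \ref{P:only-easy-consequences}.2 to control the window outside $[0,|\log r|^{2/3+0.0001}]$.

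\textbf{Step 3 (combination and main obstacle).} Multiplying gives $r^{1/3}\cdot r^{1/3}\cdot r^{2/3}=r^{4/3}$ times logs. The needed gain $r^\delta$ is what I expect to be the main obstacle. It must come from showing that at a branch point the two summed profiles are not simultaneously near-maximal with the naive probability, for instance via a small power gain in the Brownian three-way meeting estimate, as in \cite[Section 4]{DZ24}. In half-space this is harder near $x=0$: geodesics stick to the boundary, the two-slope horizon involves the extra exponential array, and branching is more frequent there. I would handle $I$ near $0$ separately, where the claimed bound $(\max I)^{-1}r^{4/3+\delta}$ is weak enough to absorb an $O(r^{2/3})$ loss. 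For $I$ away from the boundary, I would treat the environment as locally full-space plus Brownian drifts coming from the exponential layer of Proposition \ref{P:half-space-horizon-marginals}.
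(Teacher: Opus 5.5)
\textbf{There is a genuine gap at the core of the argument.} You have the outer layer right: stationarity of $\cQ$ on the initial side, exactly Brownian final conditions from Lemma \ref{lem:Mstat}, and an unconditional branching cost of order $r^{1/3}$. But the central estimate is missing, and the mechanism you propose would put the powers of $r$ in the wrong places. You estimate branching and the near-maximum events separately and multiply, treating the profiles as locally Brownian once the initial side is fixed. Conditioning on branching, however, distorts the initial profile exactly where the near-maximum events look.

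Here is how the paper handles this.
\begin{enumerate}
\item It reduces the two-slope horizon to three Brownian motions and one exponential weight: Lemma \ref{L:replacement-RN}, then Lemma \ref{lem:eqH12} via two applications of Lemma \ref{lem:exp-bro-swap}.
\item Using the Burke couplings of Corollary \ref{cor:exp-bro}, it shows (Lemma \ref{lem:ce12inc}) that branching in $I_*$ forces two things. First, $-\cB^{(4)}_4+\cB^{(5)}_3$ must attain a new maximum in $[z_-,z_+]$, hence up to small oscillation be near-maximal at $z_-$ on $[0,z_-]$. Second, $X^{(2)}(-1,3)$ must be smaller than a quantity of order $\sqrt{z_+}$.
\item The left-sided near-maximum $\cE^*_{2,-}$ of the $j$-profile forces $\cB^{(4)}_4+\cM_{t,1}\diamond\cD^*_2$ to be near-maximal at the same point. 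The shared motion $\cB^{(4)}_4=\cH^{(2)}_2$ enters with the opposite sign.
\item The Karlin--McGregor bound (Lemma \ref{lem:bm3}) then gives $\epsilon^3$ rather than $\epsilon^2$, with $\epsilon\approx r^{1/3}z_-^{-1/2}$ after Brownian scaling. Hence $\P[\cE^*_{2,-}\cap\cE^*_{1,2}]\le Cz_+^{-1}r$ up to subpolynomial factors (Lemma \ref{lem:key112-}).
\end{enumerate}
This is smaller by a factor $r^{1/3}z^{-1/2}$ than the product of the separate costs. That anti-correlation, not an extra $r^{1/3}$ for the second profile, is what raises the total from order $r$ to order $r^{4/3}$.

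Your bookkeeping is also internally inconsistent. A near-maximum of one profile in a window of width $r^{2/3}$ costs $r^{2/3}$ ($r^{1/3}$ from each side), not $r^{1/3}$.

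The factor $(\max I)^{-1}$ is not decay from branching becoming sparse away from the boundary. Far from the boundary the picture is full-space-like, and for $\max I\ge 1$ the factor is at most polylogarithmic anyway, since $I\subset[0,|\log r|^{2/3+0.0001}]$. It is a \emph{loss} near $0$, coming from the scaling $z_-^{-3/2}\cdot z_+^{1/2}$ in the joint estimate. That estimate holds uniformly in $z$, so a separate crude treatment of windows near $0$ would not cover intermediate distances such as $\max I\approx r^{1/3}$, where $r^{1+\delta}$ is still needed.

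\textbf{The $r^\delta$ gain.} You flag this as the main obstacle but give no mechanism. In the paper it comes from Lemma \ref{lem:12subpmtv} together with \cite[Lemma 4.5]{DZ24}.
\begin{itemize}
\item On branching, the point $z_4$ where the geodesic defining $\cH_1(z_+)$ jumps to the last line lies in $(z_-,z_+]$.
\item So to the right of $z_+$ the increments of $\cH_1$ dominate those of $\cB_3$, up to a small error on $I_*$, while those of $\cH_2$ dominate those of $\cB_4$.
\item The two right-sided near-maximum events therefore become events for $\cB_3+\cM_{t,1}\diamond\cD_1^*$ and $\cB_4+\cM_{t,1}\diamond\cD_2^*$.
\item The second costs $r^{1/3}$ and is independent of $\cE^*_{2,-}\cap\cE^*_{1,2}$. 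The first costs only a further $r^{\delta_0}$ conditionally.
\end{itemize}
So in the paper the $i$-profile's near-maximum contributes just $r^{\delta_0}$.

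\textbf{The final side.} The paper does not use a sandwich argument. It replaces $\cD_i$ by independent slope-$(-2\rho)$ Brownian motions $\cD_i^*$, truncated to $[0,S]$ on an event $\cE_S$ controlled by the shape bound. It pays for the change of drift with the quantitative absolute continuity bound of Lemma \ref{lem:bmcomp}. A comparison in the spirit of Lemma \ref{L:basis-for-comparison} only controls increments on events about geodesic locations, and would not by itself transfer near-maximum probabilities.
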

By a union bound over $I$ and Lemma \ref{lem:boundAi}, this immediately implies the disjointness of $A_i$, $A_j$ and $D_{i,j}$.
Namely, if we let $\cE_*$ be the event where $A_i\cap A_j \cap D_{i,j}=\emptyset$, for each $i, j\in \llbracket 1,k\rrbracket$, $i\neq j$, then we have the following bound.
		\begin{corollary} \label{cor:key1}
        There is a universal constant $\delta>0$ such that $\P[\cE[W]\setminus \cE_*] <Cr^{2/3+\delta}$.
		\end{corollary}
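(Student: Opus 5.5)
We derive Corollary \ref{cor:key1} by covering the relevant spatial range with intervals of length $r^{2/3}$, applying Proposition \ref{prop:key1} on each, and controlling the complement with Lemma \ref{lem:boundAi}. Fix a pair $i\neq j$; since there are only $k(k-1)$ pairs, a union bound over pairs costs only a constant. Set $a_r = |\log(r)|^{2/3+0.0001}$. On $\cE[W]\setminus\cE_*$ there is some pair $(i,j)$ with $A_i\cap A_j\cap D_{i,j}\neq\emptyset$. Either this intersection meets $[0,a_r]$, or $A_i\not\subset[0,a_r]$. Note that $A_i$ always lies in $[0,\infty)$ up to the neighborhood, and the neighborhood is of negligible size $|\log r|^{10}r^{2/3}$; we can shift intervals to start at $0$ or absorb a slightly negative portion into the first interval.

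For the second alternative, Lemma \ref{lem:boundAi} gives
\[
\P[\{A_i\not\subset[0,a_r]\}\cap\cE[W]]\le 2\exp(-c a_r^{3/2}) = 2\exp(-c|\log r|^{1+0.00015}).
\]
This is $o(r^{N})$ for every $N$, and in particular is at most $Cr^{2/3+\delta}$ for small $r$.

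For the first alternative, partition $[0,a_r]$ into closed intervals $I_m=[(m-1)r^{2/3}, m r^{2/3}]$ for $m=1,\dots,M$, where $M=\lceil a_r r^{-2/3}\rceil$. Then $\max I_m = m r^{2/3}$. By Proposition \ref{prop:key1},
\[
\sum_{m=1}^M \P[A_i\cap A_j\cap D_{i,j}\cap I_m\neq\emptyset] \le C\sum_{m=1}^M \frac{r^{4/3+\delta}}{m r^{2/3}} \le C r^{2/3+\delta}\log M \le C r^{2/3+\delta}|\log r|.
\]
Note that the factor $(\max I)^{-1}$ in Proposition \ref{prop:key1} is what keeps this sum only logarithmically large. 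Without it we would get $M r^{4/3+\delta}\approx a_r r^{2/3+\delta}$, which is still fine because of the polylogarithmic bound on $a_r$.

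Finally, absorb the $|\log r|$ factor by replacing $\delta$ with $\delta/2$. Summing the two alternatives over all pairs gives $\P[\cE[W]\setminus\cE_*]<Cr^{2/3+\delta}$. There is no real obstacle here. The only points needing care are that Lemma \ref{lem:boundAi} is conditional on $\cE[W]$, which is harmless since $\P(\cE[W])\le1$, and that all of the difficulty is hidden in Proposition \ref{prop:key1}, which we take as given.
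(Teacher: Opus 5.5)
Your proposal is correct and is essentially the paper's argument: a union bound of Proposition \ref{prop:key1} over intervals of length $r^{2/3}$ covering $[0,|\log r|^{2/3+0.0001}]$, where the harmonic sum costs only a $|\log r|$ factor that is absorbed by shrinking $\delta$, plus Lemma \ref{lem:boundAi} with $a=|\log r|^{2/3+0.0001}$ to make the escape event superpolynomially small. One cosmetic fix: take the last interval to be $[a_r-r^{2/3},a_r]$, so that it stays inside $[0,a_r]$ as Proposition \ref{prop:key1} requires.
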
 
This is the half-space version of \cite[Proposition 3.11]{DZ24}. Given Corollary \ref{cor:key1}, the proof of Proposition \ref{prop:clo} is essentially the same as that of \cite[Proposition 3.9]{DZ24}. We sketch the arguments below.

First, we define the required coupling between $\cQ_{s,t}$ and $\cM_{s,t}$ in Proposition \ref{prop:clo}. This coupling depends on $\{\cH_i\}_{i=1}^k$, $\cQ_{0,s}$, $\cM_{t,1}$, and $\{\cD_i\}_{i=1}^k$. It is essentially the same coupling constructed in \cite[Section 3.3]{DZ24}.

For this next lemma, we use that under $\cE[W]$, each set $A_i$ is bounded, and consists of finitely many intervals $A_{i,1}, \ldots, A_{i,\ell(i)}$, each of length at least $2|\log(r)|^{10}r^{2/3}$. The boundedness of each $A_i$ follows from the tail bound in Lemma \ref{lem:boundAi}.

		\begin{lemma}  \label{lem:exiF-hf} 
        There exists a function $F:\R_{\ge 0}\to \R\cup\{-\infty\}$ that is $\sig(\{\cH_i\}_{i=1}^k, \cQ_{0,s}, \cM_{t,1}, \{\cD_i\}_{i=1}^k)$-measurable, and  satisfies the following conditions:
			\begin{itemize}
				\item On $\cE[W]\cap\cE_*$, for each $i\in \llbracket 1,k\rrbracket$, $F-\cH_i\diamond \cQ_{0,s}$ is constant on each interval $A_{i, j}$, $j\in\II{1, \ell(i)}$.
				\item $F=-\infty$ outside of $\bigcup_{i=1}^k A_i$,
				\item For any $a>0$, with probability at least $1 - 2\exp(-ca)$ the following is true:  
                $$
                |F(x)-F(y)| \le a \log(|x-y|^{-1} + 2)\sqrt{|x-y|}
                $$
                for all $x, y \in \bigcup_{i=1}^k A_i$, $|x-y|\le |\log(r)|^{10}r^{2/3}$. 
			\end{itemize}
		\end{lemma}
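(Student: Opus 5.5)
The plan is to build $F$ by gluing together shifted copies of the profiles $\cH_i\diamond\cQ_{0,s}$, following the construction in \cite[Section 3.3]{DZ24}. Work on $\cE[W]\cap\cE_*$; off this event set $F=\cH_1\diamond\cQ_{0,s}$ on $\bigcup_i A_i$ and $-\infty$ elsewhere, so only the Hölder property needs checking there. On $\cE[W]\cap\cE_*$, the components $A_{i,j}$ form a finite family of intervals, and on each component $F$ should equal $\cH_i\diamond\cQ_{0,s}+c_{i,j}$ for a constant $c_{i,j}$. The construction therefore has two parts: defining $F$ consistently on overlaps, and choosing the constants.

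\textbf{Consistency.} Consider the connected components $U$ of $\bigcup_i A_i$. Each $U$ is a finite union of overlapping intervals $A_{i,j}$. If $A_{i,j}\cap A_{i',j'}\neq\emptyset$ with $i\ne i'$, then on the event $\cE_*$ we have $A_i\cap A_{i'}\cap D_{i,i'}=\emptyset$. Hence the intersection contains no point of $\tilde D_{i,i'}$, so $\cH_i\diamond\cQ_{0,s}-\cH_{i'}\diamond\cQ_{0,s}$ is locally constant on it.

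Since $A_{i,j}\cap A_{i',j'}$ is an interval, this difference is in fact constant there. The constants $c$ can therefore be matched along a chain of overlapping intervals. To make the choice well defined, fix the smallest index $i_0$ among the intervals forming $U$. Set $F=\cH_{i_0}\diamond\cQ_{0,s}$ on the $i_0$-intervals, and propagate through overlaps.

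Two things need verifying: that propagation never produces a conflict, and that intervals of the same index $i$ get the same shift. Both follow because within a fixed $U$ every pairwise difference is constant on the whole connected overlap graph. To see this, note that if $x\in A_{i,j}\cap A_{i',j'}$ and $x$ were not in $D_{i,i'}$-free territory, then $\cE_*$ would be violated. Also, $\tilde D$ is closed under the triangle property: the difference for the pair $(i,i'')$ is non-constant only where the difference for $(i,i')$ or for $(i',i'')$ is non-constant. This is the one combinatorial point that needs care. I expect it to be the main obstacle, since the overlap of $A_i$ and $A_{i''}$ need not lie inside $A_{i'}$. If necessary, I would handle it instead by defining $F$ on each $U$ from a single index $i_0$ and verifying the first bullet directly using $\cE_*$ for the pair $(i_0,i)$ on $A_{i_0}\cap A_i$. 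Finally, set the base constant of each component to $0$, and set $F=-\infty$ off $\bigcup_i A_i$.

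\textbf{Measurability.} Each step of the construction is a measurable operation on the listed inputs, so $F$ is measurable with respect to $\sigma(\{\cH_i\}_{i=1}^k,\cQ_{0,s},\cM_{t,1},\{\cD_i\}_{i=1}^k)$.

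\textbf{Hölder bound.} Take $x,y\in\bigcup_i A_i$ with $|x-y|\le|\log r|^{10}r^{2/3}$. Since each $A_{i,j}$ has length at least $2|\log r|^{10}r^{2/3}$, the points $x$ and $y$ lie in a common component $U$, and in fact in a union of at most two overlapping intervals. Therefore $|F(x)-F(y)|$ is bounded by $|g(x)-g(y)|$, where $g$ ranges over the $k$ profiles $\cH_i\diamond\cQ_{0,s}$, plus at most one crossing through an overlap.

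These profiles have the law of $\cH_i$ restricted to $\R_{\ge0}$, by Proposition~\ref{P:properties-elpp}.10 and Lemma~\ref{lem:Mstat}, i.e.\ stationary horizon marginals. By Proposition~\ref{P:half-space-horizon-marginals} these are LPP across Brownian motions with bounded drifts. Lévy-type modulus bounds on the finitely many Brownian motions, together with the exponential atoms, then give the modulus $a\log(|x-y|^{-1}+2)\sqrt{|x-y|}$ on $[0,a]$ with probability at least $1-2e^{-ca}$. Lemma~\ref{lem:boundAi} controls the range needed beyond $[0,a]$, using the Gaussian tail of the Brownian modulus on long intervals.
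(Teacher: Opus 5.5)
Your construction on $\cE[W]\cap\cE_*$ has a genuine gap. Write $\eta=|\log r|^{10}r^{2/3}$ and $g_i=\cH_i\diamond\cQ_{0,s}$. You claim that within a component $U$ every pairwise difference is constant on the whole overlap graph, so that all intervals of one index receive the same shift. This is false, and the lemma does not need it: the constant in the first bullet may depend on $j$. For a counterexample, suppose $A_{i_0,1}$ and $A_{i_0,2}$ both meet a single interval $A_{i_1,1}$, with the gap between them longer than $2\eta$. The event $\cE_*$ only removes $D_{i_0,i_1}$ from $A_{i_0}\cap A_{i_1}$, so $\tilde D_{i_0,i_1}$ may contain a point in the middle of that gap. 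Then $g_{i_0}-g_{i_1}$ is constant on each of the two overlaps, but with different values. Setting $F=g_{i_0}$ with zero shift on both $i_0$-intervals therefore forces two incompatible shifts on $A_{i_1,1}$. Your fallback, $F=g_{i_0}$ on all of $U$, fails for the same reason: $A_{i,j}\not\subset A_{i_0}$, and $\cE_*$ says nothing about $\tilde D_{i_0,i}$ on $A_{i,j}\setminus A_{i_0}$.

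The paper instead uses the left-to-right induction of \cite[Lemma 3.12]{DZ24}, which needs no multi-index bookkeeping. Give each interval its own constant, and add the intervals $A_{i,j}$ in order of left endpoint. When $[a,b]=A_{i,j}$ is added, its intersection with the already-covered set is $[a,\min(b,b^*)]$, where $b^*$ is the largest right endpoint so far. This segment lies inside the single earlier interval $A_{i',j'}$ attaining $b^*$, and $i'\neq i$. On it $F=g_{i'}+c'$, so $\cE_*$ for the one pair $(i,i')$ makes $F-g_i$ constant there, and $F$ extends consistently.

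The H\"older step has a second gap. It is not true that $|x-y|\le\eta$ puts $x,y$ in a common component. Since $\bigcup_iA_i$ is the $\eta$-neighbourhood of $\bigcup_i\tilde A_i$, its components can be separated by gaps of any positive length. With the base constant of every component set to $0$, $F$ can jump by an $O(1)$ amount (e.g.\ from $g_1$ to $g_2$) across an arbitrarily short gap, and the third bullet fails.

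To fix this, tie consecutive components together. When the new interval $[a,b]$ is disjoint from the covered set, choose its constant so that $F(a)=F(b^*)$. Every component has length at least $2\eta$, so a pair $x<y$ with $y-x\le\eta$ straddles at most one gap. By the triangle inequality and the monotonicity of $u\mapsto\log(u^{-1}+2)\sqrt u$ for small $u$, $|F(x)-F(y)|$ is then bounded by at most two increments of single profiles $g_i$ over subintervals of $[x,y]$. From there your argument is the paper's: stationarity via Proposition~\ref{P:properties-elpp}.10, the Brownian representation of Proposition~\ref{P:half-space-horizon-marginals}, and Lemma~\ref{lem:boundAi} for the range.

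Off the good event, take $F=0$ on $\bigcup_iA_i$, as the paper does, rather than $F=g_1$. With $F=0$ the third bullet is trivial there. With $F=g_1$ you would need to control the extent of $\bigcup_iA_i$ off $\cE[W]$, which Lemma~\ref{lem:boundAi} does not provide.
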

    This lemma is the analogue of \cite[Lemma 3.12]{DZ24} in our setting.
The proof is essentially verbatim. Instead of repeating all the details, we give a brief sketch, focusing on the differences in the proof.
\begin{proof}[Sketch proof of Lemma \ref{lem:exiF-hf}]
On $\cE[W]^c\cup\cE_*^c$, let $F = 0$ on $\bigcup_{i=1}^k A_i$ and $-\infty$ off this set. On $\cE[W]\cap\cE_*$, we construct $F$ on the intervals $A_{i,1}, \ldots, A_{i,\ell(i)}$, for each $i\in \llbracket 1, k\rrbracket$, inductively from left to right, as in the proof of \cite[Lemma 3.12]{DZ24}.

The only difference (from the proof of \cite[Lemma 3.12]{DZ24}) is in justifying the last condition.
Due to Lemma \ref{lem:boundAi}, and using the stationarity of $\cQ$ stated in Proposition \ref{P:properties-elpp}.10, it suffices to show that
\begin{equation}   \label{eq:cH1}
\P[|\cH_i(x)-\cH_i(y)| <a \log(|x-y|^{-1} + 2)\sqrt{|x-y|}, \quad \forall x, y \in [0, a], i \in \llbracket 1,k\rrbracket] > 1-2\exp(-ca).
\end{equation}
For this, note that for any $i\in \llbracket 1,k\rrbracket$ and $0\le x<y$, we have 
\[
|\cH_i(x)-\cH_i(y)| \le \sum_{j=1}^{2k} \max_{z\in [x,y]} \cB_j(z) - \min_{z\in [x,y]} \cB_j(z),
\]
where $\{\cB_j\}_{j=1}^{2k}$ are the Brownian motions used in the definition of the half-space stationary horizon marginals in Proposition \ref{P:half-space-horizon-marginals}.
Then from the fact that 
\[
\P[|\cB_j(x)-\cB_j(y)| <a \log(|x-y|^{-1} + 2)\sqrt{|x-y|}, \quad \forall x, y \in [0, a], j \in \llbracket 1,2k\rrbracket] > 1-2\exp(-ca),
\]
 \eqref{eq:cH1} follows.
\end{proof}

Then as in \cite[Section 3.3]{DZ24} (see the discussion around eq.~(25)), we define a coupling between $\cQ_{s,t}$ and $\cM_{s,t}$, such that $F\diamond \cQ_{s,t}=F\diamond \cM_{s,t}$ almost surely. 
The proof of Proposition \ref{prop:clo} is in terms of this coupling, and follows verbatim from the proof of \cite[Proposition 3.9]{DZ24}, with the only differences being:
\begin{enumerate}
    \item We bound $|\cH_i\diamond \cQ_{0,s}(x)-\cH_i\diamond \cQ_{0,s}(y)|$ for $0\le x < y \le |\log(r)|^2$ and $|\cH_i\diamond \cQ_{0,s}(x)-\cH_i\diamond \cQ_{0,s}(0)|$ for any $x>0$. 
    Due to  the stationarity of $\cQ$ stated in Proposition \ref{P:properties-elpp}.10, these are the same in law as $|\cH_i(x)-\cH_i(y)|$ for $0\le x < y \le |\log(r)|^2$ and $|\cH_i(x)|$ for $x>0$, respectively. The bounds for them follow from the same arguments as the proof of \eqref{eq:cH1}.
    \item We use Lemma \ref{lem:boundAi} to bound the probability that $\bigcup_{i=1}^k A_i \not\subset [0, |\log(r)|^{2}/2]$, under $\cE[W]$.
\end{enumerate}
We omit the remaining details.

\subsection{Proof of the branch estimate}   \label{ss:key1}
We next prove Proposition \ref{prop:key1}, where the arguments differ from those in \cite[Section 4]{DZ24} in the full-space setting. 

Without loss of generality, it suffices to prove Proposition \ref{prop:key1} for $i=1$ and $j=2$.
Again, below we use $C,c$ to denote large and small constants which may depend on $\rho, \lambda_1, \lambda_2$, and $W, \theta$, and may change from line to line.

Fix an interval $I\subset [0, |\log(r)|^{2/3+0.0001}]$ of length $r^{2/3}$. We define $I_*=[z_-, z_+]$ to be the closed $|\log(r)|^{10}r^{2/3}$-neighborhood of $I$ in  $\R_{\ge 0}$.
Next, for $i = 1, 2$ define the event
\[
\cE_i:\quad \max_{x\in I_*} \cH_i\diamond \cQ_{0,s}(x)+\cM_{t,1}\diamond \cD_i(x) > \max_{x\in [0, z_++1]}\cH_i\diamond \cQ_{0,s}(x)+\cM_{t,1}\diamond \cD_i(x) - |\log(r)|^{10}r^{1/3},
\]
and set
\[
\cE_{1,2}:\quad \cH_1\diamond \cQ_{0,s}(z_-)-\cH_2\diamond \cQ_{0,s}(z_-) \neq \cH_1\diamond \cQ_{0,s}(z_+)-\cH_2\diamond \cQ_{0,s}(z_+).
\]
By construction, $\{A_1 \cap I \neq \emptyset\}\subset \cE_1$ and $\{A_2 \cap I \neq \emptyset\}\subset \cE_2$.
As for $\cE_{1,2}$, note that  
\begin{equation}  \label{eq:hquad}
\cH_1\diamond \cQ_{0,s}(x) - \cH_2\diamond \cQ_{0,s}(x) \ge \cH_1\diamond \cQ_{0,s}(y) - \cH_2\diamond \cQ_{0,s}(y),
\end{equation}
for $0\le x < y$. This follows from the same inequality without the metric composition with $\cQ_{0,s}$ (since $\cH^{\DL}\in \mathcal D([2\rho \vee 0, \infty), \R, 2\rho \vee 0)$ by Theorem \ref{T:half-stationary-horizon}) and the stationarity in Proposition \ref{P:properties-elpp}.10. Therefore the event $\cE_{1,2}^c$ implies that $\cH_1\diamond \cQ_{0,s}-\cH_2\diamond \cQ_{0,s}$ is constant in $[z_-, z_+]$, thus $D_{1,2}\cap I =\emptyset$.
In summary,
\[
\{A_1 \cap A_2 \cap D_{1,2} \cap I \neq \emptyset\}\subset \cE_1\cap\cE_2\cap\cE_{1,2}.  
\]
It remains to bound $\P[\cE_1\cap\cE_2\cap\cE_{1,2}]$.

\subsubsection{A comparison between Brownian motions}
To bound $\P[\cE_1\cap\cE_2\cap\cE_{1,2}]$, we will repeatedly use the following quantitive absolute continuity bound between Brownian motions with different drifts. For this lemma, we use the notation $\cC([0,z])$ for the space of continuous functions on $[0,z]$ with the uniform topology.
\begin{lemma}  
\label{lem:bmcomp}
Let $\cX_1$ and $\cX_2$ be independent Brownian motions (with diffusivity $2$) and slopes $\theta_1, \theta_2$, respectively. Take any $0<\eps<0.1$ and $z>0$.
For any Borel set $\cE \subset \cC([0,z])$, if $\P[\cX_1|_{[0,z]} \in \cE] =\eps$ then 
\[
\P[\cX_2|_{[0,z]}\in \cE] \le C_0\eps \exp(C_0 |\log(\eps)|^{1/2}z^{1/2} |\theta_1-\theta_2| + C_0 z(\theta_1-\theta_2)^2),
\] where $C_0>0$ is a universal constant.
\end{lemma}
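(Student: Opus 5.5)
We plan to prove the bound by a Girsanov change of measure followed by Hölder's inequality. Write $\Delta = \theta_2 - \theta_1$. The law of $\cX_2|_{[0,z]}$ is absolutely continuous with respect to the law of $\cX_1|_{[0,z]}$. Since the Brownian motions have diffusivity $2$, the Radon--Nikodym derivative is
\[
R = \exp\Big(\tfrac{\Delta}{2}\,(\cX_1(z) - \theta_1 z) - \tfrac{\Delta^2}{4} z\Big).
\]
Only the endpoint enters, so $R$ is a function of the Gaussian variable $G = (\cX_1(z) - \theta_1 z)/\sqrt{2z} \sim N(0,1)$, and $\log R = \Delta\sqrt{z/2}\,G - \Delta^2 z/4$. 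Therefore $\P[\cX_2|_{[0,z]} \in \cE] = \E[R\,\mathds 1(\cX_1 \in \cE)]$. The independence of $\cX_1$ and $\cX_2$ plays no role.

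We then split according to the size of $G$, using the threshold $L = \sqrt{2|\log \eps|}$. On the event $\{|G| \le L\}$ we have $R \le \exp(|\Delta|\sqrt{z}\,|\log\eps|^{1/2})$, so this part contributes at most $\eps\exp(|\Delta|\sqrt{z}|\log\eps|^{1/2})$. On the event $\{|G| > L\}$ we apply Cauchy--Schwarz:
\[
\E[R\,\mathds 1(|G| > L)] \le (\E R^2)^{1/2}\,\P(|G| > L)^{1/2}.
\]
A direct computation gives $\E R^2 = \exp(\Delta^2 z/2)$. The Gaussian tail gives $\P(|G| > L)^{1/2} \le (2e^{-L^2/2})^{1/2} = \sqrt{2\eps}$.

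The main obstacle is that this gives $\sqrt{\eps}$ rather than $\eps$. We fix it by taking a larger threshold $L = 2\sqrt{|\log\eps|}$, so that the tail probability is at most $2\eps^2$. The bounded part then contributes $\eps\exp(C|\Delta|\sqrt{z}|\log\eps|^{1/2})$. The tail part contributes at most $\exp(\Delta^2 z/4)\sqrt2\,\eps$. Both contributions are covered by $C_0\eps\exp(C_0|\log\eps|^{1/2}z^{1/2}|\Delta| + C_0 z\Delta^2)$, which is the claimed bound.
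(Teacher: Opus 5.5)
Your proof is correct. With the final threshold $L = 2\sqrt{|\log\eps|}$ you get
$\P[\cX_2|_{[0,z]}\in\cE] \le \eps\exp(\sqrt2\,|\Delta|z^{1/2}|\log\eps|^{1/2}) + \sqrt2\,\eps\exp(\Delta^2 z/4)$.
This is of the required form, for instance with $C_0 = 3$. In a write-up you should start with that threshold rather than first presenting the one that fails.

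Your route differs from the paper's, although both rest on the fact that the Radon--Nikodym derivative depends only on the endpoint $\cX_1(z)$.

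\emph{The paper's route.} It first reduces to $\theta_1 = 0 < \theta_2$ and conditions on the endpoint, so that both processes become the same Brownian bridge. It then writes $\P[\cX_2|_{[0,z]}\in\cE]$ as the integral of $\P[\cX_1|_{[0,z]}\in\cE \mid \cX_1(z)=h]$ against the tilted Gaussian density, and integrates by parts in $h$. Because the tilt $e^{h\theta_2/2}$ is increasing in $h$, this is a bathtub (Neyman--Pearson) argument: among events of probability $\eps$, the worst case is the endpoint tail $\{\cX_1(z) > h_*\}$. The paper then bounds $\P[\cX_2(z) > h_*]$ directly, using $h_* \le 10|\log\eps|^{1/2}z^{1/2}$. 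This identifies the extremal event, so the reduction loses nothing and no moment computation is needed. It does, however, rely on the likelihood ratio being monotone in a scalar statistic.

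\emph{Your route.} Truncating $\log R$ and applying Cauchy--Schwarz on the tail is shorter. It needs neither monotonicity nor the bridge decomposition, only $\E R^2$ and Gaussian tails of $\log R$. It would therefore carry over unchanged to several independent Brownian motions or to general Cameron--Martin shifts. The cost is only in constants: Cauchy--Schwarz takes a square root of the tail probability, which forces $L = 2\sqrt{|\log\eps|}$ instead of $\sqrt{2|\log\eps|}$. This is harmless here because $C_0$ is an unspecified universal constant.
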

\begin{proof}
Without loss of generality we may assume that $\theta_1< \theta_2$. Moreover, by subtracting a linear function of slope $\theta_1$ from both $\cX_1, \cX_2$ we may assume $\theta_1 = 0 < \theta_2$.

Conditional on $\cX_1(z)$ (resp.~$\cX_2(z)$), $\cX_1|_{[0,z]}$ (resp.~$\cX_2|_{[0,z]}$) is a Brownian bridge (with diffusivity $2$). Therefore we can write
\begin{align*}
\P[\cX_2|_{[0,z]}\in \cE] &= \int \P[\cX_2|_{[0,z]}\in \cE \mid \cX_2(z)=h] \frac{\exp(-(h-\theta_2z)^2/(4z)) }{\sqrt{4\pi z} } dh \\ &= \int \P[\cX_1|_{[0,z]}\in \cE \mid \cX_1(z)=h]  \frac{\exp(-h^2/(4z)) }{\sqrt{4\pi z} } \exp(h\theta_2/2 - z\theta_2^2/4  ) dh.
\end{align*}
Via integration by parts in $h$, this equals
\begin{align*}
\frac{\theta_2}{2}&\int \P[\cX_1|_{[0,z]}\in \cE, \cX_1(z)>h]  \exp(h\theta_2/2 - z\theta_2^2/4  ) dh
\\
\le 
\frac{\theta_2}{2}&\int (\P[\cX_1(z)>h]\wedge \eps)   \exp(h\theta_2/2 - z\theta_2^2/4)dh.
\end{align*}
Take $h_*$ such that $\P[\cX_1(z)>h_*]=\eps$. Then by splitting the above integral into pieces on $(-\infty, h_*]$ and $[h_*, \infty)$, and integration by parts in $h$, it equals
\[
\int_{h_*}^\infty \P[\cX_1(z)=h] \exp(h\theta_2/2 - z\theta_2^2/4) dh.
\]
This can be further written as
\[
\P[\cX_2(z)> h_* + z\theta_2] +  \int_{h_*}^{h_* + z\theta_2} \P[\cX_1(z)=h] \exp(h\theta_2/2 - z\theta_2^2/4  ) dh.
\]
Noting that $\P[\cX_2(z)> h_* + z\theta_2]  = \P[\cX_1(z)>h_*]=\eps$, the above is at most
\[
\eps \left(1 + \exp(h_*\theta_2/2 + z\theta_2^2/4)\right).
\]
Finally, since $\P[\cX_1(z)>10|\log(\eps)|^{1/2}z^{1/2}]<\eps$, we have $h_* < 10|\log(\eps)|^{1/2}z^{1/2}$. The conclusion thereby follows.
\end{proof}

\subsubsection{Decomposition into events in Brownian motions}   \label{sssec:decompinto}
We now proceed to bound $\P[\cE_1\cap \cE_2\cap \cE_{1,2}]$. 
Using the stationarity of $\cQ$ from Proposition \ref{P:properties-elpp}.10, we can replace $\cH_1\diamond \cQ_{0,s}$ and $\cH_2\diamond \cQ_{0,s}$ in the above events by $\cH_1$ and $\cH_2$.

We will also replace $\cM_{t,1}\diamond \cD_1$ and $\cM_{t,1}\diamond \cD_2$, using Lemma \ref{lem:Mstat} and Lemma \ref{lem:bmcomp}. More precisely, we let  $\cD_1^*, \cD_2^*:[0,\infty)\to \R$ be independent Brownian motions with slope $-2\rho$. 
By Lemma \ref{lem:Mstat}, $\cM_{t,1}\diamond \cD_1^*-\cM_{t,1}\diamond \cD_1^*(0)$ and $\cM_{t,1}\diamond \cD_2^*-\cM_{t,1}\diamond \cD_2^*(0)$ are still Brownian motions with slope $-2\rho$ (although not independent). This second replacement comes with a Radon-Nikodym derivative cost, which we manage using Lemma \ref{lem:bmcomp}.

Let $\cE_i^*, i = 1, 2$ and  $\cE_{1,2}^*$ be the events where
\begin{align*}
   \cE_i^*&:\quad \max_{x\in I_*} \cH_i(x)+\cM_{t,1}\diamond \cD_i^*(x) > \max_{x\in [0, z_++1]}\cH_i(x)+\cM_{t,1}\diamond \cD_i^*(x) - |\log(r)|^{10}r^{1/3}, \\ 
   \cE_{1,2}^*&:\quad \cH_1(z_-)-\cH_2(z_-) \neq \cH_1(z_+)-\cH_2(z_+).
\end{align*}
It now suffices to upper bound $\P[\cE_1^*\cap\cE_2^*\cap\cE_{1,2}^*]$. 

We let the pair of processes $\{\cH_i\}_{i=1}^2$ be constructed as $\{R^\DL_i\}_{i=1}^2$ in Proposition \ref{P:half-space-horizon-marginals}. Namely, take four Brownian motions $\{\cB_i\}_{i=1}^4$ with slopes being $(\lambda_1, \lambda_2, -\lambda_2, -\lambda_1)$, and an array of exponential random variables $\{X(-i, j) : i\in\II{1, 2}, j \in \II{i +1, 5 - i}\}$, where
\begin{align*}
 &X(-1, 2) \sim \Exp(\lambda_1/2 - \lambda_2/2), \quad &&X(-1, 3) \sim \Exp(\lambda_1/2 + \lambda_2/2), \\
\quad &X(-1, 4) \sim \Exp(\lambda_1/2 - \rho), \quad, &&X(-2, 3) \sim \Exp(\lambda_2/2 - \rho). 
\end{align*}
All these Brownian motions and exponential random variables are independent of each other.
Recall from Section \ref{ss:hfsh} the cadlag LPP notations, and the half-space exponential-Brownian LPP, where $X\cB$ is the concatenation of the sequence of cadlag functions generated from $\{X(-i, j) : i\in\II{1, 2}, j \in \II{i +1, 5 - i}\}$, and $\cB$.
We then have that for each $i = 1, 2$,
\begin{equation}
\label{E:Hix}
   \cH_i(x) = X\cB(-i,i; x, 4) - X\cB(-i,i; 0, 4).
\end{equation}
Now consider the following events.
\begin{align*}
    \cE^*_{1,+}&:\quad 
\cB_3(z_+)+\cM_{t,1}\diamond \cD_1^*(z_+) > \max_{x\in [z_+, z_++1]} \cB_3(x)+\cM_{t,1}\diamond \cD_1^*(x)-2|\log(r)|^{10}r^{1/3},\\
\cE^*_{2,-}&:\quad \cH_2(z_-)+\cM_{t,1}\diamond \cD_2^*(z_-) > \max_{x\in [0, z_-]} \cH_2(x)+\cM_{t,1}\diamond \cD_2^*(x)-2|\log(r)|^{10}r^{1/3},\\
\cE^*_{2,+}&:\quad \cB_4(z_+)+\cM_{t,1}\diamond \cD_2^*(z_+) > \max_{x\in [z_+, z_++1]} \cB_4(x)+\cM_{t,1}\diamond \cD_2^*(x)-2|\log(r)|^{10}r^{1/3},
\end{align*}
and $\cE_{TV}$ where
\[
\quad \max_{x\in I_*} \cB_i(x) < \min_{x\in I_*} \cB_i(x)+ |\log(r)|^{10}r^{1/3}/20, \quad \forall i\in\{1, 2, 3, 4\},
\]
\[
\quad \max_{x\in I_*} \cM_{t,1}\diamond \cD_i^*(x) < \min_{x\in I_*} \cM_{t,1}\diamond \cD_i^*(x)+ |\log(r)|^{10}r^{1/3}/20, \quad \forall i\in\{1, 2\}.
\]
We can further reduce $\cE_1^*\cap\cE_2^*$ to these events.
\begin{lemma}   \label{lem:12subpmtv}
We have $\cE_1^*\cap\cE_2^*\cap\cE_{1,2}^*\cap\cE_{TV}\subset \cE^*_{1,+}\cap\cE^*_{2,-}\cap\cE^*_{2,+}\cap\cE_{1,2}^*$.
\end{lemma}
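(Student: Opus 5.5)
The plan is to prove each of the three inclusions $\cE^*_{2,-}$, $\cE^*_{2,+}$, $\cE^*_{1,+}$ separately; $\cE^*_{1,2}$ appears on both sides. In each case I compare an increment of $\cH_i$ outside $I_*$ with an increment of a single Brownian line, using the LPP representation \eqref{E:Hix}. The event $\cE_{TV}$ absorbs all errors coming from oscillations inside $I_*$, which costs at most a few multiples of $|\log(r)|^{10}r^{1/3}/20$. A preliminary observation, proved as in \eqref{eq:cH1}, is that any increment of $\cH_i$ over an interval is bounded by the sum of the oscillations of $\cB_1,\dots,\cB_4$ there. Hence on $\cE_{TV}$, $\cH_i$ oscillates by at most $\frac{4}{20}|\log(r)|^{10}r^{1/3}$ on $I_*$, and $\cM_{t,1}\diamond\cD_i^*$ by at most $\frac1{20}|\log(r)|^{10}r^{1/3}$.

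For $\cE^*_{2,-}$, observe that $[0,z_-]\subset[0,z_++1]$. By $\cE_2^*$ there is a point of $I_*$ where $\cH_2+\cM_{t,1}\diamond\cD_2^*$ is within $|\log(r)|^{10}r^{1/3}$ of its maximum over $[0,z_++1]$. Moving from that point to $z_-$ costs at most $\frac5{20}|\log(r)|^{10}r^{1/3}$ by the observation above. This leaves a gap of less than $2|\log(r)|^{10}r^{1/3}$, which is $\cE^*_{2,-}$.

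For $\cE^*_{2,+}$, I use the trivial LPP lower bound $\cH_2(x)-\cH_2(z_+)\ge \cB_4(x)-\cB_4(z_+)$ for $x\ge z_+$, obtained by extending the geodesic to $(z_+,4)$ along line $4$. This gives
\[
\cB_4(x)+\cM_{t,1}\diamond \cD_2^*(x)-\cB_4(z_+)-\cM_{t,1}\diamond \cD_2^*(z_+)\le \cH_2(x)+\cM_{t,1}\diamond \cD_2^*(x)-\cH_2(z_+)-\cM_{t,1}\diamond \cD_2^*(z_+).
\]
The right-hand side is controlled exactly as in the previous case, using $\cE_2^*$ and $\cE_{TV}$.

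The main step is $\cE^*_{1,+}$, where the lower bound has to use line $3$ instead, and this is where $\cE^*_{1,2}$ enters. Let $w$ be the location where a (say rightmost) geodesic in $X\cB$ from $(-1,1)$ to $(z_+,4)$ jumps onto line $4$. Then $\cH_1(z_+)+\mathrm{const}=X\cB(-1,1;w,3)+\cB_4(z_+)-\cB_4(w)$. For $x\ge z_+$, following this geodesic to $(w,3)$, staying on line $3$ until $x$ and jumping at $x$ gives
\[
\cH_1(x)-\cH_1(z_+)\ge \cB_3(x)-\cB_3(w)-(\cB_4(z_+)-\cB_4(w)).
\]
If $w\in I_*=[z_-,z_+]$, then $\cE_{TV}$ turns this into $\cB_3(x)-\cB_3(z_+)-\frac{3}{20}|\log(r)|^{10}r^{1/3}$, and we conclude as before using $\cE_1^*$. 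So the crux is to show that $\cE^*_{1,2}$ forces $w\ge z_-$. Suppose instead $w<z_-$. By planarity and the ordering of geodesics (the one from $(-2,2)$ lies weakly to the left of the one from $(-1,1)$, and geodesics are monotone in the endpoint), every geodesic from $(-1,1)$ or $(-2,2)$ to $(y,4)$ with $y\in[z_-,z_+]$ would then enter line $4$ before $z_-$. In that case both $\cH_1(y)-\cH_1(z_-)$ and $\cH_2(y)-\cH_2(z_-)$ equal $\cB_4(y)-\cB_4(z_-)$, so $\cH_1-\cH_2$ would be constant on $I_*$, contradicting $\cE^*_{1,2}$. I expect the main obstacle to be making this ordering of jump locations rigorous in the mixed exponential-Brownian half-space environment, including the diagonal constraint and the $\Delta$ structure. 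I would prove it by a path-crossing swap in the style of Lemma \ref{L:basis-for-comparison} or Proposition \ref{P:weighted-quadrangle}, which is equivalent to the monotonicity \eqref{eq:hquad}, choosing rightmost geodesics to avoid ties.
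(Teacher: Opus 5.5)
Your proof is correct and matches the paper's proof in every step except one. The shared steps are: the same oscillation bound for $\cH_i$ on $I_*$ coming from $\cE_{TV}$, the comparison $\cH_2(x)-\cH_2(z_+)\ge\cB_4(x)-\cB_4(z_+)$ for $\cE^*_{2,+}$, and, for $\cE^*_{1,+}$, the line-$3$ comparison through the entry point $w$ onto line $4$ (the paper's $z_4$).

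The divergence is the crux $w\ge z_-$. You plan a new path-crossing argument that orders the geodesics from $(-2,2)$ and from $(-1,1)$. This does work. For $v<v'$, a path from $(-2,2)$ to $(v',3)$ must meet one from $(-1,1)$ to $(v,3)$, so $v\mapsto X\cB(-2,2;v,3)-X\cB(-1,1;v,3)$ is nonincreasing, and this orders the rightmost entry points onto line $4$. The diagonal constraint you worry about plays no role, since $\cH_i$ on $\R_{\ge 0}$ is unconstrained LPP.

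However, the paper needs no second geodesic. If $w\le z_-$, then extending a geodesic along line $4$ gives $\cH_1(z_+)-\cH_1(z_-)=\cB_4(z_+)-\cB_4(z_-)\le\cH_2(z_+)-\cH_2(z_-)$. On the other hand, $\cH_1(z_+)-\cH_1(z_-)\ge\cH_2(z_+)-\cH_2(z_-)$ by the quadrangle property of $\cH^{\DL}$ in Theorem \ref{T:half-stationary-horizon}. The squeeze makes $\cH_1-\cH_2$ agree at $z_-$ and $z_+$, contradicting $\cE^*_{1,2}$. So the obstacle you anticipate is already handled by the horizon's monotonicity, which you mention in passing.

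If you cite that monotonicity, use the direction in Theorem \ref{T:half-stationary-horizon}, namely that $\cH_1-\cH_2$ is nondecreasing. The display \eqref{eq:hquad} is printed with the inequality reversed.
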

\begin{proof}
Under $\cE_{TV}$, for each $i=1, 2$ we have:
\begin{equation}  \label{eq:h1tv}
\max_{x\in I_*} \cH_i(x) < \min_{x\in I_*} \cH_i(x) + |\log(r)|^{10}r^{1/3}/5.
\end{equation}
Thus under $\cE_2^*\cap \cE_{TV}$, we
must have that $\cE_{2,-}^*$ holds, and
\begin{equation}  \label{eq:chbdcomp}
\cH_2(z_+)+\cM_{t,1}\diamond \cD_2^*(z_+) > \max_{x\in [z_+, z_++1]} \cH_2(x)+\cM_{t,1}\diamond \cD_2^*(x)-\frac54 |\log(r)|^{10}r^{1/3}.
\end{equation}
We note that from the construction of $\cH_2$ in Proposition \ref{P:half-space-horizon-marginals},
\[
\cH_2(x)-\cH_2(z_+) \ge \cB_4(x)-\cB_4(z_+),
\]
for any $x\ge z_+$. This with \eqref{eq:chbdcomp} implies $\cE_{2,+}^*$.

It remains to deduce $\cE_{1,+}^*$. For each $x\ge 0$,
from the construction of $\cH_1$ in Proposition \ref{P:half-space-horizon-marginals},
\[
\cH_1(x) = \max_{\ell = 1, 2, 3, 4} \cB(0,\ell; x, 4) - \sum_{i=\ell + 1}^4 X(-1, \ell),
\]
and we can write 
$\cB(0,\ell; x, 4)=\sum_{i=\ell}^4 \cB_i(x_{i+1})-\cB_i(x_{i})$,
for some $0=x_\ell\le \cdots \le x_4\le x_5=x$. Now, let $z_4$ be the jump point $x_4$ for $x = z_+$. Observe that under $\cE_{1,2}^*$, we have $z_4>z_-$.
Indeed, otherwise
\[
\cH_1(z_+)-\cH_1(z_-)=\cB_4(z_+)-\cB_4(z_-)\le \cH_2(z_+)-\cH_2(z_-).
\]
However, since $\cH^{\DL}\in \mathcal D([2\rho \vee 0, \infty), \R, 2\rho \vee 0)$, we must have $\cH_1(z_+)-\cH_1(z_-)\ge \cH_2(z_+)-\cH_2(z_-)$, therefore $\cH_1(z_+)-\cH_1(z_-)=\cH_2(z_+)-\cH_2(z_-)$, contradicting  $\cE_{1,2}^*$.

Next, for each $x\ge z_+ \ge z_4$, we have
\[
\cH_1(x) - \cH_1(z_4) > \cB_3(x) - \cB_3(z_4). 
\]
Then under $\cE_{1,2}^*\cap\cE_{TV}$, since $z_4\in I_*$, using \eqref{eq:h1tv} and $\cE_{TV}$ we have 
\begin{equation}   \label{eq:chb3ev}
\cH_1(x) - \cH_1(z_+) > \cB_3(x) - \cB_3(z_+) - |\log(r)|^{10}r^{1/3}/4. 
\end{equation}
Moreover, under $\cE_1^*\cap \cE_{TV}$, again using \eqref{eq:h1tv} we have
\[
\cH_1(z_+)+\cM_{t,1}\diamond \cD_1^*(z_+) > \max_{x\in [z_+, z_++1]} \cH_1(x)+\cM_{t,1}\diamond \cD_1^*(x)-\frac54 |\log(r)|^{10}r^{1/3}.
\]
This with \eqref{eq:chb3ev} implies $\cE_{1,+}^*$.
\end{proof}
Next, since $z_+-z_-<3|\log(r)|^{10}r^{2/3}$, from standard estimates on Brownian motion (applied to $\{\cB_i\}_{i=1}^4$ and $\cM_{t,1}\diamond \cD_1^*-\cM_{t,1}\diamond \cD_1^*(0)$ and $\cM_{t,1}\diamond \cD_2^*-\cM_{t,1}\diamond \cD_2^*(0)$, which are Brownian motions of slope $-2\rho$), we have
\begin{equation}   \label{eq:bdtv}
\P[\cE_{TV}]>1- 2\exp(-c|\log(r)|^{2}).
\end{equation}
Also, we have
\begin{equation}    \label{eq:bd1p}
\P[\cE^*_{2,+}] < C|\log(r)|^{10}r^{1/3}.
\end{equation}
This again simply follows from the fact that $\cB_4+\cM_{t,1}\diamond \cD_2^* - (\cB_4(z_+)+\cM_{t,1}\diamond \cD_2^*(z_+))$ is a Brownian motion (of diffusivity $4$ and slope $-\lambda_1-2\rho$).

As for $\cE^*_{1,+}$, there is a universal constant $\delta_0>0$, such that
\begin{equation}  \label{eq:key2+}
    \P[\cE^*_{1,+} \mid \cE^*_{2,-}\cap\cE^*_{2,+}\cap\cE^*_{1,2}] < Cr^{\delta_0}, \quad \text{ if }\quad  \P[\cE^*_{2,-}\cap\cE^*_{2,+}\cap\cE^*_{1,2}]\ge r^{20}.
\end{equation}
This follows from \cite[Lemma 4.5]{DZ24}. We will also prove the following bound. 
\begin{lemma}   \label{lem:key112-}
We have $\P[\cE^*_{2,-}\cap \cE^*_{1,2}] < Cz_+^{-1}r\exp(C|\log(r)|^{0.9})$.
\end{lemma}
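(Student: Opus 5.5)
Two conditions must hold at once. First, $\cE^*_{1,2}$: $\cH_1-\cH_2$ is non-constant on $I_*$, which has length of order $|\log(r)|^{10}r^{2/3}$. Second, $\cE^*_{2,-}$: the function $\cH_2+\cM_{t,1}\diamond\cD_2^*$ nearly attains its maximum over $[0,z_-]$ at the right endpoint $z_-$, up to $2|\log(r)|^{10}r^{1/3}$. The plan is to show that the first event costs about $r^{2/3}/z_+$, while the second, conditionally, costs about $r^{1/3}$ up to log factors, and multiply. I would first condition on $\cM_{t,1}\diamond\cD_2^*$, which is independent of $\cH$. By Lemma \ref{lem:Mstat}, after recentering it is a Brownian motion $\cY$ of slope $-2\rho$, so everything reduces to a statement about the explicit exponential--Brownian construction \eqref{E:Hix} together with the independent Brownian motion $\cY$.

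For $\cE^*_{1,2}$, I would use the LPP description. The difference $\cH_1-\cH_2$ stays constant on $[z_-,\infty)$ once the geodesics defining $\cH_1$ and $\cH_2$ have coalesced before $z_-$. So $\cE^*_{1,2}$ forces the last jump point $z_4$ of the $\cH_1$-geodesic (as in the proof of Lemma \ref{lem:12subpmtv}) to lie in $I_*$, or beyond it. I would reduce this via Corollary \ref{cor:exp-bro}. By Burke-type swapping, $\cH_1$ relative to $\cH_2$ is described by $\cH_2$ together with an independent Brownian motion $\cB'$ of larger slope and an exponential variable $X_*$, in such a way that
\[
\cH_1(x)-\cH_2(x)=\max\Big(0,\ \max_{0\le y\le x}\big(\cB'(y)-\cH_2(y)\big)-X_*\Big)
\]
up to a constant shift. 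Non-constancy on $I_*$ then means that the running maximum of a Brownian motion with positive drift difference, minus an independent exponential, increases inside $I_*$. This requires the process $\cB'-\cH_2$ to reach a new running maximum in $I_*$, with the exponential threshold having been passed only just before. A reflection/ballot estimate for Brownian motion gives a probability of order $|I_*|^{1/2}\cdot z_+^{-1/2}$ times the density of the threshold. Averaging over the exponential yields roughly $|I_*|/z_+\lesssim r^{2/3}|\log r|^{10}/z_+$. This matches the $z_+^{-1}$ factor in the lemma.

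Given this, I would handle $\cE^*_{2,-}$ conditionally. On the first event, near $z_-$ the process $\cH_2+\cY$ looks like a Brownian motion of diffusivity $4$, independent enough of the coalescence structure. The event that a Brownian motion on $[0,z_-]$ comes within $\eta=2|\log(r)|^{10}r^{1/3}$ of its maximum at the endpoint has probability of order $\eta/\sqrt{z_-}$. Near the boundary, where $z_-\lesssim r^{2/3}$, the bound $\eta/\sqrt{z_-}$ is useless, and one instead gets at least an extra $r^{1/3}$ from the first factor. In either case the product is $\lesssim z_+^{-1}r\,e^{C|\log r|^{0.9}}$.

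To decouple the two events, whose dependence runs through $\cH_2$ (and the drifts differ), I would apply Lemma \ref{lem:bmcomp}. Writing everything in terms of the four Brownian motions $\cB_i$ and the exponential variables, I replace the drifted motions by driftless ones. The Radon--Nikodym cost is $\exp(C|\log\eps|^{1/2}z^{1/2})$ with $z\le|\log r|^{2/3+0.0001}$, and this is exactly the $\exp(C|\log r|^{0.9})$ factor. The main obstacle is making the joint estimate rigorous: the near-maximum event at $z_-$ and the coalescence event both constrain $\cH_2$ near $z_-$. I would try splitting at the time $z_4$, using the Markov property of the Brownian motions at a stopping-like time after conditioning on the exponentials. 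On $[0,z_-]$ the relevant process is then a Brownian motion with a running-maximum constraint, and a ballot/meander estimate bounds the joint probability.
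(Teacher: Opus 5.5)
There is a genuine gap. Your first estimate, that $\cE^*_{1,2}$ costs about $|I_*|/z_+\approx|\log(r)|^{10}r^{2/3}/z_+$, is false.

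Even in the representation you propose, non-constancy of $\cH_1-\cH_2$ on $I_*$ requires only two things: the running maximum sets a new record inside $I_*$, and it has exceeded the exponential threshold by time $z_+$. Nothing forces the threshold to have been crossed ``only just before'' $z_-$. By the arcsine law the first requirement has probability of order $\sqrt{|I_*|/z_+}$. The second has probability of order $\sqrt{z_+}\wedge 1$. So for $z_\pm$ of order one, $\P[\cE^*_{1,2}]$ is of order $\sqrt{|I_*|}\approx|\log(r)|^{5}r^{1/3}$; the set where $\cH_1-\cH_2$ increases is a dimension-$1/2$ record set, not a discrete one.

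With this correct first factor, your second step only gives a bound of order $r^{2/3}$, not $r$. That step pays $\eta/\sqrt{z_-}$ for $\cE^*_{2,-}$ as if it were essentially independent of the coalescence structure. Your bookkeeping also fails on its own terms. The product $r^{2/3}z_+^{-1}\cdot r^{1/3}z_-^{-1/2}$ is much larger than $rz_+^{-1}e^{C|\log(r)|^{0.9}}$ when $z_-$ is polynomially small in $r$. The exact $z_+^{-1}$ dependence is what lets the sum over intervals $I$ in Proposition \ref{prop:key1} yield Corollary \ref{cor:key1}.

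The missing idea is that the factor $r$ comes precisely from the dependence you propose to neutralize. The events $\cE^*_{2,-}$ and $\cE^*_{1,2}$ both constrain $\cH_2$ near $z_-$, with opposite signs, so they are strongly negatively correlated. The paper exploits this in four steps.

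\emph{Reductions.} First comes the drift and rate change of Lemma \ref{L:replacement-RN}; this is the step you anticipated via Lemma \ref{lem:bmcomp}. Then Lemma \ref{lem:eqH12} reduces to three Brownian lines. Two applications of Corollary \ref{cor:exp-bro} then realize $\cH_2$ as a Brownian motion $\cB^{(4)}_4$, together with an independent Brownian motion $\cB^{(5)}_3$, such that $\cE^{(2)}_{1,2}\subset\cE_{\operatorname{disj}:(-1,3)}\cap\cE_{\operatorname{disj}:34}$ (Lemma \ref{lem:ce12inc}).

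\emph{Three-motion estimate.} On a high-probability event, $\cE_{\operatorname{disj}:34}$ implies that $\cB^{(5)}_3-\cB^{(4)}_4$ is within $\eta$ of its maximum over $[0,z_-]$ at $z_-$. The event $\cE^*_{2,-}$ says the same for $\cB^{(4)}_4+\cM_{t,1}\diamond\cD_2^*$. Reversing time at $z_-$, the pair becomes a non-intersection event for three independent Brownian motions. By the Karlin--McGregor bound of Lemma \ref{lem:bm3}, its probability is at most $C\eta^3z_-^{-3/2}\approx C|\log(r)|^{30}r\,z_-^{-3/2}$.

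\emph{Threshold event.} The event $\cE_{\operatorname{disj}:(-1,3)}$ says the exponential weight $X^{(2)}(-1,3)$ is beaten by time $z_+$. Via \eqref{eq:243} it is rewritten as an event about $\cB^{(5)}_2-\cB^{(5)}_3$ after time $z_+$, independent of the rest, and it costs $Cz_+^{1/2}|\log(r)|^{2}$.

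\emph{Combination.} When $z_-\ge z_+/2$ the product is $Cz_+^{-1}|\log(r)|^{32}r$; small $z_+$ is handled by the threshold factor alone.

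So $r$ arises as $\eta^3$ from a three-motion estimate, not as $r^{2/3}\cdot r^{1/3}$ from a product of mis-estimated marginals.
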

Assuming Lemma \ref{lem:key112-}, we can now finish proving Proposition \ref{prop:key1}.
\begin{proof}[Proof of Proposition \ref{prop:key1}]
By Lemma \ref{lem:12subpmtv},
\begin{equation}   
\label{eq:bound1212cap}
\P[\cE_1^*\cap\cE_2^*\cap\cE_{1,2}^*] \le \P[\cE_{1,+}^*\cap\cE_{2,+}^*\cap\cE_{2,-}^*\cap\cE_{1,2}^*] + 1 - \P[\cE_{TV}].
\end{equation}
Using the fact that $\cE^*_{2,-}\cap \cE^*_{1,2}$ is independent of $\cE^*_{2,+}$, we can write the right-hand side of \eqref{eq:bound1212cap} as
\[
\P[\cE^*_{1,+} \mid \cE^*_{2,+}\cap\cE^*_{2,-}\cap\cE^*_{1,2}] \P[\cE^*_{2,-}\cap \cE^*_{1,2}] \P[\cE^*_{2,+}] + 1 - \P[\cE_{TV}] .
\]
If $\P[\cE^*_{2,+}\cap\cE^*_{2,-}\cap\cE^*_{1,2}]\ge r^{20}$, from \eqref{eq:bdtv}, \eqref{eq:bd1p}, \eqref{eq:key2+}, and Lemma \ref{lem:key112-}, we have
\begin{equation}  \label{eq:bdprob1212s}
\P[\cE_1^*\cap\cE_2^*\cap\cE_{1,2}^*] < Cz_+^{-1}r^{4/3+\delta_0}\exp(C|\log(r)|^{0.9}). 
\end{equation}
If $\P[\cE^*_{2,+}\cap\cE^*_{2,-}\cap\cE^*_{1,2}]< r^{20}$, we bound the right-hand side of \eqref{eq:bound1212cap} by $r^{20}+C\exp(-c|\log(r)|^{2})$, using \eqref{eq:bdtv}.
Thus \eqref{eq:bdprob1212s} holds as well.

We next estimate $\P[\cE_1\cap\cE_2\cap\cE_{1,2}]$ in terms of $\P[\cE_1^*\cap\cE_2^*\cap\cE_{1,2}^*]$.
By the stationarity of $\cQ$ from Proposition \ref{P:properties-elpp}.10, it remains to compare $\cM_{t,1}\diamond \cD_1$ and $\cM_{t,1}\diamond \cD_1$ to $\cM_{t,1}\diamond \cD_1^*$ and $\cM_{t,1}\diamond \cD_1^*$.

Take $S=2|\log(r)|^{2/3+0.0001}$. Let $\cD_{1,S}$ (resp. $\cD_{2,S}$, $\cD_{1,S}^*$, $\cD_{2,S}^*$) be equal to $\cD_1$ (resp. $\cD_2$, $\cD_1^*$, $\cD_2^*$) in $[0, S]$, and equal $-\infty$ in $(S, \infty)$.
Let $\cE_S$ be the event where
\[
\cM_{t,1}\diamond \cD_1|_{[0, z_++1]} = \cM_{t,1}\diamond \cD_{1,S}|_{[0, z_++1]}, \quad \cM_{t,1}\diamond \cD_2|_{[0, z_++1]} = \cM_{t,1}\diamond \cD_{2,S}|_{[0, z_++1]}, \]
\[
\cM_{t,1}\diamond \cD^*_1|_{[0, z_++1]} = \cM_{t,1}\diamond \cD^*_{1,S}|_{[0, z_++1]}, \quad
\cM_{t,1}\diamond \cD^*_2|_{[0, z_++1]} = \cM_{t,1}\diamond \cD^*_{2,S}|_{[0, z_++1]}.
\]
We note that $\cE_S$ is implied by the following events: 
\[
|\cD^*_1(x)|, |\cD^*_2(x)|, |\cD_1(x)|, |\cD_2(x)| < S^{1/2}x+S^{3/2}, \quad \forall x \ge 0,
\]
and
\[
\cM_{t,1}(x, y) < \cM_{t,1}(x,x) - 2S^{3/2}-S^{1/2}(x+y), \quad \forall 0<x\le z_++1 \text{ and } y>S.
\]
Then from Brownian estimates and the shape theorem in Proposition \ref{P:only-easy-consequences}.2, we have that $\P[\cE_S]>1-2\exp(-cS^2)$.

By using Lemma \ref{lem:bmcomp} to compare the laws of $\cD_1|_{[0,S]}$ and $\cD_2|_{[0,S]}$ against the laws of $\cD_1^*|_{[0,S]}$ and $\cD_2^*|_{[0,S]}$, and \eqref{eq:bdprob1212s}, we have
\[
\P[\cE_1\cap\cE_2\cap\cE_{1,2}\cap\cE_S] < Cz_+^{-1}r^{4/3+\delta_0}\exp(C|\log(r)|^{0.9}+C|\log(r)|^{1/2}S^{1/2} + CS).
\]
Then the conclusion follows, by taking $\delta<\delta_0$.
\end{proof}

\subsubsection{Reducing the number of Brownian motions}

It now remains to prove Lemma \ref{lem:key112-}, which is the last piece to prove of Proposition \ref{prop:ulteq}. Our first step is to reduce the problem to one involving only three Brownian motions.
From the construction of $\{\cH_i\}_{i=1}^2$ in Proposition \ref{P:half-space-horizon-marginals} (recalled above in Section \ref{sssec:decompinto}), we modify the slopes of the Brownian motions and the exponential parameters in a way that will allow us to get a simpler description.

Take
\[
\gamma =\min\big( \lambda_2/2 - \rho,(\lambda_1-\lambda_2)/2, (\lambda_1+\lambda_2)/4 \big)>0.
\]
Take $X^{(1)}(-1,2), X^{(1)}(-1,4), X^{(1)}(-2,3) \sim \Exp(\gamma)$, and $X^{(1)}(-1,3)\sim\Exp(2\gamma)$, all independent of each other.
Take independent Brownian motions $\cB^{(1)}_1, \cB^{(1)}_2, \cB^{(1)}_3, \cB^{(1)}_4$, with slopes $2\gamma, 0, -2\gamma, -2\gamma$, respectively. Define $\cH_i^{(1)},i = 1, 2$ as in \eqref{E:Hix} with $X\cB$ replaced by $X^{(1)}\cB^{(1)}$, which is the cadlag functions generated from $\{X^{(1)}(-i, j) : i\in\II{1, 2}, j \in \II{i +1, 5 - i}\}$, concatenated by $\cB^{(1)}$.

\begin{lemma}
\label{L:replacement-RN}
Lemma \ref{lem:key112-} holds if the same bound holds for $\{\cH^{(1)}_i\}_{i=1}^2$ (for a different $C$).
\end{lemma}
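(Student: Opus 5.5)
The plan is to compare the law of the environment $(X\cB)$ defining $\{\cH_i\}_{i=1}^2$ with the law of $(X^{(1)}\cB^{(1)})$ by a Radon--Nikodym bound, and to transfer the estimate in Lemma \ref{lem:key112-} across. The event $\cE^*_{2,-}\cap\cE^*_{1,2}$ is measurable with respect to $\cM_{t,1}\diamond\cD_2^*$, the four exponential variables $X(-i,j)$, and the Brownian motions $\cB_1,\dots,\cB_4$ restricted to $[0,z_+]$. Since $z_+\le |\log r|^{2/3+0.0001}+1$, only Brownian paths on a window of length $S\asymp|\log r|^{2/3+0.0001}$ matter. Throughout, I will hold $\cM_{t,1}\diamond\cD_2^*$ fixed and independent, and change only the law of $(X,\cB)$.

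For the Brownian part, I apply Lemma \ref{lem:bmcomp} to each $\cB_i|_{[0,z_+]}$ separately, conditionally on everything else. The slope changes are $\lambda_1\to2\gamma$, $\lambda_2\to0$, $-\lambda_2\to-2\gamma$ and $-\lambda_1\to-2\gamma$; all of these are $O(1)$. Take the target probability to be $\eps\approx z_+^{-1}r\,e^{C|\log r|^{0.9}}$, so that $|\log\eps|\le C|\log r|$. Each application then multiplies the probability by at most
\[
\exp\!\big(C|\log r|^{1/2}z_+^{1/2}+Cz_+\big)\le \exp\!\big(C|\log r|^{0.9}\big),
\]
because $|\log r|^{1/2}\cdot|\log r|^{1/3+0.00005}\le|\log r|^{0.9}$. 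Conditioning is needed because the event is not a product event. I will instead argue iteratively: fix the other processes, view the conditional probability as a function of one Brownian path, and apply Lemma \ref{lem:bmcomp} with $\eps$ replaced by that conditional probability. The step that needs care is the monotone behaviour of $\eps\,e^{C|\log\eps|^{1/2}z^{1/2}}$. To handle it, I will split according to whether the conditional probability exceeds $r^{2}$ or not; the small case contributes a negligible $r^{2}e^{C|\log r|^{0.9}}$. Applying Jensen, or this splitting, to the integrated bound then gives a total cost factor of $\exp(C|\log r|^{0.9})$ after the four swaps.

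For the exponential variables, the density ratio of $\Exp(a)$ to $\Exp(b)$ at $x$ is $(a/b)e^{(b-a)x}$. By the choice of $\gamma$, the replacement parameters are at most the original ones: $\gamma\le\lambda_1/2-\lambda_2/2$, $2\gamma\le\lambda_1/2+\lambda_2/2$, $\gamma\le\lambda_2/2-\rho\le\lambda_1/2-\rho$. So the original exponentials are stochastically smaller. For the densities, the ratio (original over replaced) equals $(a/b)e^{-(a-b)x}\le a/b\le C$ whenever $a\ge b$. Hence
\[
\P[\cE]\le C^4\,\P^{(1)}[\cE]
\]
for any event measurable in these variables with the others fixed, and no truncation is required. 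This is why $\gamma$ is defined as a minimum.

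Combining the two steps gives $\P[\cE^*_{2,-}\cap\cE^*_{1,2}]\le C\exp(C|\log r|^{0.9})\,\P^{(1)}[\cdots]+O(r^2)$. Substituting the assumed bound for the $\cH^{(1)}$ version then yields Lemma \ref{lem:key112-} with a larger $C$, which absorbs the $\exp(C|\log r|^{0.9})$ loss since the target bound already contains that factor. The main obstacle I expect is the conditional application of Lemma \ref{lem:bmcomp}, where the multiplier depends on the small probability itself; the threshold-splitting argument above is what I would try first.
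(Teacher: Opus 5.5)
Your proposal is correct and follows the paper's proof. The exponential weights have Radon--Nikodym derivative bounded by a constant, because $\gamma$ was chosen so that every exponential parameter decreases. The Brownian slope changes are handled by Lemma \ref{lem:bmcomp} on a window of length $O(|\log r|^{2/3+0.0001})$ with $\eps \approx z_+^{-1} r e^{C|\log r|^{0.9}}$, which costs a factor $\exp(C|\log r|^{0.9})$. Your one-Brownian-motion-at-a-time conditioning, with the threshold split at $r^2$ to control the $\eps$-dependence of the multiplier, is a more careful version of the joint application of Lemma \ref{lem:bmcomp} that the paper asserts without detail.
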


\begin{proof}
The law of $\{\cH_i\}_{i=1}^2$ is absolute continuous with respect to the law of $\{\cH^{(1)}_i\}_{i=1}^2$.
We can further bound the Radon-Nikodym derivative, via comparing the density of the exponential random variables, and using Lemma \ref{lem:bmcomp}. Indeed, the law of $X$ is absolutely continuous with respect to the law of $X^{(1)}$ with uniformly bounded Radon-Nikodym derivative (depending on $\lambda_1, \lambda_2, \rho$) since we have reduced all exponential parameters. Lemma \ref{lem:bmcomp} then ensures that if a set $A \subset \cC([0, |\log(r)|^{2/3 + 0.0001}])$ has measure $\eps$ under the law of $\{\cH^{(1)}_i\}_{i=1}^2$, then $A$ will have measure at most $\eps \exp(C|\log(\eps)|^{1/2} |\log(r)|^{1/3 + 0.00005})$ under the law of $\{\cH_i\}_{i=1}^2$. Taking 
$\eps = Cz_+^{-1} r \exp(C|\log(r)|^{0.9}) > cr$
then gives the result.
\end{proof}

Now, we can describe the law of $\{\cH^{(1)}_i\}_{i=1}^2$ using only three Brownian motions.

Take $X^{(2)}(-1,3), X^{(2)}(-1,4)\sim \Exp(\gamma)$, and independent of each other.
Take independent Brownian motions $\cB^{(2)}_2, \cB^{(2)}_3, \cB^{(2)}_4$, with slopes $2\gamma, 0, -2\gamma$, respectively.
Let $X^{(2)}\cB^{(2)}$ be their corresponding concatenation.
For $x \ge 0$ and $i = 1, 2$, let
\[
\cH^{(2)}_i(x) =   X^{(2)}\cB^{(2)}(-1,i+1; x, 4) - X^{(2)}\cB^{(2)}(-1,i+1; 0, 4).
\]
\begin{lemma}   \label{lem:eqH12}
We have $\{\cH^{(1)}_i\}_{i=1}^2\eqd\{\cH^{(2)}_i\}_{i=1}^2$.
\end{lemma}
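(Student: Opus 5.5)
The plan is to prove the identity by explicit couplings at the level of the cadlag environments. I would not compute laws directly. The tools are Lemma \ref{lem:exp-bro-swap}, its corollary Corollary \ref{cor:exp-bro} (the Brownian Burke property), and the fact that cadlag LPP values from a fixed starting corner depend on a block of lines only through the two-line passage times across that block.

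First I will write both components explicitly by unwinding the definitions in \eqref{E:Hix}. For $\cH^{(1)}_1$, the path starts at $(-1,1)$. It climbs column $-1$, picking up some of $X^{(1)}(-1,2),X^{(1)}(-1,3),X^{(1)}(-1,4)$, and enters the Brownian part on some line $\ell$. For $\cH^{(1)}_2$, the path starts at $(-2,2)$, uses $X^{(1)}(-2,3)$ and the column $-1$ atoms on lines $\ge 2$, and then enters the Brownian lines $\ell\ge 2$.

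The first merging step concerns lines $1,2$, which carry slopes $2\gamma,0$, together with the atom $X^{(1)}(-1,2)\sim\Exp(\gamma)=\Exp(\lambda_0/2-\lambda_1/2)$ for $(\lambda_0,\lambda_1)=(2\gamma,0)$. This is exactly the setting of Corollary \ref{cor:exp-bro}, with $X_*=X^{(1)}(-1,2)$. I will replace $(\cB^{(1)}_1,\cB^{(1)}_2,X_*)$ by a pair $(\cB'_0,\cB'_1)$ of independent Brownian motions with slopes $0,2\gamma$. By \eqref{eq:exp-bro-c1}, this leaves all two-line passage times across lines $1,2$ unchanged. By \eqref{eq:exp-bro-c2}, the contribution of paths from $(-1,1)$ that pass through lines $1,2$ becomes the single Brownian motion $\cB'_1$, which has slope $2\gamma$. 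This Brownian motion will play the role of $\cB^{(2)}_2$.

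The second merging step does the analogous thing on lines $3,4$, which both have slope $-2\gamma$. Here the relevant atoms are $X^{(1)}(-1,3)\sim\Exp(2\gamma)$ and $X^{(1)}(-2,3)\sim\Exp(\gamma)$. Using Lemma \ref{lem:exp-bro-swap}, or more precisely its discrete predecessor Lemma \ref{lem:couprLPP} applied to the column structure in the exponential part, I will swap parameters so that the column $-2$ and column $-1$ atoms rearrange into the two $\Exp(\gamma)$ atoms $X^{(2)}(-1,3),X^{(2)}(-1,4)$ of the target description. After this, the starting corners $(-1,1),(-2,2)$ become $(-1,2),(-1,3)$ in the three-line environment with slopes $2\gamma,0,-2\gamma$. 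The slope bookkeeping is $(2\gamma,0)\mapsto(0,2\gamma)$ on the first pair, and one of the two slope $-2\gamma$ lines gets absorbed. The slope-$0$ line $\cB'_0$ should drop out entirely, since neither $\cH_1$ nor $\cH_2$ uses it once the merge is done; I need to check this carefully.

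The main obstacle is making the two merges simultaneously consistent for both $\cH^{(1)}_1$ and $\cH^{(1)}_2$. The difficulty is that $\cH^{(1)}_2$ starts at line $2$, so it sees $\cB^{(1)}_2$ itself, and not only two-line passage times through lines $1,2$. My first approach will be to express $\cH^{(1)}_2$ through passage times from $(-2,2)$ with a point at $-\infty$ on line $1$, obtained by sending an auxiliary parameter to its critical value exactly as in the proof of Corollary \ref{cor:exp-bro}. This writes both components as limits of quantities in a single extended environment, in which Lemma \ref{lem:invz} permutes parameters jointly. Equality in law of the joint pair should then follow by passing the discrete permutation identity to the exponential–Brownian limit, as in the proof of Proposition \ref{P:half-space-horizon-marginals}.
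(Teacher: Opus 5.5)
There is a genuine gap, and it comes from the order of operations. Your first step, applying Corollary \ref{cor:exp-bro} to lines $1,2$ of $X^{(1)}\cB^{(1)}$ (slopes $2\gamma,0$, $X_*=X^{(1)}(-1,2)$), is valid for $\cH^{(1)}_1$ alone. But it absorbs the slope-$0$ line $\cB^{(1)}_2$, which is exactly the line $\cH^{(1)}_2$ starts on. It is also the line that has to survive as the target's $\cB^{(2)}_3$, the slope-$0$ line on which $\cH^{(2)}_2$ starts. After this coupling, $\cH^{(1)}_2$ is no longer a functional of the new lines, and $\cB'_1$ is correlated with $\cB^{(1)}_2$, so you lose the joint description.

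Your bookkeeping cannot recover from this. $\cB'_0$ drops out. Lines $3,4$ both have slope $-2\gamma$, so swapping them does nothing. Corollary \ref{cor:exp-bro} cannot merge two lines of equal slope, since the atom would be $\Exp(0)$. So nothing in your scheme produces the slope-$0$ line at level $3$.

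The proposed repair does not work either. To write the start $(-2,2)$ as a two-line passage time across lines $1,2$, you would need a blown-up atom on line $2$. In Lemma \ref{lem:exp-bro-swap}, however, the constraint $\lambda_0<2\beta_i$ only allows blowing up the atom on the higher-slope line of the pair, which is line $1$ here. Moreover, Lemma \ref{lem:invz} concerns symmetric half-space environments. The slopes $(2\gamma,0,-2\gamma,-2\gamma)$ are not of the form $(\lambda_1,\lambda_2,-\lambda_2,-\lambda_1)$, so it is unclear which environment and which permutation you would use.

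The missing idea is to swap lines $2,3$ (slopes $0,-2\gamma$) before merging anything. Apply Lemma \ref{lem:exp-bro-swap} to this pair with $\beta_{-1}=\gamma$, which reproduces $X^{(1)}(-1,2)\sim\Exp(\gamma)$ and $X^{(1)}(-1,3)\sim\Exp(2\gamma)$, and send $\beta_{-2}\downarrow 0$. This turns the start $(-2,2)$ of $\cH^{(1)}_2$ into a blown-up atom on the higher-slope line of this pair, which is admissible, and the other column $-2$ atom is $X^{(1)}(-2,3)\sim\Exp(\gamma)$.

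After the swap, the blow-up sits at $(-2,3)$. Hence $\cH_2$ becomes a passage time from $(-1,3)$ along a slope-$0$ line and line $4$. $\cH_1$ is unaffected, because it sees lines $2,3$ only through two-line passage times from $x>-2$. The new environment has slopes $(2\gamma,-2\gamma,0,-2\gamma)$ and atoms $\Exp(2\gamma),\Exp(\gamma),\Exp(\gamma)$ at $(-1,2),(-1,3),(-1,4)$.

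Only now apply \eqref{eq:exp-bro-c2} to lines $1,2$ (slopes $2\gamma,-2\gamma$, $X_*\sim\Exp(2\gamma)$). $\cH_2$ no longer touches these lines, and the merge yields a slope-$2\gamma$ Brownian motion independent of lines $3,4$. This gives the joint identity.
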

\begin{proof}
This equality in distribution essentially follows by applying Lemma \ref{lem:exp-bro-swap} twice.
For this, we introduce the following intermediate object.

Take $X^{(3)}(-1,2)\sim \Exp(2\gamma)$ and $X^{(3)}(-1,3), X^{(3)}(-1,4)\sim \Exp(\gamma)$, and independent of each other.
Take independent Brownian motions $\cB^{(3)}_1, \cB^{(3)}_2, \cB^{(3)}_3, \cB^{(3)}_4$, with slopes $2\gamma, -2\gamma, 0, -2\gamma$, respectively. 
Let $X^{(3)}\cB^{(3)}$ be their corresponding concatenation.
Then for $x \ge 0$ and $i = 1, 2$, let
$$
\cH^{(3)}_i(x) =   X^{(3)}\cB^{(3)}(-1, 2i-1; x, 4) - X^{(3)}\cB^{(3)}(-1, 2i-1; 0, 4).
$$
Next, in Lemma \ref{lem:exp-bro-swap}, we take $\lambda_0=0$ and $\lambda_1=-2\gamma$, $\beta_{-1}=\gamma$, and send $\beta_{-2}\to 0$ from above.
This way we get a coupling between $\cB^{(1)}_2$, $\cB^{(1)}_3$, $X^{(1)}(-1,2), X^{(1)}(-2,3), X^{(1)}(-1,3)$ and $\cB^{(3)}_2$, $\cB^{(3)}_3$, $X^{(3)}(-1,2), X^{(3)}(-1,3)$, such that almost surely 
\[
X^{(1)}\cB^{(1)}(x, 2; y, 3) = X^{(3)}\cB^{(3)}(x, 2; y, 3),\quad \forall -2< x < y.
\]
When $x = - 2$, the exponential random variable of parameter $-\lambda_0/2 + \beta_{-2}$ is blowing up. In the limit we have that
\[
X^{(1)}\cB^{(1)}(-2, 2; y, 3) - X^{(1)}\cB^{(1)}(-2, 2; 0, 3) = X^{(3)}\cB^{(3)}(-1, 3; y, 3) - X^{(3)}\cB^{(3)}(-1, 3; 0, 3),\quad \forall 0 \le y.
\]
Here, on the right-hand side, we have first used that the blow-up random variable has location $(-2, 3)$, which allows us to move the starting location from $(-2, 2)\mapsto (-2, 3)$ as we take the limit, and then switched $(-2, 3)$ to $(-1, 3)$ by using that $X^{(3)}$ has no weight in $(-\infty, -1)$. 

Finally, $\cB^{(1)}_1, \cB^{(1)}_4, X^{(1)}(4,4)$ and $\cB^{(3)}_1, \cB^{(3)}_4, X^{(3)}(4,4)$ have the same distribution. Combining this with the previous two displays gives that $\{\cH^{(1)}_i\}_{i=1}^2\eqd\{\cH^{(3)}_i\}_{i=1}^2$.

On the other hand, by \eqref{eq:exp-bro-c2} in Corollary \ref{cor:exp-bro},
\[
x\mapsto X^{(3)}\cB^{(3)}(-1, 1;x,2) - X^{(3)}\cB^{(3)}(-1, 1;0,2)
\]
is a Brownian motion with slope $2\gamma$, i.e., it has the same distribution as $\cB^{(2)}_2$. Moreover, $\cB^{(3)}_3$,  $\cB^{(3)}_4$, $X^{(3)}(-1,3)$, $X^{(3)}(4,4)$ and $\cB^{(2)}_3$, $\cB^{(2)}_4$, $X^{(2)}(-1,3)$, $X^{(2)}(4,4)$ have the same distribution. Thus we conclude that  $\{\cH^{(3)}_i\}_{i=1}^2\eqd\{\cH^{(2)}_i\}_{i=1}^2$,
so the conclusion follows.
\end{proof}
It now remains to analyze $\{\cH^{(2)}_i\}_{i=1}^2$. 
In preparation, we define $\cE^{(2)}_{2,-}$ and $\cE^{(2)}_{1,2}$ with $\{\cH_i\}_{i=1}^2$ replaced by $\{\cH^{(2)}_i\}_{i=1}^2$ in the definition of $\cE^*_{2,-}$ and $\cE^*_{1,2}$.

For this, we take $\cB^{(4)}_3$, $\cB^{(4)}_4$ to be independent Brownian motions, with slopes $-2\gamma$, $0$, respectively.
Using Corollary \ref{cor:exp-bro}, we couple $\cB^{(2)}_3$, $\cB^{(2)}_4$, $X^{(2)}(-1,4)$ and $\cB^{(4)}_3$, $\cB^{(4)}_4$, such that for all $0\le x<y$,
\[
\cB^{(2)}(x ,3; y, 4) = \cB^{(4)}(x ,3; y, 4),
\]
and $\cB^{(4)}_4 = \cH^{(2)}_2$, and $X^{(2)}(-1,4)=\max_{x \ge 0} \cB^{(4)}_3(x) - \cB^{(4)}_4(x)$.
We also set $\cB^{(4)}_2=\cB^{(2)}_2$.

Next, since $X^{(2)}(-1,3)/2\sim \Exp(2\gamma)$, again using Corollary \ref{cor:exp-bro} we couple $\cB^{(4)}_2$, $\cB^{(4)}_3$, $X^{(2)}(-1,3)$ with Brownian motions $\cB^{(5)}_2$, $\cB^{(5)}_3$ of drift $2 \gamma$ and $-2 \gamma$, such that for all $0\le x < y$,
\[
\cB^{(4)}(x , 2; y, 3) = \cB^{(5)}(x , 2; y, 3),
\]
and for each $x\ge 0$,
\[
\cB^{(5)}_3(x) = \max\big( X^{(2)}(-1,3)/2 +\cB^{(4)}_3(x), \cB^{(4)}(0 , 2; x, 3) \big) - X^{(2)}(-1,3)/2,\]
and 
\begin{equation}  \label{eq:243}
X^{(2)}(-1,3)/2=\max_{x \ge 0} \cB^{(5)}_2(x) - \cB^{(5)}_3(x).
\end{equation}

We then consider the following events:
\begin{align*}
    \cE^{(4)}_{2,-}:&\quad \cB^{(4)}_4(z_-)+\cM_{t,1}\diamond \cD_2^*(z_-) > \max_{x\in [0, z_-]} \cB^{(4)}_4(x)+\cM_{t,1}\diamond \cD_2^*(x)-2|\log(r)|^{10}r^{1/3},
\\
\cE_{\operatorname{disj}:(-1,3)}:&\quad X^{(2)}(-1,3)<\max_{0\le x\le z_+}-\cB^{(4)}_3(x)+\cB^{(4)}_2(x),
\\
\cE_{\operatorname{disj}:34}:&\quad \max_{0\le x \le z_-} -\cB^{(4)}_4(x)+\cB^{(5)}_3(x) < \max_{z_-\le x \le z_+} -\cB^{(4)}_4(x)+\cB^{(5)}_3(x).
\end{align*}
Since $\cB^{(4)}_4 = \cH^{(2)}_2$ by the construction of Corollary \ref{cor:exp-bro} we have that $\cE^{(2)}_{2,-}=\cE^{(4)}_{2,-}$.
We also have the following inclusion.
\begin{lemma}   \label{lem:ce12inc}
We have $\cE^{(2)}_{1,2}\subset \cE_{\operatorname{disj}:(-1,3)} \cap \cE_{\operatorname{disj}:34}$.
\end{lemma}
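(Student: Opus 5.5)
We will prove the inclusion by contraposition. The plan is to show that on the complement of either event, $\cH^{(2)}_1-\cH^{(2)}_2$ is constant on $[z_-,z_+]$, so $\cE^{(2)}_{1,2}$ fails.

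\textbf{Step 1: rewrite both processes in terms of $\cB^{(4)}$.} Both $\cH^{(2)}_1$ and $\cH^{(2)}_2$ are cadlag last passage values to line $4$. The path for $\cH^{(2)}_1$ starts at $(-1,2)$ and the path for $\cH^{(2)}_2$ starts at $(-1,3)$. Splitting each path at the location where it leaves line $3$, and using the identity $\cB^{(2)}(x,3;y,4)=\cB^{(4)}(x,3;y,4)$ together with $\cB^{(4)}_4=\cH^{(2)}_2$ and $\cB^{(4)}_2=\cB^{(2)}_2$, we write, up to additive constants independent of $x$,
\[
\cH^{(2)}_1(x)=\max\Big(X^{(2)}(-1,3)+\cH^{(2)}_2(x),\ \max_{0\le y\le x}\big[\cB^{(4)}_2(y)-\cB^{(4)}_3(y)\big]+\cB^{(4)}(y,3;x,4)\Big).
\]
Here the first branch collects the paths that jump from line $2$ to line $3$ at time $-1$ (picking up $X^{(2)}(-1,3)$). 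The second branch collects the paths that stay on line $2$ until some $y\ge0$. This decomposition is the same bookkeeping as in the proof of Lemma~\ref{lem:12subpmtv}, and the identities used come from the couplings of Corollary~\ref{cor:exp-bro} recorded before the lemma.

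\textbf{Step 2: the event $\cE_{\operatorname{disj}:(-1,3)}$.} Suppose $\cE_{\operatorname{disj}:(-1,3)}$ fails, i.e.\ $X^{(2)}(-1,3)\ge\max_{0\le x\le z_+}\big[\cB^{(4)}_2(x)-\cB^{(4)}_3(x)\big]$. Then for every $x\le z_+$ the second branch is at most
\[
X^{(2)}(-1,3)+\max_{y\le x}\cB^{(4)}_3(y)+\cB^{(4)}(y,3;x,4)\le X^{(2)}(-1,3)+\cH^{(2)}_2(x)+\text{const}.
\]
This uses the representation of $\cH^{(2)}_2=\cB^{(4)}_4$ as a passage value from line $3$, i.e.\ the equality $\cB^{(2)}(\cdot,3;\cdot,4)=\cB^{(4)}(\cdot,3;\cdot,4)$. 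So the first branch attains the maximum throughout $[0,z_+]$, and $\cH^{(2)}_1-\cH^{(2)}_2$ is constant there, contradicting $\cE^{(2)}_{1,2}$.

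\textbf{Step 3: the event $\cE_{\operatorname{disj}:34}$.} Merge lines $2$ and $3$, together with the weight $X^{(2)}(-1,3)$, into a single effective line-$3$ profile. By construction of the $\cB^{(5)}$ coupling, this effective profile is exactly $X^{(2)}(-1,3)/2+\cB^{(5)}_3$, after recentring by the appropriate constant. Therefore
\[
\cH^{(2)}_1(x)=\text{const}+\max_{0\le y\le x}\big[\cB^{(5)}_3(y)-\cB^{(4)}_4(y)\big]+\cB^{(4)}_4(x),
\]
so
\[
\cH^{(2)}_1(x)-\cH^{(2)}_2(x)=\text{const}+\max_{0\le y\le x}\big(\cB^{(5)}_3-\cB^{(4)}_4\big)(y).
\]
This running maximum is constant on $[z_-,z_+]$ unless it increases there. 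That happens exactly when the maximum over $[z_-,z_+]$ strictly exceeds the maximum over $[0,z_-]$, which is $\cE_{\operatorname{disj}:34}$. Hence $\cE^{(2)}_{1,2}\subset\cE_{\operatorname{disj}:34}$.

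\textbf{Main obstacle.} The delicate point is checking that the $\cB^{(5)}$ construction from Corollary~\ref{cor:exp-bro} reproduces the effective line-$3$ profile exactly. This requires tracking the factor $X^{(2)}(-1,3)/2$ against the formula for $\cB^{(5)}_3$ and the exact form of \eqref{eq:exp-bro-c2}. I would verify it by writing out \eqref{eq:exp-bro-c2} with the parameters used and comparing it with the second branch in Step 1.
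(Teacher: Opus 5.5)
\textbf{There is a genuine gap in Step 3.} Your Step 2 is essentially the paper's argument for $\cE_{\operatorname{disj}:(-1,3)}$. Step 3, however, rests on a false identity, so the check you defer in your ``main obstacle'' paragraph would fail.

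First the correct decomposition. Split paths from $(-1,2)$ according to whether they jump to line $3$ at time $-1$. Using $\cB^{(2)}(0,2;\cdot,4)=\cB^{(4)}(0,2;\cdot,4)$ and $\cH^{(2)}_2=\cB^{(4)}_4$, one gets
$\cH^{(2)}_1(x)=\max\bigl(\cH^{(2)}_2(x),\,\cB^{(4)}(0,2;x,4)-X^{(2)}(-1,3)-X^{(2)}(-1,4)\bigr)$.
So the relative offset between the two branches in your Step 1 should be $X^{(2)}(-1,3)+X^{(2)}(-1,4)$, not $X^{(2)}(-1,3)$. This is not an additive constant you can suppress. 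Step 2 survives anyway, because the ``const'' you obtain there, from $\cB^{(2)}(0,3;x,4)\le X^{(2)}(-1,4)+\cH^{(2)}_2(x)$, is exactly $X^{(2)}(-1,4)$.

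Now write $R(x)=\max_{0\le w\le x}\bigl[\cB^{(4)}(0,2;w,3)-\cB^{(4)}_4(w)\bigr]$ and $S(x)=\max_{0\le w\le x}\bigl[\cB^{(4)}_3(w)-\cB^{(4)}_4(w)\bigr]$. Unwinding the definition of $\cB^{(5)}_3$ gives
\[
\cH^{(2)}_1-\cH^{(2)}_2=\max\bigl(0,\,R-X^{(2)}(-1,3)-X^{(2)}(-1,4)\bigr),\qquad \max_{0\le w\le x}\bigl(\cB^{(5)}_3-\cB^{(4)}_4\bigr)(w)=\max\bigl(S(x),\,R(x)-X^{(2)}(-1,3)/2\bigr).
\]
These are not equal up to a constant:
\begin{itemize}
\item near $x=0$ the second increases through $S$, while the first vanishes until $R$ reaches $X^{(2)}(-1,3)+X^{(2)}(-1,4)$;
\item for large $x$ they differ by $X^{(2)}(-1,3)/2+X^{(2)}(-1,4)$.
\end{itemize}
The halved weight and the line $\cB^{(4)}_3$ enter $\cB^{(5)}_3$ only so that Corollary~\ref{cor:exp-bro} applies with an $\Exp(2\gamma)$ variable. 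Thus $X^{(2)}(-1,3)/2+\cB^{(5)}_3$ is not the effective line-$3$ profile of $\cH^{(2)}_1$. Moreover $\cE_{\operatorname{disj}:34}$ can occur without $\cE^{(2)}_{1,2}$, so your ``exactly when'' is false.

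\textbf{What is missing is a one-sided comparison.} Since $\cH^{(2)}_1-\cH^{(2)}_2$ is nondecreasing, $\cE^{(2)}_{1,2}$ forces $R(z_+)>\max\bigl(R(z_-),\,X^{(2)}(-1,3)+X^{(2)}(-1,4)\bigr)$. The coupling gives $S\le\max_{x\ge0}\bigl(\cB^{(4)}_3-\cB^{(4)}_4\bigr)(x)=X^{(2)}(-1,4)$. Hence $R(z_+)-X^{(2)}(-1,3)/2$ exceeds both $S(z_-)$ and $R(z_-)-X^{(2)}(-1,3)/2$, which is precisely $\cE_{\operatorname{disj}:34}$. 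Likewise, $R(z_+)\le\max_{0\le x\le z_+}\bigl(\cB^{(4)}_2-\cB^{(4)}_3\bigr)(x)+X^{(2)}(-1,4)$ yields $\cE_{\operatorname{disj}:(-1,3)}$.

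The paper argues contrapositively instead. It rewrites the failure of $\cE_{\operatorname{disj}:34}$ as \eqref{E:maxmax}. A case analysis on which term attains the maximum at $z_+$ then shows that \eqref{eq:b34} holds with equality, either directly or via equality in \eqref{eq:b24}. This upgrades \eqref{E:maxmax} to \eqref{eq:b4322} with the full weight $X^{(2)}(-1,3)$, so $\cH^{(2)}_1$ and $\cH^{(2)}_2$ have equal increments over $[z_-,z_+]$. In either route, what is available is an inequality between the $X^{(2)}(-1,3)/2$-weighted running maximum and the true one, not an identity.
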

\begin{proof}
First, we assume that $\cE_{\operatorname{disj}:(-1,3)}$ does not hold.
Then for each $x\in [0, z_+]$, we have
\[
X^{(2)}(-1,3) + \cB^{(4)}(0,3;x,4) \ge \cB^{(4)}(0,2;x,4).
\]
Thus
\[
X^{(2)}(-1,3) + \cB^{(2)}(0,3;x,4) \ge \cB^{(2)}(0,2;x,4),
\]
which implies that
\[
X^{(2)}\cB^{(2)}(-1,3;x,4)\ge X^{(2)}\cB^{(2)}(0, 2;x,4).
\]
This implies that for each $x\in [0, z_+]$,
\begin{equation}   \label{eq:0xz+}
\cH^{(2)}_1(x) = X^{(2)}\cB^{(2)}(-1,3;x,4) - X^{(2)}\cB^{(2)}(-1,3;x,4) = \cH^{(2)}_2(x),
\end{equation}
which contradicts $\cE^{(2)}_{1,2}$.
Thus $\cE^{(2)}_{1,2}\subset \cE_{\operatorname{disj}:(-1,3)}$.

Second, we assume that $\cE_{\operatorname{disj}:34}$ does not hold.
This can be written as
\begin{multline*}
\max_{0\le x \le z_-} -\cB^{(4)}_4(x)+\max\big( X^{(2)}(-1,3)/2 +\cB^{(4)}_3(x), \cB^{(4)}(0 , 2; x, 3) \big) \\ = \max_{0\le x \le z_+} -\cB^{(4)}_4(x)+\max\big( X^{(2)}(-1,3)/2 +\cB^{(4)}_3(x), \cB^{(4)}(0 , 2; x, 3) \big),
\end{multline*}
or equivalently,
\begin{multline}
\label{E:maxmax}
\max\big(X^{(2)}(-1,3)/2+\cB^{(2)}(0 , 3; z_-, 4), \cB^{(2)}(0 , 2; z_-, 4)\big)-\cB^{(2)}_4(z_-) + \cB^{(2)}_4(z_+)\\
=\max\big(X^{(2)}(-1,3)/2+\cB^{(2)}(0 , 3; z_+, 4), \cB^{(2)}(0 , 2; z_+, 4)\big).
\end{multline}
Note that
\begin{equation}  \label{eq:b34}
\cB^{(2)}(0 , 3; z_-, 4) -\cB^{(2)}_4(z_-) + \cB^{(2)}_4(z_+) \le \cB^{(2)}(0 , 3; z_+, 4),
\end{equation}
\begin{equation}  \label{eq:b24}
 \cB^{(2)}(0 , 2; z_-, 4)  -\cB^{(2)}_4(z_-) + \cB^{(2)}_4(z_+)\le  \cB^{(2)}(0 , 2; z_+, 4).
\end{equation}
If $X^{(2)}(-1,3)/2+\cB^{(2)}(0 , 3; z_+, 4)\ge \cB^{(2)}(0 , 2; z_+, 4)$, then by \eqref{E:maxmax} we would have that equality holds in \eqref{eq:b34}. Thus
\begin{multline}   \label{eq:b4322}
    \max\big(X^{(2)}(-1,3)+\cB^{(2)}(0 , 3; z_-, 4), \cB^{(2)}(0 , 2; z_-, 4)\big)-\cB^{(2)}_4(z_-) + \cB^{(2)}_4(z_+)\\
=\max\big(X^{(2)}(-1,3)+\cB^{(2)}(0 , 3; z_+, 4), \cB^{(2)}(0 , 2; z_+, 4)\big).
\end{multline}
That is, \eqref{E:maxmax} holds with $X^{(2)}(-1,3)$ in place of $X^{(2)}(-1,3)/2$.

If  $X^{(2)}(-1,3)/2+\cB^{(2)}(0 , 3; z_+, 4)< \cB^{(2)}(0 , 2; z_+, 4)$, then by \eqref{E:maxmax} we would have that equality holds in \eqref{eq:b24}, which implies that
\[
\max_{x\in [0,z_-]} -\cB^{(2)}_4(x) + \cB^{(2)}_3(x) = \max_{x\in [0,z_+]} -\cB^{(2)}_4(x) + \cB^{(2)}_3(x).
\]
Therefore equality also holds in \eqref{eq:b34}. Then from the equality in both \eqref{eq:b24} and  \eqref{eq:b34}, we still get \eqref{eq:b4322}.

From \eqref{eq:b4322}, we get $\cH^{(2)}_1(z_+)-\cH^{(2)}_1(z_-)=\cB^{(2)}_4(z_+)-\cB^{(2)}_4(z_-)$.
Also, from the fact that equality holds in \eqref{eq:b24}, we get $\cH^{(2)}_2(z_+)-\cH^{(2)}_2(z_-)=\cB^{(2)}_4(z_+)-\cB^{(2)}_4(z_-)$.
These contradict $\cE^{(2)}_{1,2}$.
Thus $\cE^{(2)}_{1,2}\subset \cE_{\operatorname{disj}:34}$.
\end{proof}

\begin{lemma}   \label{lem:cEdisjpr}
We have $\P[\cE^{(4)}_{2,-}\cap \cE_{\operatorname{disj}:(-1,3)}\cap \cE_{\operatorname{disj}:34}]<Cz_+^{-1}|\log(r)|^{32}r$.
\end{lemma}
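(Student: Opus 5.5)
We aim for a bound of the form: three roughly independent events, each costing a factor. Here $\cE^{(4)}_{2,-}$ costs about $|\log r|^{10} r^{1/3}$, $\cE_{\operatorname{disj}:34}$ costs about $|\log r|^{10}r^{2/3}/z_+$, and $\cE_{\operatorname{disj}:(-1,3)}$ gives the remaining factor coupled with the position. Two facts organize the argument.

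First, by the construction via Corollary \ref{cor:exp-bro}, the process $\cB^{(4)}_4$ is a Brownian motion of slope $0$ and is independent of $\cB^{(4)}_3$ and $\cB^{(4)}_2=\cB^{(2)}_2$. Second, $\cB^{(5)}_2,\cB^{(5)}_3$ are independent Brownian motions of slopes $\pm 2\gamma$, and by \eqref{eq:243} they determine $X^{(2)}(-1,3)$.

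We would therefore reexpress everything in terms of the independent triple $\cB^{(5)}_2,\cB^{(5)}_3,\cB^{(4)}_4$, together with the independent process $\cM_{t,1}\diamond\cD_2^*$, which is Brownian of slope $-2\rho$ by Lemma \ref{lem:Mstat}. We also need $\cB^{(4)}_3$ and $\cB^{(4)}_2$ in these terms. From the coupling identity $\cB^{(4)}(x,2;y,3)=\cB^{(5)}(x,2;y,3)$, the identity for $\cB^{(5)}_3$, and \eqref{eq:243}, one inverts the Burke map: $\cB^{(4)}_2=\cB^{(5)}_2$, and $\cB^{(4)}_3$ is recovered as a reflected version.

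\textbf{Step 1.} Under this parametrization, we rewrite $\cE_{\operatorname{disj}:(-1,3)}$. It says that the global maximizer of $\cB^{(5)}_2-\cB^{(5)}_3$, whose maximum equals $X^{(2)}(-1,3)/2$, is such that $\max_{[0,z_+]}(\cB^{(4)}_2-\cB^{(4)}_3)$ exceeds twice it. Expanding $\cB^{(4)}_3$ through the reflection, we expect this to force the argmax $\tau$ of $\cB^{(5)}_2-\cB^{(5)}_3$ to lie in $[0,z_+]$, or at least to make the relevant running maximum be attained there.

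Given this, $\cB^{(5)}_3$ behaves near $[0,z_+]$ like a process whose increments after $\tau$ are controlled. The event $\cE_{\operatorname{disj}:34}$ then says that the maximum of $\cB^{(5)}_3-\cB^{(4)}_4$ over $[0,z_+]$ is attained in the short window $[z_-,z_+]$, of length $\le 3|\log r|^{10}r^{2/3}$.

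\textbf{Step 2.} We condition on $\cB^{(4)}_4$ and $\cM_{t,1}\diamond\cD_2^*$. The event $\cE^{(4)}_{2,-}$ is a near-maximum-at-the-endpoint event for the slope-$(-2\rho)$ Brownian motion $\cB^{(4)}_4+\cM_{t,1}\diamond\cD_2^*$ on $[0,z_-]$. Its probability is $\le C|\log r|^{10}r^{1/3}z_-^{-1/2}$ by the reflection principle.

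\textbf{Step 3.} Conditionally, we treat $\cE_{\operatorname{disj}:34}$ as a near-argmax event for $\cB^{(5)}_3-\cB^{(4)}_4$, where the argmax lies in a window of length $\ell\approx|\log r|^{10}r^{2/3}$ at distance about $z_+$ from $0$. Here $\cB^{(4)}_4$ enters with the opposite sign from its role in Step 2, and that is the key point. Using a Brownian meander/argmax density computation, we get a bound of order $\ell^{1/2}z_+^{-1/2}$ times a correction, jointly with Step 2.

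The two events involving $\cB^{(4)}_4$ near $z_-$ are positively correlated in a bad way. One asks $\cB^{(4)}_4$ to be high at $z_-$, the other low. We would handle this with a path decomposition at $z_-$: split $\cB^{(4)}_4$ into $[0,z_-]$ and $[z_-,z_+]$ pieces, using Lemma \ref{lem:bmcomp} to absorb the drifts at cost $\exp(C|\log r|^{1/2}z_+^{1/2})$, which is harmless for $z_+\le|\log r|^{0.7}$.

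\textbf{Step 4.} We combine with Step 1 to gain the extra factor. The argmax of $\cB^{(5)}_2-\cB^{(5)}_3$ being compatible with the picture costs $z_+^{-1/2}$ times a local $r^{\cdot}$ factor. Multiplying, we aim for $|\log r|^{30}r\,z_+^{-1}$. The exponent bookkeeping is $r^{1/3}\cdot r^{1/3}\cdot r^{1/3}$, coming from one endpoint-max event and two near-argmax-in-window events, with $z_+^{-1/2}z_+^{-1/2}$ from the two windows being at distance $z_+$.

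\textbf{Main obstacle.} The hardest part is Step 1 together with the joint estimate in Step 3. We need to show that $\cE_{\operatorname{disj}:(-1,3)}$ genuinely contributes an independent factor $r^{1/3}z_+^{-1/2}$, rather than being implied by $\cE_{\operatorname{disj}:34}$. Our first attempt would be to show that on the intersection, both $\cB^{(5)}_3-\cB^{(4)}_4$ and $\cB^{(5)}_2-\cB^{(5)}_3$ have near-maxima in the window $I_*$: two independent increments of $\cB^{(5)}_3$ constrained at the same location. We would then compute via a Markov decomposition at $z_-$ and $z_+$, using the Brownian reflection principle for each of the three near-maximum events, with the parabolic loss from drifts bounded by Lemma \ref{lem:bmcomp}.
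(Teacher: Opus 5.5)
Your plan depends on $\cE_{\operatorname{disj}:(-1,3)}$ contributing an independent factor $r^{1/3}z_+^{-1/2}$, by forcing a near-maximum of $\cB^{(5)}_2-\cB^{(5)}_3$ inside $I_*$. That is false. Your Step 1 observation is correct: by \eqref{eq:243} and the Burke coupling, the event says only that $\cB^{(5)}_2-\cB^{(5)}_3$ attains its global maximum $X^{(2)}(-1,3)/2$ somewhere in $[0,z_+]$, and falls back below $0$ before $z_+$. Nothing localizes this to a window of length $|\log r|^{10}r^{2/3}$. Given $\cB^{(5)}_3$, it is an event about the independent Brownian motion $\cB^{(5)}_2$ at scale $z_+$; for $z_+$ of order one it is not rare at all. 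So no power of $r$ can be extracted from it.

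Its correct cost is $Cz_+^{1/2}|\log r|^2$. On the high-probability event that $\max_{[0,z_+]}(\cB^{(4)}_2-\cB^{(4)}_3)$ and $\cB^{(5)}_3(z_+)-\cB^{(5)}_2(z_+)$ are both below $z_+^{1/2}|\log r|^2$, it forces the increment of $\cB^{(5)}_2-\cB^{(5)}_3$ after $z_+$ to stay below $2z_+^{1/2}|\log r|^2$. That event depends only on increments after $z_+$, so it is independent of everything on $[0,z_-]$. This positive power of $z_+$ is what turns $z_-^{-3/2}$ into $z_+^{-1}$. Note that your own tally $z_-^{-1/2}z_+^{-1/2}z_+^{-1/2}$ would not give $z_+^{-1}$ either.

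Consequently the whole factor $r$ must come from $\cE^{(4)}_{2,-}$ and $\cE_{\operatorname{disj}:34}$ jointly, and Steps 2--3 as you set them up only give $r^{2/3}$. The fix has two parts.

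First, outside an event of probability $2\exp(-c|\log r|^2)$, the process $\cB^{(5)}_3-\cB^{(4)}_4$ oscillates by less than $|\log r|^{10}r^{1/3}$ on $[z_-,z_+]$. Use this to replace $\cE_{\operatorname{disj}:34}$ by the event that $\max_{[0,z_-]}(\cB^{(5)}_3-\cB^{(4)}_4)$ exceeds its value at $z_-$ by less than $|\log r|^{10}r^{1/3}$.

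Second, reverse time from $z_-$. Let $X,Y,Z$ be the three independent Brownian motions obtained from $\cM_{t,1}\diamond\cD_2^*$, $\cB^{(4)}_4$ and $\cB^{(5)}_3$. The two events become barrier conditions $X+Y<\eps$ and $Z-Y<\eps$ on $[0,z_-]$, with $\eps=2|\log r|^{10}r^{1/3}$. Because $Y$ enters with opposite signs, Lemma \ref{lem:bm3} and Brownian scaling give probability $O(\eps^3z_-^{-3/2})=O(|\log r|^{30}r\,z_-^{-3/2})$. A product bound would only give $\eps^2z_-^{-1}$.

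So the opposite-sign interaction is the source of the gain, not a bad correlation to be absorbed. A path decomposition with Lemma \ref{lem:bmcomp}, which only controls changes of drift, cannot produce it. Multiplying by $Cz_+^{1/2}|\log r|^2$, adding $2\exp(-c|\log r|^2)$, and treating $z_+\le 6|\log r|^{10}r^{2/3}$ separately gives the lemma.
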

\begin{proof}
We let $\cE_{TV}^{(45)}$ be the event where the following conditions hold:
\begin{align*}
    &\max_{z_-\le x \le z_+} -\cB^{(4)}_4(x)+\cB^{(5)}_3(x) +\cB^{(4)}_4(z_-) - \cB^{(5)}_3(z_-) < |\log(r)|^{10}r^{1/3},\\
&\max_{0\le x\le z_+}-\cB^{(4)}_3(x)+\cB^{(4)}_2(x) < z_+^{1/2}|\log(r)|^{2},\\
&- \cB^{(5)}_2(z_+) + \cB^{(5)}_3(z_+)< z_+^{1/2}|\log(r)|^{2}.
\end{align*}
Using that $z_+-z_-<3|\log(r)|^{10}r^{2/3}$,  $\P[\cE_{TV}^{(45)}]>1-2\exp(-c|\log(r)|^{2})$.
On the other hand (and using \eqref{eq:243}), we have that $\cE_{\operatorname{disj}:(-1,3)}\cap \cE_{\operatorname{disj}:34}\cap\cE_{TV}^{(45)}$ implies $\cE'_{\operatorname{disj}:(-1,3)}\cap \cE'_{\operatorname{disj}:34}$, where
\begin{align*}
\cE'_{\operatorname{disj}:(-1,3)}:&\quad  \max_{x \ge z_+} \cB^{(5)}_2(x) - \cB^{(5)}_3(x) - \cB^{(5)}_2(z_+) + \cB^{(5)}_3(z_+) <2z_+^{1/2}|\log(r)|^{2},
\\
\cE'_{\operatorname{disj}:34}:&\quad \max_{0\le x \le z_-} -\cB^{(4)}_4(x)+\cB^{(5)}_3(x) +\cB^{(4)}_4(z_-) - \cB^{(5)}_3(z_-) < |\log(r)|^{10}r^{1/3}.
\end{align*}
We note that $\cE^{(4)}_{2,-}\cap \cE'_{\operatorname{disj}:34}$ is independent of $\cE'_{\operatorname{disj}:(-1,3)}$.
Next, we have $\P[\cE'_{\operatorname{disj}:(-1,3)}]<Cz_+^{1/2}|\log(r)|^{2}$.
Finally, by Lemma \ref{lem:bm3} below (and Brownian scaling) we have $\P[\cE^{(4)}_{2,-}\cap \cE'_{\operatorname{disj}:34}]<Cz_-^{-3/2}|\log(r)|^{30}r$. 
In summary, we get
\[
\P[\cE^{(4)}_{2,-}\cap \cE_{\operatorname{disj}:(-1,3)}\cap \cE_{\operatorname{disj}:34}] < Cz_+^{1/2}|\log(r)|^{2}(1 \wedge z_-^{-3/2}|\log(r)|^{30}r) + 2\exp(-c|\log(r)|^{2}).
\]
If $z_+>6|\log(r)|^{10}r^{2/3}$, we would have $z_->z_+/2$, therefore the above is bounded by $Cz_+^{-1}|\log(r)|^{32}r$.
Otherwise, the upper bound is $Cz_+^{1/2}|\log(r)|^{2}$, which is at most $Cz_+^{-1}|\log(r)|^{20}r$.
\end{proof}
		\begin{lemma} \label{lem:bm3}
			Take any $D>1$ and $0<\ep<1$.
			Let $X, Y, Z$ be three independent Brownian motions, each with drift in $[-D, D]$.
			Then we have
			\[
			\P\left[ X(x)+Y(x) < \ep, \; Z(x)-Y(x) < \ep, \forall x\in[0,1]  \right] < C_0\ep^3 D^3.
			\]
            where $C_0>0$ is a universal constant.
		\end{lemma}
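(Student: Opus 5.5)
The plan is to reduce the statement to a two-dimensional Brownian problem and then apply a Gaussian/reflection estimate. The process $(U,V):=(X+Y,\,Z-Y)$ is a two-dimensional Brownian motion. Each coordinate has diffusivity $4$, the drifts are bounded by $2D$, and the covariance is $\langle U,V\rangle_t=-2t$, so the correlation coefficient is $-1/2$. The event in question is that $(U,V)$ stays in the quadrant $\{u<\ep,\ v<\ep\}$ for all times in $[0,1]$, starting from the origin, which lies at distance of order $\ep$ from the corner. We first remove the drift with Girsanov's theorem. On the time interval $[0,1]$ the Radon–Nikodym derivative is $\exp(\mu\cdot W_1-|\mu|^2/2)$ with $|\mu|\le CD$. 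On the event, $U_1,V_1<\ep$, so the linear term is bounded above in the favourable direction. The cleanest route is Cauchy–Schwarz: the squared derivative has expectation $e^{C D^2}$, which is too crude. Instead we use the event itself, which forces $U_1,V_1\le \ep$ while the quadrant is unbounded below. So we split according to the size of $|W_1|$ and only pay $e^{CD|W_1|}$ against Gaussian tails. After a linear change of coordinates this leaves an integral of the driftless quadrant-survival density times $e^{CD|w|}$.

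For the driftless problem, apply the linear map that turns $(U,V)$ into a standard planar Brownian motion. The quadrant becomes a wedge of opening angle $\beta$. For correlation $-1/2$ one computes $\cos\beta=1/2$, so $\beta=\pi/3$. The classical wedge estimate says that the probability of staying in a wedge of angle $\beta$ for time $1$, started at distance $\ep$ from the apex, is of order $\ep^{\pi/\beta}=\ep^{3}$. This explains the cube in the statement. One way to get it concretely is to note that for $\beta=\pi/3$ the killed heat kernel is explicit by the reflection principle (the group generated by reflections in the two walls is dihedral of order $6$). Alternatively, use the harmonic function $h(r,\phi)=r^{3}\sin(3\phi)$ with optional stopping. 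Either way we obtain the density bound
\[
p_1^{\text{wedge}}(x_0,w)\le C\,\ep^3\,(1+|w|)^{3}\,e^{-|w|^2/C},
\]
for $|x_0|\le C\ep$.

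Combining the two steps, we integrate $e^{CD|w|}$ against this killed density. This gives $C\ep^3\int (1+|w|)^3 e^{CD|w|-|w|^2/C}\,dw$, which naively is $\le C\ep^3 e^{CD^2}$. This is the main obstacle, since the statement claims a polynomial dependence $D^3$. To fix it, we would not apply Girsanov on the whole of $[0,1]$. We apply it only on $[0,\tau]$, where $\tau=\min(1,D^{-2})$. On that window the drift cost is bounded by a constant, and the driftless wedge estimate gives survival probability $\le C(\ep/\sqrt{\tau})^3\le C\ep^3D^3$. Staying in the quadrant up to time $1$ implies staying up to time $\tau$, so discarding $[\tau,1]$ costs nothing. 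This yields $C_0\ep^3D^3$ as claimed, and $D>1$ ensures $\tau=D^{-2}\le1$.
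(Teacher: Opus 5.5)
Your argument is correct and is essentially the paper's route: the $\pi/3$ wedge with the order-$6$ dihedral reflection formula is exactly the Karlin--McGregor formula for the three ordered processes $Z-\ep<Y<\ep-X$ (the $A_2$ Weyl chamber), and restricting Girsanov to $[0,\min(1,D^{-2})]$ is what yields the factor $D^3$. You should make one point precise: on this window the Girsanov density is not pointwise bounded, and Cauchy--Schwarz would only give $(\ep D)^{3/2}$, so it must be integrated against the killed density, which by Brownian scaling is your time-$1$ bound with starting distance of order $\ep D$; the case $\ep D\ge 1$ is trivial.
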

This lemma is a straightforward computation using the Karlin-McGregor formula, and was shown as part of the proof of \cite[Lemma 4.3]{DZ24}. We omit the details.

Finally, we put everything back together to bound the probability of the original event $\cE^*_{2,-}\cap \cE^*_{1,2}$.
\begin{proof}[Proof of Lemma \ref{lem:key112-}]
By Lemmas \ref{lem:ce12inc} and \ref{lem:cEdisjpr} and the fact that $\cE^{(2)}_{2,-}=\cE^{(4)}_{2,-}$, we have $\P[\cE^{(2)}_{2,-}\cap\cE^{(2)}_{1,2}]<Cz_+^{-1}|\log(r)|^{32}r$.
Then by Lemma \ref{lem:eqH12}, we also have $\P[\cE^{(1)}_{2,-}\cap\cE^{(1)}_{1,2}]<Cz_+^{-1}|\log(r)|^{32}r$,
where $\cE^{(1)}_{2,-}$ (resp.~$\cE^{(1)}_{1,2}$) is $\cE^*_{2,-}$ (resp.~$\cE^*_{1,2}$) with $\{\cH_i\}_{i=1}^2$ replaced by $\{\cH^{(1)}_i\}_{i=1}^2$. The result then follows from Lemma \ref{L:replacement-RN}.
\end{proof}

\subsection{From Brownian to multiple points}
In this subsection we finish the proof of Proposition \ref{prop:finid}, using the Brownian version (Proposition \ref{prop:ulteq}).
Our general strategy is similar to the proof of Proposition 3.7 from Proposition 3.8 in \cite{DZ24}. However, the analysis is slightly more delicate because of the more complex structure of the half-space stationary horizon in Section \ref{ss:hfsh} when compared to the full-space version.

\begin{proof}[Proof of Proposition \ref{prop:finid}]
Without loss of generality, we assume that $x_1> \cdots >x_k>0$.
Take $\{\cH_i\}_{i=1}^k$ as in the setup of Proposition \ref{prop:ulteq}, which by Proposition \ref{P:half-space-horizon-marginals} is constructed from an array of exponential random variables $\{X(-i, j) : i\in\II{1, k}, j \in \II{i +1, 2k +1 - i}\}$, and a sequence of independent Brownian motions $\cB_1, \dots, \cB_{2k}$, via $\cH_i(x)=X\cB(-i, i; x, 2k) - X\cB(-i, i; 0, 2k)$.

We construct a sequence of processes $\{\cH_{\tau,i}\}_{i=1}^k$, indexed by $\tau>0$, such that each $\{\cH_{\tau,i}\}_{i=1}^k$ is absolutely continuous with respect to $\{\cH_i\}_{i=1}^k$, and as $\tau\to 0$,
			\begin{equation}
				\label{eq:htau1}
				\sup_{x\ge 0, |x-x_i|>\tau}  \cH_{\tau,i}(x) - 2kx - \cH_{\tau,i}(x_i) \cvgp -\infty,    
			\end{equation}
			\begin{equation}   \label{eq:htau2}
				\sup_{x\ge 0, |x-x_i|\le \tau} \cH_{\tau,i} (x) - \cH_{\tau,i}(x_i) \cvgp 0.
			\end{equation}

The construction is as follows.
For each $i\in \llbracket 1, k\rrbracket$, define continous $\phi_{\tau,i}:\R_{\ge 0}\to \R$, such that
			\begin{itemize}[nosep]
            \item $\phi_{\tau,i}(0)=0$,
				\item $\phi_{\tau, i}'(x) = 0$ for $x < x_i - \tau$ or $x > x_k + 1$,
				\item $\phi_{\tau, i}'(x) = i\tau^{-2}$ for $x \in (x_i - \tau, x_i)$,
				\item $\phi_{\tau, i}'(x) = -2i\tau^{-2}$ for $x \in (x_i, x_k + 1)$.
			\end{itemize}
Moreover, for each $i\in \llbracket k+1, 2k\rrbracket$, we define $\phi_{\tau,i}:\R_{\ge 0}\to \R$, such that
			\begin{itemize}[nosep]
            \item $\phi_{\tau,i}(0)=0$,
				\item $\phi_{\tau, i}'(x) = 0$ for $x > x_k + 1$,
				\item $\phi_{\tau, i}'(x) = -4k\tau^{-2}$ for $x \in (0, x_k + 1)$.
			\end{itemize}
Let $\cB_{\tau,i}=\cB_i+\phi_{\tau,i}$, for each $\tau>0$ and $i\in\llbracket 1, 2k\rrbracket$, and define $\{\cH_{\tau,i}\}_{i=1}^k$ the same way as $\{\cH_i\}_{i=1}^k$, but with $\{\cB_i\}_{i=1}^{2k}$ replaced by $\{\cB_{\tau,i}\}_{i=1}^{2k}$. Note that for each $i\in\llbracket 1, 2k\rrbracket$ and $\tau>0$, we have
$\int_0^\infty (\phi_{\tau,i}'(x))^2 dx < \infty$. Therefore by the Cameron-Martin theorem, the law of $\{\cB_{\tau,i}\}_{i=1}^{2k}$ is absolutely continuous with respect to that of $\{\cB_i\}_{i=1}^{2k}$.

We next verify \eqref{eq:htau1} and \eqref{eq:htau2}. Below we assume that $\tau<x_k/2$, $\tau<(x_i-x_{i+1})/2$ for each $i\in \llbracket 1, k-1\rrbracket$, and we set $M=\max\{X(-i,j): 1 \le i \le k; i+1 \le j \le 2k-i\}$. 

For each $i\in\llbracket 1, k\rrbracket$, and $\tau>0$, we have
\begin{equation}  
\label{eq:chlbd}
\cH_{\tau,i}(x_i)\ge \phi_{\tau,i}(x_i)+\cB_i(x_i) + X \cB(-i,i;0,i) - X \cB(-i,i;0, 2k) \ge \phi_{\tau,i}(x_i)+\cB_i(x_i) -2kM,  
\end{equation}
and for any $x\ge 0$, we have
\[
\cH_{\tau,i}(x)\le \sum_{j=1}^{2k} \max_{y\in [0,x]} \cB_j(y) - \min_{y\in [0,x]} \cB_j(y) +  
\phi_\tau(0, i; x, 2k).
\]
We note that for each $i\in\llbracket 1, k\rrbracket$ and $x\ge 0$ with $|x-x_i|>\tau$, we have $\phi_\tau(0, i; x, 2k) \le \phi_{\tau,i}(x_i)-\tau^{-1}$. Since the non-$\phi_\tau$ terms on the right-hand side of \eqref{eq:chlbd} and the previous display are constant as $\tau\to 0$, this gives \eqref{eq:htau1}.

On the other hand, for any $x\in [x_i, x_i+\tau]$, we have
\[
\cH_{\tau,i}(x)- \cH_{\tau,i}(x_i)
\le \sum_{j=1}^{2k} \sup_{x_i \le y < z \le x} \cB_j(z) + \phi_{\tau,j}(z)- \cB_j(y)-\phi_{\tau,j}(y),
\]
therefore $\sup_{x\in [x_i, x_i+\tau]} \cH_{\tau,i} (x) - \cH_{\tau,i}(x_i) \cvgp 0$.
For any $x\in [x_i-\tau, x_i]$, let $m_x \in [-i, x]$ be the largest time such that the geodesic in $X \cB_\tau$ from $(-i, i)$ to $(x, 2k)$ goes through $(m_x, i)$. Then 
\begin{align*}
\cH_{\tau, i}(x) - \cH_{\tau, i}(x_i) &\le X \cB_\tau(m_x, i + 1; x, 2k) - (X \cB_\tau(x_i) - X \cB_\tau(m_x))  \\
&\le X \cB(m_x, i + 1; x, 2k) - (X \cB(x_i) - X \cB(m_x)) - ((x_i - m_x)\vee \tau) i \tau^{-2}.
\end{align*}

The supremum over $x \in [x_i - \tau, x_i]$ and $m_x \in [-i, x]$ of the right-hand side above tends to $0$ with $\tau$. This completes the proof of \eqref{eq:htau2}.

Next, we construct a modification of the Brownian motions $\cD_i, i \in \II{1, k}$.
For each $\tau>0$ define $\cD_{\tau,i} = \cD_i + \psi_{\tau, i}$ where
$\psi_{\tau,i}:[0,\infty)\to \R$ is a continuous function defined via:
\begin{itemize}
    \item $\psi_{\tau,i}(0)=0$,
    \item $\psi_{\tau,i}'(x)=0$ for $x<y_i-\tau$ or $x>y_i+\tau$,
    \item $\psi_{\tau,i}'(x)=\tau^{-2}$ for $x\in (y_i-\tau, y_i)$,
    \item $\psi_{\tau,i}'(x)=-\tau^{-2}$ for $x\in (y_i, y_i+\tau)$.
\end{itemize}
Again by the Cameron-Martin theorem, $\{\cD_{\tau,i}\}_{i=1}^k$ is absolutely continuous with respect to the law of $\{\cD_i\}_{i=1}^k$.
We then define $\cE_\tau[W] = \cE_1[W, \cH_\tau] \cap \cE_2[W, \cD_\tau]$ the same way as $\cE[W]$, but with $\{\cH_i\}_{i=1}^k$ and $\{\cD_i\}_{i=1}^k$ replaced by $\cH_\tau = \{\cH_{\tau,i}\}_{i=1}^k$ and $\cD_\tau = \{\cD_{\tau,i}\}_{i=1}^k$. By Proposition \ref{prop:ulteq}, we now have
\begin{equation}
    \label{E:tau-W-dist}
    (\cH_\tau, \{ \cH_{\tau,i}\diamond \cM_{0,1} \diamond \cD_{\tau,i} \}_{i=1}^k, \cD_\tau ) \mathds{1}[\cE_\tau[W]]\eqd (\cH_\tau, \{ \cH_{\tau,i}\diamond \cQ_{0,1} \diamond \cD_{\tau,i} \}_{i=1}^k, \cD_\tau ) \mathds{1}[\cE_\tau[W]]
\end{equation}
Next, we claim that 
\begin{equation}
\label{E:W-tau}
    \lim_{W \to \infty} \liminf_{\tau\to 0} \P[\cE_\tau[W]] = 1.
\end{equation}
Indeed, first observe that $\cE_2[W, \cD_\tau] = \cE_2[W, \cD]$, and that 
$$
\P(\cE_2[W, \cD]) = \P(B(x) \le W + Wx/3 \text{ for all } x \ge 0)^k,
$$
where $B$ is an undrifted Brownian motion. Hence $\P(\cE_2[W, \cD_\tau]) \to 1$ as $W \to 0$. Next, for all small enough $\tau$ we have that
$$
\cE_1[W, \cH_\tau] \supset \{X\cB(W, 1; W + x, 2k) \le W + Wx/3\}.
$$
The probability of the right-hand side tends to $1$ with $W$, completing the proof of \eqref{E:W-tau}.

Next, by \eqref{eq:htau1}, \eqref{eq:htau2}, the construction of $\cD_\tau$, and the shape bounds and continuity of $\cQ, \cM$ from Proposition \ref{P:only-easy-consequences}, for any $\eps > 0$ and $i \in \II{1, k}$ we have that
$$
\P(|\cH_{\tau,i}\diamond \cM_{0, 1} \diamond \cD_{\tau,i} - \cH_{\tau,i}(x_i) - \cM_{0, 1}(x_i, y_i) - \cD_{\tau,i}(y_i)| > \eps) \to 1
$$
as $\eps \to 0$, and similarly with $\cQ$ in place of $\cM$. Combining this with \eqref{E:tau-W-dist} and \eqref{E:W-tau} and taking $\tau \to 0$ and then $W \to \infty$ yields the result.
\end{proof}

\section{Multi-level Poisson avoiding metrics}   \label{sec:TASEP}

In this section we apply our characterization Theorem \ref{thm:chawdl} to multi-level Poisson-avoiding metrics, which generalize the Poisson-avoiding metric from the introduction.

\begin{definition}  \label{defn:Dd}
For each $d\in \N$, let $\Lambda_d$ be the following directed graph. Its vertices consist of all $(x, a)\in \Z_{\ge 0} \times \Z_{2d}$ with $x+a\in 2\Z$, where $\Z_{2d}=\Z/2d\Z=\{0,\ldots, 2d-1\}$.
For each vertex $(x,a)$, there is a directed edge from it to $(x+1, a+1)$, and a directed edge from it to $(x-1, a+1)$, if $x\ge 1$. (Here our space is periodic, so $a, a + 1$ are understood modulo $2d$).
We let $D_d:\Lambda_d^2\to \Z_{\ge 0}$ be the resulting directed metric on $\Lambda_d$, i.e., $D_d((x,a), (y,b))$ is the length of the shortest directed path from $(x,a)$ to $(y,b)$.
\end{definition}

\begin{definition}
For each $\alpha>0$ and $d\in \N$, the \textbf{$d$-level Poisson-avoiding metric} with boundary parameter $\alpha$ is a random function
$$
H_d: \{(u, s; v, t) \in (\Lambda_d \times \R)^2 : s < t\} \to \Z_{\ge 0}
$$
defined as follows. First, let $\Pi$ be a Poisson process on $\Lambda_d \times \R$ such that $\Pi|_{(x, a) \times \R}$ has intensity $\alpha$ if $x = 0$, and intensity $1$ if $x > 0$. From $\Pi$, we construct a random weighted directed graph where:
\begin{itemize}[nosep]
    \item $(u, t)$ is connected to $(v, t)$ by an edge of weight $1$ if $(u, v)$ is a directed edge in $\Lambda_d$.
    \item $(u, s)$ is connected to $(u, t)$ by an edge of weight $0$ if $(u, r) \notin \Pi$ for all $r \in (s, t]$.
\end{itemize}
Let $H_d:(\Lambda_d \times \R)^2 \to \Z_{\ge 0},$ be directed graph distance in this metric.
\end{definition}

A main motivation of the above metric is its connection to the following coupling of multiple half-space TASEPs (with the same boundary parameter).

		\begin{definition}   \label{def:TASEPcp}
		For each $\alpha>0$ and $d\in \N$, we defined the \textbf{$d$-coupling} of half-space TASEPs with boundary parameter $\alpha$, as follows.

        Take the Poisson process $\Pi$, and view this as a collection of Poisson clocks on $\Lambda_d$. These clocks generate half-space TASEPs as follows.
        Let $\pi:\{(x,a)\in \Z_{\ge 0}\times \Z, x+a\in 2\Z\} \to \Lambda_d$ be the projection map.
        Recall that we can define half-space TASEP as a Markov process $(h_t: t\ge 0)$ on the state space $\SRW_+$ (from \eqref{eq:defSRW+}). 
        We flip $h_t(i) \mapsto h_t(i) + 2$ according to the Poisson clock at the vertex $\pi(i, h_t(i))$. In words, there is a flip $h_{t}(i)= h_{t-}(i)+2$ if and only if the Poisson clock at $\pi(i, h_t(i))$ rings at times $t$, and $h_t(j)=h_{t-}(i)+1$ for $j\in \{i-1, i+1\}\cap \Z_{\ge 0}$.

        By using the same set of Poisson clocks, we get a coupling of half-space TASEPs with boundary parameter $\alpha$, started from all initial configurations and all times.
		\end{definition}

The $d=1$ coupling is also known as the basic coupling of half-space TASEPs, and the Poisson-avoiding metric with $d=1$ is precisely the half-space Poisson-avoiding metric in the introduction.
We have a generalization of \eqref{eq:h0Hvar}: to evolve any one of half-space TASEPs from time $s$ to $t$, we can write
\begin{equation}   \label{eq:TASEPdcoupling}
h_t(x) = \min_{y\in \Z_{\ge 0}, (x,a)\in \Lambda_d} h_s(y) + H_d( \pi(y, h_s(y)), s; x, a, t).
\end{equation}

We next present the $d$-level generalization of Theorem \ref{thm:colorTASEPconv}.
For this, note that while $H_d$ is a 6-dimensional random function, the $\Z_{2d}$ coordinates degenerate in our limiting regime. Indeed, for each
$(x,a), (x, a'), (y,b), (y, b') \in \Lambda_d$ and $s<t$, we have
\begin{equation}   \label{eq:Hddiff}
|H_d(x,a,s;y,b,t) - H_d(x,a',s;y,b',t)| \le 2d.
\end{equation}
In the limiting regimes we consider, $d$ will always grow more slowly than the fluctuation scale, so by \eqref{eq:Hddiff}, to study limits of $H_d$ it suffices to work with the following function:
\[
H_d^-(x,s;y,t) = \min_{(x,a), (y,b) \in \Lambda_d} H_d(x,a,s;y,b,t),
\]
defined for each $x,y\in\Z_{\ge 0}$ and $s<t$.
We also recall the map $\mathbf{A}_\eps:\SRW_+\to\UC_+$ from \eqref{eq:bfAe}.
\begin{theorem}  \label{thm:mPAM}
For each $\eps>0$, we take $d_\eps>0$, such that $d_\eps\eps^{1/2}|\log(\eps)|\to 0$ as $\eps\to 0$.
Then consider the following function defined on $\mathbb{H}^2_\uparrow$:
\begin{equation}   \label{eq:mPAMconv}
\cM^{\rho,\eps}(x, s; y, t):= -\eps^{1/2}H_{d_\eps}^-(\lfloor 2\eps^{-1}x \rfloor, 2\eps^{-3/2}s;  \lfloor 2\eps^{-1}y \rfloor, 2\eps^{-3/2}t) + \eps^{-1}(t-s),
\end{equation}
where either (1) fixed $\alpha\ge \frac{1}{2}$ and $\rho=-\infty$, or (2) $\alpha = \frac{1}{2}-\rho\eps^{1/2}/2$ for some $\rho\in\R$.
Then as $\eps\to 0$, the above function converges to $\cL^\rho$, weakly under the uniform-on-compact topology.  
\end{theorem}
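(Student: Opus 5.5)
The plan is to show that $\cM^{\rho,\eps}$ is tight in the uniform-on-compact topology and that every subsequential limit is a half-space pre-landscape of parameter $\rho$. For $\rho\in\R$, Theorem \ref{T:half-space-land} then identifies the limit as $\cL^\rho$. For $\rho=-\infty$ we instead check the two conditions of Corollary \ref{C:-infty-case}, as in the proof of Theorem \ref{thm:expconv}. For condition 1 we use the monotone coupling in $\alpha$: increasing the boundary clock intensity can only decrease $H_d$, so $\cM$ can only increase. For condition 2 the one-point marginals come from TASEP convergence.

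\textbf{Marginals.} Fix $f\in\NW_+$ with rational support and values, and approximate it by narrow-wedge type initial data $h_\eps=\min_i f_{\eps,i}$ with $\mathbf A_\eps h_\eps\to f$. By \eqref{eq:TASEPdcoupling}, $f\diamond\cM^{\rho,\eps}_{s,t}(y)$ equals the rescaled half-space TASEP height $\mathfrak h^\rho_\eps(t-s,y;\mathbf A_\eps h_\eps)$ up to two errors. The first is $O(d_\eps\eps^{1/2})\to0$, coming from \eqref{eq:Hddiff} and the replacement of $H_d$ by $H_d^-$. The second comes from the fact that each single TASEP in the $d$-coupling has the law of half-space TASEP, since the clocks it sees along its own trajectory are Poisson of the correct rates. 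Theorem \ref{T:half-space-fixed-point} then gives condition 3 of Definition \ref{D:prelandscapes} in the limit.

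Independent increments hold already in the prelimit, because $\Pi$ is a Poisson process. The triangle inequality holds for $H_d$, hence for $H_d^-$ up to the same $2d$ error, and therefore also in the limit.

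\textbf{Tightness.} This is the main obstacle. Since each slice $\cM^{\rho,\eps}(x,s;\cdot,t)$ is a single TASEP from the narrow wedge $\delta_x$ (again up to $2d\eps^{1/2}$), Lemmas \ref{L:one-point-TASEP} and \ref{L:two-point-TASEP} supply one-point tails and two-point bounds in the endpoint variables $(y,t)$. The bounds carry a $\log^2$-loss and an $\exp(-ca^{1/2})$ tail, but this is still enough for a Kolmogorov--Chentsov/chaining argument on the lattice $(2\eps^{-1}\Z)\times\R$.

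To control the starting variables $(x,s)$, I would use time reversal of the Poisson environment. Reversing time and reflecting the cyclic $\Z_{2d}$ coordinate yields a metric of the same type, so the same TASEP bounds apply with the roles of the two endpoints swapped. The triangle inequality then combines the two one-sided moduli.

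The delicate points are these. First, the chaining must be done over all $O(\eps^{-C})$ lattice pairs, using the $a<t^{1/4}$ range in Lemma \ref{L:two-point-TASEP} together with the a priori bound on the number of clock rings for sub-lattice time gaps. Second, the $d$-dependence must be absorbed, which is exactly where the hypothesis $d_\eps\eps^{1/2}|\log\eps|\to0$ enters: it keeps the union-bound loss harmless. With these in hand, Arzel\`a--Ascoli on the compacts $K_b$ gives tightness, and the identification step above completes the proof.
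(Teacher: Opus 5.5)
Your proposal follows the paper's proof essentially step for step. Tightness comes from Lemmas \ref{L:one-point-TASEP} and \ref{L:two-point-TASEP} via \eqref{eq:TASEPdcoupling} and \eqref{eq:Hddiff}. Time reversal of $\cM^{\rho,\eps}$ handles the starting variables, and a dyadic union bound lets the $O(d_\eps\eps^{1/2})$ error accumulate over only $O(|\log\eps|)$ scales. You then verify the three pre-landscape axioms and appeal to Theorem \ref{T:half-space-land}, respectively Corollary \ref{C:-infty-case}.

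The one step that is wrong as written is the monotonicity in $\alpha$. Adding Poisson points can only remove zero-weight vertical edges, equivalently forbid more paths. So raising the boundary intensity can only \emph{increase} $H_d$, and hence \emph{decrease} $\cM^{\rho,\eps}$. This is the direction you need. In case (1), $\alpha$ is fixed and exceeds $1/2-\rho\eps^{1/2}/2$ for small $\eps$, so the thinning coupling gives $\cM^{-\infty,\eps}\le\cM^{\rho,\eps}$, and passing to a joint subsequential limit gives condition 1 of Corollary \ref{C:-infty-case}. With the monotonicity as you stated it, you would get the reverse domination, which contradicts that condition.

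Relatedly, case (1) must be read with $\alpha>1/2$, not $\alpha\ge 1/2$. Your condition 2 uses the $-\infty$ part of Theorem \ref{T:half-space-fixed-point} with $c=2\alpha-1>0$, and $\alpha=1/2$ is actually the case $\rho=0$.
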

\begin{proof}
We first prove the tightness of $\cM^{\rho,\eps}$ under the uniform-on-compact topology, using arguments similar to the proof of Theorem \ref{T:elpp}.

For each $b>0$, take $K_b\subset \mathbb{H}^2_\uparrow$ as in \eqref{eq:Kbdef}, and we will prove the tightness of $\cM^{\rho,\eps}|_{K_b}$ in the uniform topology.
Below, we use $c>0$ to denote small constants which may depend on $\rho$ and $b$, and the value can change from line to line.

By \eqref{eq:TASEPdcoupling} and \eqref{eq:Hddiff}, the bounds of Lemmas \ref{L:one-point-TASEP} and \ref{L:two-point-TASEP} imply that, for any $a>0$ and $(x,s;y,t)\in K_b$,
\begin{equation}   \label{eq:Mopt}
\begin{split}
    \P(\cM^{\rho,\eps}(x,s;y,t) < -a(t-s)^{1/3} - 2d_\eps\eps^{1/2} ) &< 2\exp\left( -c(a^3 \wedge \eps^{-3}) \right),\\
    \P(\cM^{\rho,\eps}(x,s;y,t) > a(t-s)^{1/3} + 2d_\eps\eps^{1/2} ) &< 2\exp\left( -c(a^{3/2} \wedge \eps^{-3/4}) \right),
    \end{split}
    \end{equation}
and for any $(x,s;y,t), (x,s;y',t')\in K_b$,
\begin{multline}   \label{eq:Mtpt}
	\P\Big(| \cM^{\rho,\eps}(x,s;y,t)  - \cM^{\rho,\eps}(x,s;y',t')  | \ge a (|y-y'|^{1/4} + |t-t'|^{1/4}) + 4d_\eps\eps^{1/2}\Big) \\ < 2\exp\left(-c(a^{1/2} \wedge \eps^{-3/16})\right).
\end{multline}    
Here the $1/4$-powers are not optimal, but easily allow us to remove the lower bound on $a$ from Lemma \ref{L:two-point-TASEP}.
From \eqref{eq:Mopt}, and the assumption that $d_\eps\eps^{1/2}|\log(\eps)|\to 0$ as $\eps\to 0$, we get the tightness of $\cM^{\rho,\eps}(0,0;0,1)$.
From \eqref{eq:Mtpt}, and the fact that $(x,s;y,t)\mapsto \cM^{\rho,\eps}(y,-t;x,-s)$ has the same law as $\cM^{\rho,\eps}$, we similarly have that for any $a>0$ and $(x,s;y,t), (x',s';y,t)\in K_b$,
\begin{multline}   \label{eq:Mtptr}
	\P\Big(| \cM^{\rho,\eps}(x,s;y,t)  - \cM^{\rho,\eps}(x',s';y,t)  | \ge a (|x-x'|^{1/4} + |s-s'|^{1/4}) + 4d_\eps\eps^{1/2}\Big) \\ < 2\exp\left(-c(a^{1/2} \wedge \eps^{-3/16})\right).
\end{multline}    
Take any $0<\delta<1/4$, and $\tilde{a}>0$.
We take a union bound of \eqref{eq:Mtpt}, over all $m\in \N$, $m\le |\log_2(\eps)/2|+1$, and $(x,s;y,t), (x,s;y',t')\in K_b\cap (2^{-2m}\times 2^{-3m})^2$ with $|y-y'|\le 2^{-2m}$ and $|t-t'|\le 2^{-3m}$, and $a=2^{\delta m}\tilde{a}$.
We also take a union bound of \eqref{eq:Mtptr}, over all $m\in \N$, $m\le |\log_2(\eps)/2|+1$, and $(x,s;y,t), (x',s';y,t)\in K_b\cap (2^{-2m}\times 2^{-3m})^2$ with $|x-x'|\le 2^{-2m}$ and $|s-s'|\le 2^{-3m}$, and $a=2^{\delta m}\tilde{a}$.
From these union bounds and the triangle inequality, we have that the random variable
\[
\sup_{u_1, u_2\in K_b} \frac{ |\cM^{\rho,\eps}(u_1)-\cM^{\rho,\eps}(u_2)|  }{\|u_1-u_2\|_2^{1/4-\delta} + d_\eps \eps^{1/2} |\log(\eps\|u_1-u_2\|_2^{-2/3}) |},
\]
is tight as $\eps\to 0$. By the Arzel\'a-Ascoli theorem, and the fact that $d_\eps\eps^{1/2}|\log(\eps)|\to 0$ as $\eps\to 0$, we get the tightness of $\cM^{\rho,\eps}|_{K_b}$ in the uniform topology.

Now take $\cM^\rho: \mathbb{H}^2_\uparrow \to \R$ to be any subsequential limit of $\cM^{\rho,\eps}$ as $\ep\to 0$.
We show that $\cM^\rho$ is a half-space pre-landscape with parameter $\rho$, by checking that its restriction to $\Q^4 \cap \mathbb{H}^2_\uparrow$ satisfies the three conditions in Theorem \ref{thm:chawdl}.

The independence of increment holds for each $H_{d_\eps}^-$, from its definition, thus holds for $\cM^\rho$.
As for the triangle inequality, if we take $o = (x, s)$, $p = (y, r)$, $q = (z, t) \in \Q_{\ge 0} \times \Q$ with $s<r<t$, we would have
\begin{multline*}
H_{d_\eps}^-(\lfloor 2\eps^{-1}x \rfloor, 2\eps^{-3/2}s;  \lfloor 2\eps^{-1}y \rfloor, 2\eps^{-3/2}r) + H_{d_\eps}^-(\lfloor 2\eps^{-1}y \rfloor, 2\eps^{-3/2}r;  \lfloor 2\eps^{-1}z \rfloor, 2\eps^{-3/2}t) \\ \le H_{d_\eps}^-(\lfloor 2\eps^{-1}x \rfloor, 2\eps^{-3/2}s;  \lfloor 2\eps^{-1}z \rfloor, 2\eps^{-3/2}t) + 4d_\eps.
\end{multline*}
By sending $\eps\to 0$, and using that $d_\eps \eps^{1/2}\to 0$, we get
\[
\cM^\rho(o; p) + \cM^\rho(p; q) \ge \cM^\rho(o; q).
\]

It now remains to verify the KPZ fixed point marginals of $\cM^\rho$ stated in Theorem \ref{thm:chawdl}.
Take any $s<t\in \Q$ and $f:\R_{\ge 0}\to \R\cup\{-\infty\}$ with a finite set $P\subset\Q_{\ge 0}$, such that $f(P)\subset \Q$, and $f(\Q_{\ge 0}\setminus P)=\{-\infty\}$.
For each $\eps>0$, consider the half-space TASEP from time $2\eps^{-3/2}s$ to $2\eps^{-3/2}t$, with initial condition $h_{2\eps^{-3/2}s}^\eps$ given by
\[
h_{2\eps^{-3/2}s}^\eps( \lfloor 2\eps^{-1}z\rfloor) = -\eps^{-1/2}f(z) - \iota(f,z),
\]
for each $z\in P$, where $\iota(f,z)\in [0, 2)$ is the number such that $\lfloor 2\eps^{-1}z\rfloor -\eps^{-1/2}f(z) - \iota(f,z) \in 2\Z$.
For general $x\in \Z_{\ge 0}$, we let
\[
h_{2\eps^{-3/2}s}^\eps (x) = \min_{z\in P} h_{2\eps^{-3/2}s}^\eps( \lfloor 2\eps^{-1}z\rfloor) + |x-\lfloor 2\eps^{-1}z\rfloor|.
\]
As $\eps\to 0$, the function $\mathbf{A}_\eps h_{2\eps^{-3/2}s}^\eps$ converges to $f$ (in $\UC_+$). Then by Theorem \ref{T:half-space-fixed-point}, for any finite $Y\subset \Q_{\ge 0}$, we have that $(\mathbf{A}_\eps h_{2\eps^{-3/2}t}^\eps(y)+\eps^{-1}(t-s): y\in Y)$ converges to 
\[
( \mathfrak{h}^\rho(t-s, y; f) : y \in Y)\eqd(f\diamond \cL^\rho(\cdot,s;y, t): y\in Y),\]
in distribution.
On the other hand, by \eqref{eq:TASEPdcoupling} we have
\[
h_{2\eps^{-3/2}t}^\eps (x) = \min_{z\in P, (x,a)\in \Lambda_{d^\eps}} h_{2\eps^{-3/2}s}^\eps( \lfloor 2\eps^{-1}z\rfloor) + H_{d^\eps}(\pi(\lfloor 2\eps^{-1}z\rfloor, h_{2\eps^{-3/2}s}^\eps( \lfloor 2\eps^{-1}z\rfloor)), ; x, a, t).
\]
Then by \eqref{eq:Hddiff} and that $d_\eps \eps^{1/2}\to 0$ as $\eps\to 0$, an almost sure subsequential limit of $(\mathbf{A}_\eps h_{2\eps^{-3/2}t}^\eps(y)+\eps^{-1}(t-s): y\in Y)$ would be $(f\diamond \cM^\rho(\cdot,s;y, t): y\in Y)$.

Now that $\cM^\rho$ is a half-space pre-landscape, by Theorem \ref{thm:chawdl} the conclusion for $\rho\in\R$ follows.

Finally, for any $\rho\in\R$, we have that $\cM^{-\infty, \eps}$ is stochastically dominated by $\cM^{\rho,\eps}$, when $\eps$ is small enough depending on $\rho$.
This follows from a coupling of the Poisson processes $\Pi$ with two different boundary parameters $\alpha$, where the one with the smaller $\alpha$ is contained in the other one.
Thus by taking a subsequential limit as $\eps\to 0$, $\cM^{-\infty}$ is stochastically dominated by $\cL^\rho$ for any $\rho\in\R$.
Then by Corollary \ref{C:-infty-case} the conclusion for $\rho=-\infty$ holds.
\end{proof}

We can also get convergence of half-space TASEPs under the $d$-coupling.
\begin{theorem}  \label{thm:TASEPexo}
			Take $f_1, \ldots, f_k \in\UC_+$.
            For each $\eps>0$, we take $d_\eps>0$, such that $d_\eps\eps^{1/2}|\log(\eps)|\to 0$ as $\eps\to 0$.
            Take half-space TASEPs $(h_t^{\eps, i}, t\ge 0)$ for $i\in\llbracket 1, k\rrbracket$ under the $d_\eps$-coupling, with each $\mathbf{A}_\eps h_0^{\eps, i}\to f_i$ in $\UC_+$ as $\eps\to 0$.
            The boundary parameter is taken either (1) fixed $\alpha> \frac{1}{2}$ and $\rho=-\infty$, or (2) $\alpha = \frac{1}{2}-\rho\eps^{1/2}/2$ for some $\rho\in\R$.
            Then 
                            \[
                \big\{ \mathbf{A}_\ep h^{\eps,i}_{2\eps^{-3/2}t} (x) + \eps^{-1}t \big\}_{(i,x,t)\in\llbracket 1, k\rrbracket\times \R_{\ge 0}\times \R_+} \cvgd \{f_i\diamond \cL^\rho(\cdot, 0; x, t)\}_{(i,x,t)\in\llbracket 1, k\rrbracket\times \R_{\ge 0}\times \R_+},
                \]
            in the following senses:
            \begin{itemize}
                \item[(i)] if there is a constant $C>0$, such that $\mathbf{A}_\ep h^{\eps,i}_0(x)+2\eps^{-1/2}x$ is constant for $x\ge C, \ep > 0, i \in \II{1, k}$, then the convergence is in the uniform-on-compact topology;
                \item[(ii)] if there is a constant $C>0$, such that $\mathbf{A}_\ep h^{\eps,i}_0(x) \le C(1+x)$ for each $i\in\llbracket 1,k\rrbracket$ and $x\ge 0$, then the convergence is in the sense of finite dimensional distributions.
            \end{itemize}
\end{theorem}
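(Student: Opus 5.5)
The plan is to deduce Theorem \ref{thm:TASEPexo} from Theorem \ref{thm:mPAM} through the variational formula \eqref{eq:TASEPdcoupling}. By Skorokhod's representation we couple so that $\cM^{\rho,\eps}\to\cL^\rho$ almost surely, uniformly on compact subsets of $\mathbb H^2_\uparrow$. By \eqref{eq:TASEPdcoupling} and \eqref{eq:Hddiff}, up to an additive error of $2d_\eps\eps^{1/2}\to0$,
$$
\mathbf{A}_\eps h^{\eps,i}_{2\eps^{-3/2}t}(x)+\eps^{-1}t
= \max_{y\in 2^{-1}\eps\Z_{\ge0}} \mathbf{A}_\eps h^{\eps,i}_0(y) + \cM^{\rho,\eps}(y,0;x,t).
$$
The proof then reduces to passing this rescaled metric composition to the limit $f_i\diamond\cL^\rho(\cdot,0;x,t)$. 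This step needs two ingredients: hypograph convergence of $\mathbf{A}_\eps h_0^{\eps,i}$ to $f_i$, and uniform control of the argmax location.

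For the argmax control, the initial data satisfy $\mathbf{A}_\eps h_0^{\eps,i}(y)\le C(1+y)$ in case (ii). In case (i) they satisfy the even stronger fact that the data equal $C'-2\eps^{-1/2}y$ for $y\ge C$, so the tail is very negative. We therefore need a uniform-in-$\eps$ parabolic shape bound for $\cM^{\rho,\eps}$, i.e.\ $\cM^{\rho,\eps}(y,0;x,t)\le -(x-y)^2/t + M(1+\log^2(2+y))$ with $M$ tight. I would obtain this from the one-point bound \eqref{eq:Mopt}, which I would upgrade using the TASEP one-point bounds of Lemma \ref{L:one-point-TASEP} at parabolically shifted points. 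I expect this to be the main obstacle. As written, Lemma \ref{L:one-point-TASEP} only covers $x,y\le \alpha_0t^{2/3}$, so far-away starting points must be handled separately: comparing with full-space TASEP or with the narrow-wedge lower bound shows that distant sites cannot win, since the linear growth of $f_i$ is beaten by the quadratic decay. Given this, a union bound over dyadic blocks in $y$ shows that the maximum is attained in $[0,K]$ with probability tending to one as $K\to\infty$, uniformly in $\eps$.

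On $[0,K]$, hypograph convergence together with uniform convergence of $\cM^{\rho,\eps}$ on compacts bounded away from $t=0$ gives pointwise (hence finite-dimensional) convergence for fixed $t>0$. This proves (ii).

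For (i), we additionally need equicontinuity in $(x,t)$ on compacts of $\R_{\ge0}\times\R_+$. A max of equicontinuous functions over a tight compact set of $y$ inherits the modulus of continuity of $\cM^{\rho,\eps}$ established in the tightness proof of Theorem \ref{thm:mPAM}, up to the vanishing $d_\eps\eps^{1/2}|\log\eps|$ error. The special tail form in (i) gives the argmax tightness uniformly over compact $(x,t)$ sets, not just pointwise. Arzelà--Ascoli then upgrades the convergence to uniform-on-compact. Joint convergence in $i$ is automatic, since all $k$ processes are functionals of the same $\cM^{\rho,\eps}$. Finally, the case $\rho=-\infty$ is identical, with Theorem \ref{thm:mPAM}(1) used in place of (2).
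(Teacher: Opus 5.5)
\textbf{There is a genuine gap.} It is the step you flag yourself: a uniform-in-$\eps$ parabolic shape bound for $\cM^{\rho,\eps}(y,0;x,t)$ over all $y\ge0$. Your argument needs it to localize the argmax, and hence for the upper bound in (ii). Nothing in the paper supplies it.

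\begin{itemize}
\item The tightness proof of Theorem \ref{thm:mPAM} only controls $\cM^{\rho,\eps}$ on fixed compacts $K_b$.
\item Lemmas \ref{L:one-point-TASEP} and \ref{L:two-point-TASEP} only apply for $x,y\in\II{0,\alpha_0t^{2/3}}$, i.e.\ bounded rescaled displacement, with $c=c(\alpha_0)$ of unspecified dependence. Evaluating them at parabolically shifted points therefore cannot give decay uniform in $y$.
\item One-point bounds at lattice points do not control a supremum over $y$ without point-to-segment estimates in that regime.
\end{itemize}

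The only free estimate is the light-cone bound $H_d\ge|x-y|$ (heights never decrease). It gives $\cM^{\rho,\eps}(y,0;x,t)\le\eps^{-1}t-2\eps^{-1/2}|x-y|+o(1)$, which beats the growth $C(1+y)$ only once $|x-y|\gtrsim\eps^{-1/2}t$. For $1\ll|x-y|\ll\eps^{-1/2}$ you would need new half-space moderate-deviation and profile bounds. These would have to include near-axis directions, which Theorem \ref{T:half-space-full-tails} (restricted to $n/m\in[\alpha_0,\alpha_0^{-1}]$) does not cover. The comparisons with full-space TASEP or a narrow wedge that you mention are not developed into such a bound. A single narrow-wedge competitor only ever gives a lower bound on the maximum.

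\textbf{The missing idea is that no upper bound on distant competitors is needed.}

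For (ii), argue as follows.
\begin{itemize}
\item Each coordinate $\mathbf{A}_\eps h^{\eps,i}_{2\eps^{-3/2}t}(x)+\eps^{-1}t$ is a single half-space TASEP. By Theorem \ref{T:half-space-fixed-point} together with Proposition \ref{P:KPZFP-marginals}, it converges in law to a variable $\eqd f_i\diamond\cL^\rho(\cdot,0;x,t)$.
\item So the finite-dimensional vector is tight, and any subsequential limit $\cP$ can be realized jointly with $\cL^\rho$.
\item Keep a single term $y_\eps$ in \eqref{eq:TASEPdcoupling}, chosen with $(\eps/2)y_\eps\to y$ and $-\eps^{1/2}h^{\eps,i}_0(y_\eps)\to f_i(y)$. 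This gives $\cP(i,x,t)\ge f_i(y)+\cL^\rho(y,0;x,t)$ almost surely.
\item Taking $y$ over a suitable countable set gives $\cP(i,x,t)\ge f_i\diamond\cL^\rho(\cdot,0;x,t)$ almost surely.
\item An almost sure inequality between two real random variables with the same law forces almost sure equality.
\end{itemize}
This is the paper's argument, and it replaces your entire shape-bound step.

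For (i), localization is deterministic. Beyond $y_C=\lfloor2\eps^{-1}C\rfloor$ the initial height has slope $+1$. Moving right at time $0$ in $\Lambda_d$ from $\pi(y_C,h_0(y_C))$ reaches exactly $\pi(y,h_0(y))$ at cost $y-y_C$. So by the triangle inequality for $H_d$, the minimum in \eqref{eq:TASEPdcoupling} is attained at some $y\le y_C$. Then uniform convergence of $\cM^{\rho,\eps}$ on the compact set $\{(y,0;x,t):y\in[0,C],\,(x,t)\in K\}$, together with hypograph convergence of the initial data, gives the uniform-on-compact convergence directly. No shape bound or Arzel\`a--Ascoli step is needed.
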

We remark that the above convergence can be upgraded, to e.g., uniform-on-compact convergence assuming $\limsup_{x\to \infty}\sup_{\eps>0, i\in \llbracket 1,k\rrbracket} x^{-2}\mathbf{A}_\ep h^{\eps,i}_0(x)=0$, using the TASEP moderate deviation estimates (Lemmas \ref{L:one-point-TASEP} and \ref{L:two-point-TASEP}). We choose not to expand on this technical upgrade here.

\begin{proof}[Proof of Theorem \ref{thm:TASEPexo}]
Part (i) follows immediately from \eqref{eq:TASEPdcoupling} and \eqref{eq:Hddiff} and the uniform-on-compact convergence in Theorem \ref{thm:mPAM}.

We then prove (ii). Take any finite $P\subset \llbracket 1, k\rrbracket\times \R_{\ge 0} \times \R_+$, and consider $\big\{ \mathbf{A}_\ep h^{\eps,i}_{2\eps^{-3/2}t} (x) + \eps^{-1}t \big\}_{(i,x,t) \in P}$.
Tightness under the product topology follows from Theorem \ref{T:half-space-fixed-point}. Let $\{\cP(i,x,t)\}_{(i,x,t) \in P}$ be any subsequential limit in law.
We may also take this limit jointly with \eqref{thm:mPAM}, so that the subsequential limit is in the same probability space as $\cL^\rho$.

By Theorem \ref{T:half-space-fixed-point}, for each $(i,x,t) \in P$, we have that $\cP(i,x,t)\eqd f_i\diamond \cL^\rho(\cdot, 0; x, t)$.
We claim that  $\cP(i,x,t) \ge  f_i\diamond \cL^\rho(\cdot, 0; x, t)$ almost surely.
Assuming this claim, we must have  $\cP(i,x,t)=f_i\diamond \cL^\rho(\cdot, 0; x, t)$ almost surely, and the conclusion follows.

It remains to prove the claim. Take any $y\in\R_{\ge 0}$, we can find $y_\eps\in \Z_{\ge 0}$, such that $(\eps/2)y_{\eps}\to y$ and $-\eps^{1/2}h^{\eps,i}_0(y_{\eps})\to f(y)$ as $\eps\to 0$.
By \eqref{eq:TASEPdcoupling} and \eqref{eq:Hddiff}, we would have
\[
h^{\eps,i}_{2\eps^{-3/2}t} (\lfloor 2\eps^{-1}x\rfloor) \le  h^{\eps,i}_0 (y_\eps) + H_{d_\eps}^-(y_\eps, 0; \lfloor 2\eps^{-1}x\rfloor, 2\eps^{-3/2}t) + 2d_\eps.
\]
Then after sending $\eps\to 0$, using that $d_\eps \eps^{1/2}\to 0$, and applying the uniform-on-compact convergence in Theorem \ref{thm:mPAM} we have that $\cP(i,x,t) \ge f_i\diamond \cL^\rho(\cdot, 0; x, t)$ almost surely. 
\end{proof}

\section{Convergence of geodesics}   \label{sec:convgeo}

For the half-space directed landscape $\cL^\rho$ with $\rho\in \R\cup\{-\infty\}$, and a continuous function $\pi:[s,t]\to \R$, henceforth a \textbf{path}, define its length
\begin{align*}
	\|\pi\|_{\mathcal{L}^\rho} := \inf_{k\in \N} \inf_{s=t_0 < t_1 <... < t_k = t}\sum_{i=1}^k \mathcal{L}^\rho(\pi(t_{i-1}), t_{i-1}; \pi(t_i),t_i).
\end{align*}
We say $\pi$ is a \textbf{geodesic} from $p = (\pi(s),s)$ to $q = (\pi(t), t)$, or more succinctly a $(p; q)$-geodesic, if $\|\pi\|_{\mathcal{L}^\rho} = \mathcal{L}^\rho(\pi(s),s; \pi(t),t)$. 

In this section, we prove convergence of geodesics in half-space exponential LPP and $d$-level Poisson avoiding metric to geodesics in $\cL^\rho$. Many of the arguments in the present section are deterministic, following ideas laid out in \cite{DOV} and \cite{dauvergne2021scaling}, which imply that subsequential limits of geodesics are geodesics. We start by showing that geodesics in $\cL^\rho$ exist.

\begin{prop}
	\label{P:geos-exist}
	For all $(p; q) = (x, s; y, t) \in \mathbb H^2_\uparrow$, there exists at least one $(p;q)$-geodesic $\pi$ in $\cL^\rho$. Moreover, all geodesics in $\cL^\rho$ are H\"older-$(2/3- \eps)$ continuous for all $\eps > 0$.

\end{prop}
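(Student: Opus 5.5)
We follow the construction of \cite[Lemma 13.2]{DOV} and the surrounding arguments, which are essentially deterministic once the metric composition law, continuity and the shape bound are available. Proposition \ref{P:only-easy-consequences} supplies exactly these inputs for $\cL^\rho$, $\rho\in\R$; for $\rho=-\infty$ we use that $\cL^{-\infty}$ is a pre-landscape (by Theorem \ref{thm:expconv} and Remark \ref{R:cts-input}), so the same proposition applies. We work on the almost sure event where metric composition holds at all points and times (Proposition \ref{P:only-easy-consequences}.3), the argmax sets are bounded on compacts, and the modulus of continuity bound of Proposition \ref{P:only-easy-consequences}.1 holds for all $b,\eps$ in a countable family.

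\textbf{Existence.} Fix $(p;q)=(x,s;y,t)$. By Proposition \ref{P:only-easy-consequences}.3, the set of maximizers of $z\mapsto \cL^\rho(x,s;z,r)+\cL^\rho(z,r;y,t)$ is a nonempty compact subset of $[0,\infty)$ for every $r\in(s,t)$. We build $\pi$ on the dyadic times $D=\{s+(t-s)j2^{-m}\}$ inductively. At time $r=s+(t-s)/2$ we pick a maximizer, say the leftmost one, which makes the choice measurable. Having fixed $\pi$ at two consecutive dyadic times $r_1<r_2$ of level $m$, we choose $\pi$ at the midpoint as a maximizer for $\cL^\rho(\pi(r_1),r_1;\pi(r_2),r_2)$. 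Metric composition then gives $\sum_i \cL^\rho(\pi(t_{i-1}),t_{i-1};\pi(t_i),t_i)=\cL^\rho(p;q)$ for every dyadic partition. The triangle inequality, which is reverse for a directed metric, then shows that the same identity holds for any finite sub-collection of dyadic times. Once we know $\pi|_D$ is uniformly continuous, we extend it continuously to $[s,t]$, and continuity of $\cL^\rho$ upgrades the equality to all partitions. Hence $\|\pi\|_{\cL^\rho}=\cL^\rho(p;q)$.

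\textbf{H\"older continuity.} This is the main step, and it also supplies the uniform continuity needed above. Take any geodesic $\pi$ and times $r_1<r_2$ in $[s,t]$ with $\tau=r_2-r_1$ small. Write $\cK^\rho=\cL^\rho+(x-y)^2/(t-s)$ as in Proposition \ref{P:only-easy-consequences}. The geodesic property gives
\[
\cL^\rho(\pi(r_1),r_1;\pi(r_2),r_2)=\cL^\rho(p;\pi(r_2),r_2)-\cL^\rho(p;\pi(r_1),r_1) \quad\text{(up to interchanging the roles of the endpoints)}.
\]
For the left-hand side, the shape bound of Proposition \ref{P:only-easy-consequences}.2 gives the upper estimate $-|\pi(r_1)-\pi(r_2)|^2/\tau+M\tau^{1/3}\log^2(\cdot)$. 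For the right-hand side, we compare $\cL^\rho(p;\pi(r_2),r_2)$ with $\cL^\rho(p;\pi(r_1),r_1)$ using the modulus of continuity (Proposition \ref{P:only-easy-consequences}.1) on a compact set containing the geodesic. Boundedness of the geodesic follows from the argmax bound $B_K$ in part 3, possibly after a separate treatment near the endpoints $s,t$, where we use the shape bound directly. Combining the two estimates gives
\[
|\pi(r_1)-\pi(r_2)|^2/\tau\lesssim \tau^{1/3}\log^{C}(1/\tau)+|\pi(r_1)-\pi(r_2)|^{1/2}\log(\cdot).
\]
Solving this inequality yields $|\pi(r_1)-\pi(r_2)|\le C\tau^{2/3}\log^{C}(1/\tau)$, which is H\"older-$(2/3-\eps)$.

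The main obstacle is making this bound uniform over all geodesics and all endpoint pairs simultaneously, including times near $s$ and $t$, where the modulus of continuity degenerates because $\eps\to0$ in $K^\eps_b$. Following \cite{DOV}, we handle this by using the parabolic term $-(x-y)^2/(t-s)$, which is built into the shape bound at all scales, rather than the modulus of continuity alone. We also take a countable union of events over the parameters $b$ and $\eps$. Since Proposition \ref{P:only-easy-consequences} has exactly the same form as its full-space analogues, we expect this step to go through verbatim as in \cite{DOV}.
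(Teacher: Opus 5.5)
Your existence construction is the paper's: dyadic midpoint maximizers, the reverse triangle inequality for sub-partitions, and continuity of $\cL^\rho$ for arbitrary partitions. Your H\"older argument, however, takes a genuinely different route.

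Write $\bar\pi(r)=(\pi(r),r)$. The paper uses only the shape bound, Proposition \ref{P:only-easy-consequences}.2, applied to the midpoint relation
\[
\cL^\rho(\bar\pi(r-\eps_k);\bar\pi(r))+\cL^\rho(\bar\pi(r);\bar\pi(r+\eps_k))=\cL^\rho(\bar\pi(r-\eps_k);\bar\pi(r+\eps_k)).
\]
Comparing parabolas places $\pi(r)$ within $O(\eps_k^{2/3}\log(1/\eps_k))$ of the average of $\pi(r\pm\eps_k)$. So the level-$k$ increments satisfy $d_k\le d_{k-1}/2+C\eps_k^{2/3-\delta}$, which gives the bound on $D$ with no a priori continuity. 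Since every geodesic satisfies the same midpoint relation, the same bound covers all geodesics.

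You instead freeze an endpoint and bound $\cL^\rho(p;\bar\pi(r_2))-\cL^\rho(p;\bar\pi(r_1))$ from below using the spatial modulus of Proposition \ref{P:only-easy-consequences}.1, plus harmless $O(\xi+\tau)$ terms from passing between $\cK^\rho$ and $\cL^\rho$, where $\tau=r_2-r_1$ and $\xi=|\pi(r_1)-\pi(r_2)|$. This works and is direct for a given continuous geodesic. But it relies on the stronger input of part 1, and part 1 only applies once $\tau^{1/3}+\xi^{1/2}$ is already small.

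That restriction creates a small circularity in your existence step, where the same argument is meant to prove that $\pi|_D$ is uniformly continuous. You can close it with a crude first pass:
\begin{itemize}
\item For dyadic $r$, $\pi(r)$ maximizes $z\mapsto\cL^\rho(p;z,r)+\cL^\rho(z,r;q)$. So part 3 with the single point $K=\{(p;q)\}$ puts all of $\pi(D)$ in a bounded set, with no separate treatment near $s,t$.
\item Hence $\cL^\rho(p;\cdot)$ at times in $[(s+t)/2,t]$, and $\cL^\rho(\cdot;q)$ at times in $[s,(s+t)/2]$, are bounded by continuity on compact sets.
\item The shape bound alone then gives $\xi^2\le C\tau$, which is enough for uniform continuity on $D$. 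Your bootstrap then upgrades this to the $2/3^-$ exponent.
\end{itemize}

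The obstacles in your last paragraph are largely not there. Using $p$ when both times lie in $[(s+t)/2,t]$ and $q$ otherwise keeps every comparison inside $K_b^{(t-s)/3}$ for $\tau\le(t-s)/6$, so nothing degenerates near the endpoints. The statement only asks that each geodesic be H\"older, so no uniformity over endpoint pairs is needed. Finally, the ``parabolic term'' fallback you attribute to \cite{DOV} is exactly the paper's midpoint argument.
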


\begin{proof}
	Let $(p; q) = (x, s; y, t) \in \mathbb H^2_\uparrow$, and for $k \in \N$, let $D_k = \{s + (t-s)i/2^k : i = 0, \dots, 2^k\}$. We first recursively define $\pi$ on $D:=\bigcup_{k \in \N} D_k$. For this, we let $\ep_k = (t-s)/2^k$ and write $\bar \pi(r) = (\pi(r), r)$.
	
	Set $\pi(s) = x, \pi(t) = y$, which defines $\pi$ on $D_0$. Then given $\pi$ on $D_{k-1}$, for $r \in D_k \setminus D_{k-1}$, define
	$$
	\pi(r) = \argmax \{z: \cL^\rho(\bar \pi(r - \ep_k); \bar \pi(r + \ep_k)) = \cL^\rho(\bar \pi(r - \ep_k); z, r) +\cL^\rho(z, r; \bar \pi(r + \ep_k))\}.
	$$
	If there are multiple points in the above argmax, we may choose $\pi(r)$ arbitrarily. Note that the argmax above is always non-empty by Proposition \ref{P:only-easy-consequences}.3. With this construction, by the shape bound in Proposition \ref{P:only-easy-consequences}.2, for any $\delta > 0$ there exists a random constant $C_\delta > 0$ such that for all $k \in \N, r \in D_k$,
	$$
	|\pi(r) - \pi(r \pm \ep_k)| \le C_\delta \ep_k^{2/3 - \delta}.
	$$
	Hence $\pi$ has a continuous extension from $D$ to all of $[s, t]$ which is H\"older-$(2/3- \eps)$ continuous for all $\eps > 0$. This extension is a geodesic by continuity of $\cL^\rho$. Finally, any geodesic can be constructed this way, which gives the first part of the theorem.
\end{proof}

Given Proposition \ref{P:geos-exist}, we can show that geodesics in prelimiting models converge to geodesics in the half-space directed landscape. The arguments here are essentially contained in either \cite[Section 13]{DOV} or \cite[Section 9]{dauvergne2021scaling} in the setting of full-space convergence to the directed landscape. We have included a quick proof here to accommodate the (slight) extra generality needed for $d$-level Poisson-avoiding metrics. 

For this next lemma, let $\Lambda = \R_{\ge 0} \times [0, 1]$ and consider the domain
$$
\mathbb{H}^{2+}_\uparrow = \{(u, s; v, t) : u, v \in \Lambda, s < t\in \R\}.
$$
We consider functions $f:\mathbb{H}^{2+}_\uparrow \to \R \cup \{-\infty\}$ satisfying the following metric composition law: for every $(u, s; v, t) \in \mathbb{H}^{2+}_\uparrow$ and every $r \in (s, t)$, let
\begin{equation}
	\label{E:MC-lambda}
	f(u, s; v, t) = \max_{w \in \Lambda} f(u, s; w, r) + f(w, r; v, t).
\end{equation}
We say that a path $\pi:[s, t] \to \Lambda$ is a \textit{geodesic} for $f$ if for any times $s=t_0 < t_1 <... < t_k = t$, we have
\begin{equation}
	\label{E:f-equality}
	f(\pi(s), s; \pi(t), t) = \sum_{i=1}^k f(\pi(t_{i-1}), t_{i-1}; \pi(t_i),t_i).
\end{equation}
We also let $\iota(x, a) = x, \iota:\Lambda \to \R_{\ge 0}$ be the projection onto the first coordinate. 

Any directed metric on $\mathbb{H}^2_\uparrow$ can be extended to a function satisfying \eqref{E:MC-lambda} by making the dependence on the $[0, 1]$-coordinates trivial. In particular, we can apply this to $\cL^\rho$ or rescalings of exponential LPP. On the other hand, adding in the extra coordinate allows us to easily embed rescalings of $d$-level Poisson-avoiding metrics.
\begin{lemma}
	\label{L:abstract-convergence}
	Let $f_n:\mathbb{H}^{2+}_\uparrow \to \R \cup \{-\infty\}$ be a sequence of functions satisfying \eqref{E:MC-lambda} which converge uniformly on compact sets to continuous limit $f:\mathbb{H}^{2+}_\uparrow \to \R$ which also satisfies \eqref{E:MC-lambda}. Suppose that additionally $f$ satisfies the following shape bound: 
	\begin{itemize}
		\item For all $c_0 > 0$, there exists a constant $C > 0$ such that for all $u = (x, a, t; y, b; t + s) \in \mathbb{H}^{2+}_\uparrow$ with $s < c_0$ we have that
		$$
		\lf|f(u) + \frac{(x-y)^2}{s} \rg|\le C s^{1/3} \log^2\lf(\frac{{2\|u\|_2}}{s}\rg).
		$$
	\end{itemize}
	Suppose $\pi_n$ is a sequence of continuous geodesics for $f_n$ between points $(u_n, s)$ to $(v_n, t)$, where the sequence $(u_n, s; v_n, t)$ is precompact in $\mathbb{H}^{2+}_\uparrow$.
	Then:
	\begin{enumerate}
		\item The sequence $\pi_n$ is precompact in the uniform topology on functions from $[s, t] \to \Lambda$.
		\item If $\pi:[s, t] \to \Lambda$ is a subsequential limit of $\pi_n$, then $\pi$ satisfies \eqref{E:f-equality} for any finite set $\{s = t_0 < t_1 < \cdots < t_k = t\} \subset [s, t]$.
	\end{enumerate}
\end{lemma}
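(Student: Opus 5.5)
We follow the deterministic argument of \cite[Section 13]{DOV} and \cite[Section 9]{dauvergne2021scaling}, in two steps: first precompactness via equicontinuity, then passing the geodesic identity \eqref{E:f-equality} to the limit using uniform convergence.

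\textbf{Step 1: a uniform modulus of continuity.} The plan is to show $\sup_n|\iota\circ\pi_n(r)-\iota\circ\pi_n(r')|\to0$ as $|r-r'|\to0$, which by Arzel\`a--Ascoli gives part 1. Since the $[0,1]$-coordinate lives in a compact set, I expect to treat it through the limiting identity rather than through equicontinuity; if a uniform topology on the full $\Lambda$-valued paths is needed, I will restrict to the setting of interest, where the second coordinate is trivial or oscillates on a vanishing scale.

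\begin{enumerate}
\item \emph{Spatial boundedness.} Fix a compact $K\subset\mathbb H^{2+}_\uparrow$ containing all endpoints. Take an intermediate time $r$ and the point $\pi_n(r)$. The geodesic identity gives
\[
f_n(u_n,s;v_n,t)=f_n(u_n,s;\pi_n(r),r)+f_n(\pi_n(r),r;v_n,t).
\]
Suppose $\iota\pi_n(r)$ were large. By the shape bound for $f$ and uniform convergence on compacts, the right-hand side would be much smaller than the left. The difficulty is that uniform convergence only controls compacts, so large $\iota$ is not directly covered. To handle it, I will use the metric composition law \eqref{E:MC-lambda} for $f_n$ at an intermediate time near $s$ or $t$ and compare with $f$ on compacts; in the applications this is supplied by the uniform shape estimates for $f_n$. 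I will therefore aim for a bound $\sup_{n,r}\iota\pi_n(r)<\infty$, argued the same way as the tightness of argmax locations in Proposition \ref{P:properties-elpp}.2.
\item \emph{Equicontinuity.} Given boundedness, fix $r<r'$ with $r'-r=h$. The geodesic property gives
\[
f_n(\pi_n(r),r;\pi_n(r'),r')\ge f_n(\pi_n(r),r;w,r'')+f_n(w,r'';\pi_n(r'),r')
\]
for any $w$ and any intermediate time $r''$. Combining the shape bound on a compact set (with $n$ large, so $f_n$ is within $\eta$ of $f$) with the triangle inequality along the geodesic, a spatial displacement $d$ over time $h$ costs roughly $d^2/h$ against fluctuations of order $h^{1/3}\log^2$. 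This forces $d\lesssim h^{2/3}\log^2+\sqrt{\eta h}$. Handling the $\eta$ term requires taking $n$ large depending on $h$, and treating the finitely many small $n$ individually, since each $\pi_n$ is continuous.
\end{enumerate}

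\textbf{Step 2: the limit is a geodesic.} Let $\pi_{n_j}\to\pi$ uniformly. For fixed times $t_0<\dots<t_k$, the points $(\pi_{n_j}(t_i),t_i)$ stay in a compact set and converge. Uniform convergence of $f_n$ to the continuous $f$ on compacts lets us pass \eqref{E:f-equality} for $f_{n_j}$ to the limit, which yields part 2.

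\textbf{Main obstacle.} I expect the hardest part to be the boundedness and modulus estimate of Step 1, because uniform convergence on compacts gives no control at spatial infinity. I will first try to extract it purely from the shape bound and \eqref{E:MC-lambda} applied to $f$, via comparison at times close to $s$ and $t$.
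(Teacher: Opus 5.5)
Your Step 2 is fine and is exactly how the paper obtains part 2. The gap is in Step 1. Both the spatial bound and the equicontinuity need an \emph{upper} bound on $f_n$ at points with small time gap, and for the spatial bound also at large spatial coordinate. This is precisely the region where uniform convergence on compacts gives nothing.

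You fill this by appealing to ``uniform shape estimates for $f_n$'' from the applications and to the argmax-tightness argument of Proposition~\ref{P:properties-elpp}.2. Neither is available. The lemma is deterministic and assumes nothing about $f_n$ beyond \eqref{E:MC-lambda} and local uniform convergence. Proposition~\ref{P:properties-elpp}.2 rests on the probabilistic profile bound of Lemma~\ref{L:Xalpha-bd}. Using \eqref{E:MC-lambda} for $f$ alone cannot help either, since the issue is controlling $f_n$.

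Your displayed inequality in the equicontinuity step splits the short segment $[r,r']$ further. Every term it involves therefore still has time gap at most $h$ and lies outside any compact set where you control $f_n-f$. Taking $n$ large ``depending on $h$'' does not give equicontinuity, which needs a single threshold in $n$ valid for all gaps $h<h_0$ at once.

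The missing idea is to extend the time window \emph{outward}. Applying \eqref{E:MC-lambda} for $f_n$ twice gives
\[
f_n(x,a,r;y,b,r') \le f_n(x,a,r-\eps;\,y,b,r'+\eps) - f_n(x,a,r-\eps;\,x,a,r) - f_n(y,b,r';\,y,b,r'+\eps).
\]
All three points on the right have time gap at least $\eps$ and bounded coordinates. So for $n\ge n_\eps$, uniform convergence and the shape bound for $f$ give $f_n(x,a,r;y,b,r')\le C_b-(x-y)^2/(r'-r+2\eps)$ on bounded sets, uniformly in $r'-r$. This is Lemma~\ref{L:limsup-princ}, taken from \cite[Lemma 13.3]{DOV}.

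Spatial boundedness then follows as in the paper. If $\sup\iota\pi_n>m$ with $m$ above the endpoints, continuity gives a time $r$ with $\iota\pi_n(r)=m$ \emph{exactly}. Hence $f_n(u_n,s;v_n,t)\le a_n(m):=\sup_{a,r}[f_n(u_n,s;m,a,r)+f_n(m,a,r;v_n,t)]$. With the spatial coordinate pinned at $m$, the bound above handles $r$ near $s$ or $t$, and uniform convergence handles the rest. So $a_n(m)\to a(m)$, and the shape bound for $f$ gives $a(m)<f(u,s;v,t)$ for large $m$.

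Equicontinuity follows from the same bound. Along the geodesic the total length converges and the two outer segments are at most $C_b$, so the segment over $[r,r+h]$ has length at least $-C$. The bound caps that segment by $C_b-d^2/(h+2\eps)$. Hence $d^2\le C'(h+2\eps)$ for all $n\ge n_\eps$ and all $h$, and the finitely many $n<n_\eps$ are handled by continuity. The paper packages this as a contradiction about Hausdorff limits of graphs. This gives only $\sqrt{h}$-type control, not the $h^{2/3}\log^2$ you assert, but that suffices.

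Your hesitation about the $[0,1]$-coordinate is justified: the hypotheses do not control it at all. Take $f_n=f=\bar\cL^\rho$ and $\pi_n=(\gamma,a_n)$, with $\gamma$ an $\cL^\rho$-geodesic and $a_n(r)=\tfrac12(1+\sin nr)$. These are geodesics but not precompact, so part 1 can only hold for $\iota\pi_n$. The paper's argument likewise only rules out $x_1\neq x_2$, and Theorem~\ref{thm:multigeoconv} only uses $\iota\pi_n$.
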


Our restriction that the endpoints $s$ and $t$ are the same for all $n$ is not necessary, and is only done so we can work with the topology of uniform convergence of functions (rather than, say, Hausdorff convergence of graphs).  When we apply Lemma \ref{L:abstract-convergence} with $f$ being the half-space directed landscape, with the $[0, 1]$-coordinates being trivial, the points $1$ and $2$ together will say that $\iota \pi_n$ will converge to a geodesic in the directed landscape. Note that we could also weaken the shape condition; we have chosen the above form so as to exactly match Proposition \ref{P:only-easy-consequences}.2.

\begin{lemma}
	\label{L:limsup-princ}
	Let $f_n, f$ be as in Lemma \ref{L:abstract-convergence}, and fix a bounded set $B = [-b, b]^6 \cap \mathbb{H}^{2+}_\uparrow$. Then there exists a positive constant $C_b$, such that for all $\ep \in (0, 1)$ there exists $n_\ep \in \N$ such that for all $n \ge n_\ep$ and $u = (x, a, t; y, a'; t + s) \in B$ we have
	$$
	f_n(u) \le C_b -\frac{(x -y)^2}{s + \ep}.
	$$
\end{lemma}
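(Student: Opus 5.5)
The proof uses the metric composition law \eqref{E:MC-lambda} for $f_n$ to split any long-time passage into many short pieces. On each short piece we then apply the shape bound for the limit $f$, transferred to $f_n$ via uniform convergence on a compact set that stays away from the diagonal $s=0$.

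First fix a small time $\delta\in(0,1)$ depending only on $b$ and $\eps$. Consider the compact set
$$K_{b,\delta}=\{(x,a,t;y,a',t+s)\in\mathbb{H}^{2+}_\uparrow : |x|,|y|\le R,\ |t|\le b,\ s\in[\delta,2b]\},$$
where $R$ is large and chosen later. On $K_{b,\delta}$ the shape bound gives $f(u)\le -\frac{(x-y)^2}{s}+C_1$, with $C_1$ depending on $b$ and $R$; on this set the logarithm is bounded by a constant depending on $\delta$, $b$ and $R$. Uniform convergence then gives $f_n\le f+1$ on $K_{b,\delta}$ for $n\ge n_\eps$. This handles all $u\in B$ with $s\ge\delta$, since $-\frac{(x-y)^2}{s}\le-\frac{(x-y)^2}{s+\eps}$.

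The main case is $s<\delta$, including $s$ arbitrarily small, where no compactness is available. Here I would use \eqref{E:MC-lambda} backward: for $u=(x,a,t;y,a',t+s)$, pick the intermediate time $r=t+s+\eps/2$ (or go backward in time to $t-\eps/2$) and any point $w=(y,a')$. Metric composition gives
$$f_n(x,a,t;y,a',t+s+\eps/2)\ \ge\ f_n(x,a,t;y,a',t+s)+f_n(y,a',t+s;y,a',t+s+\eps/2).$$
The left side has time increment in $[\eps/2,\delta+\eps/2]$, and the last term has the fixed time increment $\eps/2$ with zero spatial displacement. So with $\delta$ taken comparable to $\eps$ (say $\delta=\eps/2$) and the compact set built with lower time bound $\eps/2$, both terms lie in a compact set on which uniform convergence applies. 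This gives
$$f_n(u)\le f(x,a,t;y,a',t+s+\eps/2)+1-f(y,a',t+s;y,a',t+s+\eps/2)+1.$$
The first term is at most $-\frac{(x-y)^2}{s+\eps/2}+C$ by the shape bound. The second is at least $-C\eps^{1/3}\log^2(\cdot)$, which is at least $-C_b$ after absorbing the logarithm, since all points lie in a bounded region. Together these yield $f_n(u)\le C_b'-\frac{(x-y)^2}{s+\eps}$.

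The expected obstacle is making $C_b$ independent of $\eps$. The shape bound's logarithmic factor $\log^2(2\|u\|/s)$ blows up at time scale $\eps/2$. Since $\eps^{1/3}\log^2(C/\eps)$ is bounded uniformly for $\eps\in(0,1)$, this term is still bounded, so $C_b$ depends only on $b$ (and on $c_0=3b$ in the shape hypothesis). The $n_\eps$ threshold depends on $\eps$ through the compact set, which is permitted.
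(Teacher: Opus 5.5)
Your proposal is correct and is essentially the argument the paper relies on; the paper defers verbatim to \cite[Lemma 13.3]{DOV}. The reverse triangle inequality $f_n(u)\le f_n(x,a,t;y,a',t+s+\ep/2)-f_n(y,a',t+s;y,a',t+s+\ep/2)$ places both terms in a compact set with time increment at least $\ep/2$, where uniform convergence and the shape bound for $f$ finish, and $C_b$ is uniform in $\ep$ because $\sigma^{1/3}\log^2(C/\sigma)$ is bounded on $(0,c_0]$. Two cosmetic remarks: this single composition step already handles every $s$, so your separate case $s\ge\delta$ is redundant, and the intermediate time in the composition is $t+s$, with $t+s+\ep/2$ being the new endpoint.
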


\begin{proof}
	This lemma is essentially \cite[Lemma 13.3]{DOV}, with $\Rd$ replaced by $\mathbb{H}^{2+}_\uparrow$. The proof goes through verbatim.
\end{proof}

\begin{proof}[Proof of Lemma \ref{L:abstract-convergence}]
	By possibly taking a subsequence, we may assume $u_n \to u$ and $v_n \to v$ in $\Lambda$.
	We first check that $\limsup_{n\to\infty} (\sup \iota\pi_n) < \infty$. For $m > 0$, define
	\begin{align*}
		a(m) &= \sup_{a \in [0, 1], r \in (s, t)} f(u, s; m, a, r) + f(m, a, r; v, t),
	\end{align*}
	and define $a_n$ similarly but with $f_n$ in place of $f$ and $u_n, v_n$ in place of $u, v$. We have that
	\begin{equation*}
		\sup \iota \pi_n > m \qquad \text{implies} \qquad a_n(m) \ge f_n(u_n, s; v_n, t).
	\end{equation*}
	On the other hand, by the uniform-on-compact convergence of $f_n \to f$, the shape bound on $f$, and Lemma \ref{L:limsup-princ}, we have that $a_n(m) \to a(m)$ for all fixed $m$. Also, $f_n(u_n, s; v_n, t) \to f(u, s; v, t)$. Therefore
	$$
	\limsup_{n\to\infty} (\sup \iota\pi_n) \le \sup \{ m > 0 : a(m) \ge f(u, s; v, t) \}.
	$$
	The right-hand side above is finite by the shape bound on $f$.
	
	Now, since $\limsup_{n\to\infty} (\sup \iota\pi_n) < \infty$, the sequence of graphs $\{( \pi_n(r), r) : r\in [s, t]\}$ is tight in the Hausdorff topology on closed subsets of $\R_{\ge 0} \times [s, t]$. Let $A$ be any subsequential limit. To complete the proof of part $1$ it suffices to show that $A = \{(\pi(r), t) : r \in [s, t]\}$ for some continuous function $\pi:[s, t]\to \Lambda$. The only way for this to fail is if there exists a time $r \in [s, t]$ and $x_1 < x_2$ and $a_1, a_2\in [0,1]$, such that $(x_1, a_1, r), (x_2, a_2, r) \in A$, which implies that along a subsequence of $n\in\N$, there exists $r_n, r_n' \to r$ such that $\pi_n(r_n) \to (x_1, a_1), \pi_n(r_n') \to (x_2, a_2)$. By possibly passing to a further subsequence we may assume $r_n < r_n'$ for all $n$. Then writing $\bar \pi_n(\cdot) = (\pi_n(\cdot), \cdot)$ we have
	\begin{align*}
		f_n(u_n, s; v_n, t) =f_n(u_n, s; \bar \pi_n(r_n)) + f_n(\bar \pi_n(r_n); \bar \pi_n(r_n')) + f_n(\bar \pi_n(r_n'); v_n, t).
	\end{align*}
	Since $x_1 \ne x_2$, Lemma \ref{L:limsup-princ} implies that the right-hand side converges to $-\infty$, whereas the left-hand side is finite. This is a contradiction. This completes the proof of part $1$.
	
	Part $2$ follows from part $1$, and the uniform-on-compact convergence of $f_n \to f$.
\end{proof}

We next apply Lemma \ref{L:abstract-convergence} to show that limit points of geodesics in $d$-level Poisson-avoiding metrics and in exponential LPP are geodesics in $\cL^\rho$, in particular proving Theorem \ref{thm:expLPPgeoconv}. We start by stating geodesic convergence for $d$-level Poisson-avoiding metrics with $d>1$. 

To set things up, observe that we can alternately define the $d$-level Poisson-avoiding metric $H_d$ as follows. First, for a path $f:[s, t] \to \Lambda_d$, define 
$$
\|f\|_{H_d} = \sup_{s=t_0 < t_1 < \cdots < t_k = t} \sum_{i=0}^{k-1} D_d(f(t_i), f(t_{i+1})),
$$
where $D_d$ is as in Definition \ref{defn:Dd}. Then
$$
H_d(u, s; v, t) = \min \|f\|_{H_d},
$$ 
where the minimum is over all paths $f:[s,t] \to \Lambda_d$ with $f(s) = u, f(t) = v$ and such that $(f(r), r) \notin \pi$ for all $r \in (s, t)$. If $(v, t) \in \Pi$ we impose the additional constraint that $f(t^-) \ne v$. We call $f$ a geodesic if it achieves the minimum above.

Next, for $(x, a, s) \in \Lambda \times \R$ define $(x, a, s)_{d,\eps} = (\lfloor 2\eps^{-1}x \rfloor, \lfloor a/(2d-1) \rfloor, 2\eps^{-3/2}s)$, and for $(u, s; v,t) \in \mathbb{H}^{2+}_\uparrow$ let
$$
H_{d, \eps}(u, s; v,t) = -\eps^{1/2}H_d((u, s)_{d, \eps}; (v,t)_{d, \eps}) + \eps^{-1}(t - s).
$$
We say that $f$ is a geodesic in $H_{d, \eps}$ if $(\mathfrak{g} f)_{d, \eps} = \mathfrak{g} \tilde f$ for some geodesic $\tilde f$ in $H_d$. This is similar to the definition introduced before Theorem \ref{thm:expLPPgeoconv} (and recall the notation $\mathfrak{g}f$ from there).

Next, for a half-space directed landscape $\cL^\rho$, define $\bar \cL^\rho: \mathbb{H}^{2+}_\uparrow \to \R$ by
$$
\bar \cL^\rho(x, a, s; y, b, t) = \cL^\rho(x, s; y, t).
$$
Theorem \ref{thm:mPAM} and the bound \eqref{eq:Hddiff} guarantees that with $d_\eps$ and boundary parameters as in that theorem, we have that $H_{d_\eps, \eps} \cvgd \bar \cL^\rho$ in the uniform-on-compact topology. We use this as an input to state and prove geodesic convergence.
\begin{theorem}  \label{thm:multigeoconv}
	Take $\rho \in \R\cup \{-\infty\}$, and a sequence $\eps_n \to 0$. With $d_\eps$ and boundary parameters as in Theorem \ref{thm:mPAM}, consider a coupling where $H_{d_{\eps_n}, \eps_n} \to \bar \cL^\rho$ almost surely in the uniform-on-compact topology. Then there exists a set $\Omega$ of probability $1$ such that on $\Omega$, the following assertions hold.
	\begin{enumerate}[nosep]
		\item Consider a sequence $(u_n, s; v_n, t)$ such that $(\iota u_n, s; \iota v_n, t) \to (x, s; y, t) \in \mathbb H^2_\uparrow$ and a sequence of geodesics $\pi_n$ in $H_{d_{\eps_n}, \eps_n}$ from $(u_n; s)$ to $(v_n; t)$. Then $\iota \pi_n$ is precompact in the uniform topology and any subsequential limit of $\iota \pi_n$ is a geodesic in $\cL^\rho$ from $(x, s)$ to $(y, t)$.
		\item If $\cL^\rho$ contains a unique geodesic $\pi$ from $(x, s)$ to $(y, t)$, then $\iota \pi_n \to\pi$ almost surely.
	\end{enumerate}
	\end{theorem}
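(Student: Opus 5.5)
The plan is to apply Lemma \ref{L:abstract-convergence} pathwise, on a full-probability event, with $f_n = H_{d_{\eps_n}, \eps_n}$ and $f = \bar\cL^\rho$. Let $\Omega$ be the intersection of three events. The first is the event of the coupling, on which $H_{d_{\eps_n},\eps_n} \to \bar\cL^\rho$ uniformly on compacts. The second is the event on which $\cL^\rho$ is continuous, satisfies the metric composition law everywhere (Proposition \ref{P:only-easy-consequences}.3), and satisfies the shape bound of Proposition \ref{P:only-easy-consequences}.2 for every $c_0\in\N$ with a finite random constant. The third is the event on which $\cL^\rho$ has geodesics between all pairs of points (Proposition \ref{P:geos-exist}). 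Each of these events has probability one, and we may take a countable intersection over $c_0$.

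On $\Omega$, the limit $\bar\cL^\rho$ inherits \eqref{E:MC-lambda} from the metric composition of $\cL^\rho$, since the $[0,1]$-coordinate is trivial. The shape bound transfers directly, because $\|u\|_2$ in $\mathbb{H}^{2+}_\uparrow$ only differs from the $\mathbb H^2_\uparrow$ norm by a bounded amount. For the prelimit we must check that $H_{d,\eps}$ satisfies \eqref{E:MC-lambda}. Directed graph distance in the weighted graph defining $H_d$ obeys $H_d(u,s;v,t)=\min_w H_d(u,s;w,r)+H_d(w,r;v,t)$ over $w\in\Lambda_d$. After rescaling, the maximum over $w\in\Lambda$ is attained, because $\Lambda$ maps onto a locally finite set with $H_{d,\eps}$ piecewise constant in the $\Lambda$-variables. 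One technical point concerns intermediate times that are Poisson points, where the constraint $f(t^-)\ne v$ matters. Since Poisson times are a.s. distinct from a fixed $r$, and we only need \eqref{E:MC-lambda} for the geodesic argument, I would handle this by adding the null event to $\Omega^c$ or by noting the composition still holds with the convention on left limits.

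We also need a geodesic $\pi_n$ in $H_{d,\eps}$ to be a geodesic for $f_n$ in the sense of \eqref{E:f-equality}, and this is immediate from additivity of $\|\cdot\|_{H_d}$ along subdivisions. Next, the sequence $(u_n,s;v_n,t)$ is precompact in $\mathbb{H}^{2+}_\uparrow$, since $\iota u_n,\iota v_n$ converge and the second coordinates lie in $[0,1]$. Lemma \ref{L:abstract-convergence} then gives precompactness of $\pi_n$, hence of $\iota\pi_n$. It also shows that every subsequential limit $\pi$ satisfies \eqref{E:f-equality} for $\bar\cL^\rho$, which means exactly that $\iota\pi$ realizes equality in the sum defining $\|\cdot\|_{\cL^\rho}$ for every partition. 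This makes $\iota\pi$ a geodesic in $\cL^\rho$ from $(x,s)$ to $(y,t)$, proving part 1. Part 2 follows by the standard subsequence argument: every subsequence of $\iota\pi_n$ has a further subsequence converging to a geodesic, which must be the unique one $\pi$.

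I expect the main obstacle to be the rigorous verification of \eqref{E:MC-lambda} for the discrete metric $H_{d,\eps}$, given the rounding in $(\cdot)_{d,\eps}$ and the Poisson-point boundary conventions. The rest is a direct application of Lemma \ref{L:abstract-convergence}.
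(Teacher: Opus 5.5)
Your route is the paper's: apply Lemma \ref{L:abstract-convergence} pathwise with $f_n=H_{d_{\eps_n},\eps_n}$ and $f=\bar\cL^\rho$. The supporting checks you make are fine: \eqref{E:MC-lambda}, the shape bound via Proposition \ref{P:only-easy-consequences}.2, that geodesics of $H_{d,\eps}$ satisfy \eqref{E:f-equality}, and that $\pi$ is a geodesic of $\bar\cL^\rho$ iff $\iota\pi$ is one of $\cL^\rho$. So is the subsequence argument for part 2.

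The gap is that Lemma \ref{L:abstract-convergence} is stated only for \emph{continuous} geodesics $\pi_n$, and you never address this hypothesis. A geodesic in $H_{d,\eps}$ is any $f$ with $(\mathfrak g f)_{d,\eps}=\mathfrak g\tilde f$ for a geodesic $\tilde f$ of $H_d$. Here $\tilde f$ is a $\Lambda_d$-valued path that jumps between lattice sites, possibly several at one instant. So the $\pi_n$ in the statement are in general discontinuous.

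Continuity is genuinely used in the lemma's proof, in two places:
\begin{itemize}
\item The bound on $\sup\iota\pi_n$ needs a time $r$ with $\iota\pi_n(r)=m$ exactly, which comes from the intermediate value theorem.
\item The Hausdorff step needs two distinct limit points at a common time to come from distinct times $r_n\neq r_n'$, so that $f_n(\bar\pi_n(r_n);\bar\pi_n(r_n'))$ makes sense. For jumping paths, two such points can sit at the same time $r_n$.
\end{itemize}
As written, you have not excluded macroscopic jumps of $\iota\pi_n$. So neither uniform precompactness nor continuity of the limit follows, and the limit must be continuous to be a path and hence a geodesic.

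The paper closes this gap explicitly. It observes that any geodesic $\tau$ of $H_{d_{\eps_n},\eps_n}$ can be approximated by continuous geodesics $\tau_k$ whose graphs $\mathfrak g\tau_k$ converge in the Hausdorff topology to a set containing $\mathfrak g\tau$. Applying the lemma to continuous geodesics and diagonalizing then gives part 1 for arbitrary $\pi_n$.

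By contrast, the step you flagged as the main obstacle is routine; the paper simply asserts it. In the graph definition of $H_d$, a zero-weight vertical edge $(u,s')\to(u,t')$ crossing time $r$ can always be split into $(u,s')\to(u,r)\to(u,t')$. Hence composition holds exactly at every $r\in(s,t)$, and no exceptional null set tied to Poisson times is needed.
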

	
	\begin{proof}
	First, our definitions of $H_{d_\eps, \eps}, \bar \cL^\rho$ satisfy \eqref{E:MC-lambda}, and our notion of geodesic in $H_{d_\eps, \eps}$ matches the definition via \eqref{E:f-equality}. Moreover, $\bar \cL^\rho$ satisfies the bullet point in Lemma \ref{L:abstract-convergence} almost surely by Proposition \ref{P:only-easy-consequences}.2. Finally, observe that $\pi$ is a geodesic in $\bar \cL^\rho$ if and only if $\iota \pi$ is a geodesic in $\cL^\rho$. Therefore we may apply Lemma \ref{L:abstract-convergence} to obtain part $1$ above, as long as our initial sequence $\pi_n$ is continuous. To remove the continuity constraint, simply observe that in $H_{d_{\eps_n}, \eps_n}$, for any geodesic $\tau$, there is a sequence of continuous geodesics $\tau_k$ such that the graphs $\mathfrak{g} \tau_k$ converge in the Hausdorff topology to a limiting set $S$ containing $\mathfrak{g} \tau$. This allows us to conclude the discontinuous case of the part $1$ from the continuous case by a diagonalization argument. 
    
    Part $2$ is immediate from part $1$.
	\end{proof}
	
	\begin{proof}[Proof of Theorem \ref{thm:expLPPgeoconv}]
	The case of Poisson-avoiding metrics is immediate from the previous theorem, since in the single-level setting our metric $H_{1, \eps}$ ignores the coordinates in $[0, 1]$, and so reduces to a metric on $\mathbb H^2_\uparrow$.
	
	The exponential LPP case requires a slight modification of the definition of $\cL_n$. Indeed, in the original last passage definition, let
	$
	X^-(u; v) = X(u, v) - X(u)
	$
	denote LPP where we do not include the initial vertex, and let $\cL_n^-$ equal $\cL_n$ with $X^-$ in place of $X$. By an exponential tail bound on the maximum size of single entries and the Borel-Cantelli lemma, in the setup of Theorem \ref{thm:expLPPgeoconv} we still have that $\cL_n^- \to \cL^\rho$ almost surely. Then $\cL_n^-$ satisfies \eqref{E:MC-lambda}, and geodesics in $\cL_n$ with the definition introduced prior to Theorem \ref{thm:expLPPgeoconv} satisfy \eqref{E:f-equality}. Next, by trivially extending $\cL_n^-$ to $\bar \cL_n^-$ on all of $\mathbb H^{2+}_\uparrow$ as we extended $\cL^\rho$ to $\bar \cL^\rho$, and applying Lemma \ref{L:abstract-convergence}, the conclusion follows.
	\end{proof}

It remains to prove almost sure uniqueness of geodesics (for fixed endpoints) in $\cL^\rho$, which guarantees that Theorem \ref{thm:multigeoconv}.2 is not vacuous.  In the full-space setting, Brownian absolute continuity of the profiles $\cL_{s, t}(x, \cdot)$ implies uniqueness immediately. Here we cannot appeal to this property a priori, so we take a different approach based around resampling and a full-space comparison.

\begin{prop}
	\label{P:geo-unique}
	Let $\rho \in \R \cup \{-\infty\}$. For any fixed $(p; q) = (x, s; y, t) \in \mathbb{H}^2_\uparrow$, there is almost surely a unique $(p;q)$-geodesic $\pi$ in $\cL^\rho$.
\end{prop}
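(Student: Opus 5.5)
We reduce uniqueness to showing that, for a fixed intermediate time, the maximizer in the metric composition law is almost surely unique. Suppose two distinct $(p;q)$-geodesics $\pi_1\ne\pi_2$ exist. By continuity they differ on an open time interval, so they differ at some rational time $r\in(s,t)$. At that time both $\pi_1(r)$ and $\pi_2(r)$ are maximizers of $z\mapsto f_{p,q,r}(z):=\cL^\rho(x,s;z,r)+\cL^\rho(z,r;y,t)$, by Proposition \ref{P:only-easy-consequences}.3. It therefore suffices to prove that, for each fixed rational $r$, the function $f_{p,q,r}$ has almost surely a unique argmax on $\R_{\ge 0}$; a countable union over $r$ then finishes the argument. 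The two summands are independent by independent increments. Write $A(z)=\cL^\rho(x,s;z,r)$ and $B(z)=\cL^\rho(z,r;y,t)$; by the time reversal of Proposition \ref{P:properties-elpp}.8, $B$ has the same type of law as $A$.

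The difficulty is that we do not know a priori that $A$ is locally Brownian. The plan is to use the stationary horizon as a resampling device and compare against it via the quadrangle/monotonicity structure. First, the maximizer cannot sit at the boundary with positive probability in a way that produces two maximizers there: a tie between $z=0$ and an interior point is handled the same way as interior ties. The main step is to show that for fixed $0\le a<b$ rational, the event that $\max_{[0,a]}(A+B)=\max_{[b,\infty)}(A+B)$ has probability zero. Conditioning on $B$, it is enough to show that for a deterministic-type independent continuous $g$, the law of $\max_{[b,\infty)}(A+g)-\max_{[0,a]}(A+g)$ has no atom at $0$.

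For this I would exploit that $A=\delta_x\diamond\cL^\rho_{s,r}$ can be written, via metric composition at a time $r'\in(s,r)$, as $\tilde A\diamond\cL^\rho_{r',r}$ with $\tilde A$ independent. By Lemma \ref{lem:Mstat}, $\cL^\rho_{r',r}$ maps Brownian motion of slope $-2\rho$ to Brownian motion of slope $-2\rho$ modulo a constant. The idea is a full-space comparison: condition on everything except an additive shift. Concretely, use the independent increment structure and decompose $\cL^\rho_{r',r}(w,\cdot)$ for $w$ large. Near a point $z\ge b$ away from the boundary, the profile $A$ restricted to $[b,\infty)$ behaves like a full-space landscape. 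In particular, by coupling with the full-space directed landscape in a region far from the boundary (the half-space model differs from full space only through paths touching the diagonal), one obtains that $\max_{[b,\infty)}(A+g)$ conditionally on $A|_{[0,a]}$ has a continuous law.

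The expected obstacle is making this conditional continuity rigorous. The quickest route I would try first is a scaling/monotonicity trick. Add to the environment a small independent perturbation that shifts $A$ on $[b,\infty)$ relative to $[0,a]$. By KPZ scaling (Lemma \ref{L:M-lemma}) and stationarity, the law of the difference of maxima is invariant in a way forcing any atom at $0$ to persist under an absolutely continuous shift, hence to vanish. If that fails, I would use Lemma \ref{lem:bmcomp} together with Lemma \ref{lem:Mstat}: replace the initial condition $\delta_x$ by the stationary Brownian horizon $\cH_\lambda$ restricted to a neighborhood. Its image under $\cL^\rho_{s,r}$ is Brownian with drift and so has no ties almost surely. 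Finally, transfer back to $\delta_x$ through the quadrangle inequality (Proposition \ref{P:properties-elpp}.9), which forces argmaxes to be monotone in the initial data and pins down uniqueness.
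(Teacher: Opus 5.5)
Your reduction is fine and matches the paper's. Distinct geodesics differ at some rational $r$, and both values are maximizers of $z\mapsto\cL^\rho(x,s;z,r)+\cL^\rho(z,r;y,t)$. So it suffices to rule out ties between the restricted maxima over two disjoint intervals.

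The gap is the main step: none of your three routes establishes ``no atom at $0$''.

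The scaling/perturbation trick has no content as stated. There is no law-preserving perturbation of $\cL^\rho$ that shifts the profile on $[b,\infty)$ relative to $[0,a]$ by an independent absolutely continuous amount. Comparing perturbed and unperturbed laws would need exactly the Brownian-type regularity of half-space profiles that you admit is unavailable.

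The stationary-horizon route only yields no ties for a stationary initial condition. Even that requires $\lambda=2|\rho|$ with $\rho$ finite, since Lemma \ref{lem:Mstat} gives a Brownian motion only in that case. The transfer back to $\delta_x$ then fails. The quadrangle inequality orders leftmost and rightmost argmaxes in the initial data, but ordering cannot exclude a two-point maximizer set for $\delta_x$. Uniqueness for one initial condition does not propagate to another this way.

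Your full-space comparison idea is the right one, and it is what the paper does, but the mechanism is missing. You cannot condition on $A|_{[0,a]}$ and assert a continuous conditional law for $\max_{[b,\infty)}(A+g)$: both pieces come from the same increment $\cL^\rho_{s,r}$. There is also no a priori limiting coupling in which $\cL^\rho$ agrees locally with a full-space landscape; that is essentially the paper's open Problem \ref{P:strong-chara}.

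The paper manufactures the needed independence in the prelimit, in three steps.
\begin{enumerate}
\item \textbf{Hybrid environment.} Take bounded $I=[a,b]$ and $J=[c,d]$ with $b<c$. Replace the weights in a small space--time box around $J\times\{r\}$ by an independent i.i.d.\ full-space $\Exp(1)$ field. The box is off the diagonal, so the hybrid field has the same law as the original. Take a joint limit $(\cK,\cM,\cL)$ of the original half-space, hybrid half-space, and full-space rescaled LPP.
\item \textbf{Thin-strip localization.} Compose through a strip $[r-\eps,r+\eps]$ and use transversal-fluctuation bounds. With probability $\to 1$ as $\eps\to0$, $\cM(p;q\mid I)$ only sees $\cK$ inside the strip, so it is measurable with respect to $\cF=\sigma(\cK,\ \cM\text{ off the strip})$. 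Likewise $\cM(p;q\mid J)=\max_{z\in J} f\diamond\cL(\cdot,r-\eps;z,r)+\cL(z,r;\cdot,r+\eps)\diamond g$, with $f,g$ $\cF$-measurable and the strip part of $\cL$ independent of $\cF$.
\item \textbf{Full-space regularity.} Brownian absolute continuity of full-space profiles from arbitrary initial data (Sarkar--Vir\'ag) gives $\cM(p;q\mid J)$ a conditional Lebesgue density given $\cF$. Hence the two restricted maxima differ almost surely.
\end{enumerate}
These three ingredients are what your proposal would need to supply.
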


\begin{proof}
	Fix $r \in (s, t)$. It is enough to show that if $\pi, \tau$ are two $(p; q)$-geodesics, then almost surely $\pi(r) = \tau(r)$. For this, for an interval $I\subset \R_{\ge 0}$, define the length
	$$
	\cL^{\rho}(p; q \mid I) = \max_{z \in I} \cL^\rho(p; z, r) + \cL^{\rho}(z, r; q).
	$$
	We will show that for any disjoint intervals $I, J\subset \R_{\ge 0}$, almost surely 
	\begin{equation}
		\label{E:Lalpha-interval-noneq}
		\cL^{\rho}(p; q \mid I) \ne \cL^{\rho}(p; q \mid J).
	\end{equation}
	If we can prove this, then \eqref{E:Lalpha-interval-noneq} holds almost surely, simultaneously for all rational disjoint intervals $I, J$. On the other hand, if $\pi(r) \ne \tau(r)$, then letting $I, J$ be disjoint intervals containing $\pi(r), \tau(r)$ respectively we would have
	$$
	\cL^{\rho}(p; q \mid I) = \cL^{\rho}(p; q \mid J) = \cL^{\rho}(p; q),
	$$
	which is a contradiction.
	
	We now prove \eqref{E:Lalpha-interval-noneq} by coupling with the full-space directed landscape. Let $I = [a, b]$ and $J = [c, d]$ for $0\le a < b < c < d$.
	
	We work in the setting of Section \ref{SS:subsequential}. Fix $n$, and an environment $X^{(n)} := X_{1/2 - 2^{-4/3} \rho n^{-1/3}}$ of exponential random variables as in that section. Let $Y$ be a field of i.i.d.\ $\Exp(1)$ random variables on $\Z^2$, independent of $X^{(n)}$. Fix $\delta > 0$ such that $c - \delta > b$, and define a third field $Z^{(n)}$ on $\Z^2_\ge$ by letting
	$$
	Z^{(n)}((x, r')_n) = \begin{cases}
		Y((x, r')_n), \qquad (x, r') \in [c-\delta, d+\delta] \times [r-\delta, r + \delta],\\
		X^{(n)}((x, r')_n), \qquad \text{else}.
	\end{cases}
	$$
	Now, we take rescaling as \eqref{E:rescaled-elpp}.
    Namely, we let $\cK_n$ (resp., $\cM_n$, $\cL_n$) be defined on $\mathbb H^2_\uparrow$ as $\cQ^\rho_n$ in \eqref{E:rescaled-elpp}, with $X_{1/2-2^{-4/3}\rho n^{-1/3}}$ replaced by $X^{(n)}$ (resp., $Z^{(n)}$, $Y$).
    Take a joint subsequential limit $(\cK, \cM, \cL)$ of $(\cK_n, \cM_n, \cL_n)$. Here $\cL_n$ converges in the uniform-on-compact topology to a full-space directed landscape (restricted to $\mathbb H^2_\uparrow$), see \cite[Theorem 1.7]{dauvergne2021scaling} for details. The fields $\cK, \cM$ are half-space directed landscapes of parameter $\rho$. We will check \eqref{E:Lalpha-interval-noneq} for $\cM$. 
	First, with probability tending to $1$ as $\eps \to 0$ we have that
	\begin{align*}
		&\cM(p; q \mid I) \\
		&= \max_{z_1, z_3 \in [(a - \ep^{1/2}) \vee 0, b + \ep^{1/2}], z_2 \in I} \cM(p; z_1, r-\eps) + \cM(z_1, r-\eps; z_2, r) + \cM(z_2, r; z_3, r + \eps) + \cM(z_3, r + \eps; q) \\
		&= \max_{z_1, z_3 \in [(a - \ep^{1/2}) \vee 0, b + \ep^{1/2}], z_2 \in I} \cM(p; z_1, r-\eps) + \cK(z_1, r-\eps; z_2, r) + \cK(z_2, r; z_3, r + \eps) + \cM(z_3, r + \eps; q).
	\end{align*} 
	Here the first equality uses the shape bound and the metric composition law in Proposition \ref{P:only-easy-consequences}. The second equality uses that the $Z^{(n)}$-weights in an open set $U$ around $[a, b] \times \{r\}$ match those of $X^{(n)}$, and with probability $\to 1$ as $\eps\to 0$, geodesics in the prelimit from $[(a - \ep^{1/2}) \vee 0, b + \ep^{1/2}] \times \{r - \eps\}$ to $[(a - \ep^{1/2}) \vee 0, b + \ep^{1/2}] \times \{r +\eps\}$ will not leave $U$. This can be derived from the one-point tails bounds in Theorem \ref{T:half-space-full-tails} (see e.g., \cite[Proposition C.9]{BGZ}).    
    See also the proof of \cite[Proposition 2.6]{dauvergne2023non} for more details. Similarly, with probability $\to 1$ as $\ep \to 0$ we have that
	\begin{align*}
		&\cM(p; q \mid J) \\
		&= \max_{z_1, z_3 \in [c - \ep^{1/2}, d + \ep^{1/2}], z_2 \in J} \cM(p; z_1, r-\eps) + \cL(z_1, r-\eps; z_2, r) + \cL(z_2, r; z_3, r + \eps) + \cM(z_3, r + \eps; q).
	\end{align*} 
    Denote the last lines of the above two displays by $\cM_{\cK}$ and $\cM_{\cL}$, respectively. 
	Therefore to complete the proof, it suffices to show that almost surely $\cM_\cK\neq \cM_\cL$. Observe that $\cM_\cK$ is $\cF$-measurable, where $\cF$ is the $\sigma$-algebra generated by $\cK$, and all distances in $\cM$ off of the strip $[r-\eps, r + \eps]$. On the other hand, we can write
	$$
	\cM_\cL = \max_{z \in J} f \diamond \cL(\cdot, r-\ep; z, r) + \cL(z, r; \cdot, r + \ep) \diamond g,
	$$
	where $f, g$ are $\cF$-measurable functions. By \cite[Theorem 1.2]{sarkar2020brownian}, for fixed $f, g$, both $f \diamond \cL(\cdot, r-\ep; \cdot, r) - f \diamond \cL(\cdot, r-\ep; c, r)$ and $\cL(\cdot, r; \cdot, r + \ep) \diamond g - \cL(c, r; \cdot, r + \ep) \diamond g$ in $J$ are absolutely continuous with respect to a Brownian motion started at $0$. Hence conditional on $\cF$, $\cM_\cL$ has a Lebesgue density, and is therefore not equal to $\cM_\cK$ almost surely.
\end{proof}

\section{Open Problems}   \label{sec:openproblems}

We end with several open problems. First, we believe that there should be a unique natural coupling of half-space directed landscapes $\cL^\rho$ for all boundary parameters $\rho$. This is closely related to the following convergence problem.

\begin{prob}
	\label{P:five-parameter}
Let $X_1:\Z^2_\ge \to \R_{\ge 0}$ be a family of independent $\Exp(1)$ random variables, and for every $\alpha > 0$ define $X_\alpha(i, j) = X_1(i, j)$ for $i>j$ and $X_\alpha(i, i) = \alpha^{-1} X_1(i, i)$ for all $i$. For $\rho \in \R \cup \{-\infty\}$ and $n\in\N$, define $\cL_n^\rho$ as in \eqref{eq:cLn}, but using the field
$X_{1/2 - 2^{-4/3} \rho n^{-1/3}}$ when $\rho \ne -\infty$ and the field $X_1$ when $\rho = - \infty$. Prove that
$$
(\cL_n^\rho : \rho \in \R \cup \{-\infty\}),
$$
has a joint scaling limit, as functions on $\mathbb H^2_\uparrow \times (\R \cup \{-\infty\})$.
\end{prob}

Closely related to Proposition \ref{P:five-parameter} is the question of uniqueness of couplings of $\cL^\rho$. We propose one suggestion that seems promising, and natural given Corollary \ref{C:exp-quadrangle}.
\begin{prob}
Is there a unique coupling of $\cL^\rho$ for all $\rho \in \R \cup \{-\infty\}$ such that for any $\rho \le \rho'$, $0\le x_1 \le x_2$, $0\le y_1 \le y_2$, and $s < t$, we have the quadrangle inequality
 $$
 \cL^\rho(x_1, s; y_1, t) + \cL^{\rho'}(x_2, s; y_2, t) \le \cL^{\rho'}(x_1, s; y_1, t) + \cL^{\rho}(x_2, s; y_2, t).
 $$
\end{prob}

We believe that the boundary should not contribute any randomness in the limit, and that the half-space directed landscapes should all be measurable functions of the full-space directed landscape. 

\begin{prob}
	\label{P:boundary-no-impact}
Under the setup of Proposition \ref{P:five-parameter}, we take $\tilde X_1:\Z^2_\ge\to\R_{\ge 0}$ such that $\tilde X_1(i,j)=X_1(i,j)$ for $i>j$, and all $\tilde X_1(i,i)\sim\Exp(1)$ are i.i.d., and independent from $X_1$. For $\rho\in\R$ and $n\in\N$, define $\tilde \cL^\rho_n$ from $\tilde X_1$ the same way as $\cL^\rho_n$ from $X_1$.
Show that as $n \to \infty$,
$$
d(\cL^\rho_n, \tilde \cL^\rho_n) \cvgp 0,
$$
where $d$ is (any) metric metrizing the uniform-on-compact topology on functions from $\mathbb H^2_\uparrow \to \R$.
\end{prob}

\begin{prob}
\label{prob:measurability}
Under the setup of Proposition \ref{P:five-parameter}, we extend $X_1$ to be i.i.d.~$\Exp(1)$ from $\Z^2\to\R_{\ge 0}$.
Define $\cL_n:\Rd\to \R$ (for $\Rd=\{(x,s;y,t): x,s,y,t\in\R, s<t\}$) by expression \eqref{eq:cLn}, with the field $X_1$.
Let $(\cL, \cL^{\rho_1}, \cL^{\rho_2})$ be a subsequential limit (in law) of $(\cL_n, \cL^{\rho_1}_n, \cL^{\rho_2}_n)$. 
Is there an explicit measurable function $f$ such that $\cL^{\rho_1} = f(\cL)$? Is there an explicit measurable function $f$ such that $\cL^{\rho_1} = f(\cL^{\rho_2})$?
\end{prob}

The measurable function $f$ for $\cL^{-\infty} = f(\cL)$ should simply be given by finding optimal length paths in $\cL$, restricted to stay non-negative. It is less clear how exactly to define $f$ when $\rho_1 > -\infty$. We expect that lengths in $\cL^{\rho_1}$ can be constructed from $\cL^{-\infty}$ by allowing paths to add in a local time at the boundary. However, the exact mechanism is unclear.

We end with a suggestion for an improvement to our characterization theorem which is easier to verify. We keep this problem intentionally vague.

\begin{prob}
\label{P:strong-chara}
Show that the half-space directed landscape $\cL^\rho$ can be characterized by:
\begin{itemize}[nosep]
	\item One-point Baik-Rains laws between points on the boundary, and
	\item Full-space behavior in the bulk. More precisely, there exists a coupling with the full-space directed landscape $\cL$ such that for any compact set $K \subset (0, \infty) \times \R$, there exists a (random) $\eps > 0$ such that $\cL^\rho(u; v) = \cL(u; v)$ whenever $u, v \in K$ and $\|u - v\|_2 < \eps$.	
\end{itemize}
\end{prob}

\let\oldbibliography\thebibliography
\renewcommand{\thebibliography}[1]{%
  \oldbibliography{#1}%
  \setlength{\itemsep}{0.32em}%
}

        \bibliographystyle{alpha}
		\bibliography{bibliography}
	\end{document}